\font\dsrom=dsrom10 scaled 1200
\newlength{\textblockoffset}
\newcommand\ind[1]{\textrm{\dsrom{1}}_{#1}}
\def \m{\mathfrak{m}}
\def \O{\mathcal{O}}
\def \L{\mathcal{L }}
\def \into{\int_{\mathcal{O}}}
\newcommand{\T}{\mathcal{T}}
\def \N{\mathbb{N}}
\def \R{\mathbb{R}}
\def \Z{\mathbb{Z}}
\def \E{\mathbb{E}}
\def \P{\mathbb{P}}
\def \avirg{``}
\DeclareMathOperator{\supp}{supp}
\def\II{\mathcal{I}}
\def \D{\mathcal{D}}
\def \L{\mathcal{L}}
\def \O{\mathcal{O}}
\def \dx{\Delta x}
\def\ii{\mathbf{i}}
\def\Id{\mathrm{Id}}
\def\tr{\mathrm{Tr}}
\def \pol{{\mathbf{pol}}}
\def \Ex{\mathcal{E}}
\def \etildet{\tilde{\Ex}_t}
\def \into{\int_{\mathcal{O}}}
\def\<{\langle}
\def\>{\rangle}
\def\cvd{$\quad\Box$\medskip}
\def\T{{\mathcal{T}}}
\def\<{\langle}
\def\>{\rangle}
\def\Z{{\mathbb{Z}}}
\def\bu{\overline{u}}
\def \Sum{\displaystyle\sum}
\def \Frac{\displaystyle\frac}
\newcommand{\facciatabianca}{\newpage\shipout\null\stepcounter{page}}
\newtheorem{theorem}{Theorem}[section]
\newtheorem{rema}[theorem]{Remark}
\newtheorem{definition}[theorem]{Definition}
\newtheorem{corollary}[theorem]{Corollary}
\newtheorem{lemma}[theorem]{Lemma}
\newtheorem{proposition}[theorem]{Proposition}
\newtheorem{remark}[theorem]{Remark}
\begin{document}
	\titlepage
	\thispagestyle{empty} \pagenumbering{Roman}

\begin{center}
		\begin{center}
			
				\begin{figure} [!htb]
			\begin{center}
				\begin{tabular}{c}
					\includegraphics[scale=0.5]{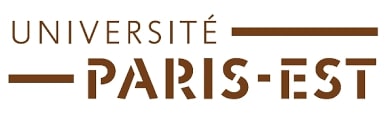} $\qquad\qquad\qquad
					\qquad\qquad$
					\includegraphics[scale=1.2]{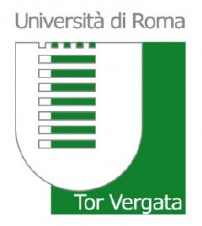}
				\end{tabular}
			\end{center}
			\end{figure}
			
								{\textsc {\LARGE Universit\`a di Roma Tor Vergata}\\
									\Large	Dipartimento di Matematica}	

							\bigskip
							
											{\LARGE \textsc {Universit\'e  Paris-Est Marne-La-Vallée}\\
												\Large		\'{E}cole Doctorale Math\'{e}matiques et STIC. Discipline: Math\'{e}matiques}	
								
								
\end{center}

%
%

\bigskip\bigskip

\vspace{1 cm}
\text{\sc \LARGE Ph.D. Thesis}
\vspace{10pt}

\vspace{10pt}


\vspace{0.5 cm}

	\begin{minipage}{\textwidth}
								\hrule
								\vspace{0.5cm}
								\centering
								{\textbf {\LARGE { 
											{Option prices in \\[3mm] 
												stochastic volatility models}}}
								}
								\vspace{0.5cm}
								\hrule
							\end{minipage}
%
%
%
\end{center}

\begin{center}
\bigskip
\textbf{\large }
\end{center}
\begin{center}

\text{\Large Giulia Terenzi}
\bigskip\bigskip\bigskip\bigskip

{\large  Defence date: 17/12/2018}

\bigskip

{\large Dissertation defence committee:\\

	Fabio ANTONELLI (Examinator)\\
	Aurélien ALFONSI (Examinator) \\
	Maya BRIANI (Examinator)\\
	Lucia CARAMELLINO (Advisor)\\
	Damien LAMBERTON (Advisor)}

%
%
%
%

\end{center}

	\facciatabianca

	\thispagestyle{empty}
	\null
	\vspace{2cm}
	\begin{flushright}
		\emph{\LARGE Alla mia famiglia}
	\end{flushright}
	\vspace{\stretch{2}}\null

	\pagenumbering{Roman} \linespread{1.5}
	\pagestyle{fancy}
	\textwidth 5.5in \textheight 20cm \topmargin
	0.25in \setcounter{tocdepth}{2}
	\linespread{1.2}
	\renewcommand{\chaptermark}[1]{\markboth{Chap.\ \textbf{\thechapter}\ -\ \textit{#1}}{}}
	\renewcommand{\sectionmark}[1]{\markright{Sec.\ \textbf{\thesection}\ -\ \textit{#1}}}
	\fancyhf{} \fancyfoot[CE,CO]{\thepage} \fancyhead[CO]{\rightmark}
	\fancyhead[CE]{\leftmark}
	\renewcommand{\headrulewidth}{0.5pt}
	\renewcommand{\footrulewidth}{0.0pt}
	\addtolength{\headheight}{0.5pt}
	\fancypagestyle{plain}{\fancyhead{}\renewcommand{\headrulewidth}{0pt}}
	
	\chapter*{Abstract}{\small
		We study option pricing problems in stochastic volatility models.
		In the first part of this thesis  we focus on American options in the Heston model.  We first give an analytical characterization of the value function of an American option as the unique solution of the associated (degenerate) parabolic obstacle problem. Our approach is based on variational inequalities in suitable weighted Sobolev spaces and  extends recent results of Daskalopoulos and Feehan (2011, 2016) and Feehan and Pop (2015).
		We also investigate the  properties of the American value function. In particular, we prove that, under suitable assumptions on the payoff, the value function is nondecreasing with respect to the volatility variable. Then, we focus on an  American put option and we extend some results which are well known in the Black and Scholes world.  In particular, we prove the strict convexity of the value function in the continuation region, some properties of the free boundary function, the Early Exercise Price formula and a weak form of the smooth fit principle. 
		This is done  mostly by using probabilistic techniques.
		
		In the second part we deal with the numerical computation of European and American option prices in jump-diffusion stochastic volatility models. We first focus on the Bates-Hull-White model, i.e. the Bates model 
		with a stochastic interest rate. We consider a backward hybrid  algorithm which uses  a Markov chain approximation (in particular, a  \avirg multiple jumps" tree) in the direction of the volatility and the interest rate and a (deterministic) finite-difference approach in order to handle the underlying asset price process. Moreover, we provide a simulation scheme to be used for Monte Carlo evaluations. Numerical results show the reliability and the efficiency of the proposed methods.
		
		Finally, we analyse the rate of convergence of the hybrid algorithm applied to  general jump-diffusion models. 
		We study first order weak convergence of Markov chains to diffusions under quite general assumptions. Then, we prove the convergence of the algorithm, by studying the stability and the consistency of the hybrid scheme, in a sense that allows us to exploit the probabilistic features of the Markov chain approximation.
		
		\medskip
		
		\textbf{Keywords}: stochastic volatility; European options; American options; degenerate parabolic problems; optimal stopping;   tree methods; finite-difference.
		
		\chapter*{R\'esum\'e}\small
		L'objet de cette th\`ese est l'\'etude de probl\`emes d'\'evaluation d'options dans les mod\`eles \`a volatilit\'e stochastique.
		La premi\`ere partie est centr\'ee sur  les options am\'ericaines dans  le mod\`ele de Heston.
		Nous donnons d'abord une caract\'erisation analytique de la fonction de valeur d'une option am\'ericaine comme l'unique solution
		du probl\`eme d'obstacle parabolique d\'eg\'en\'er\'e associ\'e. Notre approche est bas\'ee sur des in\'equations
		variationelles dans des espaces de Sobolev avec poids  \'etendant les r\'esultats r\'ecents de
		Daskalopoulos et Feehan (2011, 2016) et Feehan et Pop (2015).
		On \'etudie aussi les propri\'et\'es de la fonction de
		valeur d'une option am\'ericaine. En particulier, nous prouvons que, sous des hypoth\`eses convenables sur le payoff,
		la fonction de valeur est d\'ecroissante par rapport \`a la volatilit\'e. Ensuite nous nous concentrons sur le
		put am\`ericain et nous \'etendons  quelques r\'esultats qui sont bien connus dans le monde Black-Scholes.
		En particulier nous prouvons la convexit\'e stricte de la fonction de valeur dans la r\'egion de continuation, quelques
		propri\'et\'es de la  fronti\`ere libre, la formule de Prime d'Exercice Anticipée et une forme  faible de la propriété
		du smooth fit. Les techniques utilis\'ees sont de type probabiliste.
		
		Dans la deuxi\`eme partie nous abordons  le probl\`eme du calcul num\'erique du prix des options europ\'eenne et am\'ericaines
		dans des
		mod\`eles à volatilit\'e stochastique et avec sauts. Nous \'etudions d'abord le mod\`ele
		de Bates-Hull-White, c'est-\`a-dire le  mod\`ele de Bates avec un taux d'int\'er\^et stochastique. On consid\`ere un
		algorithme hybride r\'etrograde qui utilise une approximation par cha\^\i ne de Markov (notamment un arbre \avirg avec sauts multiples")
		dans la direction de la volatilit\'e et du taux d'int\'er\^et et une approche (d\'eterministe) par diff\'erence finie pour
		traiter le processus de prix d'actif. De plus, nous fournissons une proc\'edure de simulation pour des
		\'evaluations  Monte Carlo. Les r\'esultats num\'eriques montrent la fiabilit\'e et l'efficacit\'e de ces m\'ethodes.
		Finalement, nous analysons le taux de convergence de l'algorithme hybride appliqu\'e \`a  des mod\`eles g\'en\'eraux  de diffusion
		avec sauts. Nous \'etudions d'abord la convergence faible au premier ordre de cha\^\i nes de Markov vers la diffusion sous
		des hypoth\`eses assez g\'en\'erales. Ensuite nous prouvons la convergence de l'algorithme: nous \'etudions la stabilit\'e et
		la consistance de la m\'ethode hybride par une technique qui exploite les caract\'eristiques probabilistes de
		l'approximation par cha\^\i ne de Markov.
		
		\medskip
		
		\textbf{Mots cl\'es} : volatilit\'e stochastique ;  options am\'ericaines ; options europ\'eennes ; probl\`emes paraboliques d\'eg\'en\'er\'es ; arr\^{e}t optimal ;   approximation par arbres ; diff\'erences finies.
	}

	\tableofcontents \clearpage
	
	\fancyhf{} \fancyfoot[CE,CO]{\thepage}
	\fancyhead[CO]{\textit{Introduction}}
	\fancyhead[CE]{\textit{Introduction}}
	\renewcommand{\headrulewidth}{0.5pt}
	\renewcommand{\footrulewidth}{0.0pt}
	\addtolength{\headheight}{0.5pt}
	\fancypagestyle{plain}{\fancyhead{}\renewcommand{\headrulewidth}{0pt}}
	
	\chapter*{Introduction}
	\addcontentsline{toc}{chapter}{Introduction}

	The seminal work by Black and Scholes (\cite{BS}, 1973) was the starting point of equity dynamics modelling and it is still widely used as a useful approximation.  
	It owns its great success to its high intuition, simplicity and  parsimonious description of the 	market derivative prices. Nevertheless, it is a well known fact that it disagrees with reality in a number of significant ways. Even   F. Black, 15 years after the publication of the original  paper, wrote about  the flaws of the model \cite{Black}. 
	Indeed, empirical studies show that in the real market the log-return process is not normally distributed and its distribution is often affected by heavy tail, jumps and high peaks.
	Moreover,   the assumption of a constant volatility turns out to be too rigid to model the real world financial market. 	
	It is enough to analyse the so-called implied volatility (that is the value of the volatility parameter that, replaced in the Black and Scholes formula, gives the real market price) in  a set of  traded call options to recognize  the well known  \textit{smile/skew effect}. In fact, if we plot the implied volatility against the strike price, we can observe that the resulting shape is not a horizontal line, as it should derive from assuming a constant volatility, but it is usually convex and can present higher values for high and low values of the strike price (a \textit{smile}) or asymmetries (from which the term \textit{skew}). 
	Furthermore, the assumption of a constant volatility does not allow to properly price and hedge options which strongly depend on the volatility itself, such as the options on the realized variance or the \textit{cliquet options}.

	These results have called for more sophisticated models which can better reflect the reality. 
	Various approaches to model volatility have been introduced over time, paving the way for a huge body of literature devoted to this subject.  Let us  briefly recall some of the most famous ones. 

	Roughly speaking, we can recognize two different classes of models. The first class is given by models in which the volatility is assumed to  depend on the same noise source as the underlying asset. Here, we can find the so-called  \textit{local volatility models}, where the volatility is assumed to be a function of   time and of the current underlying asset price. Therefore, the asset price $S$ is modeled by a diffusion process of the type 
	$$
	dS_t=\mu(t,S_t)S_tdt+\sigma(t,S_t)S_tdB_t.
	$$ 
	Under classical assumptions these models preserve the completeness of the market and all the Black-Sholes pricing and hedging theory can be adapted (see, for example, \cite[Chapter 2]{Berg}).  The choice of a suitable local volatility function $\sigma=\sigma(t,S)$, is a delicate problem. 
	Bruno Dupire proved   in \cite{D} that it is  possible to find a  function   $\sigma=\sigma(t,S)$ which gives theoretical prices matching   a given configuration of vanilla options' prices.
	%
	%
	%
	Typically, the local volatility function is calibrated at $t=0$ on the market smile and kept frozen afterwards.  Therefore,  it does not take into account the daily changes in the volatility smile observed in the market. 
	For this reason, local volatility models seem to be an analytically tractable simplification of the reality rather than a representation of how volatility really evolves.  Other different models  presented in the literature belong to this first class, for instance \textit{path dependent volatility models}, in which volatility depends on the whole past trajectory of the
	asset price (see \cite{FPa,HR}).

	The second class of models consists of the so-called \textit{stochastic volatility models}. Here, the volatility  is modelled by an autonomous stochastic process $Y$ driven by some additional random noise. Typically, a stochastic volatility model is a Markovian model of the form
	\begin{align*}
	&dS_t=\mu_S(t,S_t)S_tdt+\sigma_S(Y_t)S_tdB_t,\\
	&dY_t=\mu_Y(t,Y_t)dt+\sigma_Y(t,Y_t)dW_t,
	\end{align*}
	where $B$ and $W$ are  possibly correlated Brownian motions. Moreover, often jumps are added  to the dynamics of the assets prices  and/or their volatilities. The literature on stochastic volatility models  is huge. The most successful model is the one introduced by S. Heston \cite{H}, which will be 
	extensively studied later on in this thesis. Among the others   we cite, for example, the models by Hull and White \cite{HW}, Bates \cite{bates} and Stein and Stein \cite{SS}. Moreover, there are also examples of \textit{local-stochastic volatility models} (such as the famous SABR model \cite{HKLW}) in which the volatility coefficient $\sigma_S(Y_t)$ of the underlying asset price is more general and has the form $\sigma_S(S_t,Y_t)$, that is it depends also on the current asset price.
	
	These models are, in general, not complete: the  derivative securities are usually not replicable by trading in the underlying. However, this does not affect the practice since the market can be completed with well known  procedures of market completion (for example by trading  a finite number of vanilla options).
	
	We point out that the research is still fervent in this area. For example,  empirical studies have questioned the smoothness of the volatility dynamics. As a consequence, new  models called \textit{rough volatility models} have recently  been introduced. They are non-Markovian models in which   the volatility is driven by a  Fractional Brownian motion, see the reference paper \cite{GJR} and the comprehensive website \cite{rough}, which gathers all the  developments on this subject.

	In this thesis we  consider Markovian stochastic volatility models and we collect some  results on the  problem of pricing European and American options.  It is divided into  two strongly correlated parts.
	In the first one we study some theoretical properties of the American option prices in  Heston-type models. In the second part, we deal with the problem of the numerical computation of the prices,  describing and theoretically studying hybrid schemes for pricing European and American options  in jump-diffusion stochastic volatility models.  
	More precisely, the thesis is organized as follows:
	
	\begin{itemize}
		\item Part I:
		American option prices in Heston-type models
		\begin{itemize}
			\item 
			Chapter 1. Variational formulation of American option prices in Heston-type models;
			\item
			Chapter 2. American option price properties in Heston-type models.
		\end{itemize}
		
		\item Part II:
		Hybrid schemes for pricing options in jump-diffusion stochastic volatility models
		\begin{itemize}
			\item 
			Chapter 3. Hybrid  Monte Carlo and tree-finite differences algorithm for pricing options in the Bates-Hull-White model;
			\item
			Chapter 4. 
			Weak convergence of Markov chains and numerical schemes for jump diffusion processes.
		\end{itemize}
		
	\end{itemize}
	The above chapters are extracted, sometimes verbatim, from the papers \cite{LT,LT2,bct,bctz} respectively. We now give a brief outline of the main results collected in this thesis. 
	
	%
	\section*{Part I: American option prices in Heston-type models}
	
	The model introduced by S. Heston in 1993 \cite{H} is 
	one of the most widely used stochastic volatility models in the financial world and it was the starting point for several generalizations. 
	In this model, the dynamics under the pricing measure of the asset price $S$ and the volatility process $Y$ are governed by the stochastic differential equation system
	\begin{equation}\label{Hintro}
	\begin{cases}
	dS_t=(r-\delta)S_tdt + \sqrt{Y_t}S_tdB_t,\qquad&S_0=s>0,\\ dY_t=\kappa(\theta-Y_t)dt+\sigma\sqrt{Y_t}dW_t, &Y_0=y\geq 0,
	\end{cases}
	\end{equation}
	where $B$ and $W$ denote two correlated Brownian motions with 
	$$
	d\langle B,W \rangle_t=\rho dt, \qquad \rho \in (-1,1).
	$$
	Here $r\geq0$ and $\delta\geq0$ are the risk free rate of interest and the continuous dividend rate respectively. The dynamics of the volatility follows a square-root diffusion process, which was originally introduced by E. Feller in 1951 \cite{feller} and then rediscovered by Cox, Ingersoll and Ross as an interest rate model in \cite{cir}. For this reason this process is known in the financial literature as the CIR process.
	The parameters  $\kappa\geq 0$  and $\theta> 0$ are known respectively as the  mean-reversion rate  and the long run state, while the parameter $\sigma>0$ is called the vol-vol (volatility of the volatility). One can observe that the volatility $(Y_t)_t$ tends to fluctuate around the value $\theta$ and that $\kappa$ indicates the velocity of this fluctuation and determines its frequency. This is the mean reversion feature of the CIR process and justifies the names of the constants $\kappa$ and $\theta$.

	It is well known (see, for example, \cite[Section 1.2.4]{Abook}) that under the so called Feller condition $2\kappa\theta \geq \sigma^2$, the process $Y$ with starting condition $Y_0 = y > 0 $ remains always positive. On the other hand,  if the Feller condition is not satisfied, as happens in many cases of practical importance (see e.g. the calibration results in \cite{bm,dps}), $Y$ reaches zero with probability one for any $Y_0 = y \geq 0$. 
	
	The great success of the 	Heston model is due to the fact that the dynamics of the underlying asset price can take into account the non-lognormal distribution of the asset returns and the observed
	mean-reverting property of the volatility. Moreover, it remains analytically tractable and provides a closed-form valuation formula for vanilla European options using Fourier transform.

	In this framework, the price at time $t\in [0,T]  $ of an American option with payoff function $\varphi$ and maturity $T$ is given by $P(t,S_t,Y_t)$, where
	\begin{equation*}
	P(t,s,y)=\sup_{\tau \in \mathcal{T }_{t,T}}\E\left[e^{-r(\tau-t)}\varphi(S^{t,s,y}_\tau)\right],
	\end{equation*}
	$\mathcal{T}_{t,T}$ being the set of all the stopping times with values in $[t,T]$ and $S^{t,s,y}$ denoting the solution to \eqref{Hintro} with starting condition $S_t=s$, $Y_t=y$.
	
	If we consider, as  usual, the log-price process $X_t=\log S_t$,  the 2-dimensional diffusion $(X,Y)$ has infinitesimal generator given by
	\begin{equation*}
	\mathcal{L}= \frac y 2 \left( \frac{\partial^2}{\partial x^2 } +2 \rho \sigma  \frac{\partial^2}{\partial y \partial x }+  \sigma^2  \frac{\partial^2}{\partial y^2 }  \right) + \left(r-\delta-\frac y 2\right)\frac{\partial}{\partial x}+\kappa(\theta - y)\frac{\partial}{\partial y}
	\end{equation*}
	and defined on the set $\mathcal O=\R\times (0,\infty)$. 
	Note that the differential operator $\L$ has unbounded coefficients and it is not uniformly elliptic: it degenerates on the boundary of $\O$, that is, when the volatility vanishes. This degenerate property gives rise to some technical difficulties when dealing with the theoretical properties of the model, in particular when  the problem of pricing American options is considered. In the first part of this thesis we address some of these issues.
	\subsection*{Chapter \ref{chapter-art1}: Variational formulation of American option prices in Heston type models}
	Chapter \ref{chapter-art1} is devoted to the identification of the American option value function as the unique solution of the associated obstacle problem. Indeed, despite the great success of the Heston model,  as far as we know, an exhaustive analysis  of the  analytic characterization of the value function for American options in Heston-type models is missing in the literature, at least for a large class of  payoff functions which include the standard call and put options. 
	
	Our approach is based on variational inequalities  and extends recent results of Daskalopoulos and Feehan \cite{DF,DF2} and Feehan and Pop \cite{FP} (see also \cite{CMR}). 
	More precisely, we first study  the existence and uniqueness of a weak solution of the associated degenerate parabolic obstacle problem in suitable weighted Sobolev spaces  introduced in \cite{DF} (Section \ref{sect-existenceanduniqueness}). Moreover, we also get a comparison principle. The proof essentially relies on the classical penalization technique (see \cite{BL}), with some technical devices due to the degenerate nature of the problem. 
	
	Once we have  the  existence and uniqueness of an analytical weak solution, in Section \ref{sect-identification} we  identify it with the solution to the optimal stopping problem, that is the American option value function. In order to do this,  we use suitable estimates on the joint distribution of the log-price process and the volatility process. Moreover,  we rely on semi-group techniques and on the affine property of the model.
	
	\subsection*{Chapter \ref{chapter-art2}: American option price properties in Heston type models}
	In Chapter \ref{chapter-art2} we study some qualitative properties of an American option value function in the  Heston model. 
	We first prove in Section \ref{sect-monotony} that, if the payoff function is convex and satisfies some regularity assumptions, then the option value function is increasing with respect to the volatility variable. Then, in Section \ref{sect-put}, we focus on the standard put option, that is we fix the payoff function $\varphi(s)=(K-s)_+$, and we extend to the Heston model some  results which are well known in the Black and Scholes world, mostly by using probabilistic techniques. 
	In particular, in Section \ref{sect-exerciseboundary} we introduce the so called \textit{exercise boundary} or \textit{critical price}, that is the map
	$$
	b(t,y)=\inf\{ s>0\mid P(t,s,y)>(K-s)_+\},\qquad (t,y)\in [0,T)\times [0,\infty),
	$$ 
	and we study some features of this function such as continuity properties.
	Then,  in Section \ref{sect-CIR} we prove that the American put value function is strictly convex with respect to the stock price in  the continuation region, and we do it by using purely probabilistic arguments. In Section \ref{sect-EEP} we extend to the stochastic volatility Heston model the \textit{early exercise premium formula}, that is, we prove that 
	\begin{equation*}
	P(0,S_0,Y_0)=P_e(0,S_0,Y_0)-\int_0^T e^{-rs}\E[(\delta S_s  -rK)\ind{\{S_s\leq b(s,Y_s)\}}]ds,
	\end{equation*} where $P_e(0,S_0,Y_0)$ is the  price at time $0$ of a European put with the same maturity $T$ and strike price $K$ of the original American put with price $P$. Finally, in Section \ref{sect-sf} we prove a weak form  of  the \textit{smooth fit principle}, a well known concept in optimal stopping theory. 
	\section*{Part II: Hybrid schemes for pricing options in jump-diffusion  stochastic volatility models}
	In the second part of this thesis we face up with the problem of the numerical computation of  European and American options prices in jump-diffusion stochastic volatility models. In particular, we consider the Heston model and some  generalizations of it which  have other random sources such as  jumps and a stochastic interest rate (see \cite{bates,HW}).

	From a computational point of view,  the most delicate point  is the treatment of the CIR dynamics for the volatility process in the full parameter regime - it is well known that the standard techniques fail when the square root process is considered.   
	Moreover, one has to be careful in choosing the approximation method according to the European or American option case.
	In fact, when dealing with European options, i.e. solutions to Partial (Integro) Differential Equation (hereafter P(I)DE) problems, numerical approaches involve tree methods \cite{ads, nv}, Monte Carlo procedures \cite{A-MC,A,AN, and, z}, finite-difference numerical schemes  \cite{ckmz,it, t} or quantization algorithms  \cite{PP}. 
	When American options are considered, that is, solutions to  specific optimal stopping problems or  P(I)DEs with obstacle, it is very useful to consider numerical methods which are able to easily handle dynamic programming principles, for example trees or finite-difference. 
	
	In this thesis we consider a  backward ``hybrid''    algorithm  which  combines:
	\begin{itemize}
		\item finite difference schemes	to handle the jump-diffusion  price process;
		\item Markov chains (in particular, multiple jumps trees) to approximate the other random sources, such as the stochastic volatility and the stochastic interest rate.
	\end{itemize}
	\subsection*{Chapter \ref{chapter-art3}: Hybrid  Monte Carlo and tree-finite differences algorithm for pricing options in the Bates-Hull-White model}
	In Chapter \ref{chapter-art3} we focus on the Bates-Hull-White model, where  the volatility $Y$ is a CIR  process and the underlying asset price process $S$ contains a further noise from a jump as introduced by Merton \cite{mer}. Moreover,  the interest rate $r$ is stochastic and evolves according to a generalized Ornstein-Uhlenbeck (hereafter OU) process.
	More precisely,  under the pricing measure, we consider  the following jump-diffusion model:
	\begin{equation*}
	\begin{array}{l}
	\displaystyle\frac{dS_t}{S_{t^-}}= (r_t-\delta)dt+\sqrt{Y_t}\, dZ^S_t+d H_t,
	\smallskip\\
	dY_t= \kappa_Y(\theta_Y-Y_t)dt+\sigma_Y\sqrt{Y_t}\,dZ^Y_t,
	\smallskip\\
	dr_t= \kappa_r(\theta_r(t)-r_t)dt+\sigma_r dZ^r_t,
	\end{array}
	\end{equation*}
	where, as usual, $\delta$ denotes the continuous dividend rate, $S_0,r_0>0$, $Y_0\geq 0$, $Z^S$, $Z^Y$ and $Z^r$ are correlated Brownian motions and $H$ is a compound Poisson process with
	intensity $\lambda$ and i.i.d. jumps $\{J_k\}_k$, that is,
	\begin{equation*}
	H_t=\sum_{k=1}^{K_t} J_k,
	\end{equation*}
	$K$ denoting a Poisson process with intensity $\lambda$. We assume that  the random sources , given by  the Poisson process $K$, the jump amplitudes $\{J_k\}_k$ and the $3$-dimensional correlated Brownian motion $(Z^S,Z^Y,Z^r)$, are independent.
	
	We refer to the introduction of  Chapter \ref{chapter-art3} for an overview on the existing numerical schemes for pricing options in this model.	
	
	Our pricing  procedures work as follows. 
	We first  approximate  both the stochastic volatility  and the interest rate processes with a binomial  \avirg multiple jumps" tree approach which is based on the techniques originally introduced in  \cite{nr}. Such a  multiple jumps tree approximation for the CIR process was first introduced and analysed in \cite{acz}, where it is shown   to be reliable and accurate without imposing restrictions on the coefficients. 
	
	Then, we develop two different pricing procedures. 
	In Section \ref{sect-MC} we propose a (forward) Monte Carlo method, based on simulations for the model following the binomial tree in the direction of both the volatility and the interest rate, and a space-continuous approximation for the underlying asset price process coming from a Euler-Maruyama type scheme. 
	
	In Section \ref{sect:htfd}, we describe  a hybrid backward procedure which works following the tree method in the direction of the volatility and the interest rate and a finite-difference approach in order to handle the underlying asset price process. We also give a first theoretical result on this algorithm, studying some stability properties of the procedure.
	
	Finally, Section \ref{sect-numerics} is entirely  devoted to numerical results. Several experiments are provided, both  for European and American options,  with different values of the parameters of the model. In particular, we  also consider   cases in which the Feller condition for the volatility process is not satisfied.  All numerical  results show the reliability, the accuracy and the efficiency of both the Monte Carlo and the hybrid algorithm.
	
	\subsection*{Chapter \ref{chapter-art4}: Weak convergence rate of Markov chains and hybrid numerical schemes for jump-diffusion processes}
	We devote Chapter \ref{chapter-art4} to the  study of  the theoretical convergence of a generalization of the hybrid numerical procedure described in Chapter \ref{chapter-art3}.  Here we just briefly describe our main results,   referring to Section \ref{sect-introC4} for an overview on the existing literature on the rate of convergence of numerical methods for pricing options in Heston-type models. 
	
	Recall that the hybrid algorithm uses  tree approximations and that, in their turn,
	tree methods  rely on Markov chains. So, we first consider in Section \ref{sect-markovapprox} a $d$-dimensional diffusion process
	$(Y_t)_{t\in[0,T]}$ which evolves according to the SDE
	$$
	dY_t=\mu_Y(Y_t)dt+\sigma_Y(Y_t)dW_t.
	$$
	Fix a natural number $N\geq 1$, $h=T/N$ and  assume that $(Y_{nh})_{n=0,\dots,N}$ is approximated by a Markov chain $(Y^h_n)_{n=0,\dots,N}$. It is well known  that the weak convergence  of Markov chains to diffusions relies on assumptions on the local moments of the approximating process up to order 3 or 4. We prove that,  stressing these assumptions, we can study the  rate of  the weak convergence. This analysis is  independent of the financial framework but, as an example, we apply our results to  the multiple jumps tree approximation of the CIR process introduced in \cite{acz} and used in  \cite{bcz,bcz-hhw,bctz}. 
	Let us mention that our general convergence result (Theorem \ref{conv_T})  may in principle be applied to more general trees constructed through the multiple jumps approach by Nelson and  Ramaswamy \cite{nr}, on which the tree in \cite{acz} is based  -- to our knowledge, a  theoretical study of the rate of convergence for such trees is missing in the literature. And it could also be used in other cases, e.g. the recent tree method for the Heston model developed in \cite{ads}.
	
	Then, in Section \ref{sect-hybrid} we  combine the Markov chain approach with other numerical techniques in order to handle the different components in jump-diffusion coupled models. In particular, we link $(Y_t)_{t\in[0,T]}$ with a  jump-diffusion process $(X_t)_{t\in[0,T]}$ which evolves according to a stochastic differential  whose coefficients only depend on the process. In mathematical terms, we consider the stochastic differential equation system 
	$$
	\begin{cases}
	dX_t=\mu_X(Y_t)dt+\sigma_X(Y_t)dB_t+\gamma_X(Y_t)dH_t,\\
	dY_t=\mu_Y(Y_t)dt+\sigma_Y(Y_t)dW_t,
	\end{cases}
	$$
	where $H$ is a compound Poisson process independent of the 2-dimensional Brownian motion $(W,B)$.
	We generalize the hybrid procedure developed in  \cite{bcz,bcz-hhw,bctz} which works backwardly by
	approximating the process $Y$ with a Markov chain and by using a different numerical scheme for solving a (local) PIDE allowing us to work  in the direction of the process $X$.
	We study the speed of convergence of this hybrid approach. The  main difficulty comes from the fact that, in general, the hybrid procedure cannot be directly written on a Markov 	chain, so we cannot apply the convergence results obtained in Section \ref{sect-markovapprox}. Therefore, the idea is to follow the hybrid nature of the procedure: we use classical numerical techniques, that is an analysis of the stability and of the consistency of the method, but in a sense that allows us to exploit the probabilistic properties of the Markov chain approximating the process $Y$. Again, we provide examples from the financial framework, applying our convergence results to the tree-finite difference algorithm in   the Heston or Bates model. 
	\clearpage
	\pagenumbering{arabic}
	\renewcommand{\chaptermark}[1]{\markboth{Chap..\ \textbf{\thechapter}\ -\ \textit{#1}}{}}
	\renewcommand{\sectionmark}[1]{\markright{Sec.\ \textbf{\thesection}\ -\ \textit{#1}}}
	\fancyhf{} \fancyfoot[CE,CO]{\thepage} \fancyhead[CO]{\rightmark}
	\fancyhead[CE]{\leftmark}
	\renewcommand{\headrulewidth}{0.5pt}
	\renewcommand{\footrulewidth}{0.0pt}
	\addtolength{\headheight}{0.5pt}
	\fancypagestyle{plain}{\fancyhead{}\renewcommand{\headrulewidth}{0pt}}
	\linespread{1.2}
	\part{American option prices in Heston-type models}

	\chapter{Variational formulation of American option prices}\label{chapter-art1}

	\section{Introduction}
	The Heston model is the most celebrated stochastic volatility model in the financial world.
	As a consequence, there is
	an extensive literature on numerical methods to price derivatives in Heston-type models. In this framework, besides purely probabilistic methods such as standard Monte Carlo and tree approximations,  there is a large class of algorithms which  exploit numerical analysis techniques in order to solve the standard PDE (resp. the obstacle problem) formally associated with the European (resp. American) option price  function. 
	However, these algorithms  have, in general, little mathematical support and in particular, as far as we know, a rigorous and complete study of the  analytic characterization of the American price function is not present in the literature.
	
	The main difficulties in this sense come from the degenerate nature of the model. In fact, the infinitesimal generator associated with the two dimensional diffusion given by the log-price process and the volatility process is not uniformly elliptic: it degenerates on the boundary of the domain, that is when the volatility  variable vanishes. Moreover, it has unbounded coefficients with linear growth. Therefore, the existence and the uniqueness of the solution to the pricing PDE and obstacle problem do not follow from the classical theory, at least in the case in which the boundary of the state space is reached with positive probability, as happens in many cases of practical importance (see \cite{andersen}). Moreover, the probabilistic representation of the solution, that is the identification with the price function, is far from trivial in the case of non regular payoffs. 
	
	It should be emphasized that a clear analytic characterization of the price function allows not only  to formally justify  the   theoretical convergence of some classical pricing algorithms but also to investigate the regularity properties of the price function (see  \cite{JLL} for the case of the Black and Scholes models).

	Concerning the existing literature, E. Ekstrom and J. Tysk in  \cite{ET} give a rigorous and complete analysis of these issues in the case of European options, proving that, under some regularity assumptions on the payoff functions, the price function is the unique classical solution of the associated PDE with a certain boundary behaviour for vanishing values of the volatility.  However, the payoff functions they consider do not include the case of standard put and call options. 
	
	Recently,  P. Daskalopoulos and P. Feehan  in \cite{DF,DF2} studied the existence, the uniqueness, and some regularity properties of the solution of this kind of degenerate PDE and obstacle problems in the elliptic case, introducing suitable weighted Sobolev spaces which clarify the behaviour of the solution near the degenerate boundary (see also \cite{CMR}). In another paper (\cite{FP}) P. Feehan and C. Pop addressed the issue of the probabilistic representation of the solution,  but we do not know if their assumptions on the solution of the parabolic obstacle problem are satisfied in the case of standard American options. 
	Note that Feehan and Pop did prove regularity results in the elliptic case, see \cite{FP2}. They also announce results for the parabolic case in \cite{FP}.
	
	The aim of this chapter is to give a precise analytical characterization of the American option price function in the Heston model for a large class of payoffs which includes the standard put and call options. In particular, we give a variational formulation of the American pricing problem using the weighted Sobolev spaces and the bilinear form introduced in \cite{DF}. 
	
	The chapter is organized as follows. In Section 2, we introduce our notations and we state our main results. Then, in Section 3, we study the existence and uniqueness of the solution of the associated variational inequality, extending the results obtained in \cite{DF} in the elliptic case. The proof  relies, as in \cite{DF}, on the classical penalization technique introduced by Bensoussan and Lions \cite{BL} with some technical devices due to the degenerate nature of the problem. We also establish a Comparison Theorem. Finally, in section 4, we prove that the solution of the variational inequality  with obstacle function $\psi$ is actually the American option price function with payoff $\psi$, with  conditions on $\psi$ which are satisfied, for example, by the standard call and put options. In order to do this, we use  the affine property  of the underlying diffusion given by the log price process $X$ and the volatility process $Y$. Thanks to this property, we first identify the analytic semigroup associated with the bilinear form with a correction term  and the transition semigroup of the pair $(X,Y)$ with a killing term. Then, we prove regularity results on the solution of the variational inequality and suitable estimates on the joint law  of the process $(X,Y)$ and we deduce from them the analytical characterization of the solution of the optimal stopping problem, that is the  American option price.
	
	\section{Notations and main results}
	\subsection{The Heston model}
	We recall that in the Heston model the dynamics under the pricing measure of the asset price $S$ and the volatility process $Y$ are governed by the stochastic differential equation system
	\begin{equation*}
	\begin{cases}
	\frac{dS_t}{S_t}=(r-\delta)dt + \sqrt{Y_t}dB_t,\qquad&S_0=s>0,\\ dY_t=\kappa(\theta-Y_t)dt+\sigma\sqrt{Y_t}dW_t, &Y_0=y\geq 0,
	\end{cases}
	\end{equation*}
	where $B$ and $W$ denote two correlated Brownian motions with 
	$$
	d\langle B,W \rangle_t=\rho dt, \qquad \rho \in (-1,1).
	$$
	We exclude  the  degenerate case $\rho=\pm 1$, that is the case in which the same Brownian motion drives the dynamics of $X$ and $Y$. Actually, it can be easily seen that, in this case, $S_t$ reduces to  a function of the pair $\left(Y_t,\int_0^tY_s ds\right)$ and the resulting degenerate model  cannot be treated with the techniques we develop in this chapter. Moreover, this particular situation is not very interesting from a financial point of view.
	
	Moreover, we recall that $r\geq0$ and $\delta\geq0$ are respectively  the risk free rate of interest and the continuous dividend rate. The dynamics of $Y$ follows a CIR process with mean reversion rate $\kappa>0$,  long run state $\theta>0$ and  volatility of the volatility $\theta>0$. We stress that  we do not require the Feller condition $2\kappa\theta \geq \sigma^2$: the volatility process $Y$  can hit $0$ (see, for example, \cite[Section 1.2.4]{Abook}). 
	
	We are interested in studying the price of an American option with payoff function $\psi$. For technical reasons which will be clarified later on, hereafter we consider the process
	\begin{equation}\label{traslation}
	X_t=\log S_t-\bar{c}t,\qquad 	\mbox{with }	\bar{c}=r-\delta- \frac{\rho\kappa\theta}{\sigma},
	\end{equation}
	which satisfies
	\begin{equation}\\
	\begin{cases}\label{hest}
	dX_t=\big(\frac{\rho\kappa\theta}{\sigma}-\frac {Y_t} 2 \big)dt + \sqrt{Y_t}dB_t,\\dY_t=\kappa(\theta-Y_t)dt+\sigma\sqrt{Y_t}dW_t .
	\end{cases}
	\end{equation}

	Note that, in this framework,  we have to consider payoff functions  $\psi$ which depend on both the time and the space variables. For example, in the case of a standard put option (resp. a call option) with strike price $K$ we have $ \psi(t,x)=(K-e^{x+\bar c t})_+ $ (resp. $ \psi(t,x)=(e^{x+\bar c t}-K)_+ $). So, the natural price at time $t$ of an American option with a nice enough payoff $(\psi(t,X_t,Y_t))_{0\leq t\leq T}$ is given by $P(t,X_t,Y_t)$, with
	\begin{equation*}
	P(t,x,y)=\sup_{\theta \in\mathcal{T}_{t,T}}\E[e^{-r(\theta-t )}\psi(\theta,{X}^{t,x,y}_\theta, Y^{t,y}_\theta)],
	\end{equation*}  
	where $\mathcal{T}_{t,T}$ is the set of all stopping times with values in $[t,T]$ and $(X^{t,x,y}_s, Y^{t,y}_s)_{t\leq s\leq T}$ denotes the solution to \eqref{hest} with the starting condition $(X_t,Y_t) = (x,y)$.
	
	Our aim is to give an analytical characterization of the price function $P$.
	In this chapter we denote by $\L$ the infinitesimal generator of the two dimensional diffusion $(X,Y)$, given by
	\begin{equation*}
	\mathcal{L}= \frac y 2 \left( \frac{\partial^2}{\partial x^2 } +2 \rho \sigma  \frac{\partial^2}{\partial y \partial x }+  \sigma^2  \frac{\partial^2}{\partial y^2 }  \right) + \left(\frac{\rho\kappa\theta}{\sigma}-\frac y 2\right)\frac{\partial}{\partial x}+\kappa(\theta - y)\frac{\partial}{\partial y} ,
	\end{equation*}
	which is	defined on the open set $\mathcal{O}:= \R \times (0,\infty) $. Note that  $\mathcal{L}$ has unbounded coefficients and  is not uniformly elliptic: it degenerates on the boundary $\partial\mathcal{O}=\R\times\{0\}$.
	\subsection{American options and variational inequalities}
	\subsubsection{Heuristics}
	From the optimal stopping theory, we know that the discounted price process $\tilde{P}(t,X_t,Y_t)=e^{-rt}P(t,X_t,Y_t)$ is a supermartingale  and that its 
	finite variation part only decreases on the set $P=\psi$ with respect to the time variable $t$. We want to have an analytical interpretation of these features on the function $P(t,x,y)$.
	So, assume that $P \in\mathcal{C}^{1,2} ((0,T)\times \mathcal{O})$. Then, by applying It\^{o}'s formula, the finite variation part of $\tilde{P}(t,X_t,Y_t)$ is 
	$$
	\left( \frac{\partial \tilde P }{\partial t } + \mathcal{L}\tilde P\right)(t,X_t,Y_t) .
	$$
	Since $\tilde{P}$ is a supermartingale, we can deduce the inequality 
	$$
	\frac{\partial \tilde P }{\partial t } + \mathcal{L}\tilde P\leq 0
	$$
	and, since its  finite variation part decreases only on the set $P(t,X_t,Y_t)=\psi(t,X_t,Y_t)$, we can write
	$$
	\left(  \frac{\partial \tilde P }{\partial t } + \mathcal{L}\tilde P\  \right) (\psi- P)=0.
	$$
	This relation has to be satisfied $dt-a.e.$ along the trajectories of $(t,X_t,Y_t)$. Moreover, we have the two trivial conditions $P(T,x,y)=	\psi(T,x,y)$ and $P\geq \psi$.
	
	The previous discussion is only heuristic, since the price function $P$ is not regular enough to apply  It\^{o}'s formula. However, it suggests the following strategy:
	\begin{enumerate}
		\item Study the obstacle problem
		\begin{equation} \label{system1}
		\begin{cases}
		\frac{\partial u }{\partial t } + \mathcal{L}u\leq 0, \qquad u\geq \psi, \qquad &in \  [0,T]\times \mathcal{O},\\
		\left(  \frac{\partial u }{\partial t } + \mathcal{L}u  \right) (\psi-u)=0, \qquad &in \  [0,T]\times \mathcal{O},\\
		u(T,x,y)=\psi(T,x,y).
		\end{cases}
		\end{equation}
		\item Show that the discounted price function $\tilde P$ is equal to the solution of \eqref{system1} where $\psi$ is replaced by $\tilde \psi(t,x,y)=e^{-rt}\psi(t,x,y)$.
	\end{enumerate}
	We will follow this program providing a variational formulation of system \eqref{system1}.
	\subsubsection{Weighted Sobolev spaces and  bilinear form associated with the Heston operator} \label{sect-sobolev}
	We consider the measure first introduced in \cite{DF}:
	\begin{equation*}\label{m}
	\mathfrak{m}_{\gamma, \mu}(dx,dy)=y^{\beta - 1}e^{-\gamma |x|- \mu y } dx dy, 
	\end{equation*}
	with $\gamma>0, \ \mu >0 \mbox{ and } \beta:= \frac{2 \kappa \theta}{\sigma^2}$.

	It is worth noting  that in \cite{DF} the authors fix  $\mu=\frac{2\kappa}{\sigma^2}$ in the definition of the measure $\m_{\gamma,\mu}$. This specification will not be necessary in this chapter, but it is useful to mention it in order to better understand how this measure arises. In fact, recall that the density of the speed measure of the CIR process is given by $y^{\beta-1}e^{-\frac{2\kappa}{\sigma^2}y}$. Then,  the term $y^{\beta-1}e^{ -\frac{2\kappa}{\sigma^2}y}$ in the definition of $\m_{\gamma,	\mu}$ has a clear probabilistic interpretation, while the exponential term $e^{-\gamma |x|}$ is classically introduced  just to deal with the unbounded domain in the $x-$component.


	For $u\in \R^n$ we denote by $|u|$ the standard Euclidean norm of $u$ in $\R^n$. Then, we recall the weighted  Sobolev spaces introduced in \cite{DF}. The choice of these particular Sobolev spaces will  allow us to formulate the obstacle problem \eqref{system1} in a variational framework with respect to the measure $\m_{\gamma,\mu}$. 
	\begin{definition}\label{def-sob}
		For every $p\geq 1$, let $L^p(\mathcal{O},\mathfrak{m}_{\gamma, \mu})$ be the space of all Borel measurable functions $u: \mathcal{O} \rightarrow \R $ for which 
		$$
		\Vert u \Vert^p_{L^p(\mathcal{O},\mathfrak{m}_{\gamma, \mu})} := \int_{\mathcal{O}} |u|^p d\mathfrak{m}_{\gamma, \mu}  < \infty, 
		$$
		and denote $H^0(\mathcal{O},\mathfrak{m}_{\gamma, \mu}):=L^2(\mathcal{O},\mathfrak{m}_{\gamma, \mu}).$
		
		\begin{enumerate}
			\item If $\nabla u:=(u_x,u_y)$ and $u_x$, $u_y$ are defined in the sense of distributions, we set 
			$$
			H^1(\mathcal{O},\mathfrak{m}_{\gamma, \mu}):=\{  u \in  L^2(\mathcal{O},\mathfrak{m}_{\gamma, \mu}): \sqrt{1+y}  u \mbox{ and }  \sqrt{y}|\nabla  u| \in L^2(\mathcal{O},\mathfrak{m}_{\gamma, \mu}) \},
			$$
			and 
			$$
			\Vert u \Vert^2_{H^1(\mathcal{O},\mathfrak{m}_{\gamma, \mu})} := \int_{\mathcal{O}} \left(  y|\nabla  u|^2 + (1+y)u^2 \right) d\mathfrak{m}_{\gamma, \mu}.
			$$
			\item If $D^2u:=(u_{xx},u_{xy},u_{yx}, u_{yy})$ and all derivatives of $u$ are defined in the sense of distributions, we set 
			$$
			H^2(\mathcal{O},\mathfrak{m}_{\gamma, \mu}):=\{  u \in  L^2(\mathcal{O},\mathfrak{m}_{\gamma, \mu}): \sqrt{1+y}  u, \ (1+y)|\nabla  u|, \   y|D^2u| \in L^2(\mathcal{O},\mathfrak{m}_{\gamma, \mu}) \}
			$$
			and 
			$$
			\Vert u \Vert^2_{H^2(\mathcal{O},\mathfrak{m}_{\gamma, \mu})} := \int_{\mathcal{O}} \left(  y^2|D^2u|^2 + (1+y)^2|\nabla  u|^2+(1+y)u^2 \right) d\mathfrak{m}_{\gamma, \mu} .
			$$
		\end{enumerate}
	\end{definition}
	%
	%
	
	For brevity and when the context is clear, we shall often denote
	$$
	H:= H^0(\mathcal{O}, \mathfrak{m}_{\gamma, \mu}), \qquad V:= H^1(\mathcal{O}, \mathfrak{m}_{\gamma, \mu}) 
	$$
	and
	$$\Vert
	u\Vert_H :=\Vert u \Vert_{L^2(\mathcal{O}, \mathfrak{m}_{\gamma, \mu}) }, \qquad \Vert u \Vert_V :=\Vert u \Vert_{H^1(\mathcal{O}, \mathfrak{m}_{\gamma, \mu}) }.
	$$
	Note that we have the inclusion 
	$$
	H^2(\mathcal{O}, \mathfrak{m}_{\gamma, \mu}) \subset H^1(\mathcal{O}, \mathfrak{m}_{\gamma, \mu})
	$$
	and that the spaces $H^k(\mathcal{O}, \mathfrak{m}_{\gamma, \mu})$, for $k=0,1,2$ are Hilbert spaces with the inner products
	$$
	(u,v)_H=	(u,v)_{L^2(\mathcal{O}, \mathfrak{m}_{\gamma, \mu}) }=\int_{\mathcal{O}}^{}uv d \m_{\gamma, \mu} ,
	$$ 
	$$
	(u,v)_V=(u,v)_{H^1(\mathcal{O}, \mathfrak{m}_{\gamma, \mu})} =\int_{\mathcal{O}}^{}\left(y \left( \nabla  u,\nabla  v\right) + (1+y)uv \right)d\m_{\gamma, \mu} 
	$$
	and
	$$
	(u,v)_{H^2(\mathcal{O}, \mathfrak{m}_{\gamma, \mu})} :=\int_{\mathcal{O}}^{}\left(y^2\left( D^2u,D^2v\right) + (1+y)^2 \left( \nabla  u,\nabla  v\right)+ (1+y)uv \right) d \m_{\gamma, \mu},
	$$
	where $(\cdot,\cdot)$ denotes the standard scalar product in $\R^n$.
	
	Moreover, for every  $T>0, \,p\in [1,+\infty)$ and $i=0,1,2$, we set
	\begin{align*}
	L^p([0,T];H^i(\O,\m_{\gamma,\mu}))=\bigg\{& u:[0,T]\times \O\rightarrow \R \mbox{ Borel measurable}: u(t,\cdot,\cdot) \in H^i(\O,\m_{\gamma,\mu})\\&\mbox{ for a.e. } t\in [0,T] \mbox{ and }	\int_0^T \Vert u(t,\cdot.\cdot)\Vert^p_{H^i(\O,\m_{\gamma,\mu})}dt<\infty\bigg\}
	\end{align*}
	and
	$$
	\Vert u\Vert^p_{L^p([0,T];H^i(\O,\m_{\gamma,\mu}))}= 	\int_0^T \Vert u(t,\cdot.\cdot)\Vert^p_{H^i(\O,\m_{\gamma,\mu})}dt.
	$$
	We also define $L^\infty([0,T];H^i)$ with the usual essential sup norm.
	
	We can now introduce the following bilinear form.
	\begin{definition}
		For any $u,v \in H^1(\mathcal{O}, \mathfrak{m}_{\gamma, \mu})$ we define the bilinear form
		\begin{align*}
		a_{\gamma, \mu}(u,v) 
		=&  \frac 1 2 \int_{\mathcal{O}} y\left(  u_xv_x(x,y)  +\rho\sigma u_xv_y(x,y) + \rho\sigma u_yv_x(x,y) +\sigma^2 u_y v_y(x,y ) \right)d\m_{\gamma, \mu} \\
		&+\into y \left(  j_{\gamma,\mu}(x)u_x(x,y) + k_{\gamma,\mu}  (x)u_y(x,y) \right) v(x,y) d\m_{\gamma, \mu},
		\end{align*}
		where
		\begin{equation}\label{JandGamma}
		j_{\gamma,\mu}(x)=\frac 1 2 \left( 1  -\gamma \mbox{sgn}(x)-\mu\rho\sigma \right), \qquad k_{\gamma,\mu}(x)=\kappa-\frac{\gamma\rho\sigma}{2} \mbox{sgn}(x) -\frac{\mu\sigma^2}{2}  . 
		\end{equation}
	\end{definition}
	We will prove that $a_{\gamma,\mu}$ is the bilinear form associated with the operator $\L$, in the sense that for every $ u\in H^2(\mathcal{O},\m_{\gamma,\mu})$ and for every $ v\in H^1(\mathcal{O},\m_{\gamma,\mu})$, we have
	\begin{equation*}
	(\mathcal{L}u,v)_H=-a_{\gamma, \mu}(u,v).
	\end{equation*}
	In order to simplify the notation,  for the rest of this chapter we will write   $	\m$ and $a(\cdot,\cdot)$ instead of $\m_{\gamma, \mu}$ and $a_{\gamma, \mu}(\cdot,\cdot)$ every time the dependence on $\gamma$ and $\mu$ does not play a role in the analysis and computations.
	\subsection{Variational formulation of the American price}
	Fix $T>0$. We consider an assumption on the payoff function $\psi$ which will be crucial in the discussion of the penalized problem.
	
	\medskip
	\noindent	
	\textbf{Assumption $\mathcal{H}^1$.}
	We say that a function $\psi$ satisfies Assumption $\mathcal{H}^1$ if	$\psi\in \mathcal{C}([0,T];H) $, $\sqrt{1+y}\psi \in L^2([0,T];V)$, $\psi(T)\in V$ and there exists  $\Psi\in L^2([0,T];V)$ such that $\left| \frac{\partial \psi}{\partial t} \right|\leq \Psi$. 
	
	\medskip
	
	We will also need a domination condition on $\psi$ by a function $\Phi$ which satisfies the following assumption. 
	
	\medskip
	\noindent	
	\textbf{Assumption $\mathcal{H}^2$.}
	We say that a function $\Phi\in L^2([0,T];H^2(\mathcal{O},\m))$ satisfies Assumption $\mathcal{H}^2$   if  
	$(1+y)^{\frac 32}\Phi\in L^2([0,T];H)$, $\frac{\partial \Phi}{\partial t}+ \mathcal{L}\Phi \leq 0$ and $\sqrt{1+y}\Phi \in L^\infty([0,T]; L^2(\mathcal{O}, 	\m_{\gamma,\mu'}))$ for some $0<\mu'<\mu$.
	
	\medskip
	The domination condition is needed to deal with the lack of coercivity of the bilinear form associated with our problem. Similar conditions are also used in \cite{DF}.
	
	The first step in the variational formulation of the problem is to introduce the associated variational inequality and 
	to prove the following existence and uniqueness result.
	\begin{theorem}\label{variationalinequality}
		Assume that $	\psi$ satisfies Assumption $\mathcal{H}^1$ together with $0\leq \psi\leq \Phi$, where $\Phi$ satisfies Assumption $\mathcal{H}^2$.
		Then, there exists a unique function $u$ such that $	u\in\mathcal{C}([0,T];H)\cap L^2([0,T];V),\,\frac{\partial u}{\partial t } \in L^2([0,T];  H)$ and 
		\begin{equation}\label{VI}
		\begin{cases}
		-\left( \frac{\partial u}{\partial t },v -u  \right)_H + a(u,v-u)\geq 0, \quad \mbox{a.e. in } [0,T] \quad v\in L^2([0,T];V), \ v\geq \psi,\\
		u\geq \psi \mbox{ a.e. in } [0,T]\times \R \times (0,\infty),\\
		u(T)=\psi(T),\\
		0\leq u \leq \Phi.
		\end{cases}
		\end{equation}
	\end{theorem}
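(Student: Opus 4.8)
The plan is to follow the classical penalization method of Bensoussan and Lions \cite{BL}, adapted to the degenerate and non-coercive setting dictated by the Heston operator. The starting point is a G\aa rding inequality for $a$. Since $|\rho|<1$, the constant principal matrix with entries $1,\rho\sigma,\rho\sigma,\sigma^2$ is positive definite, so the second-order part of $a(v,v)$ controls $\|\sqrt{y}\,\nabla v\|_H^2$ from below; the first-order terms have bounded coefficients $j_{\gamma,\mu},k_{\gamma,\mu}$ and are absorbed by Young's inequality at the price of a multiple of $\|v\|_H^2$. This yields constants $\alpha>0$, $\lambda\geq 0$ with
\begin{equation*}
a(v,v)+\lambda\|v\|_H^2\geq \alpha\,\|\sqrt{y}\,\nabla v\|_H^2,\qquad v\in V.
\end{equation*}
Crucially this does \emph{not} bound the full $\|v\|_V^2$, because the weighted term $\int_{\mathcal O} y\,v^2\,d\m$ appearing in the $V$-norm is left uncontrolled: this is the lack of coercivity that Assumption $\mathcal{H}^2$ is designed to compensate.

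For fixed $\epsilon>0$ I would introduce the penalized backward problem: find $u_\epsilon\in L^2([0,T];V)$ with $\partial_t u_\epsilon\in L^2([0,T];H)$, $u_\epsilon(T)=\psi(T)$, solving
\begin{equation*}
-\Big(\frac{\partial u_\epsilon}{\partial t},v\Big)_H + a(u_\epsilon,v)=\frac1\epsilon\big((\psi-u_\epsilon)^+,v\big)_H,\qquad \forall\,v\in V,
\end{equation*}
for a.e.\ $t$, where $(\cdot)^+$ denotes the positive part. Existence and uniqueness of $u_\epsilon$ follow from the abstract theory of linear parabolic equations in the Gelfand triple $V\hookrightarrow H\hookrightarrow V'$ (a Faedo--Galerkin approximation combined with the G\aa rding inequality and a monotonicity/fixed-point argument for the Lipschitz penalty $r\mapsto \tfrac1\epsilon(\psi-r)^+$).

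The heart of the proof is a set of a priori estimates on $u_\epsilon$ that are \textbf{uniform in} $\epsilon$. Testing the penalized equation with $u_\epsilon$ and using the G\aa rding inequality together with Gronwall's lemma gives uniform bounds for $\|u_\epsilon\|_{\mathcal{C}([0,T];H)}$ and $\|\sqrt{y}\,\nabla u_\epsilon\|_{L^2([0,T];H)}$. To control the penalty term $\tfrac1\epsilon(\psi-u_\epsilon)^+$ in $L^2([0,T];H)$ I would test with $\partial_t u_\epsilon-\partial_t\psi$, exploiting Assumption $\mathcal{H}^1$ (in particular $|\partial_t\psi|\leq\Psi\in L^2([0,T];V)$ and $\psi(T)\in V$) to bound the resulting terms; this simultaneously yields the uniform bound on $\partial_t u_\epsilon$ in $L^2([0,T];H)$. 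The upper bound $u_\epsilon\leq\Phi$ is obtained by comparison: since $\Phi\geq\psi$ and $\partial_t\Phi+\mathcal{L}\Phi\leq0$, $\Phi$ is a supersolution, and testing the relation solved by $w^+:=(u_\epsilon-\Phi)^+$ while using the monotonicity of the penalty and the terminal inequality $w(T)\le 0$ forces $w^+\equiv0$; the same argument with the subsolution $0$ (using $\psi\geq0$) gives $u_\epsilon\geq0$. The domination by $\Phi$, which controls the $y$-growth through the extra integrability in the weaker measure $\m_{\gamma,\mu'}$ together with the condition $(1+y)^{3/2}\Phi\in L^2([0,T];H)$, is exactly what supplies the missing $y\,v^2$ coercivity in these estimates, and the weighted integration by parts near the degenerate boundary $y=0$ (delicate when $\beta<1$) is the step I expect to be the main obstacle.

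Finally, the uniform bounds give, along a subsequence, weak limits $u_\epsilon\rightharpoonup u$ in $L^2([0,T];V)$ and $\partial_t u_\epsilon\rightharpoonup \partial_t u$ in $L^2([0,T];H)$, together with strong convergence in $L^2([0,T];H)$ from an Aubin--Lions-type compactness argument in these weighted spaces; this yields $u\in\mathcal{C}([0,T];H)$, the terminal condition $u(T)=\psi(T)$, and the bounds $0\le u\le\Phi$. Since $\tfrac1\epsilon(\psi-u_\epsilon)^+$ is bounded in $L^2([0,T];H)$, we get $(\psi-u)^+\equiv0$, i.e.\ $u\geq\psi$. To recover \eqref{VI}, for admissible $v\geq\psi$ I would test the penalized identity against $v-u_\epsilon$ and observe that $\tfrac1\epsilon\big((\psi-u_\epsilon)^+,v-u_\epsilon\big)_H\ge 0$ (splitting $v-u_\epsilon=(v-\psi)+(\psi-u_\epsilon)$ and using $v\geq\psi$), so that $-(\partial_t u_\epsilon,v-u_\epsilon)_H+a(u_\epsilon,v-u_\epsilon)\geq0$; passing to the limit via the weak lower semicontinuity of the principal quadratic part of $a$ and the strong--weak continuity of the remaining terms gives the variational inequality. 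Uniqueness is then immediate: given two solutions $u_1,u_2$, testing the inequality for $u_1$ with $v=u_2$ and that for $u_2$ with $v=u_1$ and adding yields $(\partial_t(u_1-u_2),u_1-u_2)_H\geq a(u_1-u_2,u_1-u_2)$; with $u_1(T)=u_2(T)$, the G\aa rding inequality and Gronwall's lemma force $u_1=u_2$.
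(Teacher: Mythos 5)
Your overall architecture (penalize, estimate uniformly in $\epsilon$, compare with $0$ and $\Phi$, pass to the limit) is the right family of ideas, but there are two places where the argument as written does not go through, and both stem from the same point: the defect of coercivity of $a$ is not a multiple of $\|v\|_H^2$ but of $\|(1+y)^{1/2}v\|_H^2$, with an \emph{unbounded} weight. First, for existence of the penalized solution you invoke the abstract parabolic theory in the Gelfand triple ``combined with the G\aa rding inequality,'' but the inequality you state, $a(v,v)+\lambda\|v\|_H^2\geq\alpha\|\sqrt{y}\,\nabla v\|_H^2$, does not make $a+\lambda(\cdot,\cdot)_H$ coercive on $V$ (the term $\int_{\mathcal O}y\,v^2\,d\m$ in $\|v\|_V^2$ is uncontrolled), and the usual change of unknown $u\mapsto e^{\lambda t}u$ cannot absorb a zeroth-order term weighted by $(1+y)$. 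The paper's actual route is to work with the genuinely coercive form $a_\lambda(u,v)=a(u,v)+\lambda\int(1+y)uv\,d\m$ (plus a truncation $y\wedge M$ of the unbounded first-order coefficients, removed afterwards via estimates uniform in $M$), solve the \emph{coercive} penalized problem and the coercive variational inequality with a source term, and then recover the non-coercive inequality by a monotone iteration: $u_0=\Phi$ and $u_n$ solves the coercive VI with source $g_n=\lambda(1+y)u_{n-1}$; the comparison principle gives $\Phi\geq u_1\geq u_2\geq\dots\geq0$, and the domination $0\leq u_n\leq\Phi$ together with Assumption $\mathcal{H}^2$ (in particular $(1+y)^{3/2}\Phi\in L^2([0,T];H)$) keeps the sources admissible and the bounds uniform in $n$. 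Your proposal has no substitute for this iteration, so the existence part has a real gap.

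Second, your uniqueness argument fails for the same reason: from $\frac12\frac{d}{dt}\|w\|_H^2\geq a(w,w)\geq -C\|(1+y)^{1/2}w\|_H^2$ you cannot apply Gronwall, because $\|(1+y)^{1/2}w\|_H^2$ is not bounded by $\|w\|_H^2$. The paper's uniqueness proof is genuinely more delicate: it splits the integral over $\{y\leq\lambda\}$ and $\{y>\lambda\}$, bounds the tail using $|w|\leq\Phi$ and the hypothesis $\sqrt{1+y}\,\Phi\in L^\infty([0,T];L^2(\mathcal{O},\m_{\gamma,\mu'}))$ for some $\mu'<\mu$ (which produces a factor $e^{-(\mu-\mu')\lambda}$), applies Gronwall at fixed $\lambda$, lets $\lambda\to\infty$ to conclude $w=0$ on a short terminal interval of length $<(\mu-\mu')/C$, and then iterates backward in time. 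Without this localization in $y$ and the bootstrap in $t$, uniqueness is not established. The rest of your outline (Galerkin for the penalized problem, testing with $\partial_tu_\epsilon$, the comparison argument for $0\leq u_\epsilon\leq\Phi$, the monotonicity of the penalty when passing to the limit in the inequality) matches the paper's strategy at the level of the coercive sub-problems.
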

	The proof is presented in Section 3 and essentially relies on the penalization technique introduced by Bensoussan and Lions  (see also \cite{F})  with some technical devices due to the degenerate nature of the problem. We extend in the parabolic framework the  results obtained in \cite{DF} for the elliptic case.  
	
	The second step is to identify the unique solution of the variational inequality \eqref{VI} as the solution of the optimal stopping problem, that is the (discounted) American option price. In order to do this, we consider the following  assumption on the payoff function.
	
	\medskip
	\noindent	
	\textbf{Assumption $\mathcal{H}^*$.}
	We say that a function $\psi:[0,T]\times \R\times [0,\infty)\rightarrow \R$ satisfies Assumption $\mathcal{H}^*$   if  $\psi$ is continuous and there exist constants $C>0$ and $L\in \left[0,\frac{2\kappa}{\sigma^2}\right)$ such that, for all $(t,x,y)\in[0,T]\times \R\times [0,\infty)$,
	\begin{equation}\label{boundonpsi}
	0\leq \psi(t,x,y)\leq C(e^{x}+e^{Ly}),
	\end{equation}
	and
	\begin{equation}\label{boundonderivatives}
	\left|\frac{\partial \psi}{\partial t}(t,x,y)\right|+\left|\frac{\partial \psi}{\partial x}(t,x,y)\right|+ \left|\frac{\partial \psi}{\partial y}(t,x,y)\right|\leq C(e^{a|x|+by}),
	\end{equation}
	for some $a,b\in \R$.
	\medskip

	Note that  the payoff functions of a standard  call  and put option with strike price $K$ (that is, respectively, $\psi=\psi(t,x)=(K-e^{x+\bar c t})_+ $ and $\psi=\psi(t,x)=(e^{x+\bar c t}-K)_+$) satisfy Assumption $\mathcal H^*$.
	Moreover, it is easy to see that, if $\psi$ satisfies Assumption $\mathcal{H}^*$, then it is possible to choose $\gamma$ and $\mu$ in the definition of the measure $\m_{\gamma,\mu}$  (see \eqref{m})  such that $\psi$ satisfies  the assumptions of Theorem \ref{variationalinequality}. Then, for such $\gamma$ and $\mu$, we get the following identification result.
	\begin{theorem}	\label{theorem2}
		Assume that $\psi$ satisfies Assumption $\mathcal{H}^*$.
		Then, the solution $u$ of the variational inequality \eqref{VI} associated with $\psi$ is continuous and coincides with  the function $u^*$ defined by 
		\begin{equation*}\label{americanprice}
		u^*(t,x,y)= \sup_{\tau \in \mathcal{T}_{t,T}}\E \left[   \psi(\tau,X_\tau^{t,x,y}, Y_\tau^{t,x,y})     \right].
		\end{equation*}

	\end{theorem}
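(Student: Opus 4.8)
The plan is to realise the candidate value function $u^*$ as a (mild) solution of the obstacle problem and then to invoke the uniqueness part of Theorem \ref{variationalinequality} to conclude $u=u^*$. The bridge between the analytic object $u$ and the probabilistic object $u^*$ is the semigroup generated by the bilinear form $a$, so I would begin there. Let $A$ be the operator associated with $a$ through $(Au,v)_H=a(u,v)$; by the relation $(\mathcal{L}u,v)_H=-a(u,v)$ this is the $H$-realisation of $-\mathcal{L}$. Since $a$ is not coercive, I would first add a constant $\lambda$ to restore a Gårding inequality, work with $a_\lambda=a+\lambda(\cdot,\cdot)_H$, which generates an analytic semigroup $(T^\lambda_t)$ on $H=L^2(\mathcal{O},\m_{\gamma,\mu})$, and remove $\lambda$ at the end. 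The crucial step is to prove the probabilistic representation
$$
T^\lambda_t f(x,y)=\E\big[e^{-\lambda t}\,f(X^{x,y}_t,Y^{x,y}_t)\big],
$$
so that up to the killing term coming from the correction the analytic semigroup coincides with the transition semigroup of $(X,Y)$. Here the affine property of $(X,Y)$ is decisive: the Laplace--Fourier transform of $(X_t,Y_t)$ is exponentially affine, hence the transition semigroup acts explicitly on the exponentials $e^{px+qy}$, a family rich enough to (i) verify that the probabilistic semigroup leaves the weighted spaces invariant, via the exponential-moment bounds the affine formulas provide, and (ii) identify it with $(T^\lambda_t)$ on a core, and therefore on all of $H$.

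Next I would collect the estimates on the joint law of $(X_t,Y_t)$ needed to make $u^*$ an admissible competitor. The growth condition \eqref{boundonpsi}, namely $0\le\psi\le C(e^{x}+e^{Ly})$ with $L<2\kappa/\sigma^2$, is tailored to these affine moment bounds: $L<2\kappa/\sigma^2$ is exactly the range in which $\sup_{t\le T}\E[e^{LY_t}]$ is finite and $e^{Ly}\in L^2(\mathcal{O},\m_{\gamma,\mu})$ for an admissible choice of $\mu$, while \eqref{boundonderivatives} controls the space--time increments. From these I would deduce that $u^*$ is finite, that $u^*(t,\cdot,\cdot)\in H$ with the right integrability in $t$, and that $\psi\le u^*\le\Phi$, placing $u^*$ in the class in which Theorem \ref{variationalinequality} grants uniqueness.

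I would then show that $u^*$ solves \eqref{VI}. By optimal stopping theory $u^*(t,X_t,Y_t)$ is the Snell envelope of the payoff, i.e.\ the smallest supermartingale dominating $\psi$, and a true martingale up to the first hitting time of the contact set $\{u^*=\psi\}$. Using the mild-solution formulation together with the semigroup representation above -- rather than a classical It\^o expansion, which is unavailable since $u$ lies only in $\mathcal{C}([0,T];H)\cap L^2([0,T];V)$ with $\partial_t u\in L^2([0,T];H)$ -- I would translate the supermartingale property into the variational inequality $-(\partial_t u^*,v-u^*)_H+a(u^*,v-u^*)\ge 0$ for $v\ge\psi$, the domination into $u^*\ge\psi$, and the martingale-on-the-continuation-region property into the complementarity encoded by \eqref{VI}; the terminal and bound conditions are immediate. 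Uniqueness in Theorem \ref{variationalinequality} then forces $u=u^*$. Continuity of $u$ follows because $u^*$, as the value of an optimal stopping problem with continuous payoff driven by the Feller-continuous affine diffusion $(X,Y)$, is continuous.

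The hardest part will be the rigorous semigroup identification on the degenerate weighted spaces. The operator degenerates on $\partial\mathcal{O}=\R\times\{0\}$ and has unbounded, linearly growing coefficients, and since the Feller condition is not assumed the process $(X,Y)$ genuinely reaches $y=0$; one must therefore control the behaviour of both $(T^\lambda_t)$ and the process near the degenerate boundary and check that no spurious boundary condition is introduced in passing from $a$ to $\mathcal{L}$. Handling this together with the lack of coercivity -- which is precisely where the domination function $\Phi$ and the correction constant $\lambda$ enter -- and then transferring the probabilistic (super)martingale structure through the mild-solution formulation without appealing to $\mathcal{C}^{1,2}$ regularity of $u$, is the technical core of the argument; the affine structure is the tool that keeps all the relevant exponential moments explicit and finite.
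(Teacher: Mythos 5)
Your proposal runs the identification in the opposite direction from the paper, and this reversal hides the main difficulty rather than resolving it. You propose to show that $u^*$ solves \eqref{VI} and then invoke uniqueness; but to be a competitor in the uniqueness class of Theorem \ref{variationalinequality} the function $u^*$ must satisfy $u^*\in\mathcal{C}([0,T];H)\cap L^2([0,T];V)$ with $\partial_t u^*\in L^2([0,T];H)$, and nothing in your argument produces this Sobolev regularity for the Snell envelope of a degenerate, non-uniformly-elliptic diffusion. Translating the supermartingale/complementarity structure of the Snell envelope into the weak formulation precisely requires the a priori regularity you do not have. The paper goes the other way: it starts from the VI solution $u$ (whose regularity is already known), uses the penalized structure to show (Proposition \ref{reg2}) that $u$ satisfies a linear equation with a source $F\in L^p([0,T];L^p(\mathcal{O},\m))$ with $F-\lambda(1+y)u\ge 0$ and $(F-\lambda(1+y)u,\psi-u)_H=0$, builds the martingale $M_t=e^{-\Lambda_t}u(t,X_t,Y_t)+\int_0^t e^{-\Lambda_s}F(s,X_s,Y_s)ds$ (Proposition \ref{martingale}), and deduces both $u\ge u^*$ (supermartingale property) and $u\le u^*$ (stopping at the exit of the continuation region), before removing the extra smoothness on $\psi$ by uniform approximation and the Comparison Principle. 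Your plan omits this regularization/approximation step entirely, and it is needed even for the payoffs of standard puts and calls.

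There is also a concrete technical error in your choice of correction. You take $a_\lambda=a+\lambda(\cdot,\cdot)_H$ and the killing factor $e^{-\lambda t}$. Because the coefficients grow linearly in $y$, the energy estimate only gives $a(u,u)\ge C_2\|u\|_V^2-C_3\|(1+y)^{1/2}u\|_H^2$, and adding $\lambda\|u\|_H^2$ cannot absorb the term $C_3\int(1+y)u^2\,d\m$: coercivity on $V$ (whose norm carries the weight $(1+y)$ in the zero-order part) is \emph{not} restored by a constant shift. The paper must use $a_\lambda(u,v)=a(u,v)+\lambda\int_{\mathcal{O}}(1+y)uv\,d\m$, whose probabilistic counterpart is the killing term $e^{-\lambda\int_0^t(1+Y_s)ds}$, not $e^{-\lambda t}$; all the subsequent affine computations (Propositions \ref{Laplace}, \ref{Laplace2}, \ref{prop-TF}) and the joint-law estimate of Theorem \ref{theotransition} are built around this $y$-dependent discount. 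Your identification of the semigroups on the exponential family is the right idea and matches the paper's Proposition \ref{identificationsemigroup}, but with the wrong correction the two semigroups you are comparing are not the ones that actually arise from the coercive form.
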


	\section{Existence and uniqueness of solutions to the variational inequality}\label{sect-existenceanduniqueness}
	
	\subsection{Integration by parts and energy estimates}
	The following result justifies the definition of the bilinear form $a$.
	\begin{proposition}\label{Prop_integrationbyparts}
		If $u\in H^2(\mathcal{O},\m)$ and $v\in H^1(\mathcal{O},\m)$, we have
		\begin{equation}\label{A=a}
		(\mathcal{L}u,v)_H=-a(u,v).
		\end{equation}
	\end{proposition}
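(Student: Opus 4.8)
The plan is to reduce \eqref{A=a} to a direct integration by parts on a dense class of smooth functions and then pass to the limit. First I would check that both sides are well defined and continuous in the relevant norms, so that such a density argument is legitimate: by the very definition of $H^2(\O,\m)$ one has $\L u\in H$ for $u\in H^2(\O,\m)$ (the summands carry the weights $y\,|D^2u|$ and $(1+y)|\nabla u|$, which are square integrable against $\m$), while $a$ is bounded on $H^1(\O,\m)\times H^1(\O,\m)$ since, for instance, $\int_\O y\,u_x v\,d\m\le\|\sqrt y\,u_x\|_H\,\|\sqrt y\,v\|_H$ and $j_{\gamma,\mu},k_{\gamma,\mu}$ are bounded. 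It then suffices to prove the identity on the smooth functions that are used to approximate $H^2(\O,\m)$ and $H^1(\O,\m)$ in \cite{DF}, and to extend it by continuity.

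Writing $\m=w\,dx\,dy$ with $w(x,y)=y^{\beta-1}e^{-\gamma|x|-\mu y}$, the core computation concerns the second-order part $\L_2u=\tfrac y2\,\nabla\cdot(A\nabla u)$, where $A=\big(\begin{smallmatrix}1&\rho\sigma\\\rho\sigma&\sigma^2\end{smallmatrix}\big)$ is the constant symmetric diffusion matrix. Integrating by parts once,
$$\int_\O \tfrac y2\,\nabla\cdot(A\nabla u)\,v\,w\,dx\,dy=-\int_\O (A\nabla u)\cdot\nabla\!\big(\tfrac y2\,v\,w\big)\,dx\,dy,$$
and I would expand $\nabla(\tfrac y2\,vw)$ using the two elementary identities $w_x=-\gamma\,\mathrm{sgn}(x)\,w$ (valid a.e., $w$ being Lipschitz in $x$) and $w_y=\big(\tfrac{\beta-1}{y}-\mu\big)w$. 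The terms in which $v$ is differentiated reassemble exactly into the symmetric principal part $-\tfrac12\int_\O y\,(A\nabla u)\cdot\nabla v\,d\m$ of $-a(u,v)$, while the remaining terms, in which $v$ is \emph{not} differentiated, are first order in $u$; adding to them the first-order part of $\L$, I would collect the coefficients of $u_xv$ and of $u_yv$.

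The crux is the cancellation forced by the choice $\beta=2\kappa\theta/\sigma^2$. In the coefficient of $u_xv$ the constant term $-\tfrac12\rho\sigma\beta$ coming from $w_y$ is exactly cancelled by the drift coefficient $\tfrac{\rho\kappa\theta}{\sigma}$ of $\L$; likewise, in the coefficient of $u_yv$ the term $-\tfrac12\sigma^2\beta$ cancels $\kappa\theta$. What survives are precisely the $y$-proportional pieces, which regroup as $-y\,j_{\gamma,\mu}(x)\,u_x$ and $-y\,k_{\gamma,\mu}(x)\,u_y$, matching the first-order part of $-a(u,v)$. This step is what pins down the particular form of $j_{\gamma,\mu},k_{\gamma,\mu}$ in \eqref{JandGamma}, and it is essentially mechanical once the two derivative identities above are in hand.

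I expect the main obstacle to be analytic rather than algebraic, namely justifying that no boundary contribution survives and that the identity genuinely passes to the limit. At $x=0$ the weight is only Lipschitz, but it is continuous and $w_x$ exists a.e., so no singular term arises; at $x=\pm\infty$ the exponential decay disposes of the boundary terms. The delicate boundary is the degenerate one $\{y=0\}$: the $y$-integration by parts produces a term carrying the factor $y\,w\sim y^{\beta}$, which vanishes as $y\to0^+$ precisely because $\beta>0$. One cannot in general avoid this boundary by working only with functions supported in the open set $\O$, so the vanishing of the $\{y=0\}$ boundary term, together with the approximation of a general $u\in H^2(\O,\m)$ in the weighted $H^2$-norm (controlling $y\,u_y\,w$ near $y=0$), is the genuine technical backbone. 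I would therefore isolate the density statement of \cite{DF} and the boundary-term estimate as the heart of the proof, the integration-by-parts identity itself following by routine computation.
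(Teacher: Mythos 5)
Your algebra is exactly the paper's: the split of the cross term $2\rho\sigma u_{xy}$ into two halves, the identities $w_x=-\gamma\,\mathrm{sgn}(x)w$ and $w_y=(\tfrac{\beta-1}{y}-\mu)w$, and the cancellation of $-\tfrac12\rho\sigma\beta$ against $\tfrac{\rho\kappa\theta}{\sigma}$ and of $-\tfrac12\sigma^2\beta$ against $\kappa\theta$ (which is precisely what the shift $\bar c$ in \eqref{traslation} is for, cf.\ Remark \ref{rem_trasl}) are all there. Where you genuinely diverge is in the analytic scaffolding. You propose: prove the identity on smooth functions, check continuity of both sides in $H^2\times H^1$, and conclude by density, with the degenerate boundary handled by showing the trace term $\sim y^{\beta}$ vanishes as $y\to0^+$. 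The paper instead never invokes a global density theorem in the weighted spaces: it proves two standalone integration-by-parts formulas (Propositions \ref{IPP1} and \ref{IPP2}) for merely locally square-integrable $u,v$ satisfying the integrability conditions $\int|u_xv|+|uv_x|+|uv|\,d\m<\infty$ etc., by mollifying on compacts of the \emph{open} set $\O$ and then removing the compact-support restriction with cutoffs $A_j(x,y)=\alpha(x/j,y/j)\chi(jy)$ whose error terms are killed by dominated convergence (using $\|\zeta\chi'(\zeta)\|_\infty<\infty$ and $\int|uv|\,d\m<\infty$). In particular your remark that one ``cannot avoid'' the boundary $\{y=0\}$ by working with functions supported in $\O$ is contradicted by the paper's actual method: the cutoff $\chi(jy)$ does exactly that, and no boundary-trace analysis is ever performed. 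What your route buys is a shorter formal computation; what it costs is the density of smooth functions in $H^1(\O,\m)$ and $H^2(\O,\m)$, which you outsource to \cite{DF} without verification and which is the only unestablished link in your chain. The paper's route is longer but self-contained and needs weaker hypotheses on $u,v$ than membership in the weighted Sobolev spaces. Either way the conclusion \eqref{A=a} follows, so I would count your proposal as correct modulo the cited density result, but be aware it is not the argument the paper runs.
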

	This result is proved with the same arguments of  \cite[Lemma 2.23]{DF} or \cite[Lemma A.3]{DF2} but  we prefer to repeat here  the proof  since it clarifies why we have considered the process $X_t=\log S_t- \bar c t$ instead of the standard log-price process $\log S_t$. 
	
	Before proving Proposition \ref{Prop_integrationbyparts}, we show some preliminary results. The first one is  about the standard regularization of a function by convolution.
	
	\begin{lemma}\label{lemmaIPP}
		Let $\varphi:\R\times\R \rightarrow \R^+ $ be a $C^\infty$ function  with compact support in $[-1,+1]\times [-1,0]$ and such that $\int\int \varphi(x,y)dxdy=1$. For $j\in\N$ we set $\varphi_j(x,y)=j^2\varphi(jx,jy  )$. Then, for every function u locally square-integrable on $\R\times(0,\infty)$ and for every compact set $K$, we have
		$$
		\lim_{j\rightarrow \infty } \int\!\!\! \int_K(\varphi_j\ast u-u)^2(x,y)dxdy=0.
		$$	\end{lemma}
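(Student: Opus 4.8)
The plan is to recognize Lemma \ref{lemmaIPP} as the standard statement that mollification converges in $L^2_{loc}$, and to prove it by the classical three-step argument: a uniform operator bound coming from Young's inequality, uniform convergence for continuous functions, and a density argument. The only point requiring care is that $u$ is merely defined and square-integrable on the \emph{open} set $\R\times(0,\infty)$, so I must first localize to guarantee that $\varphi_j\ast u$ only samples $u$ at points where it is actually defined.

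First I would localize. Since $K$ is compact and contained in $\R\times(0,\infty)$, write $K\subset[-M,M]\times[y_0,y_1]$ with $0<y_0\le y_1$. For $(x,y)\in K$ the value $\varphi_j\ast u(x,y)=\int\!\!\int\varphi_j(x',y')u(x-x',y-y')\,dx'dy'$ involves $u$ only at points $(x-x',y-y')$ with $x-x'\in[x-1/j,x+1/j]$ and, thanks to the one-sided support of $\varphi$ in the $y$-direction (support in $[-1,0]$), with $y-y'\in[y,y+1/j]$. Hence for every $j\ge1$ these points stay in the fixed compact set $\tilde K:=[-M-1,M+1]\times[y_0,y_1+1]\subset\R\times(0,\infty)$, on which $u$ is genuinely square-integrable. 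Replacing $u$ by $\bar u:=u\ind{\tilde K}\in L^2(\R^2)$ changes neither $\varphi_j\ast u$ nor $u$ on $K$, so $\int\!\!\int_K(\varphi_j\ast u-u)^2\le\int\!\!\int_{\R^2}(\varphi_j\ast\bar u-\bar u)^2$, and it suffices to prove $\|\varphi_j\ast\bar u-\bar u\|_{L^2(\R^2)}\to0$ for $\bar u\in L^2(\R^2)$.

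Next I would record the uniform bound: a change of variables gives $\int\!\!\int\varphi_j=\int\!\!\int\varphi=1$, and since $\varphi\ge0$ we get $\|\varphi_j\|_{L^1}=1$, so Young's inequality yields $\|\varphi_j\ast w\|_{L^2}\le\|\varphi_j\|_{L^1}\|w\|_{L^2}=\|w\|_{L^2}$ for all $w\in L^2(\R^2)$, uniformly in $j$. Then I would treat continuous functions: for $g\in C_c(\R^2)$, writing $\varphi_j\ast g(x,y)-g(x,y)=\int\!\!\int\varphi_j(x',y')[g(x-x',y-y')-g(x,y)]\,dx'dy'$ and using that $\varphi_j$ is supported in a ball of radius $\sqrt2/j$ and integrates to $1$, the uniform continuity of $g$ gives $\|\varphi_j\ast g-g\|_\infty\to0$, hence (all supports lying in a fixed compact set) $\|\varphi_j\ast g-g\|_{L^2}\to0$.

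Finally I would combine these by a $3\varepsilon$ argument: given $\varepsilon>0$, choose $g\in C_c(\R^2)$ with $\|\bar u-g\|_{L^2}<\varepsilon$ and estimate $\|\varphi_j\ast\bar u-\bar u\|_{L^2}\le\|\varphi_j\ast(\bar u-g)\|_{L^2}+\|\varphi_j\ast g-g\|_{L^2}+\|g-\bar u\|_{L^2}$, where the uniform bound controls the first term by $\varepsilon$, the third is $<\varepsilon$, and the middle tends to $0$; thus $\limsup_j\|\varphi_j\ast\bar u-\bar u\|_{L^2}\le2\varepsilon$, and letting $\varepsilon\to0$ concludes. The main (and essentially only) subtlety is the localization step, namely checking that the one-sided $y$-support of the mollifier together with the compactness of $K$ inside the open domain keeps the convolution within $\R\times(0,\infty)$, so that the clean $L^2(\R^2)$ theory applies verbatim.
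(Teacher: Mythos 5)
Your proof is correct and follows essentially the same three-step argument as the paper: a uniform $L^2$ bound on the mollification operator, uniform convergence for continuous functions, and density. The only cosmetic difference is that you extend $u$ by zero off a fixed compact set and invoke Young's inequality globally, whereas the paper obtains the same operator bound via Jensen's inequality on a slightly enlarged compact set; your explicit remark about the one-sided $y$-support keeping the convolution inside $\R\times(0,\infty)$ is a nice clarification of a point the paper leaves implicit.
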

	\begin{proof}
		We first observe that, by using Jensen's inequality with respect to the measure $ \varphi_j(\xi,\zeta)d\xi d\zeta$, we get
		\begin{align*}
		\int \!\!\!\int_K(\varphi_j\ast u )^2(x,y)dxdy &\leq \int\!\!\!\int_Kdxdy\int\!\!\!\int \varphi_j(\xi,\zeta)u^2(x-\xi,y-\zeta)d\xi d\zeta\\
		&=\int\!\!\!\int \varphi_j(\xi,\zeta)d\xi d\zeta\int\!\!\!\int \mathbbm{1}_K(x+\xi,y+\zeta )u^2(x,y)dxdy.
		\end{align*}
		We deduce, for $j$ large enough,
		$$
		\int\!\!\! \int_K(\varphi_j\ast u )^2(x,y)dxdy \leq 	\int\!\!\! \int_{\bar{K}} u^2(x,y)dxdy,
		$$
		where $\bar{K}=\{ (x,y) \in \O | d_\infty\big((x,y),K) \leq \frac 1 j\}$. Let $\epsilon$ be a positive constant and $v$ be a continuous function such that $\int \!\!\! \int_{\bar{K}} (u(x,y)-v(x,y))^2dxdy \leq \epsilon $. By using the well known inequality $(x_1+\dots+x_l)^2\leq l(x_1^2+\dots+x_l^2)$, we have
		\begin{align*}
		&	\int\!\!\int_K(\varphi_j\ast u-u)^2(x,y)dxdy\\&\leq 3 \int\!\!\int_K(\varphi_j\ast u-\varphi_j \ast v )^2(x,y)dxdy +3\int\!\!\int_K(\varphi_j\ast v-v)^2(x,y)dxdy \\&\qquad+ 3 \int \!\!\int_K(v-u)^2(x,y)dxdy
		\\&\leq 3\left(\int\!\!\int_{\bar{K}}(v-u)^2(x,y)dxdy
		+  \int\!\!\int_K(\varphi_j\ast v-v)^2(x,y)dxdy + \int\!\!\int_{\bar{K}}(v-u)^2(x,y)dxdy\right)
		\\& \leq 6\epsilon +3 \int\!\!\int_K (\varphi_j\ast v-v)^2(x,y)dxdy .
		\end{align*}
		Since $v$ is continuous, we have $|\varphi_j\ast v|\leq \sup_{x,y\in \bar{K}}|v(x,y)|$ and $\lim_{j\rightarrow \infty }\varphi_j\ast v(x,y)=v(x,y)$ on $K$. Therefore, by Lebesgue Theorem,  we can pass to the limit in the above inequality and we get
		$$
		\limsup_{j\rightarrow\infty}\int\!\!\!\int_K(\varphi_j\ast u-u)^2(x,y)dxdy\leq 6\epsilon,
		$$
		which completes the proof.
	\end{proof}
	Then, the following two propositions justify the integration by parts formulas with respect to the measure $\m$.
	\begin{proposition}\label{IPP1}
		Let us consider $u,v: \mathcal{O} \rightarrow \R$ locally square-integrable on $\mathcal{O}$, with derivatives $u_x$ and $v_x$ locally square-integrable on $\mathcal{O}$ as well.  Moreover, assume that
		$$
		\into \big(| u_x(x,y)v(x,y)|+ | u(x,y)v_x(x,y)|+ |u(x,y)v(x,y)| \big) d\m <\infty.
		$$ Then, we have
		\begin{equation}\label{IPPx}
		\into u_x(x,y) v(x,y)d\m=-\int_\mathcal{O} u(x,y) \left(v_x(x,y)-\gamma sgn(x)v\right)d\m.
		\end{equation}
	\end{proposition}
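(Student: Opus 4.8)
The plan is to reduce the claim to an elementary one-dimensional integration by parts in the $x$ variable, the only subtleties being the weak regularity of $u,v$ and the corner of the weight $e^{-\gamma|x|}$ at $x=0$. First I would mollify in both variables: with the kernel $\varphi_j$ of Lemma~\ref{lemmaIPP}, whose support lies in $[-1,1]\times[-1,0]$, set $U_j=\varphi_j\ast u$ and $V_j=\varphi_j\ast v$. Because the $y$-support of $\varphi_j$ is pushed upward, for $j$ large these convolutions are smooth and well defined on any compact subset of $\O$ while staying away from the degeneracy line $\{y=0\}$. Convolution commutes with $\partial_x$, so $(U_j)_x=\varphi_j\ast u_x$, and Lemma~\ref{lemmaIPP} gives $U_j\to u$, $(U_j)_x\to u_x$, $V_j\to v$, $(V_j)_x\to v_x$ in $L^2_{\mathrm{loc}}(\O)$.

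Next I would prove the identity for the smooth pair $(U_j,V_j)$, keeping $j$ fixed. For a.e. $y>0$ the weight $w(x)=e^{-\gamma|x|}$ is smooth on each half-line with $w'(x)=-\gamma\,\mathrm{sgn}(x)\,w(x)$, so integrating by parts on $[-R,R]$,
\begin{equation*}
\int_{-R}^{R}(U_j)_x\,V_j\,w\,dx=-\int_{-R}^{R}U_j\big((V_j)_x-\gamma\,\mathrm{sgn}(x)V_j\big)w\,dx+\big[U_jV_jw\big]_{-R}^{R}.
\end{equation*}
The contributions at $x=0^{\pm}$, coming from splitting the integral at the corner of $|x|$, cancel because $w,U_j,V_j$ are continuous at $0$ and $w$ carries no Dirac mass there. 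Transporting the standing integrability assumption to $U_j,V_j$ (which costs only a $j$-dependent constant, since the weight varies by a bounded factor over the $O(1/j)$ support of the mollifier) shows that, for a.e. $y$, the map $x\mapsto U_jV_jw$ is integrable with integrable derivative, hence tends to $0$ as $x\to\pm\infty$; letting $R\to\infty$ therefore kills the boundary term. Multiplying by $y^{\beta-1}e^{-\mu y}$, integrating in $y$ and recalling the definition \eqref{m} of $\m$ yields \eqref{IPPx} for $(U_j,V_j)$.

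Finally I would let $j\to\infty$. On each compact subset of $\O$ the measure $\m$ is comparable to Lebesgue measure, so the products $(U_j)_xV_j$ and $U_j\big((V_j)_x-\gamma\,\mathrm{sgn}(x)V_j\big)$ converge in $L^1$ there, being products of $L^2_{\mathrm{loc}}$-convergent factors with uniformly bounded local norms. The global bound
\begin{equation*}
\into\big(|u_xv|+|uv_x|+|uv|\big)\,d\m<\infty
\end{equation*}
then provides the domination needed to interchange limit and integral over all of $\O$, and \eqref{IPPx} follows for $u$ and $v$.

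The genuinely delicate point is the boundary behaviour at $x=\pm\infty$, since the hypotheses give integrability but not pointwise decay. This is precisely why I would send $R\to\infty$ while the functions are still smooth --- using the integrable-derivative argument above --- rather than first passing $j\to\infty$: the latter would force me to evaluate the non-smooth $u,v$ pointwise at $\pm R$, for which no trace is available. The corner at $x=0$ is harmless because $e^{-\gamma|x|}$ is continuous there, so the splitting produces no extra term.
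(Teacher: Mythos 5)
Your overall strategy (mollify, integrate by parts in $x$ against the weight, pass to the limit) is the natural one, and your handling of the corner of $|x|$ at $0$ is fine. The genuine gap is the step where you ``transport the standing integrability assumption to $U_j,V_j$'' and then reuse the same global bound as a dominating function. The hypothesis controls only the \emph{products} $|u_xv|$, $|uv_x|$, $|uv|$ in $L^1(\m)$, not the factors separately. A product of mollifications such as $(U_j)_xV_j$ is a four-fold average of $u_x(x-\xi,y-\zeta)\,v(x-\xi',y-\zeta')$ over $(\xi,\zeta)$ and $(\xi',\zeta')$, and its off-diagonal contributions ($(\xi,\zeta)\neq(\xi',\zeta')$) are not controlled by the integrability of the diagonal product $u_xv$: one can have $\into |uv|\,d\m<\infty$ while $\into |U_jV_j|\,d\m=\infty$ (take $u$ large where $v$ vanishes and vice versa on alternating thin strips, so that $uv\equiv 0$ but the mollified supports overlap). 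Consequently neither the vanishing of the boundary term as $R\to\infty$ (which needs $x\mapsto U_jV_jw$ to lie in $W^{1,1}(\R)$ for a.e.\ $y$) nor the final interchange of limit and integral over all of $\O$ is justified: local $L^2$ convergence gives $L^1_{\mathrm{loc}}$ convergence of the products, but no integrable majorant and no uniform integrability at infinity.

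The paper's proof avoids this by reversing the order of the reductions. It first proves the identity when $v$ has compact support in $\O$: then every integral lives on a fixed compact set where $u,v\in L^2$, the mollified $v_j$ still has compact support in $\O$ for large $j$, no boundary terms arise at all, and the corner of $|x|$ is handled by the smooth weights $e^{-\gamma\sqrt{x^2+\epsilon}}$ with $\epsilon\downarrow 0$. Only afterwards is the support restriction removed, by multiplying $v$ by cutoffs $A_j(x,y)=\alpha(x/j,y/j)\chi(jy)$; the crucial point is that the resulting error term is $\into u\,v\,(A_j)_x\,d\m$, which involves the \emph{original} functions and is bounded by $\frac Cj\into\ind{\{|x|\geq j\}}|uv|\,d\m\to 0$ --- exactly the quantity your hypothesis controls. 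If you restructure your argument so that the truncation acts on $u,v$ themselves and the mollification is only ever applied to compactly supported data, your proof goes through.
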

	\begin{proof}
		First we assume that $v$ has compact support in $\R \times (0,\infty)$. For any $j\in \N$ we consider the $C^\infty$ functions $u_j=\varphi_j\ast u$ and $v_j=\varphi_j\ast v$, with $\varphi_j$ as in Lemma \ref{lemmaIPP}. Note that $\mbox{supp }  v_j \subset \mbox{supp}  \ v + \mbox{supp }  \varphi_j$ and so, for $j$ large enough, $\mbox{supp }  v_j \subset \R \times (0,\infty)$. For any $\epsilon >0$, integrating  by parts, we have
		$$
		\int_{-\infty }^\infty \!\!(u_j)_x(x,y)v_j (x,y)e^{-\gamma \sqrt{x^2+\epsilon}}dx=-\int_{-\infty}^{\infty } \!\!u_j\left( (v_j)_x(x,y)-\gamma \frac{ x}{\sqrt{x^2+\epsilon}}v_j(x,y)   \right) e^{-\gamma \sqrt{x^2+\epsilon}}dx,
		$$
		and, letting $\epsilon \rightarrow 0 $,
		$$
		\int_{-\infty }^\infty (u_j)_x(x,y)v_j (x,y)e^{-\gamma |x|}dx=-\int_{-\infty}^{\infty } u_j \big( (v_j)_x(x,y)-\gamma sgn(x) v_j(x,y) \big) e^{-\gamma |x|}dx.
		$$
		Multiplying by $y^{\beta-1}e^{-\mu y}$ and integrating in $y$ we obtain 
		$$
		\into (u_j)_x(x,y)v_j (x,y)d\m=- \into u_j(x,y) \big( (v_j)_x(x,y)-\gamma sgn(x) v_j (x,y)\big) d\m.
		$$
		Recall that, for $j$ large enough, $v_j$ has compact support in $\R \times (0,\infty)$ and $\m$ is bounded on this compact. By using Lemma \ref{lemmaIPP}, letting $j\rightarrow \infty$ we get
		$$
		\into u_x(x,y)v (x,y)d\m=- \into u\big( v_x(x,y)-\gamma sgn(x) v(x,y\big) d\m.
		$$
		Now let us consider the general case of a function $v$ without compact support. We introduce a $C^\infty-$function $\alpha$ with values in $[0,1]$,  $\alpha(x,y)=0$ for all $(x,y) \notin [-2,+2]\times [-2,+2]$, $\alpha(x,y)=1$ for all $(x,y) \in [-1,+1]\times [-1,+1]$ and a $C^\infty-$function $\chi $ with values in $[0,1]$,  $\chi(y)=0$ for all $	 y\in [0,\frac 1 2 ] $, $\chi(y)=1$ for all $ y \in [+1,\infty).$ We set
		\begin{equation*}\label{Aj}
		A_j(x,y)= \alpha\left(\frac x j,\frac y j\right)\chi(jy),\qquad j\in \N.
		\end{equation*}
		For every $j\in \N$, $A_j$ has compact support in $ \O $ and we have
		$$
		\into u_x(x,y)A_j(x,y)v(x,y ) d\m $$ $$=- \into u(x,y) 	\big( v_x(x,y)-\gamma sgn(x)v(x,y)         \big) A_j(x,y)d\m-\into u(x,y)v(x,y) (A_j)_x(x,y)d\m.
		$$
		The function $A_j$ is bounded by $\Vert \alpha\Vert_\infty \Vert \chi\Vert_\infty$ and $\lim_{j\rightarrow +\infty } A_j(x,y)=1$ for every $(x,y) \in \O$. Moreover $(A_j)_x(x,y)=\frac 1 j \alpha_x \left(\frac x j,\frac y j\right)\chi(jy)$, so that
		$$
		\left| \into u(x,y)v(x,y) (A_j)_x(x,y)d\m\right| \leq \frac C j \into \ind{\{|x|\geq j\}} |u(x,y)v(x,y) | d\m,
		$$
		where $C=\Vert \alpha_x\Vert_\infty \Vert \chi\Vert_\infty$. Therefore, we obtain \eqref{IPPx} letting $j\rightarrow \infty$.
	\end{proof}
	\begin{proposition}\label{IPP2}
		Let us consider $u,v: \mathcal{O} \rightarrow \R$ locally square-integrable on $\mathcal{O}$, with derivatives $u_y$ and $v_y$ locally square-integrable on $\mathcal{O}$ as well.  Moreover, assume that
		$$
		\into y\big(| u_y(x,y)v(x,y)|+ | u(x,y)v_xy(x,y)|\big)+ |u(x,y)v(x,y)|  d\m <\infty.
		$$ Then, we have
		\begin{equation}\label{IPPy}
		\int_\mathcal{O} y u_y(x,y) v(x,y)d\m=-\int_\mathcal{O}y u(x,y) v_y(x,y)d\m- \int_{\mathcal{O}} (\beta-\mu y)u(x,y) v(x,y) d\m.
		\end{equation}
	\end{proposition}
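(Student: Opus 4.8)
The plan is to follow exactly the two-stage scheme used in the proof of Proposition \ref{IPP1}, adapting the one-dimensional integration by parts to the $y$-variable, where the weight now carries the factor $y^{\beta-1}$ coming from the CIR speed density. The only genuinely new phenomenon is the behaviour at the degenerate boundary $y=0$, and controlling it will be the main point.

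First I would reduce to smooth functions with compact support in $\O=\R\times(0,\infty)$. Fix such a $v$, set $u_j=\varphi_j\ast u$, $v_j=\varphi_j\ast v$ with $\varphi_j$ as in Lemma \ref{lemmaIPP}, so that for $j$ large $v_j$ still has compact support inside $\O$. For fixed $x$ the $y$-integration is performed against the weight $y^{\beta-1}e^{-\mu y}$; absorbing the extra factor $y$ from $y u_y v$, the relevant weight is $w(y):=y^\beta e^{-\mu y}$, whose derivative is $w'(y)=(\beta-\mu y)y^{\beta-1}e^{-\mu y}$. An ordinary integration by parts in $y$ on $(0,\infty)$ then gives
\begin{equation*}
\int_0^\infty (u_j)_y v_j\, y^\beta e^{-\mu y}dy=-\int_0^\infty u_j (v_j)_y\, y^\beta e^{-\mu y}dy-\int_0^\infty (\beta-\mu y)u_j v_j\, y^{\beta-1}e^{-\mu y}dy,
\end{equation*}
with no boundary contribution since $v_j$ is supported away from both $0$ and $\infty$. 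Multiplying by $e^{-\gamma|x|}$ and integrating in $x$ yields the identity for $u_j,v_j$; passing to the limit $j\to\infty$ via Lemma \ref{lemmaIPP} (the measure $\m$ being bounded and comparable to Lebesgue measure on the relevant compact) produces \eqref{IPPy} for every $v$ with compact support in $\O$.

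Next I would remove the compact support assumption by inserting the same cutoff $A_j(x,y)=\alpha(x/j,y/j)\chi(jy)$ used in Proposition \ref{IPP1}, with $\alpha$ truncating at infinity and $\chi(jy)$ pushing the support away from $y=0$. Applying the compactly supported identity to $vA_j$ produces, besides the three desired integrals, the extra term $-\into y\,uv\,(A_j)_y\,d\m$. The factor splits as
\begin{equation*}
(A_j)_y(x,y)=\tfrac1j\,\alpha_y\!\left(\tfrac xj,\tfrac yj\right)\chi(jy)+\alpha\!\left(\tfrac xj,\tfrac yj\right)j\,\chi'(jy).
\end{equation*}
The first summand is bounded by $C/j$ and is handled exactly as the analogous term in Proposition \ref{IPP1}, using $\into y|uv|\,d\m<\infty$.

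The hard part will be the second summand, which is the only place where the degeneracy at $y=0$ enters. It is supported on $\{1/(2j)\le y\le 1/j\}$, where $\chi'(jy)$ is nonzero, and it carries an apparently dangerous factor $j$. The key observation is that on this strip one has $jy\le 1$, so the factor $y$ already present in $y\,uv\,(A_j)_y$ absorbs it, whence
\begin{equation*}
\left|\into \alpha\!\left(\tfrac xj,\tfrac yj\right) j\,\chi'(jy)\,y\,uv\,d\m\right|\le C\int_{\{1/(2j)\le y\le 1/j\}}|uv|\,d\m.
\end{equation*}
Since $\into|uv|\,d\m<\infty$ by hypothesis and the strips $\{1/(2j)\le y\le 1/j\}$ shrink to the $\m$-negligible boundary $\{y=0\}$, the right-hand side tends to $0$ as $j\to\infty$ by absolute continuity of the integral. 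Letting $j\to\infty$ in the identity for $vA_j$ then removes the cutoff and gives \eqref{IPPy} in full generality. This last step is exactly where the extra factor $y$ in the statement, together with $\beta>0$, is used: without it the boundary term at the degenerate edge would not be controllable.
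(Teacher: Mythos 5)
Your proposal is correct and follows essentially the same route as the paper: the compactly supported case is handled by mollification and a one-dimensional integration by parts against the weight $y^{\beta}e^{-\mu y}$, and the cutoff $A_j(x,y)=\alpha(x/j,y/j)\chi(jy)$ is then removed by observing that the dangerous term $j\chi'(jy)$ is supported where $jy\leq 1$, so the explicit factor $y$ absorbs the $j$ and the remaining integral over $\{y\leq 1/j\}$ of $|uv|\,d\m$ vanishes by dominated convergence. The paper phrases the same bound as $\sup_{\zeta>0}|\zeta\chi'(\zeta)|$ (choosing $\chi$ with $\|y\chi'(y)\|_\infty<\infty$), which is exactly your observation in different notation.
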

	\begin{proof}
		If $v$ has compact support in $\O$, we obtain \eqref{IPPy} as in the proof of Proposition \ref{IPP1}. On the other hand, if $v$  does not have compact support,
		\begin{align*}
		\into y u_y(x,y)&v(x,y)A_j(x,y)d\m =- \into yu(x,y)v_y(x,y) A_j(x,y)d\m \\&- \into (\beta-\mu y)u(x,y)v(x,y)A_j(x,y)d\m -\into y u(x,y) v(x,y)(A_j)_y(x,y) d\m,
		\end{align*}
		where $A_j(x,y)=\alpha(\frac x j,\frac y j)\chi(jy)$, as in the proof of Proposition \ref{IPP1} but choosing $\chi$ such that, moreover, $\|y\chi'(y)\|_\infty<\infty$. We have $ (A_j)_y(x,y) =\frac 1 j  \alpha_y (\frac x j,\frac y j)\chi(jy) +j\alpha (\frac x j,\frac y j)\chi'(jy)$.
		Note that 
		$$
		\left| \into yu(x,y) v(x,y) j\alpha \left(\frac x j,\frac y j\right)\chi'(jy) d\m \right| \leq \into \ind{ \left \{y\leq \frac 1 j  \right \}}|u(x,y)v(x,y)| \Vert \alpha\Vert_\infty \sup_{\zeta>0}|\zeta \chi' (\zeta)| d\m.
		$$
		The last expression goes to 0 as $j \rightarrow \infty$ since $\into |u(x,y) v(x,y) | d\m <\infty$. The assertion follows by passing to the limit $j\rightarrow \infty$.
	\end{proof}
	We can now prove Proposition  \ref{Prop_integrationbyparts}.	
	\begin{proof}[Proof of Proposition \ref{Prop_integrationbyparts}]
		By using Lemma \ref{IPP1}  we have
		\[
		\into y \frac{\partial^2 u}{\partial x^2}vd\m=-\into  y\frac{\partial u}{\partial x}\left(\frac{\partial v}{\partial x}
		-\gamma sgn(x) v\right)d\m,
		\]
		\[
		\into  y \frac{\partial^2 u}{\partial y^2}vd\m=-\into y \frac{\partial u}{\partial y}\frac{\partial v}{\partial y}d\m
		+\into  (\mu y -\beta)\frac{\partial u}{\partial y} vd\m,
		\]
		\[
		\into y \frac{\partial^2 u}{\partial x\partial y}vd\m=-\into y\frac{\partial u}{\partial y}\left(\frac{\partial v}{\partial x}
		-\gamma sgn(x) v\right)d\m
		\]
		and
		\[
		\into  y \frac{\partial^2 u}{\partial x\partial y}vd\m=-\into y \frac{\partial u}{\partial x}\frac{\partial v}{\partial y}d\m
		+\into  (\mu y -\beta)\frac{\partial u}{\partial x} vd\m.
		\]
		Recalling  that 
		\[
		\mathcal{L} =\frac{y}{2}\left(\frac{\partial^2 }{\partial x^2}+2\rho\sigma\frac{\partial^2 }{\partial x\partial y}+\sigma^2\frac{\partial^2 }{\partial y^2}\right)
		+\left(\frac{\rho\kappa\theta}{\sigma}-\frac{y}{2}\right)\frac{\partial}{\partial x}
		+\kappa(\theta-y) \frac{\partial}{\partial y}
		\]
		and using the equality $\beta=2\kappa\theta/\sigma^2$, we get
		\begin{align*}
		&	(\mathcal{L} u,v)_H=-\into \frac{y}{2}\left(\frac{\partial u}{\partial x}\frac{\partial v}{\partial x}+
		\sigma^2 \frac{\partial u}{\partial y}\frac{\partial v}{\partial y}+
		\rho\sigma \frac{\partial u}{\partial x}\frac{\partial v}{\partial y}+
		\rho\sigma \frac{\partial u}{\partial y}\frac{\partial v}{\partial x}\right) d\m \\&+\into \frac{1}{2}\frac{\partial u}{\partial x}\left(y\gamma sgn(x) +{\rho\sigma}(\mu y -\beta)\right)vd\m 
		+\into\frac{1}{2}\frac{\partial u}{\partial y}\left(\mu \sigma^2y -\beta\sigma^2+ {\rho\sigma}y\gamma sgn(x)\right) vd\m\\&+\into \left[\left(\frac{\rho\kappa\theta}{\sigma}-\frac{y}{2}\right)\frac{\partial u}{\partial x}
		+\kappa(\theta-y) \frac{\partial u}{\partial y}\right]
		vd\m=-a(u,v).
		\end{align*}
	\end{proof}
	
	\begin{remark}\label{rem_trasl}
		By a closer look at the proof of Proposition \ref{Prop_integrationbyparts}	it is clear that the choice of $\bar c$  in \eqref{traslation} allows  to avoid terms of the type $\int  (u_x+u_y) v d\m$ in the associated bilinear form $a$. This trick will be crucial in order to obtain suitable energy estimates.
	\end{remark}
	Recall the well-known inequality 
	\begin{equation}\label{inequality}
	bc=(\sqrt{\zeta}b) \left(\frac c {\sqrt{\zeta}}\right)  \leq \frac \zeta 2  b^2+\frac 1 {2\zeta} c^2, \qquad \ b,c \in \R, \ \zeta >0.
	\end{equation}
	Hereafter we will often apply \eqref{inequality} in the proofs even if it is not explicitly recalled each time.
	
	We have the following energy estimates.
	\begin{proposition}\label{energyestimates}
		For every $u,v\in V$, the bilinear form $a(\cdot,\cdot)$ satisfies 
		\begin{equation}	\label{lb}
		|a(u,v)|\leq C_1 \Vert u \Vert_V\Vert v \Vert_V,
		\end{equation}
		\begin{equation}\label{ub}
		a(u,u) \geq  C_2 \Vert u \Vert^2_V -C_3\Vert (1 + y)^{\frac 1 2 }u\Vert^2_H,
		\end{equation}
		where $$C_1=\delta_0+K_1,\quad C_2=\frac{\delta_1} 2, \quad C_3= \frac{\delta_1}{2}+\frac {K_1^2} {2\delta_1}, $$
		with
		\begin{equation}\label{delta0}
		\delta_0=\sup_{s_1^2+t_1^2>0, \;s_2^2+t_2^2>0}\frac{|s_1s_2+\rho\sigma s_1t_2+\rho\sigma s_2 t_1+\sigma^2 t_1t_2|}{2\sqrt{(s_1^2+t_1^2)(s_2^2+t_2^2)}},
		\end{equation}
		\begin{equation}\label{delta1}
		\delta_1=\inf_{s^2+t^2>0}\frac{s^2+2\rho\sigma st+\sigma^2 t^2}{2(s^2+t^2)},
		\end{equation}
		and
		\begin{equation}\label{K1}
		K_1=\sup_{x\in \R}\sqrt{j^2_{\gamma,\mu}(x)+k^2_{\gamma,\mu}(x)}.
		\end{equation}
	\end{proposition}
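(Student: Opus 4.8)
The plan is to split $a(u,v)$ into its principal (second-order) part $a_2(u,v)=\frac12\into y\big(u_xv_x+\rho\sigma u_xv_y+\rho\sigma u_yv_x+\sigma^2u_yv_y\big)\,d\m$ and its first-order part $a_1(u,v)=\into y\big(j_{\gamma,\mu}(x)u_x+k_{\gamma,\mu}(x)u_y\big)v\,d\m$, and to estimate the two separately. Everything rests on three pointwise facts. First, the integrand of $a_2$ is the symmetric bilinear form whose normalized modulus is $\le\delta_0$ by \eqref{delta0}, so $|a_2(u,v)|\le\delta_0\into y|\nabla u|\,|\nabla v|\,d\m$. Second, its quadratic (diagonal) value satisfies $s^2+2\rho\sigma st+\sigma^2t^2\ge 2\delta_1(s^2+t^2)$ by \eqref{delta1}, whence $a_2(u,u)\ge\delta_1\into y|\nabla u|^2\,d\m$. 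Third, by Cauchy--Schwarz in $\R^2$ together with \eqref{K1} one has $|j_{\gamma,\mu}(x)u_x+k_{\gamma,\mu}(x)u_y|\le K_1|\nabla u|$. I would stress at the outset that $\delta_1>0$: this is precisely where the exclusion $\rho\neq\pm1$ enters, since the coefficient matrix $\left(\begin{smallmatrix}1& \rho\sigma\\ \rho\sigma&\sigma^2\end{smallmatrix}\right)$ has determinant $\sigma^2(1-\rho^2)>0$. The degeneracy in $y$ causes no trouble, because the weight $y|\nabla u|^2$ is exactly the gradient part of the $V$-norm.

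For the continuity bound \eqref{lb} I would apply Cauchy--Schwarz in $L^2(\O,\m)$ to the two displayed estimates, using the trivial inclusions $\into y|\nabla u|^2\,d\m\le\|u\|_V^2$ and $\into yv^2\,d\m\le\into(1+y)v^2\,d\m\le\|v\|_V^2$. This gives $|a_2(u,v)|\le\delta_0\|u\|_V\|v\|_V$, and, writing $a_1$ as $K_1\into\sqrt{y}|\nabla u|\cdot\sqrt{y}|v|\,d\m$ and applying Cauchy--Schwarz once more, $|a_1(u,v)|\le K_1\|u\|_V\|v\|_V$. Summing yields $C_1=\delta_0+K_1$.

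For the coercivity (G\aa rding-type) inequality \eqref{ub} the lower bound on $a_2(u,u)$ is immediate from the $\delta_1$-estimate; the only delicate step is controlling the first-order term $a_1(u,u)$, which has no definite sign. I would bound $|a_1(u,u)|\le K_1\into\sqrt{y}|\nabla u|\cdot\sqrt{y}|u|\,d\m$ and apply the Young inequality \eqref{inequality} with the calibrated parameter $\zeta=\delta_1/K_1$, so that $|a_1(u,u)|\le\frac{\delta_1}{2}\into y|\nabla u|^2\,d\m+\frac{K_1^2}{2\delta_1}\into yu^2\,d\m$. The first term then absorbs exactly half of the coercive contribution of $a_2$, leaving $a(u,u)\ge\frac{\delta_1}{2}\into y|\nabla u|^2\,d\m-\frac{K_1^2}{2\delta_1}\into yu^2\,d\m$.

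The last step is purely algebraic: from the definition of the $V$-norm one has $\into y|\nabla u|^2\,d\m=\|u\|_V^2-\|(1+y)^{1/2}u\|_H^2$, and trivially $\into yu^2\,d\m\le\|(1+y)^{1/2}u\|_H^2$. Substituting these into the previous line produces $a(u,u)\ge\frac{\delta_1}{2}\|u\|_V^2-\big(\frac{\delta_1}{2}+\frac{K_1^2}{2\delta_1}\big)\|(1+y)^{1/2}u\|_H^2$, i.e. the stated constants $C_2=\delta_1/2$ and $C_3=\delta_1/2+K_1^2/(2\delta_1)$. The whole argument is mechanical once the three pointwise bounds are in place; the only genuine choice is the value of $\zeta$, dictated by the requirement of matching the prescribed constants, and the one conceptual point to keep in mind is the strict positivity of $\delta_1$, which fails exactly in the excluded case $\rho=\pm1$.
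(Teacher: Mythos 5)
Your proposal is correct and follows essentially the same route as the paper: the same splitting into the principal part (bounded via $\delta_0$, $\delta_1$) and the first-order part (bounded via $K_1$), the same Young inequality with the calibrated parameter $\zeta=\delta_1/K_1$, and the same final rearrangement using $\into y|\nabla u|^2\,d\m=\Vert u\Vert_V^2-\Vert(1+y)^{1/2}u\Vert_H^2$. The only difference is cosmetic: you spell out the continuity bound \eqref{lb}, which the paper dismisses as "similar", and you make explicit the remark that $\delta_1>0$ precisely because $|\rho|<1$, which the paper states separately without proof.
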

	It is easy to see  that the constants $\delta_0,\delta_1$ and $K_1$ defined in \eqref{delta0} and \eqref{K1} are positive and finite (recall that
	the functions 	 $j_{\gamma,\mu}=j_{\gamma,\mu}(x)$  and $k_{\gamma,\mu}=\kappa_{\gamma,\mu}(x)$ defined in \eqref{JandGamma} are bounded). 
	
	These energy estimates  were already proved in  \cite[Lemma 2.40]{DF} with a very similar statement. Here we repeat the proof for the sake of completeness, since we will refer to it later on.
	\begin{proof}[Proof of Proposition \ref{energyestimates}]
		In order to prove \eqref{ub}, we note that
		\begin{align*}
		&	 \frac 1 2 \int_{\mathcal{O}} y\left(  u_xv_x+\rho\sigma u_xv_y + \rho\sigma u_yv_x +\sigma^2 u_y v_y \right)d\m  \geq \delta_1 \into y|\nabla u|^2 d\m.
		\end{align*}
		Therefore
		\begin{align*}
		a(u,u)&\geq\delta_1 \into y|\nabla u|^2 d\m
		-		  K_1  \into y|\nabla u| |u| d\m  \\&\geq\delta_1 \into y|\nabla u|^2 d\m
		- \frac{K_1\zeta} 2 \int_{\mathcal{O}}y|\nabla u|^2   
		d\m  -  \frac {K_1} {2\zeta} \int_{\mathcal{O}} (1+y) u^2d\m  \\
		&= \left(\delta_1-\frac{K_1\zeta} 2 		\right)\int_{\mathcal{O}} \left(y |\nabla   u |^2 + (1+y)u^2\right) d\m -\left( \delta_1-\frac{K_1\zeta} 2+\frac {K_1} {2\zeta}  \right)\into (1+y)u^2 d\m.
		\end{align*}	
		The assertion then follows by choosing $\zeta=\delta_1/K_1$.  \eqref{lb} can be proved in a similar way.
	\end{proof}
	\subsection{Proof of Theorem \ref{variationalinequality} }
	Among the standard assumptions required in \cite{BL}  for the penalization procedure, there are the  coercivity and  the boundedness of the coefficients.  In the Heston-type models these assumptions are no longer satisfied and this leads to some technical difficulties. 
	In order to overcome them, we introduce some auxiliary operators.

	From now on, we set 
	$$
	a(u,v)=\bar{a}(u,v)+	\tilde{a}(u,v),
	$$
	where
	\begin{eqnarray*}
		\bar{a}(u,v)&=&\into\frac{y}{2}\left(\frac{\partial u}{\partial x}\frac{\partial v}{\partial x}+
		\rho\sigma \frac{\partial u}{\partial x}\frac{\partial v}{\partial y}+
		\rho\sigma \frac{\partial u}{\partial y}\frac{\partial v}{\partial x}+
		\sigma^2 \frac{\partial u}{\partial y}\frac{\partial v}{\partial y}\right) d\m,\\
		\tilde{a}(u,v)&=&\into y\frac{\partial u}{\partial x}j_{\gamma,\mu}vd\m
		+  \into y\frac{\partial u}{\partial y}k_{\gamma,\mu} vd\m.
	\end{eqnarray*}
	Note that $\bar a$ is symmetric.
	As in the proof of Proposition \eqref{energyestimates} we have, for every $u, v\in V$,
	\begin{eqnarray*}
		|\bar{a}(u,v)|&\leq& \delta_0 \into y|\nabla u||\nabla v| d\m ,
	\end{eqnarray*}
	\begin{eqnarray*}
		\bar{a}(u,u)&\geq& \delta_1 \into y|\nabla u|^2 d\m,
	\end{eqnarray*}
	and
	\[
	|\tilde{a}(u,v)|\leq K_1  \into y|\nabla u||v| d\m,
	\]
	with $\delta_0,\,\delta_1$ and $K_1$ defined in Proposition \ref{energyestimates}.
	Moreover,  for $\lambda\geq 0$ and  $M>0$ we consider the bilinear forms
	\begin{eqnarray*}
		a_\lambda(u,v)&=&{a}(u,v)+\lambda \into (1+y)uv d\m,\\
		\bar{a}_\lambda(u,v)&=&\bar{a}(u,v)+\lambda \into (1+y)uv d\m,\\
		\tilde{a}^{(M)}(u,v)&=&\into (y\wedge M)\left(\frac{\partial u}{\partial x}j_{\gamma,\mu}
		+ \frac{\partial u}{\partial y}k_{\gamma,\mu}\right) vd\m
	\end{eqnarray*}
	and
	\begin{eqnarray*}
		a^{(M)}_\lambda(u,v)&=& \bar{a}_\lambda(u,v)+\tilde{a}^{(M)}(u,v).
	\end{eqnarray*}
	The operator $a_\lambda$  was introduced in \cite{DF} to deal with the lack of coercivity of the bilinear form $a$, while the introduction of the truncated operator $a^{(M)}_\lambda$ with $M>0$ will be useful in order to overcome the technical difficulty related to the unboundedness of the coefficients.
	\begin{lemma}\label{coercivity}
		Let $\delta_0,\, \delta_1, \, K_1$ be defined as in \eqref{delta0}, \eqref{delta1} and \eqref{K1} respectively.
		For any fixed $\lambda\geq \frac{\delta_1}{2}+\frac{K_1^2}{2\delta_1}$ the bilinear forms $a_{\lambda}$ and $a_{\lambda}^{(M)}$ are continuous and coercive. More precisely, we have
		\begin{equation}
		|a_{\lambda}(u,v)|\leq C\Vert u \Vert_V \Vert v \Vert_V, \qquad  u,v \in V,
		\end{equation}
		\begin{equation}
		a_{\lambda}(u,u) \geq \frac{\delta_1} 2\Vert u \Vert_V^2, \qquad  u\in V,
		\end{equation}
		and
		\begin{equation}
		|a_{\lambda}^{(M)}(u,v)|\leq C\Vert u \Vert_V \Vert v \Vert_V, \qquad u,v \in V,
		\end{equation}
		\begin{equation}
		a_{\lambda}^{(M)}(u,u) \geq \frac{\delta_1} 2 \Vert u \Vert_V^2, \qquad  u\in V.
		\end{equation}
		where $C=\delta_0+K_1+\lambda$.
	\end{lemma}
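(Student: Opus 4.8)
The plan is to reduce everything to the estimates already recorded for the splitting $a=\bar a+\tilde a$ and to the energy estimate \eqref{ub} of Proposition \ref{energyestimates}, the point being that the two modifications made in passing from $a$ to $a_\lambda$ and to $a_\lambda^{(M)}$ — adding the nonnegative zero-order term $\lambda\into(1+y)uv\,d\m$ and truncating the first-order weight $y$ to $y\wedge M$ — each interact harmlessly with those estimates. For continuity of $a_\lambda$ I would bound term by term using Cauchy--Schwarz in $L^2(\O,\m)$: one gets $|\bar a(u,v)|\le\delta_0\into y|\nabla u||\nabla v|\,d\m\le\delta_0\Vert u\Vert_V\Vert v\Vert_V$ and, since $\into y\,v^2\,d\m\le\Vert v\Vert_V^2$, also $|\tilde a(u,v)|\le K_1\into y|\nabla u||v|\,d\m\le K_1\Vert u\Vert_V\Vert v\Vert_V$, while the new term is controlled by $\lambda\into(1+y)|uv|\,d\m\le\lambda\Vert u\Vert_V\Vert v\Vert_V$. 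Summing yields the continuity constant $C=\delta_0+K_1+\lambda$.

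For coercivity of $a_\lambda$ I would simply feed \eqref{ub} the new positive term. Writing $\Vert(1+y)^{\frac12}u\Vert_H^2=\into(1+y)u^2\,d\m$, estimate \eqref{ub} gives $a_\lambda(u,u)\ge\frac{\delta_1}{2}\Vert u\Vert_V^2+(\lambda-C_3)\into(1+y)u^2\,d\m$ with $C_3=\frac{\delta_1}{2}+\frac{K_1^2}{2\delta_1}$. The hypothesis $\lambda\ge\frac{\delta_1}{2}+\frac{K_1^2}{2\delta_1}$ is exactly the requirement $\lambda\ge C_3$, so the last term is nonnegative and the bound $a_\lambda(u,u)\ge\frac{\delta_1}{2}\Vert u\Vert_V^2$ follows at once.

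For the truncated form the only change is that $\tilde a$ is replaced by $\tilde a^{(M)}$, in which the weight $y$ becomes $y\wedge M$. Since $0\le y\wedge M\le y$ pointwise and $|u_x j_{\gamma,\mu}+u_y k_{\gamma,\mu}|\le K_1|\nabla u|$ by Cauchy--Schwarz together with the definition of $K_1$, I would bound $|\tilde a^{(M)}(u,v)|\le K_1\into(y\wedge M)|\nabla u||v|\,d\m\le K_1\into y|\nabla u||v|\,d\m$, so that continuity of $a_\lambda^{(M)}$ follows verbatim with the same constant $C$. For coercivity I would re-run the lower-bound argument of Proposition \ref{energyestimates}: starting from $\bar a(u,u)\ge\delta_1\into y|\nabla u|^2\,d\m$ and $\tilde a^{(M)}(u,u)\ge-K_1\into y|\nabla u||u|\,d\m$, an application of \eqref{inequality} with $\zeta=\delta_1/K_1$ reproduces the same estimate as for $a$, whence $a_\lambda^{(M)}(u,u)\ge\frac{\delta_1}{2}\Vert u\Vert_V^2$ for $\lambda\ge C_3$; crucially, all constants are independent of $M$.

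I do not expect a deep obstacle: the substantive work is in Proposition \ref{energyestimates}, and the only point requiring genuine care is to verify that the truncation $y\wedge M$ does not degrade the constants. This is immediate from $y\wedge M\le y$, and it is precisely what guarantees the uniformity in $M$ that will be needed later, when one lets $M\to\infty$ in the penalization scheme.
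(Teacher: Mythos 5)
Your proof is correct and follows essentially the same route as the paper: the paper also derives continuity by the termwise Cauchy--Schwarz bounds on $\bar a$, $\tilde a^{(M)}$ and the zero-order term (yielding $C=\delta_0+K_1+\lambda$), and obtains coercivity by absorbing $K_1\into y|\nabla u||u|\,d\m$ via \eqref{inequality} with $\zeta=\delta_1/K_1$, using $y\wedge M\le y$ to keep the constants independent of $M$. Your shortcut of citing \eqref{ub} directly for $a_\lambda$ is just the same computation packaged through Proposition \ref{energyestimates}, and the hypothesis $\lambda\ge\frac{\delta_1}{2}+\frac{K_1^2}{2\delta_1}$ is used in exactly the same way.
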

	\begin{proof}
		The proof for the bilinear form  $a_\lambda$ follows as in \cite[Lemma 3.2]{DF}. We give the details for  $a_\lambda^{(M)}$ to check that the constants do not depend on $M$. Note that, for every $u,v\in V$,
		\begin{equation*}
		|\tilde{a}^{(M)}(u,v)|\leq K_1  \into y |\nabla u||v| d\m,
		\end{equation*}
		so that by straightforward computations we get
		\begin{align*}
		|a^{(M)}_\lambda(u,v)|
		\leq (\delta_0+\lambda+K_1)\|u\|_V\|v\|_V.
		\end{align*}
		On the other hand, for every $\zeta>0$,
		\begin{eqnarray*}
			a^{(M)}_\lambda(u,u)&\geq& \delta_1 \into y|\nabla u|^2 d\m+\lambda
			\into (1+y)u^2 d\m-K_1 \into y|\nabla u||u| d\m\\
			&\geq&\left(\delta_1-\frac{K_1\zeta}{2}\right)\into y|\nabla u|^2 d\m
			+\left(\lambda -\frac{K_1}{2\zeta}\right)
			\into (1+y)u^2 d\m.
		\end{eqnarray*}
		By choosing $\zeta=\delta_1/K_1$, we get
		\begin{eqnarray*}
			a^{(M)}_\lambda(u,u)&\geq& \frac{\delta_1}{2}\into y|\nabla u|^2 dm
			+\left(\lambda -\frac{K_1^2}{2\delta_1}\right)
			\into (1+y)u^2 d\m\geq  \frac{\delta_1}{2}\|u\|^2_V,
		\end{eqnarray*}
		for every $\lambda\geq \frac{\delta_1}{2}+\frac{K_1^2}{2\delta_1}$.
	\end{proof}
	From now on in the rest of this chapter we assume  $\lambda\geq \frac{\delta_1}{2}+\frac{K_1^2}{2\delta_1}$  as in Lemma \ref{coercivity}. Moreover, we will denote by $\|b\|=\sup_{u,v\in V,u,v\neq0}\frac{|b(u,v)|}{\|u\|_V\|v\|_V}$ the norm of a bilinear form $b:V\times V\rightarrow \R$.
	\begin{remark}
		We stress that  Lemma \ref{coercivity} gives us
		\begin{equation}\label{estimate_aM}
		\sup_{M>0}\|a_\lambda^{(M)}\|\leq C ,
		\end{equation}
		where $C=\delta_0+K_1+\lambda$. This will be crucial in the penalization technique we are going to describe in  Section \ref{sect-penalization}.  Roughly speaking, 
		in order to prove the existence of a solution of the penalized coercive problem we will introduce in Theorem \ref{penalizedcoerciveproblem}, we proceed  as follows. First, we replace the bilinear form  $a_\lambda$ with the operator $a^{(M)}_\lambda$, which has bounded coefficients, and we solve the associated penalized truncated coercive problem (see Proposition \ref{penalizedcoercivetruncatedproblem}). 
		Then,  thanks to \eqref{estimate_aM},  we can  deduce estimates on the solution which are uniform in $M$ (see Lemma \ref{lemma_estim})  and which will allow us to pass to the limit as $M$ goes to infinity and to find a solution of the original penalized coercive problem. 
	\end{remark}

	%
	%
	Finally, we define
	$$
	\mathcal{L}^\lambda  := 	\mathcal{L}-\lambda(1+y) 
	$$
	the differential operator associated with the bilinear form $a_\lambda$, that is 
	$$
	(	\mathcal{L}^\lambda u,v)_H=- 	a_\lambda(u,v), \qquad u\in H^2(\O,\m),\, v\in V.
	$$
	\subsubsection{Penalized problem }\label{sect-penalization}
	For any fixed $\varepsilon >0$  we define the penalizing operator 
	\begin{equation}\label{penalizedoperator}
	\zeta_\varepsilon(t,u)= -\frac 1 \varepsilon (\psi(t)-u)_+= \frac 1 \varepsilon \zeta(t,u),\qquad t\in[0,T],  u\in V.
	\end{equation}
	Since for every fixed $t\in[0,T]$ the function $x\mapsto -(\psi(t)-x)_+$ is nondecreasing, we have the following well known monotonicity result (see \cite{BL}).
	\begin{lemma}	\label{monotonicity}
		For any fixed $t\in [0,T]$ the penalizing operator \eqref{penalizedoperator} is monotone, in the sense that
		$$
		(\zeta_\varepsilon(t,u)-\zeta_\varepsilon(t,v),u-v)_H \geq 0, \qquad   u,v \in V.
		$$
	\end{lemma}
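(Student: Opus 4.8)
The plan is to reduce the claim to an elementary pointwise inequality and then integrate against the measure $\m$. Since $\varepsilon>0$ enters \eqref{penalizedoperator} only as a positive multiplicative factor, it suffices to prove that
$$
(\zeta(t,u)-\zeta(t,v),u-v)_H\geq 0,\qquad u,v\in V,
$$
where $\zeta(t,\cdot)$ is the operator induced by the scalar map $g(s):=-(\psi(t,x,y)-s)_+$ applied pointwise at each $(x,y)\in\O$.

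First I would record the scalar monotonicity: for fixed $t$ and fixed $(x,y)$, the map $s\mapsto g(s)=-(\psi(t,x,y)-s)_+$ is nondecreasing, because increasing $s$ decreases $\psi(t,x,y)-s$, hence decreases its positive part, hence increases $g$. Any nondecreasing real function satisfies $(g(a)-g(b))(a-b)\geq 0$ for all $a,b\in\R$; applying this with $a=u(x,y)$ and $b=v(x,y)$ yields
$$
\big(\zeta(t,u)(x,y)-\zeta(t,v)(x,y)\big)\big(u(x,y)-v(x,y)\big)\geq 0
$$
for a.e. $(x,y)\in\O$.

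The final step is to integrate this nonnegative integrand against $\m_{\gamma,\mu}$. By the definition of the inner product on $H$,
$$
(\zeta(t,u)-\zeta(t,v),u-v)_H=\into \big(\zeta(t,u)-\zeta(t,v)\big)(u-v)\,d\m\geq 0,
$$
and multiplying by $1/\varepsilon>0$ gives the assertion. The only point requiring care is integrability: since $\big|(\psi(t)-u)_+-(\psi(t)-v)_+\big|\leq |u-v|$, the operator $\zeta(t,\cdot)$ maps $H$ into $H$ and the integrand is dominated by $|u-v|^2\in L^1(\O,\m)$, so the integral is well defined and finite. I do not expect a genuine obstacle here: the statement is the standard monotonicity of the penalization term from \cite{BL}, and the degeneracy of the weight $\m$ plays no role, precisely because the argument is entirely pointwise in $(x,y)$ and only the sign of the integrand matters.
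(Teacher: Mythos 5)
Your proof is correct and follows exactly the route the paper indicates: the paper does not write out the argument but justifies the lemma by the same observation that $s\mapsto -(\psi(t)-s)_+$ is nondecreasing pointwise, citing \cite{BL}. Your added remark on integrability via the Lipschitz bound $|(\psi(t)-u)_+-(\psi(t)-v)_+|\leq|u-v|$ is a sensible precaution but does not change the substance.
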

	
	We now introduce the intermediate penalized coercive problem with  a source  term $g$. We consider the following assumption:
	
	\medskip
	\noindent	
	\textbf{Assumption $\mathcal{H}^0$.}
	We say that a function $g$ satisfies Assumption $\mathcal{H}^0$ if	$ \sqrt{1+y}g\in L^2([0,T];H)$.
	
	\medskip

	\begin{theorem}\label{penalizedcoerciveproblem} 
		Assume that  $\psi$ satisfies Assumption $\mathcal{H}^1$ and  $g$ satisfies Assumption $\mathcal{H}^0$. Then, for every fixed $\varepsilon>0$,  there exists a unique function $u_{\varepsilon,\lambda}$ such that 
		$	u_{\varepsilon,\lambda}\in L^2([0,T];V)$,  $\frac{\partial u_{\varepsilon,\lambda}}{\partial t } \in L^2([0,T]; H)$ and, for all $v\in L^2([0,T];V)$,
		\begin{equation}\label{PCP}
		\begin{cases}
		-\left( \frac{\partial u_{\varepsilon,\lambda}}{\partial t },v  \right)_H + a_{\lambda}(u_{\varepsilon,\lambda},v)+ (\zeta_\varepsilon(t,u_{\varepsilon,\lambda}),v)_H= (g,v)_H,\qquad \mbox{a.e. in } [0,T],\\
		u_{\varepsilon,\lambda}(T)=\psi(T).
		\end{cases}
		\end{equation}
		Moreover, the following estimates hold:
		\begin{equation}\label{sp1}
		\Vert u_{\varepsilon,\lambda} \Vert_{L^\infty([0,T],V)}\leq K,
		\end{equation}
		\begin{equation}\label{sp2}
		\left\Vert \frac{\partial u_{\varepsilon,\lambda}}{\partial t }\right \Vert_{L^2([0,T];H)}\leq K,
		\end{equation}
		\begin{equation}\label{sp3}
		\frac {1} {\sqrt{\varepsilon }}\left\Vert (\psi-u_{\varepsilon,\lambda})^+ \right \Vert_{L^\infty([0,T],H)}\leq K,
		\end{equation}
		where $K=C \left(    \Vert \Psi \Vert_{L^2([0,T];V)} + \Vert \sqrt{1+y}g\Vert_{L^2([0,T];H)}  + \Vert\sqrt{1+y}\psi \Vert_{L^2([0,T];V)}+\Vert\psi(T)\Vert_V^2   \right)$,  with $C>0$ independent of $\varepsilon$, and $\Psi$ is given in Assumption $\mathcal{H}^1$.
	\end{theorem}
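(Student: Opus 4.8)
The plan is to read \eqref{PCP} as a forward-in-time parabolic problem carrying a Lipschitz, monotone zero-order perturbation, to solve first a truncated version, and then to let the truncation parameter tend to infinity. Since the problem is posed with the terminal datum $u_{\varepsilon,\lambda}(T)=\psi(T)\in V$, I would begin by reversing time, setting $w(t):=u_{\varepsilon,\lambda}(T-t)$; this flips the sign of the time derivative and recasts \eqref{PCP} as an initial value problem with $w(0)=\psi(T)$, placing us in the classical framework for parabolic variational equations. Observe that, by Lemma \ref{coercivity}, $a_\lambda$ (and the truncated form $a_\lambda^{(M)}$) is continuous and coercive on $V$, and that the penalization $u\mapsto\zeta_\varepsilon(t,u)=-\frac1\varepsilon(\psi(t)-u)_+$ is globally Lipschitz in $u$ with constant $1/\varepsilon$ and monotone by Lemma \ref{monotonicity}.

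First I would solve the truncated penalized problem, where $a_\lambda$ is replaced by $a_\lambda^{(M)}$ (this is Proposition \ref{penalizedcoercivetruncatedproblem}). Because $a_\lambda^{(M)}$ has bounded coefficients, a Faedo--Galerkin scheme applies: I would fix a countable basis of $V$, project the time-reversed equation onto the span of the first $n$ vectors, and note that the resulting finite-dimensional system of ODEs has a unique global solution precisely because $\zeta_\varepsilon$ is Lipschitz. Energy bounds uniform in $n$ then yield a weakly convergent subsequence, and the monotonicity of $\zeta_\varepsilon$ lets me identify the limit of the nonlinear term by a Minty-type argument, producing a solution of the truncated problem; uniqueness at this stage follows from coercivity and monotonicity. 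A Banach fixed point in $\mathcal{C}([0,T];H)$, built on the Lipschitz constant $1/\varepsilon$ and the linear solvability with $a_\lambda^{(M)}$, is an equivalent route.

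The core of the argument is the derivation of a priori estimates with constants independent of both $M$ and $\varepsilon$, which are exactly \eqref{sp1}--\eqref{sp3}; the $\varepsilon$-uniformity is what will later drive the passage to the limit in the variational inequality. The decisive choice is to test the equation not with $\partial_t u_{\varepsilon,\lambda}$ alone but with $\partial_t(u_{\varepsilon,\lambda}-\psi)$. Indeed, a direct computation gives the clean identity
\begin{equation*}
\left(\zeta_\varepsilon(t,u_{\varepsilon,\lambda}),\,\partial_t(u_{\varepsilon,\lambda}-\psi)\right)_H=\frac{1}{2\varepsilon}\frac{d}{dt}\left\|(\psi-u_{\varepsilon,\lambda})^+\right\|_H^2 ,
\end{equation*}
so that, after integrating in time and using $(\psi-u_{\varepsilon,\lambda})^+(T)=0$, the penalization contributes exactly the nonnegative quantity appearing in \eqref{sp3}, with no leftover $1/\varepsilon$ factor. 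Together with the symmetry of the principal part $\bar a$ (which turns $\bar a_\lambda(u_{\varepsilon,\lambda},\partial_t u_{\varepsilon,\lambda})$ into a perfect time derivative controlling $\|u_{\varepsilon,\lambda}(t)\|_V$, hence \eqref{sp1}) and the coercivity of $a_\lambda^{(M)}$, this identity simultaneously produces \eqref{sp1}, \eqref{sp2} and \eqref{sp3}. The terms involving $\psi$ generated by the extra $-\partial_t\psi$ in the test function are absorbed through \eqref{inequality}, Assumption $\mathcal H^1$ (here one genuinely needs $\Psi\in L^2([0,T];V)$, not merely in $H$, to dominate $a_\lambda(u_{\varepsilon,\lambda},\partial_t\psi)$) and Assumption $\mathcal H^0$ on $g$, followed by a Gronwall argument; this is how the constant $K$ acquires its stated form.

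The main obstacle is the first-order term $\tilde a^{(M)}(u_{\varepsilon,\lambda},\partial_t u_{\varepsilon,\lambda})$ generated by the unbounded, non-coercive drift, whose weight is $y\wedge M$ while \eqref{sp2} controls $\partial_t u_{\varepsilon,\lambda}$ only in the unweighted space $H$. At finite $M$ the coefficients are bounded, so the Galerkin construction and the energy identities above are fully justified; the real difficulty is to organize the estimate so that the constant does not degenerate as $M\to\infty$, and this is exactly what the uniform bound $\sup_M\|a_\lambda^{(M)}\|\le C$ of \eqref{estimate_aM} guarantees. With $M$-uniform bounds in hand one extracts a weak-$\ast$ limit $u_{\varepsilon,\lambda}$ in $L^\infty([0,T];V)$ with $\partial_t u_{\varepsilon,\lambda}\in L^2([0,T];H)$; the only delicate passage there is the recovery of the untruncated form $\tilde a$ from $\tilde a^{(M)}$, handled via $(y\wedge M)\uparrow y$ together with the boundedness of $j_{\gamma,\mu},k_{\gamma,\mu}$ and the bound on $\sqrt y\,|\nabla u_{\varepsilon,\lambda}|$ in $L^2([0,T];H)$. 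Finally, uniqueness for fixed $\varepsilon$ follows by subtracting two solutions, testing with their difference, and combining the coercivity of $a_\lambda$ with the monotonicity of $\zeta_\varepsilon$ (Lemma \ref{monotonicity}), which renders the penalization contribution nonnegative, so that Gronwall forces the difference to vanish.
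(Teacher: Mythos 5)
Your overall architecture --- Galerkin for the truncated problem with $a_\lambda^{(M)}$, $M$- and $\varepsilon$-uniform energy estimates, passage to the limit $M\to\infty$, uniqueness by monotonicity plus coercivity --- matches the paper's. The gap is in the step you single out as decisive. Testing with $\partial_t(u_{\varepsilon,\lambda}-\psi)$ does turn the penalization into the exact derivative $\frac{1}{2\varepsilon}\frac{d}{dt}\Vert(\psi-u_{\varepsilon,\lambda})_+\Vert_H^2$, but it simultaneously produces the term $a_\lambda^{(M)}(u_{\varepsilon,\lambda},\partial_t\psi)$, whose principal part is of the form $\int_{\mathcal O} y\,\nabla u_{\varepsilon,\lambda}\cdot\nabla\partial_t\psi\,d\m$. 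Assumption $\mathcal H^1$ only gives the pointwise domination $|\partial_t\psi|\le\Psi$ with $\Psi\in L^2([0,T];V)$; this controls $\Vert\sqrt{1+y}\,\partial_t\psi\Vert_H$ but says nothing about $\nabla\partial_t\psi$, so $\Vert\partial_t\psi\Vert_V$ --- hence $a_\lambda^{(M)}(u_{\varepsilon,\lambda},\partial_t\psi)$ --- is not dominated by any norm of $\Psi$. You could run your estimate under the stronger hypothesis $\partial_t\psi\in L^2([0,T];V)$ (which the truncated Proposition \ref{penalizedcoercivetruncatedproblem} indeed assumes), but then $K$ would involve $\Vert\partial_t\psi\Vert_{L^2([0,T];V)}$ instead of $\Vert\Psi\Vert_{L^2([0,T];V)}$, and the final regularization step that relaxes the hypotheses back to $\mathcal H^1$ (which your sketch omits) would collapse. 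The paper's actual mechanism is different: it tests with $\partial_t u_j$, accepts the leftover $\frac1\varepsilon((\psi_j-u_j)_+,\partial_t\psi_j)_H$, bounds it by $\frac1\varepsilon((\psi_j-u_j)_+,\Psi)_H$ using the sign of $(\psi_j-u_j)_+$ together with the domination, and then controls this last quantity by testing the equation itself with $v=\Psi(t)\in V$ (inequality \eqref{estavecv1}); only norms of $\Psi$, never of $\partial_t\psi$ in $V$, enter.

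A second, related point: you attribute the control of $\tilde a^{(M)}(u_{\varepsilon,\lambda},\partial_t u_{\varepsilon,\lambda})$ to the uniform bound $\sup_M\Vert a_\lambda^{(M)}\Vert\le C$ of \eqref{estimate_aM}. That bound only yields $|\tilde a^{(M)}(u,\partial_t u)|\le C\Vert u\Vert_V\Vert\partial_t u\Vert_V$, and no $M$- and $\varepsilon$-uniform bound on $\partial_t u_{\varepsilon,\lambda,M}$ in $V$ is available. What actually closes the estimate is the separate weighted bound $\Vert\, y|\nabla u_{\varepsilon,\lambda,M}|\,\Vert_{L^2([0,T];H)}\le C$ of \eqref{uchapeau}, obtained by testing the equation with $(u_{\varepsilon,\lambda,M}-\psi)\varphi_n$, $\varphi_n(y)=1+y\wedge n$, and letting $n\to\infty$ by monotone convergence; with it one writes $|\tilde a^{(M)}(u,\partial_t u)|\le K_1\Vert\, y|\nabla u|\,\Vert_H\Vert\partial_t u\Vert_H$ and absorbs $\Vert\partial_t u\Vert_H$ into the left-hand side. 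Without this weighted estimate (or a substitute for it) your energy inequality does not close, so it needs to be added to your plan before the passage $M\to\infty$.
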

	The proof of uniqueness of the solution of the penalized coercive problem follows a standard monotonicity argument as in \cite{BL}, so we omit the proof.
	%
	
	The  proof of existence in Theorem \ref{penalizedcoerciveproblem} is quite long and technical, so we split it into two propositions. We first consider the truncated penalized problem, which requires less stringent conditions on $\psi$ and $g$.
	\begin{proposition}\label{penalizedcoercivetruncatedproblem}
		Let $\psi\in \mathcal{C}([0,T];H)\cap     L^2([0,T];V)$ and $ g\in L^2([0,T];H)$. Moreover, assume that $ \psi(T) \in H^2(\O,\m)$, $ (1+y)\psi(T) \in H$, $\frac{\partial \psi}{\partial t}\in L^2([0,T];V) $  and $\frac{\partial g}{\partial t}\in L^2([0,T];H)$. Then, there exists a unique function $u_{\varepsilon,\lambda,M }$ such that $	u_{\varepsilon,\lambda,M }\in  L^2([0,T]; V),  \,
		\frac{\partial u_{\varepsilon,\lambda,M}}{\partial t } \in L^2([0,T]; V)$ and for all $v\in L^2([0,T]; V)$
		\begin{equation}\label{PTCP}
		\begin{cases}
		-\left( \frac{\partial u_{\varepsilon,\lambda,M}}{\partial t },v \right)_H + a_{\lambda}^{(M)}(u_{\varepsilon,\lambda,M},v)+ (\zeta_\varepsilon(t,u_{\varepsilon,\lambda,M}),v)_H=  (g,v)_H,  \qquad \mbox{a.e. in     } [0,T) ,\\		u_{\varepsilon,\lambda,M}(T)=\psi(T). 
		\end{cases}
		\end{equation}
	\end{proposition}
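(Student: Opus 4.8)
The plan is to combine a Galerkin approximation in $V$ with the penalization machinery of Bensoussan and Lions \cite{BL}. The two structural facts I would lean on are that, for fixed $\lambda$ and $M$, the bilinear form $a_\lambda^{(M)}$ is continuous and coercive on $V$ (Lemma \ref{coercivity}), and that, for fixed $\varepsilon$, the penalizing map $u\mapsto\zeta_\varepsilon(t,u)$ of \eqref{penalizedoperator} is Lipschitz continuous from $H$ to $H$ with constant $1/\varepsilon$ and monotone (Lemma \ref{monotonicity}). Since existence is sought for fixed $\varepsilon,\lambda,M$, every constant below may depend on these parameters; the delicate uniformity in $M$ and $\varepsilon$ belongs to Theorem \ref{penalizedcoerciveproblem} and is not an issue here. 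It is convenient to reverse time $t\mapsto T-t$, turning \eqref{PTCP} into a forward Cauchy problem with initial datum $\psi(T)$, a coercive linear part and a Lipschitz monotone nonlinearity, i.e. a standard semilinear parabolic equation. For the construction I would take a Hilbert basis $\{e_k\}_{k\geq1}$ of $V$ (for instance eigenfunctions of the symmetric coercive form $\bar a_\lambda$, orthogonal both in $H$ and for $\bar a_\lambda$), project onto $V_m=\mathrm{span}(e_1,\dots,e_m)$, and seek $u_m(t)=\sum_{k\leq m}c^m_k(t)e_k$. Because $\zeta_\varepsilon(t,\cdot)$ is Lipschitz and the coefficients of $a_\lambda^{(M)}$ are bounded after the truncation at level $M$, the ODE system for the $c^m_k$ has a right-hand side globally Lipschitz in the unknowns and measurable in $t$, so Picard--Lindel\"of gives a unique absolutely continuous solution on all of $[0,T]$.

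The core of the argument is a priori estimates uniform in $m$. Testing the projected equation with $u_m$ and invoking the coercivity from Lemma \ref{coercivity} produces the first bound $\|u_m\|_{L^\infty([0,T];H)}+\|u_m\|_{L^2([0,T];V)}\leq C$. To reach the announced regularity $\partial_t u\in L^2([0,T];V)$ I would differentiate in time, but since the penalty is only Lipschitz this has to be carried out on the difference quotients $\delta^h u_m(t)=\bigl(u_m(t+h)-u_m(t)\bigr)/h$. Subtracting the equations at $t+h$ and $t$ and testing with $\delta^h u_m$, the increment of $\zeta_\varepsilon$ splits into a term monotone in $u_m(t+h)-u_m(t)$, which contributes with a favourable sign by Lemma \ref{monotonicity} and can be dropped, and a term dominated by $\tfrac1\varepsilon|\psi(t+h)-\psi(t)|$ in $H$, while the source gives $\tfrac1h|g(t+h)-g(t)|$. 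Exploiting $\partial_t\psi\in L^2([0,T];V)$ and $\partial_t g\in L^2([0,T];H)$, the coercivity of $a_\lambda^{(M)}$ and Gronwall's lemma, I would obtain bounds on $\delta^h u_m$ in $L^\infty(H)\cap L^2(V)$ uniform in $h$ and $m$, hence $\partial_t u_m\in L^2([0,T];V)$ with a uniform bound.

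This energy balance carries a boundary term $\|\partial_t u_m(T)\|_H^2$, which is the genuinely delicate point; I expect it, together with the rigorous justification of the difference-quotient differentiation of the non-smooth penalty, to be the main obstacle. I would control it by reading the projected equation at $t=T$ and testing with $\partial_t u_m(T)\in V_m$: using Proposition \ref{Prop_integrationbyparts} to rewrite $a_\lambda^{(M)}(u_m(T),\cdot)$ as the $H$-pairing of $-\,\cdot$ against the differential operator associated with $a_\lambda^{(M)}$ applied to $u_m(T)$, one finds $\|\partial_t u_m(T)\|_H$ bounded by the $H$-norms of that operator applied to the terminal datum, of $\zeta_\varepsilon(T,u_m(T))$ (uniformly bounded in $H$), and of $g(T)$. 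This is exactly where the hypotheses $\psi(T)\in H^2(\O,\m)$ and $(1+y)\psi(T)\in H$ enter: they guarantee that the truncated Heston operator maps $\psi(T)$ into $H$, the factor $(1+y)\psi(T)$ being forced by the zero-order term $-\lambda(1+y)$.

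Finally I would pass to the limit $m\to\infty$. The uniform bounds furnish weak-$*$ limits in the relevant spaces, while the Aubin--Lions compactness lemma yields strong convergence of $u_m$ in $L^2([0,T];H)$; since $\zeta_\varepsilon(t,\cdot)$ is Lipschitz from $H$ to $H$, this suffices to identify the limit of the nonlinear term and to pass to the limit in the weak formulation \eqref{PTCP}, the terminal condition $u_{\varepsilon,\lambda,M}(T)=\psi(T)$ being preserved because $u\in L^2([0,T];V)$ with $\partial_t u\in L^2([0,T];V)$ is continuous in time. For uniqueness I would take two solutions, subtract, test the difference with itself, discard the penalty term by the monotonicity of Lemma \ref{monotonicity}, and close with the coercivity of $a_\lambda^{(M)}$ and Gronwall's lemma.
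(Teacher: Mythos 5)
Your plan follows the paper's proof almost step for step: Galerkin approximation, the energy estimate from testing with the solution, the $\partial_t u\in L^2([0,T];V)$ estimate obtained by (formally) differentiating the equation in time and exploiting the monotonicity of $\zeta_\varepsilon$, and the control of the boundary term $\|\partial_t u_m(T)\|_H$ by reading the equation at $t=T$ and testing with $\partial_t u_m(T)$ --- which is indeed exactly where $\psi(T)\in H^2(\O,\m)$ and $(1+y)\psi(T)\in H$ are used. Your difference-quotient formulation of the time-differentiation step is a slightly more careful variant of what the paper does (the paper differentiates the penalty directly, producing the indicator $\mathbbm{1}_{\{\psi_j\geq u_j\}}$), and your choice of test function $u_m$ instead of $u_m-\psi_m$ in the first estimate is immaterial for this proposition.

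There is, however, one step that would fail as written: the appeal to Aubin--Lions to get strong convergence of $u_m$ in $L^2([0,T];H)$. That lemma requires the embedding $V\hookrightarrow H$ to be compact, and here it is not: $\O=\R\times(0,\infty)$ is unbounded in $x$ and the $V$-norm $\int_\O\big(y|\nabla u|^2+(1+y)u^2\big)d\m$ adds no decay in the $x$-direction beyond what the measure $e^{-\gamma|x|}dx$ already provides. A sequence of the form $u_n(x,y)=e^{\gamma n/2}\phi(x-n)\chi(y)$ with fixed bumps $\phi,\chi$ is bounded in $V$, converges weakly to $0$ in $H$, but has $H$-norm bounded away from zero. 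The conclusion you need --- identifying the weak limit of $(\psi_m-u_m)_+$ with $(\psi-u)_+$ --- still holds, but only via \emph{local} compactness: Rellich on relatively compact open subsets $\mathcal{U}\Subset\O$ gives strong convergence in $L^2(\mathcal{U},\m)$, hence a.e. convergence along a subsequence, which forces the weak limit of the penalty term to coincide with $(\psi-u)_+$. This is precisely how the paper closes the argument, and your proposal should be amended accordingly.
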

	\begin{proof}
		\begin{enumerate}
			\item \textbf{ Finite dimensional problem}
			We use the classical Galerkin method of approximation, which consists in introducing a nondecreasing sequence $(V_j)_j$ of subspaces of $V$ such that $dimV_j<\infty$ and, for every $ v\in V ,$ there exists  a sequence $  \ (v_j)_{j\in \N}$  such that $v_j \in V_j $ for any $ j\in\N $ and $\Vert v-v_j\Vert_V\rightarrow0$ as $j\rightarrow \infty$. Moreover, we assume that $\psi(T)\in V_j,$ for all $ j\in\N$. Let $P_j$ be the projection of $V$ onto $V_j$ and $\psi_j(t)=P_j\psi(t)$. We have  $\psi_j (t) \rightarrow \psi(t)$ strongly in $V$ and $\psi_j(T)=\psi(T)$ for any $ j\in \N$. 
			The finite dimensional problem is, therefore, to find $u_j:[0,T]\rightarrow V_j$ such that 
			\begin{equation}\label{ATPCP}
			\begin{cases}
			
			-\left( \frac{\partial u_j}{\partial t }(t),v   \right)_H + a_{\lambda}^{(M)}(u_j(t),v)-\frac 1 \varepsilon ((\psi_j(t)-u_j(t))_+,v)_H= (g(t),v)_H, \qquad 	 v\in V_j,\\
			u_j(T)=\psi(T).
			\end{cases}
			\end{equation}
			
			This problem can be interpreted as an ordinary differential equation in $V_j$ (dim $V_j<\infty$),
			that is 
			$$
			\begin{cases}
			-\frac{\partial u_j}{\partial t }(t) + A_{\lambda,j}^{(M)}u_j(t)-\frac 1 \varepsilon Q_j((\psi_j(t)-u_j(t))_+)= Q_jg(t)\\
			u_j(T)=\psi(T),
			\end{cases}
			$$
			where $  A_{\lambda,j}^{(M)}:V_j\rightarrow V_j$ is a finite dimensional linear operator and $Q_j$ is the projection of $H$ onto $V_j$. 
			Note that the function $u\rightarrow Q_j((\psi_j(t)-u)_+)$ is Lipschitz continuous, since 
			\begin{align*}
			&	\| Q_j((\psi_j(t)-u)_+)- Q_j((\psi_j(t)-v)_+)\|_{V_j}\\&\qquad\leq C_j\| Q_j((\psi_j(t)-u)_+)- Q_j((\psi_j(t)-v)_+)\|_{H}\leq C_j \|u-v\|_H.
			\end{align*}
			On the other hand,   the function $(t,u)\rightarrow Q_j((\psi_j(t)-u(t)_+)$ is continuous with values in $V_j$. In fact,  we can easily prove that it is weakly continuous, that is, for  $v\in V_j$, the application $(t,u)\rightarrow (Q_j((\psi_j(t)-u)_+),v)$ is continuous. In fact
			\begin{equation}\label{continuityode}
			\begin{split}
			&	\left|\big(    Q_j((\psi_j(t)-u)_+)-  Q_j((\psi_j(s)-w)_+),v\big) \right|\leq 	\left| \big(  Q_j(  (\psi_j(t)-u)_+)-  Q_j((\psi_j(s)-u)_+),v\big) \right|\\&\qquad+ 	\left| \big(    Q_j((\psi_j(s)-u)_+)-  Q_j((\psi_j(s)-w)_+),v\big) \right|.
			\end{split}
			\end{equation}
			The second term in the right hand side of \eqref{continuityode} goes to 0 by using the Lipschitz continuity proved above. On the other hand, it is easy to prove that for any $u\in V,v\in H^2(\O,\m)$, one has $|(u,v)_V|\leq C\|u\|_H\|v\|_{H^2(\O(\m))} $. Since $v\in V_j$ we can assume without loss of generality that $v\in H^2(\O,\m)$, so that  for   the first term in the right hand side of \eqref{continuityode}, we easily get 
			$$
			\left| \big(  Q_j(  (\psi_j(t)-u)_+)- Q_j( (\psi_j(s)-u)_+),v\big) \right|\leq \|\psi_j(t)-\psi_j(s)\|_H\|v\|_{H^2(\O,\m)},
			$$
			which goes to 0.	Finally, it is easy to see that  the term $Q_jg$ belongs to $L^2([0,T];V_j)$. 
			
			Therefore, we can use the Cauchy-Lipschitz Theorem and we  deduce the existence and the uniqueness of a solution $u_j$ of \eqref{ATPCP}, continuous from $[0,T]$ into $V_j$, a.e. differentiable and with integrable derivative.
			
			\item \textbf{ Estimates on the finite dimensional problem}
			First, we take $v=u_j(t)-\psi_j(t)$ in  \eqref{ATPCP}. We get
			\begin{align*}
			&	-\left( \frac{\partial u_j}{\partial t }(t),u_j(t)-\psi_j (t)  \right)_H + a_{\lambda}^{(M)}(u_j(t),u_j(t)-\psi_j(t))\\&\qquad-\frac 1 \varepsilon ((\psi_j(t)-u_j(t))_+,u_j(t)-\psi_j(t))_H
			= (g(t),u_j(t)-\psi_j(t))_H,
			\end{align*}
			which can be rewritten as
			\begin{align*}
			&	-\frac 1 2 \frac{d }{d t }\Vert u_j(t)-\psi_j(t) \Vert_H^2 - \left( \frac{\partial \psi_j}{\partial t }(t),u_j(t)-\psi_j  (t) \right)_H \\&+ a_{\lambda}^{(M)}(u_j(t)-\psi_j(t),u_j(t)-\psi_j(t))_H + \frac 1 \varepsilon ((\psi_j(t)-u_j(t))_+,\psi_j(t)-u_j(t))_H\\&+ a_{\lambda}^{(M)}(\psi_j(t),u_j(t)-\psi_j(t))  = (g(t),u_j(t)-\psi_j(t))_H .
			\end{align*}
			
			We integrate between $t$ and $T$ and we use coercivity and $u_j(T)=\psi_j(T)$ to obtain  
			\begin{align*}
			&	\frac 1 2 \Vert u_j(t)-\psi_j(t) \Vert_H^2  + \frac{\delta_1}2\int_t^T \Vert u_j(s)-\psi_j(s)\Vert_V^2 ds +\frac 1 \varepsilon\int_t^T \Vert(\psi_j(s)-u_j(s))_+\Vert^2_Hds\\
			&
			\leq    \frac 1 {2\zeta}      \int_t^T \left \Vert \frac{\partial \psi_j(s)}{\partial t } \right \Vert_H^2 ds+\frac \zeta 2 \int_t^T  \Vert u_j(s)-\psi_j(s) \Vert_H^2 ds +    \frac 1 {2\zeta}   \int_t^T  \Vert g(s) \Vert^2_H ds\\& + \frac \zeta 2  \int_t^T\!\! \Vert u_j(s)-\psi_j(s) \Vert_H^2 ds
			+ \frac {\|a^{(M)}_\lambda\|\zeta} 2 \! \int_t^T \!  \!\Vert u_j(s)-\psi_j(s) \Vert^2_V ds +  \frac {\|a^{(M)}_\lambda\|}{2\zeta}  \int_t^T\!\!  \Vert \psi_j(s) \Vert_V^2ds,
			\end{align*}
			for any $\zeta>0$.
			Recall  that $\psi_j=P_j\psi$,  and so $ \Vert\psi_j (t)\Vert^2_V\leq  \Vert\psi  (t)\Vert^2_V$. In the same way $\Vert \frac{\partial \psi_j(t)}{\partial t } \Vert^2_H\leq \Vert \frac{\partial \psi_j(t)}{\partial t } \Vert^2_V \leq \Vert \frac{\partial \psi(t)}{\partial t } \Vert^2_V    $ .
			Choosing $\zeta=\frac{\delta_1}{4+2\|a^{(M)}_\lambda\|}$ 
			after simple calculations we  deduce that there exists $C>0$ independent of $M$, $\varepsilon$ and $j$ such that 
			\begin{equation}\label{estim1}
			\begin{array}{c}
			\frac 1 4 	\Vert u_j(t)\Vert_H^2  +\frac{\delta_1}{8}\int_t^T \Vert u_j(s)
			\Vert_V^2 ds +\frac 1 \varepsilon\int_t^T \Vert(\psi_j(s)-u_j(s))_+\Vert^2_Hds \\	\leq C \left(  \left  \Vert \frac{\partial \psi}{\partial t } \right\Vert^2_{L^2([t,T];V)} + \Vert g\Vert^2_{L^2([t,T];H)}  + \Vert\psi\Vert^2_{L^2([t,T];V)}  +\Vert\psi(T)\Vert^2_H \right).
			\end{array}
			\end{equation}
			
			\medskip
			
			We now go back to \eqref{ATPCP} and we take $v=\frac{\partial u_j}{\partial t}(t)$ so we get
			\begin{align*}
			&	-\left \Vert \frac{\partial u_j}{\partial t} (t)\right \Vert^2_H+ \bar{a}_\lambda\left(u_j(t),\frac{\partial u_j}{\partial t}(t)\right) + \tilde{a}^{(M)}\left(u_j(t),\frac{\partial u_j}{\partial t}(t)\right)\\&\qquad-\frac 1 \varepsilon \left(\left(\psi_j(t)-u_j(t)\right)_+,\frac{\partial u_j}{\partial t}(t)\right)_H= \left(g(t),\frac{\partial u_j}{\partial t}(t)\right)_H.
			\end{align*}
			Note that
			\begin{align*}
			&	-\frac 1 \varepsilon \left((\psi_j(t)-u_j(t))_+,\frac{\partial u_j}{\partial t}(t)\right)_H\\&\quad= \frac 1 \varepsilon \left((\psi_j-u_j)_+,\frac{\partial (\psi_j-u_j)}{\partial t}(t)\right)_H - \frac 1 \varepsilon \left((\psi_j(t)-u_j(t))_+,\frac{\partial \psi_j}{\partial t}(t)\right)_H\\
			&\quad= \frac 1 {2\varepsilon} \frac d  {d t}\Vert (\psi_j-u_j)_+(t)\Vert^2_H - \frac 1 \varepsilon \left((\psi_j(t)-u_j(t))_+,\frac{\partial \psi_j}{\partial t}(t)\right)_H.
			\end{align*}
			Therefore, using the symmetry of $\bar a_\lambda$, we have
			$$-\left \Vert \frac{\partial u_j}{\partial t} (t)\right \Vert^2_H+ \frac 1 2 \frac{d }{d t}     \bar{a}_\lambda(u_j(t), u_j(t)) + \tilde{a}^{(M)} \left(u_j(t),\frac{\partial u_j}{\partial t}(t)\right)+\frac 1 {2\varepsilon} \frac \partial  {\partial t}\Vert (\psi_j(t)-u_j(t))_+\Vert^2_H $$ 
			$$- \frac 1 \varepsilon \left((\psi_j(t)-u_j(t))_+,\frac{\partial \psi_j}{\partial t}(t)\right)_H= \left(g(t),\frac{\partial u_j}{\partial t}(t)\right)_H.
			$$
			Integrating between $t$ and $T$, we obtain 
			\begin{align*}
			&\int_t^T\left \Vert \frac{\partial u_j}{\partial t}(s) \right \Vert^2_Hds+ \frac 1 2 \bar{a}_\lambda(u_j(t), u_j(t)) +\frac 1 {2\varepsilon} \Vert (\psi_j(t)-u_j(t))_+\Vert^2_H\\
			&= \int_t^T \tilde{a}^{(M)}\left(u_j(s),\frac{\partial u_j}{\partial s}(s)\right)ds +\frac 1 2 \bar{a}_\lambda(\psi_j(T),\psi_j(T))\\& -\int_t^T\frac 1 \varepsilon \left((\psi_j(s)-u_j(s)_+,\frac{\partial \psi_j}{\partial s}(s)\right)_Hds-\int_t^T \!\left(g(s),\frac{\partial u_j}{\partial s}(s)\right)_Hds.
			\end{align*}
			Recall that $
			\bar{a}_\lambda(u_j(t), u_j(t)) \geq \frac{\delta_1} 2 \Vert u_j(t)\Vert_V^2$, $
			|\tilde{a}^{(M)}(u,v)|\leq K_1  \into y\wedge M |\nabla u||v| d\m		  			$ and \linebreak
			$\bar a _\lambda(\psi_j(T),\psi_j(T))=\bar a _\lambda(\psi(T),\psi(T))\leq \|\bar a_\lambda\|\|\psi(T)\|_V^2$,
			so that, for every  $\zeta>0$,
			\begin{align*}
			&	\int_t^T \left \Vert \frac{\partial u_j}{\partial s}(s) \right \Vert^2_Hds+ \frac {\delta_1} 4  \Vert u_j(t)\Vert_V^2 +\frac 1 {2\varepsilon} \Vert (\psi_j(t)-u_j(t))_+\Vert^2_H\\
			&	\leq K_1 \int_t^Tds \into y\wedge M |\nabla u_j(s,.)|\left|\frac{\partial u_j}{\partial t}(s, .)\right|d\m+\frac{\|\bar a_\lambda\|} 2 \Vert\psi
			(T)\Vert_V^2  \\&\quad +\frac 1 \varepsilon \int_t^T \Vert (\psi_j(s)-u_j(s))_+\Vert_H  \left \Vert \frac{\partial \psi_j}{\partial s}(s)\right \Vert_Hds
			+\int_t^T \Vert g(s)\Vert_H  \left \Vert \frac{\partial u_j}{\partial s}(s)\right \Vert_Hds\\
			&	\leq \frac{K_1}{2\zeta} \int_t^T \Vert u_j(s)\Vert^2_Vds + \frac{K_1M}{2}\zeta \int_t^T \left \Vert \frac{\partial u_j}{\partial s}(s) \right \Vert_H^2ds +\frac{\|\bar a_\lambda\|} 2 \Vert\psi(T)\Vert_V^2 \\& \quad+  \frac {\zeta} { 2\varepsilon}
			\int_t^T \Vert (\psi_j(s)-u_j(s))_+\Vert^2_H ds+  \frac 1 {2\zeta\varepsilon}\int_t^T \left \Vert \frac{\partial \psi_j}{\partial t}(s)\right \Vert_H^2ds+\frac 1 {2 \zeta} \int_t^T \Vert g(s)\Vert_H^2 ds\\&\qquad +\frac \zeta {2} \int_t^T  \left \Vert \frac{\partial u_j}{\partial s}(s)\right \Vert^2_Hds.
			\end{align*}
			
			From \eqref{estim1}, we already know that 
			\begin{align*}\\
			&	\int_t^T \Vert u_j(s)
			\Vert_V^2 ds+\frac 1 \varepsilon\int_t^T                 \Vert(\psi_j(s)-u_j(s))_+\Vert^2_Hds \\&\qquad\leq C \left(  \left  \Vert \frac{\partial \psi}{\partial t } \right \Vert^2_{L^2([t,T];V)}+ \Vert g\Vert^2_{L^2([t,T];H)}  + \Vert\psi\Vert^2_{L^2([t,T];V)} +\Vert\psi(T)\Vert^2_H  \right),\end{align*}
			then we can finally deduce
			\begin{equation}\label{estim2}
			\begin{split}
			\int_t^T& \left \Vert \frac{\partial u_j}{\partial t} (s)\right \Vert^2_Hds+   \Vert u_j(t)\Vert_V^2 +\frac 1 {2\varepsilon} \Vert (\psi_j(t)-u_j(t))_+\Vert^2_H\\
			&	\leq C_{\varepsilon,M} \left(  \left  \Vert \frac{\partial \psi}{\partial t } \right\Vert^2_{L^2([t,T];V)} + \Vert g\Vert^2_{L^2([t,T];H)}  + \Vert\psi \Vert^2_{L^2([t,T];V)}  +\Vert\psi(T)\Vert^2_V \right),
			\end{split}
			\end{equation}   	
			where $C_{\varepsilon,M} $  is a constant which depends on $\varepsilon$ and $M$ but not on $j$.
			
			\medskip
			We will also need a further estimation.	If we denote $\bar{u}_j=\frac{\partial u_j}{\partial t}$ and we differentiate the equation \eqref{ATPCP} with respect to $t$ for a fixed $v$ independent of $t$, we obtain that $\bar{u}_j$ satisfies
			\begin{equation}\label{eqforubar}
			-\left(  \frac{\partial \bar{u}_j}{\partial t }(t), v\right)_H\! \!+ a_{\lambda}^{(M)}(\bar{u}_j(t),v)- \frac 1 \varepsilon \left( \! \bigg(\frac{\partial \psi_j}{\partial t }(t)-\bar{u}_j(t)\bigg)\mathbbm{1}_{\{ \psi_j(t) \geq u_j(t)\}},v   \right)_H\!\!\!=\left(\frac{\partial g}{\partial t}(t),v\right)_H\!\!\!,
			\end{equation}
			for any $v\in V_j$.	As regards the initial condition, from \eqref{ATPCP} computed in $t=T$,   for every $ v\in V_j$ we have
			\begin{align*}
			\left(  \frac{\partial u_j(T)}{\partial t},v     \right)_H &= a_{\lambda}^{(M)}(\psi(T),v)-(g(T),v)_H.
			\\	&=-\left( \mathcal{L}\psi(T),v\right)_H+\lambda\left( (1+y)\psi(T),v\right)_H\\&+\left( (y\wedge M-y) ( j_{\gamma,\mu}u_x+k_{\gamma,\mu}u_y) ,v\right)_H+\left(g(T),v\right)_H.
			\end{align*}
			Choosing $v= \frac{\partial u_j(T)}{\partial t}$, we deduce that 
			\begin{align*}
			\left \Vert  \frac{\partial u_j(T)}{\partial t}\right  \Vert_H& \leq C \left(     \Vert \mathcal L \psi(T) \Vert_H + \Vert(1+y)\psi(T)\Vert_H +  \Vert(y-M)_+\nabla   \psi(T)\Vert_H +\Vert g(T)\Vert_H \right)
			\\& \leq C\left(     \Vert \psi(T) \Vert_{H^2(\O,\m)}  + \Vert(1+y)\psi(T)\Vert_H +\Vert g(T)\Vert_H \right),
			\end{align*}
			that is, 
			$\left \Vert  \frac{\partial u_j(T)}{\partial t}\right  \Vert_H\leq C  \left(  \Vert \psi(T) \Vert_{H^2(\O,\m)}+\Vert(1+y)\psi(T)\Vert_H +\Vert g(T)\Vert_H \right)$.

			We can take $v=\bar{u}_j(t)$ in \eqref{eqforubar} and we obtain
			\begin{align*}
			&	-\left(  \frac{\partial \bar{u}_j}{\partial t }(t), \bar{u}_j(t)\right)_H + a_{\lambda}^{(M)}(\bar{u}_j(t),\bar{u}_j(t))- \frac 1 \varepsilon \left(  \bigg(\frac{\partial \psi_j}{\partial t }(t)-\bar{u}_j(t)\bigg)\mathbbm{1}_{\{ \psi_j (t)\geq u_j(t)\}},\bar{u}_j  (t) \right)_H
			\\&\quad	=\left(\frac{\partial g}{\partial t}(t),\bar{u}_j(t)\right)_H,
			\end{align*}
			so that
			\begin{align*}
			&	-\frac 1 2\frac{d}{d t} \left\Vert   \bar{u}_j(t)\right\Vert_H^2 + \frac{\delta_1}{2}\Vert \bar{u}_j(t)\Vert_V^2\\&\leq \frac 1 \varepsilon \left(  \bigg(\frac{\partial \psi_j}{\partial t }(t)-\bar{u}_j(t)\bigg)\mathbbm{1}_{\{ \psi_j(t) \geq u_j\}},\bar{u}_j (t)  \right)_H\!\!\!+\left(\frac{\partial g}{\partial t}(t),\bar{u}_j(t)\right)_H\\
			&\leq \frac 1 \varepsilon \left(  \frac{\partial \psi_j}{\partial t }(t)\mathbbm{1}_{\{ \psi_j(t) \geq u_j\}},\bar{u}_j (t)  \right)_H+\left(\frac{\partial g}{\partial t}(t),\bar{u}_j(t)\right)_H.
			\end{align*}
			Integrating between $t$ and $T$, with the usual calculations, we obtain, in particular, that
			\begin{equation}\label{estim3}
			\begin{split}
			&	\Vert \bar{u}_j(t)\Vert_H^2+\frac{\delta_1}{2}	 \int_t^T\Vert \bar{u}_j(s)\Vert_V^2 ds \\&  \leq C_\varepsilon \bigg( \! \Vert \psi(T) \Vert^2_{H^2(\O,\m)} \!+\Vert (1+y) \psi(T) \Vert^2_{H} +\Vert g(T)\Vert^2_H   \!+\! \left \Vert \frac{\partial \psi}{\partial t }\right \Vert^2_{L^2([t,T];H)}\!\!\!\!+ \left \Vert \frac{\partial g}{\partial t }\right \Vert^2_{L^2([t,T];H)} \! \bigg),
			\end{split}
			\end{equation}
			where $C_\varepsilon$ is a constant which depends on $\varepsilon$, but not on $j$.

			\item \textbf{Passage to the limit}
			
			Let $\varepsilon$ and $M$ be fixed. By passing to a subsequence, from \eqref{estim2} we can assume that $\frac{\partial u_j}{\partial t}$ weakly converges to a function $u_{\varepsilon,\lambda,M}'$ in $L^2([0,T];H)$. We deduce that, for any fixed $t\in [0,T]$, $u_j(t)$  weakly  converges in $H$ to 
			$$
			u_{\varepsilon,\lambda,M}(t)= \psi(T)-\int_t^T u_{\varepsilon,\lambda,M}'(s) ds. 
			$$ 
			Indeed, $u_{j}(t)$ is bounded in $V$, so the convergence is weakly in $V$. Passing to the limit in \eqref{estim3} we deduce that $\frac{\partial u_{\varepsilon,\lambda,M}}{\partial t } \in L^2([0,T];V)$.
			Moreover, from $\eqref{estim2}$, we have that $(\psi_j-u_j(t))^+ $ weakly converges in $H$ to a certain function $\chi(t) \in H$. 
			Now, for any $v\in V$ we know that there exists a sequence $(v_j)_{j\in \N}$  such that $v_j \in V_j $ for all $j \in\N $ and $\Vert v-v_j\Vert_V\rightarrow0$. 
			We have
			$$
			-\left( \frac{\partial u_j}{\partial t }(t),v_j   \right)_H + a_{\lambda}^{(M)}(u_j(t),v_j)_H -\frac 1 \varepsilon ((\psi_j(t)-u_j(t))_+,v_j)_H= (g(t),v_j)_H
			$$
			so, passing to the limit as $j\rightarrow \infty$, 
			$$
			-\left( \frac{\partial u_{\varepsilon,\lambda,M}}{\partial t }(t),v   \right)_H + a_\lambda(u_{\varepsilon,\lambda,M}(t),v)_H-\frac 1 \varepsilon (\chi(t),v)_H= (g(t),v)_H.
			$$
			We only have to note that $\chi(t)= (\psi(t)-u_{\varepsilon,\lambda,M}(t))_+$. In fact, $\psi_j(t)\rightarrow \psi(t)$ in $V$ and, up to a subsequence, $\mathbbm{1}_\mathcal{U}u_j(t)\rightarrow \mathbbm{1}_\mathcal{U}u_{\varepsilon,\lambda,M}(t)$ in $L^2(\mathcal{U},\m)$ for every open $\mathcal{U} $ relatively compact in $\mathcal{O}$. Therefore,  there exists a subsequence which converges a.e.  and this allows to conclude the proof. 
		\end{enumerate}
	\end{proof}
	We  now want to get rid of the truncated operator, that is to pass to the limit for $M\rightarrow  \infty$. In order to do this we need some  estimates on the function $u_{\varepsilon,\lambda,M}$ which are uniform in $M$.
	\begin{lemma}\label{lemma_estim}
		Assume that, in addition to the assumptions of Proposition \ref{penalizedcoercivetruncatedproblem},  
		$  \sqrt{1+y}\psi \in L^2([0,T];V),$    $\left| \frac{\partial \psi}{\partial t} \right|\leq \Psi$ with $\Psi\in L^2([0,T];V)$ and $g$ satisfies Assumption $\mathcal{H}^0$.
		Let $u_{\varepsilon,\lambda,M}$ be the solution of \eqref{PTCP}.
		Then,
		\begin{equation}\label{estim4}
		\begin{array}{c}
		\int_t^T\left \Vert \frac{\partial u_{\varepsilon,\lambda,M}}{\partial s}(s) \right \Vert^2_Hds+   \Vert u_{\varepsilon,\lambda,M}(t)\Vert_V^2 +\frac 1 {\varepsilon} \Vert (\psi(t)-u_{\varepsilon,\lambda,M}(t))_+\Vert^2_H\\
		\leq C  \left( \Vert 	\Psi \Vert_{L^2([0,T];V)}+ \|\sqrt{1+y}g\|_{L^2([0,T];H)} +\|\sqrt{1+y}\psi\|^2_{L^2([0,T];V)}   +\Vert\psi(T)\Vert_V^2\right),
		\end{array}
		\end{equation}   	
		where $C$ is a positive constant independent of $M$ and $\varepsilon$.
	\end{lemma}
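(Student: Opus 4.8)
The plan is to run the energy estimate of Proposition~\ref{penalizedcoercivetruncatedproblem} once more for the solution $u:=u_{\varepsilon,\lambda,M}$ of \eqref{PTCP} (legitimate, since there $\partial_t u\in L^2([0,T];V)$), but to reorganise the two problematic contributions — the penalisation coupling and the first–order part $\tilde a^{(M)}$ — so that the resulting constant no longer sees $\varepsilon$ or $M$. First I would test \eqref{PTCP} not with $\partial_t u$ but with $v=\partial_t(u-\psi)\in V$ (available because $|\partial_t\psi|\le\Psi$ with $\Psi\in L^2([0,T];V)$) and integrate in time from $t$ to $T$. Using the symmetry of $\bar a_\lambda$, the term $\bar a_\lambda(u,\partial_s u)$ becomes $\tfrac12\frac{d}{ds}\bar a_\lambda(u,u)$, and — this is the point of subtracting $\partial_t\psi$ — the penalised term collapses to a perfect time derivative, $-\tfrac1\varepsilon((\psi-u)_+,\partial_s(u-\psi))_H=-\tfrac1{2\varepsilon}\frac{d}{ds}\|(\psi-u)_+\|_H^2$, so that after integration (and $u(T)=\psi(T)$) it contributes exactly $\tfrac1{2\varepsilon}\|(\psi(t)-u(t))_+\|_H^2$ to the left-hand side with no leftover coupling term. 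Coercivity of $\bar a_\lambda$ (namely $\bar a_\lambda(w,w)\ge\tfrac{\delta_1}2\|w\|_V^2$) then produces the three quantities appearing on the left of \eqref{estim4}.

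On the right-hand side I would then have, besides the harmless terminal datum $\tfrac12\bar a_\lambda(\psi(T),\psi(T))\le\tfrac12\|\bar a_\lambda\|\,\|\psi(T)\|_V^2$, only the source terms and the pieces coming from $\partial_t\psi$, i.e. $(\partial_s u,\partial_s\psi)_H$, $a^{(M)}_\lambda(u,\partial_s\psi)$, $(g,\partial_s(u-\psi))_H$, together with the first–order term $\int_t^T\tilde a^{(M)}(u,\partial_s u)\,ds$. The first batch is exactly what the extra hypotheses are for: $|(\partial_s u,\partial_s\psi)_H|$ and $|(g,\partial_s u)_H|$ are split by Young's inequality so that a small multiple of $\int_t^T\|\partial_s u\|_H^2$ is absorbed on the left and the remainders are bounded by $\|\Psi\|_{L^2([0,T];H)}$ and $\|\sqrt{1+y}\,g\|_{L^2([0,T];H)}$; while $|a^{(M)}_\lambda(u,\partial_s\psi)|\le C\|u\|_V\|\partial_s\psi\|_V$ and $|(g,\partial_s\psi)_H|$ are controlled, uniformly in $M$, by the uniform bound $\sup_M\|a^{(M)}_\lambda\|\le C$ of \eqref{estimate_aM}, by Cauchy--Schwarz in time, by $\|\Psi\|_{L^2([0,T];V)}$, and by the already-uniform control of $\int_t^T\|u\|_V^2$ and $\tfrac1\varepsilon\int_t^T\|(\psi-u)_+\|_H^2$ coming from the (uniform) estimate \eqref{estim1}; the hypothesis $\sqrt{1+y}\,\psi\in L^2([0,T];V)$ is what makes that reuse of \eqref{estim1} legitimate with the stated right-hand side.

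The genuine obstacle is the first–order term $\int_t^T\tilde a^{(M)}(u,\partial_s u)\,ds=\int_t^T\into(y\wedge M)(j_{\gamma,\mu}u_x+k_{\gamma,\mu}u_y)\partial_s u\,d\m\,ds$. The naive bound $|\tilde a^{(M)}(u,\partial_s u)|\le K_1\into(y\wedge M)|\nabla u||\partial_s u|\,d\m$ forces a choice: carrying the weight $(y\wedge M)$ onto $\partial_s u$ reproduces the factor $M$ that spoiled \eqref{estim2} (since $\into(y\wedge M)|\partial_s u|^2\le M\|\partial_s u\|_H^2$), whereas carrying it onto $\nabla u$ produces $\into(y\wedge M)^2|\nabla u|^2$, an $H^2$–type quantity not controlled uniformly, and routing through $\int_t^T\|\partial_s u\|_V^2$ only yields the $\varepsilon$–dependent constant of \eqref{estim3}. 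My plan here is therefore to integrate by parts in time, $\int_t^T\tilde a^{(M)}(u,\partial_s u)\,ds=\big[\tilde a^{(M)}(u,u)\big]_t^T-\int_t^T\tilde a^{(M)}(\partial_s u,u)\,ds$, bounding the terminal contribution by $K_1\|\psi(T)\|_V^2$ and absorbing the time-$t$ contribution $\tilde a^{(M)}(u(t),u(t))$ (which is $\le K_1\|u(t)\|_V^2$) into the coercive term $\tfrac12\bar a_\lambda(u(t),u(t))$, at the price of enlarging $\lambda$; the remaining integral is then rewritten with the weighted integration-by-parts formulas of Propositions \ref{IPP1} and \ref{IPP2} so as never to transfer the truncation weight onto $\partial_t u$. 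Making this last step rigorous while keeping the constant independent of both $M$ and $\varepsilon$ — which is precisely the role of the truncation and of the uniform bound \eqref{estimate_aM} — is the delicate heart of the proof; once it is done, collecting all terms, choosing the Young parameters small enough to absorb the $\int_t^T\|\partial_s u\|_H^2$ pieces, and invoking \eqref{estim1} for the uniform $V$– and penalisation bounds yields \eqref{estim4} with $C$ independent of $M$ and $\varepsilon$.
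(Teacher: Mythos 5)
Your handling of the penalization term (testing with $\partial_t(u-\psi)$ so that it collapses to $\tfrac1{2\varepsilon}\tfrac{d}{dt}\|(\psi-u)_+\|_H^2$) is a legitimate variant of what the paper does: the paper keeps $v=\partial_t u$ and controls the leftover coupling $\tfrac1\varepsilon((\psi-u)_+,\partial_t\psi)_H\le\tfrac1\varepsilon((\psi-u)_+,\Psi)_H$ by testing \eqref{PTCP} a second time with $v=\Psi(t)$ (estimate \eqref{estavecv1}); either route works for that piece. The genuine gap is exactly where you place it yourself: the term $\int_t^T\tilde a^{(M)}(u,\partial_s u)\,ds$. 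The remedy you sketch — integrate by parts in time and then in space via Propositions \ref{IPP1} and \ref{IPP2} so as not to load the weight onto $\partial_s u$ — does not close. The spatial integration by parts against the measure $\m=y^{\beta-1}e^{-\gamma|x|-\mu y}dxdy$ produces correction terms (the $\gamma\,\mathrm{sgn}(x)$ and $(\beta-\mu y)$ contributions of Propositions \ref{IPP1}--\ref{IPP2}, applied with the weight $y\wedge M$) that are bounded only by $\into(1+y)|u|\,|\partial_s u|\,d\m$. The $V$-norm controls $\|\sqrt{1+y}\,u\|_H$ but not $\|(1+y)u\|_H$, and $\|\partial_s u\|_H$ carries no spatial weight to share, so these terms are not controlled by any quantity available at that stage. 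You acknowledge this step as the "delicate heart" without resolving it; as written it cannot be completed.

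The ingredient the paper uses, and which is absent from your plan, is a \emph{preliminary weighted a priori estimate}: test \eqref{PTCP} with $v=(u-\psi)\varphi_n$, $\varphi_n(y)=1+y\wedge n$, obtain the bound \eqref{calculbis} uniformly in $n$, $M$ and $\varepsilon$, and let $n\to\infty$ by monotone convergence to get \eqref{uchapeau}:
$$
\int_t^T\big\|\,y|\nabla u(s)|\,\big\|_{H}^2\,ds\le K_4\Big(\|\sqrt{1+y}\,g\|^2_{L^2([t,T];H)}+\|\sqrt{1+y}\,\Psi\|^2_{L^2([t,T];H)}+\|\sqrt{1+y}\,\psi\|^2_{L^2([t,T];V)}\Big).
$$
With this in hand the problematic term is disposed of directly: $|\tilde a^{(M)}(u,\partial_s u)|\le K_1\|\,y|\nabla u|\,\|_H\,\|\partial_s u\|_H$ (using $y\wedge M\le y$), Young's inequality absorbs the $\|\partial_s u\|_H^2$ piece on the left, and \eqref{uchapeau} bounds the remainder uniformly in $M$ and $\varepsilon$. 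This weighted preliminary step is also where the hypothesis $\sqrt{1+y}\,\psi\in L^2([0,T];V)$ is actually consumed. Without it, or an equivalent uniform bound on $\int_t^T\|\,y|\nabla u|\,\|_H^2\,ds$, your outline cannot produce a constant independent of $M$ and $\varepsilon$.
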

	\begin{proof}
		To simplify the notation we denote $u_{\varepsilon,\lambda,M}$ by $u$ and $u_{\varepsilon,\lambda,M}-\psi=u-\psi$ by $w$.
		For $n\geq 0$, 
		define  $\varphi_n(x,y)=1+y\wedge n$. Since  $\varphi_n$ and its derivatives are bounded, if $v\in V$,  we have $v\varphi_n\in V$. 
		Choosing  $v= (u-\psi)\varphi_n=w\varphi_n$ in \eqref{PTCP}, with simple passages we get
		\begin{align*}
		&	-\left( \frac{\partial w}{\partial t }(t),w(t)\varphi_n   \right)_H  + a^{(M)}_{\lambda}(w(t),w(t)\varphi_n)
		+ (\zeta_\varepsilon(t,u(t)),w(t)\varphi_n)_H \\&\qquad    = \left(\frac{\partial \psi}{\partial t }(t)+g(t),w(t)\varphi_n\right)_H- a^{(M)}_{\lambda}(\psi(t),w(t)\varphi_n).
		\end{align*}
		With the notation  $\varphi'_n=\frac{\partial \varphi_n}{\partial y}= \ind{\{y\leq n\}}$, we have
		\begin{align*}
		& a^{(M)}_{\lambda}(w(t),w(t)\varphi_n)= \\&
		\into\frac{y}{2}\left[\left(\frac{\partial w}{\partial x}(t)\right)^2+
		2\rho\sigma \frac{\partial w}{\partial x}(t)\frac{\partial w}{\partial y}(t)+\sigma^2 \left(\frac{\partial w}{\partial y}(t)\right)^2
		\right ]\varphi_n d\m	+      \lambda \into (1+y)w^2(t)\varphi_nd\m\\
		&+\into \frac{y}{2}\left(\rho\sigma \frac{\partial w}{\partial x}(t)+
		\sigma^2\frac{\partial w}{\partial y}(t)\right)w(t)\varphi'_nd\m
		+\into y\wedge M\left(\frac{\partial w}{\partial x}(t)j_{\gamma,\mu}+
		\frac{\partial w}{\partial y}(t)k_{\gamma,\mu}\right)w(t)\varphi_nd\m\\
		& \geq\delta_1 \into y\left|\nabla w(t)
		\right|^2 \varphi_n d\m+\lambda \into (1+y)w^2(t)\varphi_nd\m
		-K_1\into y\left|\nabla w(t)\right|| w(t)|\varphi_n d\m\\
		&\qquad
		-K_2\into y\left|\nabla w(t)\right|| w(t)|\ind{\{y\leq n\}}d\m,
		\end{align*}
		where $K_2=\frac{\sqrt{\rho^2\sigma^2+\sigma^4}}{2}$. Note that, if $n=0$, the last term vanishes, and that, for all $n>0$,
		\[
		\into y\left|\nabla w(t)\right| |w(t)|\ind{\{y\leq n\}}d\m\leq \Vert w(t)\Vert _V^2.
		\]
		Therefore, for all $\zeta>0$,
		\begin{align*}
		&  a^{(M)}_{\lambda}(w(t),w(t)\varphi_n)
		\geq\delta_1 \into y\left|\nabla w(t)
		\right|^2 \varphi_n d\m+\lambda \into (1+y)w^2(t)\varphi_nd\m\\
		&\qquad
		-K_1\into y\left(\frac{\zeta}{2}\left|\nabla w(t)\right|^2
		+\frac{1}{2\zeta}|w(t)|^2\right)\varphi_n d\m				
		-K_2\Vert w(t)\Vert _V^2\\
		&\qquad \geq\left( \delta_1 -\frac{K_1\zeta}{2}\right)\into y\left|\nabla w(t)
		\right|^2 \varphi_n d\m+\left(\lambda -\frac{K_1}{2\zeta}\right)\into (1+y)w^2(t)\varphi_nd\m			
		-K_2\Vert w(t)\Vert _V^2\\
		&\qquad\geq 
		\frac{\delta_1}{2}\into\left( y\left|\nabla w(t)
		\right|^2+(1+y)w^2(t)\right)\varphi_nd\m-K_2\Vert w(t)\Vert _V^2,
		\end{align*}
		where, for the last inequality, we have chosen $\zeta=\delta_1/K_1$ and used the inequality 
		$\lambda\geq \frac{\delta_1}{2}+\frac{K_1^2}{2\delta_1}$. Again, in the case $n=0$ the last term 
		on the righthand side can be omitted.

		Hence, we have, with the notation $\Vert v\Vert ^2_{V,n}=\into\left( y\left|\nabla v
		\right|^2+(1+y)v^2\right)\varphi_nd\m$,		 		
		\begin{align*}
		&\-\frac{1}{2}\frac{d}{dt}\into w^2(t)\varphi_n d\m+
		\frac{\delta_1}{2}\Vert w(t)\Vert ^2_{V,n}
		+\frac{1}\varepsilon \into (-w(t))_+^2\varphi_nd\m\leq  	\\&\qquad	
		\left(g(t)+ \frac{\partial \psi}{\partial t }(t),w(t)\varphi_n   \right)_H
		-a^{(M)}_{\lambda}(\psi(t),w(t)\varphi_n)
		+K_2\Vert w(t)\Vert _V^2.
		\end{align*}
		In the case $n=0$, the inequality reduces to
		$$
		-\frac{1}{2}\frac{d}{dt}\into w^2(t)d\m+
		\frac{\delta_1}{2}\Vert w(t)\Vert ^2_{V}
		+\frac{1}\varepsilon \into (\psi-u)_+^2d\m
		\leq  		
		\left(g(t)+ \frac{\partial \psi}{\partial t }(t),w(t)   \right)_H
		-a^{(M)}_{\lambda}(\psi(t),w(t)).
		$$
		Now, integrate from  $t$ to $T$ and use $u(T)=\psi(T)$ to derive
		\begin{equation}\label{calcul}
		\begin{split}
		&	\frac{1}{2}\into w(t)^2\varphi_n d\m+
		\frac 	{\delta_1} 2\int_t^T ds \Vert w(s)\Vert ^2_{V,n}
		+\frac{1}\varepsilon\int_t^T ds \into (-w(s))_+^2\varphi_n d\m\\	&\leq\int_t^T\left(g(s)+ \frac{\partial \psi}{\partial t }(s),w(s)\varphi_n   \right)_Hds+\left|\int_t^Ta^{(M)}_{\lambda}(\psi(s),w(s)\varphi_n) ds\right|  +K_2\int_t^T\Vert  w(s) \Vert _V^2ds,
		\end{split}
		\end{equation}
		and, in the case $n=0$,
		\begin{equation}\label{calcul0}
		\begin{split}
		&		\frac{1}{2}\Vert w(t)\Vert _H^2+
		\frac 	{\delta_1} 2\int_t^T\!  \Vert w(s)\Vert ^2_{V}ds
		+\frac{1}\varepsilon\int_t^T ds \into (-w(s))_+^2 d\m\\&\quad	\leq\int_t^T\!\left(g(s)+ \frac{\partial \psi}{\partial t }(s),w(s)
		\right)_H\!\|ds+\int_t^T\left|a^{(M)}_{\lambda}(\psi(s),w(s))\right|  ds.
		\end{split}
		\end{equation}
		
		We have, for all $\zeta_1>0$,
		\begin{align*}
		&	\int_t^T\bigg(g(s)+ \frac{\partial \psi}{\partial t }(s),w(s)\varphi_n   \bigg)_Hds\\&\quad \leq \frac {\zeta_1}2 \int_t^Tds\into|w(s)|^2\varphi_nd\m+\frac 1 {2\zeta_1}\int_t^Tds\into\left|g(s)+ \frac{\partial \psi}{\partial t }(s)\right|^2\varphi_n d\m\\
		&\quad \leq \frac {\zeta_1}2 \int_t^Tds\into|w(s)|^2\varphi_nd\m+\frac 1 {\zeta_1}\|\sqrt{1+y}g\|_{L^2([t,T];H)}^2+\frac 1 {\zeta_1}
		\left	\|\sqrt{1+y}\frac{\partial\psi}{\partial t}\right\|_{L^2([t,T];H)}^2.
		\end{align*}
		Moreover,	 it is easy to check that, for all $v_1$, $v_2\in V$,
		\[
		|a^{(M)}_{\lambda}(v_1,v_2\varphi_n)|\leq K_3 \Vert v_1\Vert _{V,n}\Vert v_2\Vert _{V,n}, \mbox{\qquad with } K_3=\delta_0+K_1+K_2+\lambda,
		\]
		so that, for any $\zeta_2>0$,
		\begin{align*}
		&	\int_t^T |a^{(M)}_{\lambda}(\psi(s),w(s)\varphi_n)| ds\\&\quad\leq K_3\int_t^Tds\Vert \psi(s)\Vert _{V,n}\Vert w(s)\Vert _{V,n} \leq  \frac {K_3\zeta_2}2\int_t^Tds\Vert w(s)\Vert ^2_{V,n}
		+\frac {K_3}{2\zeta_2}\int_t^Tds\Vert \psi(s)\Vert ^2_{V,n}.
		\end{align*}
		Now, if we chose  $\zeta_1=K_3\zeta_2=\delta_1/4$
		and we go back to \eqref{calcul} and \eqref{calcul0}, using $\left|\frac{\partial \psi}{\partial t}\right|\leq \Psi$ we get
		\begin{equation}\label{calculbis}
		\begin{split}
		&	\frac{1}{2}\into w^2(t)\varphi_n d\m+
		\frac 	{\delta_1} 4\int_t^T  \Vert w(s)\Vert ^2_{V,n}ds
		+\frac{1}\varepsilon\int_t^T ds \into (-w(s))_+^2\varphi_n d\m\\	&\leq
		\frac 4 {\delta_1}\left(\|\sqrt{1+y}g\|_{L^2([t,T];H)}^2+\|\sqrt{1+y}\Psi\|_{L^2([t,T];H)}^2\right)
		+\frac {2K_3^2}{\delta_1}\int_t^T\Vert \psi(s)\Vert ^2_{V,n}ds\\&\qquad+K_2\Vert  w \Vert _{L^2([t,T];H)}^2,\\
		&\leq
		\frac 4 {\delta_1}\left(\|\sqrt{1+y}g\|_{L^2([t,T];H)}^2+\|\sqrt{1+y}\Psi\|_{L^2([t,T];H)}^2\right)
		+\frac {4K_3^2}{\delta_1}\left\|\sqrt{1+y}\psi\right\|^2_{L^2([t,T];V)}\\&\qquad+K_2\Vert  w \Vert _{L^2([t,T];H)}^2,
		\end{split}
		\end{equation}
		where the last inequality follows from the estimate $\Vert v\Vert ^2_{V,n}\leq 2\Vert \sqrt{1+y}v\Vert _V^2$, and, in the case $n=0$,
		\begin{equation}\label{calcul0bis}
		\begin{split}
		&	\frac{1}{2}\Vert w(t)\Vert _H^2+
		\frac 	{\delta_1} 4\int_t^T \!\! \Vert w(s)\Vert ^2_{V}
		ds	+\frac{1}\varepsilon\int_t^T \!ds\! \into \!(-w(s))_+^2 d\m\\&\quad	\leq
		\frac 4 {\delta_1}\left(\|g\|_{L^2([t,T];H)}^2+\|\Psi\|_{L^2([t,T];H)}^2\right)
		+\frac {2K_3^2}{\delta_1}\| \psi\|^2_{L^2([t,T];V)}.
		\end{split}
		\end{equation}
		From \eqref{calcul0bis} recalling that $w=u-\psi$ we deduce 
		\begin{equation}\label{calcul0tris}
		\begin{split}
		\int_t^T\!\!\|u(s)\|_V^2ds&\leq \int_t^T\!\!2( \|w(s)\|_V^2+\|\psi(s)\|_V^2)ds\\& \leq \frac{32}{\delta_1^2}\left(\|g\|_{L^2([t,T];H)}^2+\|\Psi\|_{L^2([t,T];H)}^2\right)+\left(\frac {16K_3^2}{\delta_1^2}+2\right)\| \psi\|^2_{L^2([t,T];V)}.
		\end{split}
		\end{equation}
		Moreover, combining \eqref{calculbis} and \eqref{calcul0bis}, we have
		\begin{equation*}
		\begin{split}
		&	\frac{1}{2}\into w^2(t)\varphi_n d\m+
		\frac 	{\delta_1} 4\int_t^T  \Vert w(s)\Vert ^2_{V,n}ds
		+\frac{1}\varepsilon\int_t^T ds \into (-w(s))_+^2\varphi_n d\m\\	&\leq
		\left(\frac 4 {\delta_1}+ \frac{16K_2}{\delta_1^2}\right)\left(\|\sqrt{1+y}g\|_{L^2([t,T];H)}^2+\|\sqrt{1+y}\Psi\|_{L^2([t,T];H)}^2\right)
		\\&\quad+\frac {4K_3^2}{\delta_1}\left(1+\frac{2K_2}{\delta_1}\right)\|\sqrt{1+y}\psi\|^2_{L^2([t,T];V)}.
		\end{split}
		\end{equation*}
		In particular,
		\begin{equation*}
		\begin{split}
		&\int_t^Tds\into y|\nabla u(s)|^2\varphi_n d\m \leq \int_t^T\Vert u(s)\Vert ^2_{V,n }ds\leq 2 \int_t^T  \Vert w(s)\Vert ^2_{V,n}ds+ 2\int_t^T ds\Vert \psi(s)\Vert ^2_{V,n}ds\\
		&\leq \frac 8 {\delta_1}  \left(\frac 4 {\delta_1}+ \frac{16K_2}{\delta_1^2}\right)\left(\|\sqrt{1+y}g\|_{L^2([t,T];H)}^2+\|\sqrt{1+y}\Psi\|_{L^2([t,T];H)}^2\right)
		\\&\quad+\left(\frac {32K_3^2}{\delta_1^2}\left(1+\frac{2K_2}{\delta_1}\right)+4\right)\|\sqrt{1+y}\psi\|^2_{L^2([t,T];V)} 
		\end{split}
		\end{equation*}
		and, 
		by	using the Monotone convergence theorem, we deduce 
		\begin{equation}\label{uchapeau}
		\begin{split}
		& 		\int_t^T|y|\nabla u(s)| \|_{H}^2 ds\\&\qquad \leq K_4\left(\|\sqrt{1+y}g\|_{L^2([t,T];H)}^2\!+\|\sqrt{1+y}\Psi\|_{L^2([t,T];H)}^2\!+\|\sqrt{1+y}\psi\|^2_{L^2([t,T];V)} \right),
		\end{split}
		\end{equation}
		where $K_4=\frac 8 {\delta_1}  \left(\frac 4 {\delta_1}+ \frac{16K_2}{\delta_1^2}\right) \vee\left(\frac {32K_3^2}{\delta_1^2}\left(1+\frac{2K_2}{\delta_1}\right)+4  \right)		$.

		We are now in a position to prove \eqref{estim4}.  Taking $v=\frac{\partial u}{\partial t }$ in \eqref{PTCP}, we have
		$$
		-\left \Vert \frac{\partial u}{\partial t}\right \Vert^2_H+ \bar{a}_\lambda\left(u,\frac{\partial u}{\partial t}\right) +
		\tilde{a}^{(M)}\left(u,\frac{\partial u}{\partial t}\right)-\frac{1}{\varepsilon}\left((\psi-u)_+,\frac{\partial u}{\partial t}\right)_H
		= \left(g(t),\frac{\partial u}{\partial t}(t)\right)_H.
		$$
		Note that, since $\bar a_\lambda$ is symmetric, $\frac{d}{dt}\bar{a}_\lambda\left(u(t),u(t)\right)=2\bar{a}_\lambda\left(u(t),\frac{\partial u}{\partial t}(t)\right)$.
		On the other hand, 
		\begin{align*}
		\left((\psi(t)-u(t))_+,\frac{\partial u}{\partial t}\right)_H &=-\frac{1}{2}\frac{d}{dt}\Vert (\psi(t)-u(t))_+\Vert _H^2+\left((\psi(t)-u(t))_+,\frac{\partial \psi}{\partial t}(t)\right)_H,
		\end{align*}
		so that
		\begin{align*}
		&	\left\Vert \frac{\partial u}{\partial t} (t)\right \Vert^2_H-\frac{1}{2}\frac{d}{dt}\bar{a}_\lambda\left(u(t),u(t)\right) 
		-\frac{1}{2\varepsilon}\frac{d}{dt}\Vert (\psi(t)-u(t))_+\Vert _H^2\\&\qquad
		= \tilde{a}^{(M)}\left(u(t),\frac{\partial u}{\partial t}(t)\right) -\left(g(t),\frac{\partial u}{\partial t}(t)\right)_H
		- \frac 1{\varepsilon}\left((\psi(t)-u(t))_+,\frac{\partial \psi}{\partial t}(t)\right)_H\\
		& \qquad\leq \left|\tilde{a}^{(M)}\left(u(t),\frac{\partial u}{\partial t}(t)\right)\right|+
		\Vert g(t)\Vert _H\left\Vert \frac{\partial u}{\partial t}(t) \right\Vert_H+\frac 1{\varepsilon}\left((\psi(t)-u(t)_+,\Psi (t)\right)_H\\
		& \qquad\leq \left(K_1 \left\Vert y|\nabla u(t)|\right\Vert_H+\Vert g(t)\Vert _H \right) \left\Vert \frac{\partial u}{\partial t}(t)\right\Vert_H
		+\frac 1{\varepsilon}\left((\psi(t)-u(t))_+,\Psi(t) \right)_H.
		\end{align*}		
		Moreover, if we take $v=\Psi(t)$ in \eqref{PTCP}, we get
		$$
		-\left( \frac{\partial u}{\partial t }(t),\Psi(t) \right)_H + a_{\lambda}^{(M)}(u(t),\Psi(t))-
		\frac 1{\epsilon} \left((\psi(t)-u(t))_+,\Psi(t)\right)_H= \left(g(t),\Psi(t) \right)_H,
		$$
		so that
		\begin{equation}\label{estavecv1}
		\begin{split}
		&	\frac{1}{\varepsilon}\left( (\psi(t)-u(t))_+,\Psi(t)\right)_H \leq  \left \Vert    \frac{\partial u}{\partial t } (t)\right \Vert_H \Vert \Psi(t)\Vert_H+ \Vert a^{(M)}_\lambda\Vert 
		\Vert u(t) \Vert_V\Vert \Psi(t)\Vert_V+\Vert g(t)\Vert_H\Vert \Psi(t)\Vert_H.	\end{split}
		\end{equation}
		Therefore,
		\begin{align*}
		&	\left\Vert \frac{\partial u}{\partial t} (t)\right \Vert^2_H-\frac{1}{2}\frac{d}{dt}\bar{a}_\lambda\left(u(t),u(t)\right) 
		-\frac{1}{2\varepsilon}\frac{d}{dt}\Vert (\psi(t)-u(t))_+\Vert _H^2
		\\&	 \leq	 \left(K_1 \left\Vert y|\nabla u(t)|\right\Vert_H+\Vert g(t)\Vert _H +\Vert \Psi(t)\Vert_H\right) 
		\left\Vert \frac{\partial u}{\partial t} (t)\right\Vert_H
		+ \Vert a^{(M)}_\lambda\Vert 
		\Vert u (t)\Vert_V\Vert \Psi(t)\Vert_V\\&	
		\qquad+ \Vert g(t)\Vert_H\Vert \Psi(t)\Vert_H,
		\end{align*}
		hence
		\begin{align*}
		&	\frac{1}{2}\left\Vert \frac{\partial u}{\partial t}(t) \right \Vert^2_H-\frac{1}{2}\frac{d}{dt}\bar{a}_\lambda\left(u(t),u(t)\right) 
		-\frac{1}{2\varepsilon}\frac{d}{dt}\Vert (\psi(t)-u(t))_+\Vert _H^2
		\\&		 \leq \frac{1}{2}\left(K_1 \left\Vert y|\nabla u(t)|\right\Vert_H+\|g(t)\Vert _H +\Vert \Psi(t)\Vert_H\right)^2 		
		+\Vert a^{(M)}_\lambda\Vert 
		\Vert u(t) \Vert_V^2 \Vert \Psi(t)\Vert_V^2+\Vert g(t)\Vert_H\Vert \Psi(t)\Vert_H.
		\end{align*}
		Integrating between $t$ and $T$, we get, 
		\begin{align*}
		&	\frac{1}{2}\left \Vert \frac{\partial u}{\partial s} \right \Vert^2_{L^2([t,T];H)}+ 
		\frac 1 2 \bar{a}_\lambda\left(u(t), u(t)\right) +\frac 1 {2\varepsilon} \Vert (\psi(t)-u(t))_+\Vert^2_H\\&\qquad
		\leq \frac 1 2 \bar{a}_\lambda(\psi(T),\psi(T))+2\|g\Vert ^2_{L^2([t,T];H)}  +
		2\Vert \Psi\Vert_{L^2([t,T];H)}^2 	+\frac{3K_1^2}{2} \left\Vert y|\nabla u|\right\Vert^2_{L^2([t,T];H)}
		\\&\qquad	+\frac{ \|a^{(M)}_\lambda\|}2\Vert u \Vert_{L^2([t,T];V)}+\frac{ \|a^{(M)}_\lambda\|}{2}\Vert \Psi\Vert_{L^2([t,T];V)},
		\end{align*}
		so, recalling that $\bar{a}_\lambda(u(t),u(t)\geq\delta_1\into y|\nabla u(t)|^2d\m+\lambda\into(1+y)u^2d\m\geq (\delta_1\wedge\lambda) \|u(t)\|_V^2$,
		\begin{align*}
		\frac{1}{2}&\left \Vert \frac{\partial u}{\partial s} \right \Vert^2_{L^2([t,T];H)}+ 
		\frac {\delta_1\wedge\lambda} 2 \|u(t)\|_V^2 +\frac 1 {2\varepsilon} \Vert (\psi(t)-u(t))_+\Vert^2_H\\&
		\leq \frac {\|\bar{a}_\lambda\|} 2 \|\psi(T)\|_V^2+2\|g\Vert ^2_{L^2([t,T];H)} +
		2\Vert \Psi\Vert_{L^2([t,T];H)}^2 \\&\quad 	+\frac{3K_1^2}{2} \left\Vert y|\nabla u|\right\Vert^2_{L^2([t,T];H)}
		+\frac{ \Vert a^{(M)}_\lambda\Vert }2\Vert u \Vert^2_{L^2([t,T];V)}+\frac{ \Vert a^{(M)}_\lambda\Vert }{2}\Vert \Psi\Vert^2_{L^2([t,T];V)}\\&\leq  \frac {\|\bar{a}_\lambda\|} 2 \|\psi(T)\|_V^2+2\|g\Vert ^2_{L^2([t,T];H)} +
		2\Vert \Psi\Vert_{L^2([t,T];H)}^2 \\&\quad +
		\frac{3K_1^2}{2}	K_4\left(\|\sqrt{1+y}g\|_{L^2([t,T];H)}^2+\|\sqrt{1+y}\Psi\|_{L^2([t,T];H)}^2+\|\sqrt{1+y}\psi\|^2_{L^2([t,T];V)}\right )\\
		&\quad +\frac{ \Vert a^{(M)}_\lambda\Vert }2\left(\frac{32}{\delta_1^2}\left(\|g\|_{L^2([t,T];H)}^2+\|\Psi\|_{L^2([t,T];H)}^2\right)+\left(\frac {16K_3^2}{\delta_1^2}+2\right)\| \psi\|^2_{L^2([t,T];V)}\right)\\&\quad +\frac{ \Vert a^{(M)}_\lambda\Vert }{2}\Vert \Psi\Vert^2_{L^2([t,T];V)},
		\end{align*}
		where the last inequality follows from \eqref{calcul0tris} and \eqref{uchapeau}.
		Rearranging the terms, 
		we deduce that there exists a constant $C>0$ independent of $M$ and $\varepsilon$ such that 
		\begin{align*}
		&	\frac{1}{2}\left \Vert \frac{\partial u}{\partial s} \right \Vert^2_{L^2([t,T];H)}+ 
		\frac {\delta_1\wedge\lambda} 4 \|u(t)\|_V^2 +\frac 1 {2\varepsilon} \Vert (\psi(t)-u(t))_+\Vert^2_H
		\\&\qquad \leq C\left(\|\sqrt{1+y}g\|_{L^2([t,T];H)}^2+\|\Psi\|_{L^2([t,T];V)}^2+\left\|\sqrt{1+y}\psi\right\|^2_{L^2([t,T];V)}+ \|\psi(T)\|_V^2 \right),
		\end{align*}
		which concludes the proof.
	\end{proof}
	\begin{proof}[Proof of Theorem \ref{penalizedcoerciveproblem}: existence]
		Assume for a first moment that we have the further assumptions $ \psi(T) \in H^2(\O,\m)$, $ (1+y)\psi(T) \in H$, $\frac{\partial \psi}{\partial t}\in L^2([0,T];V) $  and $\frac{\partial g}{\partial t}\in L^2([0,T];H)$.
		Thanks to $\eqref{estim4}$ we can repeat the same arguments as in the proof of Proposition \ref{penalizedcoercivetruncatedproblem} in order to pass to the limit in $j$, but this time as $M\rightarrow \infty $.  
		%
		Therefore, we deduce the existence of a function $u_{\varepsilon,\lambda}\in L^2([0,T];V)$ with $\frac{\partial u_{\varepsilon,\lambda}}{\partial t}\in L^2([0,T];H)$ and such that
		$$
		-\left( \frac{\partial u_{\varepsilon,\lambda}}{\partial t }(t),v   \right)_H + a_\lambda(u_{\varepsilon,\lambda}(t),v)_H-\frac 1 \varepsilon ((\psi(t)-u_{\varepsilon,\lambda}(t))_+,v)_H= (g(t),v)_H.
		$$
		The estimates \eqref{sp1}, \eqref{sp2} and \eqref{sp3} directly follow from \eqref{estim4} as $M\rightarrow \infty$.

		We have now to weaken the assumptions on $g$ and $\psi$. We can do this by a regularization procedure. In fact, let us assume that $\psi$ satisfies Assumption $\mathcal H^1$ (so, in particular, $\left|\frac{\partial \psi}{\partial t}\right|\leq \Psi$ for a certain $\Psi\in L^2([0,T];V)$ and $g$ satisfies Assumption $\mathcal H^0$. Then, by  standard regularization techniques (see  for example \cite[Corollary A.12]{DF}), we can find  sequences of functions $(g_n)_n$, $(\psi_n)_n$ and $(\Psi_n)_n$ of class $C^\infty$ with compact support such that, for any $n\in\N$, $n\in\N$, $|\frac{\partial\psi_n}{\partial t}|\leq \Psi_n$ and all the regularity assumptions required in the first part of the proof are satisfied. Moreover,  it is easy to see that $\Vert \sqrt{1+y}g_n -\sqrt{1+y}g\Vert_{L^2([0,T];H)}\rightarrow0$, $ \Vert \sqrt{1+y}\psi_n -\sqrt{1+y}\psi\Vert_{L^2([0,T];V)} \rightarrow 0$, $ \Vert\Psi_n-\Psi\Vert_{L^2([0,T];V)} \rightarrow 0$, $\Vert \psi_n(T)-\psi(T)\Vert_V\rightarrow0$ as $n\rightarrow \infty$.  Therefore, the solution $u_{\varepsilon,\lambda,M}^n$ of the equation \eqref{PCP} with source function $g_n$ and obstacle function $\psi_n$ satisfies  
		\begin{equation}\label{lessass}
		\begin{array}{c}
		\int_t^T \left \Vert \frac{\partial u^n_{\varepsilon,\lambda,M}}{\partial s} (s)\right \Vert^2_H \,ds+   \Vert u^n_{\varepsilon,\lambda,M}(t)\Vert_V^2 +\frac 1 {\varepsilon} \Vert (\psi_n(t)-u^n_{\varepsilon,\lambda,M}(t))_+\Vert^2_H\\
		\leq C  \left(   \|\sqrt{1+y}g_n\|_{L^2([0,T];H)} +\|\sqrt{1+y}\psi_n\|^2_{L^2([0,T];V)}   +\|\Psi_n\|_{L^2([0,T];V)}^2 +\Vert\psi_n(T)\Vert_V^2 \right).
		\end{array}
		\end{equation} Then, we can take the limit for $n\rightarrow \infty$ in \eqref{lessass} and the assertion follows as in the  first part of the proof.
	\end{proof}

	Moreover, we have the following Comparison principle for the coercive penalized problem.
	\begin{proposition} \label{CompPrinc1}
		
		\begin{enumerate}
			\item Assume that $\psi_i$  satisfies Assumption $\mathcal{H}^1$ for $i=1,2$ and $g$ satisfies Assumption $\mathcal{H}^0$.  Let $u^i_{\varepsilon,\lambda}$ be the unique solution of \eqref{PCP}   with  obstacle function $\psi_i$ and source function $g$.  If $\psi_1\leq \psi_2$, then $u^1_{\varepsilon,\lambda}\leq u^2_{\varepsilon,\lambda}$.
			\item Assume that  $\psi$ satisfies Assumption $\mathcal{H}^1$  and  $g_i$ satisfy Assumption $\mathcal{H}^0$ for $i=1,2$. Let $u^i_{\varepsilon,\lambda}$ 
			be the unique solution of \eqref{PCP} with obstacle function $\psi$ and source function $g_i$.  If $g_1\leq g_2$, then $u^1_{\varepsilon,\lambda}\leq u^2_{\varepsilon,\lambda}$.
			\item Assume that $\psi_i$  satisfies Assumption $\mathcal{H}^1$ for $i=1,2$ and $g$ satisfies Assumption $\mathcal{H}^0$.  Let $u^i_{\varepsilon,\lambda}$ be the unique solution of \eqref{PCP}   with  obstacle function $\psi_i$ and source function $g$.  If $\psi_1- \psi_2\in L^\infty$, then $u^1_{\varepsilon,\lambda}- u^2_{\varepsilon,\lambda}\in L^\infty $ and $\Vert  u^1_{\varepsilon,\lambda}- u^2_{\varepsilon,\lambda}\Vert_\infty \leq   \Vert \psi_1- \psi_2\Vert_\infty$.
		\end{enumerate}
	\end{proposition}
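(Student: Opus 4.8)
The plan is to handle all three assertions by a single device: subtract the two penalized equations, test the difference against the positive part of $w:=u^1_{\varepsilon,\lambda}-u^2_{\varepsilon,\lambda}$, and show that $t\mapsto\|w_+(t)\|_H^2$ is nondecreasing, so that the terminal condition forces $w_+\equiv 0$. Since both solutions lie in $L^2([0,T];V)$ with time-derivative in $L^2([0,T];H)$, the function $w_+=(u^1_{\varepsilon,\lambda}-u^2_{\varepsilon,\lambda})_+$ belongs to $L^2([0,T];V)$ and is an admissible test function, and the chain rule gives $(\frac{\partial w}{\partial t},w_+)_H=\tfrac12\frac{d}{dt}\|w_+\|_H^2$ a.e. The algebraic fact I would isolate first is that, because $\nabla w_+=\ind{\{w>0\}}\nabla w$ and $w\,w_+=w_+^2$, every term of $a_\lambda(w,w_+)$ — the symmetric part $\bar a$, the first-order part $\tilde a$, and the zero-order part $\lambda((1+y)\,\cdot\,,\cdot)_H$ — collapses to the same expression evaluated at $(w_+,w_+)$; hence $a_\lambda(w,w_+)=a_\lambda(w_+,w_+)\ge\frac{\delta_1}{2}\|w_+\|_V^2\ge 0$ by the coercivity in Lemma \ref{coercivity}.

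For (i) the two obstacles differ, so the penalization contribution is $-\frac1\varepsilon((\psi_1-u^1_{\varepsilon,\lambda})_+-(\psi_2-u^2_{\varepsilon,\lambda})_+,w_+)_H$. On $\{w>0\}$ one has $u^1_{\varepsilon,\lambda}>u^2_{\varepsilon,\lambda}$ and $\psi_1\le\psi_2$, so $\psi_1-u^1_{\varepsilon,\lambda}<\psi_2-u^2_{\varepsilon,\lambda}$ and, the positive part being nondecreasing, the integrand is $\ge 0$; off $\{w>0\}$ we have $w_+=0$. Thus this term is $\ge 0$, and the tested identity $-\tfrac12\frac{d}{dt}\|w_+\|_H^2+a_\lambda(w_+,w_+)+(\text{penalization})=0$ yields $\frac{d}{dt}\|w_+(t)\|_H^2\ge 0$. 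Since $w(T)=\psi_1(T)-\psi_2(T)\le 0$, we have $w_+(T)=0$, and monotonicity forces $\|w_+(t)\|_H=0$ for all $t$, i.e. $u^1_{\varepsilon,\lambda}\le u^2_{\varepsilon,\lambda}$. Statement (ii) is identical, except the penalization now has a common obstacle (so its sign follows directly from Lemma \ref{monotonicity}) while the new right-hand term $(g_1-g_2,w_+)_H\le 0$; both contributions push $\frac{d}{dt}\|w_+\|_H^2\ge 0$, and $w(T)=0$ concludes.

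For (iii), set $c:=\|\psi_1-\psi_2\|_\infty$. The key observation is that adding a constant shifts the source: since $\nabla c=0$, one has $a_\lambda(c,v)=(\lambda c(1+y),v)_H$, so $u^2_{\varepsilon,\lambda}+c$ solves the penalized problem with obstacle $\psi_2+c$ and source $g+\lambda c(1+y)$ (the penalization is unchanged because $(\psi_2+c)-(u^2_{\varepsilon,\lambda}+c)=\psi_2-u^2_{\varepsilon,\lambda}$). One checks that $\psi_2+c$ still satisfies Assumption $\mathcal H^1$ and that $g+\lambda c(1+y)$ still satisfies Assumption $\mathcal H^0$ (here $(1+y)^{3/2}\in L^2([0,T];H)$ because $\m$ decays exponentially in $y$). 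Then I would chain (i) and (ii): from $\psi_1\le\psi_2+c$ and $g\le g+\lambda c(1+y)$ (as $c\ge 0$), applying (i) and then (ii) gives $u^1_{\varepsilon,\lambda}\le u^2_{\varepsilon,\lambda}+c$; exchanging the roles of $\psi_1$ and $\psi_2$ gives $u^2_{\varepsilon,\lambda}\le u^1_{\varepsilon,\lambda}+c$, whence $\|u^1_{\varepsilon,\lambda}-u^2_{\varepsilon,\lambda}\|_\infty\le c$.

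The routine parts are the absolute-continuity/chain-rule justification for $\|w_+(t)\|_H^2$ and the closedness of $V$ under taking positive parts in these weighted spaces. The genuine point to get right is the sign bookkeeping of the penalization difference in (i), where the two obstacles do not coincide: it is exactly the monotonicity of $u\mapsto-(\psi-u)_+$ combined with the ordering $\psi_1\le\psi_2$ that makes that term favorable, and this is the only place where the specific hypothesis of each statement is used.
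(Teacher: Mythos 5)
Your proof is correct and is precisely the standard Bensoussan--Lions argument that the paper invokes when it omits this proof (testing the difference of the two penalized equations against $w_+$, using $a_\lambda(w,w_+)=a_\lambda(w_+,w_+)\ge 0$ by coercivity, the monotonicity of $u\mapsto -(\psi-u)_+$ together with the ordering of the obstacles for the sign of the penalization term, and the constant-shift trick $u^2_{\varepsilon,\lambda}+c$ with modified source $g+\lambda c(1+y)$ for the $L^\infty$ estimate). No gaps; the only points needing care (admissibility of $w_+$ as a test function, the chain rule for $\|w_+\|_H^2$, and membership of constants in the weighted spaces) are correctly identified and are indeed routine here since $\m$ is a finite measure with exponential decay in $y$.
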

	Proposition \ref{CompPrinc1} can be proved with standard techniques introduced in \cite[Chapter 3]{BL} so we omit the proof.
	\subsubsection{Coercive variational inequality}
	\begin{proposition}\label{coercive variational inequality}
		Assume that $\psi$ satisfies Assumption $\mathcal{H}^1$  and $g$ satisfies Assumption $\mathcal{H}^0$.
		Moreover, assume that $ 0\leq \psi \leq \Phi$ with $\Phi\in L^2([0,T]; H^2(\mathcal{O},\m))$ such  that $\frac{\partial \Phi}{\partial t}+\mathcal{L}\Phi \leq 0$ and $0\leq g \leq -\frac{\partial \Phi}{\partial t} -\mathcal{L}^\lambda \Phi$.
		Then, there exists a unique function $u_\lambda $ such that $ u_\lambda\in L^2([0,T];V), \,\frac{\partial u_\lambda}{\partial t } \in L^2([0,T]; H)$ and 
		\begin{equation} \label{CVI}
		\begin{cases}
		-\left( \frac{\partial u_\lambda}{\partial t },v -u_\lambda  \right)_H + a_\lambda(u_\lambda,v-u_\lambda)\geq (g,v-u_\lambda)_H, \quad \mbox{a.e. in } [0,T],\\\qquad \qquad\qquad\qquad\qquad\qquad\qquad\qquad\qquad\qquad\qquad\qquad v\in  L^2([0,T];V), \ v\geq \psi, \\u_\lambda(T)=\psi(T),\\u_\lambda \geq \psi \mbox{ a.e. in } [0,T]\times \R \times (0,\infty).
		\end{cases}
		\end{equation}
		Moreover, $0 \leq u_\lambda \leq  \Phi$.
	\end{proposition}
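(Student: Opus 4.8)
The plan is to obtain $u_\lambda$ as the limit, as $\varepsilon\to 0$, of the solutions $u_{\varepsilon,\lambda}$ of the penalized coercive problem \eqref{PCP} furnished by Theorem \ref{penalizedcoerciveproblem}. This is the classical penalization scheme of Bensoussan and Lions \cite{BL}, the only extra care being the degeneracy and the weights. First I would note that, under the present hypotheses, the estimates \eqref{sp1}, \eqref{sp2} and \eqref{sp3} hold uniformly in $\varepsilon$, so that $(u_{\varepsilon,\lambda})_\varepsilon$ is bounded in $L^\infty([0,T];V)$ and $(\partial_t u_{\varepsilon,\lambda})_\varepsilon$ is bounded in $L^2([0,T];H)$. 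Extracting a subsequence I get $u_{\varepsilon,\lambda}\rightharpoonup u_\lambda$ weakly-$*$ in $L^\infty([0,T];V)$ (hence weakly in $L^2([0,T];V)$) and $\partial_t u_{\varepsilon,\lambda}\rightharpoonup \partial_t u_\lambda$ weakly in $L^2([0,T];H)$, which already yields $u_\lambda\in L^2([0,T];V)$ with $\partial_t u_\lambda\in L^2([0,T];H)$ and, passing to the limit in the terminal value, $u_\lambda(T)=\psi(T)$. From \eqref{sp3} one has $\|(\psi-u_{\varepsilon,\lambda})^+\|_{L^\infty([0,T];H)}\le K\sqrt{\varepsilon}\to 0$, and in the limit $(\psi-u_\lambda)^+=0$, i.e. the constraint $u_\lambda\ge\psi$ a.e. in $[0,T]\times\O$.

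For the bounds $0\le u_\lambda\le\Phi$ I would argue at the penalized level and pass to the limit. Writing \eqref{PCP} in strong form as $\partial_t u_{\varepsilon,\lambda}+\mathcal{L}^\lambda u_{\varepsilon,\lambda}=-\tfrac1\varepsilon(\psi-u_{\varepsilon,\lambda})^+-g$, the hypotheses $\psi\ge 0$, $g\ge 0$ make the constant $0$ a subsolution, while $\Phi\ge\psi$ (so the penalty vanishes on $\Phi$) together with $\partial_t\Phi+\mathcal{L}^\lambda\Phi\le -\lambda(1+y)\Phi\le 0$ and $g\le -\partial_t\Phi-\mathcal{L}^\lambda\Phi$ makes $\Phi$ a supersolution. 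The comparison principle of Proposition \ref{CompPrinc1} (or its proof by testing against $(u_{\varepsilon,\lambda})^-$ and $(u_{\varepsilon,\lambda}-\Phi)^+$) then gives $0\le u_{\varepsilon,\lambda}\le\Phi$ uniformly in $\varepsilon$, and the bound survives in the weak limit.

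The core of the argument is the passage to the inequality. Fixing an admissible competitor $v\in L^2([0,T];V)$ with $v\ge\psi$ and testing \eqref{PCP} against $v-u_{\varepsilon,\lambda}$, I would exploit the sign of the penalty term: decomposing $v-u_{\varepsilon,\lambda}=(v-\psi)+(\psi-u_{\varepsilon,\lambda})$ and using $v-\psi\ge 0$ together with $(\psi-u_{\varepsilon,\lambda})^+\ge 0$ shows that both contributions to $(\zeta_\varepsilon(t,u_{\varepsilon,\lambda}),v-u_{\varepsilon,\lambda})_H$ are nonpositive, whence
\begin{equation*}
-\Big(\tfrac{\partial u_{\varepsilon,\lambda}}{\partial t}, v-u_{\varepsilon,\lambda}\Big)_H + a_\lambda(u_{\varepsilon,\lambda}, v-u_{\varepsilon,\lambda}) \ge (g, v-u_{\varepsilon,\lambda})_H .
\end{equation*}
It then remains to pass to the limit $\varepsilon\to 0$ in this inequality, integrated in time. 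The linear contributions $a_\lambda(u_{\varepsilon,\lambda},v)$, $(g,v-u_{\varepsilon,\lambda})_H$ and $(\partial_t u_{\varepsilon,\lambda},v)_H$ converge by weak convergence; the delicate terms are the quadratic ones, $a_\lambda(u_{\varepsilon,\lambda},u_{\varepsilon,\lambda})$ and $(\partial_t u_{\varepsilon,\lambda},u_{\varepsilon,\lambda})_H$, for which weak convergence alone is insufficient.

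I expect this passage to the limit in the quadratic terms to be the main obstacle. The symmetric coercive part $\bar a_\lambda$ is a nonnegative quadratic form, hence weakly lower semicontinuous, so $\liminf_\varepsilon \bar a_\lambda(u_{\varepsilon,\lambda},u_{\varepsilon,\lambda})\ge \bar a_\lambda(u_\lambda,u_\lambda)$; the lower-order part $\tilde a(u_{\varepsilon,\lambda},u_{\varepsilon,\lambda})$ passes to the limit once I upgrade weak convergence to strong convergence in $L^2([0,T];H)$. This upgrade I would obtain by a compactness argument: on every relatively compact $\mathcal{U}\Subset\O$ the embedding $H^1(\mathcal{U})\hookrightarrow L^2(\mathcal{U})$ is compact, so an Aubin--Lions argument gives local strong convergence, while the weight $(1+y)$ in $\|\cdot\|_V$ controls the mass for large $|x|$ and large $y$ and provides the tightness needed to globalize it. For the time term, rewriting $(\partial_t u_{\varepsilon,\lambda},u_{\varepsilon,\lambda})_H=\tfrac12\tfrac{d}{dt}\|u_{\varepsilon,\lambda}\|_H^2$ and integrating reduces it to boundary values in $H$ at $t$ and $T$, handled by the strong $H$-convergence and the terminal condition. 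Combining these facts yields \eqref{CVI} for $u_\lambda$. Finally, uniqueness follows the standard route: given two solutions, I would test each inequality with the other's solution, add, and use the coercivity of $a_\lambda$ from Lemma \ref{coercivity} together with the resulting $\tfrac{d}{dt}\|u^1_\lambda-u^2_\lambda\|_H^2$ to conclude via Gronwall that they coincide.
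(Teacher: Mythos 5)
Your proposal follows essentially the same route as the paper's proof: penalization, the uniform estimates \eqref{sp1}--\eqref{sp3}, the sandwich $0\le u_{\varepsilon,\lambda}\le\Phi$ obtained by comparing with the solutions $0$ and $\Phi$ of the penalized problem via Proposition \ref{CompPrinc1}, local compactness upgraded to strong convergence in $H$, and the sign/monotonicity of the penalty term to pass to the variational inequality, with weak lower semicontinuity handling the quadratic term. One small repair: the $(1+y)$ weight in the $V$-norm gives tightness only in the $y$-direction, not for large $|x|$; to globalize the strong $H$-convergence you should instead use the already-established domination $|u_{\varepsilon,\lambda}-u_\lambda|\le 2\Phi$ together with $\Phi\in L^2([0,T];H)$, which is exactly the splitting over $\mathcal{O}_\delta$ and $\mathcal{O}_\delta^c$ used in the paper.
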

	\begin{proof}
		The uniqueness of the solution of \eqref{CVI} follows by a standard monotonicity argument introduced in \cite[Chapter 3]{BL} (see \cite{T}). 
		As regards the existence of a solution, we follow  the lines of the proof of \cite[Theorem 2.1]{BL} but we repeat here the details since we use a compactness argument which is not present in the classical theory.
		
		For each fixed $\varepsilon >0$ we have the estimates \eqref{sp1} and \eqref{sp2}, so, for every $t\in[0,T]$, we can extract a subsequence $u_{\varepsilon,\lambda}$ such that $u_{\varepsilon,\lambda}(t) \rightharpoonup u_\lambda(t)$ in $V$ as $\varepsilon \rightarrow 0 $  and $u'_\varepsilon(t) \rightharpoonup u'_\lambda(t)$ in $H$ for some function $u_\lambda \in V$.
		
		Note that $u=0 $ is the unique solution of \eqref{PCP} when $\psi=g=0$, while $u=\Phi $ is the unique solution of \eqref{PCP} when $\psi=\Phi$ and $g=-\frac{\partial \Phi}{\partial t}-\mathcal{L}^\lambda \Phi=-\frac{\partial \Phi}{\partial t}-\mathcal{L} \Phi +\lambda(1+y)\Phi  $. Therefore, Proposition \ref{CompPrinc1} implies that $0\leq u_{\varepsilon,\lambda} \leq \Phi$. Recall that $u_{\varepsilon,\lambda}(t) \rightarrow u_\lambda(t)$ in $L^2(\mathcal{U}, \m ) $ for every relatively compact open $\mathcal{U} \subset \mathcal{O}$. This, together with the fact that $d\m$ is a finite measure, allows to conclude that we have strong convergence of $u_{\varepsilon,\lambda}$ to $u_\lambda$ in $H$. In fact, if $\delta >0$ and  $\mathcal{O}_{\delta}:=(-\frac 1 \delta, \frac 1 \delta )\times (\delta, \frac 1 \delta)$, 
		\begin{align*}
		&	\int_0^Tds	\int_{\mathcal{O}} |u_{\varepsilon,\lambda}(s)- u_\lambda(s)|^2d\m\\&\qquad
		\leq  	\int_0^T\!ds\int_{\mathcal{O}_{\delta}}\! |u_{\varepsilon,\lambda}(s)- u_\lambda(s)|^2d\m+	\int_0^T\!ds\int_{\mathcal{O}^c_{\delta}} \!|u_{\varepsilon,\lambda}(s)- u_\lambda(s)|^2d\m\\&\qquad\leq 	\int_0^T\!ds\int_{\mathcal{O}_{\delta}}\! |u_{\varepsilon,\lambda}(s)- u_\lambda(s)|^2d\m + 	\int_0^T\!ds\int_{\mathcal{O}^c_{\delta}} \!4\Phi^2(s) d\m
		\end{align*} 
		and it is enough to let $\delta$ goes to 0.
		
		From \eqref{sp3} we also have that $(\psi(t)-u_{\varepsilon,\lambda}(t))^+ \rightarrow 0 $ strongly in $H$ as $\varepsilon \rightarrow 0$ . On the other hand $(\psi(t)-u_{\varepsilon,\lambda}(t))_+\rightharpoonup \chi(t)$ weakly in $H$ and $\chi =(\psi-u_\lambda)_+$ since there exists a subsequence of $u_{\varepsilon,\lambda} (t)
		$ which converges pointwise to $u_\lambda(t)$. Therefore, $(\psi(t)-u_\lambda(t))^+=0$, which means $u_\lambda(t) \geq \psi(t)$.
		
		Then we consider the penalized coercive equation in \eqref{PCP} replacing $v$ by $v-u_{\varepsilon,\lambda}(t)$, with $v \geq \psi(t)$. Since $ \zeta_\varepsilon(t,v)=0$ and $ (\zeta_\varepsilon(t,v)- \zeta_\varepsilon(t,u_{\varepsilon,\lambda}(t)),v-u_{\varepsilon,\lambda}(t))_H\geq 0$ we easily deduce that
		$$
		-\left( \frac{\partial u_{\varepsilon,\lambda}}{\partial t }(t),v-u_{\varepsilon,\lambda}(t)   \right)_H + a_\lambda(u_{\varepsilon,\lambda}(t),v-u_{\varepsilon,\lambda}(t))\geq (g(t),v-u_{\varepsilon,\lambda}(t))_H
		$$ 
		so that, letting $\varepsilon $ goes to 0, we have
		\begin{align*}
		-\left( \frac{\partial u_\lambda}{\partial t }(t),v-u_\lambda (t) \right)_H + a_\lambda(u_\lambda(t),v)&\geq (g(t),v-u_\lambda(t))_H+ \liminf_{\varepsilon\rightarrow 0} a_\lambda(u_{\varepsilon,\lambda}(t),u_{\varepsilon,\lambda}(t))\\& \geq (g(t),v-u_\lambda(t))_H+  a_\lambda(u_\lambda(t),u_\lambda(t)).
		\end{align*}
		
		Moreover, since $0\leq u_{\varepsilon,\lambda}\leq \Phi$ for every $\varepsilon>0$ and $u_\lambda=\lim_{\varepsilon  \rightarrow 0} u_{\varepsilon,\lambda}$, we have $0\leq u_\lambda \leq \Phi$ and the assertion follows.
	\end{proof}
	The following Comparison Principle is a direct consequence of Proposition \ref{CompPrinc1},.
	\begin{proposition}\label{CompPrinc2}
		
		\begin{enumerate}
			\item 
			For $i=1,\,2$, assume that  $\psi_i$ satisfies Assumption $\mathcal{H}^1$, $g$ satisfies Assumption $\mathcal{H}^0$ and $ 0\leq \psi_i \leq \Phi$ with $\Phi\in L^2([0,T];H^2(\mathcal{O},\m))$ such  that $\frac{\partial \Phi}{\partial t}+\mathcal{L}\Phi \leq 0$ and $0\leq g \leq -\frac{\partial \Phi}{\partial t} -\mathcal{L}^\lambda \Phi$. Let  $u^i_{\lambda}$  be the unique solution of \eqref{CVI}  with obstacle function $\psi_i$ and source function $g$.  If $\psi_1\leq \psi_2$, then $u^1_{\lambda}\leq u^2_{\lambda}$.
			\item For $i=1,\,2$, assume that  $\psi$ satisfies Assumption $\mathcal{H}^1$, $g_i$ satisfy Assumption $\mathcal{H}^0$ and $ 0\leq \psi \leq \Phi$ with $\Phi\in L^2([0,T];H^2(\mathcal{O},\m))$ such  that $\frac{\partial \Phi}{\partial t}+\mathcal{L}\Phi \leq 0$ and $0\leq g_i \leq -\frac{\partial \Phi}{\partial t} -\mathcal{L}^\lambda \Phi$. Let  $u^i_{\lambda}$  be the unique solution of \eqref{CVI}  with obstacle function $\psi$ and source function $g_i$. If $g_1\leq g_2$, then $u^1_{\lambda}\leq u^2_{\lambda}$.
			\item 	For $i=1,\,2$, assume that  $\psi_i$ satisfies Assumption $\mathcal{H}^1$, $g$ satisfies Assumption $\mathcal{H}^0$ and $ 0\leq \psi_i \leq \Phi$ with $\Phi\in L^2([0,T];H^2(\mathcal{O},\m))$ such  that $\frac{\partial \Phi}{\partial t}+\mathcal{L}\Phi \leq 0$ and $0\leq g \leq -\frac{\partial \Phi}{\partial t} -\mathcal{L}^\lambda \Phi$. Let  $u^i_{\lambda}$  be the unique solution of \eqref{CVI}  with obstacle function $\psi_i$ and source function $g$.  If $\psi_1- \psi_2\in L^\infty$, then $u^1_{\lambda}- u^2_{\lambda}\in L^\infty $ and $\Vert  u^1_{\lambda}- u^2_{\lambda}\Vert_\infty \leq   \Vert \psi_1- \psi_2\Vert_\infty$.
		\end{enumerate}
	\end{proposition}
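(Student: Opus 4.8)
The plan is to deduce all three parts directly from the comparison principle for the penalized coercive problem, Proposition \ref{CompPrinc1}, by passing to the limit in the penalization parameter. Recall that in the proof of Proposition \ref{coercive variational inequality} the solution $u_\lambda$ of the variational inequality \eqref{CVI} is constructed as the limit, as $\varepsilon\to 0$, of the solutions $u_{\varepsilon,\lambda}$ of the penalized coercive problem \eqref{PCP} associated with the same obstacle and source, and this convergence is strong in $L^2([0,T];H)=L^2([0,T]\times\O,dt\,d\m)$. Consequently, along a subsequence it holds a.e. on $[0,T]\times\O$. In each of the three cases I would write down the two penalized approximants corresponding to the two sets of data, apply the already-established inequality at the level of \eqref{PCP}, and then let $\varepsilon\to0$.

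For part (i), I would fix $\varepsilon>0$ and take $u^1_{\varepsilon,\lambda},u^2_{\varepsilon,\lambda}$ to be the solutions of \eqref{PCP} with obstacles $\psi_1\le\psi_2$ and common source $g$. The assumptions on $\psi_i$, $g$ and $\Phi$ guarantee, via Proposition \ref{coercive variational inequality}, that the variational inequalities \eqref{CVI} have unique solutions $u^1_\lambda,u^2_\lambda$, which are exactly the $\varepsilon\to0$ limits of the $u^i_{\varepsilon,\lambda}$. Proposition \ref{CompPrinc1}(i) yields $u^1_{\varepsilon,\lambda}\le u^2_{\varepsilon,\lambda}$ a.e.; extracting a common subsequence along which both families converge a.e.\ and passing to the limit preserves this inequality, so $u^1_\lambda\le u^2_\lambda$ a.e. By uniqueness of the solution of \eqref{CVI} the limit is independent of the chosen subsequence, hence the inequality holds for the full family. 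Part (ii) is treated identically, exchanging the roles of obstacle and source and invoking Proposition \ref{CompPrinc1}(ii).

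For part (iii), the same extraction applies, but starting now from the uniform bound $\Vert u^1_{\varepsilon,\lambda}-u^2_{\varepsilon,\lambda}\Vert_\infty\le\Vert\psi_1-\psi_2\Vert_\infty$ provided by Proposition \ref{CompPrinc1}(iii), whose right-hand side does not depend on $\varepsilon$. Passing to the a.e.\ limit and using the lower semicontinuity of the essential-supremum norm under a.e.\ convergence, I would conclude that $u^1_\lambda-u^2_\lambda\in L^\infty$ with $\Vert u^1_\lambda-u^2_\lambda\Vert_\infty\le\Vert\psi_1-\psi_2\Vert_\infty$.

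The argument presents no genuine difficulty; the only point deserving a little care is to ensure that a single subsequence of $\varepsilon\to0$ serves both approximating families $u^1_{\varepsilon,\lambda}$ and $u^2_{\varepsilon,\lambda}$ and delivers a.e.\ convergence for both. This is immediate by a diagonal extraction from the strong $L^2([0,T];H)$ convergence established in Proposition \ref{coercive variational inequality}, and uniqueness of the limit then removes any residual subsequence dependence.
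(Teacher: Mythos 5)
Your proposal is correct and follows exactly the route the paper intends: the paper states Proposition \ref{CompPrinc2} as "a direct consequence of Proposition \ref{CompPrinc1}" without further detail, and your argument — applying the penalized comparison principle and passing to the a.e.\ limit of the approximants $u_{\varepsilon,\lambda}$ constructed in the proof of Proposition \ref{coercive variational inequality} — is precisely the omitted verification. The care you take with a common subsequence and with lower semicontinuity of the $L^\infty$ norm in part (iii) is appropriate and consistent with the paper's framework.
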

	\subsubsection{Non-coercive variational inequality}
	We can finally prove Theorem \ref{variationalinequality}. Again, we first study the uniqueness of the solution and then we deal with the existence.
	\begin{proof}[Proof of uniqueness in Theorem \ref{variationalinequality}]
		Suppose that there are two functions $u_1 $ and $u_2$ which satisfy \eqref{VI}. As usual, we take $v=u_2$ in the equation satisfied by $u_1$ and $v=u_1$ in the one satisfied by $u_2$ and we add the resulting equations. 
		Setting $w:= u_2-u_1$,		  		we get that,  a.e. in $[0,T]$,
		$$
		\left( \frac{\partial w}{\partial t }(t),w(t) \right)_H - a(w(t),w(t))\geq 0. 
		$$
		From the energy estimate \eqref{ub}, we know that
		$$
		a(u(t),u(t)) \geq  C_1 \Vert u(t) \Vert^2_V -C_2\Vert (1 + y)^{\frac 1 2 }u(t)\Vert^2_H,
		$$
		so that
		$$
		\frac 1 2 \frac{d}{d t } \Vert w(t)\Vert_H^2+ C_2\Vert (1 + y)^{\frac 1 2 }w(t)\Vert^2_H \geq 0.
		$$
		By integrating from $t$ to $T$, since $w(T)=0$, we have
		\begin{align*}
		\Vert &w(t)\Vert_H^2\leq C_2\int_t^T \Vert (1 + y)^{\frac 1 2 }w(s)\Vert^2_Hds\\
		& \leq C_2 \bigg( \int_t^T ds \int_{\mathcal{O}} \mathbbm{1}_{\{y\leq \lambda\}}  (1 + y)w^2(s)d\m+ \int_t^T ds \int_{\mathcal{O}} \mathbbm{1}_{\{y> \lambda\}}(1 + y)w^2(s)d\m \bigg)\\
		&\leq C \bigg( \int_t^T ds \int_{\mathcal{O}}(1 + \lambda)w^2(s)y^{\beta-1} e^{-\gamma |x| } e^{-\mu y}dxdy \bigg)\\&\qquad+C \bigg( + \int_t^T ds \int_{\mathcal{O}} \mathbbm{1}_{\{y> \lambda\}}(1 + y)w^2(s)y^{\beta-1} e^{-\gamma |x| } e^{-(\mu-\mu') y }e^{-\mu'y}dxdy \bigg)\\
		&\leq C \bigg( \int_t^T ds \int_{\mathcal{O}}dxdy (1 + \lambda)w^2(s)y^{\beta-1} e^{-\gamma |x| } e^{-\mu y}\bigg)\\&\qquad+C \bigg( e^{-(\mu-\mu') \lambda} \int_t^T ds \int_{\mathcal{O}}dxdy(1 + y)\Phi^2(s)y^{\beta-1} e^{-\gamma |x| } e^{-\mu'y}\bigg),
		\end{align*}
		where $\mu'<\mu$ and $\lambda>0$.
		Since $ C_2= \int_{\mathcal{O}}dxdy (1 + y)\Phi^2(s)y^{\beta-1} e^{-\gamma |x| } e^{-\mu' y }<\infty$, we have
		\begin{align*}
		\Vert w(t)\Vert_H^2&\leq C  (1+\lambda )\int_t^T  \Vert w(s)\Vert_H^2ds +C_2(T-t)e^{-(\mu-\mu') \lambda},
		\end{align*}
		so, by using the Gronwall Lemma, 
		$$
		\Vert w(t)\Vert_H^2\leq C_2Te^{-(\mu-\mu') \lambda+C(T-t)(1+\lambda)}.
		$$
		Sending  $\lambda\rightarrow \infty $, we deduce that $w(t)=0$ in $[T,t]$ for $t$ such that $T-t< \frac{\mu-\mu'}{C}$. Then, we iterate the same argument: we integrate between $t'$ and $t$ with $t-t'<\frac{\mu-\mu'}{C}$ and we have $w(t)=0$ in $[T,t']$  and so on. We deduce that $w(t)=0$ for all $t\in [0,T]$ so the assertion follows.
	\end{proof}

	\begin{proof}[Proof of existence in Theorem \ref{variationalinequality}]
		Given $u_0=\Phi$, we can construct a sequence $(u_n)_n\subset V$ such that
		
		\begin{equation}\label{claim1}
		u_n\geq \psi \mbox{ a.e. in } [0,T] \times \mathcal{O}, \qquad n\geq 1,
		\end{equation}
		\begin{equation}\label{claim2}
		\begin{split}
		-\left( \frac{\partial u_n}{\partial t },v -u_n  \right)_H +a(u_n,v-u_n) + \lambda ((1+y)u_n,v-u_n)_H \geq  \lambda ((1+y)u_{n-1},v-u_n)_H,  \\ v\in V, \quad v\geq \psi , \quad
		\mbox{ a.e. on } [0,T]\times \mathcal{O}, \qquad n \geq 1,
		\end{split}
		\end{equation}
		\begin{equation}
		u_n(T)=\psi(T), \qquad \mbox{ in } \mathcal{O},
		\end{equation}
		\begin{equation}\label{claim3}
		\Phi\geq u_1\geq u_2\geq \dots \geq u_{n-1}\geq u_n \geq \dots \geq 0 ,\qquad  \mbox{ a.e. on } [0,T]\times \mathcal{O}.
		\end{equation}
		In fact, if   we have $0\leq u_{n-1} \leq \Phi$ for all $ n\in \N$, then the assumptions of Proposition \ref{coercive variational inequality} are satisfied with 
		$$
		g_n= \lambda(1+y)u_{n-1}.
		$$
		Indeed, since $(1+y)^{\frac 3 2 }\Phi\in L^2([0,T];H)$, we have that  $g_n$ and $\sqrt{1+y}g_n$ belong to $ L^2([0,T];H) $  and, moreover, $0\leq g_n \leq \lambda(1+y)\Phi \leq - \frac{\partial \Phi}{\partial t} -\L_\lambda \Phi$. Therefore, step by step, we can deduce the existence and the uniqueness of a solution $u_n$ to \eqref{claim2} such that $0\leq u_n \leq \Phi$.  \eqref{claim3} is a simple consequence of Proposition \ref{CompPrinc2}. In fact, proceeding by induction, at each step we have 
		$$
		g_n=	\lambda(1+y)u_{n-1} \leq 	\lambda(1+y)u_{n-2}=g_{n-1}
		$$
		so that $u_{n} \leq 	u_{n-1}$.
		Now, recall that 
		\begin{equation*}
		\Vert u_{n} \Vert_{L^\infty([0,T],V)}\leq K,
		\end{equation*}
		\begin{equation*}
		\left\Vert \frac{\partial u_{n}}{\partial t }\right \Vert_{L^2([0,T];H)}\leq K,
		\end{equation*}
		where $K=C \left(    \Vert \Psi \Vert_{L^2([0,T];V)} + \Vert \sqrt{1+y}g_n\Vert_{L^2([0,T];H)}  + \Vert\sqrt{1+y}\psi \Vert_{L^2([0,T];V)}+\Vert\psi(T)\Vert_V \right)$.
		Note that the constant $K$ is independent of $n$ since $|g_n|=|\lambda(1+y)u_{n-1},|\leq \lambda (1+y)\Phi,$ for every $n \in \N.$
		Therefore, by passing to a subsequence, we can assume that there exists a function $u$ such that $u\in L^2([0,T];V) $, $ \frac{\partial u }{\partial t} \in L^2([0,T];H) $ and for every $t\in [0,T]$, $u'_n(t) \rightharpoonup u'(t) $ in $H$ and $u_n (t) \rightharpoonup u(t) $ in $V$. Indeed, again thanks to the fact that $0\leq u_n \leq \Phi$, we can deduce that $u_n(t)\rightarrow u(t)$ in $H$.  Therefore we can pass to the limit in 
		\begin{align*}
		-\left( \frac{\partial u_n}{\partial t },u_n-v \right)_H +a(u_n,v-u_n) + \lambda ((1+y)u_n,v-u_n)_H  \geq  \lambda((1+y)u_{n-1},v-u_n)_H
		\end{align*}
		and the assertion follows.
	\end{proof}
	\begin{remark}
		Keeping in mind our purpose of identifying  the solution of the variational inequality \eqref{VI} with the American option price we have considered the case without source term ($g=0$) in the variational inequality \eqref{VI}. However, under the same assumptions of Theorem \ref{variationalinequality}, we can prove in the same way the existence and the uniqueness of a solution of 
		\begin{equation*}
		\begin{cases}
		-\left( \frac{\partial u}{\partial t },v -u  \right)_H + a(u,v-u)\geq (g,v-u)_H, \quad \mbox{a.e. in } [0,T] \quad v\in  L^2([0,T];V), \ v\geq \psi,\\
		u\geq \psi \mbox{ a.e. in } [0,T]\times \R \times (0,\infty),\\
		u(T)=\psi(T),\\
		0\leq u \leq \Phi,
		\end{cases}
		\end{equation*}
		where $g$ satisfies Assumption $\mathcal{H}^0$  and $0\leq g\leq -\frac{\partial \Phi}{\partial t }-\L\Phi$.
	\end{remark}
	We conclude stating the following Comparison Principle, whose proof  is a direct consequence of  Proposition \ref{CompPrinc2} and the proof of Proposition \ref{variationalinequality}.
	\begin{proposition}\label{CompPrinc3}
		For $i=1,2$, assume that $\psi_i$ satisfies Assumption $\mathcal{H}^1$ and $0\leq \psi_i\leq \Phi$ with  $\Phi$ satisfying Assumption $\mathcal{H}^2$. Let $u^i_{\lambda}$  be the unique solution of \eqref{CVI} with obstacle function $\psi_i$. Then:
		\begin{enumerate}
			\item   If $\psi_1\leq \psi_2$, then $u^1_{\lambda}\leq u^2_{\lambda}$.
			\item  If $\psi_1- \psi_2\in L^\infty$, then $u^1_{\lambda}- u^2_{\lambda}\in L^\infty $ and $\Vert  u^1_{\lambda}- u^2_{\lambda}\Vert_\infty \leq   \Vert \psi_1- \psi_2\Vert_\infty$.
		\end{enumerate}
	\end{proposition}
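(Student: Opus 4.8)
The plan is to build on the iterative scheme used in the existence part of Theorem~\ref{variationalinequality}, transferring the coercive comparison statements of Proposition~\ref{CompPrinc2} along the iteration and then passing to the limit. Recall that there the solution $u$ of the non-coercive problem \eqref{VI} is obtained as the limit of a sequence $(u_n)_n$ with $u_0=\Phi$, where $u_n$ solves the coercive variational inequality \eqref{CVI} with obstacle $\psi$ and source $g_n=\lambda(1+y)u_{n-1}$, and where $0\le u_n\le \Phi$ and $u_n(t)\to u(t)$ in $H$ for every $t$. I would run this construction simultaneously for $i=1,2$: set $u^i_0=\Phi$ and let $u^i_n$ solve \eqref{CVI} with obstacle $\psi_i$ and source $g^i_n=\lambda(1+y)u^i_{n-1}$, so that $u^i_n\to u^i_\lambda$ as $n\to\infty$.

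For part (i), I would prove by induction that $u^1_n\le u^2_n$ for every $n$. The base case is clear since $u^1_0=u^2_0=\Phi$. For the inductive step, assume $u^1_{n-1}\le u^2_{n-1}$; then $g^1_n=\lambda(1+y)u^1_{n-1}\le \lambda(1+y)u^2_{n-1}=g^2_n$. Since both the obstacle and the source increase from index $1$ to index $2$, I would compare through an intermediate solution $\tilde u_n$ of \eqref{CVI} with obstacle $\psi_2$ and source $g^1_n$ (admissible, as $g^1_n\le \lambda(1+y)\Phi\le -\partial_t\Phi-\mathcal{L}^\lambda\Phi$): Proposition~\ref{CompPrinc2}(i), with the same source and $\psi_1\le\psi_2$, gives $u^1_n\le\tilde u_n$, while Proposition~\ref{CompPrinc2}(ii), with the same obstacle $\psi_2$ and $g^1_n\le g^2_n$, gives $\tilde u_n\le u^2_n$, whence $u^1_n\le u^2_n$. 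Letting $n\to\infty$ and using $u^i_n(t)\to u^i_\lambda(t)$ in $H$ (hence a.e. along a subsequence) yields $u^1_\lambda\le u^2_\lambda$.

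For part (ii), set $c=\|\psi_1-\psi_2\|_\infty$ and show inductively that $\|u^1_n-u^2_n\|_\infty\le c$. The crucial tool is a constant-shift identity for the coercive problem: since $\mathcal{L}$ annihilates constants and $a_\lambda(c,v)=\lambda c\int_\O(1+y)v\,d\m$, if $w$ solves \eqref{CVI} with obstacle $\chi$ and source $h$, then $w+c$ solves \eqref{CVI} with obstacle $\chi+c$ and source $h+\lambda c(1+y)$, dominated by $\Phi+c$. One checks that $\Phi+c$ is admissible, using $\partial_t(\Phi+c)+\mathcal{L}(\Phi+c)=\partial_t\Phi+\mathcal{L}\Phi\le 0$ and $-\mathcal{L}^\lambda c=\lambda(1+y)c$, so that $-\partial_t(\Phi+c)-\mathcal{L}^\lambda(\Phi+c)=-\partial_t\Phi-\mathcal{L}^\lambda\Phi+\lambda(1+y)c$. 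In particular $u^2_n+c$ solves \eqref{CVI} with obstacle $\psi_2+c$ and source $\lambda(1+y)(u^2_{n-1}+c)$, dominated by $\Phi+c$. The inductive hypothesis gives $u^1_{n-1}\le u^2_{n-1}+c$, hence $g^1_n\le\lambda(1+y)(u^2_{n-1}+c)$, while $\psi_1\le\psi_2+c$; comparing $u^1_n$ with $u^2_n+c$ exactly through the intermediate-solution device of part (i) yields $u^1_n\le u^2_n+c$. By symmetry $u^2_n\le u^1_n+c$, so $\|u^1_n-u^2_n\|_\infty\le c$, and passing to the limit gives $\|u^1_\lambda-u^2_\lambda\|_\infty\le c$.

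The main obstacle I anticipate is technical bookkeeping rather than conceptual: one must verify that each shifted problem arising in the induction genuinely satisfies the domination hypotheses of Proposition~\ref{coercive variational inequality}, so that Proposition~\ref{CompPrinc2} is applicable, in particular that $\Phi+c$ is an admissible dominating function and that the modified source remains between $0$ and $-\partial_t(\Phi+c)-\mathcal{L}^\lambda(\Phi+c)$. The simultaneous change of both obstacle and source is dispatched cleanly by the intermediate-solution comparison, and the passage to the limit is harmless since the $L^\infty$ bounds are preserved under the a.e. convergence along the subsequence provided by the construction in Theorem~\ref{variationalinequality}.
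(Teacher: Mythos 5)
Your proof is correct and follows exactly the route the paper indicates: it runs the iterative construction from the existence part of Theorem~\ref{variationalinequality} for both obstacles, transfers the coercive comparison statements of Proposition~\ref{CompPrinc2} along the iteration (via the intermediate-solution device and, for part (ii), the constant-shift identity $a_\lambda(c,v)=\lambda c\int_\O(1+y)v\,d\m$), and passes to the limit. The paper leaves these details implicit, merely asserting the result is a direct consequence of Proposition~\ref{CompPrinc2} and the existence proof, so your write-up is a faithful elaboration of the intended argument.
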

	
	\section{Connection with the optimal stopping problem}\label{sect-identification}
	Once we have the existence and the uniqueness of a solution $u$ of the variational inequality \eqref{variationalinequality}, our aim is to prove that  it matches the solution of the optimal stopping problem, that is
	\begin{equation*}
	u(t,x,y)=u^*(t,x,y), \qquad \mbox{ on } [0,T] \times \bar{\mathcal{O}},
	\end{equation*}
	where $u^*$ is defined by
	$$
	u^*(t,x,y)= \sup_{\tau \in \mathcal{T}_{t,T}}\E \left[  \psi(\tau,X_\tau^{t,x,y}, Y_\tau^{t,x,y})     \right],
	$$ 
	$	\mathcal{T}_{t,T}$ being the set of the stopping times with values in $[t,T]$.
	Since the function $u$ is not regular enough to apply It\^{o}'s Lemma, we use another strategy in order to prove the above identification.
	So, we first show, by using the affine character of the underlying diffusion, that the semigroup associated with the bilinear form $a_\lambda$ coincides with the transition semigroup of the two dimensional diffusion $(X,Y)$ with a killing term. Then, we prove suitable estimates on the joint law of $(X,Y)$ and $L^p$-regularity results on the solution of the variational inequality and we deduce from them the probabilistic interpretation.

	\subsection{Semigroup associated with the bilinear form }

	We introduce now the semigroup associated with the coercive  bilinear form $a_\lambda$. With a natural notation, we define the following spaces
	$$
	L^2_{loc}(\R^+;H)=\left\{f:\R^+\rightarrow H : \forall t\geq 0 \int_0^t \|f(s)\|_H^2ds<\infty  \right \},
	$$
	$$
	L^2_{loc}(\R^+;V)=\left \{f:\R^+\rightarrow V :  \forall t\geq 0 \int_0^t \|f(s)\|_V^2ds<\infty \right  \}.
	$$
	First of all, we  state the following result:
	\begin{proposition}\label{propsg2}
		For every $\psi\in V$,
		$f\in L^2_{loc}(\R^+;H)$ with $\sqrt{y}f\in L^2_{loc}(\R^+;H)$,   there exists a unique function $u\in L^2_{loc}(\R^+;V)$ such that
		$\frac{\partial u}{\partial t}\in L^2_{loc}(\R^+;H)$, $u(0)=\psi$ and
		\begin{equation}\label{var_eq}
		\left(\frac{\partial u}{\partial t},v\right)_H+a_\lambda(u,v)=(f,v)_H, \quad v\in V.
		\end{equation}
		Moreover we have, for every $t\geq 0$,
		\begin{equation}
		\Vert u(t)\Vert _H^2+\frac{\delta_1}{2}\int_0^t\Vert u(s)\Vert ^2_Vds\leq \Vert \psi \Vert ^2_H
		+\frac{2}{\delta_1} \int_0^t \Vert f(s)\Vert _H^2ds \label{estimL2bis}
		\end{equation}
		and
		$$
		||u(t)||^2_V+	\int_0^t||u_t(s)||^2_Hds \leq C\left(||\psi||_V^2+\frac{1}{2}\int_0^t||\sqrt{1+y}f(s)||^2_H
		ds \right),
		$$
		with $C>0$.
	\end{proposition}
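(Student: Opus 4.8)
The plan is to treat \eqref{var_eq} as a linear, forward-in-time parabolic Cauchy problem in the Gelfand triple $V\hookrightarrow H\hookrightarrow V'$ and to apply the variational theory of parabolic equations, in the same spirit as the penalized problems treated above but without any penalizing term, so that the equation is genuinely linear. Since Lemma \ref{coercivity} guarantees that $a_\lambda$ is continuous and coercive on $V$ for $\lambda\geq\frac{\delta_1}{2}+\frac{K_1^2}{2\delta_1}$, I would first establish existence by a Galerkin approximation: choose the nondecreasing family $(V_j)_j$ of finite-dimensional subspaces of $V$ as in the proof of Proposition \ref{penalizedcoercivetruncatedproblem}, solve the resulting linear ODE system in $V_j$ (with no nonlinear term now, so Cauchy--Lipschitz applies at once and globally), and obtain a sequence $(u_j)_j$. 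Uniqueness is immediate: by linearity the difference $w$ of two solutions solves the homogeneous equation with $w(0)=0$, and testing with $w$ gives $\frac{d}{dt}\|w\|_H^2=-2a_\lambda(w,w)\leq 0$, so the coercivity of $a_\lambda$ (Lemma \ref{coercivity}) forces $w\equiv 0$.

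For the first estimate \eqref{estimL2bis}, I would test \eqref{var_eq} with $v=u(t)$. Using $\left(\frac{\partial u}{\partial t},u\right)_H=\frac12\frac{d}{dt}\|u\|_H^2$, the coercivity bound $a_\lambda(u,u)\geq\frac{\delta_1}{2}\|u\|_V^2$ of Lemma \ref{coercivity}, and Young's inequality in the form $(f,u)_H\leq\|f\|_H\|u\|_V\leq\frac{\delta_1}{4}\|u\|_V^2+\frac{1}{\delta_1}\|f\|_H^2$, one obtains
$$\frac{d}{dt}\|u\|_H^2+\frac{\delta_1}{2}\|u\|_V^2\leq\frac{2}{\delta_1}\|f\|_H^2 ,$$
and integrating from $0$ to $t$ with $u(0)=\psi$ yields \eqref{estimL2bis} with exactly the stated constants. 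This is first carried out on $u_j$ (where the manipulations are licit since $u_j$ is differentiable in $t$), giving bounds uniform in $j$, and then transferred to the weak limit.

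The second estimate is the delicate one and the main obstacle, for two reasons. First, it requires testing with $v=\frac{\partial u}{\partial t}$, which a priori lies only in $H$, so $a_\lambda(u,u_t)$ is not literally meaningful on $V$; this is handled at the Galerkin level by splitting $a_\lambda=\bar a_\lambda+\tilde a$ and using the symmetry of $\bar a_\lambda$, so that $\bar a_\lambda\!\left(u_j,\frac{\partial u_j}{\partial t}\right)=\frac12\frac{d}{dt}\bar a_\lambda(u_j,u_j)$, which after integration produces the term $\|u(t)\|_V^2$ through the coercivity of $\bar a_\lambda$. Second, and more seriously, the first-order part $\tilde a(u,u_t)=\into y\big(u_x j_{\gamma,\mu}+u_y k_{\gamma,\mu}\big)u_t\,d\m$ carries the unbounded weight $y$, so the bound $|\tilde a(u,u_t)|\leq K_1\into y|\nabla u||u_t|\,d\m$ cannot be closed directly against $\|u\|_V\|u_t\|_H$. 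This is precisely the degeneracy difficulty already met in Lemma \ref{lemma_estim}, and I would resolve it by the same devices: replace $\tilde a$ by its truncation $\tilde a^{(M)}$ (coefficient $y\wedge M$) and introduce the weighted test functions $\varphi_n=1+y\wedge n$, deriving via Young's inequality bounds uniform in $M$ and $n$. It is exactly in absorbing these weighted cross terms, together with $(f,u_t)_H$, that the weight $\sqrt{1+y}$ attaches to $f$ and the first estimate \eqref{estimL2bis} must be invoked to control $\int_0^t\|u(s)\|_V^2\,ds$.

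Passing to the limit in $n$ (by monotone convergence), then $M\to\infty$, and finally $j\to\infty$, gives both the regularity $\frac{\partial u}{\partial t}\in L^2_{loc}(\R^+;H)$ and the second estimate, with a constant $C>0$ depending only on $\delta_0,\delta_1,K_1,\lambda$. I expect that essentially no new idea beyond Proposition \ref{penalizedcoercivetruncatedproblem} and Lemma \ref{lemma_estim} is needed: since the problem is linear, no fixed-point or monotonicity argument is required, and the whole construction is a streamlined, penalization-free version of those proofs, with the degeneracy handled by the same truncation-and-weight machinery.
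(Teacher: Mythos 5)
Your proposal is correct and follows essentially the same route as the paper's proof in Section \ref{sect-appendix-var}: Galerkin approximation for the truncated problem with $a^{(M)}_\lambda$, testing with $u$ for \eqref{estimL2bis}, testing with $\partial_t u$ together with the symmetry of $\bar a_\lambda$ for the second estimate, and closing the troublesome term $K_1\int y\wedge M\,|\nabla u||u_t|\,d\m$ by first testing with a weighted function (the paper uses $v=u^{(M)}(y\wedge M)$ rather than $u\,(1+y\wedge n)$, an immaterial difference) to get a bound on $\int\int (y\wedge M)^2|\nabla u^{(M)}|^2\,d\m$ that is uniform in $M$ thanks to the hypothesis $\sqrt{1+y}f\in L^2_{loc}(\R^+;H)$, before letting $M\to\infty$. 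The only minor deviation is the order of the limits in $j$, $M$, $n$, which does not affect the argument.
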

	The proof can be found in the appendix of this chapter. Moreover, we can prove a Comparison Principle for the equation \eqref{var_eq} as we have done for the variational inequality.  
	
	We denote $u(t)=\bar{P}^\lambda_t\psi$ the solution of \eqref{var_eq} corresponding to $u(0)=\psi$ and $f=0$. From \eqref{estimL2bis} we deduce that the operator $\bar{P}^\lambda_t$ is a linear contraction on $H$ and, from uniqueness, we have the semigroup property. 
	\begin{proposition}\label{propsg3}
		Let us consider $f:\R^+\to H$ such that
		$\sqrt{1+y}f\in L^2_{loc}(\R^+,H)$.  Then, the solution of
		\[
		\begin{cases}
		\left(\frac{\partial u}{\partial t},v\right)_H+a_\lambda(u,v)=(f,v)_H,\quad v\in V,\\
		u(0)=0,
		\end{cases}
		\]
		is given by $ u(t)=\int_0^t  \bar P^\lambda_sf(t-s)ds=\int_0^t\bar P^\lambda_{t-s}f(s)ds$.
	\end{proposition}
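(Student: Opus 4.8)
The plan is to recognise the stated formula as the Duhamel (variation-of-constants) representation and to identify it with the unique solution furnished by Proposition~\ref{propsg2}. First, the two integral expressions agree: the substitution $s\mapsto t-s$ turns $\int_0^t \bar P^\lambda_s f(t-s)\,ds$ into $\int_0^t \bar P^\lambda_{t-s}f(s)\,ds$, so I may work with
\[
u(t):=\int_0^t \bar P^\lambda_{t-s}f(s)\,ds .
\]
Since $\bar P^\lambda_\tau$ is a contraction on $H$ (the consequence of \eqref{estimL2bis} with $f=0$ already recorded after Proposition~\ref{propsg2}), one gets at once $\|u(t)\|_H\le\int_0^t\|f(s)\|_H\,ds$ and $u(0)=0$. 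Because Proposition~\ref{propsg2} guarantees a \emph{unique} solution of \eqref{var_eq} with $u(0)=0$, it suffices to verify that the $u$ above has the required regularity, namely $u\in L^2_{loc}(\R^+;V)$ and $\partial_t u\in L^2_{loc}(\R^+;H)$, and that it satisfies \eqref{var_eq}.

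I would carry out the verification of \eqref{var_eq} first for a regular source, say $f$ piecewise constant in time with values in $H^2(\O,\m)$ (or $f\in\mathcal C^1(\R^+;H)$ with $f(s)$ in the domain of $\L^\lambda$). For such $f$ the map $\tau\mapsto\bar P^\lambda_\tau f(s)$ is a strong solution of the homogeneous equation, so $\partial_\tau\bar P^\lambda_\tau f(s)=\L^\lambda\bar P^\lambda_\tau f(s)$ in $H$, and differentiation under the integral sign is licit:
\[
\partial_t u(t)=f(t)+\int_0^t \L^\lambda\bar P^\lambda_{t-s}f(s)\,ds .
\]
Testing against $v\in V$ and using $(\L^\lambda w,v)_H=-a_\lambda(w,v)$ gives
\[
\Big(\partial_t u(t),v\Big)_H=(f(t),v)_H-\int_0^t a_\lambda\big(\bar P^\lambda_{t-s}f(s),v\big)\,ds ,
\]
and since $a_\lambda(\cdot,v)$ is linear and continuous on $V$ the integral equals $a_\lambda(u(t),v)$, which is exactly \eqref{var_eq}.

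For a general source with only $\sqrt{1+y}f\in L^2_{loc}(\R^+;H)$ I would argue by density and stability. Pick regular $f_n$ as above with $\sqrt{1+y}f_n\to\sqrt{1+y}f$ in $L^2_{loc}(\R^+;H)$, and let $u_n$ be the associated Duhamel integrals; by the previous step each $u_n$ solves \eqref{var_eq} with source $f_n$ and $u_n(0)=0$, hence coincides with the solution of Proposition~\ref{propsg2} for that data and obeys both a priori bounds of that proposition. Applying those estimates to the differences $u_n-u_m$ (solutions with source $f_n-f_m$, by linearity) shows that $(u_n)$ is Cauchy in $L^2_{loc}(\R^+;V)$ and $(\partial_t u_n)$ in $L^2_{loc}(\R^+;H)$; the $H$-contraction bound identifies the limit with $u$. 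Passing to the limit in the variational identity for $u_n$ yields \eqref{var_eq} for $u$, and uniqueness from Proposition~\ref{propsg2} concludes the identification. The hard part is precisely the passage from regular to rough $f$: the pointwise use of the generator $\L^\lambda$ and the differentiation under the integral are only justified for sufficiently smooth sources, so the whole weight of the argument rests on the uniform energy control \eqref{estimL2bis} together with the $V$- and time-derivative estimate of Proposition~\ref{propsg2}, which make the approximation stable in the right norms.
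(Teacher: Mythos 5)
Your proposal is correct and follows essentially the same route as the paper: both reduce by density and the stability estimate \eqref{estimL2bis} to a nice class of sources, verify the Duhamel formula there by direct computation, and conclude by the uniqueness in Proposition~\ref{propsg2}. The only noteworthy difference is that the paper takes the dense class to be $f=\ind{(t_1,t_2]}\psi$ with $\psi\in V$ and works with the integrated weak identity $(\bar P^\lambda_t\psi,v)_H+\int_0^t a_\lambda(\bar P^\lambda_s\psi,v)\,ds=(\psi,v)_H$, which avoids your pointwise use of the generator $\L^\lambda$ (and hence the need to regularize $f$ all the way into $H^2(\O,\m)$ and to justify strong differentiability of $\tau\mapsto\bar P^\lambda_\tau f(s)$, which is not explicitly established in the text).
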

	\begin{proof}
		Note that $V$ is dense in $H$ and recall the estimate \eqref{estimL2bis}, so  it is enough to prove the assertion for  $f=\ind{(t_1,t_2]}\psi$, with $0\leq t_1<t_2$ and $\psi\in V$. If we set $u(t)=\int_0^t\bar P^\lambda_{t-s}f(s)ds$, we have
		\begin{align*}
		u(t)&=\ind{\{t\geq t_1\}}\int_{t_1}^{t\wedge t_2}\bar P^\lambda_{t-s}\psi ds	=\begin{cases}
		\int_{t_1}^{t_2}\bar P^\lambda_{t-s}\psi ds=\int_{t-t_2}^{t-t_1}\bar P^\lambda_{s}\psi ds\quad&\mbox{ if } t\geq t_2\\
		\displaystyle \int_{t_1}^{t}\bar P^\lambda_{t-s}\psi ds=\int_{0}^{t-t_1}\bar P^\lambda_{s}\psi ds \quad &\mbox{ if } t\in[t_1, t_2)
		\end{cases}.
		\end{align*}
		Therefore, for every $v\in V$, we have $(u_t,v)_H+a_\lambda(u,v)=0$ if $t\leq t_1$ and, if $t\geq t_1$,
		$$
		\left(\frac{\partial u}{\partial t},v\right)_H+a_\lambda(u(t),v)=
		\begin{cases}
		\left(\bar P^\lambda_{t-t_1}\psi-\bar P^\lambda_{t-t_2}\psi, v\right)_H+
		a_\lambda\left(\int_{t-t_2}^{t-t_1}\bar P^\lambda_{s}\psi ds, v\right)\quad &\mbox{ if } t\geq t_2\\
		\left(\bar P^\lambda_{t-t_1}\psi, v\right)_H+
		a_\lambda\left(\int_{0}^{t-t_1}\bar P^\lambda_{s}\psi ds, v\right)  \quad&\mbox{ if } t\in[t_1, t_2)
		\end{cases}.
		$$
		The assertion follows from $(\bar P^\lambda_t\psi,v)_H+\int_0^t a_\lambda(\bar P_s\psi,v)ds  =(\psi,v)_H$.
	\end{proof} 
	\begin{remark}
		It is not difficult to prove that  $\bar{P}^\lambda_t:L^p(\mathcal{O},\m) \rightarrow L^p(\mathcal{O},\m) $  is a contraction for every $ p\geq 2$, and it is an analytic semigroup. This is not useful to our purposes so we omit the proof.
	\end{remark}
	\subsection{Transition semigroup}
	We define $\E_{x_0,y_0}(\quad)= \E(\quad| X_0=x_0, Y_0=y_0)$. 
	Fix $\lambda >0$.  For every measurable positive function $f$ defined on $\R\times [0,+\infty)$, we define
	\[
	P^\lambda_tf(x_0,y_0)=\E_{x_0,y_0}\left(e^{-\lambda\int_0^t(1+Y_s)ds} f(X_t,Y_t)\right).
	\]
	The operator	$	P^\lambda_t$ is the transition semigroup of the two dimensional diffusion $(X,Y)$ with the killing term $e^{-\lambda\int_0^t(1+Y_s)ds}$.
	
	Set $\E_{y_0}(\quad)= \E(\quad| Y_0=y_0)$. 
	We first prove some useful results about the Laplace transform of the pair $(Y_t,\int_0^tY_sds)$. These results rely on the affine structure of the model and have already appeared in slightly different forms in the literature (see, for example, \cite[Section 4.2.1]{Abook}). We include a proof for convenience.
	\begin{proposition}\label{Laplace}
		Let $z$  and  $w$ be two complex numbers with nonpositive real parts. The equation
		\begin{equation}\label{*}
		\psi'(t)=\frac{\sigma^2}{2}\psi^2(t)-\kappa \psi(t)+w
		\end{equation}
		has a unique solution $ \psi_{z,w}$ defined on $[0,+\infty)$, such that $\psi_{z,w}(0)=z$. Moreover,
		for every $t\geq 0$,
		\[
		\E_{y_0}\left( e^{z Y_t+w \int_0^t Y_s ds}\right)=e^{y_0\psi_{z,w}(t) +\theta\kappa\phi_{z,w}(t)},
		\]
		with $\phi_{z,w}(t)=\int_0^t \psi_{z,w}(s)ds$.
	\end{proposition}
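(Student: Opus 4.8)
The plan is to combine a deterministic study of the Riccati equation \eqref{*} with a Feynman--Kac martingale argument. I would split the work into three stages: (i) global solvability of \eqref{*} together with the sign estimate $\mathrm{Re}\,\psi_{z,w}\le 0$; (ii) construction of an exponential local martingale whose drift vanishes precisely because $\psi_{z,w}$ solves \eqref{*}; and (iii) an integrability check turning the local martingale into a genuine one, after which the identity drops out by reading off the two endpoints.

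First I would treat \eqref{*}. Since the right-hand side $\frac{\sigma^2}{2}\psi^2-\kappa\psi+w$ is polynomial, hence locally Lipschitz, Cauchy--Lipschitz gives a unique maximal solution $\psi_{z,w}$ on some $[0,T^*)$ with $\psi_{z,w}(0)=z$. The key qualitative fact is that $\mathrm{Re}\,\psi_{z,w}(t)\le 0$ for all $t$: writing $\psi=p+iq$, on the set $\{p=0\}$ one has $p'=-\frac{\sigma^2}{2}q^2+\mathrm{Re}(w)\le 0$ because $\mathrm{Re}(w)\le 0$, so the half-plane $\{\mathrm{Re}\le 0\}$ is invariant and $p(0)=\mathrm{Re}(z)\le 0$ keeps the trajectory inside it. This sign information is exactly what rules out blow-up: with $p\le 0$ a direct computation gives $\frac{d}{dt}|\psi|^2=\sigma^2 p|\psi|^2-2\kappa|\psi|^2+2(\mathrm{Re}(w)\,p+\mathrm{Im}(w)\,q)\le |w|(1+|\psi|^2)$, so Gronwall's lemma bounds $|\psi_{z,w}|$ on every finite interval and forces $T^*=+\infty$. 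Consequently $\phi_{z,w}(t)=\int_0^t\psi_{z,w}(s)\,ds$ is well defined and $\mathrm{Re}\,\phi_{z,w}$ is continuous, hence bounded on compacts.

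Next I would fix $t>0$, set $F(s,y)=\exp\!\big(y\psi_{z,w}(t-s)+\kappa\theta\phi_{z,w}(t-s)\big)$, and define
$$
M_s=\exp\Big(w\int_0^s Y_u\,du\Big)F(s,Y_s),\qquad s\in[0,t].
$$
Applying It\^o's formula to $F(s,Y_s)$ (legitimate since $F$ is smooth and $Y$ is the strong solution of the CIR equation) and using $\phi_{z,w}'=\psi_{z,w}$, the finite-variation part of $M$ has density proportional to $F\,Y_s\big(\tfrac{\sigma^2}{2}\psi_{z,w}^2-\kappa\psi_{z,w}+w-\psi_{z,w}'\big)$ with all functions evaluated at $t-s$, and this vanishes identically by \eqref{*}. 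Hence $M$ is a complex local martingale, i.e. its real and imaginary parts are local martingales.

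Finally, the integrability step is where the sign conditions pay off again: since $Y_s\ge 0$, $\mathrm{Re}(w)\le 0$ and $\mathrm{Re}\,\psi_{z,w}\le 0$, one gets $|M_s|\le \exp\!\big(\kappa\theta\,\mathrm{Re}\,\phi_{z,w}(t-s)\big)$, which is bounded by a deterministic constant on $[0,t]$. A bounded local martingale is a true martingale, so $\E_{y_0}[M_t]=\E_{y_0}[M_0]=M_0$. Since $\psi_{z,w}(0)=z$ and $\phi_{z,w}(0)=0$, reading off $M_0=\exp\!\big(y_0\psi_{z,w}(t)+\kappa\theta\phi_{z,w}(t)\big)$ and $M_t=\exp\!\big(zY_t+w\int_0^tY_u\,du\big)$ yields the claim. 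I expect the main obstacle to be the first stage --- establishing $\mathrm{Re}\,\psi_{z,w}\le 0$ and leveraging it for global existence --- since everything downstream (the vanishing of the drift and the integrability of $M$) relies on those bounds.
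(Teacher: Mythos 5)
Your proposal is correct and follows essentially the same route as the paper: show $\mathrm{Re}\,\psi_{z,w}\le 0$, use it to prevent blow-up of $|\psi_{z,w}|^2$ via Gronwall, build the exponential local martingale $M$ whose drift vanishes by the Riccati equation, and conclude from $|M_s|\le 1$ that it is a true martingale. The only cosmetic difference is in the sign step: your boundary/invariance heuristic for the half-plane $\{\mathrm{Re}\le 0\}$ is made fully rigorous in the paper by noting that $\psi_1'\le \frac{\sigma^2}{2}\psi_1^2-\kappa\psi_1$ implies $t\mapsto \psi_1(t)e^{-\frac{\sigma^2}{2}\int_0^t(\psi_1(s)-\frac{2\kappa}{\sigma^2})ds}$ is nonincreasing, which handles the tangency case cleanly.
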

	\begin{proof}
		Let $\psi$ be the solution of \eqref{*}. We define $\psi_1$ (resp. $w_1$) and $\psi_2$
		(resp. $w_2$) the real and the imaginary part of
		$\psi$ (resp. $w$). We have
		\[
		\left\{
		\begin{array}{l}
		\psi'_1(t)=\frac{\sigma^2}{2}\left(\psi_1^2(t)-\psi_2^2(t)\right)-\kappa \psi_1(t)+w_1,\\
		\psi'_2(t)=\sigma^2\psi_1(t)\psi_2(t)-\kappa \psi_2(t)+w_2.
		\end{array}
		\right.
		\]
		From the first equation we deduce that $ \psi'_1(t)\leq \frac{\sigma^2}{2}\left(\psi_1(t)-\frac{2\kappa}{\sigma^2}\right) \psi_1(t)+w_1$
		and, since $w_1\leq 0$, the function $t\mapsto \psi_1(t)e^{-\frac{\sigma^2}2\int_0^t(\psi_1(s)-\frac{2\kappa}{\sigma^2})ds}$ is nonincreasing. Therefore $\psi_1(t)\leq 0$
		if $\psi_1(0)\leq 0$. Multiplying the first equation by $\psi_1(t)$ and the second one by $\psi_2(t)$ and adding we get
		\begin{eqnarray*}
			\frac 1 2 	\frac{d}{dt}\left(|\psi(t)|^2\right)&=&\left(\frac{\sigma^2}{2}\psi_1(t)-\kappa\right)|\psi(t)|^2+w_1\psi_1(t)+w_2\psi_2(t)\\
			&\leq &\left(\frac{\sigma^2}{2}\psi_1(t)-\kappa\right)|\psi(t)|^2+|w||\psi(t)|\\
			&\leq &\left(\frac{\sigma^2}{2}\psi_1(t)-\kappa\right)|\psi(t)|^2+\epsilon |\psi(t)|^2+\frac{|w|^2}{4\epsilon}.
		\end{eqnarray*}
		We deduce that $|\psi(t)|$ cannot explode in finite time and, therefore,  $\psi_{z,w}$
		actually exists on $[0, +\infty)$. 
		
		Now, let us define the function  $F_{z,w}(t,y)=e^{y\psi_{z,w}(t) +\theta\kappa\phi_{z,w}(t)}$.
		$F_{z,w}$ is  $C^{1,2}$ on $[0,+\infty)\times \R$ and it satisfies by construction the following equation
		\[
		\frac{\partial F_{z,w}}{\partial t}=\frac{\sigma^2}{2}y\frac{\partial^2 F_{z,w}}{\partial y^2}+
		\kappa(\theta-y)\frac{\partial F_{z,w}}{\partial y}+w y F_{z,w}.
		\]
		Therefore, for every $T>0$, the process $(M_t)_{0\leq t\leq T}$ defined by \begin{equation}\label{Mt}
		M_t=e^{w\int_0^t Y_sds}F_{z,w}(T-t, Y_t)
		\end{equation} 
		is a local martingale. On the other hand, note that 
		$$
		|M_t|= \left|  e^{w\int_0^t Y_sds} \right|\left|  e^{Y_t\psi_{z,w}(T-t) +\theta\kappa\phi_{z,w}(T-t)} \right|\leq 1
		$$ 
		since $w$, $\psi_{z,w}(t)$ and $\phi_{z,w}(t)=\int_0^t\psi_{z,w}(s)ds$ all have nonpositive real parts. Therefore the process $(M_t)_t$ is a  true martingale indeed. We deduce that 
		$F_{z,w}(T,y_0)=\E_{y_0}\left(e^{w\int_0^T Y_sds}e^{zY_T}\right)$ and the assertion follows.
	\end{proof}
	We also have the following result which specifies the behaviour of the Laplace transform  of $(Y_t,\int_0^tY_sds)$ when evaluated in two real numbers, not necessarily nonpositive.
	\begin{proposition}\label{Laplace2}
		Let $\lambda_1$  and $\lambda_2$ be two real numbers such that 
		\[
		\frac{\sigma^2}{2}\lambda_1^2-\kappa\lambda_1+\lambda_2\leq 0.
		\]
		Then, the equation
		\begin{equation}\label{**}
		\psi'(t)=\frac{\sigma^2}{2}\psi^2(t)-\kappa \psi(t)+\lambda_2
		\end{equation}
		has a  unique solution $ \psi_{\lambda_1,\lambda_2}$ defined on $[0,+\infty)$ such that $\psi_{\lambda_1,\lambda_2}(0)=\lambda_1$. Moreover, for every  $t\geq 0$, we have
		\[
		\E_{y_0}\left( e^{\lambda_1 Y_t+\lambda_2 \int_0^t Y_s ds}\right)\leq 
		e^{y_0\psi_{\lambda_1,\lambda_2}(t) +\theta\kappa\phi_{\lambda_1,\lambda_2}(t)},
		\]
		with $\phi_{\lambda_1,\lambda_2}(t)=\int_0^t \psi_{\lambda_1,\lambda_2}(s)ds$.
	\end{proposition}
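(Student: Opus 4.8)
The plan is to adapt the argument of Proposition \ref{Laplace}, but now working with real exponents that need not be nonpositive, so that the martingale argument must be replaced by a supermartingale argument yielding an inequality rather than an equality. First I would establish the global existence and uniqueness of the solution $\psi_{\lambda_1,\lambda_2}$ of the Riccati equation \eqref{**} on $[0,+\infty)$. The key observation is that the condition $\frac{\sigma^2}{2}\lambda_1^2-\kappa\lambda_1+\lambda_2\leq 0$ says precisely that $\psi'(0)=\frac{\sigma^2}{2}\lambda_1^2-\kappa\lambda_1+\lambda_2\leq 0$ at the starting point $\psi(0)=\lambda_1$. Writing the right-hand side of \eqref{**} as the quadratic $q(\psi)=\frac{\sigma^2}{2}\psi^2-\kappa\psi+\lambda_2$, I would argue that $\lambda_1$ lies between the two real roots of $q$ (or at the left root), so that $q(\lambda_1)\leq 0$; by a standard phase-line/comparison analysis for one-dimensional autonomous ODEs, the solution is monotone nonincreasing and stays trapped below $\lambda_1$, hence remains bounded and cannot blow up in finite time. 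This gives existence on all of $[0,+\infty)$, and uniqueness follows from the local Lipschitz character of $q$.

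The heart of the proof is the probabilistic bound. As in Proposition \ref{Laplace}, I would set $F(t,y)=e^{y\psi_{\lambda_1,\lambda_2}(t)+\theta\kappa\phi_{\lambda_1,\lambda_2}(t)}$ with $\phi_{\lambda_1,\lambda_2}(t)=\int_0^t\psi_{\lambda_1,\lambda_2}(s)ds$; by construction of $\psi_{\lambda_1,\lambda_2}$ via the Riccati equation, $F$ satisfies the same linear PDE
\[
\frac{\partial F}{\partial t}=\frac{\sigma^2}{2}y\frac{\partial^2 F}{\partial y^2}+\kappa(\theta-y)\frac{\partial F}{\partial y}+\lambda_2\,yF,
\]
so that the process $M_t=e^{\lambda_2\int_0^t Y_s\,ds}F(T-t,Y_t)$ is a local martingale. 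The difficulty, and the main departure from Proposition \ref{Laplace}, is that here $M_t$ is no longer bounded: since $\lambda_1$ and $\lambda_2$ may be positive, the factor $e^{\lambda_2\int_0^t Y_s ds}$ and the exponential $F$ need not be bounded by $1$. Hence I cannot invoke boundedness to upgrade the local martingale to a true martingale. Instead, since $M_t\geq 0$, I would use the fact that a nonnegative local martingale is a supermartingale. This yields $\E_{y_0}(M_T)\leq \E_{y_0}(M_0)=F(T,y_0)$, which is exactly the claimed inequality
\[
\E_{y_0}\!\left(e^{\lambda_1 Y_t+\lambda_2\int_0^t Y_s\,ds}\right)\leq e^{y_0\psi_{\lambda_1,\lambda_2}(t)+\theta\kappa\phi_{\lambda_1,\lambda_2}(t)},
\]
after identifying $M_T=e^{\lambda_2\int_0^T Y_s ds}e^{\lambda_1 Y_T}$ and renaming $T$ as $t$.

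The step I expect to be most delicate is justifying that $M$ is genuinely a nonnegative local martingale and that the supermartingale inequality applies cleanly. I would localize with a sequence of stopping times $\tau_n$ reducing $M$ to a bounded martingale, so that $\E_{y_0}(M_{t\wedge\tau_n})=F(t,y_0)$, and then pass to the limit using Fatou's lemma together with $M_{t\wedge\tau_n}\to M_t$ almost surely; Fatou gives $\E_{y_0}(M_t)\leq \liminf_n\E_{y_0}(M_{t\wedge\tau_n})=F(t,y_0)$, which is precisely the desired inequality and explains why equality is lost in general. A secondary technical point is verifying that $\psi_{\lambda_1,\lambda_2}(s)$ and $\phi_{\lambda_1,\lambda_2}(s)$ stay finite on $[0,t]$ so that $F$ is well defined and $C^{1,2}$, but this is already guaranteed by the global existence established in the first step.
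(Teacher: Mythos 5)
Your proposal is correct and follows essentially the same route as the paper: global existence of $\psi_{\lambda_1,\lambda_2}$ is obtained by showing $\psi'\leq 0$ and that the solution stays trapped between the real roots of $\frac{\sigma^2}{2}\lambda^2-\kappa\lambda+\lambda_2=0$, and the inequality then comes from observing that the local martingale $M$ of Proposition \ref{Laplace} is no longer bounded but, being nonnegative, is a supermartingale. Your explicit localization-plus-Fatou step is just the standard proof of that last fact and matches the paper's intent.
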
 
	\begin{proof}
		Let $\psi$ be the solution of \eqref{**} with $\psi(0)=\lambda_1$. We have
		\[
		\psi''(t)=(\sigma^2 \psi(t)-\kappa)\psi'(t).
		\]
		Therefore, the function $t\mapsto \psi'(t)e^{-\int_0^t(\sigma^2\psi(s)-\kappa)ds}$is a constant, hence $\psi'(t)$ has constant sign.  Moreover, the assumption on $\lambda_1$
		and $\lambda_2$ ensures that  $\psi'(0)\leq 0$. We deduce that $\psi'(t)\leq 0$
		and $\psi(t)$ remains between the solutions of the equation
		\[
		\frac{\sigma^2}{2}\lambda^2-\kappa\lambda+\lambda_2=0.
		\]
		This proves that the solution is defined on the whole interval
		$[0,+\infty)$. Now the assertion follows as in the proof of Proposition \ref{Laplace}: just note that  the process $(M_t)_t$ defined as in \eqref{Mt} is no more uniformly bounded, so we cannot directly deduce that it is a martingale. However,  it remains a positive local martingale, hence a supermartingale.
	\end{proof}
	
	\begin{remark}\label{remarksemigroup}
		Let us now consider	two real numbers $\lambda_1$  and $\lambda_2$  such that 
		\[
		\frac{\sigma^2}{2}\lambda_1^2-\kappa\lambda_1+\lambda_2<0.
		\]	 From the proof of Proposition \ref{Laplace2}, by using the optional sampling theorem we have
		\begin{eqnarray*}
			\sup_{\tau\in\T_{0,T}}\E_y\left(e^{\lambda_2\int_0^\tau Y_sds}e^{\psi_{\lambda_1,\lambda_2}(T-\tau)Y_\tau+\theta\kappa\phi_{\lambda_1,\lambda_2}(T-\tau)}\right)
			&\leq&e^{y\psi_{\lambda_1,\lambda_2}(T)+\theta\kappa\phi_{\lambda_1,\lambda_2}(T)}.
		\end{eqnarray*}
		
		Consider now $\epsilon>0$ and let $\lambda_1^\epsilon=(1+\epsilon)\lambda_1$ and $\lambda_2^\epsilon = (1+\epsilon)\lambda_2$.
		For $\epsilon$ small enough, we have $ \frac{\sigma^2}{2}(\lambda_1^\epsilon)^2-\kappa\lambda_1^\epsilon+\lambda_2^\epsilon< 0$.
		Therefore
		\begin{eqnarray*}
			\sup_{\tau\in\T_{0,T}}\E_y\left(e^{\lambda_2^\epsilon\int_0^\tau Y_sds}e^{\psi_{\lambda_1^\epsilon,\lambda_2^\epsilon}(T-\tau)Y_\tau+
				\theta\kappa\phi_{\lambda_1^\epsilon,\lambda_2^\epsilon}(T-\tau)}\right)
			&\leq&e^{y\psi_{\lambda_1^\epsilon,\lambda_2^\epsilon}(T)+\theta\kappa\phi_{\lambda_1^\epsilon,\lambda_2^\epsilon}(T)}.
		\end{eqnarray*}
		If we have $\psi_{\lambda_1^\epsilon,\lambda_2^\epsilon}\geq (1+\epsilon)\psi_{\lambda_1,\lambda_2}$,
		we can deduce that
		\begin{eqnarray*}
			\sup_{\tau\in\T_{0,T}}\E_y\left(e^{\lambda_2(1+\epsilon)\int_0^\tau Y_sds}
			e^{(1+\epsilon)\left(\psi_{\lambda_1,\lambda_2}(T-\tau)Y_\tau+\theta\kappa\phi_{\lambda_1,\lambda_2}(T-\tau)\right)}\right)
			&\leq&e^{y\psi_{\lambda_1^\epsilon,\lambda_2^\epsilon}(T)+\theta\kappa\phi_{\lambda_1^\epsilon,\lambda_2^\epsilon}(T)},
		\end{eqnarray*}
		and, therefore, that the family  $\left(e^{\lambda_2\int_0^\tau Y_sds}e^{\psi_{\lambda_1,\lambda_2}(T-\tau)Y_\tau+\theta\kappa\phi_{\lambda_1,\lambda_2}(T-\tau)}\right)_{\tau\in\T_{0,T}}$
		is uniformly integrable. As a consequence, the process $(M_t)_t$ is a true  martingale and we have
		\[
		\E_{y}\left( e^{\lambda_1 Y_t+\lambda_2 \int_0^t Y_s ds}\right) = 
		e^{y\psi_{\lambda_1,\lambda_2}(t) +\theta\kappa\phi_{\lambda_1,\lambda_2}(t)}.
		\]
		So, it remains to show  that $\psi_{\lambda_1^\epsilon,\lambda_2^\epsilon}\geq (1+\epsilon)\psi_{\lambda_1,\lambda_2}$. In order to do this
		we set $g_\epsilon(t)=\psi_{\lambda_1^\epsilon,\lambda_2^\epsilon}(t)-(1+\epsilon)\psi_{\lambda_1,\lambda_2}(t)$.
		From the equations satisfied by $\psi_{\lambda_1^\epsilon,\lambda_2^\epsilon}$ and
		$\psi_{\lambda_1,\lambda_2}$ we deduce that
		\begin{eqnarray*}
			g_\epsilon'(t)&=&\frac{\sigma^2}{2}\left(\psi^2_{\lambda_1^\epsilon,\lambda_2^\epsilon}(t)-(1+\epsilon)
			\psi^2_{\lambda_1,\lambda_2}(t)\right)-\kappa \left(\psi_{\lambda_1^\epsilon,\lambda_2^\epsilon}(t)-
			(1+\epsilon) \psi_{\lambda_1,\lambda_2}(t)\right)\\
			&=&
			\frac{\sigma^2}{2}\left(\psi^2_{\lambda_1^\epsilon,\lambda_2^\epsilon}(t)-(1+\epsilon)^2
			\psi^2_{\lambda_1,\lambda_2}(t)\right)
			-\kappa g_\epsilon(t)+\frac{\sigma^2}{2}\left((1+\epsilon)^2-(1+\epsilon)\right)\psi^2_{\lambda_1,\lambda_2}(t)\\
			&=&\frac{\sigma^2}{2}\left(\psi_{\lambda_1^\epsilon,\lambda_2^\epsilon}(t)+(1+\epsilon)
			\psi_{\lambda_1,\lambda_2}(t)\right)g_\epsilon(t)
			-\kappa g_\epsilon(t)+\frac{\sigma^2}{2}\epsilon(1+\epsilon)\psi^2_{\lambda_1,\lambda_2}(t)\\
			&=&f_\epsilon(t) g_\epsilon(t)+\frac{\sigma^2}{2}\epsilon(1+\epsilon)\psi^2_{\lambda_1,\lambda_2}(t),
		\end{eqnarray*}
		where
		\[
		f_\epsilon(t)=\frac{\sigma^2}{2}\left(\psi_{\lambda_1^\epsilon,\lambda_2^\epsilon}(t)+(1+\epsilon)
		\psi_{\lambda_1,\lambda_2}(t)\right)
		-\kappa.
		\]
		Therefore, the function $g_\epsilon(t)e^{-\int_0^t f_\epsilon(s)ds}$ is nondecreasing  and, since $g_\epsilon(0)=0$,
		we have $g_\epsilon(t)\geq 0$.
	\end{remark}    
	
	We can now prove the following Lemma, which will be useful in  Section \ref{sect-estimjointlaw} to  prove suitable estimates on the joint law of the process $(X,Y)$.
	\begin{lemma}\label{RemarqueMomentsNeg}
		For every $q>0$  there exists $C>0$ such that for all $y_0\geq 0$,
		\begin{equation}
		\E_{y_0}\left(\int_0^t Y_v dv\right)^{-q}\leq \frac{C}{t^{2q}}.
		\end{equation}
	\end{lemma}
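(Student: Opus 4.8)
The plan is to control all negative moments simultaneously through the elementary Gamma–function identity $x^{-q}=\frac{1}{\Gamma(q)}\int_0^\infty \lambda^{q-1}e^{-\lambda x}\,d\lambda$, valid for $x>0$ and $q>0$. Applying it with $x=\int_0^t Y_v\,dv$ (which is a.s.\ strictly positive for $t>0$, since $\theta>0$ forces the drift of $Y$ to be strictly positive at the origin) and taking expectations, Tonelli's theorem yields
\[
\E_{y_0}\Big(\int_0^t Y_v\,dv\Big)^{-q}=\frac{1}{\Gamma(q)}\int_0^\infty \lambda^{q-1}\,\E_{y_0}\big(e^{-\lambda\int_0^t Y_v\,dv}\big)\,d\lambda .
\]
The inner expectation is exactly the Laplace transform of Proposition \ref{Laplace} with $z=0$ and $w=-\lambda$ (both with nonpositive real part), so $\E_{y_0}(e^{-\lambda\int_0^t Y_v\,dv})=e^{y_0\psi_{0,-\lambda}(t)+\theta\kappa\phi_{0,-\lambda}(t)}$. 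Since $\psi_{0,-\lambda}(t)\le 0$ (the sign property established in the proof of Proposition \ref{Laplace}) and $y_0\ge 0$, the factor $e^{y_0\psi_{0,-\lambda}(t)}\le 1$, whence $\E_{y_0}(e^{-\lambda\int_0^t Y_v\,dv})\le e^{\theta\kappa\phi_{0,-\lambda}(t)}$ uniformly in $y_0\ge 0$. This uniformity is the crucial point and is what will make the final constant independent of the starting volatility; equivalently it follows from the pathwise comparison $Y^{y_0}\ge Y^{0}$, which reduces the whole statement to $y_0=0$.

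Next I would make the bound on $\phi_{0,-\lambda}$ explicit. The Riccati equation \eqref{*} for $\psi_{0,-\lambda}$ has constant coefficients, so the substitution $\psi_{0,-\lambda}=-\frac{2}{\sigma^2}\,w'/w$ linearises it into $w''+\kappa w'-\frac{\sigma^2\lambda}{2}w=0$ with $w(0)=1$, $w'(0)=0$; writing $\gamma=\sqrt{\kappa^2+2\sigma^2\lambda}$ one gets $w(t)=e^{-\kappa t/2}\big(\cosh(\tfrac{\gamma t}{2})+\tfrac{\kappa}{\gamma}\sinh(\tfrac{\gamma t}{2})\big)$, and, because $\phi_{0,-\lambda}(t)=\int_0^t\psi_{0,-\lambda}=-\frac{2}{\sigma^2}\log w(t)$, the identity $e^{\theta\kappa\phi_{0,-\lambda}(t)}=w(t)^{-\beta}$ with $\beta=\frac{2\kappa\theta}{\sigma^2}$. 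The elementary lower bound $w(t)\ge \tfrac12 e^{(\gamma-\kappa)t/2}$ (use $\tfrac{\kappa}{\gamma}\sinh\ge0$ and $\cosh x\ge\tfrac12 e^x$) then gives
\[
\E_{y_0}\big(e^{-\lambda\int_0^t Y_v\,dv}\big)\le 2^{\beta}\,e^{-\frac{\kappa\theta}{\sigma^2}(\gamma-\kappa)t},\qquad \gamma=\sqrt{\kappa^2+2\sigma^2\lambda}.
\]

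Finally I would insert this into the $\lambda$–integral and extract the power of $t$ by a change of variables. Substituting $s=\gamma-\kappa=\sqrt{\kappa^2+2\sigma^2\lambda}-\kappa$, so that $\lambda=\frac{s(s+2\kappa)}{2\sigma^2}$ and $d\lambda=\frac{s+\kappa}{\sigma^2}\,ds$, and then scaling $s=v/t$, transforms the integral into $t^{-2q}$ times $\int_0^\infty v^{q-1}(v+2\kappa t)^{q-1}(v+\kappa t)e^{-\frac{\kappa\theta}{\sigma^2}v}\,dv$. For $t\in(0,T]$ this last integral is bounded uniformly in $t$: one uses $(v+\kappa t)\le(v+2\kappa t)$, and for $0<q\le1$ the subadditivity $(v+2\kappa t)^{q}\le v^{q}+(2\kappa T)^{q}$ (for $q\ge1$ one bounds instead $(v+2\kappa t)^{q-1}\le(v+2\kappa T)^{q-1}$), reducing it to the convergent integral $\int_0^\infty\big(v^{2q-1}+(2\kappa T)^{q}\,v^{q-1}\big)e^{-\frac{\kappa\theta}{\sigma^2}v}\,dv$. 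This yields $\E_{y_0}(\int_0^t Y_v\,dv)^{-q}\le C\,t^{-2q}$ with $C$ depending only on $q,T,\kappa,\theta,\sigma$ but not on $y_0$.

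The main obstacle is not the Laplace machinery but this last uniform-in-$t$ estimate of the $\lambda$–integral: the integrand genuinely has two regimes, $\gamma-\kappa\sim\frac{\sigma^2}{\kappa}\lambda$ for small $\lambda$ and $\gamma-\kappa\sim\sigma\sqrt{2\lambda}$ for large $\lambda$, and only the large-$\lambda$ regime produces the sharp rate $t^{-2q}$; the small-$q$ case also requires care near $v=0$, where the naive bound $(v+2\kappa t)^{q-1}\le v^{q-1}$ is too lossy. One should moreover keep in mind that the stated bound can only hold for $t$ bounded: by ergodicity $\int_0^t Y_v\,dv\sim\theta t$, so the true order of the negative moment for large $t$ is $t^{-q}$, not $t^{-2q}$. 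Hence the estimate is to be read for $t\in(0,T]$ with $C=C_T$, which is precisely the range in which it is used in Section \ref{sect-estimjointlaw}.
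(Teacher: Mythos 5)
Your proof is correct and follows essentially the same route as the paper's: the Gamma-function identity, the Laplace transform of $\int_0^t Y_v\,dv$ from Proposition \ref{Laplace2} with the sign of $\psi_{0,-\lambda}$ giving uniformity in $y_0\ge 0$, and a change of variables of the form $\lambda\mapsto v/t^2$ to extract the rate $t^{-2q}$; the only difference is that you obtain the bound on $\phi_{0,-\lambda}(t)$ by solving the Riccati equation exactly via the linearization $\psi=-\tfrac{2}{\sigma^2}w'/w$, whereas the paper bounds $\psi_{0,-\lambda}$ through its integral representation combined with the a priori estimate $\psi_{0,-\lambda}\ge -\sqrt{2\lambda/\sigma^2}$, which yields the same $e^{-c\sqrt{\lambda}\,t}$ decay for large $\lambda$. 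Your closing caveat is well taken: the paper's own proof also terminates with a bound of the form $C+C't^{-2q}$, so the stated inequality is indeed to be read for $t$ in a bounded interval, which is the only regime in which it is used.
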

	\begin{proof}
		If we take $\lambda_1=0$ and $\lambda_2=-s$ with $s>0$  in Proposition \ref{Laplace2},
		we get
		\[
		\E_{y_0}\left( e^{-s \int_0^t Y_v dv}\right)=e^{y_0\psi_{0,-s}(t) +\theta\kappa\phi_{0,-s}(t)}.
		\]
		Since $\psi'_{0,-s}(0)=-s<0$, we can deduce by the  proof of Proposition \ref{Laplace2} that $\psi'_{0,-s}(t)=-se^{\int_0^t(\sigma^2\psi(u)-\kappa)du}$. Therefore, since $\psi_{0,-s}=0$, we have  
		\begin{equation}\label{psi0-s}
		\psi_{0,-s}(t)=-s\int_0^te^{\int_0^u(\sigma^2\psi(v)-\kappa)dv}du.
		\end{equation}
		Again from the proof of Proposition \ref{Laplace2},
		\[
		\psi_{0,-s}(t)\geq 
		\frac{\kappa}{\sigma^2}-\sqrt{\left(\frac{\kappa}{\sigma^2}\right)^2+2\frac{s}{\sigma^2}}
		\geq -\sqrt{2s/\sigma^2},
		\]
		so, by using \eqref{psi0-s}, we deduce that
		$$
		\psi_{0,-s}(t)\leq 
		-s\int_0^te^{\int_0^u-(\sigma \sqrt{2s}+\kappa)dv}du=-s\int_0^te^{-\lambda_su}du=-\frac{s}{\lambda_s}(1-e^{-t\lambda_s}).
		$$	
		where $\lambda_s=\sigma\sqrt{2s}+\kappa$. Since $\phi_{0,-s}(t)=\int_0^t \psi_{0,-s}(u)du$, we have
		$$
		\phi_{0,-s}(t)\leq-\frac{s}{\lambda^2_s} \left( t\lambda_s   -1+e^{-t\lambda_s}\right).
		$$
		Therefore, since $\psi_{0,-s}(t)\leq 0$,  for any $y_0\geq 0$ we get
		\begin{align*}
		\E_{y_0}\left(e^{-s\int_0^tY_vdv}\right)&\leq e^{\kappa\theta\phi_{0,-s}(t)}\leq e^{ -\frac{\kappa\theta s}{\lambda_s^2} (t\lambda_s-1+e^{-t\lambda_s})    } .
		\end{align*}
		Now, recall that for every $q>0$ we can write
		$$
		\frac{1}{y^q}=\frac{1}{\Gamma(q)}\int_0^\infty s^{q-1}e^{-sy}ds.
		$$
		Therefore
		\begin{align*}
		\E_{y_0}\left(\int_0^t Y_v dv\right)^{-q}&=\E_{y_0}\left(\frac{1}{\Gamma(q)}\int_0^\infty s^{q-1}
		e^{-s\int_0^t Y_v dv}ds\right)\\
		&\leq\frac{1}{\Gamma(q)}\int_0^1s^{q-1}
		e^{  -\frac{\kappa\theta s}{\lambda_s^2} (t\lambda_s-1+e^{-t\lambda_s})} ds+ \frac{1}{\Gamma(q)}\int_1^\infty s^{q-1}
		e^{  -\frac{\kappa\theta s}{\lambda_s^2} (t\lambda_s-1+e^{-t\lambda_s})} ds.
		\end{align*}
		Recall that $\lambda_s=\sigma\sqrt{2s}+\kappa$, so the first terms in the right hand side is finite. Moreover, for $s>1$, we have $\frac{\kappa\theta s}{\lambda_s^2}\leq C$.  Then, by noting that the function $u\mapsto tu-1+e^{-tu}$ is nondecreasing, we have
		\begin{align*}
		\E_{y_0}\left(\int_0^t Y_v dv\right)^{-q}&\leq C+ \frac{1}{\Gamma(q)}\int_1^\infty s^{q-1}
		e^{  -C (t\sigma\sqrt{2s}-1+e^{-t\sigma\sqrt{2s}})} ds\\
		&\leq C+ \frac{1}{t^{2q}\Gamma(q)}\int_0^\infty v^{q-1}
		e^{  -C (\sigma\sqrt{2v}-1+e^{-\sigma\sqrt{2v}})} dv\\
		&\leq \frac{C}{t^{2q}},
		\end{align*}
		which concludes  the proof.
	\end{proof}
	
	Now recall that the diffusion $(X,Y)$ evolves according to the following stochastic differential system
	
	\begin{equation*}
	\begin{cases}
	dX_t=\left( \frac{\rho\kappa\theta}{\sigma}-\frac{Y_t}{2}\right)dt + \sqrt{Y_t}dB_t,\\
	dY_t=\kappa(\theta-Y_t)dt+\sigma\sqrt{Y_t}dW_t.
	\end{cases}
	\end{equation*}	 	
	If we set $\tilde{X}_t=X_t-\frac{\rho}{\sigma}Y_t$, we have 
	\begin{equation}
	\label{model2}
	\begin{cases}
	d\tilde{X}_t=\left( \frac{\rho\kappa}{\sigma}-\frac{1}{2}\right)Y_tdt+\sqrt{1-\rho^2}\sqrt{Y_t} d\tilde{B}_t,\\
	dY_t=\kappa(\theta-Y_t)dt+\sigma\sqrt{Y_t}dW_t.
	\end{cases}
	\end{equation}
	where $\tilde{B}_t=(1-\rho^2)^{-1/2}\left(B_t-\rho W_t\right)$. Note that $\tilde B $ is a standard Brownian motion with $\langle \tilde{B},W\rangle_t=0$.
	\begin{proposition}\label{prop-TF}
		For all  $u, \, v \in \R$, for all $\lambda\geq 0$
		and for all $(x_0,y_0)\in \R\times [0,+\infty)$ we have
		\[
		\E_{x_0,y_0}\left( e^{iu X_t+ivY_t}e^{-\lambda\int_0^t Y_s  ds}\right)=
		e^{iu x_0+y_0(\psi_{\lambda_1,\mu}(t)-iu\frac{\rho}{\sigma})+\theta\kappa\phi_{\lambda_1,\mu}(t)},
		\]
		where $\lambda_1= i(u\frac{\rho}{\sigma}+v)$, $\mu=iu\left(\frac{\rho\kappa}{\sigma}-\frac{1}{2}\right)-\frac{u^2}{2}(1-\rho^2)-\lambda$
		and the function $\psi_{\lambda_1,\mu}$
		and $\phi_{\lambda_1,\mu}$ are defined in Proposition \ref{Laplace}.
	\end{proposition}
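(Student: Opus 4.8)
The plan is to exploit the change of variable \eqref{model2}, which decouples the martingale part of the log-price from the volatility. Writing $X_t=\tilde X_t+\frac{\rho}{\sigma}Y_t$, the exponent in the transform becomes $iuX_t+ivY_t=iu\tilde X_t+i\big(u\frac{\rho}{\sigma}+v\big)Y_t=iu\tilde X_t+\lambda_1 Y_t$, so the whole problem is recast in terms of the pair $(\tilde X,Y)$, with the crucial feature that by \eqref{model2} the process $\tilde X$ is driven by $\tilde B$, which is orthogonal to $W$.

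First I would integrate out $\tilde X_t$ conditionally on the volatility path. From \eqref{model2}, $\tilde X_t=x_0-\frac{\rho}{\sigma}y_0+\big(\frac{\rho\kappa}{\sigma}-\frac12\big)\int_0^t Y_s\,ds+\sqrt{1-\rho^2}\int_0^t\sqrt{Y_s}\,d\tilde B_s$. Since $(\tilde B,W)$ is a two-dimensional Brownian motion (recall $\langle\tilde B,W\rangle\equiv0$ from the remark following \eqref{model2}), $\tilde B$ is independent of the filtration generated by $W$, hence of $Y$. Conditioning on $\mathcal F^W$, the integrand $\sqrt{Y_s}$ is frozen and $\int_0^t\sqrt{Y_s}\,d\tilde B_s$ is a centred Gaussian with variance $\int_0^t Y_s\,ds$, so that $\E\big(e^{iu\sqrt{1-\rho^2}\int_0^t\sqrt{Y_s}d\tilde B_s}\mid\mathcal F^W\big)=e^{-\frac{u^2}{2}(1-\rho^2)\int_0^t Y_s\,ds}$. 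Substituting and collecting all the $\int_0^t Y_s\,ds$ terms (the drift coefficient $iu(\frac{\rho\kappa}{\sigma}-\frac12)$, the Gaussian correction $-\frac{u^2}{2}(1-\rho^2)$, and the killing rate $-\lambda$) into the single constant $\mu$, I obtain
\begin{equation*}
\E_{x_0,y_0}\big(e^{iuX_t+ivY_t}e^{-\lambda\int_0^tY_s\,ds}\big)=e^{iu(x_0-\frac{\rho}{\sigma}y_0)}\,\E_{y_0}\big(e^{\lambda_1 Y_t+\mu\int_0^t Y_s\,ds}\big).
\end{equation*}

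At this point the statement reduces to the Laplace-transform formula for the CIR pair $(Y_t,\int_0^t Y_s\,ds)$ proved in Proposition \ref{Laplace}. The only hypothesis to check is that $z=\lambda_1$ and $w=\mu$ have nonpositive real parts: indeed $\lambda_1=i(u\frac{\rho}{\sigma}+v)$ is purely imaginary, while $\Re\mu=-\frac{u^2}{2}(1-\rho^2)-\lambda\le0$ because $\lambda\ge0$ and $\rho\in(-1,1)$. Applying Proposition \ref{Laplace} gives $\E_{y_0}\big(e^{\lambda_1 Y_t+\mu\int_0^t Y_s ds}\big)=e^{y_0\psi_{\lambda_1,\mu}(t)+\theta\kappa\phi_{\lambda_1,\mu}(t)}$, and folding the prefactor $e^{-iu\frac{\rho}{\sigma}y_0}$ into the $y_0$-coefficient produces exactly $e^{iux_0+y_0(\psi_{\lambda_1,\mu}(t)-iu\frac{\rho}{\sigma})+\theta\kappa\phi_{\lambda_1,\mu}(t)}$, as claimed.

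The one genuinely delicate point is the conditioning step: one must justify that, given the whole trajectory of $W$ (equivalently of $Y$), the stochastic integral $\int_0^t\sqrt{Y_s}\,d\tilde B_s$ really is Gaussian with the stated conditional variance. This rests on the independence of $\tilde B$ and $\mathcal F^W$, which follows from L\'evy's characterization applied to $(\tilde B,W)$, and is most cleanly carried out through a regular conditional distribution (or a freezing/approximation argument for the It\^o integral). Everything else is bookkeeping of the coefficients multiplying $\int_0^t Y_s\,ds$.
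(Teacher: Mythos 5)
Your proof is correct and follows essentially the same route as the paper: decompose $X_t=\tilde X_t+\frac{\rho}{\sigma}Y_t$, condition on $W$ to integrate out the $\tilde B$-driven Gaussian part, collect the coefficients of $\int_0^t Y_s\,ds$ into $\mu$, and invoke Proposition \ref{Laplace}. Your explicit verification that $\lambda_1$ and $\mu$ have nonpositive real parts is a welcome detail that the paper leaves implicit.
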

	\begin{proof}
		We have
		\[
		\E_{x_0,y_0}\left( e^{iu X_t+ivY_t-\lambda\int_0^t Y_s  ds}\right)=\E_{x_0,y_0}\left( e^{iu (\tilde{X_t}+\frac{\rho}{\sigma}Y_t)+ivY_t-\lambda\int_0^t Y_s  ds}\right)
		\]
		and 
		\[
		\tilde{X_t}=x_0-\frac{\rho}{\sigma}y_0+\int_0^t \left( \frac{\rho\kappa}{\sigma}-\frac{1}{2}\right)Y_sds+\int_0^t\sqrt{(1-\rho^2)Y_s}d\tilde{B}_s.
		\]
		Since $\tilde{B}$ and $W$ are independent,
		\[
		\E\left(e^{iu\tilde{X}_t} \;|\; W\right)=e^{iu\left(x_0-\frac{\rho}{\sigma}y_0+\int_0^t \left(\frac{\rho\kappa}{\sigma}-\frac{1}{2}\right)Y_sds\right)
			-\frac{u^2}{2}(1-\rho^2)\int_0^t Y_s ds}
		\]
		and
		\begin{eqnarray*}
			\E_{x_0,y_0}\left( e^{iu X_t+ivY_t-\lambda\int_0^t Y_s  ds}\right)=e^{iu\left(x_0-\frac{\rho}{\sigma}y_0\right)}
			\E_{y_0}\left(e^{i\left(u\frac{\rho}{\sigma}+v\right)Y_t+
				\left(iu(\frac{\rho\kappa}{\sigma}-\frac{1}{2})-\frac{u^2}{2}(1-\rho^2)-\lambda\right)\int_0^tY_sds}\right).
		\end{eqnarray*}
		Then the assertion follows by using  Proposition~\ref{Laplace}.
	\end{proof}
	
	\subsection{Identification of the semigroups}
	We now show that the semigroup $\bar P^\lambda_t$ associated with the coercive bilinear form  can be actually identified with the transition semigroup $ P^\lambda_t$.  Recall  the Sobolev  spaces $L^p(\O,\m_{\gamma,\mu})$ introduced in Definition \ref{def-sob} for $p\geq 1$. In order to prove the identification of the semigroups, we need the following property of the transition semigroup.
	\begin{theorem}\label{theotransition}
		For all $p>1$, $\gamma>0$ and $\mu>0$ there exists $\lambda >0$ such that,
		for every compact $K\subseteq\R\times[0,+\infty)$ and for every  $T>0$, there is $C_{p,K, T}>0$ such that
		\[
		P^\lambda_t f(x_0,y_0)\leq \frac{C_{p,K, T}}{t^{\frac{\beta}{p}+\frac{3}{2p}}}||f||_{L^p(\O,\m_{\gamma,\mu})},\qquad (x_0,y_0)\in K.
		\]
		for every measurable positive function  $f$ on $\R\times[0,+\infty)$ and for every $t\in(0,T]$.
	\end{theorem}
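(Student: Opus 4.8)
The plan is to exploit the conditional Gaussian structure of the log-price revealed by \eqref{model2}: conditionally on the trajectory of the volatility $Y$ (equivalently on the Brownian motion $W$), the process $\tilde X_t=X_t-\frac{\rho}{\sigma}Y_t$ is Gaussian with mean $m_t=x_0-\frac{\rho}{\sigma}y_0+\big(\frac{\rho\kappa}{\sigma}-\frac12\big)\int_0^t Y_s\,ds$ and variance $v_t=(1-\rho^2)\int_0^t Y_s\,ds$. Conditioning on $\mathcal F^W$ and integrating out the Gaussian component $\tilde X_t$, I would first write
\[
P^\lambda_t f(x_0,y_0)=e^{-\lambda t}\,\E_{y_0}\!\left[e^{-\lambda\int_0^t Y_s\,ds}\int_\R f(x,Y_t)\,\frac{1}{\sqrt{2\pi v_t}}\,e^{-\frac{(x-\frac{\rho}{\sigma}Y_t-m_t)^2}{2v_t}}\,dx\right].
\]
This is the natural substitute for It\^o's formula and it is precisely the occupation integral $\int_0^t Y_s\,ds$ governing $v_t$ whose negative moments are controlled by Lemma \ref{RemarqueMomentsNeg}, which signals that this conditioning is the right route.

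Next I would estimate the inner $x$-integral by H\"older's inequality with exponents $p$ and $q=p/(p-1)$, splitting off the weight $e^{-\gamma|x|}$. Completing the square in the Gaussian kernel gives
\[
\int_\R f(x,Y_t)\,\frac{e^{-(x-\frac{\rho}{\sigma}Y_t-m_t)^2/(2v_t)}}{\sqrt{2\pi v_t}}\,dx\;\le\;C\,F(Y_t)\,v_t^{-\frac{1}{2p}}\,e^{\frac{\gamma}{p}\left(\frac{|\rho|}{\sigma}Y_t+|m_t|\right)},
\]
where $F(y)=\big(\int_\R f(x,y)^p e^{-\gamma|x|}\,dx\big)^{1/p}$ reconstructs the $x$-part of the $\m_{\gamma,\mu}$-norm. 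Since $(x_0,y_0)$ ranges over the compact $K$ and $|m_t|\le C_K\big(1+\int_0^t Y_s\,ds\big)$, the exponential factor is bounded by $C_K\exp\!\big(c\int_0^t Y_s\,ds+c'Y_t\big)$. I would then choose $\lambda$ large enough that the killing $e^{-\lambda\int_0^t Y_s\,ds}$ absorbs the factor $e^{c\int_0^t Y_s\,ds}$, leaving a genuinely decaying weight $e^{-\alpha\int_0^t Y_s\,ds}$ with $\alpha$ proportional to $\lambda$, and apply H\"older once more in the $Y$-expectation, again with exponents $p,q$:
\[
P^\lambda_t f(x_0,y_0)\le C_K\Big(\E_{y_0}\big[e^{-\alpha\int_0^t Y_s\,ds}\,e^{pc'Y_t}F(Y_t)^p\big]\Big)^{1/p}\Big(\E_{y_0}\big[\big(\textstyle\int_0^t Y_s\,ds\big)^{-q/(2p)}\big]\Big)^{1/q}.
\]
The second factor is handled by Lemma \ref{RemarqueMomentsNeg}. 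For the first, writing $g(y)=e^{pc'y}F(y)^p$, the killed expectation equals $\int_0^\infty g(y)\,q^\alpha_t(y_0,y)\,dy$, and I would bound the killed CIR transition density by $q^\alpha_t(y_0,y)\le C(t)\,y^{\beta-1}e^{-\mu y}$, thereby recovering $\|f\|_{L^p(\O,\m_{\gamma,\mu})}^p$. Collecting the powers of $t$ arising from the Gaussian normalisation $v_t^{-1/(2p)}$, from the negative moments of $\int_0^t Y_s\,ds$, and from the small-time singularity of the CIR density (which behaves like $t^{-\beta}$ at $y_0=0$) then yields, after careful but routine bookkeeping, the stated exponent $\frac{\beta}{p}+\frac{3}{2p}$.

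The hard part will be the density estimate $q^\alpha_t(y_0,y)\le C(t)\,y^{\beta-1}e^{-\mu y}$ \emph{uniformly} for $t\in(0,T]$. The un-killed CIR density decays in $y$ only like $e^{-c_t y}$ with $c_t=\frac{2\kappa}{\sigma^2(1-e^{-\kappa t})}$, which for $t$ bounded away from $0$ and for $\mu>\frac{2\kappa}{\sigma^2}$ is \emph{too slow} to be dominated by $e^{-\mu y}$; it is exactly the killing $e^{-\alpha\int_0^t Y_s\,ds}$ that steepens the effective decay rate, and this is why the parameter $\lambda$ must be chosen large. Quantifying this through the ODE characterisation of the Laplace transform of $\big(Y_t,\int_0^t Y_s\,ds\big)$ in Propositions \ref{Laplace}--\ref{Laplace2} and the comparison argument of Remark \ref{remarksemigroup}, and checking that for $\lambda$ large the net decay beats $e^{-\mu y}$ for every $t\le T$ while the prefactor $C(t)$ carries only the integrable singularity $t^{-\beta}$ near $t=0$, is the crux of the proof. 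Everything else reduces to the two H\"older steps above together with the negative-moment bound already secured in Lemma \ref{RemarqueMomentsNeg}.
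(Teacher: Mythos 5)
Your overall architecture is the same as the paper's: condition on $W$ to exploit the Gaussianity of $\tilde X_t$ given $\int_0^t Y_s\,ds$, integrate out the Gaussian component, apply H\"older twice (once in $x$, once in the $Y$-expectation), control the negative moments of $\int_0^t Y_s\,ds$ via Lemma \ref{RemarqueMomentsNeg}, and recover the $y$-part of the $\m_{\gamma,\mu}$-norm from a bound on the law of $Y_t$. The bookkeeping of the powers of $t$ (namely $t^{-\beta/p-1/(2p)}$ from the density prefactor and $t^{-1/p}$ from the negative moments) is also consistent with the stated exponent.

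However, there is a genuine gap exactly where you place the ``crux'': you reduce everything to a pointwise bound $q^\alpha_t(y_0,y)\,e^{pc'y}\leq C(t)\,y^{\beta-1}e^{-\mu y}$ on the \emph{killed} CIR transition density, uniform in $t\in(0,T]$, and you do not prove it. This is not a routine step: Propositions \ref{Laplace} and \ref{Laplace2} give you exponential \emph{moments} of $(Y_t,\int_0^t Y_s\,ds)$, not a pointwise density estimate, and passing from the former to the latter (with the correct $y^{\beta-1}$ behaviour at $0$ and an integrable-in-$t$ prefactor) would require a separate inversion or comparison argument that is at least as hard as the theorem itself. The paper's proof is structured precisely to avoid this: it keeps the \emph{unkilled} density, for which Proposition \ref{density_CIR} gives an explicit Bessel-function bound with decay $e^{-(\sqrt y-\sqrt{y_t})^2/(2L_t)}$, and manufactures the required $e^{-\mu y}$ decay by inserting the weight $e^{-\bar\mu Y_t}$ into the H\"older measure ($e^{-\gamma|z|-\bar\mu Y_t}dz\,d\P_{y_0}$, with $\bar\mu=\mu+\gamma|\rho|/\sigma$). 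The price is a compensating factor $e^{(q-1)\bar\mu Y_t}$ in the conjugate term $J_q$, which combines with the killing $e^{-q\lambda\Sigma_t^2}$ and is controlled by the joint Laplace transform of $(Y_t,\int_0^t Y_s\,ds)$ — an exponential-moment estimate, which is where the largeness condition on $\lambda$ actually enters. To repair your proof you should either establish the killed-density bound rigorously, or redistribute the exponential weight into the H\"older step as the paper does, in which case the rest of your argument goes through with the unkilled density estimate you already have available.
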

	Theorem \ref{theotransition} will  also play a crucial role in order to prove Theorem \ref{theorem2}.   Its proof relies on suitable estimates on the joint law of the diffusion $(X,Y)$ and we postpone it to the following section. Then, we can prove the following result.
	\begin{proposition}\label{identificationsemigroup}
		There exists $\lambda>0$ such that, for every function $f\in H$ and for every $t\geq 0$,
		\[
		\bar{P}^\lambda_t f(x,y)=P^\lambda_tf(x,y),\quad dxdy \mbox{ a.e.}
		\]
	\end{proposition}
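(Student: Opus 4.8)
The plan is to identify the two semigroups by showing that, for $f$ in a dense subclass of $H$, the function $t\mapsto P^\lambda_t f$ is a \emph{weak} solution of the parabolic problem \eqref{var_eq} (with zero source and initial datum $f$) in the sense of Proposition \ref{propsg2}, and then to invoke the uniqueness part of that proposition together with a density argument. First I would fix $\lambda>0$ large enough that $a_\lambda$ is coercive (Lemma \ref{coercivity}) and that the smoothing estimate of Theorem \ref{theotransition} holds; since $\m$ is a finite measure we have $H=L^2(\O,\m)\subset L^p(\O,\m)$ for some $p\in(1,2]$, so Theorem \ref{theotransition} applies to every $f\in H$ and guarantees that $P^\lambda_t f$ is well defined pointwise and locally bounded.

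Next I would reduce to $f\in C^\infty_c(\mathcal{O})$, which is dense in $H$. For such $f$, $\mathcal{L}^\lambda f\in C^\infty_c(\mathcal{O})$ as well, and an elementary application of It\^o's formula to $e^{-\lambda\int_0^s(1+Y_r)dr}f(X_s,Y_s)$ yields (the stochastic integral being a genuine martingale because $f$ and $\nabla f$ are bounded with compact support) the Feynman--Kac identity
\begin{equation*}
P^\lambda_t f - f=\int_0^t P^\lambda_s(\mathcal{L}^\lambda f)\,ds.
\end{equation*}
Pairing with $v\in V$ and using the finiteness of $\m$ to justify Fubini, I would set $u(s)=P^\lambda_s f$ and deduce that $s\mapsto(u(s),v)_H$ is absolutely continuous with $(\partial_s u(s),v)_H=(P^\lambda_s\mathcal{L}^\lambda f,v)_H$ for a.e.\ $s$. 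Since the transition semigroup commutes with its generator, $P^\lambda_s\mathcal{L}^\lambda f=\mathcal{L}^\lambda P^\lambda_s f=\mathcal{L}^\lambda u(s)$, so that the integration by parts formula of Proposition \ref{Prop_integrationbyparts} applied to $a_\lambda$ (which requires the regularity $P^\lambda_s f\in H^2(\O,\m)$ discussed below) gives $(\partial_s u(s),v)_H=-a_\lambda(u(s),v)$. Together with $u(0)=f$ this is exactly the weak formulation \eqref{var_eq} with right-hand side $f=0$, whence $u=\bar P^\lambda_\cdot f$ by the uniqueness in Proposition \ref{propsg2}.

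To pass from the dense class to all of $H$ I would use density on the analytic side, where $\bar P^\lambda_t$ is a contraction by the estimate \eqref{estimL2bis}, and on the probabilistic side I would pass to the limit along a subsequence converging $dxdy$-a.e., the relevant control being again supplied by the smoothing estimate of Theorem \ref{theotransition}; this yields $\bar P^\lambda_t f=P^\lambda_t f$ $dxdy$-a.e.\ for every $f\in H$ and every $t\geq 0$.

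The hard part will be the regularity needed to close the step $(P^\lambda_s\mathcal{L}^\lambda f,v)_H=-a_\lambda(P^\lambda_s f,v)$: Proposition \ref{Prop_integrationbyparts} is stated for arguments in $H^2(\O,\m)$, so one must establish that $P^\lambda_s f\in H^2(\O,\m)$ (or at least that $P^\lambda_s f$ belongs to the domain of the generator inside $V$ and that the form identity $(\mathcal{L}^\lambda w,v)_H=-a_\lambda(w,v)$ extends by continuity from $H^2(\O,\m)$ onto that domain). This is precisely where the degeneracy of $\L$ on $\partial\O=\R\times\{0\}$ and the unbounded, linearly growing coefficients make the analysis delicate, and where the smoothing estimate of Theorem \ref{theotransition} together with the $L^p$-regularity results for the transition semigroup are essential.
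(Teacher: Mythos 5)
Your overall strategy (identify $t\mapsto P^\lambda_t f$ as the weak solution of \eqref{var_eq} for $f$ in a dense class, then conclude by uniqueness and density) is the same as the paper's, and your density step at the end is essentially the one used there (Theorem \ref{theotransition} with $p=2$ for the probabilistic side, the contraction property of $\bar P^\lambda_t$ for the analytic side). The problem is the dense class you choose. The step you yourself flag as ``the hard part'' — establishing $P^\lambda_s f\in H^2(\O,\m)$ (or that $P^\lambda_s f$ lies in a domain on which the form identity extends) for $f\in C^\infty_c(\O)$ — is not a technicality that can be deferred: it is a regularity statement for the degenerate Kolmogorov equation with unbounded coefficients, precisely the kind of result the chapter is at pains to avoid assuming (this is why the authors do not simply quote Feehan--Pop). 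No such regularity theory is available in the paper, and without it the chain $(\partial_s u(s),v)_H=(P^\lambda_s\L^\lambda f,v)_H=(\L^\lambda P^\lambda_s f,v)_H=-a_\lambda(P^\lambda_s f,v)$ breaks at the second and third equalities (the commutation $P^\lambda_s\L^\lambda f=\L^\lambda P^\lambda_s f$ also presupposes that $P^\lambda_s f$ is in the domain of the generator). So as written the proof has a genuine gap.

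The paper closes this gap by choosing a different dense class: the complex exponentials $f(x,y)=e^{\mathbf{i}ux+\mathbf{i}vy}$. For these, the affine structure of the model (Propositions \ref{Laplace} and \ref{prop-TF}) gives a closed-form expression $P^\lambda_tf(x,y)=e^{-\lambda t}e^{\mathbf{i}ux+y(\psi_{\lambda_1,\mu}(t)-\mathbf{i}u\rho/\sigma)+\theta\kappa\phi_{\lambda_1,\mu}(t)}$, and one checks \emph{by hand} that this explicit function has nonnegative real part in the exponent of $y$, hence belongs to $H^2(\O,\m)$ for every $t$, depends continuously on $t$, and satisfies $\partial_tF=(\L-\lambda(1+y))F$; Proposition \ref{Prop_integrationbyparts} then applies directly and the uniqueness in Proposition \ref{propsg2} identifies $F(t,\cdot,\cdot)$ with $\bar P^\lambda_tf$. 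In other words, the affine property is not an optional convenience here — it is what supplies, for free, the $H^2(\O,\m)$ regularity that your argument would otherwise have to prove. If you replace $C^\infty_c(\O)$ by the exponentials and use Proposition \ref{prop-TF} in place of It\^o/Feynman--Kac, your argument becomes the paper's proof.
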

	\begin{proof}
		We can easily deduce from Theorem \ref{theotransition} with $p=2$ that, for $\lambda$ large enough, if $(f_n)_n$ is a sequence of functions which converges to $f$ in $H$, then the sequence  $(P^\lambda_tf_n)_n$ converges uniformly to $P^\lambda_tf$ on the compact sets. 	On the other hand, recall  that $\bar P^\lambda_t$ is a contraction  semigroup on $H$ so that  the function $f\mapsto \bar P^\lambda_tf$ is continuous and we have $\bar P^\lambda_t f_n \rightarrow \bar P^\lambda_t f$ in $H$.

		Therefore, by density arguments, it is enough to prove the equality for $
		f(x,y)=e^{iux+ivy}$
		with $u$, $v\in \R$. We  have, by using  Proposition \ref{prop-TF},
		\begin{eqnarray*}
			P^\lambda_tf(x,y)&=&\E_{x,y}\left(e^{-\lambda\int_0^t(1+Y_s)ds} e^{iuX_t+ivY_t}\right)\\
			&=& e^{-\lambda t}e^{iu x+y\left(\psi_{\lambda_1,\mu}(t)-iu\frac{\rho}{\sigma}\right)+\theta\kappa\phi_{\lambda_1,\mu}(t)},
		\end{eqnarray*}
		with $\lambda_1= i(u\frac{\rho}{\sigma}+v)$, $\mu=iu\left(\frac{\rho\kappa}{\sigma}-\frac{1}{2}\right)-\frac{u^2}{2}(1-\rho^2)-\lambda$.
		The function $F(t,x,y)$ defined by
		$F(t,x,y)=e^{-\lambda t}e^{iu x+y\left(\psi_{\lambda_1,\mu}(t)-iu\frac{\rho}{\sigma}\right)+\theta\kappa\phi_{\lambda_1,\mu}(t)}$
		satisfies $F(0,x,y)=e^{iux+ivy}$ and
		\[
		\frac{\partial F}{\partial t}=\left(\mathcal{L}-\lambda (1+y)\right)F.
		\]
		Moreover, since the real parts of $\lambda_1$ and $\mu$ are nonnegative, we can deduce from the proof of Proposition \ref{Laplace} that the real part of the function $t\rightarrow \psi(t)$ is nonnegative. Then, it is straightforward to see  that, for every $t\geq 0$, we have $F(t,\cdot,\cdot)\in H^2(\O,\m)$ and $t\mapsto F(t,\cdot,\cdot)$ is continuous,
		so that,
		for every $v\in V$,  $(\mathcal{L} F(t,.,.), v)_H=-a(F(t,.,.),v)$. Therefore
		\[
		\left(\frac{\partial F}{\partial t}, v\right)_H +a_\lambda(F(t,.,.),v)=0 \quad v\in V,
		\]
		and $F(t,.,.)=\bar{P}^\lambda_tf$.
	\end{proof}
	
	\subsection{Estimates on the joint law}\label{sect-estimjointlaw}
	In this section we prove Theorem \ref{theotransition}. We first recall some results about the density of the process $Y$.
	
	With the notations
	\[
	\nu=\beta-1=\frac{2\kappa\theta}{\sigma^2}-1,\quad y_t=y_0e^{-\kappa t}, \quad L_t=\frac{\sigma^2}{4\kappa}\left(1-e^{-\kappa t}\right),
	\]
	it is well known (see, for example, \cite[Section 6.2.2]{LL}) that the transition density of the process $Y$ is given by
	\[
	p_t(y_0,y)=\frac{e^{-\frac{y_t}{2L_t}}}{2y_t^{\nu/2}L_t}e^{-\frac{y}{2L_t}}y^{\nu/2}I_\nu\left(\frac{\sqrt{yy_t}}{L_t}\right),
	\]
	where $I_\nu$ is the first-order modified Bessel function with index $\nu$, defined by
	\[
	I_\nu(y)=\left(\frac{y}{2}\right)^\nu\sum_{n=0}^\infty \frac{(y/2)^{2n}}{n!\Gamma(n+\nu+1)}.
	\]
	It is clear that near $y=0$ we have $I_\nu(y)\sim \frac{1}{\Gamma(\nu+1)}\left(\frac{y}{2}\right)^\nu$ while, for $y\rightarrow \infty$, we have the asymptotic behaviour  $I_\nu(y)\sim e^y/\sqrt{2\pi y}$ (see \cite[page 377]{AS}).
	\begin{proposition}\label{density_CIR}
		There exists a constant $C_\beta>0$ (which depends only on $\beta$) such that, for every $t>0$,
		\[
		p_t(y_0,y)\leq
		\frac{C_\beta}{L_t^{\beta+\frac{1}{2}}}
		e^{-\frac{(\sqrt{y}-\sqrt{y_t})^2}{2L_t}}
		y^{\beta -1}
		\left(L_t^{1/2}
		+
		(yy_t)^{1/4}\right), \qquad (y_0,y)\in [0,+\infty)\times]0,+\infty).
		\]
	\end{proposition}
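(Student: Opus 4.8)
The plan is to reduce everything to a uniform bound on the modified Bessel function $I_\nu$, after first isolating the Gaussian factor. Starting from the explicit density, I would observe that
\[
-\frac{y_t}{2L_t}-\frac{y}{2L_t}=-\frac{(\sqrt{y}-\sqrt{y_t})^2}{2L_t}-\frac{\sqrt{yy_t}}{L_t},
\]
so that, setting $z:=\frac{\sqrt{yy_t}}{L_t}$, the density rewrites as
\[
p_t(y_0,y)=\frac{1}{2L_t}\Big(\frac{y}{y_t}\Big)^{\nu/2}e^{-\frac{(\sqrt{y}-\sqrt{y_t})^2}{2L_t}}\,e^{-z}I_\nu(z).
\]
Thus the exact exponential factor appearing in the claimed estimate comes out for free, and the whole problem reduces to controlling $e^{-z}I_\nu(z)$ together with the prefactor $(y/y_t)^{\nu/2}$, keeping careful track of the powers of $L_t$, $y$ and $y_t$.

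The technical heart is a two-sided control of $I_\nu$ obtained by interpolating the two asymptotics recalled just above the statement. For small argument, the series expansion gives $e^{-z}I_\nu(z)\le I_\nu(z)\le C_\nu z^\nu$ on $z\le 1$, since $\sum_n (z/2)^{2n}/(n!\,\Gamma(n+\nu+1))$ is bounded on $[0,1]$; for large argument, the asymptotics $I_\nu(z)\sim e^z/\sqrt{2\pi z}$ together with the continuity and strict positivity of $I_\nu$ on $(0,\infty)$ yields $e^{-z}I_\nu(z)\le C_\nu z^{-1/2}$ on $z>1$. Establishing this uniform interpolation bound is the main point; the rest is bookkeeping. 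I would then split the estimate of $p_t$ according to whether $z\le 1$ or $z>1$.

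In the regime $z\le 1$, that is $\sqrt{yy_t}\le L_t$, one has $(yy_t)^{1/4}\le L_t^{1/2}$, hence $L_t^{1/2}+(yy_t)^{1/4}\ge L_t^{1/2}$; using $(y/y_t)^{\nu/2}z^\nu=y^\nu/L_t^\nu$ gives
\[
p_t(y_0,y)\le \frac{C_\nu}{2}\,\frac{y^\nu}{L_t^{\nu+1}}\,e^{-\frac{(\sqrt{y}-\sqrt{y_t})^2}{2L_t}},
\]
which is dominated by the claimed right-hand side (recall $\beta=\nu+1$) up to the constant. In the regime $z>1$, i.e. $\sqrt{yy_t}>L_t$, one has $L_t^{1/2}+(yy_t)^{1/4}\ge (yy_t)^{1/4}$, and substituting $e^{-z}I_\nu(z)\le C_\nu z^{-1/2}=C_\nu L_t^{1/2}(yy_t)^{-1/4}$ one checks that the ratio of the resulting bound to the claimed right-hand side equals a constant times $z^{-(\nu+1)}$, which is $\le 1$ because $z>1$ and $\nu+1=\beta>0$.

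Finally I would choose $C_\beta$ larger than the two constants produced in the two cases, which concludes. The only delicate step is the uniform Bessel estimate $e^{-z}I_\nu(z)\le C_\nu\,(z^\nu\wedge z^{-1/2})$: it requires combining the two stated asymptotic regimes with a compactness argument on any interval bounded away from $0$ and $\infty$, and it is precisely this interpolation that produces the two additive terms $L_t^{1/2}+(yy_t)^{1/4}$ in the final bound.
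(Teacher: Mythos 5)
Your proposal is correct and follows essentially the same route as the paper's proof: the same two-regime bound $I_\nu(z)\leq C_\nu\big(z^\nu\ind{\{z\leq 1\}}+e^z z^{-1/2}\ind{\{z>1\}}\big)$, the same split according to $\sqrt{yy_t}\lessgtr L_t$, and the same power counting (your ratio computation with $z^{-(\nu+1)}\leq 1$ is exactly the paper's use of $y_t^{-1}\leq y/L_t^2$ on $\{yy_t>L_t^2\}$). The only cosmetic difference is that you complete the square at the outset rather than at the end.
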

	\begin{proof}
		From the asymptotic behaviour of  $I_\nu$ near $0$ and $\infty$ we deduce the existence of a constant  $C_\nu>0$ such that
		\[
		I_\nu(x)\leq C_\nu\left(x^\nu\ind{\{x\leq 1\}}+ \frac{e^x}{\sqrt{x}}\ind{\{x>1\}}\right).
		\]
		Therefore 
		\begin{eqnarray*}
			p_t(y_0,y)&=&\frac{e^{-\frac{y_t+y}{2L_t}}}{2y_t^{\nu/2}L_t}y^{\nu/2}I_\nu\left(\frac{\sqrt{yy_t}}{L_t}\right)\\
			&\leq &\frac{e^{-\frac{y_t+y}{2L_t}}}{2y_t^{\nu/2}L_t}y^{\nu/2}
			C_\nu\left(\frac{(yy_t)^{\nu/2}}{L_t^\nu}
			\ind{\{yy_t\leq L_t^2\}}+ \frac{e^{\frac{\sqrt{yy_t}}{L_t}}}{(yy_t)^{1/4}/L_t^{1/2}}
			\ind{\{yy_t> L_t^2\}}\right)\\
			&=&
			\frac{C_\nu}{2}e^{-\frac{y_t+y}{2L_t}}\left(\frac{y^\nu}{L_t^{\nu+1}}\ind{\{yy_t\leq L_t^2\}}
			+
			\frac{y^{\frac{\nu}{2}-\frac{1}{4}}e^{\frac{\sqrt{yy_t}}{L_t}}}{(y_t)^{\frac{\nu}{2}+\frac{1}{4}}L_t^{1/2}}
			\ind{\{yy_t> L_t^2\}}\right).
		\end{eqnarray*}
		On $\{yy_t> L_t^2\}$, we have $y_t^{-1}\leq y/L_t^2$ and, since $\nu+1>0$,
		\begin{eqnarray*}
			\frac{y^{\frac{\nu}{2}-\frac{1}{4}}}{(y_t)^{\frac{\nu}{2}+\frac{1}{4}}}=
			y_t^{1/4} \frac{y^{\frac{\nu}{2}-\frac{1}{4}}}{(y_t)^{\frac{\nu}{2}+\frac{1}{2}}}
			\leq 
			y_t^{1/4} \frac{y^{\nu+\frac{1}{4}}}{L_t^{\nu+1}}.
		\end{eqnarray*}
		So
		\begin{eqnarray*}
			p_t(y_0,y)&\leq&
			\frac{C_\nu}{2}e^{-\frac{y_t+y}{2L_t}}\left(\frac{y^\nu}{L_t^{\nu+1}}\ind{\{yy_t\leq L_t^2\}}
			+
			\frac{(yy_t)^{1/4}y^{\nu}
				e^{\frac{\sqrt{yy_t}}{L_t}}}{L_t^{\nu+\frac{3}{2}}}\ind{\{yy_t> L_t^2\}}\right)\\
			&\leq&
			\frac{C_\nu}{2L_t^{\nu+\frac{3}{2}}}
			e^{-\frac{y_t+y}{2L_t}}
			y^{\nu}e^{\frac{\sqrt{yy_t}}{L_t}}
			\left(L_t^{1/2}\ind{\{yy_t\leq L_t^2\}}
			+
			(yy_t)^{1/4}\ind{\{yy_t> L_t^2\}}\right)  \\
			&=&
			\frac{C_\nu}{2L_t^{\nu+\frac{3}{2}}}
			e^{-\frac{(\sqrt{y}-\sqrt{y_t})^2}{2L_t}}
			y^{\nu}
			\left(L_t^{1/2}\ind{\{yy_t\leq L_t^2\}}
			+
			(yy_t)^{1/4}\ind{\{yy_t> L_t^2\}}\right),
		\end{eqnarray*}
		and the assertion follows.
	\end{proof}
	We are now ready to prove Theorem \ref{theotransition}, which we have used in order to prove the identification of the semigroups in Proposition \ref{identificationsemigroup} and which we will use again later on in this chapter. 
	\begin{proof}[Proof of Theorem \ref{theotransition}]
		Note that
		\begin{eqnarray*}
			P^\lambda_tf(x_0,y_0)&=&\E_{x_0,y_0}\left(e^{-\lambda\int_0^t(1+Y_s)ds} \tilde{f}(\tilde X_t,Y_t)\right),
		\end{eqnarray*}
		where
		\[
		\tilde{f}(x,y)=f\left(x+\frac{\rho}{\sigma}y,y\right) \quad\mbox{and}\quad \tilde X_t=X_t-\frac{\rho}{\sigma} Y_t.
		\]
		Recall that the dynamics of $\tilde X$ is given by \eqref{model2} so we have
		\[
		\tilde X_t=\tilde{x}_0+\bar\kappa\int_0^tY_sds+\bar\rho\int_0^t\sqrt{Y_s}d\tilde{B}_s,
		\]
		with
		\[
		\tilde{x}_0=x_0-\frac{\rho}{\sigma}y_0,\quad \bar\kappa=\frac{\rho\kappa}{\sigma}-\frac{1}{2},\quad
		\bar\rho=\sqrt{1-\rho^2}.
		\]
		Recall that the Brownian motion $\tilde{B}$ is independent of the process $Y$. We set
		$
		\Sigma_t=\sqrt{\int_0^t Y_sds}
		$
		and $n(x)= \frac{1}{\sqrt{2\pi}}e^{-x^2/2}$. Therefore
		\begin{eqnarray*}
			P^\lambda_t f(x_0,y_0)&=&\E_{y_0}\left( e^{-\lambda t-\lambda\Sigma_t^2}
			\int \tilde{f}\left(\tilde{x}_0+\bar\kappa\Sigma_t^2+\bar\rho\Sigma_t z,Y_t\right)n(z)dz\right)\\
			&\leq&\E_{y_0}\left( e^{-\lambda\Sigma_t^2}
			\int \tilde{f}\left(\tilde{x}_0+\bar\kappa\Sigma_t^2+\bar\rho\Sigma_t z,Y_t\right)n(z)dz\right)\\
			&=&\E_{y_0}\left( e^{-\lambda\Sigma_t^2}
			\int \tilde{f}\left(
			\tilde{x}_0+z,Y_t\right)n\left(\frac{z-\bar\kappa\Sigma_t^2}{\bar\rho\Sigma_t} \right)
			\frac{dz}{\bar\rho\Sigma_t}\right).
		\end{eqnarray*}
		H\"{o}lder's inequality with respect to the measure $e^{-\gamma|z|-\bar{\mu}Y_t}dzd\P_{y_0}$,
		where $\gamma >0$ and $\bar \mu>0$ will be chosen later on, gives, for every $p>1$
		\begin{eqnarray}
		P^\lambda_t f(x_0,y_0)&\leq &
		\left[\E_{y_0}\left(\int e^{-\gamma|z|-\bar\mu Y_t} \tilde{f}^p\left(
		\tilde{x}_0+z,Y_t\right)dz\right)\right]^{1/p}J_q,\label{eq-P1}
		\end{eqnarray}
		with $q=p/(p-1)$ and
		\[
		(J_q)^q=\E_{y_0}\left(\int e^{(q-1)\gamma|z|+(q-1)\bar\mu Y_t-q\lambda \Sigma_t^2}
		n^q\left(\frac{z-\bar\kappa\Sigma_t^2}{\bar\rho\Sigma_t} \right)
		\frac{dz}{(\bar\rho\Sigma_t)^q}\right).
		\]
		Using Proposition \ref{density_CIR} we can write, for every $z\in\R$,
		\begin{align*}
		&	\E_{y_0} \left(e^{-\bar\mu Y_t} \tilde{f}^p\left(
		\tilde{x}_0+z,Y_t\right)\right)=\int_0^\infty dy p_t(y_0,y) e^{-\bar\mu y}
		\tilde{f}^p\left(
		\tilde{x}_0+z,y \right)\\
		&\quad \leq \frac{C_\beta\left(\sqrt{\frac{\sigma^2}{4\kappa}}+y_0^{1/4}\right)}{L_t^{\beta+\frac{1}{2}}}\int_0^\infty \!\!\!dy
		e^{-\frac{(\sqrt{y}-\sqrt{y_t})^2}{2L_t}-\bar\mu y}
		y^{\beta -1}
		\left(1
		+
		y^{1/4}\right) \tilde{f}^p\left(
		\tilde{x}_0+z,y \right).
		\end{align*}
		If we set $L_\infty=\sigma^2/(4\kappa)$,  for every $\epsilon\in(0,1)$  we have
		\begin{align*}
		e^{-\frac{(\sqrt{y}-\sqrt{y_t})^2}{2L_t}}&\leq e^{-\frac{(\sqrt{y}-\sqrt{y_t})^2}{2L_\infty}}\\
		&= e^{-\frac{y}{2L_\infty}}e^{\frac{\sqrt{yy_t}}{L_\infty}-\frac{y_t}{2L_\infty}}\\
		&\leq e^{-\frac{y}{2L_\infty}}e^{\epsilon\frac{y}{2L_\infty}}e^{\frac{y_t}{2\epsilon L_\infty}}
		e^{-\frac{y_t}{2L_\infty}}\\
		&=e^{-(1-\epsilon)\frac{y}{2L_\infty}}e^{\frac{y_t}{2\epsilon L_\infty}(1-\epsilon)}\\
		&\leq e^{-(1-\epsilon)\frac{y}{2L_\infty}}e^{\frac{y_0}{2\epsilon L_\infty}(1-\epsilon)}.
		\end{align*}
		It is easy to see that $e^{-y\left( \bar \mu +\frac{1-\epsilon}{2L_\infty}   \right)}(1+y^{1/4})\leq C_{\epsilon,\sigma,\kappa}e^{-y\left( \bar \mu +\frac{1-2\epsilon}{2L_\infty}   \right)}$. Therefore,  we can write
		\begin{align*}
		&	\E_{y_0} \left(e^{-\bar\mu Y_t} \tilde{f}^p\left(
		\tilde{x}_0+z,Y_t\right)\right)\\&\quad\leq
		\frac{C_\beta e^{\frac{y_0(1-\epsilon)}{2\epsilon L_\infty}}
			\left(\sqrt{\frac{\sigma^2}{4\kappa}}+y_0^{1/4}\right)}{L_t^{\beta+\frac{1}{2}}}\int_0^\infty \!\!\!dy
		e^{-y\left(\bar\mu + \frac{1-\epsilon}{2L_\infty}\right)}
		y^{\beta -1}
		\left(1
		+
		y^{1/4}\right) \tilde{f}^p\left(
		\tilde{x}_0+z,y \right)\\&\quad
		\leq
		\frac{C_{\beta, \sigma,\kappa, \epsilon} e^{\frac{y_0(1-\epsilon)}{\epsilon L_\infty}}
		}{L_t^{\beta+\frac{1}{2}}}\int_0^\infty \!\!\!dy
		e^{-y\left(\bar\mu + \frac{1-2\epsilon}{2L_\infty}\right)}
		y^{\beta -1}
		\tilde{f}^p\left(
		\tilde{x}_0+z,y \right).
		\end{align*}
		As regards $J_q$, setting $z'=\frac{z-\bar\kappa\Sigma_t^2}{\bar\rho\Sigma_t}$, we have
		\begin{eqnarray*}
			(J_q)^q&=&\E_{y_0}\left(\int e^{(q-1)\gamma|z'\bar\rho\Sigma_t +\bar\kappa\Sigma_t^2|+(q-1)\bar\mu Y_t-q\lambda \Sigma_t^2}
			n^q\left(z' \right)
			\frac{dz'}{(\bar\rho\Sigma_t)^{q-1}}\right)\\
			&\leq &
			\E_{y_0}\left(
			\int e^{(q-1)\gamma\bar\rho\Sigma_t |z|
				+(q-1)\bar\mu Y_t+ ((q-1)| \bar\kappa|\gamma-q\lambda )\Sigma_t^2}
			n^q\left(z \right)
			\frac{dz}{(\bar\rho\Sigma_t)^{q-1}}\right).
		\end{eqnarray*}
		Note that
		\begin{eqnarray*}
			\int e^{(q-1)\gamma\bar\rho\Sigma_t |z|
			}
			n^q\left(z \right)
			dz
			&=&\frac{1}{(\sqrt{2\pi})^{q}}\int e^{(q-1)\gamma\bar\rho\Sigma_t |z|
			}
			e^{-qz^2/2}
			dz\\
			&\leq &\frac{2}{\sqrt{2\pi}} \int e^{(q-1)\gamma\bar\rho\Sigma_t z
			}
			e^{-qz^2/2}
			dz\\
			&=&\frac{2}{\sqrt{2\pi}}e^{\frac{(q-1)^2}{2q}\gamma^2\bar\rho^2 \Sigma_t^2}
			\int e^{-\frac 1 2\left( \sqrt{q }z-\frac{(q-1)\gamma\bar\rho\Sigma_t }{\sqrt{ q}}\right)^2}	
			dz\\
			&=&\frac 2 {\sqrt q}e^{\frac{(q-1)^2}{2q}\gamma^2\bar\rho^2 \Sigma_t^2},
		\end{eqnarray*}
		so that
		\begin{eqnarray*}
			(J_q)^q&\leq&\frac 2 {\sqrt q}\E_{y_0}\left(
			e^{
				(q-1)\bar\mu Y_t+ \bar\lambda_q\Sigma_t^2}
			\frac{1}{(\bar\rho\Sigma_t)^{q-1}}
			\right),
		\end{eqnarray*}
		with
		\[
		\bar\lambda_q= (q-1)| \bar\kappa|\gamma +\frac{(q-1)^2}{2q}\gamma^2\bar\rho^2 -q\lambda
		=\frac{1}{p-1}\left(| \bar\kappa|\gamma+\frac{1}{2p}\gamma^2\bar\rho^2-p\lambda\right).
		\]
		Using  H\"older's  inequality again
		we get, for every $p_1>1$ and $q_1=p_1/(p_1-1)$,
		\begin{eqnarray*}
			(J_q)^q&\leq&\sqrt{\frac 2 q}\left(
			\E_{y_0}\left(
			e^{
				p_1(q-1)\bar\mu Y_t+ p_1\bar\lambda_q\Sigma_t^2}\right)\right)^{1/p_1}
			\left(
			\E_{y_0}\left(
			\frac{1}{(\bar\rho\Sigma_t)^{q_1(q-1)}}\right)\right)^{1/q_1}\\
			&\leq&
			\frac{C_{q,q_1}}{t^{q-1}} \left(\E_{y_0}\left(
			e^{
				p_1(q-1)\bar\mu Y_t+ p_1\bar\lambda_q\Sigma_t^2}\right)\right)^{1/p_1},
		\end{eqnarray*}
		where the last inequality follows from Lemma \ref{RemarqueMomentsNeg}.
		
		We now apply Proposition~\ref{Laplace2} with $\lambda_1=p_1(q-1)\bar\mu$
		and $\lambda_2=p_1\bar\lambda_q$. The assumption on $\lambda_1$ and $\lambda_2$ becomes
		\[ \frac{\sigma^2}{2}p_1(q-1)\bar\mu^2-\kappa \bar\mu+| \bar\kappa|\gamma+\frac{1}{2p}\gamma^2\bar\rho^2-p\lambda\leq 0
		\]
		or, equivalently, 
		\[
		\lambda\geq \frac{\sigma^2}{2p(p-1)}p_1\bar\mu^2-\kappa \frac{\bar\mu}{p}+| \bar\kappa|\frac{\gamma}{p}+\frac{1}{2p^2}\gamma^2\bar\rho^2.
		\]
		Note that the last inequality is satisfied for at least a  $p_1>1$ if and only if
		\begin{equation}\label{assumption_lambda}
		\lambda> \frac{\sigma^2}{2p(p-1)}\bar\mu^2-\kappa \frac{\bar\mu}{p}+| \bar\kappa|\frac{\gamma}{p}+\frac{1}{2p^2}\gamma^2\bar\rho^2.
		\end{equation}
		Going back to \eqref{eq-P1} under the condition \eqref{assumption_lambda}, we have
		\begin{eqnarray*}
			P^\lambda_t f(x_0,y_0)&\leq &
			\frac{C_{p,\epsilon}}{L_t^{\frac{\beta}{p}+\frac{1}{2p}}t^{1/p}}e^{A_{p, \epsilon}y_0}
			\left(\int dz e^{-\gamma|z|}\int_0^\infty \!\!\!dy
			e^{-y\left(\bar\mu + \frac{1-2\epsilon}{2L_\infty}\right)}
			y^{\beta -1}
			\tilde{f}^p\left(
			\tilde{x}_0+z,y \right)\right)^{1/p}\\
			&\leq&
			\frac{C_{p,\epsilon}e^{A_{p, \epsilon}y_0}}{t^{\frac{\beta}{p}+\frac{3}{2p}}}
			\left(\int dz e^{-\gamma|z|}\int_0^\infty \!\!\!dy
			e^{-y\left(\bar\mu + \frac{1-2\epsilon}{2L_\infty}\right)}
			y^{\beta -1}
			f^p\left(
			\tilde{x}_0+z+\frac{\rho}{\sigma}y,y \right)\right)^{1/p}\\
			&=&
			\frac{C_{p,\epsilon}e^{A_{p, \epsilon}y_0}}{t^{\frac{\beta}{p}+\frac{3}{2p}}}
			\left(\int dz e^{-\gamma|z-\tilde{x}_0-\frac{\rho}{\sigma}y|}\int_0^\infty \!\!\!dy
			e^{-y\left(\bar\mu + \frac{1-2\epsilon}{2L_\infty}\right)}
			y^{\beta -1}
			f^p\left(
			z,y \right)\right)^{1/p}\\
			&\leq&
			\frac{C_{p,\epsilon}e^{A_{p, \epsilon}y_0+\gamma|\tilde{x}_0|}}{t^{\frac{\beta}{p}+\frac{3}{2p}}}
			\left(\int dz e^{-\gamma|z|}\int_0^\infty \!\!\!dy
			e^{-y\left(\bar\mu -\gamma\frac{|\rho|}{\sigma}+ \frac{1-2\epsilon}{2L_\infty}\right)}
			y^{\beta -1}
			f^p\left(
			z,y \right)\right)^{1/p}.
		\end{eqnarray*}
		If we choose $\epsilon=1/2$ and $\bar\mu=\mu+\gamma\frac{|\rho|}{\sigma}$,  the assertion follows provided $\lambda$ satisfies
		\[
		\lambda>\frac{\sigma^2}{2p(p-1)}\left(\mu+\gamma\frac{|\rho|}{\sigma}\right)^2-
		\kappa \frac{\mu+
			\gamma\frac{|\rho|}{\sigma}}{p}+| \bar\kappa|\frac{\gamma}{p}+\frac{1}{p^2}\gamma^2\bar\rho^2.
		\]
	\end{proof}
	
	\subsection{Proof of Theorem  \ref{theorem2}}
	We are finally ready to prove the identification Theorem \ref{theorem2}. We first prove the result under further regularity assumptions on the payoff function $\psi$, then we deduce the general statement by an approximation technique. 
	
	\subsubsection{Case with a regular function $\psi$ }
	The following regularity result paves the way for the identification theorem in the case of a  regular payoff function.
	\begin{proposition}\label{reg2}
		Assume that $\psi$ satisfies Assumption $\mathcal H^1$ and $0\leq \psi\leq \Phi$ with $\Phi$ satisfying Assumption $\mathcal{H}^2$. If moreover we assume  $\psi \in L^2([0,T];H^2(\O,\m))$ and $ \frac{\partial \psi}{\partial t}+\L \psi , \,(1+y)\Phi\in L^p([0,T];L^p(\mathcal{O,\m}))$ for some $p\geq 2$, then there exist $\lambda_0>0$ and $F\in L^p([0,T];L^p(\mathcal{O,\m}))$ such that for all $ \lambda \geq \lambda_0$ the solution $u$ of \eqref{VI} satisfies
		\begin{equation}
		-\left(\frac{\partial u}{\partial t}	 ,v\right)_H + a_\lambda(u,v)=(F,v)_H,\qquad \mbox{a.e. in } [0,T], \quad v\in V.
		\end{equation}
	\end{proposition}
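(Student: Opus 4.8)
The plan is to realise $u$ as the limit of penalised approximations of a coercive problem and to prove a bound on the penalisation term that is uniform in the penalisation parameter; this bound furnishes $F$ in the limit. First I would note that $u$ itself solves a coercive variational inequality: adding $\lambda((1+y)u,v-u)_H$ to the inequality in \eqref{VI} shows that $u$ satisfies \eqref{CVI} with source $g:=\lambda(1+y)u$. Since $0\le u\le\Phi$ and $\Phi$ satisfies Assumption $\mathcal H^2$, one has $0\le g\le\lambda(1+y)\Phi\le-\frac{\partial\Phi}{\partial t}-\mathcal L^\lambda\Phi$ and $\sqrt{1+y}\,g\in L^2([0,T];H)$, so $g$ fulfils the hypotheses of Proposition \ref{coercive variational inequality}; by uniqueness $u$ is exactly that solution. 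Hence $u=\lim_{\varepsilon\to0}u_\varepsilon$, where $u_\varepsilon=u_{\varepsilon,\lambda}$ solves the penalised coercive problem \eqref{PCP} with this $g$, and with $F_\varepsilon:=-\zeta_\varepsilon(t,u_\varepsilon)=\frac1\varepsilon(\psi-u_\varepsilon)_+$ the penalised equation reads $-(\frac{\partial u_\varepsilon}{\partial t},v)_H+a_\lambda(u_\varepsilon,v)=(g+F_\varepsilon,v)_H$.

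The core of the argument is a bound on $\|F_\varepsilon\|_{L^p([0,T];L^p(\mathcal O,\m))}$ uniform in $\varepsilon$. Writing $w:=u_\varepsilon-\psi$, so that $(\psi-u_\varepsilon)_+$ is the negative part of $w$ and $w(T)=0$, the equation becomes $-\frac{\partial w}{\partial t}-\mathcal L^\lambda w=F_\varepsilon+g+\tilde\phi$ with $\tilde\phi:=\frac{\partial\psi}{\partial t}+\mathcal L\psi-\lambda(1+y)\psi$; the hypotheses $\psi\in L^2([0,T];H^2(\mathcal O,\m))$, $\frac{\partial\psi}{\partial t}+\mathcal L\psi\in L^p$ and $(1+y)\Phi\in L^p$ guarantee $g+\tilde\phi\in L^p([0,T];L^p(\mathcal O,\m))$. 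I would then test this identity against $(\psi-u_\varepsilon)_+^{\,p-1}$ and integrate over $[0,T]\times\mathcal O$ against $\m$. The time term is nonpositive (it equals $-\frac1p\|(\psi-u_\varepsilon)_+(0)\|_{L^p(\mathcal O,\m)}^p$) thanks to $w(T)=0$; the principal second-order part $\bar a$ contributes a nonpositive term, its integrand being $-(p-1)(\psi-u_\varepsilon)_+^{\,p-2}$ times the nonnegative quadratic form in $\nabla w$; and the zeroth-order part equals $-\lambda\int(1+y)(\psi-u_\varepsilon)_+^{\,p}\,d\m$.

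The only term without an a priori sign is the first-order contribution $\tilde a(w,(\psi-u_\varepsilon)_+^{\,p-1})$, bounded by $K_1\int y\,|\nabla w|(\psi-u_\varepsilon)_+^{\,p-1}\,d\m$; here the degeneracy surfaces, because Young's inequality splits off a term $\frac{K_1}{2\eta}\int y\,(\psi-u_\varepsilon)_+^{\,p}\,d\m$ carrying the unbounded weight $y$, which cannot be controlled by $\|F_\varepsilon\|_{L^p}$ alone. This is the main obstacle, and it is resolved precisely by the coercive correction: choosing $\eta$ so that the gradient part is absorbed by the second-order term and then taking $\lambda\ge\lambda_0$ large (with $\lambda_0$ depending on $p$ through $K_1$ and $\delta_1$) so that $\frac{K_1}{2\eta}\le\lambda$, the offending term is dominated by $-\lambda\int y\,(\psi-u_\varepsilon)_+^{\,p}\,d\m$. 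One is left with
\[
\int_0^T\!\!\int_{\mathcal O}F_\varepsilon^{\,p}\,d\m\,dt\le\int_0^T\!\!\int_{\mathcal O}|g+\tilde\phi|\,F_\varepsilon^{\,p-1}\,d\m\,dt,
\]
and Hölder's inequality yields $\|F_\varepsilon\|_{L^p}\le\|g+\tilde\phi\|_{L^p}$, uniformly in $\varepsilon$. I would make these manipulations rigorous on the finite-dimensional Galerkin approximations of Proposition \ref{penalizedcoercivetruncatedproblem}, where $(\psi-u_\varepsilon)_+^{\,p-1}$ is an admissible test function, and then pass to the limit.

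Finally, the uniform bound allows me to extract a subsequence with $F_\varepsilon\rightharpoonup F_\infty$ weakly in $L^p([0,T];L^p(\mathcal O,\m))$. Passing to the limit $\varepsilon\to0$ in $-(\frac{\partial u_\varepsilon}{\partial t},v)_H+a_\lambda(u_\varepsilon,v)=(g+F_\varepsilon,v)_H$, using $u_\varepsilon\to u$ together with the bounds \eqref{sp1}--\eqref{sp2}, gives $-(\frac{\partial u}{\partial t},v)_H+a_\lambda(u,v)=(F,v)_H$ with $F:=g+F_\infty=\lambda(1+y)u+F_\infty\in L^p([0,T];L^p(\mathcal O,\m))$, which is the claim.
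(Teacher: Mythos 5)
Your proposal is correct and follows essentially the same route as the paper: recast $u$ as the solution of the coercive inequality with source $g=\lambda(1+y)u$, test the penalised equation against $(\psi-u_\varepsilon)_+^{p-1}$, use the sign of the principal part together with a large $\lambda$ to absorb the first-order terms, and conclude with H\"older's inequality that $\frac1\varepsilon(\psi-u_\varepsilon)_+$ is bounded in $L^p([0,T];L^p(\mathcal O,\m))$ uniformly in $\varepsilon$ before passing to the limit. The only cosmetic differences are that the paper checks positivity of $a_\lambda(w,w^{p-1})$ by completing the square on the quadratic forms $(a,b)\mapsto\delta_1(p-1)a^2+j_{\gamma,\mu}ab+\tfrac\lambda2 b^2$ rather than by Young's inequality, and it justifies the test function by first assuming $\psi$ bounded and then removing that assumption by density, rather than working on the Galerkin level.
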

	\begin{proof}
		Note that, for $\lambda$ large enough, $u$ can be seen as the solution $u_\lambda$ of an equivalent coercive variational inequality, that is 
		\begin{equation*}
		-\left(\frac{\partial u_{\lambda}}{\partial t}	 ,v -u_\lambda  \right)_H + a_\lambda(u_\lambda,v-u_\lambda)\geq (g,v-u_\lambda)_H,
		\end{equation*}
		where $g= \lambda(1+y)u $ satisfies the assumptions of Proposition \ref{coercive variational inequality}. Therefore,  there exists a sequence $(u_{\varepsilon, \lambda})_\varepsilon$ of non  negative functions such that $\lim_{\varepsilon\rightarrow 0 }u_{\varepsilon, \lambda}=u_\lambda$ and
		$$
		-\left( \frac{\partial u_{\varepsilon,\lambda}}{\partial t },v   \right)_H + a_\lambda(u_{\varepsilon,\lambda},v)-\left(\frac 1 \varepsilon (\psi-u_{\varepsilon,\lambda})_+,v\right)_H= (g,v)_H,\qquad v\in V.
		$$
		Since  both $u_{\varepsilon,\lambda}$ and $\psi$ are positive and $\psi$ belongs to $L^p([0,T]; L^p(\mathcal{O},\m))$, we have $(\psi-u_{\varepsilon,\lambda})_+\in L^p([0,T]; L^p(\mathcal{O},\m))$. In order to simplify the notation, we set $w=(\psi-u_{\varepsilon,\lambda})_+$. Taking $v=w^{p-1}$ and assuming that $\psi$ is bounded we observe that $v\in L^2([0,T]; V  )$ and we can write
		$$
		-\left(	\frac{\partial u_{\varepsilon,\lambda}}{\partial t}, w^{p-1}\right)_H
		+a_\lambda(u_{\varepsilon,\lambda},w^{p-1})-\frac 1 \varepsilon \Vert w\Vert^p_{L^p(\mathcal{O},\m)} =\left( g, w^{p-1} \right)_H,
		$$
		so that
		$$
		\frac 1 p \frac{d }{dt }\Vert w\Vert_{ L^p(\mathcal{O},\m)}^p  -a_\lambda(\psi-u_{\varepsilon,\lambda},w^{p-1})-\frac 1 \varepsilon \Vert w\Vert^p_{L^p(\mathcal{O},\m)} 
		=\left( g, w^{p-1} \right)_H-\left( \frac{\partial \psi }{\partial t },w^{p-1} \right)_H+ a_\lambda(\psi,w^{p-1}).
		$$
		Integrating from $0$ to $T$ we get
		\begin{equation}\label{regpaux}
		\begin{split}
		&	-	\frac 1 p\Vert w(0)\Vert_{ L^p(\mathcal{O},\m)}^p -\int_0^T a_\lambda((\psi-u_{\varepsilon,\lambda})(t),w^{p-1}(t))dt  -\frac 1 \varepsilon\int_0^T \Vert w(t)\Vert_{ L^p(\mathcal{O},\m)}^p dt \\&\quad
		=\int_0^T\left( g(t), w^{p-1}(t) \right)_Hdt-\int_0^T \left( \frac{\partial \psi }{\partial t }(t),w_+^{p-1}(t) \right)_Hdt+ \int_0^Ta_\lambda(\psi(t),w^{p-1}(t))dt.
		\end{split}
		\end{equation}

		Now, with the usual integration by parts,
		\begin{align*}
		&a_{\lambda}(w,w^{p-1})
		= \into \frac y 2 (p-1 ) w^{p-2 }\left[ \left( \frac{\partial w}{\partial x }\right)^2 +2\rho\sigma      \frac{\partial w}{\partial x }\frac{\partial w}{\partial y }+ \sigma^2 \left( \frac{\partial w}{\partial y }\right)^2 \right]d\m
		\\&		 \qquad		+\into y\left( j_{\gamma,\mu }(x)  \frac{\partial w}{\partial x }+ k_{\gamma,\mu}(x)   \frac{\partial w}{\partial y }  \right)  w^{p-1} d\m + \lambda \into  ( 1+y ) w^p d\m 
		\\& 	\geq \delta_1(p-1) \into y w^{p-2 } \left[ \left( \frac{\partial w}{\partial x }\right)^2 + \left( \frac{\partial w}{\partial y }\right)^2 \right]d\m
		+\into y\left( j_{\gamma,\mu }(x)  \frac{\partial w}{\partial x }+ k_{\gamma,\mu}(x)   \frac{\partial w}{\partial y }  \right)  w^{p-1} d\m \\&\qquad+ \lambda \into  y w^p d\m 
		\\& 		= \into y  w^{p-2 }   \bigg[    \delta_1(p-1)   \left( \frac{\partial w}{\partial x }\right)^2  
		+  j_{\gamma,\mu }(x)  \frac{\partial w}{\partial x } w + \frac{\lambda}{2 }  w^2 \bigg] d\m\\&\qquad
		+ \into y  w^{p-2 }   \bigg[    \delta_1(p-1)   \left( \frac{\partial w}{\partial y }\right)^2 
		+  k_{\gamma,\mu }(x)  \frac{\partial w}{\partial y } w + \frac{\lambda}{2 }  w^2 \bigg] d\m
		\geq 0,
		\end{align*}
		since, for $\lambda$ large enough, the quadratic forms $(a,b)\rightarrow  \delta_1(p-1)a^2 + j_{\gamma.\mu}  ab + \frac \lambda 2 b^2 $ and $(a,b)\rightarrow  \delta_1(p-1)a^2 + k_{\gamma.\mu}  ab + \frac \lambda 2 b^2 $ are both positive definite.
		
		Recall that $\psi\in L^2([0,T];H^2(\O,\m)), $  $\frac{\partial \psi }{\partial_t}+\mathcal{L }\psi\in L^p([0,T],L^p(\O,\m)), \, (1+y)\psi\leq(1+y)\Phi\in L^p([0,T],L^p(\O,\m))$  and $g=(1+y)u\leq(1+y)\Phi\in L^p([0,T];L^p(\mathcal{O,\m}))$.
		Therefore, going back to \eqref{regpaux} and using H\"{o}lder's inequality,
		\begin{align*}
		&	\frac 1 \varepsilon\int_0^T \Vert w(t)\Vert_{ L^p(\mathcal{O},\m)}^pdt\\&\quad \leq \left[\left(\int_0^T \Vert g(t)\Vert_{ L^p(\mathcal{O},\m)}^pdt  \right)^{\frac 1 p}\!\!+\!\! \left(\int_0^T\!\left \Vert   \frac{\partial \psi }{\partial t} (t)+ \mathcal{L}^\lambda\psi (t) \right \Vert_{ L^p(\mathcal{O},\m)}^p\!\!\!\!\!\!
		dt  \right)^{\frac 1 p} 
		\right]  \left( \int_0^T \Vert w\Vert_{ L^p(\mathcal{O},\m)}^pdt  \right)^{\frac {p-1} p}\!\!.
		\end{align*}
		Recalling that $w=(\psi-u_{\varepsilon,\lambda})_+$, we deduce that
		\begin{equation}
		\left \Vert \frac 1 \varepsilon (\psi-u_{\varepsilon,\lambda})_+ \right \Vert_{L^p([0,T]; L^p(\mathcal{O},\m))} \leq C,
		\end{equation}
		for a positive constant $C$ independent of $\varepsilon$. 	Note that the estimate does not involve the $L^\infty$-norm of $\psi$ (which we assumed to be bounded for the payoff) so that by a standard approximation argument, it remains valid for unbounded $\psi$. 
		The assertion then follows passing to the limit for $\varepsilon\rightarrow0$ in 
		$$
		-\left( \frac{\partial u_{\varepsilon,\lambda}}{\partial t },v   \right)_H + a_\lambda(u_{\varepsilon,\lambda},v)= \left(\frac 1 \varepsilon (\psi-u_{\varepsilon,\lambda})_+,v\right)_H +(g,v)_H,\qquad  v\in V.
		$$
	\end{proof}
	Now, note that we can easily prove the continuous dependence of the process $X$ with respect to the initial state. 
	\begin{lemma}\label{propflo}
		Fix $(x,y)\in \R\times [0,+\infty)$. Denote by $(X^{x,y}_t, Y^y_t)_{t\geq 0}$ the solution of the system
		\[
		\left\{
		\begin{array}{l}
		dX_t=\left( \frac{\rho\kappa\theta}{\sigma}-\frac{Y_t}{2}\right)dt+\sqrt{Y_t} dB_t,\\
		dY_t=\kappa(\theta-Y_t)dt+\sigma\sqrt{Y_t}dW_t,
		\end{array}
		\right.
		\]
		with $X_0=x$, $Y_0=y$ and   $\langle B,W\rangle_t=\rho t$.
		We have, for every  $ t\geq 0$ and for every $(x,y), \, (x',y')\in \R\times [0,+\infty)$,
		$\E\left|Y^{y'}_t-Y^y_t\right| \leq |y'-y|$ and
		\[
		\E\left|X^{x',y'}_t-X^{x,y}_t\right| \leq |x'-x|+\frac{t}{2}|y'-y|+\sqrt{t|y'-y|}.
		\]    
	\end{lemma}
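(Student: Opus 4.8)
The plan is to treat the two components successively, exploiting the fact that $Y^{y}$ and $Y^{y'}$ (respectively $X^{x,y}$ and $X^{x',y'}$) are driven by the \emph{same} pair of correlated Brownian motions $(B,W)$, so that the martingale parts enter the differences as increments of a single stochastic integral and the whole estimate is reduced to controlling $\E|Y^{y'}_t-Y^y_t|$ first.

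First I would handle the volatility component. Writing $D_t=Y^{y'}_t-Y^y_t$ and subtracting the two CIR equations gives
\[
D_t=(y'-y)-\kappa\int_0^t D_s\,ds+\sigma\int_0^t\big(\sqrt{Y^{y'}_s}-\sqrt{Y^y_s}\big)\,dW_s.
\]
Since the diffusion coefficient $y\mapsto\sigma\sqrt{y}$ is nondecreasing and H\"older-$\tfrac12$ continuous while the drift $\kappa(\theta-y)$ is Lipschitz, the classical one-dimensional comparison theorem applies to the CIR equation: if $y'\ge y$ then $Y^{y'}_t\ge Y^y_t$ almost surely for every $t$, hence $D_t\ge 0$ and $|D_t|=D_t$ (the general case follows by swapping the roles of $y$ and $y'$). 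Because the CIR process has an integrable running mean, i.e. $\E\int_0^t Y_s\,ds<\infty$, the stochastic integral above is a true martingale with zero expectation, so setting $f(t)=\E[D_t]\ge 0$ we obtain $f(t)=(y'-y)-\kappa\int_0^t f(s)\,ds$, that is $f'(t)=-\kappa f(t)$. Therefore $\E|Y^{y'}_t-Y^y_t|=(y'-y)e^{-\kappa t}\le|y'-y|$, which is the first estimate.

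For the log-price component I would start from the explicit representation
\[
X^{x',y'}_t-X^{x,y}_t=(x'-x)-\frac12\int_0^t\big(Y^{y'}_s-Y^y_s\big)\,ds+\int_0^t\big(\sqrt{Y^{y'}_s}-\sqrt{Y^y_s}\big)\,dB_s,
\]
in which the constant drift $\frac{\rho\kappa\theta}{\sigma}$ cancels. Taking absolute values and expectations, I would bound the three terms separately. The finite-variation term is controlled by the first estimate, $\tfrac12\int_0^t\E|Y^{y'}_s-Y^y_s|\,ds\le\tfrac{t}{2}|y'-y|$. For the martingale term I would use Jensen's inequality, then the It\^o isometry, and finally the elementary bound $(\sqrt a-\sqrt b)^2\le|a-b|$ for $a,b\ge 0$ (which follows from $|\sqrt a-\sqrt b|\le\sqrt a+\sqrt b$), obtaining
\[
\E\Big|\int_0^t\big(\sqrt{Y^{y'}_s}-\sqrt{Y^y_s}\big)\,dB_s\Big|\le\Big(\E\int_0^t\big(\sqrt{Y^{y'}_s}-\sqrt{Y^y_s}\big)^2ds\Big)^{1/2}\le\Big(\int_0^t\E|Y^{y'}_s-Y^y_s|\,ds\Big)^{1/2}\le\sqrt{t\,|y'-y|}.
\]
Summing the three contributions yields exactly the claimed bound on $\E|X^{x',y'}_t-X^{x,y}_t|$.

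I expect the only genuinely delicate points to be the justification of the comparison theorem for the CIR diffusion, which I would invoke as a classical result (pathwise uniqueness holds by the Yamada--Watanabe criterion precisely because $\sqrt{\cdot}$ is H\"older-$\tfrac12$), and the verification that the stochastic integrals appearing above are true martingales rather than mere local martingales. Both reduce to the standard fact that the CIR process has finite first and second moments, uniformly on $[0,t]$, so that the $L^2$-integrability of the integrands is immediate. Everything else is a routine combination of the comparison argument, the resulting linear ordinary differential equation for $f$, and the It\^o isometry, so no Gronwall-type iteration is even needed.
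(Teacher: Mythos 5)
Your proof is correct, and both estimates land exactly on the claimed bounds. The paper itself omits the proof, saying only that the first inequality "can be proved by using standard techniques introduced in \cite{IW} (see the proof of Theorem 3.2 and its Corollary in Section IV.3)" --- i.e.\ the Yamada--Watanabe smoothing argument with the $C^2$ approximations $\phi_m$ of $|x|$, which the authors do carry out in full elsewhere (in the proof of Lemma \ref{lemmasup_diff} in the appendix of Chapter 2). You take a different route for that first step: you invoke the one-dimensional comparison theorem to get $Y^{y'}_t\geq Y^y_t$ a.s.\ when $y'\geq y$, so that $|D_t|=D_t$ and a plain expectation plus the linear ODE $f'=-\kappa f$ gives the (slightly stronger) identity $\E|Y^{y'}_t-Y^y_t|=(y'-y)e^{-\kappa t}$. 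Both routes are legitimate here: yours is shorter and yields monotonicity of the flow and the explicit exponential decay for free, while the $\phi_m$ technique the paper points to has the advantage of not requiring the comparison theorem and of extending to settings where the two equations have \emph{different} diffusion coefficients (which is precisely why the authors need it in Lemma \ref{lemmasup_diff}, where comparison is unavailable). Your treatment of the $X$-component --- cancellation of the constant drift, the bound $\frac t2|y'-y|$ on the finite-variation part, and Jensen plus the It\^o isometry plus $(\sqrt a-\sqrt b)^2\leq|a-b|$ for the martingale part --- is the natural argument the authors had in mind when writing that "the other inequality easily follows," and the integrability justifications you give (finite moments of the CIR process, so the local martingales are true martingales) are the right ones.
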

	The proof of Lemma \ref{propflo} is straightforward so we omit the details: the inequality $\E\left|Y^{y'}_t-Y^y_t\right| \leq  |y'-y|$ can be proved by using standard techniques introduced in \cite{IW} (see the proof of Theorem 3.2 and its Corollary in Section IV.3) and the other inequality easily follows.

	Then, we can prove the following result.
	\begin{proposition}\label{propconti1}
		Let $\psi:\R\times[0,\infty)\rightarrow\R$ be  continuous 
		and such that there exist $C>0$ and $ a,\, b \geq0$ with  $|\psi(x,y)|\leq Ce^{a|x|+by}$ for every $(x,y)\in \R\times [0,+\infty)$. Then, if
		\[
		\lambda> ab|\rho|\sigma+\frac{b^2\sigma^2}{2}-\kappa b+\frac{a^2-a}{2},
		\]
		we have $P^\lambda_t|\psi|(x,y)<\infty$ for every $t\geq 0$, $(x,y)\in \R\times [0,+\infty)$ and the function
		$(t,x,y)\mapsto P^\lambda_t\psi(x,y)$ is continuous on $[0,\infty)\times\R\times[0,\infty)$.
	\end{proposition}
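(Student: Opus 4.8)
The plan is to establish the two assertions separately, reducing both the finiteness of $P^\lambda_t|\psi|$ and the joint continuity of $(t,x,y)\mapsto P^\lambda_t\psi(x,y)$ to the explicit Laplace transform estimates of Propositions \ref{Laplace} and \ref{Laplace2}, together with the continuous dependence of the flow provided by Lemma \ref{propflo}.

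For the finiteness, I would start from the domination $|\psi(x,y)|\le Ce^{a|x|+by}$ and the elementary bound $e^{a|x|}\le e^{ax}+e^{-ax}$, so that it is enough to control $P^\lambda_t\big(e^{\pm ax+by}\big)(x_0,y_0)$. Setting $\tilde X_t=X_t-\frac\rho\sigma Y_t$ and conditioning on $W$ exactly as in the proof of Proposition \ref{prop-TF}, but now with real exponents, I obtain
\[
P^\lambda_t\big(e^{ax+by}\big)(x_0,y_0)=e^{-\lambda t}\,e^{a(x_0-\frac\rho\sigma y_0)}\,\E_{y_0}\Big(e^{\lambda_1 Y_t+\lambda_2\int_0^t Y_s\,ds}\Big),
\]
with $\lambda_1=a\frac\rho\sigma+b$ and $\lambda_2=a\big(\frac{\rho\kappa}\sigma-\frac12\big)+\frac{a^2}2(1-\rho^2)-\lambda$, the term with $e^{-ax}$ being obtained by the substitution $a\mapsto-a$. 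By Proposition \ref{Laplace2} each of these expectations is finite provided $\frac{\sigma^2}2\lambda_1^2-\kappa\lambda_1+\lambda_2\le0$, and a direct computation shows that, for the two signs, this reduces to $\lambda\ge\pm ab\rho\sigma+\frac{b^2\sigma^2}2-\kappa b+\frac{a^2\mp a}2$; the hypothesis on $\lambda$, in which $|\rho|$ absorbs the cross term coming from either sign of the correlation, is precisely what guarantees the relevant inequality (the dominant $e^{ax}$ contribution producing exactly the stated threshold). This yields $P^\lambda_t|\psi|(x_0,y_0)<\infty$ with an explicit majorant of the form $C\,e^{A|x_0|+By_0}$, locally bounded in $(x_0,y_0)$.

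For the continuity, write $(X^{x,y}_s,Y^y_s)$ for the flow and fix a sequence $(t_n,x_n,y_n)\to(t,x,y)$. By Lemma \ref{propflo} the maps $(x,y)\mapsto X^{x,y}_s$ and $y\mapsto Y^y_s$ are continuous in $L^1$, uniformly for $s$ in a compact interval; combined with the almost sure continuity of the paths $s\mapsto\big(X_s,Y_s,\int_0^s Y_u\,du\big)$, this gives, along a subsequence, the almost sure convergence of $X^{x_n,y_n}_{t_n}$, $Y^{y_n}_{t_n}$ and $\int_0^{t_n}Y^{y_n}_s\,ds$ to their counterparts at $(t,x,y)$. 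Since $\psi$ is continuous, the integrands $e^{-\lambda\int_0^{t_n}(1+Y_s)\,ds}\psi(X_{t_n},Y_{t_n})$ then converge almost surely to the limiting integrand, and the only remaining point is to pass to the limit under the expectation.

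This last point is the main obstacle, and it is where the strict inequality in the statement is used. Following the idea of Remark \ref{remarksemigroup}, the strictness allows me to replace $(a,b)$ by $\big((1+\epsilon)a,(1+\epsilon)b\big)$ for some small $\epsilon>0$ while still satisfying the (non-strict) threshold of the first step; applying that step to the enlarged exponents bounds the $(1+\epsilon)$-th moment
\[
\sup_n\E\Big(\big|e^{-\lambda\int_0^{t_n}(1+Y_s)\,ds}\psi(X_{t_n},Y_{t_n})\big|^{1+\epsilon}\Big)<\infty,
\]
locally uniformly in the starting points — here the local boundedness of the majorant $C\,e^{A|x_0|+By_0}$ and the $L^1$-continuity of the flow are used to control the perturbed initial data $(x_n,y_n)$. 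This uniform $L^{1+\epsilon}$ bound gives uniform integrability, so by Vitali's theorem one may pass to the limit and obtain $P^\lambda_{t_n}\psi(x_n,y_n)\to P^\lambda_t\psi(x,y)$. As the limit is independent of the extracted subsequence, the full sequence converges, which proves the joint continuity of $(t,x,y)\mapsto P^\lambda_t\psi(x,y)$ on $[0,\infty)\times\R\times[0,\infty)$.
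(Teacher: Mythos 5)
Your argument follows the same route as the paper's proof: the reduction of $P^\lambda_t\big(e^{ax+by}\big)$ to the Laplace transform of $\big(Y_t,\int_0^tY_s\,ds\big)$ by conditioning on $W$ as in Proposition \ref{prop-TF}, the appeal to Proposition \ref{Laplace2}, and the continuity argument combining Lemma \ref{propflo} (convergence in probability of $X^{x_n,y_n}_{t_n}$, $Y^{y_n}_{t_n}$ and $\int_0^{t_n}Y^{y_n}_s\,ds$) with uniform integrability obtained by inflating the exponents to $(1+\epsilon)a$, $(1+\epsilon)b$, $(1+\epsilon)\lambda$. That last step, including the check that the strict inequality survives the inflation for $\epsilon$ small, is exactly the paper's estimate \eqref{6**}, and your Vitali/subsequence conclusion is fine.

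There is, however, one concrete assertion in your finiteness step that your own computation contradicts. You correctly record that the two signs in $e^{a|x|}\le e^{ax}+e^{-ax}$ lead to the thresholds $\lambda\ge \pm ab\rho\sigma+\frac{b^2\sigma^2}{2}-\kappa b+\frac{a^2\mp a}{2}$; in particular the $e^{-ax}$ contribution requires $\lambda\ge -ab\rho\sigma+\frac{b^2\sigma^2}{2}-\kappa b+\frac{a^2+a}{2}$. You then claim the stated hypothesis covers both signs because $|\rho|$ absorbs the cross term. It does absorb the cross term, but not the extra $+a$ in the zeroth-order part: the hypothesis dominates the $e^{-ax}$ threshold only when $ab(|\rho|+\rho)\sigma\ge a$, which fails for instance whenever $\rho\le 0$ and $a>0$. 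Concretely, with $\rho=0$, $b=0$, $a=1$ the hypothesis reads $\lambda>0$, while $P^\lambda_t\big(e^{-x}\big)(x_0,y_0)=e^{-\lambda t-x_0}\,\E_{y_0}\big(e^{(1-\lambda)\int_0^tY_s\,ds}\big)$, which is infinite for large $t$ as soon as $\kappa^2<2\sigma^2$ and $0<\lambda<1-\kappa^2/(2\sigma^2)$. To be fair, the paper's own proof passes from the bound on $\E\big(e^{aX_t+bY_t-\lambda\int_0^tY_s\,ds}\big)$ to the bound on $\E\big(e^{a|X_t|+bY_t-\lambda\int_0^tY_s\,ds}\big)$ in a single sentence and never addresses this point either; your more explicit version merely makes the gap visible. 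A clean fix is either to strengthen the hypothesis so that $\lambda$ also exceeds $-ab\rho\sigma+\frac{b^2\sigma^2}{2}-\kappa b+\frac{a^2+a}{2}$, or to state the proposition for payoffs dominated by $e^{ax}+e^{by}$, which is what the subsequent applications actually use.
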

	\begin{proof}
		We can prove, as in the proof of  Proposition \ref{prop-TF},  that
		\begin{eqnarray*}
			\E_{x,y}\left(e^{a X_t+bY_t-\lambda\int_0^t Y_s ds}\right)&=&
			e^{a\left(x-\frac{\rho}{\sigma}y\right)}
			\E_{y}\left(e^{\left(a\frac{\rho}{\sigma}+b\right)Y_t+
				\left(a(\frac{\rho\kappa}{\sigma}-\frac{1}{2})+\frac{a^2}{2}(1-\rho^2)-\lambda\right)\int_0^tY_sds}\right).
		\end{eqnarray*}
		Thanks to Proposition~\ref{Laplace2}, if
		\begin{equation}\label{6*}
		\frac{\sigma^2}{2}\left(a\frac{\rho}{\sigma}+b\right)^2-\kappa\left(a\frac{\rho}{\sigma}+b\right)+
		\left(a(\frac{\rho\kappa}{\sigma}-\frac{1}{2})+\frac{a^2}{2}(1-\rho^2)-\lambda\right)<0,
		\end{equation}
		we have, for any $T>0$ and for any compact $K\subseteq \R\times [0,+\infty[$,
		\[
		\sup_{(t,x,y)\in[0,T]\times K} \E_{x,y}\left(e^{a X_t+bY_t-\lambda\int_0^t Y_s ds}\right)<\infty.
		\]
		Note that \eqref{6*} is equivalent to
		\[
		\lambda>ab\rho\sigma+\frac{b^2\sigma^2}{2}-\kappa b+\frac{a^2-a}{2}.
		\]
		Therefore, under the assumptions of the Proposition, we have,
		for any $T>0$ and for any compact set $K\subseteq \R\times [0,+\infty[$,
		\[
		\sup_{(t,x,y)\in[0,T]\times K} \E_{x,y}\left(e^{a |X_t|+bY_t-\lambda\int_0^t Y_s ds}\right)<\infty.
		\]
		Moreover, for $\epsilon$ small enough,
		\begin{equation}\label{6**}
		\sup_{(t,x,y)\in[0,T]\times K} \E_{x,y}\left(e^{a(1+\epsilon) |X_t|+b(1+\epsilon)Y_t-\lambda(1+\epsilon)\int_0^t Y_s ds}\right)<\infty.
		\end{equation}
		Then, let  $\psi$ be a continuous function on $\R\times[0,+\infty[$ such that $|\psi(x,y)|\leq Ce^{a|x|+by}$. It is evident that $P^\lambda_t |\psi|(x,y)<\infty$
		and we have
		\[
		P^\lambda_t\psi(x,y)=\E\left(e^{-\lambda\int_0^t(1+Y^y_s)ds}\psi(X^{x,y}_t,Y^{y}_t)\right).
		\]
		If $((t_n,x_n,y_n))_n$ converges to $(t,x,y)$,  we deduce from Lemma~\ref{propflo} that $X^{x_n,y_n}_{t_n}\rightarrow X^{x,y}_t$, $Y^{y_n}_{t_n}\rightarrow Y^{y}_t$ and  $\int_0^{t_n} Y^{y_n}_sds \rightarrow\int_0^t Y^y_sds$ in probability. 
		Therefore $e^{-\lambda\int_0^{t_n}(1+Y_s)ds}\psi(X^{x_n,y_n}_{t_n},Y^{y_n}_{t_n})$
		converges to $e^{-\lambda\int_0^{t}(1+Y_s)ds}\psi(X^{x,y}_{t},Y^{y}_{t})$ in probability. The estimate~\eqref{6**}
		ensures the uniformly integrability of $e^{-\lambda\int_0^{t_n}(1+Y_s)ds}\psi(X^{x_n,y_n}_{t_n},Y^{y_n}_{t_n})$
		so that  $\lim_{n\to \infty}P^\lambda_{t_n}\psi(x_n,y_n)=P^\lambda_{t}\psi(x,y)$ which concludes the proof.		
	\end{proof} 
	
	\begin{proposition}\label{martingale}
		Fix  $p>\beta+\frac{5}{2}$ and $\lambda$ as in Theorem \ref{theotransition}. Let us consider
		$u\in C([0,T];H)\cap L^2([0,T];V)$, with $\frac{\partial u}{\partial t}\in L^2([0,T];H)$ such that
		\[
		\begin{cases}
		\left	(\frac{\partial u}{\partial t},v\right)_H+a_\lambda(u(t),v)=(f(t),v)_H,\qquad v\in V,\\
		u(0)=\psi,
		\end{cases}
		\]
		with $\psi$ continuous, $\psi\in V$, $\sqrt{1+y}f\in L^2([0,T];H)$  and $f\in L^p([0,T];L^p(\mathcal{O}, \m))$. Then, if $\psi$
		and $\lambda$ satisfy the assumptions of Proposition ~\ref{propconti1},  we have 
		\begin{enumerate}
			\item For every $t\in [0,T]$, $u(t)=P^\lambda_t\psi+\int_0^tP^\lambda_sf(t-s)ds$.
			\item  The function  $(t,x,y)\mapsto u(t,x,y)$ is continuous on $[0,T]\times \R\times [0,+\infty)$.
			\item If $\Lambda_t=\lambda\int_0^t(1+Y_s)ds$, 
			the process $(M_t)_{0\leq t\leq T}$, defined by
			\[
			M_t=e^{-\Lambda_t}u(T-t,X_t,Y_t)+\int_0^te^{-\Lambda_s}f(T-s,X_s,Y_s)ds,
			\]
			with $X_0=x,\, Y_0=y$	is a martingale
			for every $(x,y)\in \R\times [0,+\infty)$.
		\end{enumerate}
	\end{proposition}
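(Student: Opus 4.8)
The plan is to prove the three assertions in order: first deduce the Duhamel-type representation of $u$ in terms of the transition semigroup, then upgrade this $H$-identity to a genuine pointwise continuous function, and finally exploit the Markov property of $(X,Y)$ together with the semigroup property of $P^\lambda_t$ to obtain the martingale property — deliberately avoiding It\^o's formula, which $u$ is not regular enough to support.

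For assertion (1) I would argue by linearity of the variational equation \eqref{var_eq}. By Proposition \ref{propsg3} the solution associated with $u(0)=0$ and source $f$ is $\int_0^t \bar P^\lambda_{t-s}f(s)\,ds$, while $\bar P^\lambda_t\psi$ is by definition the solution with $f=0$ and $u(0)=\psi$; since \eqref{var_eq} has a unique solution, the solution with data $(\psi,f)$ is $u(t)=\bar P^\lambda_t\psi+\int_0^t \bar P^\lambda_{t-s}f(s)\,ds=\bar P^\lambda_t\psi+\int_0^t \bar P^\lambda_s f(t-s)\,ds$. Proposition \ref{identificationsemigroup} then replaces each $\bar P^\lambda$ by the transition semigroup $P^\lambda$, yielding the stated formula as an identity in $H$, hence $dxdy$-a.e.

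For assertion (2) the candidate continuous representative is the right-hand side of the formula in (1). The term $(t,x,y)\mapsto P^\lambda_t\psi(x,y)$ is continuous by Proposition \ref{propconti1}, whose hypotheses (the growth bound on $\psi$ and the condition on $\lambda$) are exactly those assumed here. For the convolution term I would use the smoothing estimate of Theorem \ref{theotransition}: since $p>\beta+\tfrac52$, the exponent $\tfrac{\beta}{p}+\tfrac{3}{2p}=\tfrac{\beta+3/2}{p}<1$, so $s\mapsto s^{-(\beta+3/2)/p}$ is integrable near $0$ and, on every compact $K$, $\int_0^t P^\lambda_s|f(t-s)|(x,y)\,ds\le C_{p,K,T}\int_0^t s^{-(\beta+3/2)/p}\|f(t-s)\|_{L^p(\O,\m)}\,ds<\infty$. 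This makes the convolution finite everywhere; continuity then follows by approximating $f$ in $L^p([0,T];L^p(\O,\m))$ by continuous compactly supported functions (to which Proposition \ref{propconti1} applies termwise) and passing to the limit, using the same integrable-singularity bound uniformly on compacts. Thus $u$ admits the continuous version given by (1), and henceforth $u(t,x,y)$ denotes this function.

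For assertion (3), fix $0\le r\le t\le T$ and condition on $\mathcal F_r$. Writing $\Lambda_t=\Lambda_r+\lambda\int_r^t(1+Y_s)\,ds$, the Markov property of $(X,Y)$ gives $\E[e^{-\Lambda_t}u(T-t,X_t,Y_t)\mid\mathcal F_r]=e^{-\Lambda_r}\big(P^\lambda_{t-r}u(T-t,\cdot)\big)(X_r,Y_r)$ and $\E\big[\int_r^t e^{-\Lambda_s}f(T-s,X_s,Y_s)\,ds\mid\mathcal F_r\big]=e^{-\Lambda_r}\int_0^{t-r}\big(P^\lambda_\sigma f(T-r-\sigma,\cdot)\big)(X_r,Y_r)\,d\sigma$. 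Inserting the representation of (1) into $P^\lambda_{t-r}u(T-t,\cdot)$, using $P^\lambda_{t-r}P^\lambda_{T-t}=P^\lambda_{T-r}$ and the substitution $\tau=\sigma+(t-r)$ in the convolution, I obtain $P^\lambda_{t-r}u(T-t,\cdot)+\int_0^{t-r}P^\lambda_\sigma f(T-r-\sigma,\cdot)\,d\sigma=u(T-r,\cdot)$. Hence $\E[M_t\mid\mathcal F_r]=e^{-\Lambda_r}u(T-r,X_r,Y_r)+\int_0^r e^{-\Lambda_s}f(T-s,X_s,Y_s)\,ds=M_r$, which is the martingale property.

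The main obstacle will be the integrability and uniform-integrability bookkeeping that legitimises the conditioning and guarantees $M_t\in L^1$: one must check $\E_{x,y}\big(e^{-\Lambda_t}|u(T-t,X_t,Y_t)|\big)<\infty$ and the integrability of the time-integral term, and justify applying the Markov property to integrands that are only continuous with exponential growth. These estimates follow from the growth bound on $P^\lambda\psi$ in Proposition \ref{propconti1} (in particular \eqref{6**}) and from the integrable-singularity bound of Theorem \ref{theotransition} for the convolution, but combining them cleanly is the delicate point. The second subtlety, which is prerequisite to (3), is precisely the upgrade in (2) from the a.e. identity to a pointwise continuous representative, since only this allows $u$ to be evaluated along the trajectory $(X_t,Y_t)$.
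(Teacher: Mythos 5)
Your proposal is correct and follows essentially the same route as the paper: assertion (1) via Proposition \ref{propsg3} and the identification of $\bar P^\lambda_t$ with $P^\lambda_t$, assertion (2) via Proposition \ref{propconti1} plus density of bounded continuous functions in $L^p$ combined with the smoothing bound of Theorem \ref{theotransition}, and assertion (3) via the Markov property and the semigroup identity (the paper conditions $M_T$ on $\mathcal{F}_t$ rather than $M_t$ on $\mathcal{F}_r$, which is equivalent). One small imprecision: the finiteness of $\int_0^t s^{-(\beta+3/2)/p}\|f(t-s,\cdot,\cdot)\|_{L^p(\O,\m)}\,ds$ does not follow merely from integrability of $s^{-(\beta+3/2)/p}$ (which would only need $p>\beta+\tfrac32$), but requires a H\"older inequality in time pairing $s^{-(\beta+3/2)/p}\in L^{p/(p-1)}$ against $\|f\|_{L^p(\O,\m)}\in L^p([0,T])$ — this is exactly where the full hypothesis $p>\beta+\tfrac52$ is used, as in the paper's displayed estimate.
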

	\begin{proof}
		The first assertion follows from Proposition~\ref{propsg3}.

		The continuity of $(t,x,y)\mapsto P^\lambda_t\psi(x,y)$ is given by Proposition~\ref{propconti1}. The continuity of  $(t,x,y)\mapsto \int_0^t P^\lambda_sf(t-s,.)(x,y)ds$  is trivial if $(t,x,y)\mapsto f(t,x,y)$
		is bounded continuous.  If $f\in L^p([0,T];L^p(\mathcal{O}, \m))$, $f$ is the limit in  $L^p$ of a sequence of  bounded continuous functions and we have $\int_0^t P^\lambda_s f_n(t-s,\cdot)ds \rightarrow \int_0^t P^\lambda_s f(t-s,\cdot)ds$ uniformly in $[0,T]\times K$ for every compact
		$K$ of $\R\times [0,+\infty)$). In fact, thanks to Theorem~\ref{theotransition}, we can write for 
		$t\in[0,T]$ and $(x,y)\in K$ 
		\begin{equation}
		\begin{split}
		&	\int_0^tP^\lambda_s|f_n-f|(t-s,\cdot,\cdot)(x,y)ds\leq \int_0^t\frac{C_{p,K,T}}{s^{\frac{2\beta+3}{2p}}}ds
		||(f_n-f)(t-s,\cdot,\cdot)||_{L^p(\O,\m)}\\
		&\quad\leq C_{p,K,T}\left(\int_0^t||(f_n-f)(t-s,\cdot,\cdot)||^p_{L^p(\O,\m)}ds\right)^{1/p}\left(\int_0^t\frac{ds}{s^{\frac{2\beta+3}{2(p-1)}}}\right)^{1-\frac{1}{p}}\\
		&\quad \leq C_{p,K,T}\left(\int_0^T||(f_n-f)(s,\cdot,\cdot)||^p_{L^p(\O,\m)}ds\right)^{1/p}\left(\int_0^T\frac{ds}{s^{\frac{2\beta+3}{2(p-1)}}}\right)^{1-\frac{1}{p}}.
		\end{split}
		\end{equation}
		The assumption $p>\beta+\frac{5}{2}$ ensures the convergence of the integral in the right hand side.

		For the last assertion, note that $M_T=e^{-\Lambda_T}\psi(X_T,Y_T)+\int_0^Te^{-\Lambda_s}f(T-s,X_s,Y_s)ds$.
		Then, we can prove that $M_t$ is integrable with the same arguments that we used to show the continuity of $(t,x,y)\mapsto u(t,x,y)$.
		Moreover, by using the Markov property,
		\begin{align*}
		&	\E_{x,y}\left( M_T \,|\, \mathcal{F}_t\right)\\&=e^{-\Lambda_t}P^{\lambda}_{T-t}\psi(X_t,Y_t)
		+\int_0^t\!e^{-\Lambda_s}f(T-s,X_s,Y_s)ds
		+e^{-\Lambda_t}\!\int_t^TP^\lambda_{s-t}f(T-s,.,.)(X_t,Y_t)ds\\
		&
		=e^{-\Lambda_t}\left(P^{\lambda}_{T-t}\psi(X_t,Y_t)+\int_0^{T-t}\!\!P^\lambda_{s}f(T-t-s,.,.)(X_t,Y_t)ds\right)\!+\!\int_0^te^{-\Lambda_s}f(T-s,X_s,Y_s)ds\\
		&=e^{-\Lambda_t}u(T-t, X_t, Y_t)+\int_0^te^{-\Lambda_s}f(T-s,X_s,Y_s)ds=M_t.
		\end{align*}
		
	\end{proof}
	
	We are now ready to prove the following proposition.
	
	\begin{proposition}\label{identification}
		Assume that $\psi$ satisfies Assumption $\mathcal{H}^*$.
		Moreover, fix $p>\beta+\frac{5}{2}$ and assume that $\psi\in L^2([0,T];H^2(\O,\m))$ and $\frac{\partial \psi}{\partial t}+\L\psi\in L^p([0,T];L^p(\O,\m))$.
		Then, the solution u of the variational inequality \eqref{VI} satisfies
		\begin{equation} \label{u=u*}
		u(t,x,y)=u^*(t,x,y), \qquad \mbox{on }[0,T] \times \bar{\mathcal{O}},
		\end{equation}
		where $u^*$ is defined by
		$$
		u^*(t,x,y)= \sup_{\tau \in \mathcal{T}_{t,T}}\E \left[   \psi(\tau,X_\tau^{t,x,y},
		Y_\tau^{t,y})     \right].
		$$
	\end{proposition}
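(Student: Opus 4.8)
The plan is to establish the two inequalities $u\ge u^*$ and $u\le u^*$ separately, the bridge between the analytic solution $u$ of \eqref{VI} and the probabilistic value $u^*$ being a supermartingale property of $u$ along the trajectories of $(X,Y)$, which becomes a martingale up to the first hitting time of the obstacle. First I would exploit the extra regularity. By Proposition \ref{reg2}, for $\lambda$ large enough $u$ satisfies $-(\partial_t u,v)_H+a_\lambda(u,v)=(F,v)_H$ with $F\in L^p([0,T];L^p(\O,\m))$. Writing $a_\lambda(u,v)=-(\L u,v)_H+\lambda((1+y)u,v)_H$ identifies $F=\lambda(1+y)u-(\partial_t u+\L u)$; since $u\ge 0$ and the variational inequality forces $\partial_t u+\L u\le 0$ (test with $v=u+\varphi$, $\varphi\ge 0$), we get $F\ge 0$, while $g_0:=\partial_t u+\L u=\lambda(1+y)u-F\le 0$ also lies in $L^p$. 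Reversing time and applying Proposition \ref{martingale} to $\hat u(t,\cdot)=u(T-t,\cdot)$, with terminal datum $\psi(T)\in V$ and source $\hat F(t,\cdot)=F(T-t,\cdot)$ (the integrability requirements $\hat F\in L^p$ and $\sqrt{1+y}\hat F\in L^2([0,T];H)$ being guaranteed by $0\le u\le\Phi$ and Assumption $\mathcal H^2$), yields both the continuity of $(t,x,y)\mapsto u(t,x,y)$ and the fact that $M_t=e^{-\Lambda_t}u(t,X_t,Y_t)+\int_0^t e^{-\Lambda_s}F(s,X_s,Y_s)\,ds$ is a martingale under $\P_{x,y}$, where $\Lambda_t=\lambda\int_0^t(1+Y_s)\,ds$. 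The hypothesis $p>\beta+5/2$ is used precisely here, so that the time integrals in Propositions \ref{martingale} and \ref{theotransition} converge.

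The second step is to remove the killing factor and recover the clean supermartingale. Since $M$ is a martingale and $\int_0^\cdot e^{-\Lambda_s}F\,ds$ is absolutely continuous, the process $N_t:=e^{-\Lambda_t}u(t,X_t,Y_t)=M_t-\int_0^t e^{-\Lambda_s}F\,ds$ is a semimartingale; multiplying by the continuous finite-variation process $e^{\Lambda_t}$ and integrating by parts (no cross variation appears) gives
\[
u(t,X_t,Y_t)=u(0,X_0,Y_0)+\int_0^t e^{\Lambda_s}\,dM_s+\int_0^t g_0(s,X_s,Y_s)\,ds,
\]
where I used $\lambda(1+Y_s)u-F=g_0$. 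Hence $s\mapsto u(s,X_s,Y_s)$ is a local martingale plus the nonincreasing process $\int_0^\cdot g_0\,ds$, i.e. a local supermartingale, with no killing term. By the time homogeneity of the coefficients of \eqref{hest} and the Markov property, the same computation started from $(x,y)$ at time $t$ shows that $s\mapsto u(s,X^{t,x,y}_s,Y^{t,y}_s)$ is a supermartingale on $[t,T]$, which is a genuine (not merely local) one once uniform integrability is secured by the domination $0\le u\le\Phi$ together with the growth bounds of Assumption $\mathcal H^*$ and the Laplace-transform estimates of Propositions \ref{Laplace2} and \ref{propconti1}.

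Granting this, the inequality $u\ge u^*$ is immediate: for any $\tau\in\mathcal{T}_{t,T}$, optional sampling of the supermartingale and $u\ge\psi$ give $u(t,x,y)\ge\E[u(\tau,X^{t,x,y}_\tau,Y^{t,y}_\tau)]\ge\E[\psi(\tau,X^{t,x,y}_\tau,Y^{t,y}_\tau)]$, and one takes the supremum over $\tau$. For the reverse inequality I would introduce the candidate $\tau^*=\inf\{s\ge t:u(s,X_s,Y_s)=\psi(s,X_s,Y_s)\}\wedge T$ and argue that on $[t,\tau^*)$ the trajectory stays in the continuation set $\{u>\psi\}$, where the complementarity relation $g_0\,(u-\psi)=0$ forces $g_0=0$; therefore the stopped process $u(s\wedge\tau^*,\cdot)$ is a true martingale and $u(t,x,y)=\E[u(\tau^*,X_{\tau^*},Y_{\tau^*})]=\E[\psi(\tau^*,X_{\tau^*},Y_{\tau^*})]\le u^*(t,x,y)$, using the continuity of $u$ and $\psi$ (and $u(T)=\psi(T)$) to identify $u=\psi$ at $\tau^*$.

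The hard part will be this last step, namely turning the space-time complementarity identity $g_0\,(u-\psi)=0$, which holds only $\m$-almost everywhere, into the pathwise statement $\int_t^{\tau^*}g_0(s,X_s,Y_s)\,ds=0$. This is exactly where the estimates on the joint law become indispensable: Theorem \ref{theotransition} and Proposition \ref{density_CIR} provide an $L^p$-control of the transition density, integrable against $\m$, which shows that the occupation measure of $(s,X_s,Y_s)$ is absolutely continuous enough that the $\m$-null set $\{g_0\neq 0\}\cap\{u>\psi\}$ is not charged along the trajectory and the $L^p$ function $g_0$ can legitimately be evaluated on paths. The same estimates, controlling the behaviour near the degenerate boundary $y=0$, are what justify the optional sampling and the passage from local to genuine (super)martingale. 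Once Proposition \ref{identification} is proved under these regularity hypotheses, the general case of Theorem \ref{theorem2} follows by approximating a payoff satisfying $\mathcal H^*$ with regularized obstacles and passing to the limit with the help of the comparison principle of Proposition \ref{CompPrinc3}.
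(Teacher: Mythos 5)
Your proposal is correct and follows essentially the same route as the paper: Proposition \ref{reg2} to produce $F\in L^p$ with $F-\lambda(1+y)u\ge 0$ and $(F-\lambda(1+y)u,\psi-u)_H=0$, Proposition \ref{martingale} for the killed martingale, integration by parts to strip the killing factor, localization plus the class-$\mathcal D$ domination $0\le u\le\Phi$ to get a genuine supermartingale (whence $u\ge u^*$), and the continuation-region argument combined with the absolute continuity of the law of $(X,Y)$ to turn the a.e.\ complementarity into the pathwise identity giving $u\le u^*$. The only cosmetic difference is that the paper needs merely the existence of a density for $(X,Y)$ at this last step, rather than the full quantitative estimates of Theorem \ref{theotransition} that you invoke.
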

	\begin{proof}
		We first check that $\psi$ satisfies the assumptions of Proposition \ref{reg2}. Note that, thanks to the growth condition \eqref{boundonpsi}, it is possible to write  $0\leq \psi(t,x,y)\leq \Phi(t,x,y)$ with $\Phi(t,x,y)=C_T(e^{x-\frac{\rho \kappa\theta}{\sigma}t}+e^{Ly-\kappa\theta L t})$, where $L\in\left[0,\frac{2\kappa}{\sigma^2}\right)$ and $C_T$ is a positive constant which depends on $T$. Moreover,  recall the growth condition on the derivatives  \eqref{boundonderivatives}. Then, it is easy to see that we can choose $\gamma $ and $\mu$ in the definition of the measure $\m$ (see \eqref{m}) such that $\psi$ satisfies Assumption $\mathcal{H}^1$, $ \Phi$ satisfies Assumption $\mathcal{H}^2$ (note that  $\frac{\partial \Phi}{\partial t}+\L\Phi\leq 0$) and
		$(1+y)\Phi$, $\frac{\partial \psi}{\partial t}+\L\psi\in L^p([0,T];L^p(\O,\m))$.  Therefore we can apply  Proposition \ref{reg2} and we get  that, for $\lambda$ large enough,  there exists $F\in L^p([0,T];L^p(\mathcal{O,\m}))$ such that  $u$ satisfies
		\begin{equation*}
		-\left(\frac{\partial u}{\partial t}	 ,v\right)_H + a_\lambda(u,v)=(F,v)_H,\qquad v\in V, 
		\end{equation*}
		that is
		\begin{equation*}
		-\left(\frac{\partial u}{\partial t}	 ,v\right)_H + a(u,v)=(F-\lambda(1+y)u,v)_H,\qquad v\in V.
		\end{equation*}
		On the other hand we know that
		\begin{equation*}
		\begin{cases}
		-\left( \frac{\partial u}{\partial t },v -u  \right)_H + a(u,v-u)\geq 0 , \qquad \mbox{ a.e. in } [0,T]\qquad v\in V, \ v\geq \psi, \\u(T)=\psi(T),\\u \geq \psi \mbox{ a.e. in } [0,T]\times \R \times (0,\infty).
		\end{cases}
		\end{equation*}
		From the previous relations we easily   deduce that $F-\lambda(1+y)u\geq 0$ a.e. and, taking $v=\psi$, that $(F-\lambda(1+y)u,\psi-u)_H=0.$ 
		Moreover, note that  the assumptions of Proposition \ref{martingale} are satisfied, so
		the process $(M_t)_{0\leq t\leq T}$ defined by
		\begin{equation}\label{M}
		M_t=e^{-\Lambda_t}u(t,X_t,Y_t)+\int_0^te^{-\Lambda_s}F(s,X_s,Y_s)ds,
		\end{equation}
		with $X_0=x, \, Y_0=y$	is a martingale
		for every $(x,y)\in \R\times [0,+\infty)$. Then, we deduce that
		the process
		\[
		\tilde M_t= u(t,X_t,Y_t)+\int_0^{t} \left(F(s,X_s,Y_s)   -\lambda(1+Y_s)u(s,X_s,Y_s)\right)ds
		\]
		is a local martingale. In fact, from \eqref{M} we can write
		\begin{align*}
		d	\tilde M_t&=d\left[e^{\Lambda_t}M_t-e^{\Lambda_t}\int_0^te^{-\Lambda_s}F(s,X_s,Y_s)ds
		\right]+ F(t,X_t,Y_t)dt-\lambda(1+Y_t)u(t,X_t,Y_t)dt\\&
		=e^{\Lambda_t}dM_t+\Big[ \lambda(1+Y_t)e^{\Lambda_t}M_t-\lambda(1+Y_t)e^{\Lambda_t}\int_0^te^{-\Lambda_s}F(s,X_s,Y_s)ds\\&\quad-e^{\Lambda_t}e^{-\Lambda_t} F(t,X_t,Y_t)   + F(t,X_t,Y_t)-\lambda(1+Y_t)u(t,X_t,Y_t)\Big]dt\\
		&=e^{\Lambda_t}dM_t.
		\end{align*}

		So, for any stopping time $\tau$ there exists an increasing sequence of stopping times $(\tau_n)_n$ such that $\lim_n\tau_n=\infty$ and 
		\begin{equation}\label{relationmgloc}
		\E_{x,y}[u(\tau\wedge \tau_n,X_{\tau\wedge \tau_n}, Y_{\tau\wedge \tau_n})]=u(0,x,y)-\E_{x,y}\left[\int_0^{\tau\wedge\tau_n}\!\!\!(  F(s,X_s,Y_s)-\lambda(1+Y_s)u(s,X_s,Y_s))ds\right]\!.
		\end{equation}
		Since $F-\lambda(1+y)u\geq0$ we can pass to the limit in the right hand side of \eqref{relationmgloc} thanks to the monotone convergence theorem.  Recall now that an adapted right continuous process $(Z_{t})_{t \geq 0}$ is said to be 
		of class $\mathcal{D}$ if the family $(Z_{\tau})_{\tau \in \mathcal{T}_{0,\infty} }$, where $\mathcal{T}_{0,\infty}$ is the set of all stopping times with values in $[0,\infty)$,  is uniformly integrable. Moreover, recall that $0\leq u(t,x,y)\leq \Phi(x,y)=C_T(e^{x-\frac{\rho \kappa\theta}{\sigma}t}+e^{Ly-\kappa\theta L t})$.  The discounted and dividend adjusted price process $(e^{-(r-\delta)t}S_t)_t=(e^{X_t-\frac{\rho\kappa\theta}{\sigma}t})_t$  is a martingale (we refer to \cite{Kr} for an analysis of the martingale property in general affine stochastic volatility models), so we deduce that it is of class $\mathcal{D}$. On the other hand, we can prove that the process $(e^{LY_t-\kappa\theta t})_t$ is of class $\mathcal D$ following the same arguments used in Remark \ref{remarksemigroup}. Therefore, the process $(\Phi(t+s,X^{t,x,y}_s))_{s\in[t,T]}$ is of class $ \mathcal{D}$ for every $(t,x,y)\in[0,T]\times\R\times [0,\infty)$.
		So we can pass to the limit in the left hand side of \eqref{relationmgloc} and  we get that $\lim_{n\rightarrow \infty }\E_{x,y}[u(\tau\wedge \tau_n,X_{\tau\wedge \tau_n}, Y_{\tau\wedge \tau_n})]= \E_{x,y}[u(\tau,X_{\tau}, Y_{\tau})]$. Therefore, passing to the limit as $n\rightarrow \infty$, we get
		\begin{equation*}
		\E_{x,y}[u(\tau,X_{\tau}, Y_{\tau})]=u(0,x,y)-\E_{x,y}\left[\int_0^{\tau} (  F(s,X_s,Y_s)-\lambda(1+Y_s)u(s,X_s,Y_s))ds\right], 
		\end{equation*}
		for every $\tau\in\mathcal{T}_{0,T}$.
		Recall that $F -\lambda(1+y)u\geq0$, so the process $ u(t,X_t,Y_t)$ is actually a supermartingale. Since $u\geq \psi$, we deduce directly from the definition of Snell envelope that $u(t,X_t,Y_t) \geq u^*(t,X_t,Y_t)  $ a.e. for $t\in [0,T]$. 
		
		In order to show the opposite inequality, we consider the so called continuation region 
		$$
		\mathcal{C}=\{ (t,x,y)\in [0,T)\times \R\times [0,\infty): u(t,x,y)>\psi(t,x,y)   \},
		$$ 
		its $t$-sections
		$$
		\mathcal{ C}_t=\{(x,y)\in 	\R\times [0,\infty) : (t,x,y)\in \mathcal C  \}, \qquad t\in [0,T),
		$$
		and the stopping time 
		\begin{align*}
		\tau_t=\inf\{ s \geq t : (s,X_s,Y_s)\notin \mathcal{C} \}=\inf\{ s \geq t : u(s,X_s,Y_s)=\psi(s,X_s,Y_s) \}.
		\end{align*}
		Note that $u(x,X_s,Y_s)>\psi(s,X_s,Y_s)$ for $t\leq s < \tau_t$. Moreover, recall  that $(F-\lambda(1+y)u,\psi-u)=0$ a.e., so $Leb\{ (x,y)\in \mathcal{C}_t :  F-\lambda(1+y)u \neq 0 \}= 0 \, dt$ a.e.. Since the two dimensional diffusion $(X,Y)$ has a density, we deduce that  $\E\left[ F(s,X_s,Y_s)-\lambda(1+Y_s)u(s,X_s,Y_s)\ind{\{(X_s,Y_s)\in \mathcal C_s \}}  \right]\\=0$, and so $F(s,X_s,Y_s)-\lambda(1+Y_s)u(s,X_s,Y_s) = 0 $ $ds,\ d\P- a.e. $ on $\{s<\tau_t\}$. Therefore,
		$$
		\E\left[    u(\tau_t,X_{\tau_t},Y_{\tau_t}) \right]= 	\E\left[    u(t,X_t,Y_t) \right],
		$$
		and, since $u(\tau_t,X_{\tau_t},Y_{\tau_t}) = \psi(\tau_t,X_{\tau_t},Y_{\tau_t})$ thanks to the continuity of $u$ and $\psi$,
		$$
		E\left[  u(t,X_t,Y_t) \right]= 	\E\left[    \psi(\tau_t,X_{\tau_t},Y_{\tau_t}) \right]\leq 	 	\E\left[    u^*(t,X_{t},Y_{t}) \right],
		$$
		so that $u(t,X_t,Y_t) = u^*(t,X_t,Y_t)  $  a.e..  With the same arguments  we can prove that $u(t,x,y)=u^*(t,x,y)$ and this concludes the proof.
	\end{proof}
	
	\subsubsection {Weaker assumptions on $\psi$}
	The last step is to establish the equality $u=u^*$ under weaker assumptions on $\psi$, so proving Theorem \ref{theorem2}. 
	\begin{proof}[Proof of Theorem \ref{theorem2}]
		First assume that there exists a sequence $(\psi_n)_ {n	\in \N}$ of continuous functions on $[0,T]	\times \R\times[0,\infty)$ which converges uniformly to $\psi$ and such that, for each $n\in \N$,  $\psi_n$ satisfies the assumptions of Proposition \ref{identification}.
		For every $n\in \N$,  we set  $u_n=u_n(t,x,y)$     the  unique solution of the variational inequality \eqref{variationalinequality} with final condition $u_n(T,x,y)=\psi_n(T,x,y)$ and $u_n^*(t,x,y)= \sup_{\tau\in\T_{t,T}}\E[\psi_n(\tau,X_\tau^{t,x,y},Y^{t,y}_\tau)]$. Then, thanks to Proposition \ref{identification}, for every $n\in\N$ we have
		$$
		u_n(t,x,y)=u_n^*(t,x,y) \qquad \mbox{ on } [0,T]\times \bar{\mathcal{O}}.
		$$
		Now, the left hand side  converges to $u(t,x,y)$ thanks to the Comparison Principle. As regards the right hand side, 
		$$
		\sup_{\tau \in \mathcal{T}_{t,T}}\E \left[   \psi_n(\tau,X_\tau^{t,x,y}, Y_\tau^{t,x,y})     \right]\rightarrow \sup_{\tau \in \mathcal{T}_{t,T}} \E \left[ e^{-r(\tau-t)}  \psi(\tau,X_\tau^{t,x,y}, Y_\tau^{t,x,y})     \right]
		$$
		thanks to the uniform convergence of $\psi_n$ to $\psi$.

		Therefore, it is enough to prove that, if $\psi$ satisfies Assumption  $\mathcal{H}^*$, then it is the uniform limit of  a sequence of functions $\psi_n$ which satisfy the assumptions of Proposition \ref{identification}. 
		This can be done following the very same arguments of \cite[Lemma 3.3]{JLL} so we omit  the technical details (see \cite{T}).
	\end{proof}
	
	%
	%
	
	\section{Appendix: Proof of Proposition \ref{propsg2}}\label{sect-appendix-var}
	The proof of  Proposition  \ref{propsg2}  can be carried out following the very same lines of the proof of Proposition \ref{coercive variational inequality}.  For this reason, we retrace here only the main steps of the proof.
	So, the first step is to solve the following truncated coercive problem.
	\begin{proposition}\label{propsg1}
		Assume $\lambda\geq \frac{\delta_1}{2}+\frac{K_1^2}{2\delta_1}$. For every $\psi\in V$,
		$f\in L^2_{loc}(\R^+,H)$
		and $M>0$, there exists a unique function $u^{(M)}\in L^2_{loc}(\R^+,V)$, such that
		$u^{(M)}_t\in L^2_{loc}(\R^+,H)$, $u^{(M)}(0)=\psi$ and
		\[
		(u^{(M)}_t,v)_H+a^{(M)}_\lambda(u^{(M)},v)=(f,v)_H,\qquad v\in V.
		\]
		Moreover, for every $t\geq 0$,
		\begin{equation}
		\|u^{(M)}(t)\|_H^2+\frac{\delta_1}{2}\int_0^t\|u^{(M)}(s)\|^2_Vds\leq \|\psi \|^2_H
		+\frac{2}{\delta_1} \int_0^t \|f(s)\|_H^2ds \label{estimL2}
		\end{equation}
		and
		\begin{equation}\label{estimL2bisM}
		\begin{split}
		\frac{1}{2}& \int_0^t\|u^{(M)}_t(s)\|^2_Hds+\frac{\delta_1}{4}\|u^{(M)}(t)\|^2_V \\&\leq \frac{1}{2}\bar{a}_\lambda(\psi,\psi)+\frac{1}{2}\int_0^t\|f(s)\|^2_H
		ds
		+K_1\int_0^t ds 
		\int\!\!\int y\wedge M |\nabla u^{(M)}(s)\|u^{(M)}_t(s)| d\m.
		\end{split}
		\end{equation}
	\end{proposition}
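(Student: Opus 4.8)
The plan is to treat this as a linear, forward-in-time parabolic variational problem and solve it by the Galerkin method, following the lines of the proof of Proposition~\ref{penalizedcoercivetruncatedproblem} but with two simplifications: there is now neither an obstacle nor a penalizing term, so the equation is genuinely linear, and the coefficients of $\tilde{a}^{(M)}$ are bounded thanks to the truncation at level $M$. The starting point is Lemma~\ref{coercivity}, which ensures that $a_\lambda^{(M)}$ is continuous and coercive on $V$ with $a_\lambda^{(M)}(u,u)\geq \tfrac{\delta_1}{2}\|u\|_V^2$ and with constants independent of $M$. This places us in the classical setting of parabolic variational equations, so I would carry out the Galerkin argument mainly in order to extract the two explicit energy estimates \eqref{estimL2} and \eqref{estimL2bisM}.

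First I would introduce an increasing sequence $(V_j)_j$ of finite-dimensional subspaces of $V$ with dense union and $\psi\in V_j$ for all $j$, and look for $u_j:\R^+\to V_j$ solving $(\partial_t u_j,v)_H+a_\lambda^{(M)}(u_j,v)=(f,v)_H$ for every $v\in V_j$, with $u_j(0)=\psi$. Since this is a linear system of ODEs in $V_j$ with an $L^2_{loc}$ source, the Cauchy--Lipschitz (Carath\'eodory) theorem yields a unique global solution on each $[0,T]$. The a priori bounds then come from the two standard test functions. Choosing $v=u_j(t)$ and integrating in time gives \eqref{estimL2} from coercivity and Young's inequality (with the $\|u_j\|_H^2$ term absorbed into $\|u_j\|_V^2$). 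Choosing $v=\partial_t u_j(t)$ and using the symmetry of $\bar a_\lambda$ to write $\bar a_\lambda(u_j,\partial_t u_j)=\tfrac12\tfrac{d}{dt}\bar a_\lambda(u_j,u_j)$, then integrating from $0$ to $t$ and absorbing $(f,\partial_t u_j)$, produces \eqref{estimL2bisM}, in which the non-symmetric first-order contribution is kept on the right-hand side as $K_1\int_0^t\!\into (y\wedge M)|\nabla u_j|\,|\partial_t u_j|\,d\m\,ds$, while $\tfrac12\bar a_\lambda(u_j(t),u_j(t))\geq \tfrac{\delta_1}{4}\|u_j(t)\|_V^2$.

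The main obstacle is precisely the control of this last term, that is of $\tilde a^{(M)}(u_j,\partial_t u_j)$, which is not a total time derivative. Here the truncation is essential: since $(y\wedge M)\leq y$, the factor $\sqrt{y\wedge M}\,|\nabla u_j|$ is dominated by $\|u_j\|_V$, whereas $\sqrt{y\wedge M}\,|\partial_t u_j|$ carries a weight bounded by $\sqrt{M}$, so Young's inequality absorbs a small multiple of $\|\partial_t u_j\|_H^2$ into the left-hand side at the cost of an $M$-dependent constant. This gives bounds on $u_j$ in $L^\infty_{loc}(\R^+;V)$ and on $\partial_t u_j$ in $L^2_{loc}(\R^+;H)$ that are uniform in $j$; the uniformity in $M$, which fails at this stage, is not needed here and is recovered separately through the weighted estimates of Lemma~\ref{lemma_estim}. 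By weak compactness I would extract a subsequence with $u_j\rightharpoonup u^{(M)}$ in $L^2_{loc}(\R^+;V)$ and $\partial_t u_j\rightharpoonup \partial_t u^{(M)}$ in $L^2_{loc}(\R^+;H)$; because every term of the equation is linear in the unknown, the passage to the limit is immediate, and \eqref{estimL2}, \eqref{estimL2bisM} survive by weak lower semicontinuity.

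Uniqueness is then a direct consequence of linearity: if $u^{(M)}_1,u^{(M)}_2$ are two solutions, their difference $w$ solves the homogeneous equation with $w(0)=0$, and testing with $v=w(t)$ yields $\tfrac12\tfrac{d}{dt}\|w\|_H^2+\tfrac{\delta_1}{2}\|w\|_V^2\leq 0$, so $w\equiv 0$ by Gronwall's lemma. This completes Proposition~\ref{propsg1}; Proposition~\ref{propsg2} is then obtained by letting $M\to\infty$, using that the bound \eqref{estimL2} is independent of $M$ and that $(y\wedge M)\uparrow y$, so that the extra integral in \eqref{estimL2bisM} is controlled uniformly and the limit satisfies the untruncated variational equation.
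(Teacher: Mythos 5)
Your proposal is correct and follows essentially the same route as the paper: a Galerkin approximation, the two test functions $v=u_j$ and $v=\partial_t u_j$, the symmetry of $\bar a_\lambda$ to produce the total time derivative, the truncated first-order term kept on the right-hand side and absorbed via Young's inequality with an $M$-dependent constant, and weak compactness to pass to the limit in $j$. The only addition is your explicit uniqueness argument, which the paper leaves implicit but which is the standard linear/Gronwall step you describe.
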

	\begin{proof}
		Fix $\psi\in V$ and $f\in L^2_{loc}(\R^+,H)$. 	Let $(V_j)_{j}$ be an increasing sequence of subspaces of $V$ with finite dimension such that $\bigcup_j V_j$ is dense in $V$ and $\psi\in V_0$ . For every $j$,  denote by $u_j$ the unique solution of the differential equation
		\[
		\left(\frac{\partial u_j}{\partial t},v\right)_H+a^{(M)}_\lambda(u_j,v)=(f,v)_M,\qquad v\in V_j,
		\]
		with $u_j(0)=\psi$. 
		
		Taking $v=u_j$ and using the inequality $a^{(M)}_\lambda(u,u)\geq \frac{\delta_1}{2}\|u\|_V$, we get
		\begin{eqnarray*}
			\left(\frac{\partial u_j}{\partial t},u_j\right)_H+a^{(M)}_\lambda(u_j,u_j)&=&(f,u_j)_H\\
			\frac{1}{2}\frac{d}{dt}\|u_j(t)\|_H^2+a^{(M)}_\lambda(u_j(t),u_j(t))&=&(f(t),u_j(t))_H\\
			\frac{1}{2}\frac{d}{dt}\|u_j(t)\|_H^2+\frac{\delta_1}{2}\|u_j(t)\|_V^2&\leq &(f(t),u_j(t))_H.
		\end{eqnarray*}
		Integrating between  $0$ and $t$, we get
		\begin{eqnarray*}
			\frac{1}{2}\|u_j(t)\|_H^2+\frac{\delta_1}{2}\int_0^t \|u_j(s)\|_V^2ds&\leq &\frac{1}{2}\|\psi\|_H^2+ \int_0^t \|f(s)\|_H\|u_j(s)\|_Hds.
		\end{eqnarray*}
		So, if $f=0$,
		\begin{eqnarray*}
			\|u_j(t)\|_H^2+\delta_1\int_0^t \|u_j(s)\|_V^2ds&\leq &\|\psi\|_H^2,
		\end{eqnarray*}
		and, for $f\neq 0$,
		\begin{eqnarray*}
			\frac{1}{2}\|u_j(t)\|_H^2+\frac{\delta_1}{2}\int_0^t \|u_j(s)\|_V^2ds&\leq &\frac{1}{2}\|\psi\|_H^2+
			\frac{\delta_1}{4}\int_0^t\|u_j(s)\|_H^2ds + \frac{1}{\delta_1}\int_0^t \|f(s)\|^2_Hds.
		\end{eqnarray*}
		Therefore,
		\begin{eqnarray*}
			\frac{1}{2}\|u_j(t)\|_H^2+\frac{\delta_1}{4}\int_0^t \|u_j(s)\|_V^2ds&\leq &\frac{1}{2}\|\psi\|_H^2+
			\frac{1}{\delta_1}\int_0^t \|f(s)\|^2_Hds.
		\end{eqnarray*}
		By taking $v=\partial u_j/\partial t$, we get, using the symmetry of $\bar{a}_\lambda$,
		\begin{eqnarray*}
			\left\|\frac{\partial u_j}{\partial t}\right\|^2_H+a^{(M)}_\lambda\left(u_j,\frac{\partial u_j}{\partial t}\right)&=&\left(f,\frac{\partial u_j}{\partial t}\right)_H\\
			\left\|\frac{\partial u_j}{\partial t}\right\|^2_H+\bar{a}_\lambda \left(u_j,\frac{\partial u_j}{\partial t}\right)
			+\tilde a^{(M)}\left(u_j,\frac{\partial u_j}{\partial t}\right)&=&\left(f,\frac{\partial u_j}{\partial t}\right)_H\\
			\left\|\frac{\partial u_j}{\partial t}\right\|^2_H+\frac{1}{2}\frac{d}{dt}\bar{a}_\lambda \left(u_j, u_j\right)
			+\tilde a^{(M)}\left(u_j,\frac{\partial u_j}{\partial t}\right)&=&\left(f,\frac{\partial u_j}{\partial t}\right)_H,
		\end{eqnarray*}
		and, integreting from $0$ to $t$,
		\begin{align*}
		&	\int_0^t  \left\|\frac{\partial u_j}{\partial t}(s)\right\|^2_Hds+
		\frac{1}{2}\bar{a}_\lambda \left(u_j(t), u_j(t)\right)= \frac{1}{2}\bar{a}_\lambda \left(\psi,\psi\right)
		+ \int_0^t \left(f(s),\frac{\partial u_j}{\partial t}(s)\right)_Hds
		\\&\qquad	- \int_0^t\tilde a^{(M)}\left(u_j(s), \frac{\partial u_j}{\partial t}(s)\right)_Hds.
		\end{align*}
		Therefore,
		\begin{align*}
		&	 	\int_0^t  \left\|\frac{\partial u_j}{\partial t}(s)\right\|^2_Hds+
		\frac{\delta_1}{4}\left\|u_j(t)\right\|_V^2\\&\leq \frac{1}{2}\bar{a}_\lambda \left(\psi,\psi\right)
		+ \int_0^t \left(f(s),\frac{\partial u_j}{\partial t}(s)\right)_Hds
		+K_1\int_0^t ds \into y\wedge M |\nabla u_j(s,.)|\left|\frac{\partial u_j}{\partial t}(s, .)\right|d\m\\
		&\leq \frac{1}{2}\bar{a}_\lambda \left(\psi,\psi\right)
		+ \int_0^t \|f(s)\|_H\left\|\frac{\partial u_j}{\partial t}(s)\right\|_Hds\\&\qquad
		+\int_0^t ds\into\!\left( \frac{K_1y}{2\zeta} |\nabla u_j(s,.)|^2+
		\frac{K_1M\zeta}{2} \left|\frac{\partial u_j}{\partial t}(s, .)\right|^2\right)d\m\\
		&\leq \frac{1}{2}\bar{a}_\lambda \left(\psi,\psi\right)
		+ \int_0^t \|f(s)\|_H\left\|\frac{\partial u_j}{\partial t}(s)\right\|_Hds
		+\frac{K_1}{2\zeta}\int_0^t  \|u_j(s)\|_V^2ds+
		\frac{K_1M}{2}\zeta \int_0^t \!  \left\|\frac{\partial u_j}{\partial t}(s)\right\|^2_H\!ds.         
		\end{align*}
		Then the assertion follows by passing to the limit as $j$ tends to infinity and by using the estimates above.
	\end{proof}
	Then, we have the following Lemma.
	\begin{lemma} If, in addiction to the assumptions of Proposition \ref{propsg1} we also assume $\sqrt{1+y}f\in L^2_{loc}(\R^+,H)$, we have 
		\begin{eqnarray*}
			\frac{1}{4} \int_0^t\|u^{(M)}_t(s)\|^2_Hds+\frac{\delta_1}{4}\|u^{(M)}(t)\|^2_V& \leq& \frac{1}{2}\bar{a}_\lambda(\psi,\psi)+\frac{1}{2}\int_0^t\|f(s)\|^2_H
			ds\\
			&&+
			\frac{4K_1^2K_3}{\delta_1}\left( \|\sqrt{1+y}\psi\|_H^2+\int_0^t ds \|\sqrt{1+y}f(s)\|_H^2\right).
		\end{eqnarray*}
	\end{lemma}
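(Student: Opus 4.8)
The plan is to upgrade the estimate \eqref{estimL2bisM} of Proposition \ref{propsg1} into one whose constants are independent of $M$, the only obstruction being its last term $K_1\int_0^t ds\into (y\wedge M)|\nabla u^{(M)}(s)||u^{(M)}_t(s)|\,d\m$. Controlling it naively by Cauchy--Schwarz forces the appearance of $\int_0^t\|y|\nabla u^{(M)}|\|_H^2\,ds$, an $H^2$-type quantity which is not a priori bounded when only $\psi\in V$ is available. The heart of the proof will therefore be a separate, $M$-uniform bound on this quantity, obtained exactly as in the passage from \eqref{calculbis} to \eqref{uchapeau} in the proof of Lemma \ref{lemma_estim}, namely by testing the equation against the weighted function $v=u^{(M)}\varphi_n$, with $\varphi_n=1+y\wedge n$.

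First I would record the elementary bound $\int_0^t\|u^{(M)}(s)\|_V^2\,ds\le \frac{2}{\delta_1}\|\psi\|_H^2+\frac{4}{\delta_1^2}\int_0^t\|f(s)\|_H^2\,ds$, which is immediate from \eqref{estimL2}. Next, testing the equation of Proposition \ref{propsg1} with $v=u^{(M)}\varphi_n$ (legitimate since $\varphi_n$ and its derivatives are bounded) and using the lower bound $a^{(M)}_\lambda(u^{(M)},u^{(M)}\varphi_n)\ge \frac{\delta_1}{2}\|u^{(M)}\|_{V,n}^2-K_2\|u^{(M)}\|_V^2$ established in the computation of Lemma \ref{lemma_estim} (with $K_2=\frac{\sqrt{\rho^2\sigma^2+\sigma^4}}{2}$), I get
\begin{equation*}
\frac12\frac{d}{dt}\into (u^{(M)})^2\varphi_n\,d\m+\frac{\delta_1}{2}\|u^{(M)}\|_{V,n}^2\le K_2\|u^{(M)}\|_V^2+\left(f,u^{(M)}\varphi_n\right)_H.
\end{equation*}
Bounding $(f,u^{(M)}\varphi_n)_H\le \frac{\delta_1}{4}\|u^{(M)}\|_{V,n}^2+\frac{1}{\delta_1}\|\sqrt{1+y}f\|_H^2$ (using $\varphi_n\le 1+y$ and $\into (u^{(M)})^2\varphi_n\,d\m\le\|u^{(M)}\|_{V,n}^2$), the term $\frac{\delta_1}{4}\|u^{(M)}\|_{V,n}^2$ is absorbed on the left, so no Gronwall argument is needed. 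Integrating from $0$ to $t$, using $\into\psi^2\varphi_n\,d\m\le\|\sqrt{1+y}\psi\|_H^2$ together with the preliminary $V$-bound for the $K_2$ term, and finally letting $n\to\infty$ by monotone convergence, I obtain the $M$-uniform estimate
\begin{equation*}
\int_0^t\|y|\nabla u^{(M)}(s)|\|_H^2\,ds\le \frac{4K_3}{\delta_1}\left(\|\sqrt{1+y}\psi\|_H^2+\int_0^t\|\sqrt{1+y}f(s)\|_H^2\,ds\right),
\end{equation*}
where I have used $y^2\le(1+y)y$ to extract $\|y|\nabla u^{(M)}|\|_H^2$ from $\|u^{(M)}\|_{V,n}^2$, and $K_3=\delta_0+K_1+K_2+\lambda$.

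Finally I would return to \eqref{estimL2bisM} and estimate the offending term by Cauchy--Schwarz and Young, using $y\wedge M\le y$,
\begin{equation*}
K_1\into (y\wedge M)|\nabla u^{(M)}||u^{(M)}_t|\,d\m\le K_1\|y|\nabla u^{(M)}|\|_H\|u^{(M)}_t\|_H\le \frac14\|u^{(M)}_t\|_H^2+K_1^2\|y|\nabla u^{(M)}|\|_H^2.
\end{equation*}
Integrating in time, the term $\frac14\int_0^t\|u^{(M)}_t\|_H^2\,ds$ is absorbed into the $\frac12\int_0^t\|u^{(M)}_t\|_H^2\,ds$ on the left-hand side of \eqref{estimL2bisM}, leaving $\frac14\int_0^t\|u^{(M)}_t\|_H^2\,ds$, and substituting the $M$-uniform bound of the previous step for $K_1^2\int_0^t\|y|\nabla u^{(M)}|\|_H^2\,ds$ yields exactly the claimed inequality. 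The main obstacle, as anticipated, is the second step: one cannot control $\|y|\nabla u^{(M)}|\|_H$ through the bare coercivity of $a^{(M)}_\lambda$, and the weighted test function $u^{(M)}\varphi_n$ is essential precisely because the extra cross-term it generates (proportional to $\varphi_n'=\ind{\{y\le n\}}$) is confined to $\{y\le n\}$ and is therefore dominated by the already-controlled $\|u^{(M)}\|_V^2$ rather than by an $H^2$-norm of the data, which is unavailable here.
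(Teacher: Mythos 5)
Your proof is correct and follows essentially the same route as the paper's: the paper also obtains the missing $M$-uniform control of the weighted gradient by testing the equation against a weighted copy of $u^{(M)}$ (it uses the weight $\phi_M=y\wedge M$ directly, so no auxiliary limit $n\to\infty$ is needed), and then absorbs the troublesome term in \eqref{estimL2bisM} via Young's inequality with exactly your split $\zeta=1/(2K_1)$. The only discrepancy is cosmetic: the constant the paper calls $K_3$ here is $\max\bigl(\tfrac{1}{\delta_1},\tfrac12,\tfrac{2K_2}{\delta_1},\tfrac{4K_2}{\delta_1^2}\bigr)$, not the $K_3=\delta_0+K_1+K_2+\lambda$ of Lemma \ref{lemma_estim} that you reuse, but both depend only on the model parameters and not on $M$, which is all that matters.
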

	
	\begin{proof}
		
		Let us denote $\phi_M(x,y)=y\wedge M$. Since $\phi_M$ and its derivatives are bounded, if $u^{(M)}\in V$, $u^{(M)}\phi_M\in V$. Then, taking  $v=u^{(M)}\phi_M$, we get
		\begin{eqnarray*}
			\left(\frac{\partial u^{(M)}}{\partial t},u^{(M)}\phi_M\right)_H+a^{(M)}_\lambda(u^{(M)},u^{(M)}\phi_M)&=&\left(f,u^{(M)}\phi_M\right)_H,
		\end{eqnarray*}
		which,  setting $\phi'_M=\partial \phi_M/\partial y$, can be rewritten as
		\begin{align*}
		& 	\into \frac{\partial u^{(M)}}{\partial t}u^{(M)}\phi_M d\m+
		\into \frac{y}{2}\left(\frac{\partial u^{(M)}}{\partial x}\frac{\partial u^{(M)}}{\partial x}+
		\sigma^2 \frac{\partial u^{(M)}}{\partial y}\frac{\partial u^{(M)}}{\partial y}+
		2\rho\sigma \frac{\partial u^{(M)}}{\partial x}\frac{\partial u^{(M)}}{\partial y}
		\right) \phi_M d\m\\&\quad
		+ \into \frac{y}{2}\left(\rho\sigma \frac{\partial u^{(M)}}{\partial x}+
		\sigma^2\frac{\partial u^{(M)}}{\partial y}\right)u^{(M)}\phi'_Md	\m+\into y\left(\frac{\partial u^{(M)}}{\partial x}j_{\gamma,\mu}+
		\frac{\partial u^{(M)}}{\partial y}k_{\gamma,\mu}\right)u^{(M)}\phi_Md\m\\&\quad
		+      \lambda \into (1+y)(u^{(M)})^2\phi_Md\m=(f,u^{(M)}\phi_M)_H.
		\end{align*}
		Then, by using $0\leq \phi'_M\leq \ind{\{y\leq M\}}$, 
		\begin{align*}
		&	\frac{1}{2}\frac{d}{dt}\into (u^{(M)})^2\phi_M d\m+
		\delta_1 \into y\left|\nabla u^{(M)}
		\right|^2 \phi_M d\m+\lambda \into (1+y)(u^{(M)})^2\phi_Md\m 
		\\
		&\quad\leq 
		(f,u^{(M)}\phi_M)_H+K_1\into y\left|\nabla u^{(M)}\right\|u^{(M)}|\phi_Md\m+
		\into \frac{y}{2}\left|\rho\sigma \frac{\partial u^{(M)}}{\partial x}+
		\sigma^2\frac{\partial u^{(M)}}{\partial y}\right\|u^{(M)}|\phi'_Md\m\\
		&\quad \leq
		(f,u^{(M)}\phi_M)_H+K_1\into y\left|\nabla u^{(M)}\right\|u^{(M)}|\phi_Md\m+
		\frac{\sqrt{\rho^2\sigma^2+\sigma^4}}{2}\into y\wedge M\left|\nabla u^{(M)}\right| |u^{(M)} |d\m\\
		&\quad \leq 
		(f,u^{(M)}\phi_M)_H+\frac{K_1\zeta}{2}\into y\left|\nabla u^{(M)}\right|^2\phi_Md\m
		+\frac{K_1}{2\zeta}\into y\left| u^{(M)}\right|^2\phi_Md\m\\
		&\qquad+
		\frac{\sqrt{\rho^2\sigma^2+\sigma^4}}{2}\into y\wedge M\left|\nabla u^{(M)}\right| |u^{(M)} |d\m.
		\end{align*}
		By taking $\zeta=\delta_1/K_1$ and noting that  $\into y\wedge M\left|\nabla u^{(M)}\right| |u^{(M)} |d\m\leq \|u^{(M)}\|_V^2$, we get
		\begin{align*}
		&\frac{1}{2}\frac{d}{dt}\into (u^{(M)})^2\phi_M d\m+
		\frac{\delta_1}{2} \into y\left|\nabla u^{(M)}
		\right|^2 \phi_M d\m+\left(\lambda -\frac{K_1^2}{2\delta_1}\right)\into (1+y)(u^{(M)})^2\phi_Md\m\\ &\quad \leq
		(f,u^{(M)}\phi_M)_H       +K_2\|u^{(M)}\|_V^2
		\end{align*}
		with $K_2=\frac{\sqrt{\rho^2\sigma^2+\sigma^4}}{2}$
		and, by using  $\lambda\geq \frac{\delta_1}{2}+\frac{K_1^2}{2\delta_1}$ and integrating from $0$ to $t$,
		\begin{align*}
		&	\frac{1}{2}\into (u^{(M)})^2(t,.)\phi_M d\m+
		\frac{\delta_1}{2}\int_0^t ds
		\into \left(y\left|\nabla u^{(M)}(s)
		\right|^2 +(1+y)(u^{(M)})^2(s)\right)\phi_Md\m\\&\quad \leq
		\int_0^t  (f(s),u^{(M)}(s)\phi_M)_Hds
		+\frac{1}{2}\into \psi^2\phi_M d\m
		+K_2\int_0^t ds\|u^{(M)}(s) \|_V^2d\m.
		\end{align*}
		We have, for every $\zeta>0$,
		\begin{equation*}
		\int_0^t (f(s),u^{(M)}(s)\phi_M)_Hds\leq
		\frac{\zeta}{2}\int_0^t ds\into \phi_M\left|u^{(M)}(s)\right|^2d\m
		+\frac{1}{2\zeta}\int_0^t ds\into \phi_M\left|f(s)\right|^2d\m
		\end{equation*}
		and, taking $\zeta=\delta_1/2$,
		\begin{align*}
		&	\frac{1}{2}\into (u^{(M)})^2(t,.)\phi_M d\m+
		\frac{\delta_1}{4}\int_0^t ds
		\into \left(y\left|\nabla u^{(M)}(s)
		\right|^2 +(1+y)(u^{(M)})^2(s)\right)\phi_Md\m\\ & \quad \leq
		\frac{1}{\delta_1}\int_0^t ds\into \phi_M\left|f(s)\right|^2d\m
		+\frac{1}{2}\into \psi^2\phi_M d\m
		+K_2\int_0^t \|u^{(M)}(s) \|_V^2ds.
		\end{align*}
		Then, by using \eqref{estimL2},
		\begin{align*}
		&	\frac{1}{2}\into (u^{(M)})^2(t,.)\phi_M d\m+
		\frac{\delta_1}{4}\int_0^t ds
		\into \left(y\left|\nabla u^{(M)}(s)
		\right|^2 +(1+y)(u^{(M)})^2(s)\right)\phi_Md\m\\&\quad\leq
		\frac{1}{\delta_1}\int_0^t ds\into \phi_M\left|f(s)\right|^2d\m
		+\frac{1}{2}\into \psi^2\phi_M d\m
		+\frac{2K_2}{\delta_1}\|\psi \|_H^2+\frac{4K_2}{\delta_1^2}\int_0^t\|f(s)\|_H^2  ds\\
		\quad	&\quad \leq K_3\left( \|\sqrt{1+y}\psi\|_H^2+\int_0^t ds \|\sqrt{1+y}f(s)\|_H^2\right),
		\end{align*}
		where $K_3=\max\left(\frac{1}{\delta_1}, \frac{1}{2}, \frac{2K_2}{\delta_1},\frac{4K_2}{\delta_1^2}\right)$.
		Note that $K_3$ does not depend on $M$. We deduce from the last inequality that
		\begin{align*}
		\int_0^t ds
		\into \left|\nabla u^{(M)}(s)
		\right|^2 \phi_M^2d\m\leq \frac{4K_3}{\delta_1}\left( \|\sqrt{1+y}\psi\|_H^2+\int_0^t ds \|\sqrt{1+y}f(s)\|_H^2\right)
		\end{align*}
		and, by using \eqref{estimL2bisM},
		\begin{align*}
		&	\frac{1}{2} \int_0^t\|u^{(M)}_t(s)\|^2_Hds+\frac{\delta_1}{4}\|u^{(M)}(t)\|^2_V\\& \leq \frac{1}{2}\bar{a}_\lambda(\psi,\psi)+\frac{1}{2}\int_0^t\|f(s)\|^2_H
		ds     
		+K_1\int_0^t ds 
		\into y\wedge M |\nabla u^{(M)}(s)\|u^{(M)}_t(s)| d\m\\
		&\leq
		\frac{1}{2}\bar{a}_\lambda(\psi,\psi)+\frac{1}{2}\int_0^t\!\|f(s)\|^2_H
		ds     
		+\frac{K_1\zeta}{2}\int_0^t \!ds \!
		\into  \!\!|u^{(M)}_t(s)|^2 d\m+
		\frac{K_1}{2\zeta}\int_0^t \!ds 
		\!\!	\into \phi_M^2 |\nabla u^{(M)}(s)|^2 d\m
		\end{align*}
		By taking $\zeta=1/(2K_1)$, we get
		\begin{align*}
		&\qquad	\frac{1}{4} \int_0^t\|u^{(M)}_t(s)\|^2_Hds+\frac{\delta_1}{4}\|u^{(M)}(t)\|^2_V \\& \leq \frac{1}{2}\bar{a}_\lambda(\psi,\psi)+\frac{1}{2}\int_0^t\|f(s)\|^2_H
		ds+	\frac{4K_1^2K_3}{\delta_1}\left( \|\sqrt{1+y}\psi\|_H^2+\int_0^t ds \|\sqrt{1+y}f(s)\|_H^2\right).
		\end{align*}
	\end{proof}
	Now,  in order to prove Proposition \ref{propsg2}, it is enough to let $M$ go to infinity.
	
	\chapter{American option price properties in Heston type models}\label{chapter-art2}

	\section{Introduction}

	One of the strengths of the Black and Scholes type models relies in their analytical tractability. 
	A large number of papers have been devoted to the pricing of European and American options and to the study of the regularity properties of the price in this framework.  
	
	Things become more complicated in the case of stochastic volatility models. Some properties of European options were studied, for example, in \cite{Oa} but if we consider American options, as far as we know, the existing literature is rather poor. One of the main reference  is a paper by Touzi \cite{T}, in which  the author studies some  properties of a standard American put option in a class of stochastic volatility models under  classical assumptions, such as the  uniform ellipticity of the model. 
	
	However, the assumptions in \cite{T}  are not satisfied by the well known  Heston  model because of its degenerate nature and some of the analytical techniques used in \cite{T} cannot be directly applied. 
	
	This chapter, which is extracted from \cite{LT2}, is devoted to the study of some properties of the American option price in the Heston model. Our main aim is to extend some well known results in the Black and Scholes world to the Heston type stochastic volatility models. We do it  mostly by using probabilistic techniques.
	
	In more details, the chapter is organized as follows. In Section \ref{sect-cap2model} we set up our new notation. In Section \ref{sect-monotony}, we prove that, if the payoff function is convex and satisfies some regularity assumptions, the American option value function is increasing with respect to the volatility variable. This topic was already addressed in \cite{AOJ} with an elegant probabilistic approach, under the assumption that the coefficients of the model satisfy the well known Feller condition. Here, we prove it without imposing  conditions on the coefficients.
	
	Then, in Section \ref{sect-put} we focus on the standard American put option. We first generalise to the Heston model the well known notion of  critical price or exercise boundary  and we study some properties of this function. Then we prove that the American option price is strictly convex in the continuation region with respect to the stock price. This result was already proved in \cite{T} for uniformly elliptic stochastic volatility  by using PDE techniques.  Here, we extend the result to the degenerate Heston model by using a probabilistic approach.  We also give an explicit formulation of the early exercise premium, that  is the difference in price
	between an American option and an otherwise identical European option, and we do it by using  results first  introduced in \cite{J}. Finally,  we  provide  a weak formulation of the so called smooth fit property. 
	The chapter ends with an appendix, which is devoted to the proofs of some technical results.
	\section{Notation}\label{sect-cap2model}
	Recall that in the Heston model we have
	\begin{equation}\label{hest2volta}
	\begin{cases}
	\frac{dS_t}{S_t}=(r-\delta)dt + \sqrt{Y_t}dB_t,\qquad&S_0=s>0,\\ dY_t=\kappa(\theta-Y_t)dt+\sigma\sqrt{Y_t}dW_t, &Y_0=y\geq 0,
	\end{cases}
	\end{equation}
	where $B$ and $W$ denote two correlated Brownian motions with correlation coefficient  $ \rho \in (-1,1).
	$	
	Through this chapter we denote by $\L$ the infinitesimal generator of the pair $(S,Y)$, that is the differential operator given by
	\begin{equation}\label{L}
	\L= \frac y 2 \left(s^2 \frac{\partial^2}{\partial s^2 } +2s\rho \sigma  \frac{\partial^2}{\partial s \partial y }+ \sigma^2  \frac{\partial^2}{\partial y^2 }\right)+ \left(r-\delta \right)s\frac{\partial}{\partial s} +\kappa(\theta - y)\frac{\partial}{\partial y}.
	\end{equation}
	Let  $(S^{t,s,y}_u,Y^{t,y}_u)_{u\in [t,T]}$ be  the solution of \eqref{hest2volta} which starts  at time $t$ from the position  $(s,y)$. When the initial time is $t=0$ and there is no ambiguity, we will often write $(S^{s,y}_u,Y^{y}_u)$ or directly $(S_u,Y_u)$  instead of $(S^{0,s,y}_u,Y^{0,y}_u)$.
	We recall that the price of an American option with a nice enough payoff $(\varphi(S_t))_{t\in [0,T]} $ and maturity $T$ is given by $P_t=P(t,S_t,Y_t)$, where 
	\begin{equation*}
	P(t,s,y)=\sup_{\tau \in\mathcal{T}_{t,T}}\E[e^{-r(\tau-t )}\varphi(S^{t,s,y}_\tau)],
	\end{equation*}
	$\mathcal{T}_{t,T}$ being  the set of the stopping times with values in $[t,T]$.
	
	It  will be useful in this chapter to consider  the log-price process, so we set  $X_t=\log S_t$. In this case, recall that the pair $(X,Y)$  evolves according to
	\begin{equation}
	\begin{cases}\label{hest2}
	dX_t=\left(r-\delta-\frac{1}{2}Y_t\right)dt + \sqrt{Y_t}dB_t, \qquad&X_0=x=\log s\in\R,\\ dY_t=\kappa(\theta-Y_t)dt+\sigma\sqrt{Y_t}dW_t, &Y_0=y\geq 0,
	\end{cases}
	\end{equation}
	and has infinitesimal generator given by
	\begin{equation}\label{Llog}
	\tilde {\mathcal{L}}= \frac y 2 \left( \frac{\partial^2}{\partial x^2 } +2\rho \sigma  \frac{\partial^2}{\partial x \partial y }+ \sigma^2  \frac{\partial^2}{\partial y^2 }\right)+ \left(r-\delta-\frac y 2 \right)\frac{\partial}{\partial x} +\kappa(\theta - y)\frac{\partial}{\partial y}.
	\end{equation}
	
	With this change of variables, the American option price function is given by $u(t,x,y)=P(t,e^x,y)$, which can be rewritten as
	\begin{equation*}
	u(t,x,y)=\sup_{\tau \in \mathcal{T }_{t,T}}\E[e^{-r(\tau-t)}\psi(X^{t,x,y}_\tau)],
	\end{equation*}
	where $\psi(x)=\varphi(e^x)$.

	\section{Monotonicity with respect to the volatility}\label{sect-monotony}
	In this section we  prove the increasing feature of the option price with respect to the volatility variable under the assumption that the payoff function $\varphi$ is convex and satisfies some regularity properties. The same topic was addressed by Touzi in \cite{T}  for uniformly elliptic stochastic volatility models  and by Assing \textit{et al.} \cite{AOJ} for a class of models which includes the Heston model when the Feller condition is satisfied.
	
	For convenience we pass to the logarithm in the $s-$variable and we study the monotonicity of the function $u$. Note that the convexity assumption on the payoff function $\varphi\in  C^2(\R)$ corresponds to the condition $ \psi''-\psi'\geq 0$ for the function $\psi(x)=\varphi(e^x)$.
	
	Let us recall some standard notation. For $\gamma >0$ we introduce the following weighted Sobolev spaces 
	$$
	L^2(\R,e^{-\gamma|x|})=\left \{ u:\R\rightarrow\R:\|u\|_2^2=\int u^2(x)e^{-\gamma|x|}dx<\infty \right   \},
	$$
	$$
	W^{1,2}(\R,e^{-\gamma|x|})=\left \{ u\in L^2(\R,e^{-\gamma|x|}): \frac{\partial u }{\partial x}\in L^2(\R,e^{-\gamma|x|}) \right   \},
	$$
	$$
	W^{2,2}(\R,e^{-\gamma|x|})= \left  \{ u\in L^2(\R,e^{-\gamma|x|}): \frac{\partial u }{\partial x},\frac{\partial^2 u }{\partial x^2}\in L^2(\R,e^{-\gamma|x|})   \right  \}.
	$$
	\begin{theorem}\label{monotonie}
		Let  $\psi$ be a bounded function such that $\psi\in W^{2,2}(\R,e^{-\gamma |x|})\cap  C^2(\R)$ and   $\psi''-\psi'\geq 0$.
		Then the value function $u$ is nondecreasing with respect to the volatility variable.
	\end{theorem}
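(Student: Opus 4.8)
The plan is to work in the log-price formulation, where by Theorem~\ref{theorem2} the value function $u$ solves the obstacle problem associated with $\tilde{\mathcal L}$ and obstacle $\psi$, and where convexity of $\varphi$ becomes $\psi''-\psi'\ge 0$. Since $u$ is not smooth enough to be differentiated, I would regularise on two fronts. First, replace the singular constraint by a smooth penalisation: for $\varepsilon>0$ choose a penalising function $\zeta_\varepsilon$ of class $C^2$, nondecreasing and \emph{concave}, and consider $-\partial_t u_\varepsilon-\tilde{\mathcal L}u_\varepsilon+\zeta_\varepsilon(u_\varepsilon-\psi)=0$ with $u_\varepsilon(T)=\psi$; monotonicity and concavity of $\zeta_\varepsilon$ are exactly the features that will drive the sign propagation, and $u_\varepsilon\to u$ as $\varepsilon\to0$. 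Second, to handle the degeneracy of $\tilde{\mathcal L}$ at $y=0$, write $\tilde{\mathcal L}=y\Lambda+\Lambda_0$ with $\Lambda=\tfrac12\partial_{xx}+\rho\sigma\,\partial_{xy}+\tfrac{\sigma^2}2\partial_{yy}-\tfrac12\partial_x-\kappa\partial_y$ (constant coefficients) and $\Lambda_0=(r-\delta)\partial_x+\kappa\theta\partial_y$, and replace the factor $y$ by a smooth, bounded, nondecreasing $\rho_m(y)\uparrow y$. The operator $\tilde{\mathcal L}_m=\rho_m(y)\Lambda+\Lambda_0$ is then uniformly elliptic with bounded coefficients, and classical theory gives a smooth solution $u_{\varepsilon,m}$. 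Any discount contributes only a further nonnegative zeroth-order term, independent of $y$, and is harmless throughout.

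The heart of the argument is the propagation of the convexity-type inequality in the $x$-direction. Because the coefficients of $\tilde{\mathcal L}_m$ do not depend on $x$, differentiation in $x$ commutes with the operator, so $u_x:=\partial_x u_{\varepsilon,m}$ and $u_{xx}:=\partial_x^2u_{\varepsilon,m}$ solve the linearised equations, the penalisation contributing $\zeta_\varepsilon'(u-\psi)(u_x-\psi')$ and $\zeta_\varepsilon''(u-\psi)(u_x-\psi')^2+\zeta_\varepsilon'(u-\psi)(u_{xx}-\psi'')$ respectively. Setting $w:=u_{xx}-u_x$ and subtracting, I obtain
\[
-\partial_t w-\tilde{\mathcal L}_m w+\zeta_\varepsilon'(u-\psi)\,w=\zeta_\varepsilon'(u-\psi)(\psi''-\psi')-\zeta_\varepsilon''(u-\psi)(u_x-\psi')^2,
\]
with terminal datum $w(T)=\psi''-\psi'\ge0$. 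Here $\zeta_\varepsilon'\ge0$ together with $\psi''-\psi'\ge0$ makes the first source nonnegative, while $\zeta_\varepsilon''\le0$ makes the second nonnegative; since the zeroth-order coefficient is $\ge0$, the backward maximum principle yields $w=u_{xx}-u_x\ge0$.

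With the sign of $w$ in hand, I would differentiate the penalised equation in $y$ and study $\bar u:=\partial_y u_{\varepsilon,m}$. Using $\partial_y[\rho_m(y)\Lambda u]=\rho_m'(y)\Lambda u+\rho_m(y)\Lambda\bar u$ and that $\psi$ is independent of $y$, one finds that $\bar u$ solves a linear equation with terminal condition $\bar u(T)=0$; after absorbing the first-order pieces $\rho\sigma\,\partial_x\bar u$, $\tfrac{\sigma^2}2\partial_y\bar u$, $-\kappa\bar u$ coming from $\rho_m'(y)\Lambda u$ into the operator, the only remaining source is $\tfrac12\rho_m'(y)(u_{xx}-u_x)=\tfrac12\rho_m'(y)\,w$. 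Since $\rho_m'\ge0$ and $w\ge0$, this source is nonnegative, and a comparison principle in the spirit of Propositions~\ref{CompPrinc1}--\ref{CompPrinc3} gives $\partial_y u_{\varepsilon,m}\ge0$. Letting first $m\to\infty$ and then $\varepsilon\to0$, and using the stability of the regularised/penalised solutions together with the identification of $u$, I conclude that $u$ is nondecreasing in $y$.

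The sign propagation being clean, the real work is analytic: justifying that $u_{\varepsilon,m}$ may be differentiated twice in $x$ and once in $y$, and—crucially—obtaining a priori bounds \emph{uniform in $m$ and $\varepsilon$} so the limits can be taken. The delicate step is the estimate $\partial_y u_{\varepsilon,m}\in V$ uniformly, which I would get by a difference-quotient argument in $y$ (testing the equation for $h^{-1}(u_{\varepsilon,m}(\cdot,y+h)-u_{\varepsilon,m}(\cdot,y))$ against itself and invoking coercivity of $\tilde{\mathcal L}_m$), controlling the commutator $h^{-1}(\rho_m(y+h)-\rho_m(y))\Lambda u$ via the bound on $\|\Lambda u_{\varepsilon,m}\|_H$. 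Passing to the degenerate limit $\rho_m(y)\uparrow y$ must be handled with care exactly where ellipticity is lost, at $y=0$; but $u_y\ge0$ is a closed weak inequality and is preserved under the weak convergence furnished by these uniform estimates.
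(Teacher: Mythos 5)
Your core mechanism is exactly the right one, and it coincides with the paper's: differentiate the penalized equation twice in $x$, use that $\zeta_\varepsilon$ is nondecreasing and concave to propagate $w=u_{xx}-u_x\geq 0$ backwards from $w(T)=\psi''-\psi'\geq 0$ via the maximum principle, then differentiate in $y$ and observe that the only surviving source term is $\tfrac12\rho_m'(y)\,w\geq 0$. Indeed your $\rho_m(y)$ is precisely the paper's $f_n^2(y)$, so at the level of the regularized operator the two proofs are the same computation.

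The genuine gap is in the removal of the regularization. You regularize the \emph{operator} ($\rho_m(y)\Lambda+\Lambda_0$, uniformly elliptic for fixed $m$) and then must prove that $u_{\varepsilon,m}\to u_\varepsilon\to u$ pointwise, or at least in a topology preserving the inequality $\partial_y u\geq 0$. You flag this as ``delicate'' but do not supply it, and it cannot be waved away: the a priori bound $\|\Lambda u_{\varepsilon,m}\|_H\leq C$ that your difference-quotient argument needs is \emph{not} uniform in $m$ by ``standard estimates'', because the ellipticity constant of $\rho_m\Lambda+\Lambda_0$ degenerates as $m\to\infty$ exactly at $y=0$; and the identification of the limit with the value function $u$ requires a stability/uniqueness statement for the degenerate obstacle problem that is itself the hard part of Chapter~\ref{chapter-art1} (and is established there only for a transformed operator, in weighted spaces, under assumptions $\mathcal{H}^1$--$\mathcal{H}^2$). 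The paper sidesteps all of this by regularizing the \emph{SDE} instead: replacing $\sqrt{y}$ by a smooth, uniformly positive $f_n$ gives a nondegenerate diffusion $(X^n,Y^n)$ whose value function $u^n$ is handled by classical theory, and the convergence $u^n\to u$ is obtained purely probabilistically from $\sup_{t\leq T}|X^n_t-X_t|\to 0$ in probability together with the boundedness and continuity of the payoff (Lemma~\ref{lemmasup_diff} and Proposition~\ref{convP}); no uniform PDE estimates in the degenerate limit are needed, and ``nondecreasing in $y$'' passes to the pointwise limit for free. To complete your argument you would either have to prove the missing stability result for the degenerate limit $\rho_m(y)\uparrow y$, or reinterpret $\rho_m$ as the squared diffusion coefficient of an approximating SDE and argue probabilistically --- which is the paper's proof.
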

	In order to prove Theorem \ref{monotonie}, let us consider   a smooth approximation $f_n\in C^\infty(\R)$ of the function $f(y)=\sqrt{y^+}$, such that $f_n$ has  bounded derivatives, $1/n\leq f_n\leq n$, $f_n(y)$ is increasing in $y$, $f_n^2$ is Lipschitz continuous uniformly in $n$ and $f_n\rightarrow f$ locally uniformly  as $n\rightarrow \infty$. 
	
	Then, we consider  the sequence of SDEs
	\begin{equation}
	\begin{cases}\label{hestapprox}
	dX^n_t=\left(r-\delta-\frac{f_n^2(Y^n_t)} 2 \right) dt + f_n(Y^n_t)dB_t,\qquad &X^n_0=x,\\ dY^n_t=\kappa\left(\theta-f_n^2(Y^n_t)\right)dt+\sigma f_n(Y^n_t)dW_t, &Y_0^n=y.
	\end{cases}
	\end{equation}
	Note that, for every $n\in\N$, the diffusion matrix $a_n(y)=\frac 1 2 \Sigma_n(y) \Sigma_n(y)^t$, where 
	$$
	\Sigma_n(y) = \left(
	\begin{array}{cc}
	\sqrt{1-\rho^2}	f_n(y) &  \rho f_n(y) \\ 
	0 &  \sigma f_n(y)
	\end{array} 	\right),
	$$
	is uniformly elliptic.
	For any fixed $n\in\N$ the infinitesimal generator of the diffusion $(X^n,Y^n)$ is given by
	$$
	\tilde{ \mathcal{L}}^n= \frac {f_n^2(y)} 2\left( \frac{\partial^2}{\partial x^2} +2\rho\sigma \frac{\partial^2u}{\partial x \partial y}+\sigma^2 \frac{\partial^2}{\partial y^2}\right) +\left(r-\delta-\frac{f_n^2(y)} 2 \right) \frac{\partial }{\partial x} + \kappa\left(\theta-f_n^2(y)\right)\frac{\partial }{\partial y}
	$$
	and it is uniformly elliptic with  bounded coefficients.
	
	We will need the following result.
	\begin{lemma}\label{lemmasup_diff}
		For any $\lambda>0$, we have\begin{equation}
		\label{1} \lim_{n \rightarrow \infty} \P\left( \sup_{t \in [0,T]}|X^n_t-X_t|\geq \lambda \right)=0 \end{equation}  and\begin{equation}
		\label{2}
		\lim_{n \rightarrow \infty} \P\left(\sup_{t \in [0,T]}|Y^n_t-Y_t|\geq \lambda \right)=0 .\end{equation}  
	\end{lemma}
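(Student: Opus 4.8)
The plan is to run a Yamada--Watanabe type stability argument, exploiting that $Y$ and $Y^n$ are driven by the \emph{same} Brownian motion $W$ (and $X,X^n$ by the same $B$), so that the convergence is pathwise rather than merely in law. The key structural remark is that the CIR process $Y$ started at $Y_0=y\ge 0$ stays nonnegative, so $f(Y_t)=\sqrt{Y_t}$ and $f^2(Y_t)=Y_t$ almost surely; thus $(X,Y)$ is exactly the system obtained by replacing $f_n$ with $f=\sqrt{\,\cdot\,^+}$ in \eqref{hestapprox}. Setting $D^n_t:=Y^n_t-Y_t$, subtraction of the two volatility equations gives
\[
dD^n_t=\kappa\big(Y_s-f_n^2(Y^n_t)\big)\,dt+\sigma\big(f_n(Y^n_t)-\sqrt{Y_t}\big)\,dW_t,\qquad D^n_0=0 .
\]
All the difficulty is concentrated in the volatility component, since the square-root diffusion coefficient is only $1/2$-H\"older; once $Y^n\to Y$ is controlled, the $X$-component will follow easily.

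First I would estimate $\E|D^n_t|$ by the classical device of a sequence of smooth functions $\phi_m$ approximating $|\cdot|$ from below, with $\phi_m''=\psi_m$ supported where $a_m\le|\cdot|\le a_{m-1}$, $0\le\psi_m(u)\le \tfrac{2}{mu}$, and $|\phi_m'|\le 1$. Applying It\^o's formula to $\phi_m(D^n_t)$ and taking expectations kills the martingale term and leaves a drift contribution and an It\^o (second-order) contribution. For the drift I would split $Y_s-f_n^2(Y^n_s)=\big(f_n^2(Y_s)-f_n^2(Y^n_s)\big)+\big(Y_s-f_n^2(Y_s)\big)$, bounding the first piece by $L\,|D^n_s|$ using the uniform Lipschitz property of $f_n^2$ and leaving the second as a coefficient-mismatch term. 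For the It\^o term I would use $\big(f_n(Y^n_s)-\sqrt{Y_s}\big)^2\le 2\big(f_n-f\big)^2(Y^n_s)+2\big(f(Y^n_s)-f(Y_s)\big)^2$ together with $\big(\sqrt{a^+}-\sqrt{b^+}\big)^2\le|a-b|$; the last factor combines with $\phi_m''(D^n_s)\le \tfrac{2}{m|D^n_s|}$ to produce a harmless $O(1/m)$ contribution, while the mismatch factor $(f_n-f)^2(Y^n_s)$ is multiplied by a constant depending on $m$.

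The main obstacle is that $f_n\to f$ only \emph{locally} uniformly, so the mismatch term $\E\int_0^T(f_n-f)^2(Y^n_s)\,ds$ cannot be handled by a global supremum bound. Here I would introduce a truncation at level $R$: on $\{Y^n_s\le R\}$ the integrand is $\le\sup_{y\le R}(f_n-f)^2(y)\to 0$, while on $\{Y^n_s>R\}$ I would use the pointwise growth bound $f_n^2(y)\le C(1+y)$ (from the uniform Lipschitz constant and $f_n(0)\to 0$), reducing the tail to $\E\int_0^T(1+Y^n_s)\mathbf 1_{\{Y^n_s>R\}}\,ds$. This requires uniform-in-$n$ moment estimates $\sup_n\E\sup_{s\le T}(Y^n_s)^2<\infty$, which I would derive by a standard It\^o-plus-Gronwall computation, the strong mean reversion of the drift $\kappa(\theta-f_n^2(y))$ being favourable. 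Choosing $R$ large (uniformly in $n$) and then letting $n\to\infty$ makes the mismatch term vanish for fixed $m$; the coefficient-mismatch term $\int_0^T\E|Y_s-f_n^2(Y_s)|\,ds\to 0$ by dominated convergence using $f_n^2(Y_s)\to Y_s$. After Gronwall in $t$ and then $m\to\infty$ one gets $\E|D^n_t|\to 0$ for each $t$.

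Finally I would upgrade to the supremum and transfer to $X^n$. Having shown $\E\int_0^T(f_n(Y^n_s)-\sqrt{Y_s})^2\,ds\to 0$ and $\E\int_0^T|Y_s-f_n^2(Y^n_s)|\,ds\to 0$ along the way, a Burkholder--Davis--Gundy estimate applied to the martingale part of $D^n$ yields $\E\sup_{t\le T}|D^n_t|\to 0$, and Markov's inequality then gives \eqref{2}. For the log-price, subtracting the two $X$-equations gives
\[
d\big(X^n_t-X_t\big)=\tfrac12\big(Y_t-f_n^2(Y^n_t)\big)\,dt+\big(f_n(Y^n_t)-\sqrt{Y_t}\big)\,dB_t ,
\]
whose coefficients are precisely the quantities already shown to vanish in the relevant integrated sense; the same BDG-plus-Markov argument then produces \eqref{1}, completing the proof of Lemma \ref{lemmasup_diff}.
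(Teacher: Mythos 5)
Your proposal is correct and follows essentially the same route as the paper's proof: the Yamada--Watanabe smoothing functions $\phi_m$, It\^o's formula for $\phi_m(Y^n_t-Y_t)$ with the drift and second-order mismatch terms split exactly as you describe, uniform-in-$n$ moment bounds on $Y^n$ to control the $f_n\to f$ mismatch (the paper invokes dominated convergence where you make the truncation at level $R$ explicit), Gronwall, and then a BDG/Doob estimate on the stochastic integrals to upgrade to the supremum and to transfer the convergence to the $X$-component.
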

	The proof is inspired by the proof of uniqueness of the solution for the CIR process (see \cite[Section IV.3]{IW}). We postpone it to the Appendix.
	
	From now on, let us set $\E_{x,y}[\cdot]=\E[\cdot|(X_0,Y_0)=(x,y)]$.  For every $n \in \N$,	we consider the American value function with payoff $\psi$ and underlying diffusion $(X^n,Y^n)$,  that is
	$$
	u^n(t,x,
	y)=   \sup_{\tau \in \mathcal{T}_{0,T-t}} \E_{x,y} \left[   e^{-r\tau} \psi(X_\tau^{n})       \right],\qquad  (t,x,y) \in [0,T]\times \R \times [0,\infty).
	$$
	We prove that $u^n$ is actually an approximation of the function $u$, at least for bounded continuous payoff functions.
	\begin{proposition}\label{convP}
		Let $\psi$ be a bounded continuous function.
		Then, 
		$$
		\lim_{n \rightarrow \infty }|	u^n(t,x,y) -u(t,x,y) |=0,\qquad  (t,x,y) \in [0,T]\times \R \times [0,\infty).
		$$ 
	\end{proposition}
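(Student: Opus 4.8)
The plan is to exploit the coupling built into the construction: the approximating pairs $(X^n,Y^n)$ solving \eqref{hestapprox} are driven by the \emph{same} Brownian motions $B,W$ as $(X,Y)$, so both value functions are suprema over one common class $\mathcal{T}_{0,T-t}$ of stopping times on one common probability space. This lets me compare $u^n$ and $u$ stopping time by stopping time. First I would record the elementary reduction
\[
|u^n(t,x,y)-u(t,x,y)|
\;\le\; \sup_{\tau\in\mathcal{T}_{0,T-t}}\E_{x,y}\big[e^{-r\tau}\,|\psi(X^n_\tau)-\psi(X_\tau)|\big]
\;\le\; \sup_{\tau\in\mathcal{T}_{0,T-t}}\E_{x,y}\big[|\psi(X^n_\tau)-\psi(X_\tau)|\big],
\]
which uses the inequality $|\sup_\tau A(\tau)-\sup_\tau B(\tau)|\le \sup_\tau|A(\tau)-B(\tau)|$ together with $0\le e^{-r\tau}\le 1$. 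The whole problem is thereby reduced to showing that the right-hand side tends to $0$ as $n\to\infty$, \emph{uniformly over all stopping times} $\tau\le T-t$.

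Second, I would set up a localization argument. Fix $\epsilon>0$ and set $M=\sup_{\R}|\psi|<\infty$. Since $X$ has a.s.\ continuous paths on $[0,T]$, the quantity $\sup_{s\le T}|X_s|$ is a.s.\ finite, so I can pick $R>0$ with $\P_{x,y}(\sup_{s\le T}|X_s|>R)<\epsilon$; the crucial point is that this bound does not involve $\tau$. On the compact interval $[-R-1,R+1]$ the continuous function $\psi$ is uniformly continuous, which yields $\eta\in(0,1)$ such that $x_1,x_2\in[-R-1,R+1]$ with $|x_1-x_2|<\eta$ force $|\psi(x_1)-\psi(x_2)|<\epsilon$. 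Then, by Lemma \ref{lemmasup_diff}, I choose $N$ so that $\P_{x,y}(\sup_{s\le T}|X^n_s-X_s|\ge\eta)<\epsilon$ for all $n\ge N$.

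Third, I would combine these on the good event $B_n=\{\sup_{s\le T}|X^n_s-X_s|<\eta\}\cap\{\sup_{s\le T}|X_s|\le R\}$. On $B_n$, for every $\tau\le T$ we have $X_\tau\in[-R,R]$ and $|X^n_\tau-X_\tau|<\eta<1$, hence automatically $X^n_\tau\in[-R-1,R+1]$, so that $|\psi(X^n_\tau)-\psi(X_\tau)|<\epsilon$; note that no separate tail estimate on $X^n$ is required. On $B_n^c$ I bound the integrand by $2M$, and $\P_{x,y}(B_n^c)<2\epsilon$ for $n\ge N$. Consequently $\E_{x,y}[|\psi(X^n_\tau)-\psi(X_\tau)|]\le \epsilon+4M\epsilon$ for every $\tau\le T-t$ and every $n\ge N$, and the reduction of the first step gives $|u^n(t,x,y)-u(t,x,y)|\le(1+4M)\epsilon$. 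Letting $\epsilon\to0$ concludes the proof.

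The main delicacy I expect is that $\psi$ is only assumed bounded and continuous, not uniformly continuous, so $|\psi(X^n_\tau)-\psi(X_\tau)|$ cannot be controlled directly by a single modulus of continuity on all of $\R$. The device that rescues the uniformity in $\tau$ is the $\tau$-free tail bound $\P_{x,y}(\sup_{s\le T}|X_s|>R)<\epsilon$, which follows merely from path continuity and, combined with the common-probability-space coupling, makes the good-event/bad-event split work simultaneously for every stopping time. Recognizing that the supremum over $\tau$ can be moved inside and that this tail control is uniform in $\tau$ is the only nonroutine point; the remainder is a standard estimate.
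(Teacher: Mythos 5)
Your proof is correct and follows essentially the same route as the paper: bound $|u^n-u|$ by a supremum over stopping times of $\E[|\psi(X^n_\tau)-\psi(X_\tau)|]$, split on the event where $\sup_t|X^n_t-X_t|$ is small, and invoke Lemma \ref{lemmasup_diff} for the bad event. The only difference is that you spell out the localization of $X$ on a compact set and the use of uniform continuity of $\psi$ there, a step the paper compresses into ``the assertion easily follows using \eqref{1} and the arbitrariness of $\lambda$''.
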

	\begin{proof}
		For any $\lambda>0$,
		\begin{align*}
		\bigg|      \sup_{\tau \in \mathcal{T}_{0,T-t}}& \E_{x,y} \left[   e^{-r\tau} \psi(X_\tau^{n})       \right]-  \sup_{\tau \in \mathcal{T}_{0,T-t}} \E_{x,y} \left[   e^{-r\tau}  \psi(X_\tau)      \right] \bigg|\\
		&\leq  \sup_{\tau \in \mathcal{T}_{0,T-t}}   	\bigg|  \E_{x,y} \left[   e^{-r\tau}  (\psi(X_\tau^{n})    -\psi(X_\tau)  ) \right] \bigg|\\
		&\leq \E_{x,y} \left[ \sup_{t\in[0,T]}  | \psi(X_t^{n})    -\psi(X_t)   |   \right]\\
		& \leq   \E_{x,y} \left[ \sup_{t\in[0,T]}  | \psi(X_t^{n})    -\psi(X_t)    |  \ind{\{|X^n_t-X_t|\leq \lambda\}  }\right]+ 2 \|\psi\|_\infty \P\left(  \sup_{t\in[0,T]}  |X^n_t-X_t |>\lambda \right).
		\end{align*}
		Then the assertion easily follows using \eqref{1} and the arbitrariness of $\lambda$.
	\end{proof}
	
	We can now prove that, for every $n\in\N$,  the approximated price function $u^n$ is nondecreasing with respect to the volatility variable. 
	\begin{proposition}\label{mon2}
		Assume that $\psi \in W^{2,2}(\R,e^{-\gamma|x|}dx)\cap C^2(\R)$ and $\psi''-\psi'\geq 0$. Then  $\frac{\partial u^n}{\partial y }\geq 0$ for every $n\in\N$.
	\end{proposition}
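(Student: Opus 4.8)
The plan is to exploit the fact that, unlike the genuine Heston generator, the approximating operator $\tilde{\mathcal{L}}^n$ is \emph{uniformly elliptic with smooth bounded coefficients}. Hence $u^n$ can be obtained as the limit, as $\varepsilon\to0$, of the solution $u^n_\varepsilon$ of a \emph{smooth penalized problem}
\[
-\frac{\partial u^n_\varepsilon}{\partial t}-\tilde{\mathcal{L}}^n u^n_\varepsilon+r\,u^n_\varepsilon+\zeta_\varepsilon(u^n_\varepsilon-\psi)=0,\qquad u^n_\varepsilon(T,\cdot,\cdot)=\psi,
\]
where $\zeta_\varepsilon$ is the usual penalizing function, chosen concave, nondecreasing and of class $C^2$, so that $\zeta_\varepsilon'\ge0$ and $\zeta_\varepsilon''\le0$. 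Classical parabolic theory provides enough regularity to differentiate this equation in $x$ and $y$, and it suffices to prove $\partial_y u^n_\varepsilon\ge0$ uniformly in $\varepsilon$ and then let $\varepsilon\to0$. Two nested comparison arguments will do the job.

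First I would establish the auxiliary sign $\partial_{xx}u^n_\varepsilon-\partial_x u^n_\varepsilon\ge0$. Since the coefficients of $\tilde{\mathcal{L}}^n$ depend only on $y$, the derivations $\partial_x$ and $\partial_{xx}$ commute with $\tilde{\mathcal{L}}^n$. Differentiating the penalized equation once and twice in $x$ and subtracting, the function $z:=\partial_{xx}u^n_\varepsilon-\partial_x u^n_\varepsilon$ solves
\[
-\frac{\partial z}{\partial t}-\tilde{\mathcal{L}}^n z+\big(r+\zeta_\varepsilon'(u^n_\varepsilon-\psi)\big)z=\zeta_\varepsilon'(u^n_\varepsilon-\psi)\,(\psi''-\psi')-\zeta_\varepsilon''(u^n_\varepsilon-\psi)\,\big(\partial_x(u^n_\varepsilon-\psi)\big)^2,
\]
with terminal datum $z(T,\cdot)=\psi''-\psi'\ge0$. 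The zeroth-order coefficient $r+\zeta_\varepsilon'\ge0$ and, thanks to $\psi''-\psi'\ge0$, $\zeta_\varepsilon'\ge0$ and $\zeta_\varepsilon''\le0$, the right-hand side is nonnegative; the parabolic comparison principle then yields $z\ge0$.

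Next I would differentiate the penalized equation in $y$ and set $w:=\partial_y u^n_\varepsilon$. Because $\psi$ depends on $x$ only, $\partial_y\psi=0$, so the penalization contributes the nonnegative zeroth-order term $\zeta_\varepsilon'(u^n_\varepsilon-\psi)\,w$ with no source. Differentiating the coefficients of $\tilde{\mathcal{L}}^n$ produces the extra terms $\tfrac{(f_n^2)'(y)}{2}\big(\partial_{xx}u^n_\varepsilon-\partial_x u^n_\varepsilon+2\rho\sigma\,\partial_{xy}u^n_\varepsilon+\sigma^2\partial_{yy}u^n_\varepsilon\big)-\kappa(f_n^2)'(y)\,\partial_y u^n_\varepsilon$. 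The crucial observation is that $\partial_{xy}u^n_\varepsilon=\partial_x w$ and $\partial_{yy}u^n_\varepsilon=\partial_y w$, so the mixed and pure second-order $y$-derivatives are really first- and second-order derivatives of $w$ and can be absorbed into a uniformly elliptic operator $\widehat{\mathcal{L}}^n$ acting on $w$; likewise $-\kappa(f_n^2)'(y)\,\partial_y u^n_\varepsilon$ becomes a zeroth-order term. One is left with
\[
-\frac{\partial w}{\partial t}-\widehat{\mathcal{L}}^n w+\big(r+\zeta_\varepsilon'(u^n_\varepsilon-\psi)+\kappa(f_n^2)'(y)\big)w=\frac{(f_n^2)'(y)}{2}\big(\partial_{xx}u^n_\varepsilon-\partial_x u^n_\varepsilon\big),\qquad w(T,\cdot)=0.
\]
Since $f_n$ is positive and increasing, $(f_n^2)'\ge0$, so the zeroth-order coefficient is nonnegative and, by the first step, the right-hand side is nonnegative; comparison gives $w=\partial_y u^n_\varepsilon\ge0$. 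Letting $\varepsilon\to0$ and using $u^n_\varepsilon\to u^n$ preserves the sign, whence $u^n$ is nondecreasing in $y$.

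I expect the main obstacle to be not the formal computation but its rigorous justification on the unbounded domain $\O=\R\times(0,\infty)$: one must guarantee that $u^n_\varepsilon$ and the derivatives $z$ and $w$ belong to weighted spaces such as $W^{2,2}(\R,e^{-\gamma|x|})$ (and their $y$-analogues), so that the equations for $z$ and $w$ hold in the weak sense and the comparison principle applies in the coercive variational framework of \cite{BL} rather than pointwise. In particular, the existence of $\partial_y u^n_\varepsilon$ in $V$ would be obtained, as in the degenerate case, through a difference-quotient argument combined with the uniform energy estimates coming from the uniform ellipticity of $\tilde{\mathcal{L}}^n$; controlling the growth at infinity of the quadratic term $\zeta_\varepsilon''(\partial_x(u^n_\varepsilon-\psi))^2$ and of the coefficient $\kappa(f_n^2)'$ is the delicate point that makes the weighted setting necessary.
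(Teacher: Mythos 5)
Your proposal is correct and follows essentially the same route as the paper: penalize with a concave, nondecreasing $C^2$ penalty, differentiate the (uniformly elliptic) penalized equation in $y$ to reduce the sign of $\partial_y u^n_\varepsilon$ to that of $f_nf_n'\,(\partial^2_xu^n_\varepsilon-\partial_xu^n_\varepsilon)$, establish $\partial^2_xu^n_\varepsilon-\partial_xu^n_\varepsilon\geq 0$ by a second comparison argument using $\psi''-\psi'\geq 0$, $\zeta_\varepsilon'\geq 0$, $\zeta_\varepsilon''\leq 0$, and pass to the limit $\varepsilon\to 0$. The only difference is the order in which the two comparison steps are presented, which is immaterial.
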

	\begin{proof}
		Fix $n\in\N$.  We know from the classical theory of variational inequalities that
		$u^n$ is the unique solution of the associated variational inequality (see, for example, \cite{JLL}). Moreover, 
		$u^n$ is the limit of the solutions of a sequence of  penalized problems. In particular, consider a family of penalty functions $\zeta_\varepsilon:\R\rightarrow\R$  such that, for each $\varepsilon>0$, 
		$\zeta_\varepsilon$ is a  $C^2 $, nondecreasing and concave function with bounded derivatives, satisfying $\zeta_\varepsilon(u)=0$, for $u\geq 	\varepsilon$ and $\zeta_\varepsilon(0)=b$, where $b$ is such that $\tilde{\mathcal{A}}^n\psi\geq b$ with the notation $\tilde{\mathcal{A}}^n=\tilde \L^n-r$ (see the proof of Theorem 3 in \cite{damien}). Then,  there exists a sequence  $(u^n_\varepsilon)_{\varepsilon>0}$ such that $\lim_{\varepsilon\rightarrow 0 }u^n_\varepsilon=u^n$ in the sense of distributions and, for every $\varepsilon >0$, 
		\begin{equation*}
		\begin{cases}
		-\frac{\partial u^n_\varepsilon}{\partial t}- \mathcal{A}^n u^n_\varepsilon + \zeta_\varepsilon(u^n_\varepsilon-\psi)=0,\\
		u^n_\varepsilon(T)=	\psi(T).
		\end{cases}
		\end{equation*}
		In order to simplify the notation, hereafter in this proof we denote  by $u$ the function $u^n_\varepsilon$. 
		
		Recall that, from the classical theory of parabolic semilinear equations, since $\psi\in C^2(\R)$ we have that $u\in  C^{2,4}([0,T), \R\times (0,\infty))$ (here we refer, for example, to \cite{LSU}). 
		Set now $\bar u= \frac{\partial u }{\partial y }$.  Differentiating the equation satisfied by $u^n$, we get that $\bar u$ satisfies 
		\begin{equation*}
		\begin{cases}
		-\frac{\partial \bar u}{\partial t}-\bar{ \mathcal{A}}^n \bar u=  f_n(y)f_n'(y) \left(\frac{\partial^2 u}{\partial x^2}-\frac{\partial u}{\partial x}\right),\\
		\bar u(T)=0,
		\end{cases}
		\end{equation*}
		where 
		\begin{align*}
		\bar{\mathcal{A}}^n&= \frac {f_n^2(y)} 2\left( \frac{\partial^2}{\partial x^2} +2\rho\sigma \frac{\partial^2u}{\partial x \partial y}+\sigma^2 \frac{\partial^2}{\partial y^2}\right)
		+\left(r-\delta-\frac{f_n^2(y)} 2 +2\rho\sigma f_n(y)f_n'(y)\right) \frac{\partial }{\partial x} \\&\qquad +\left( \kappa\left(\theta-f_n^2(y)\right)+\sigma^2f_n(y)f'_n(y)\right)\frac{\partial }{\partial y}-2\kappa f_n(y)f_n'(y) + \zeta_\varepsilon'(u^n_\varepsilon-\psi)       -(r-\delta).
		\end{align*}
		
		By using the  Comparison principle, we deduce that, if $ f_n(y)f_n'(y) \left(\frac{\partial^2 u}{\partial x^2}-\frac{\partial u}{\partial x}\right)\geq 0 $, then $\bar u\geq 0$ and the assertion follows letting $\varepsilon$ tend to 0.
		
		Since  $f_n$ is positive and  nondecreasing, it is enough to prove that $\frac{\partial^2 u}{\partial x^2}-\frac{  \partial u}{\partial x}\geq 0$. 
		We write the equations satisfied by $u'=\frac{  \partial u}{\partial x}$ and $u''=\frac{  \partial^2 u}{\partial x^2}$. We have
		\begin{equation}\label{u'}
		\begin{cases}
		-\frac{\partial u'}{\partial t}-\tilde{\mathcal{A}}^n u'+\zeta'_\varepsilon(u-\psi)(u'-\psi')=0,\\
		u(T)=\psi,
		\end{cases}
		\end{equation} 
		and
		\begin{equation}\label{u''}
		\begin{cases}
		-\frac{\partial u''}{\partial t}-\tilde{\mathcal{A}}^nu''+\zeta''_\varepsilon(u-\psi)(u'-\psi')^2+\zeta'_\varepsilon(u-\psi)(u''-\psi'')=0,\\
		u''(T)=\psi''.
		\end{cases} 
		\end{equation}
		Using \eqref{u'} and \eqref{u''}, we get that $u''-u'$ satisfies
		\begin{equation}
		\begin{cases}
		-\frac{\partial (u''-u')}{\partial t
		}-\mathcal{A}^n(u''-u')+\zeta'_\varepsilon(u-\psi)(u''-u')=\zeta'_\varepsilon(u-\psi)(\psi''-\psi')-\zeta''_\varepsilon(u-\psi)(u'-\psi')^2,\\
		u''(T)-u'(T)=\psi''-\psi'.
		\end{cases} 
		\end{equation}
		Recall that $\psi''-\psi'\geq0$ by assumption and
		that $\zeta_\varepsilon$ is increasing and concave. Then, 
		$$
		\zeta'_\varepsilon(u-\psi)(\psi''-\psi')-\zeta''_\varepsilon(u-\psi)(u'-\psi')^2\geq 0, \quad u''(T)-u'(T)=\psi''-\psi'\geq0,
		$$
		hence, by using again the Comparison principle, we deduce that $ u''-u'\geq0$ which concludes the proof.
	\end{proof}
	The proof of Theorem \ref{monotonie} is now almost immediate.
	\begin{proof}[Proof of Theorem \ref{monotonie}]
		Thanks to Proposition \ref{mon2}, the function $u^n$ is increasing in the $y$ variable for all $n\in\N$. 	Then, the assertion follows by using Proposition \ref{convP}.
	\end{proof}
	
	\section{The American put price}\label{sect-put}
	From now on we focus our attention on the standard put option with strike price $K$ and maturity $T$, that is we fix $\varphi(s)=(K-s)_+$ and we study the properties of the function
	\begin{equation}\label{put-price}
	P(t,s,y)=\sup_{\tau \in\mathcal T_{t,T}}\E[e^{-r(\tau-t )}(K-S^{t,s,y}_\tau)_+].
	\end{equation} 
	The following result easily follows from \eqref{put-price}.
	\begin{proposition}\label{properties_P}
		The price function $P$ satisfies:
		\begin{enumerate}
			\item $(t,s,y)\mapsto P(t,s,y)$ is continuous and positive;
			\item $t\mapsto P(t,s,y)$ is nonincreasing;
			\item $y\mapsto P(t,s,y)$ is nondecreasing;
			\item $s\mapsto P(t,s,y)$ is nonincreasing and convex.
		\end{enumerate}
	\end{proposition}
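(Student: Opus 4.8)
The plan is to read off each of the four properties from the structure of \eqref{put-price}, exploiting in turn the linearity of the dynamics of $S$, the time-homogeneity of \eqref{hest2volta}, the monotonicity-in-volatility result of Theorem \ref{monotonie}, and the flow estimates of Lemma \ref{propflo}. Positivity is immediate: choosing the stopping time $\tau\equiv t$ in \eqref{put-price} gives $P(t,s,y)\ge (K-s)_+\ge 0$, and strict positivity when $s\ge K$ follows because $(K-S^{t,s,y}_T)_+>0$ with positive probability.

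For the monotonicity and convexity in $s$ (item (iv)), I would use that the equation for $S$ in \eqref{hest2volta} is linear in $S$, so that $S^{t,s,y}_\tau=s\,Z^{t,y}_\tau$ with $Z^{t,y}_\tau=\exp\!\big(\int_t^\tau (r-\delta-\tfrac{Y_v}{2})\,dv+\int_t^\tau\sqrt{Y_v}\,dB_v\big)>0$ a process not depending on $s$. Hence, for every fixed stopping time $\tau$ and every $\omega$, the map $s\mapsto e^{-r(\tau-t)}(K-sZ^{t,y}_\tau)_+$ is convex and nonincreasing, being the positive part of an affine nonincreasing function of $s$. Taking expectation preserves both properties, and since the class $\mathcal{T}_{t,T}$ and the processes $(Z^{t,y},Y)$ do not depend on $s$, the pointwise supremum $P(t,\cdot,y)$ over $\tau$ is again convex and nonincreasing. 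For item (ii), the time-homogeneity of \eqref{hest2volta} yields $P(t,s,y)=\sup_{\tau\in\mathcal{T}_{0,T-t}}\E[e^{-r\tau}(K-S^{0,s,y}_\tau)_+]$, a quantity depending on $t$ only through the horizon $T-t$; since $\mathcal{T}_{0,T-t_2}\subseteq\mathcal{T}_{0,T-t_1}$ whenever $t_1\le t_2$, the supremum can only decrease as $t$ increases.

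Item (iii) I would deduce from Theorem \ref{monotonie} by approximation, since the put payoff is not $C^2$. In the log variable the payoff is $\psi(x)=(K-e^x)_+$, which is bounded, $K$-Lipschitz, and satisfies $\psi''-\psi'=K\,\delta_{\log K}\ge 0$ in the sense of distributions. Mollifying, $\psi_k=\psi\ast\phi_k$ with $\phi_k\ge 0$ a smooth approximate identity gives functions $\psi_k\in C^\infty$ that are bounded with bounded derivatives, satisfy $\psi_k''-\psi_k'=K\,\phi_k(\cdot-\log K)\ge 0$, belong to $W^{2,2}(\R,e^{-\gamma|x|})$, and converge uniformly to $\psi$. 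By Theorem \ref{monotonie} each corresponding value function $u_k$ (with payoff $\psi_k$) is nondecreasing in $y$, and since $\|u_k-u\|_\infty\le\|\psi_k-\psi\|_\infty\to 0$ because the payoff enters linearly in the supremum, the limit $u$, and hence $P$, is nondecreasing in $y$.

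The delicate point is the joint continuity in item (i), which I would establish in the log variable for the bounded $K$-Lipschitz payoff $\psi$ and then transfer back through $P(t,s,y)=u(t,\log s,y)$. Continuity in $(x,y)$ at fixed horizon follows from Lemma \ref{propflo}: for any $\tau$, $\E|\psi(X^{t,x,y}_\tau)-\psi(X^{t,x',y'}_\tau)|\le K\,\E\,\sup_{v\le T}|X^{t,x,y}_v-X^{t,x',y'}_v|$, and the flow estimate, extended to the supremum via the Burkholder--Davis--Gundy inequality together with $\E|Y^{t,y}_v-Y^{t,y'}_v|\le|y-y'|$ and $(\sqrt a-\sqrt b)^2\le|a-b|$, bounds this by $K\big(|x-x'|+\tfrac{T}{2}|y-y'|+C\sqrt{T|y-y'|}\big)$, a modulus independent of $\tau$. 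The main obstacle is continuity in the time/horizon variable: writing $P$ as a function of the horizon and comparing horizons $\theta_1<\theta_2$, I would associate to a near-optimal $\sigma\le\theta_2$ the truncated time $\sigma\wedge\theta_1$ and control the difference by the short-time increment $\E\,\sup_{\theta_1\le v\le\theta_2}|X_v-X_{\theta_1}|$ together with the discount gap $|e^{-r\sigma}-e^{-r(\sigma\wedge\theta_1)}|\le r(\theta_2-\theta_1)$; here the boundedness of $\psi$ and the uniform bound $\E\int_{\theta_1}^{\theta_2}Y_s\,ds\le C(\theta_2-\theta_1)$ for the CIR process force this increment to $0$ as $\theta_2-\theta_1\to 0$, locally uniformly in $(x,y)$. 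Combining the two moduli yields joint continuity, and the stated properties then transfer verbatim to $P$.
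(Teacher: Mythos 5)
Your proof is correct and follows essentially the same route as the paper: item 4 from the multiplicative structure $S^{t,s,y}_\tau=sZ^{t,y}_\tau$ and the convexity/monotonicity of $s\mapsto(K-sZ)_+$, item 3 by smoothing the payoff so that $\psi''-\psi'\ge 0$ is preserved and invoking Theorem \ref{monotonie} before passing to the uniform limit (your explicit mollification is a concrete instance of the paper's appeal to density results and \cite[Lemma 3.3]{JLL}), and items 1--2 by the classical arguments the paper leaves implicit. The only cosmetic caveat is that strict positivity should be read as holding for $t<T$, since $P(T,s,y)=(K-s)_+$ vanishes for $s\ge K$.
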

	\begin{proof}
		The proofs of $1.$ and $2.$ are classical and straightforward.  As regards $3.$, we
		note that $\varphi$ is convex and  the function $\psi(x)=(K-e^x)_+$ belongs to the space $W^{1,2}(\R,e^{-\gamma|x|})$ for a $\gamma>1$ but it is not regular enough to  apply  Proposition \ref{monotonie}. However, we can use an approximation procedure. Indeed, thanks to density results and \cite[Lemma 3.3]{JLL}, we can  approximate the function $\psi$ with a sequence of functions $\psi_n\in W^{2,2}(\R,e^{-\gamma|x|})\cap C^2(\R)$ such that $\psi_n''-\psi_n'\geq0$, so the assertion easily follows passing to the limit.   $4.$  follows from the fact that $\varphi(s)=(K-s)_+$ is nonincreasing and convex.
	\end{proof}

	Moreover, thanks to  the Lipschitz continuity of the payoff function, we have the following result.
	\begin{proposition}	\label{prop-lipschitz}
		The function $x \mapsto u(t,x,y)$ is Lipschitz continuous while the function $y \mapsto u(t,x,y)$ is H\"{o}lder continuous. If $2\kappa\theta \geq \sigma^2$ the function $y\mapsto u(t,x,y)$ is locally Lipschitz continuous on $(0,\infty)$. 
	\end{proposition}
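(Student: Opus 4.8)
The plan is to exploit the Lipschitz character of the log-payoff $\psi(x)=(K-e^{x})_{+}$, whose derivative $-e^{x}\ind{\{e^{x}<K\}}$ is bounded in modulus by $K$, together with the flow estimates of Lemma~\ref{propflo}. Writing $u(t,x,y)=\sup_{\tau\in\mathcal{T}_{t,T}}\E[e^{-r(\tau-t)}\psi(X^{t,x,y}_{\tau})]$ and using the elementary inequality $|\sup_{\tau}A_{\tau}-\sup_{\tau}B_{\tau}|\le\sup_{\tau}|A_{\tau}-B_{\tau}|$ for the two families indexed by $\tau$, one obtains
\[
|u(t,x',y')-u(t,x,y)|\le K\sup_{\tau\in\mathcal{T}_{t,T}}\E\big|X^{t,x',y'}_{\tau}-X^{t,x,y}_{\tau}\big|.
\]
Thus the whole statement reduces to controlling $\sup_{\tau}\E|X^{t,x',y'}_{\tau}-X^{t,x,y}_{\tau}|$ in the coupling where both flows are driven by the same pair $(B,W)$. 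Since the proof of Lemma~\ref{propflo} uses only the $Y$-dependent part $-\tfrac12 Y$ of the $X$-drift, the constant part of the drift of \eqref{hest2} cancels in the difference and the lemma applies verbatim to the present log-price.

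For the Lipschitz dependence in $x$ I would fix $y'=y$. As the coefficients of the $X$-equation \eqref{hest2} depend on the state only through $Y$, with $Y^{y}$ common to both flows the difference $X^{x',y}_{s}-X^{x,y}_{s}=x'-x$ is constant along every path, whence $|u(t,x',y)-u(t,x,y)|\le K|x'-x|$.

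For the H\"older dependence in $y$ I would reproduce at a bounded stopping time the computation behind Lemma~\ref{propflo}. With the common driving noise, $X^{x,y'}_{s}-X^{x,y}_{s}=-\tfrac12\int_{0}^{s}(Y^{y'}_{u}-Y^{y}_{u})\,du+\int_{0}^{s}(\sqrt{Y^{y'}_{u}}-\sqrt{Y^{y}_{u}})\,dB_{u}$; applying $\E|Y^{y'}_{u}-Y^{y}_{u}|\le|y'-y|$, the inequality $(\sqrt a-\sqrt b)^{2}\le|a-b|$, and the It\^o isometry at the bounded stopping time $\tau\le T$ (legitimate by optional sampling, the integrands being nonnegative so that Tonelli applies) yields
\[
\sup_{\tau\in\mathcal{T}_{t,T}}\E\big|X^{x,y'}_{\tau}-X^{x,y}_{\tau}\big|\le \frac{T}{2}|y'-y|+\sqrt{T|y'-y|},
\]
giving $\tfrac12$-H\"older continuity in $y$, the square-root term dominating for small increments.

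Finally, the locally Lipschitz improvement under $2\kappa\theta\ge\sigma^{2}$ is where the real work lies, since the square-root term above must be upgraded to a linear one. I would pass to $R=\sqrt{Y}$: under the Feller condition $Y^{y}$ stays strictly positive for $y>0$, so It\^o's formula gives an SDE for $R^{y}$ with additive noise $\tfrac{\sigma}{2}\,dW$ and drift $\tfrac{1}{2R}(\kappa\theta-\tfrac{\sigma^{2}}{4})-\tfrac{\kappa}{2}R$. For two starting points driven by the same $W$ the additive noise cancels, so $R^{y'}-R^{y}$ solves the pathwise ordinary differential equation
\[
\frac{d}{ds}\big(R^{y'}_{s}-R^{y}_{s}\big)=-\Big(\frac{\kappa\theta-\sigma^{2}/4}{2R^{y'}_{s}R^{y}_{s}}+\frac{\kappa}{2}\Big)\big(R^{y'}_{s}-R^{y}_{s}\big),
\]
whose coefficient is nonpositive (here $\kappa\theta-\sigma^{2}/4\ge\sigma^{2}/4\ge 0$). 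Hence $|\sqrt{Y^{y'}_{s}}-\sqrt{Y^{y}_{s}}|\le|\sqrt{y'}-\sqrt{y}|$ for every $s$, pathwise. Substituting this sharper control, together with $|Y^{y'}_{s}-Y^{y}_{s}|\le|\sqrt{y'}-\sqrt{y}|\,(\sqrt{Y^{y'}_{s}}+\sqrt{Y^{y}_{s}})$ and the uniform boundedness of the CIR moments on $[0,T]$, into the drift and stochastic-integral terms replaces $\sqrt{T|y'-y|}$ by a constant multiple of $|\sqrt{y'}-\sqrt{y}|$. On a compact subset of $(0,\infty)$, where $\sqrt{y'}+\sqrt{y}$ is bounded away from zero, $|\sqrt{y'}-\sqrt{y}|\le C|y'-y|$, which yields local Lipschitz continuity. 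The main obstacle is making the passage to $R=\sqrt{Y}$ rigorous, namely justifying It\^o's formula in view of the behaviour of $Y$ near $0$ and establishing the required moment bounds for $\sqrt{Y}$, and transferring the fixed-time flow estimates to arbitrary bounded stopping times.
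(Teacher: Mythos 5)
Your proposal is correct and follows the same overall strategy as the paper: reduce everything to flow estimates via the Lipschitz character of the payoff, obtain $1/2$-H\"older continuity in $y$ from $(\sqrt a-\sqrt b)^2\le|a-b|$ together with $\E|Y^{y'}_u-Y^y_u|\le|y'-y|$, and upgrade to local Lipschitz under the Feller condition by controlling $|\sqrt{Y^{y'}_s}-\sqrt{Y^y_s}|$ through the additive-noise SDE for $\sqrt{Y}$. Two of your steps differ in mechanism, though not in substance. First, you bound $|u(t,x',y')-u(t,x,y)|$ by $K\sup_{\tau}\E|X^{t,x',y'}_\tau-X^{t,x,y}_\tau|$ and evaluate the stochastic integral at the stopping time via optional sampling, whereas the paper passes to the cruder bound $C\,\E\big[\sup_{s\in[t,T]}|X^{t,x',y'}_s-X^{t,x,y}_s|\big]$ and uses a Doob-type maximal inequality; both yield the same $\sqrt{T|y'-y|}$ term. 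Second, in the Feller case the paper differentiates the flow in $y$ (invoking the monotonicity $\dot Y^y_t\ge0$ from Revuz--Yor) to get $\partial_y\sqrt{Y^y_t}\le\tfrac{1}{2\sqrt y}$ and then integrates in $y$, while you compare two solutions directly and observe that $R^{y'}-R^{y}$ satisfies a pathwise ODE with nonpositive coefficient, giving the contraction $|\sqrt{Y^{y'}_s}-\sqrt{Y^y_s}|\le|\sqrt{y'}-\sqrt y|$. Your comparison argument is arguably more self-contained (it avoids the differentiability of the stochastic flow), and both hinge on the same facts you correctly flag as needing justification: strict positivity of $Y$ under $2\kappa\theta\ge\sigma^2$ so that It\^o's formula applies to $\sqrt y$ on $(0,\infty)$, and $\kappa\theta-\sigma^2/4\ge0$ so that the drift is contractive. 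The remaining details you list (moment bounds, transfer to stopping times) are routine and handled identically in the paper.
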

	\begin{proof}
		It is easy to  prove that, for every fixed $t\geq 0$ and $y,y'\geq 0$ with $y\geq y'$, 
		\begin{equation}\label{flow-y}
		\E\left[Y^{y}_t-Y^{y'}_t\right] \leq y-y'. 
		\end{equation}
		Then, for $(x,y), (x',y')\in\R\times [0,\infty)$ we have
		\begin{align*}
		&	|u(t,x,y)-u(t,x',y')| = \left|    \sup_{\theta \in\mathcal{T}_{t,T}}\E[e^{-r(\theta-t )}(K-e^{X^{t,x,y}_\theta})_+]-  \sup_{\theta \in \mathcal{T}_{t,T}}\E[e^{-r(\theta-t )}(K-e^{X^{t,x',y'}_\theta})_+] \right|\\&\quad
		\leq     \sup_{\theta \in\mathcal{T}_{t,T}}\left|  \E\Big[e^{-r(\theta-t )}(K-e^{X^{t,x,y}_\theta})_+-e^{-r(\theta-t )}(K-e^{X^{t,x',y'}_\theta})_+  \Big]  \right| \\
		&\quad\leq C  \E \left[  \sup_{u \in [t,T]}|X_u^{t,x,y}      -X_u^{t,x',y'}| \right] \\
		&\quad \leq C\left( |x-x'|+\int_t^T\E [|Y^{t,y}_u-Y^{t,y'}_y|]du + \E\left[ \sup_{s\in[t,T]} \left|  \int_t^s (\sqrt{Y^{t,y}_u}-\sqrt{Y^{t,y'}_u})dW_u   \right| \right]\right)\\
		& \quad \leq C\left( |x-x'|+\int_t^T\E[ |Y^{t,y}_u-Y^{t,y'}_y|]du +\left(\E\left[ \sup_{s\in[t,T]} \left|  \int_t^s  (\sqrt{Y^{t,y}_u}-\sqrt{Y^{t,y'}_u})dW_u   \right| \right]^2\right)^{\frac 1 2} \right)\\
		&\quad \leq  C\left( |x-x'|+\int_t^T\E[|Y^{t,y}_u-Y^{t,y'}_u| ] du +\left(\E\left[  \int_t^T (Y^{t,y}_u-Y^{t,y'}_u)du    \right]\right)^{\frac 1 2} \right)\\
		&\quad \leq C_T(|x-x'|+\sqrt{|y-y'|}).
		\end{align*}
		Now, recall that, if $2\kappa \theta \geq \sigma^2$, the  volatility process $Y$ is strictly positive so we can apply It\^o's Lemma to the square root  function   and the process $Y_t$ in the open set $(0,\infty)$. We get
		\begin{align*}
		\sqrt{Y_t^y}&= \sqrt{y}+ \int_0^t \frac{1}{2\sqrt{Y_u^y}}dY^y_u - \frac 1 2 \int_0^t
		\frac{1}{4(Y^y_u)^{\frac 3 2 }}\sigma^2 Y_u^y du\\
		&=\sqrt{y}+ \left(  \frac{\kappa \theta}{2} - \frac{\sigma^2}{8} \right)  \int_0^t \frac{1}{\sqrt{Y_u^y}}du  - \frac \kappa 2 \int_0^t \sqrt{Y^y_u}du + \frac \sigma 2 W_t.
		\end{align*}
		Differentiating with respect to $y$ (see also \cite{Oa}) we deduce that
		\begin{equation}\label{stimaperholder}		\begin{split}
		\frac{\dot{Y_t^y}}{2\sqrt{Y_t^y}}  & = \frac{1}{2\sqrt{y}} + \left(  \frac{\kappa \theta}{2} - \frac{\sigma^2}{8} \right)  \int_0^t -\frac{\dot{Y_u^y}}{2(Y_u^y)^{\frac 3 2 }}du - \frac \kappa 2 \int_0^t \frac{\dot{Y_u^y}}{2\sqrt{Y_u^y}} du \leq \frac{1}{2\sqrt{y}}, \qquad a.s. 
		\end{split}
		\end{equation}
		since $\kappa\theta \geq \sigma^2/2\geq \sigma^2/4$ and $Y^y_t>0, \ \dot{Y}^y_t \geq0$ (see  \cite[Theorem 3.7, Chapter 9]{RY}). 
		
		Therefore, let us consider $y,y' \geq a$. Repeating the same calculations as before
		\begin{align*}
		&	|u(t,x,y)-u(t,x,y')| \\\quad
		&\quad\leq C\left(\int_t^T\E[|Y^{t,y}_u-Y^{t,y'}_u|]du +\left(\E\left[ \sup_{s\in[t,T]} \left|  \int_t^s  (\sqrt{Y^{t,y}_u}-\sqrt{Y^{t,y'}_u})dW_u   \right| \right]^2\right)^{\frac 1 2} \right)\\
		&	\quad\leq  C\left(\int_t^T\E[|Y^{t,y}_u-Y^{t,y'}_u|  ]du +\left(\E\left[  \int_t^T (\sqrt{Y^{t,y}_u}-\sqrt{Y^{t,y'}_u})^2du    \right]\right)^{\frac 1 2} \right)\\
		&\quad =  C\left(\int_t^T\E[|Y^{t,y}_s-Y^{t,y'}_s |]du +\left(\E\left[  \int_t^Tdu \left(\int_{y}^{y'}   \frac{\dot{Y}_u^{t,w}}{2\sqrt{Y_u^{t,w}}} dw  \right)^2  \right]\right)^{\frac 1 2} \right)\\
		&\quad \leq C_T\left(   |y-y'| + \left( \E\left[  \int_t^T \left(   \frac{1}{2\sqrt{a}} |y-y'|  \right)^2du  \right]\right)^{\frac 1 2}    \right)\\
		&\quad \leq C_T|y-y'|,
		\end{align*}
		which completes the proof. 
	\end{proof}
	\begin{remark}
		Studying the properties of the put price also clarifies the behaviour of the call price since it is straightforward to extend to the Heston model the  symmetry relation between call and put prices. In fact, let us   highlight the dependence of the prices with respect to the  parameters $K,r,\delta,\rho$, that is let us write
		$$
		P(t,x,y;K,r,\delta,\rho)=\sup_{\tau\in\mathcal{T}_{t,T}}\E[e^{-r(\tau-t)}(K-S^{t,s,y}_\tau)_+],
		$$
		for the put option price and
		$$ C(t,s,y;K,r,\delta,\rho)=\sup_{\tau\in\mathcal{T}_{t,T}}\E[e^{-r(\tau-t)}(S^{t,s,y}_\tau-K)_+],
		$$
		for the call option. Then, we have $C(t,s,y;K,r,\delta,\rho)=P(t,K,y;x,\delta,r,-\rho)$.
		
		In fact, for every $\tau \in \mathcal{T}_{t,T}$, we have
		\begin{align*}
		&\E e^{-r(\tau-t)}  \bigg( se^{\int_t^\tau\left( r-\delta -\frac{Y^{t,y}_s}2\right)ds+\int_t^\tau \sqrt{Y^{t,y}_s}dB_s}-K\bigg)_+  \\&\qquad=\E e^{-\delta(\tau-t)}e^{\int_t^\tau \sqrt{Y^{t,y}_s}dB_s-\int_t^\tau\frac{Y^{t,y}_s}2ds } \bigg(  x-Ke^{\int_t^\tau\left(\delta-r +\frac{Y^{t,y}_s}2\right)ds -\int_t^\tau dB_s  } \bigg)_+  \\&\qquad=\E e^{-\delta(\tau-t)}e^{\int_t^T \sqrt{Y^{t,y}_s}dB_s-\int_t^T\frac{Y^{t,y}_s}2ds } \bigg(  x-Ke^{\int_t^\tau\left(\delta-r +\frac{Y^{t,y}_s}2\right)ds -\int_t^\tau  \sqrt{Y^{t,y}_s}dB_s  } \bigg)_+ , 
		\end{align*}
		where the last equality follows from the fact that $(e^{\int_t^s \sqrt{Y^{t,y}_s}dB_s-\int_t^s\frac{Y^{t,y}_s}2ds })_{s\in[t,T]}$ is a martingale. Then, note that the process  $\hat B_t= B_t-\sqrt{Y^{t,y}_t }t$ is a Brownian motion under the  probability measure $\hat P$ which has density $d\hat{\P}/d\P=e^{\int_t^T \sqrt{Y^{t,y}_s}dB_s-\int_t^T\frac{Y^{t,y}_s}2ds }$. Therefore
		$$
		\E e^{-r(\tau-t)}  \bigg( se^{\int_t^\tau\left( r-\delta -\frac{Y^{t,y}_s}2\right)ds+\int_t^\tau \sqrt{Y^{t,y}_s}dB_s}-K\bigg)_+ =\hat \E e^{-\delta (\tau-t)} \bigg(  x-Ke^{\int_t^\tau\left(\delta-r -\frac{Y^{t,y}_s}2\right)ds -\int_t^\tau \sqrt{Y^{t,y}_s}dB_s  } \bigg)_+.
		$$
		Under the probability $\hat{\P}$, the process $(-\hat B,W )$ is a Brownian motion with correlation coefficient $-\rho$ so that the assertion follows.
	\end{remark}
	\subsection{The  exercise boundary}\label{sect-exerciseboundary}
	Let us introduce the so called continuation region
	$$
	\mathcal C=\{ (t,s,y) \in [0,T)\times (0,\infty)\times[0,\infty) :P(t,s,y)>\varphi(s)  \}
	$$
	and its complement, the exercise region
	$$
	\mathcal E= \mathcal C^c=\{ (t,s,y) \in [0,T)\times (0,\infty)\times[0,\infty) :P(t,s,y)=\varphi(s)\}.
	$$
	Note that, since $P$ and $\varphi$ are both continuous,  $\mathcal C$ is an (relative) open set while  $\mathcal E$ is a closed set.

	Generalizing the standard definition given in the Black and Scholes  type models,  we consider the \textit{critical exercise price} or \textit{free exercise boundary}, defined as
	$$
	b(t,y)=\inf\{ s>0| P(t,s,y)>(K-s)_+\},\qquad (t,y)\in [0,T)\times [0,\infty).
	$$
	We have $P(t,s,y)=\varphi(s)$  for $s\in [0,b(t,y))$ and also for $s= b(t,y)$, due to the continuity of $P$ and $\varphi$.	
	Note also that, since $P>0$, we have $b(t,y) \in [0,K)$.
	Moreover, since $P$ is convex, we can write
	$$
	\mathcal C=\{(t,s,y) \in [0,T)\times (0,\infty)\times[0,\infty) :  s>b(t,y)    \}
	$$
	and
	$$
	\mathcal E=\{(t,s,y) \in [0,T)\times (0,\infty)\times[0,\infty) :  s\leq b(t,y)    \}.
	$$

	We now study some properties of the free boundary  $b:[0,T)\times [0,\infty)\rightarrow [0,K)$.  First of all, we have the following simple result.
	\begin{proposition}
		We have:
		\begin{enumerate}
			\item 	for every fixed $y\in[0,\infty)$, the function $t\mapsto b(t,y)$ is nondecreasing and right continuous;
			\item  for every fixed $t\in[0,T)$, the function $y\mapsto b(t,y)$ is nonincreasing and  left continuous.
		\end{enumerate}
	\end{proposition}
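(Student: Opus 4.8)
The plan is to read off both monotonicity statements directly from the monotonicity of the price $P$ recorded in Proposition~\ref{properties_P}, and then to upgrade each of them to one-sided continuity by exploiting the closedness of the exercise region $\mathcal E$ together with the continuity of $P$ and $\varphi$.

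First I would establish that $t\mapsto b(t,y)$ is nondecreasing. Fix $y$ and take $t_1\le t_2$. If $s$ lies in the continuation region at time $t_2$, i.e. $P(t_2,s,y)>(K-s)_+$, then the fact that $t\mapsto P(t,s,y)$ is nonincreasing gives $P(t_1,s,y)\ge P(t_2,s,y)>(K-s)_+$, so $s$ also lies in the continuation region at $t_1$. Using the representation $\mathcal C=\{s>b(t,y)\}$, this inclusion reads $\{s>b(t_2,y)\}\subseteq\{s>b(t_1,y)\}$, hence $b(t_1,y)\le b(t_2,y)$. The monotonicity in $y$ is entirely symmetric: for $y_1\le y_2$ the fact that $y\mapsto P(t,s,y)$ is nondecreasing shows that continuation at $y_1$ forces continuation at $y_2$, whence $\{s>b(t,y_1)\}\subseteq\{s>b(t,y_2)\}$ and $b(t,y_2)\le b(t,y_1)$.

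For the one-sided continuity I would use that a monotone function admits one-sided limits everywhere, so only the value of the limit has to be identified. For right continuity in $t$, fix $y$, take $t_n\downarrow t$ with $t_n>t$, and set $L:=\lim_n b(t_n,y)$; monotonicity gives $L\ge b(t,y)$. Each point $(t_n,b(t_n,y),y)$ belongs to $\mathcal E$, since $P(t_n,b(t_n,y),y)=\varphi(b(t_n,y))$ by continuity of $P$ and $\varphi$. If $L=0$ then $b(t,y)=0=L$ and there is nothing to prove; otherwise $(t,L,y)\in[0,T)\times(0,\infty)\times[0,\infty)$, and since $\mathcal E$ is closed and $(t_n,b(t_n,y),y)\to(t,L,y)$, the limit lies in $\mathcal E$, i.e. $L\le b(t,y)$. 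Thus $L=b(t,y)$. The left continuity in $y$ is proved in the same fashion, approximating with $y_n\uparrow y$, $y_n<y$, and applying the closedness of $\mathcal E$ to the sequence $(t,b(t,y_n),y_n)$.

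The monotonicity deductions are routine. The delicate point, and the one I would be most careful about, is the continuity step: one has to verify that the approximating exercise points genuinely converge to a point of the (relatively) closed set $\mathcal E$ within the admissible domain, which is exactly why the degenerate boundary $s=0$ must be isolated and handled separately. I would also stress that the orientation of each monotonicity is precisely what makes the approximating sequence approach from the side on which closedness of $\mathcal E$ is informative; approaching from the opposite side would only produce $L\le b(t,y)$ (via $\mathcal E$ closed) or $L\ge b(t,y)$ (via monotonicity) but never both, which is why only the one-sided — and not full — continuity can be obtained by this argument.
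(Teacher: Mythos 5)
Your proposal is correct and follows essentially the same route as the paper: monotonicity is read off from the monotonicity of $P$ in $t$ and $y$, and the one-sided continuity is obtained by passing to the limit along the exercise points $(t_n,b(t_n,y),y)$ (the paper phrases this as passing to the limit in the identity $P(t_n,b(t_n,y),y)=\varphi(b(t_n,y))$ using continuity of $P$ and $\varphi$ and then invoking the definition of $b$, which is the same mechanism as your appeal to the closedness of $\mathcal E$). Your explicit treatment of the case $L=0$ and your closing remark on why only one-sided continuity follows are both sound additions, not gaps.
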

	\begin{proof}
		$1.$ Recalling that the map $t\mapsto P(t,s,y)$ is nonincreasing, we directly deduce that $t\mapsto b(t,y)$  is nondecreasing.
		Then, fix $t\in [0,T)$ and let $(t_n)_{n \geq 1}$ be a decreasing sequence such that $\lim_{n \rightarrow \infty} t_n = t$. The sequence $(b(t_n,y))_n$ is nondecreasing so that $ \lim_{n \rightarrow \infty}b(t_n,y)$ exists and we have $  \lim_{n \rightarrow \infty}b(t_n,y) \geq b(t,y)$.
		On the other hand, we have		
		\begin{equation*}
		P(t_n,b(t_n,y),y)= \varphi(b(t_n,y)) \qquad  n \geq 1,
		\end{equation*}
		and, by the continuity of $P$ and $\varphi$,
		\begin{equation*}
		P(t,\lim_{n \rightarrow \infty}b(t_n,y),y)= \varphi(\lim_{n \rightarrow \infty}b(t_n,y)).
		\end{equation*}
		We deduce by the definition of $b$ that $  \lim_{n \rightarrow \infty}b(t_n,y) \leq b(t,y)$ which concludes the proof.
		
		$2.$  The second assertion can be proved with the same arguments, this time recalling that $y\mapsto P(t,s,y)$ is a nondecreasing function.
	\end{proof}
	Recall that  $b(t,y) \in [0,K)$.   Indeed, we can prove  the positivity of the function.
	\begin{proposition}\label{positivity}
		We have $b(t,y)>0$ for every $(t,y) \in [0,T)\times [0,\infty)$.
	\end{proposition}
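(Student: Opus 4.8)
The plan is to show that immediate exercise is optimal for all sufficiently small $s$: concretely, for each $(t,y)$ I would produce $s^\ast=s^\ast(t,y)>0$ with $P(t,s,y)=(K-s)_+=K-s$ whenever $s\le s^\ast$, which gives $b(t,y)\ge s^\ast>0$. Since $P(t,s,y)\ge(K-s)_+\ge K-s$ always holds, it suffices to prove the reverse inequality $P(t,s,y)\le K-s$ for small $s$, i.e. $\sup_{\tau\in\mathcal T_{t,T}}\E[e^{-r(\tau-t)}(K-S^{t,s,y}_\tau)_+]\le K-s$.

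The first step is a drift decomposition. Writing $(K-s)_+=(K-s)+(s-K)_+$ and using that $(e^{-r(u-t)}S^{t,s,y}_u)_{u\ge t}$ has dynamics $d(e^{-r(u-t)}S_u)=-\delta e^{-r(u-t)}S_u\,du+\text{(martingale)}$, optional stopping yields, for every $\tau\in\mathcal T_{t,T}$,
\begin{equation*}
\E\big[e^{-r(\tau-t)}(K-S_\tau)_+\big]-(K-s)=\E\Big[\int_t^\tau e^{-r(u-t)}(\delta S_u-rK)\,du\Big]+\E\big[e^{-r(\tau-t)}(S_\tau-K)_+\big].
\end{equation*}
The running term carries the favourable drift $\delta S_u-rK$, which is strictly negative as long as $S_u<rK/\delta$ (here $r>0$ is used; for $\delta=0$ it equals $-rK<0$ for every value of $S_u$). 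Fix a level $\ell\in(0,\min(K,rK/\delta))$, any $\ell\in(0,K)$ if $\delta=0$; on the strip $\{s<\ell\}$ the obstacle $K-s$ is a strict supersolution of the pricing operator, since $\mathcal L(K-s)-r(K-s)=\delta s-rK<0$, and the overshoot term $(S_\tau-K)_+$ vanishes as long as the path has not left the strip.

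The second step is localisation and scaling. I would introduce the exit time $\rho_\ell=\inf\{u\ge t:S_u\ge\ell\}\wedge T$ and split $G(\tau):=\E[e^{-r(\tau-t)}(K-S_\tau)_+]-(K-s)$ according to $\{\tau<\rho_\ell\}$ and $\{\tau\ge\rho_\ell\}$: on the former the path stays below $\ell$, the overshoot is zero and the drift contribution is $\le 0$; on the latter one restarts the functional at the level $\ell$ via the strong Markov property. The quantitative input is the multiplicative representation $S^{t,s,y}_u=s\,\mathcal E_u$, where $\mathcal E_u=\exp\!\big(\int_t^u(r-\delta-\tfrac12 Y_v)\,dv+\int_t^u\sqrt{Y_v}\,dB_v\big)$ is independent of $s$ and satisfies $\E[\sup_{u\in[t,T]}\mathcal E_u]<\infty$ on the finite horizon (this holds without the Feller condition, so the degeneracy of $Y$ at $0$ is harmless). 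Hence $\{\rho_\ell\le T\}=\{\sup_{[t,T]}\mathcal E_u\ge\ell/s\}$ has probability tending to $0$ as $s\to0$, and every contribution from excursions above $\ell$ is controlled by $s\,\E[\mathbf 1_{\{\rho_\ell\le T\}}\sup_u\mathcal E_u]=o(1)$.

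The main obstacle is that these estimates only give $P(t,s,y)-(K-s)\le\varepsilon(s)$ with $\varepsilon(s)\to0$, which is not yet the equality sought: the genuine difficulty is to show that the positive overshoot — equivalently, via Itô--Tanaka, the local-time term $\tfrac12\E[\int_t^\tau e^{-r(v-t)}\,dL^K_v]$ at the level $K$ — is actually dominated by the strictly negative drift accumulated while the process sits at small values. I would close this either by a verification argument (the American price is the least supersolution of the obstacle problem dominating the payoff, so exhibiting a supersolution $w\ge(K-s)_+$ of $\partial_t w+\mathcal L w-rw\le0$ that coincides with $K-s$ on $\{s\le s^\ast\}$ forces $P\le w$, hence $P=K-s$ there), or by sharpening the probabilistic estimate so that the occupation of $\{S_u\le\ell\}$, contributing $-rK$ per unit time, strictly outweighs the local time at $K$ for $s$ small. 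Constructing such a supersolution in the genuinely two-dimensional degenerate Heston setting is the technical heart of the matter — it cannot be taken independent of $y$ and concave, for then it could not remain nonnegative — but once it is available, $b(t,y)\ge s^\ast(t,y)>0$ follows for every $(t,y)\in[0,T)\times[0,\infty)$.
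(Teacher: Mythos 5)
Your reduction (show $P(t,s,y)=K-s$ for all $s\le s^\ast(t,y)$, hence $b(t,y)\ge s^\ast>0$) and the drift/overshoot identity
$\E[e^{-r(\tau-t)}(K-S_\tau)_+]-(K-s)=\E[\int_t^\tau e^{-r(u-t)}(\delta S_u-rK)\,du]+\E[e^{-r(\tau-t)}(S_\tau-K)_+]$
are both correct, and you have correctly located where the difficulty sits. But the proof is not complete: the step that would turn $P(t,s,y)-(K-s)\le\varepsilon(s)$ into $P(t,s,y)-(K-s)\le 0$ is exactly the step you defer. Neither of the two closing devices you name is carried out: no supersolution is exhibited (and you yourself observe that the natural one-dimensional candidates are ruled out), and the pathwise domination of the local-time term $\tfrac12\E[\int_t^\tau e^{-r(u-t)}\,dL^K_u]$ by the accumulated drift $-\,\E[\int_t^\tau e^{-r(u-t)}(rK-\delta S_u)\mathbf 1_{\{S_u\le K\}}du]$ is not established \emph{uniformly over stopping times}. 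The dangerous competitors are stopping times adapted to catch a fast excursion from $s$ up to the strike: on the event that $S$ reaches $K$ within a short window of length $\epsilon$, the local time gained scales like $\sqrt{\epsilon}$ while the drift penalty scales like $\epsilon$, so a crude comparison of orders goes the wrong way and one must exploit the smallness of the probability of such excursions in a genuinely quantitative way. Until that is done, the argument proves only that $P(t,s,y)\to K-s$ as $s\to 0$, which does not exclude $b(t,y)=0$. (A secondary caveat: $\E[\sup_{u\in[t,T]}\mathcal E_u]<\infty$ is not automatic in Heston for all parameters when $\rho>0$, since moments of the stock of order $>1$ can explode; your localisation should be phrased so as not to need more than the martingale property of the discounted price.)

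For comparison, the paper takes a completely different, indirect route that avoids any quantitative drift-versus-local-time estimate. Assuming $b(t^*,y^*)=0$, the monotonicity of $b$ in $t$ (nondecreasing) and in $y$ (nonincreasing) forces the whole region $(0,t^*)\times(0,\infty)\times(y^*,\infty)$ into the continuation region, where $\partial_t u+(\tilde{\mathcal L}-r)u=0$ and $\partial_t u\le 0$, hence $(\tilde{\mathcal L}-r)u\ge 0$ in the sense of distributions. Testing this inequality against products $\lambda\chi_1(\lambda x)\cdot\sqrt{\lambda}\,\chi_2(\sqrt{\lambda}(y-y^*))$ and letting $\lambda\to 0$, every term involving derivatives of $u$ either vanishes (after integration by parts, using only the boundedness of the put price) or is nonpositive (the term $-\kappa y\,\partial_y u$, by monotonicity in $y$), while the right-hand side converges to $rK>0$ — a contradiction. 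If you want to salvage your direct approach, the cleanest fix is probably your verification route, but the supersolution must be built $y$-dependently; the paper's test-function argument is the way it sidesteps that construction entirely.
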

	\begin{proof}
		Without loss of generality we can assume that $0<t<T$, since $T$ is arbitrary and the put price is a function of $T-t$.  Suppose that $b(t^*,y^*)=0$ for some $(t^*,y^*) \in (0,T)\times [0,	\infty) $. Since $b(t,y)\geq 0$, $t \mapsto b(t,y)$ is nondecreasing and $y \mapsto b(t,y)$ is nonincreasing, we have $ b(t,y)=0$ for $(t,y) \in (0,t^*) \times (y^*,\infty)$, so that
		\begin{equation*}
		P(t,s,y)> \varphi(s),\qquad (t,s,y) \in (0,t^*) \times(0,\infty) \times (y^*,\infty).
		\end{equation*}
		To simplify the calculations, we pass  to the logarithm in the space variable and we consider the functions $u(t,x,y)= P(t,e^x,y)$ and $\psi(x)=\varphi(e^x) $. We  have $u(t,x,y)>\psi(x)$ and $$(\partial_t+\tilde{\mathcal{L}}-r)u=0 \qquad \mbox{ on } (0,t^*) \times \R \times (y^*,\infty),$$ where $\tilde \L$ was defined  in \eqref{Llog}. Since $t \mapsto u(t,x,y)$ is nondecreasing, we deduce that, for $t\in (0,t^*)$, $(\tilde \L-r)u=-\partial_t u\geq 0$ in the sense of distributions. Therefore, for any nonnegative and $C^\infty$ test functions $\theta$, $\phi$ and $\zeta$ which have support respectively in $(0,t^*)$, $(-\infty,\infty)$ and $(y^*,\infty)$, we have 
		\begin{equation*}
		\int_0^{t^*}\theta(t)dt \int_{-\infty}^\infty dx \int_{y^*}^\infty dy \tilde{ \mathcal{L} }u(t,x,y) \phi(x)\zeta(y) \geq r 	\int_0^{t^*}\theta(t)dt \int_{-\infty}^\infty dx \int_{y^*}^\infty dy (K-e^x)\phi(x)\zeta(y),
		\end{equation*}
		or equivalently, by the continuity of the integrands in $t$, 			
		\begin{equation} \label{distr}
		\int_{-\infty}^\infty dx \int_{y^*}^\infty dy\tilde {\mathcal{L}} u(t,x,y) \phi(x)\zeta(y) \geq r 	 \int_{-\infty}^\infty dx \int_{y^*}^\infty dy (K-e^x)\phi(x)\zeta(y).
		\end{equation}
		Let $\chi_1$ and $\chi_2 $ be two nonnegative $C^\infty$ functions  such that $\supp\chi_1\subseteq [-1,0]$, $\supp\chi_2\subseteq [0,1]$ and $\int \chi_1(x)dx=\int \chi_2(x)dx=1$. Let us apply \eqref{distr} with $\phi(x)=\lambda \chi_1(\lambda x)$ and $\zeta(y)=\sqrt{\lambda} \chi_2(\sqrt{\lambda} (y-y^*))$, with $\lambda>0$. For the right hand side of \eqref{distr}, we have
		\begin{align*}
		r  \int_{-\infty}^\infty dx \int_{y^*}^{\infty}dy (K-e^x)\phi(x)\zeta(y)= rK -r\int_{-\infty}^\infty e^{\frac{x}{\lambda}} \chi_1(x)dx .
		\end{align*}
		Since $ \supp\chi_1 \subset [-1,0]$, $\lim_{\lambda\rightarrow 0}\int e^{\frac{x}{\lambda}} \chi_1(x)dx =0$, so that
		\begin{equation}\label{leftside}
		\lim_{\lambda\rightarrow 0} r  \int_\R dx \int_{-\infty}^{y^*}dy (K-e^x)\phi(x)\zeta(y)= rK >0.
		\end{equation}
		
		As regards the left hand side of \eqref{distr}, we have
		\begin{align*}
		&\int_{-\infty}^{+\infty} dx \int_{y^*}^{\infty}\tilde{\mathcal{L}   } u(t,x,y) \phi(x)\zeta(y)dy\\&=	\int_{-\infty}^{+\infty} dx \int_{y^*}^{\infty}\! \frac y 2 \left( \frac{\partial^2u}{\partial x^2}(t,x,y)+2\rho\sigma \frac{\partial^2u}{\partial x \partial y}(t,x,y)+ \sigma^2\frac{\partial^2u}{\partial y^2}(t,x,y) \right)\lambda \chi_1(\lambda x )\sqrt{\lambda}\chi_2(\sqrt{\lambda}(y-y^*))dy\\&+\int_{-\infty}^{+\infty} dx \int_{y^*}^{\infty} \left(\left( r-\delta-\frac y 2 \right)\frac{\partial u}{\partial x}(t,x,y)+\kappa(\theta-y)\frac{\partial u}{\partial y}(t,x,y)\right)\lambda \chi_1(\lambda x )\sqrt{\lambda}\chi_2(\sqrt{\lambda}(y-y^*))dy.
		\end{align*}
		We first study the second order derivatives term. Integrating by parts two times  we have
		\begin{align*}
		\int_{-\infty}^{+\infty} &dx \int_{y^*}^\infty   \frac y 2 \frac{\partial^2}{\partial x^2 }u(t,x,y) \lambda \chi_1(\lambda x)\sqrt{\lambda}\chi_2(\sqrt{\lambda}(y-y^*))dy\\
		&=\int_{-\infty}^{+\infty} dx \int_{y^*}^\infty   \frac y 2 u(t,x,y) \lambda^3 \chi''_1(\lambda x)\sqrt{\lambda}\chi_2(\sqrt{\lambda}(y-y^*))dy\\
		&	=\lambda^{\frac 3 2} \int_{-\infty}^{+\infty} dx \int_0^{\infty}   \frac 1 2\left(y +\sqrt \lambda y^*\right) u\left(t,\frac x \lambda,\frac y{\sqrt \lambda } +y^*\right)  \chi''_1( x) \chi_2(y)dy.
		\end{align*}
		Since $u$ is bounded and $\chi_2$ has support in $[0,1]$, the last term goes to 0 as $\lambda$ tends to 0.
		For the mixed derivative term, since $\chi_2(0)=0$,
		\begin{align*}
		&	\int_{-\infty}^{+\infty}  dx \int_{y^*}^\infty \rho\sigma y\frac{\partial^2}{\partial x \partial y} u(t,x,y)    \lambda \chi_1(\lambda x)\sqrt{\lambda}\chi_2(\sqrt{\lambda}(y-y^*))dy\\&\quad=-\rho \sigma
		\int_{-\infty}^{+\infty} dx \int_{y^*}^\infty y\frac{\partial}{\partial y }u(t,x,y) \lambda^2 \chi_1'(\lambda x)  \sqrt{\lambda}\chi_2(\sqrt{\lambda}(y-y^*))dy\\&\quad=\rho \sigma
		\int_{-\infty}^{+\infty} dx \int_{y^*}^\infty  u(t,x,y) \lambda^2 \chi_1'(\lambda x)  \sqrt{\lambda}\chi_2(\sqrt{\lambda}(y-y^*))dy\\&\qquad+\rho \sigma
		\int_{-\infty}^{+\infty} dx \int_{y^*}^\infty u(t,x,y) \lambda^2 \chi_1'(\lambda x)  \lambda\chi'_2(\sqrt\lambda(y^*-y))dy\\
		&\quad=\lambda \rho \sigma
		\int_{-\infty}^{+\infty} dx \int_0^\infty  u\left(t,\frac x \lambda,\frac y{\sqrt \lambda } +y^*\right)  \chi_1'( x)  \chi_2(y) dy\\& \qquad  + \lambda^{\frac  3 2 }\rho \sigma
		\int_{-\infty}^{+\infty} dx \int_0^\infty u\left(t,\frac x \lambda,\frac y{\sqrt \lambda } +y^*\right)  \chi_1'( x)  \chi'_2(y)dy,
		\end{align*}
		which goes to 0 as $\lambda$ tends to 0 with the same arguments as before.
		
		Moreover, integrating by parts two times, we have
		\begin{align*}
		&	\int_{-\infty}^{+\infty} dx \int_{y^*}^\infty   \frac y 2 \sigma^2 \frac{\partial^2}{\partial  y^2} u(t,x,y)    \lambda \chi_1(\lambda x)\sqrt \lambda \chi_2(\sqrt \lambda (y-y^*)) dy\\
		&\quad=-	\int_{-\infty}^{+\infty} dx \int_{y^*}^\infty  \frac  {\sigma^2} 2 \frac{\partial}{\partial  y} u(t,x,y)    \lambda \chi_1(\lambda x)\left( \sqrt \lambda \chi_2(\sqrt \lambda (y-y^*)) + y\lambda \chi'_2(\sqrt \lambda (y-y^*)) \right)dy\\
		&\quad =\int_{-\infty}^{+\infty} dx \int_{y^*}^\infty    \frac  {\sigma^2} 2 u(t,x,y)  \left( 2 \lambda \chi_1(\lambda x)\lambda \chi'_2(\sqrt \lambda (y-y^*))\right)dy \\
		&\quad=\sqrt \lambda  \sigma^2\int_{-\infty}^{+\infty} dx \int_0^{\infty }   u\left(t,\frac x \lambda ,\frac y {\sqrt \lambda}+y^*\right)    \chi_1( x) \left(  \lambda \chi'_2(y)+\frac 1 2\lambda ^{\frac 3 2} \left( y +\sqrt \lambda y^*\right) \chi''_2(y)\right)  dy
		\end{align*}
		which again tends to 0 as $\lambda$ goes to 0. We now study the terms in \eqref{distr} which contains the first order derivatives of $u$. First, note that
		\begin{align*}
		&	 	\int_{-\infty}^{+\infty} dx \int_{y^*}^{\infty} \left( r-\delta-\frac y 2 \right)\frac{\partial }{\partial x}u(t,x,y)\lambda \chi_1(\lambda x )\sqrt \lambda\chi_2(\sqrt\lambda(y-y^*))dy \\&\quad=-	\int_{-\infty}^{+\infty} dx \int_{y^*}^{\infty} \left( r-\delta-\frac y 2 \right) u(t,x,y)\lambda^2 \chi'_1(\lambda x )\sqrt \lambda\chi_2(\sqrt\lambda(y-y^*))dy\\
		&\quad=-\sqrt \lambda	\int_{-\infty}^{+\infty} dx \int_0^{\infty} \left( \sqrt \lambda r-\sqrt \lambda \delta-\frac 1 2\left(y  +\sqrt \lambda y^*\right) \right) u\left(t,\frac x \lambda ,\frac y {\sqrt \lambda} +y^* \right)\chi'_1( x )\chi_2(y)dy.
		\end{align*}
		Again, passing to the limit, the last term tends to 0. On the other hand, 
		\begin{align*}
		&	\int_{-\infty}^{+\infty} dx \int_{y^*}^{\infty} \kappa(\theta-y)\frac{\partial }{\partial y}u(t,x,y)\lambda \chi_1(\lambda x )\sqrt \lambda\chi_2(\sqrt \lambda(y-y^*))dy\\
		&\quad=	\int_{-\infty}^{+\infty} dx \int_{y^*}^{\infty} \kappa\theta\frac{\partial }{\partial y}u(t,x,y)\lambda \chi_1(\lambda x )\sqrt \lambda\chi_2(\sqrt \lambda(y-y^*))dy \\&\qquad-\int_{-\infty}^{+\infty} dx \int_{y^*}^{\infty}\kappa y\frac{\partial }{\partial y}u(t,x,y)\lambda \chi_1(\lambda x )\sqrt \lambda\chi_2(\sqrt \lambda(y-y^*))dy.
		\end{align*}
		Integrating by parts and doing the usual change of variables we have
		\begin{align*}
		&	\int_{-\infty}^{+\infty} dx \int_{y^*}^{\infty} \kappa\theta\frac{\partial }{\partial y}u(t,x,y)\lambda \chi_1(\lambda x )\sqrt \lambda\chi_2(\sqrt \lambda(y-y^*)) dy\\
		&\quad=-\sqrt \lambda\int_{-\infty}^{+\infty} dx \int_0^{\infty}\kappa\theta  u\left(t,\frac x \lambda ,\frac y {\sqrt \lambda} +y^*\right)\chi_1( x )\chi'_2(y)	dy,
		\end{align*}
		which tends  to 0 as $\lambda$ tends to 0, 	while 
		$$
		-\int_{-\infty}^{+\infty} dx \int_{y^*}^{\infty}\kappa y\frac{\partial }{\partial y}u(t,x,y)\lambda \chi_1(\lambda x )\sqrt \lambda\chi_2(\sqrt \lambda(y-y^*))dy,
		$$
		which is nonpositive, since $u$ is nondecreasing in $y$. 
		We finally deduce that
		\begin{equation}
		\label{rightside}
		\limsup_{\lambda\rightarrow 0 }		\int_{-\infty}^{+\infty} dx \int_{y^*}^{\infty}dy\mathcal{L   } u(t,x,y) \phi(x)\zeta(y)\leq 0,
		\end{equation}
		which, together with \eqref{leftside}, contradicts \eqref{distr}. Then, the assertion follows.
	\end{proof}
	As regards  the regularity of the free boundary, we can prove the following result.
	
	\begin{proposition}
		For any $t\in[0,T)$ there exists a countable set $\mathcal N\subseteq (0,\infty)$ such that
		$$
		b(t^-,y)= b(t,y), \qquad y\in (0,\infty)\setminus\mathcal N.
		$$
	\end{proposition}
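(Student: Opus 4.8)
The plan is to exploit the two monotonicities of $b$ together with the fact that $P$ solves $(\partial_t+\mathcal L-r)P=0$ on the open continuation region $\mathcal C$. Since $t\mapsto b(t,y)$ is nondecreasing, the left limit $b(t^-,y)=\sup_{s<t}b(s,y)$ exists and satisfies $b(t^-,y)\le b(t,y)$; moreover $y\mapsto b(t^-,y)$ is nonincreasing, being a supremum of nonincreasing functions. Fix $t\in[0,T)$ and, for each rational $\xi>0$, set
$$\mathcal N_\xi=\{\,y>0:\ b(t^-,y)<\xi<b(t,y)\,\}.$$
Because both $y\mapsto b(t^-,y)$ and $y\mapsto b(t,y)$ are nonincreasing, $\mathcal N_\xi$ is the intersection of the two half-lines $\{b(t^-,\cdot)<\xi\}$ and $\{b(t,\cdot)>\xi\}$, hence an interval; and since a rational lies between any two distinct reals, $\{y:b(t^-,y)<b(t,y)\}=\bigcup_{\xi\in\mathbb Q,\,\xi>0}\mathcal N_\xi$. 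Thus it suffices to prove that each interval $\mathcal N_\xi$ reduces to at most one point: then $\mathcal N:=\bigcup_\xi\mathcal N_\xi$ is countable and $b(t^-,y)=b(t,y)$ off $\mathcal N$.

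The core step is to rule out that some $\mathcal N_\xi$ contains two points $y_1<y_2$. First I would build an open box on which $P$ solves the pricing PDE. Put $\xi_0=b(t^-,y_1)<\xi$; by the two monotonicities, $b(s,y)\le b(t^-,y)\le\xi_0$ for every $s<t$ and $y\in[y_1,y_2]$, so for any $t_1<t$ and any $\epsilon\in(0,\xi-\xi_0)$ the box $Q=(t_1,t)\times(\xi-\epsilon,\xi)\times(y_1,y_2)$ lies in $\mathcal C$ (there $s>\xi-\epsilon>\xi_0\ge b(s,y)$), and $Q$ stays away from the degeneracy $\{y=0\}$, where $\mathcal L$ is uniformly parabolic with smooth coefficients. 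On the terminal face, for $s\in(\xi-\epsilon,\xi)$ and $y\in(y_1,y_2)$ we have $s<\xi<b(t,y)<K$, hence $(t,s,y)\in\mathcal E$ and $P(t,s,y)=\varphi(s)=K-s$. Introducing $v=P-\varphi$ and using $\mathcal L(K-s)=-(r-\delta)s$, a direct computation gives $(\partial_t+\mathcal L-r)\varphi=\delta s-rK$, so on $Q$
$$\partial_t v+\mathcal L v-rv=rK-\delta s,$$
with $v>0$ on $Q$ and $v\equiv0$ on the smooth terminal face $R=(\xi-\epsilon,\xi)\times(y_1,y_2)$. By interior parabolic regularity up to the terminal face (away from the lateral boundary and from $\{y=0\}$) $v$ is $C^{1,2}$ up to time $t$ on $R$; since $v(t,\cdot)\equiv0$ there, all its spatial derivatives vanish and the equation yields $\partial_t v(t^-,s,y)=rK-\delta s$. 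On the other hand, as $(t,\xi,y)\in\mathcal E$ the supermartingale property of the discounted price gives $(\partial_t+\mathcal L-r)\varphi\le0$ there, i.e. $\delta\xi\le rK$, so $rK-\delta s>0$ for $s<\xi$ (using $r>0$ or $\delta>0$). But $v\ge0$ with $v(t,s,y)=0$ forces $t\mapsto v(t,s,y)$ to decrease to its minimum as $t\uparrow$, whence $\partial_t v(t^-,s,y)\le0$ — a contradiction.

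Consequently every $\mathcal N_\xi$ is empty or a single point, so $\mathcal N=\bigcup_{\xi\in\mathbb Q,\,\xi>0}\mathcal N_\xi$ is countable, which is the assertion. The delicate point — the only place where the degenerate nature of the Heston operator must be controlled — is the boundary regularity of $P$ (equivalently of $v$) up to the terminal time slice, which legitimizes evaluating $\partial_t v$ at $t^-$ through the PDE; this is available precisely because $Q$ is confined to $\{y\ge y_1>0\}$, where the generator is nondegenerate with smooth coefficients. I would also dispose of the borderline case $r=\delta=0$ separately, where early exercise is never strictly optimal and the statement is immediate.
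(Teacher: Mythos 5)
Your proof is correct, and it reaches the contradiction by a genuinely different mechanism than the paper. The bookkeeping differs first: you slice the jump set by rational levels $\xi$ and use the two monotonicities to show each slice $\mathcal N_\xi$ is an interval that must degenerate to a point, whereas the paper localises around a common continuity point $y^*$ of the monotone maps $y\mapsto b(t,y)$ and $y\mapsto b(t^-,y)$; both yield countability, and yours is arguably the cleaner reduction. The real divergence is in the contradiction itself. The paper stays entirely distributional: it squeezes $(\tilde{\mathcal L}-r)u$ at time $t$ from the left (continuation region, $\partial_t u\le 0$ gives $\ge 0$) and from the right (exercise region gives $\le 0$), concluding $(\tilde{\mathcal L}-r)\psi=\delta e^x-rK\equiv 0$ on an open box, which is absurd. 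You instead work classically on the box $Q$: you invoke parabolic Schauder regularity of $v=P-\varphi$ up to the terminal face $\{t\}\times R$ (legitimate since $Q$ avoids $\{y=0\}$, the terminal data vanish identically, and no corner compatibility is needed in the interior of $R$), read off $\partial_t v(t^-,\cdot)=rK-\delta s>0$ from the equation, and contradict $\partial_t v(t^-,\cdot)\le 0$ coming from $v\ge 0=v(t,\cdot)$. What the paper's route buys is that it never needs boundary regularity of the value function — only interior distributional inequalities and continuity — which is in keeping with the degenerate setting; what your route buys is a pointwise, more transparent sign argument, at the price of the up-to-the-face Schauder estimate, which you correctly identify as the delicate input. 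Both arguments share the same engine, the strict sign of $(\mathcal L-r)\varphi=\delta s-rK$ below the boundary, and both (the paper tacitly, you explicitly) must exclude the degenerate case $r=\delta=0$.
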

	\begin{proof}
		Without loss of generality we pass to the logarithm in the $s-$variable and we prove the assertion for the function $\tilde b(t,y)=\ln b(t,y)$.
		Fix $t\in [0,T)$ and recall that $y\mapsto \tilde b (t,y)$ is a nonincreasing function, so it has at most a countable set of discontinuity points. Let $y^*\in (0,\infty)$ be a continuity point for the maps $y\mapsto \tilde b(t,y)$ and $y\mapsto \tilde b(t^-,y)$ and assume that
		\begin{equation}\label{assu}
		\tilde b(t^-,y^*)<\tilde b(t,y^*).
		\end{equation}
		Set $	\epsilon=\frac {\tilde b(t,y^*)-\tilde b(t^-,y^*) }2$.  By continuity, there exist $y_0,y_1> 0$ such that for any $y\in (y_0,y_1)$ we have
		$$
		\tilde b(t,y)>	\tilde b(t,y^*)-\frac \epsilon 4, \qquad \mbox{ and }\qquad
		\tilde b(t^-,y)<	\tilde b(t^-,y^*)+\frac \epsilon 4.
		$$
		Therefore, by using \eqref{assu}, we get, for any $y\in (y_0,y_1)$,
		$$
		\tilde b(t,y)>	\tilde b(t,y^*)-\frac \epsilon 4>\tilde b(t^-,y^*)+\frac 3 4 \epsilon >	\tilde b(t^-,y^*)+\frac \epsilon 4>\tilde b(t^-,y).
		$$
		Now, set $b^-=\tilde b(t^-,y^*)+\frac \epsilon 4$ and $b^+=\tilde b(t^-,y^*)+\frac 3 4 $ and let  $(s,x,y)\in (0,t)\times (b^-,b^+)\times (y_0,y_1)$. Since  $t\mapsto		\tilde b(t,\cdot)$ is nondecreasing, we have $x>\tilde b(t^-,y)>\tilde b(s,y)$, so that $u(s,x,y)>\psi(x)$. Therefore, on the set $(0,t)\times (b^-,b^+)\times (y_0,y_1)$ we have 
		$$
		(	\tilde \L-r)u(s,x,y)= -\frac{\partial u}{\partial t}(s,x,y)\geq 0.
		$$
		This means that, for any nonnegative and $C^\infty$ test functions $\theta$, $\psi$ and $\zeta$ which have  support respectively in $(0,t)$, $(b^-,b^+)$ and $(y_0,y_1)$ we can write
		\begin{equation*}
		\int_0^{t}\theta(t)dt \int_{-\infty}^\infty dx \int_{y^*}^\infty dy ({\tilde{\L}} -r)u(t,x,y) \phi(x)\zeta(y) \geq 0.
		\end{equation*}
		By the continuity of the integrands in $t$, 	we deduce that 		$(\tilde \L -r)u(t,\cdot,\cdot)\geq 0$ in the sense of distributions on the set $(b^-,b^+)\times (y_0,y_1)$.
		
		On the other hand,  for any $(s,x,y)\in (t,T)\times (b^-,b^+)\times (y_0,y_1)$,  we have $x\leq	\tilde b(t,y)\leq 	\tilde b(s,y)$, so that $u(s,x,y)=\psi(x)$. Therefore, it follows from   $\frac{\partial u}{\partial t}+(	\tilde \L-r)u\leq 0$ and the continuity of the integrands that $(	\tilde \L-r)u(t\cdot, \cdot)=(	\tilde \L-r)\psi(\cdot)
		\leq 0$ in the sense of distributions on the set $ (b^-,b^+)\times (y_0,y_1)$. 
		
		We deduce that $(	\tilde \L-r)\psi= 0$ on the set $(b^-,b^+)\times (y_0,y_1)$, but it is easy to see that  $
		(	\tilde \L-r)\psi(x)=(	\tilde \L-r)(K-e^x)=\delta e^x-rK$ and thus cannot be identically zero in a nonempty open set.
	\end{proof}
	
	\begin{remark}
		It is worth observing that the arguments used  in \cite{V} in order to prove the continuity of the exercise price of American options in a multidimensional Black and Scholes model can be easily adapted to our framework.  In particular, if  we  consider the $t$-sections of the exercise region, that is
		\begin{equation}\label{tsection}
		\begin{split}
		\mathcal E_t&= \{ (s,y)\in (0,\infty)\times [0,\infty): P(t,s,y)=\varphi(s)      \},	\\&=\{ (s,y)\in (0,\infty)\times [0,\infty): s\leq b(t,y)\}, \qquad\qquad\qquad t\in [0,T),
		\end{split}
		\end{equation}
		we can easily prove that 
		\begin{equation}\label{tsections}
		\Ex_t=\bigcap_{u>t}\Ex_u,\qquad\qquad
		\Ex_t=\overline{\bigcup_{u<t}\Ex_u}.
		\end{equation}
		However, unlike the case of an American option on several assets, in our case \eqref{tsections} is not sufficient   to  deduce the continuity of the function $t\mapsto b(t,y)$.
	\end{remark}
	\subsection{Strict convexity in the continuation region}
	We know that $P$ is convex in the space variable (see Proposition \ref{properties_P}). In \cite{T} it is also proved that, in the case of non-degenerate stochastic volatility models, $P$ is strictly convex in the continuation region but the proof follows an analytical approach which cannot be applied in our degenerate model.   In this section we extend this result to the Heston model by using  purely probabilistic techniques. 
	
	We will need the following Lemma, whose proof can be found in the Appendix.
	\begin{lemma}\label{support}
		For every continuous function $s: [0,T]\rightarrow \R$ such that $s(0)=S_0$ and for every $\epsilon>0$ we have
		$$
		\P\left(\sup_{t\in [0,T]}|S_t-  s (t)|<\epsilon , \sup_{t\in [0,T]}|Y_t-Y_0|<\epsilon \right)>0.
		$$
	\end{lemma}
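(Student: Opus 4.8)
The plan is to prove the statement through the Stroock--Varadhan support theorem, after two reductions and one localization that removes the degeneracy of the coefficients at $y=0$. The argument is cleanest, and its geometric content transparent, when $Y_0>0$ (together with the standing assumption $\rho\in(-1,1)$, i.e. $\rho^2<1$); this is the regime relevant to the strict convexity argument in the continuation region. First I would reduce to a smooth, strictly positive target. Since $t\mapsto s(t)$ is continuous with $s(0)=S_0>0$ and we only need strict positivity of the probability, it suffices to treat $s\in C^1([0,T])$ with $s>0$ and $s(0)=S_0$: given such an $s$, pick $\tilde s\in C^1$ with $\tilde s(0)=S_0$, $\tilde s>0$ and $\sup_t|s(t)-\tilde s(t)|<\epsilon/2$, so that $\{\sup_t|S_t-\tilde s(t)|<\epsilon/2\}\subseteq\{\sup_t|S_t-s(t)|<\epsilon\}$. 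Passing to $X=\log S$ and $\tilde x=\log\tilde s$ (licit because $\tilde s$ is bounded away from $0$ and $\infty$), the event becomes a sup-norm tube of some radius $\epsilon'>0$ around $(\tilde x(\cdot),Y_0)$ in $C([0,T];\R^2)$, and we may freely assume $\epsilon'<Y_0/2$, since shrinking the tube only shrinks the event. It thus suffices to show that the path $(\tilde x(\cdot),Y_0)$ belongs to the topological support of the law of $(X,Y)$.

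By the support theorem this support is the closure, in the uniform topology, of the family of skeleton paths solving the Stratonovich control system associated with \eqref{hest2}. Decorrelating by $B=\rho W+\sqrt{1-\rho^2}\,\tilde B$ with $(\tilde B,W)$ an independent two-dimensional Brownian motion — here $\rho^2<1$ guarantees a genuinely independent noise source $\tilde B$ for $X$ — the driving vector fields are $V_1=(\sqrt{1-\rho^2}\sqrt y,\,0)$ and $V_2=(\rho\sqrt y,\,\sigma\sqrt y)$, the $y$-component of $V_1$ vanishing because $Y$ is driven by $W$ alone. After the Itô--Stratonovich correction, which only alters the drifts by smooth functions of $Y$, the controlled system reads
\[
\dot y(t)=\kappa(\theta-y)-\tfrac{\sigma^2}{4}+\sigma\sqrt{y}\,h_2(t),\qquad \dot x(t)=\Big(r-\delta-\tfrac{y}{2}-\tfrac{\sigma\rho}{4}\Big)+\sqrt{1-\rho^2}\sqrt{y}\,h_1(t)+\rho\sqrt{y}\,h_2(t),
\]
with controls $h_1,h_2$. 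The $y$-equation is steered by $h_2$ only: the constant choice $h_2\equiv(\tfrac{\sigma^2}{4}-\kappa(\theta-Y_0))/(\sigma\sqrt{Y_0})$, well defined precisely because $Y_0>0$, produces the stationary solution $y\equiv Y_0$. Substituting $y\equiv Y_0$ and this $h_2$ into the $x$-equation reduces it to $\dot x=c_0+\sqrt{1-\rho^2}\sqrt{Y_0}\,h_1(t)$ for a constant $c_0$; since $\sqrt{1-\rho^2}\sqrt{Y_0}>0$, taking $h_1(t)=(\tilde x'(t)-c_0)/(\sqrt{1-\rho^2}\sqrt{Y_0})$ yields exactly $x(t)=\tilde x(t)$. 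Hence $(\tilde x(\cdot),Y_0)$ is itself a skeleton path and lies in the support.

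The main obstacle is that the classical support theorem requires coefficients that are smooth and bounded with bounded derivatives, whereas $\sqrt{y}$ fails to be Lipschitz at $y=0$. I would circumvent this by localization: since $\epsilon'<Y_0/2$, replace $\sqrt{y}$ by $f\in C^\infty(\R)$, bounded with bounded derivatives, coinciding with $\sqrt{y}$ on $[Y_0-\epsilon',Y_0+\epsilon']$, and likewise truncate the unbounded drift $\kappa(\theta-y)$ outside this interval. The modified diffusion $(\hat X,\hat Y)$ then has smooth bounded coefficients, the support theorem applies to it, and the skeleton computation above is untouched because it lives entirely at $y=Y_0$. On the event $\{\sup_t|Y_t-Y_0|<\epsilon'\}$ the path $Y$ never leaves the interval where $f=\sqrt{y}$, so $(\hat X,\hat Y)$ and $(X,Y)$ coincide there; consequently the $\epsilon'$-tube around $(\tilde x(\cdot),Y_0)$ has positive probability for $(\hat X,\hat Y)$, hence for $(X,Y)$. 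Undoing the passage to $X=\log S$ and the approximation of $s$ by $\tilde s$ gives the assertion. I would finally remark that the construction genuinely relies on $Y_0>0$ and $\rho^2<1$: for $Y_0=0$ the coefficient $\sqrt{Y_0}$ driving $S$ vanishes, so one cannot steer $S$ along an arbitrary path while keeping $Y$ near $0$, which is exactly why strict positivity of the initial volatility is assumed in the application.
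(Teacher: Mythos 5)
Your argument for $Y_0>0$ is sound and takes a genuinely different route from the paper's. You invoke the two\-/dimensional Stroock--Varadhan support theorem after localizing the coefficients around $y=Y_0$ and exhibit an explicit skeleton path (a constant control $h_2$ freezing $y\equiv Y_0$, then $h_1$ steering $x$ along $\tilde x$). The paper never uses the joint support theorem: it conditions on $Y$, observes that $\int_0^t\sqrt{Y_s}\,dW_s=\frac1\sigma\big(Y_t-Y_0-\int_0^t\kappa(\theta-Y_s)\,ds\big)$ is $Y$-measurable, so that given $Y$ the process $X$ is a deterministic functional of $Y$ plus $\sqrt{1-\rho^2}\int_0^t\sqrt{Y_s}\,d\bar W_s$, which by Dubins--Schwarz is a time-changed Brownian motion $\tilde W_{\int_0^tY_s\,ds}$; the one-dimensional support theorem for Brownian motion then gives $\P\big(\sup_t|X_t-m(t)|<\epsilon\mid Y\big)>0$ a.s., and it only remains to show $\P\big(\sup_t|Y_t-Y_0|<\epsilon\big)>0$, which for $Y_0>0$ is proved by exactly your localization trick applied to the scalar equation for $Y$ alone. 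Your approach buys an explicit control-theoretic picture; the paper's buys the case $Y_0=0$ essentially for free.

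And that is where the genuine gap lies: the lemma must hold for $Y_0=0$, and your closing justification for excluding that case is incorrect on both counts. First, the strict convexity theorem where the lemma is applied is stated for $(0,s_1,y),(0,s_2,y)$ in the continuation region with $y\in[0,\infty)$, so $y=0$ is allowed; the paper's proof of the lemma treats $Y_0=0$ as a separate case (via the strong Markov property at the hitting time of $\epsilon/2$ to get $\P(\sup_tY_t<\epsilon)>0$). Second, the heuristic ``one cannot steer $S$ along an arbitrary path while keeping $Y$ near $0$'' is false: on $\{\sup_tY_t<\epsilon\}$ the clock $\tau(t)=\int_0^tY_s\,ds$ is still a.s.\ strictly increasing, because $\kappa\theta>0$ forces $Y$ to leave $0$ instantly and to spend Lebesgue-null time there; hence $t\mapsto\tilde W_{\tau(t)}$ has full support in $\{f\in C([0,T]):f(0)=0\}$ however small $\tau(T)$ is, and $X$ can be driven along any continuous path by the independent noise $\bar W$. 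Your skeleton construction cannot be repaired at $Y_0=0$ (the control $h_2$ blows up), so to cover that case you must switch to the conditioning argument or an equivalent. A minor further point: the reduction to a strictly positive target $\tilde s$ tacitly requires $s>-\epsilon/2$ everywhere; this is harmless (for $s$ dipping below $-\epsilon$ the event is empty, so the lemma is implicitly meant for positive targets), but it deserves a sentence.
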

	
	\begin{theorem}
		The function $s \mapsto P(t,s,y)$ is strictly convex in the continuation region.
	\end{theorem}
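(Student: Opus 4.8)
The plan is to argue by contradiction, exploiting the convexity already established in Proposition \ref{properties_P}, the multiplicative structure of the stock price, and the support result of Lemma \ref{support}. Fix $(t,y)$ and suppose $s\mapsto P(t,s,y)$ fails to be strictly convex at some $s_\lambda$ lying in the continuation region, i.e. $s_\lambda>b(t,y)$. Since $P(t,\cdot,y)$ is convex, failure of strict convexity at $s_\lambda$ means that $P(t,\cdot,y)$ is affine on a nondegenerate interval $[s_0,s_1]$ with $s_0<s_\lambda<s_1$; write $s_\lambda=\lambda s_0+(1-\lambda)s_1$ with $\lambda\in(0,1)$.

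First I would record the linearity of the flow in the initial stock value: from \eqref{hest2volta} one has $S^{t,s,y}_u=s\,E^{t,y}_u$, where $E^{t,y}_u=\exp\big(\int_t^u(r-\delta-Y^{t,y}_v/2)dv+\int_t^u\sqrt{Y^{t,y}_v}\,dB_v\big)$ does not depend on $s$. Let $\tau^*$ be the optimal stopping time for the starting point $(s_\lambda,y)$, namely the first entrance of $(u,S_u,Y_u)$ into the exercise region $\mathcal E$ (this identification is standard Snell-envelope theory). Using $\tau^*$ as a sub-optimal rule for the endpoints together with the pathwise convexity of $x\mapsto(K-x)_+$ gives
\begin{align*}
P(t,s_\lambda,y)&=\lambda P(t,s_0,y)+(1-\lambda)P(t,s_1,y)\\
&\ge \E\big[e^{-r(\tau^*-t)}\big(\lambda(K-s_0E^{t,y}_{\tau^*})_++(1-\lambda)(K-s_1E^{t,y}_{\tau^*})_+\big)\big]\\
&\ge \E\big[e^{-r(\tau^*-t)}(K-s_\lambda E^{t,y}_{\tau^*})_+\big]=P(t,s_\lambda,y),
\end{align*}
the first line being the assumed affineness. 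Hence every inequality is an equality; the middle one forces the nonnegative integrand to vanish a.s. Since $\lambda(K-a)_++(1-\lambda)(K-b)_+>(K-\lambda a-(1-\lambda)b)_+$ precisely when $a<K<b$, this means
$$\P\big(s_0E^{t,y}_{\tau^*}<K<s_1E^{t,y}_{\tau^*}\big)=0,\qquad\text{i.e.}\qquad \P\big(S^{t,s_\lambda,y}_{\tau^*}\in(\alpha K,\beta K)\big)=0,$$
with $\alpha=s_\lambda/s_1<1<\beta=s_\lambda/s_0$.

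The heart of the proof is then to contradict this last identity by producing, via Lemma \ref{support}, a positive-probability event on which the process is stopped at maturity with $S_T$ just above the strike. Recall that $b(u,y')<K$ for every $u<T$ and every $y'\ge 0$. Choose $\delta\in(0,(\beta-1)K)$ and a continuous deterministic path $\sigma:[t,T]\to(0,\infty)$ with $\sigma(t)=s_\lambda$ that rises from $s_\lambda$ to the level $K+\delta$ on a short interval $[t,t+\eta]$ and then stays equal to $K+\delta$ on $[t+\eta,T]$. Using $b(t,y)<s_\lambda$ and the right-continuity of $t\mapsto b(t,y)$ (so that $b(t+\eta,y)<s_\lambda$ for $\eta$ small), the path keeps a strictly positive distance from the free boundary, uniformly on $[t,T)$: on $[t+\eta,T)$ simply because $\sigma\equiv K+\delta>K>b$. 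By Lemma \ref{support}, for $\epsilon\in(0,\delta)$ small the event $A=\{\sup_u|S_u-\sigma(u)|<\epsilon,\ \sup_u|Y_u-y|<\epsilon\}$ has positive probability, and on $A$ one has $S_u>K>b(u,Y_u)$ on $[t+\eta,T)$ and $S_u>b(u,Y_u)$ on $[t,t+\eta]$ by the initial margin, so $(u,S_u,Y_u)$ never meets $\mathcal E$ before $T$ and therefore $\tau^*=T$. On $A$ we also get $S_{\tau^*}=S_T\in(K+\delta-\epsilon,K+\delta+\epsilon)\subset(K,\beta K)\subset(\alpha K,\beta K)$, whence $\P(S^{t,s_\lambda,y}_{\tau^*}\in(\alpha K,\beta K))>0$, the desired contradiction.

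The main obstacle, and the place needing care, is this last step. One must transfer the inequality $\sigma(u)>b(u,y)$ to $\sigma(u)>b(u,Y_u)$, which is immediate on $[t+\eta,T)$ (there $b(u,Y_u)<K<\sigma(u)$ regardless of $Y_u$) but on $[t,t+\eta]$ requires the monotonicity and left-continuity of $b(u,\cdot)$ together with $Y_u>y-\epsilon$. The decisive idea is to keep the target path at the level $K+\delta$ strictly above $K$ — hence strictly above the boundary, which is always $<K$ — rather than driving it down to $K$; this completely sidesteps any delicate behaviour of the free boundary as $u\uparrow T$, where $b(u,y)$ may approach $K$. Finally, the statement that non-strict convexity at an interior point of $\mathcal C_t$ produces such an affine segment $[s_0,s_1]$ is a routine consequence of convexity, so the argument above applies at every $(t,s,y)$ with $s>b(t,y)$.
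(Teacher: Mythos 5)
Your argument is correct and follows essentially the same route as the paper's: both reduce strict convexity to the strict convexity of $x\mapsto(K-x)_+$ across the strike, evaluated at the optimal stopping time of the intermediate starting point, and both invoke Lemma \ref{support} to build a positive-probability scenario in which the process never leaves the continuation region and finishes just above $K$. The only cosmetic omissions are the case $s_\lambda\geq K+\delta$, where the target path must decrease rather than rise (boundary-avoidance is then trivial since the path stays above $K>b$, exactly as the paper notes in its final sentence), and the need to take $\delta$ slightly smaller (e.g.\ $\delta<(\beta-1)K/2$) so that $K+\delta+\epsilon<\beta K$.
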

	\begin{proof}
		Without loss of generality we can assume $t=0$. 	 We have to prove that, if $(s_1,y), \, (s_2,y) \in (0,\infty)\times [0,\infty)$ are such that  $(0,s_1,y), (0,s_2,y) \in \mathcal{C}$, then
		\begin{equation}\label{conv}
		P(0,\theta s_1+(1-\theta)s_2,y) < 
		\theta P(0, s_1,y)+(1-\theta) P(0,s_2,y).
		\end{equation}
		Let us   rewrite the price process  as
		$
		S_t^{s,y}=se^{ \int_{0}^{t}\left(r-\delta-\frac {Y_u}{2}\right)du +\int_{0}^{t}\sigma \sqrt{Y_u}dB_u  }:= sM^y_t,
		$
		where $M^y_t=S^{1,y}_t$ and  assume that, for example, $s_1>s_2$.
		We claim that it is enough to prove that, for 
		$\varepsilon>0$ small enough,
		\begin{equation}\label{conv_2}
		\begin{split}
		\P\Big(  &(\theta s_1+(1-\theta)s_2)M^y_t>b(t,Y_t) \, \forall t \in [0,T)\, \& \,(\theta s_1+(1-\theta)s_2)M^y_T \in (K-\varepsilon,K+\varepsilon) \Big) >0.
		\end{split}
		\end{equation}
		In fact, 	let $\tau^*$ be the optimal stopping time for $P(0,\theta s_1+(1-\theta)s_2,y)$. If  $ (\theta s_1+(1-\theta)s_2)M^y_t>b(t,Y_t)$ for every $ t \in [0,T) $, then we are in the continuation region for all $  t \in [0,T) $, hence  $\tau^*=T$. Then, the condition $ (\theta s_1+(1-\theta)s_2)M^y_T \in (K-\varepsilon,K+\varepsilon) $ for  $\varepsilon>0$ small enough ensures on one hand that $s_1M^y_{\tau^*}>K$, since 
		\begin{align*}
		s_1M^y_{\tau^*}	& = (\theta s_1+(1-\theta)s_2 )M^y_{\tau^*}+ (1-\theta)(s_1-s_2)M^y_{\tau^*}\\
		& >K-\varepsilon + \frac{(1-\theta)(s_1-s_2)(K-\varepsilon)}{\theta s_1+(1-\theta)s_2 }>K,
		\end{align*}
		for $\varepsilon$ small enough. 		 	On the other hand, it also ensures that $ s_2M^y_{\tau^*}<K $, which can be proved  with similar arguments. 
		Therefore, we get
		\begin{equation*}\label{condition_convexity}
		\P\left((K-s_1M^y_{\tau^*})_+=0\, \& \, (K-s_2M^y_{\tau^*})_+>0 \right)>0,
		\end{equation*}
		which, from a closer look  at the graph of the function $x	\mapsto (K-x)_+$,  implies that
		\begin{align*}
		\E[e^{-r\tau^*}(K-(\theta s_1+(1-\theta)s_2 ) M^y_{\tau^*})_+] < \theta \E[e^{-r\tau^*}(K-s_1 M^y_{\tau^*})_+] +(1-\theta)\E[e^{-r\tau^*}(K-s_2 M^y_{\tau^*})_+],
		\end{align*}
		and, as a consequence, \eqref{conv}.

		So, the rest of the proof is devoted  to prove that \eqref{conv_2} is actually  satisfied.
		
		With this aim, we first consider a suitable continuous function $m:[0,T]\rightarrow\R$ constructed as follows. 
		In order to simplify the notation, we set $s=\theta s_1+(1-\theta)s_2$.
		Note that, for $\varepsilon>0$ small enough, we have $s=\theta s_1+(1-\theta)s_2 >b(0,y)+\varepsilon$ since $(0,s_1,y)$ and $ (0,s_2,y)$ are in the continuation region $ \mathcal{C}$, that is $s_1,s_2\in (b(0,y),\infty)$.  By the right continuity of the map $t\mapsto b(t,y)$, we know that there exists $\bar t\in (0,T)$ such that $s>b(t,y)+\frac \varepsilon 2$ for any $t\in [0,\bar t]$. Moreover the function $y\mapsto b(\bar t, y)$ is left continuous and nonincreasing, so there exists $\eta_\varepsilon>0$ such that $s >b(\bar t,z)+\frac \varepsilon 4$ for any $z\geq y-\eta_\varepsilon$. 
		Assume now that $s\leq K+\frac \varepsilon 2$ and
		set 
		$$
		m(t)=\begin{cases}1 + \frac t {\bar t} \left( \frac{ K+\frac \varepsilon 2 }{s}- 1 \right), \qquad &0\leq t\leq \bar t,\\
		\frac{K+\frac \varepsilon 2}{s},\qquad & \bar t\leq t\leq T.
		\end{cases}
		$$
		Note that $m$ is continuous, $m(0)=1$ and, recalling that $t\mapsto b(t,y)$ is nondecreasing and $b(t,y)<K$,
		$$
		sm(t)=\begin{cases}
		s + \frac t {\bar t} \left(K+\frac \varepsilon 2 - s \right)	\geq s >b(\bar t,y-\eta_\varepsilon )+\frac \varepsilon 4, \qquad &0\leq t\leq \bar t,\\
		K+\frac \varepsilon 2\geq b(t,y-\eta_\varepsilon),  &\bar t\leq  t\leq T.
		\end{cases}
		$$
		Moreover, by  Lemma \ref{support}, we know that, for any $\epsilon>0$,
		$$
		\P\left(\sup_{t\in [0,T]}|sM^y_t-  sm (t)|<\epsilon , \sup_{t\in [0,T]}|Y_t-y|<\epsilon \right)>0.
		$$
		Therefore, by applying Lemma \ref{support} with  $\epsilon=\min\left\{\frac \varepsilon 8,\eta_\varepsilon\right\}$, we have that, with positive probability,
		$$
		sM^y_t>sm(t)-\frac \varepsilon 8\geq b(t, y-\eta_\varepsilon)+\frac \varepsilon 8\geq b(t,Y_t).
		$$
		and
		$$
		sM^y_T\leq sm(T)+\frac \varepsilon 8\leq K+\varepsilon,\qquad 	sM^y_T\geq sm(T)-\frac \varepsilon 8\geq K-\varepsilon,
		$$
		which proves \eqref{conv_2} concluding the proof. If $s>K+\frac \varepsilon 2$, then it is enough to take $m(t)$ as a nonincreasing continuous function such that $m(0)=1$ and $sm(T)=K+\frac \varepsilon 2$. Then, the assertion follows with the same reasoning.

		%
		%
	\end{proof}
	
	\subsection{Early exercise premium}\label{sect-EEP}
	We now extend to the stochastic volatility	Heston model a well known result in the Black and Scholes world, the so called \textit{early exercise premium formula}. It is an explicit formulation of the quantity $P-P_e$, where $P_e=P_e(t,s,y)$ is the European put price with the same strike price $K$ and maturity $T$ of the American option with price function $P=P(t,s,y)$.  Therefore, it represents the additional price you have to pay for the possibility of exercising before maturity.
	\begin{proposition}\label{EEP}
		Let $P_e(0,S_0,Y_0)$ be the European put price at time $0$ with maturity $T$ and strike price $K$. Then, one has
		\begin{equation*}
		P(0,S_0,Y_0)=P_e(0,S_0,Y_0)-\int_0^T e^{-rs}\E[(\delta S_s  -rK)\ind{\{S_s\leq b(s,Y_s)\}}]ds.
		\end{equation*}
	\end{proposition}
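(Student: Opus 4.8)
The plan is to apply a (generalized) Itô formula to the discounted price process $e^{-rt}P(t,S_t,Y_t)$ and to identify its finite-variation part explicitly. First I would record the obstacle problem solved by $P$. By the analytic characterization of the American price obtained in Chapter~\ref{chapter-art1} (transported from the log-price variables to the $(s,y)$ variables via $X_t=\log S_t$), the put price $P$ satisfies, in the variational sense, $\partial_t P+\L P-rP\le 0$, $P\ge\varphi$, $P(T,\cdot,\cdot)=\varphi$, together with the complementarity relation $\big(\partial_t P+\L P-rP\big)(P-\varphi)=0$. On the continuation region $\mathcal C$ this forces $\partial_t P+\L P-rP=0$, while on the exercise region $\mathcal E=\{s\le b(t,y)\}$ we have $P=\varphi=K-s$ (recall $b(t,y)<K$), so a direct computation with $\L$ in \eqref{L} gives $\partial_t\varphi+\L\varphi-r\varphi=-(r-\delta)s-r(K-s)=\delta s-rK$. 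Hence, globally and almost everywhere,
\begin{equation*}
\partial_t P+\L P-rP=(\delta s-rK)\,\ind{\{s\le b(t,y)\}}.
\end{equation*}

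Next I would apply Itô's formula to $t\mapsto e^{-rt}P(t,S_t,Y_t)$. Since $P$ is only a weak solution and is not $C^{1,2}$, this has to be routed through the regularity of the variational solution: using that $P\in L^2([0,T];H^2)$ with $\partial_t P\in L^2$ (so that $\partial_t P+\L P$ is a genuine function, as in Proposition~\ref{reg2}) and the martingale identities of Proposition~\ref{martingale} and Proposition~\ref{identification}, the process
\begin{equation*}
N_t=e^{-rt}P(t,S_t,Y_t)-P(0,S_0,Y_0)-\int_0^t e^{-rs}\big(\partial_t P+\L P-rP\big)(s,S_s,Y_s)\,ds
\end{equation*}
is a local martingale. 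Inserting the drift computed above yields
\begin{equation*}
e^{-rT}P(T,S_T,Y_T)=P(0,S_0,Y_0)+\int_0^T e^{-rs}(\delta S_s-rK)\,\ind{\{S_s\le b(s,Y_s)\}}\,ds+N_T.
\end{equation*}

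Then I would take expectations. The decisive simplification for the put is that everything is bounded: $0\le P\le K$, hence $e^{-rt}P(t,S_t,Y_t)\in[0,K]$, and on the set where the indicator is nonzero one has $S_s\le b(s,Y_s)<K$, so $|\delta S_s-rK|\le(\delta+r)K$ and the drift integral is bounded on $[0,T]$. Consequently $N$ is a bounded local martingale on $[0,T]$, therefore a true martingale, so $\E[N_T]=\E[N_0]=0$. Using $P(T,S_T,Y_T)=\varphi(S_T)=(K-S_T)_+$ we obtain
\begin{equation*}
\E\!\left[e^{-rT}(K-S_T)_+\right]=P(0,S_0,Y_0)+\int_0^T e^{-rs}\,\E\!\left[(\delta S_s-rK)\,\ind{\{S_s\le b(s,Y_s)\}}\right]ds,
\end{equation*}
and recognizing the left-hand side as the European put price $P_e(0,S_0,Y_0)$ and rearranging gives the announced formula.

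The hard part will be the rigorous justification of the Itô expansion for the non-smooth $P$, equivalently the Doob--Meyer decomposition of the supermartingale $e^{-rt}P(t,S_t,Y_t)$ together with the identification of its predictable increasing part with the drift above. I would obtain it by approximating $P$ with the smooth solutions of the penalized problems of Chapter~\ref{chapter-art1} (and, for the put, by the regularization of the kinked payoff $\psi(x)=(K-e^x)_+$ already used in the proof of Theorem~\ref{theorem2}), applying the classical Itô formula to the approximants and passing to the limit using the energy estimates. The second delicate point is that the boundary set $\{S_s=b(s,Y_s)\}$ must contribute nothing to the drift; this follows because $(S,Y)$ admits a transition density (cf. Proposition~\ref{density_CIR} and the density argument used in the proof of Proposition~\ref{identification}) while $\{s=b(t,y)\}$ is Lebesgue-null in $s$ for each fixed $(t,y)$. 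Once these two facts are in place, the integrability of the drift and the vanishing of the martingale term are immediate from the boundedness noted above.
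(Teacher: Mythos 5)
Your route is the classical PDE one: identify the drift $\partial_tP+\L P-rP$ with the function $(\delta s-rK)\ind{\{s\le b(t,y)\}}$ and integrate it along the paths via It\^o. The paper deliberately avoids this: it works with the Doob--Meyer decomposition of $U_t=e^{-rt}P(t,S_t,Y_t)$, Tanaka's formula applied to $Z_t=e^{-rt}(K-S_t)_+$ and to $\zeta_t=U_t-Z_t$, and reduces the whole statement to showing that the local time $L^0(\zeta)$ vanishes (Lemma \ref{local_time}).

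The genuine gap in your argument is the treatment of the free boundary. Writing ``$\partial_tP+\L P-rP=(\delta s-rK)\ind{\{s\le b(t,y)\}}$ a.e.'' presupposes that this distribution has no singular part carried by $\{s=b(t,y)\}$; if $\partial_sP$ jumped across the boundary, $\partial_s^2P$ would contain a surface measure there and the It\^o expansion would produce a local-time term that does \emph{not} disappear merely because $(S,Y)$ has a density and the boundary is Lebesgue-null --- compare Tanaka's formula, where $|B|$ picks up a local time even though $\{B_s=0\}$ has zero Lebesgue measure. Your proposed fix via the penalized approximations does not close this: the uniform $L^p$ bound on $\frac1\varepsilon(\psi-u_\varepsilon)_+$ in Proposition \ref{reg2} requires $\psi\in L^2([0,T];H^2(\O,\m))$ and $\partial_t\psi+\L\psi\in L^p$, which the put payoff violates ($\partial_x^2\psi$ contains a Dirac mass at $x=\log K-\bar ct$), and the bound blows up along any regularization $\psi_n\to\psi$; so no $L^p$ identification of the drift survives the limit. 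This is exactly why the paper proves $L^0(\zeta)\equiv 0$ by a separate argument exploiting the monotonicity of $b$ in $t$ and $y$ (through the auxiliary function $j(s,y)=\sup_{\tau<s,\,\zeta>y}b(\tau,\zeta)$ and the identity $j=b$ $ds\,dy$-a.e.), and why the smooth-fit section comes only \emph{after} the early exercise premium. Without an argument of this type, or a genuine strong-regularity result for $P$ up to the free boundary, your drift identity --- and hence the formula --- is not justified. Once that point is granted, the remainder of your proof (boundedness of $P$ and of the drift for the put, so the local martingale is a true martingale; take expectations; use $P(T,\cdot)=(K-S_T)_+$) is correct and coincides with the paper's final step.
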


	The proof of Proposition \ref{EEP} relies on purely probabilistic  techniques and is based on the  results first  introduced in \cite{J}. Let $U_t=e^{-rt}P(t,S_t,Y_t)$ and $Z_t=e^{-rt}\varphi(S_t)$. Since $U_t$ is a supermartingale, we have the Snell decomposition 
	\begin{align}\label{doob_dec}
	& U_t=M_t-A_t,
	\end{align}
	where $M$ is a martingale and $A$ is a nondecreasing predictable process with $A_0=0$, continuous with probability 1 thanks to the continuity of $\varphi$. On the other hand, 
	\begin{align*}
	Z_t=e^{-rt}(K-S_t)_+&=Z_0-r\int_0^t e^{-rs}(K-S_s)_+ds -\int_0^t e^{-rs}\ind{\{S_s\leq K\}}dS_s +\int_0^te^{-rs}dL^K_s(S)\\
	&=m_t+a_t,
	\end{align*}
	where $L^K_t(S)$ is the local time of $S$ in $K$,
	$$
	m_t=Z_0 -\int_0^te^{-rs}\ind{\{S_s\leq K\}}S_s\sqrt{Y_s}dB_s
	$$  is a local martingale, and 
	$$
	a_t=-r\int_0^t e^{-rs}(K-S_s)_+ds -\int_0^t e^{-rs}\ind{\{S_s\leq K\}}S_s(r-\delta)ds +\int_0^te^{-rs}dL^K_s(S)
	$$
	is a predictable process with finite variation and $a_0=0$. Recall that $a_t$ can be written as the sum of an increasing and a decreasing component, that is $a_t=a_t^++a_t^-.$
	Since $(L^K_t)_t$ is increasing, we deduce that the decreasing process $(a_t^-)_t$ is absolutely continuous with respect to the Lebesgue measure, that is 
	$$
	da_t^- \ll dt.
	$$
	We denote by $k_t=k(t,S_t,Y_t)$ the  density of $a_t^-$ w.r.t. $dt$. 
	
	We now define $$\zeta_t=U_t-Z_t \geq 0.$$
	Thanks to  Tanaka's formula,
	\begin{equation*}\label{Tanaka}
	\zeta_t=	\zeta_t^+= \zeta_0 + \int_0^t \ind{\{\zeta_s >0\}}d\zeta_s+ \frac 12L^0_t(\zeta),
	\end{equation*}	
	where $L^0_t(\zeta)$ is the local time of $\zeta$ in $0$.
	Therefore,
	\begin{align*}
	\zeta_t&= \zeta_0 + \int_0^t \ind{\{\zeta_s >0\}}d(U_s-Z_s)+ \frac 12L^0_t(\zeta)\\
	&=\zeta_0 + \int_0^t \ind{\{\zeta_s >0\}}dM_s- \int_0^t \ind{\{\zeta_s >0\}}dm_s - \int_0^t \ind{\{\zeta_s >0\}}da_s+ \frac 12L^0_t(\zeta),
	\end{align*}	
	where the last equality follows from the fact that the process $A_t$ only increases on the set $\{\zeta_t=0\}$.
	Then, we can write
	\begin{align*}
	U_t&= U_0 + \bar{M}_t - \int_0^t \ind{\{\zeta_s >0\}}da_s+ \frac 12L^0_t(\zeta)+a_t  =U_0 + \bar{M}_t + \int_0^t \ind{\{\zeta_s =0\}}da_s+ \frac 12L^0_t(\zeta),
	\end{align*}
	where $\bar{M}_t=\int_0^t \ind{\{\zeta_s>0\}}d(M_s-m_s)+m_t$ is a local martingale. 
	Thanks to the continuity of $U_t$ we have the uniqueness of the decompositions, so
	\begin{equation}\label{Aa}
	-A_t= \int_0^t \ind{\{\zeta_s =0\}}da_s+ \frac 12L^0_t(\zeta).
	\end{equation}
	This means in particular that $\int_0^t \ind{\{\zeta_s =0\}}da_s+ \frac 12L^0_t(\zeta)$ is decreasing, but $L^0_t(\zeta)$ is increasing  so $-\int_0^t \ind{\{\zeta_s =0\}}da_s$ must be an increasing process and 
	\begin{equation*}
	\frac 12 dL^0_t(\zeta) \ll \ind{\{\zeta_t =0\}}da_t^-\ll dt.
	\end{equation*}
	We define $\mu_t$ the density of $\frac 12 L^0_t(\zeta) $ w.r.t. $dt$ and, by Motoo Theorem (see \cite{DM}), we can write $\mu_t=\mu(S_t,Y_t)$. 
	Moreover, let us consider the $t$-sections of the exercise region defined in \eqref{tsection}. We can easily prove the following Lemma.
	\begin{lemma}\label{lemmat}
		For any $t\in[0,T)$ we have 
		\begin{equation*}
		\Ex_t=\overline{	\mathring{\Ex_t}},
		\end{equation*}
		and
		$
		\mathring{\Ex}_t=\{(s,y)\in (0,\infty)\times[0,\infty): 0<s<b(t,y^+)\}\neq \emptyset,
		$
		where $b(t,y^+)=\lim_{y\rightarrow y^+}b(t,y)$. 
	\end{lemma}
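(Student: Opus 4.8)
The plan is to combine the monotonicity and one--sided continuity of the free boundary $b(t,\cdot)$ with the positivity result of Proposition~\ref{positivity} and the fact that the exercise region is closed. Since $P(t,\cdot,\cdot)$ and $\varphi$ are continuous, the $t$--section $\Ex_t=\{(s,y):0<s\le b(t,y)\}$ is a closed subset of $(0,\infty)\times[0,\infty)$; hence $\overline{\mathring{\Ex_t}}\subseteq \Ex_t$ is automatic, and the whole statement reduces to identifying $\mathring{\Ex_t}$ and proving the reverse inclusion $\Ex_t\subseteq\overline{\mathring{\Ex_t}}$.

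First I would establish the displayed identity $\mathring{\Ex_t}=\{(s,y):0<s<b(t,y^+)\}$, using that $y\mapsto b(t,y)$ is nonincreasing and left continuous. For $\supseteq$, given $0<s_0<b(t,y_0^+)$ I would exploit $b(t,y_0+\eta)\to b(t,y_0^+)$ as $\eta\downarrow0$ to select $\epsilon>0$ with $s_0+\epsilon\le b(t,y_0+\epsilon)$ and $s_0-\epsilon>0$; since $b(t,\cdot)$ is nonincreasing and left continuous, $\inf_{|y-y_0|<\epsilon}b(t,y)=b(t,(y_0+\epsilon)^-)=b(t,y_0+\epsilon)\ge s_0+\epsilon$, so the relative box $(s_0-\epsilon,s_0+\epsilon)\times\big([0,\infty)\cap(y_0-\epsilon,y_0+\epsilon)\big)$ lies in $\Ex_t$. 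Conversely, any interior point admits such a box, which upon letting $\epsilon\downarrow0$ forces $s_0<b(t,y_0^+)$. Nonemptiness then follows from positivity: for every $y$ one has $b(t,y^+)\ge b(t,y+1)>0$ by Proposition~\ref{positivity} and monotonicity, so $\{0<s<b(t,y^+)\}\neq\emptyset$.

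The reverse inclusion $\Ex_t\subseteq\overline{\mathring{\Ex_t}}$ is the crux. Take $(s_0,y_0)\in\Ex_t$, i.e. $0<s_0\le b(t,y_0)$. The key observation is that for every $y<y_0$ monotonicity gives $b(t,y^+)=\lim_{\eta\downarrow0}b(t,y+\eta)\ge b(t,y_0)\ge s_0$, so $(s_0-1/n,y)\in\mathring{\Ex_t}$ for all $y<y_0$ and all large $n$. When $y_0>0$ I would take the sequence $(s_0-1/n,\,y_0-1/n)$, which lies in $\mathring{\Ex_t}$ and converges to $(s_0,y_0)$, giving $(s_0,y_0)\in\overline{\mathring{\Ex_t}}$. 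I expect the main obstacle to be the boundary value $y_0=0$: there one cannot approach from below in $y$, and if $b(t,\cdot)$ had a downward jump at $0$ (that is, $b(t,0^+)<s_0\le b(t,0)$) then no interior point would be close to $(s_0,0)$. I would therefore rule this out by showing $b(t,0^+)=b(t,0)$, which I expect to obtain from the continuity of $P$ together with the same distributional test--function argument as in Proposition~\ref{positivity}: on any nondegenerate open set in the exercise region one must have $(\tilde{\L}-r)\psi=\delta e^x-rK\equiv0$, which is impossible, and this precludes $b(t,\cdot)$ from jumping across a vertical segment accumulating at the origin. Once the $y_0=0$ case is settled, the two inclusions combine to give $\Ex_t=\overline{\mathring{\Ex_t}}$, and together with the formula for $\mathring{\Ex_t}$ and its nonemptiness this completes the proof.
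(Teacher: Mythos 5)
Your proof follows essentially the same route as the paper's: both identify the candidate set $\tilde{\Ex}_t=\{(s,y): 0<s<b(t,y^+)\}$, show it is relatively open and equals $\mathring{\Ex}_t$ using that $b(t,\cdot)$ is nonincreasing and left continuous (you do this with an explicit box, the paper via lower semicontinuity of $(s,y)\mapsto b(t,y^+)-s$; the two arguments are the same in substance), get nonemptiness from $b>0$ (Proposition \ref{positivity}), and prove $\Ex_t\subseteq\overline{\mathring{\Ex}_t}$ by approaching $(s_0,y_0)$ diagonally from below along $(s_0-1/n,\,y_0-1/n)$ and using $b(t,y^+)\ge b(t,y_0)$ for $y<y_0$. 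All of this is correct and matches the paper's proof step for step.

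The one place you diverge is the case $y_0=0$. You are right that the diagonal sequence exits the domain there; for what it is worth, the paper's own proof uses exactly the same sequence and does not address this case either, so your observation identifies a genuine (if minor) lacuna rather than creating one. However, your proposed repair does not work as described. If $b(t,0^+)<b(t,0)$, the exceptional set $\{(s,0):\,b(t,0^+)<s\le b(t,0)\}$ is a one-dimensional segment of $\Ex_t$ lying on the boundary $\{y=0\}$, while every point $(s,y)$ with $y>0$ and $s>b(t,0^+)\ge b(t,y)$ belongs to the continuation region. Hence there is no nondegenerate open subset of the exercise region on which one could test $(\tilde{\L}-r)\psi=\delta e^x-rK$ against smooth test functions, and the contradiction scheme of Proposition \ref{positivity} simply does not engage; any honest proof of $b(t,0^+)=b(t,0)$ would have to exploit the boundary behaviour of the degenerate operator at $y=0$, which is a different and harder question. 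So either restrict the diagonal approximation to $y_0>0$ and state the $y_0=0$ case as an explicit additional claim, or supply a real argument for right-continuity of $b(t,\cdot)$ at $0$; asserting that it follows from the earlier test-function computation is not justified.
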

	The proof is given in the Appendix for the sake of completeness.  Now, let us prove the following preliminary result.
	\begin{lemma}\label{local_time}
		The local time $L^0_t(\zeta)$ is indistinguishable from 0. 
	\end{lemma}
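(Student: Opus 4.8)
The plan is to show that the measure $dL^0_t(\zeta)$ charges only the boundary of the exercise region, and that this boundary is a.s. avoided by the occupation measure of $(S,Y)$. Recall that $\zeta=U-Z\ge 0$ and that the local time $L^0(\zeta)$ increases only on $\{t:\zeta_t=0\}=\{t:(S_t,Y_t)\in\Ex_t\}$. We have already established that $\frac12 L^0_t(\zeta)$ is absolutely continuous with respect to $dt$, with a density $\mu_t=\mu(S_t,Y_t)\ge 0$ supported on the exercise region $\Ex$. It therefore suffices to prove that $\mu$ vanishes on the interior $\mathring{\Ex}$, and then to observe that the remaining part $\Ex\setminus\mathring{\Ex}$ is a Lebesgue-null set which is a.s. not charged by the occupation times of $(S,Y)$.

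For the interior I would argue by matching finite-variation parts. By Lemma \ref{lemmat}, on $\mathring{\Ex}_t$ one has $s<b(t,y^+)\le b(t,y)<K$, hence $P(t,s,y)=\varphi(s)=K-s$. Consequently, as long as $(S_t,Y_t)$ stays in the open set $\mathring{\Ex}_t$, the process $U_t=e^{-rt}(K-S_t)$ is a genuine It\^o process, whose absolutely continuous finite-variation part has density $e^{-rt}(\delta S_t-rK)$ and which carries no singular (local-time) component of its own. On the other hand, on the exercise region $da_s=e^{-rs}(\delta S_s-rK)ds$, since there $(K-S_s)_+=K-S_s$, $\ind{\{S_s\le K\}}=1$, and $L^K(S)$ does not grow because $S_s<K$. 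Comparing the absolutely continuous finite-variation density of $U$ obtained from It\^o's formula with the one read off the decomposition $U_t=U_0+\bar M_t+\int_0^t\ind{\{\zeta_s=0\}}da_s+\frac12 L^0_t(\zeta)$, namely $\ind{\{\zeta_s=0\}}e^{-rs}(\delta S_s-rK)+\mu_s$, and using $\zeta_s=0$ on $\Ex$, forces $\mu_s=0$ whenever $(S_s,Y_s)\in\mathring{\Ex}_s$. Uniqueness of the semimartingale decomposition legitimizes this identification; the main technical point is to carry it out on the random set of times spent in $\mathring{\Ex}$ rather than pointwise in space.

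It then remains to control the boundary. Since $\mu=0$ both on the continuation region (where $\zeta>0$) and on $\mathring{\Ex}$, the measure $dL^0_t(\zeta)$ is carried by $\{t:(S_t,Y_t)\in\Ex_t\setminus\mathring{\Ex}_t\}$. For each fixed $t$, the monotone map $y\mapsto b(t,y)$ has at most countably many discontinuities, so $\Ex_t\setminus\mathring{\Ex}_t$ is contained in the graph $\{s=b(t,y)\}$ together with countably many vertical segments, and is therefore a planar Lebesgue-null set. Because the law of $(S_t,Y_t)$ is absolutely continuous (the diffusion $(X,Y)$ has a density, as used earlier in the chapter), $\E\int_0^T\ind{\{(S_t,Y_t)\in\Ex_t\setminus\mathring{\Ex}_t\}}\,dt=\int_0^T\P\big((S_t,Y_t)\in\Ex_t\setminus\mathring{\Ex}_t\big)\,dt=0$, so this occupation time is a.s. zero. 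As $\mu_t=\mu_t\ind{\{(S_t,Y_t)\in\Ex_t\setminus\mathring{\Ex}_t\}}$, the integral $\frac12 L^0_T(\zeta)=\int_0^T\mu_t\,dt$ is an integral over a time-set of measure zero and hence vanishes a.s., independently of the size of $\mu$. By continuity and monotonicity of $t\mapsto L^0_t(\zeta)$ the local time is then indistinguishable from $0$.
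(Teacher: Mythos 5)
Your overall architecture --- kill the density $\mu$ on the spatial interior of the exercise region, then dispose of the remaining boundary set by an occupation-time/null-set argument --- parallels the paper's, and your third step is essentially sound (the paper does the same thing with the slightly larger set $\{(s,x,y): j(s,y)\le x\le b(s,y)\}$, $j(s,y)=\sup_{\tau<s,\,\zeta>y}b(\tau,\zeta)$, shown to be negligible via $j=b$ $ds\,dy$-a.e.). The genuine gap is in your second step. Uniqueness of the semimartingale decomposition identifies the finite-variation parts of $U$ as measures on the whole of $[0,T]$ --- that is exactly identity \eqref{Aa}, $-A_t=\int_0^t\ind{\{\zeta_s=0\}}da_s+\frac12L^0_t(\zeta)$ --- but it does not allow you to equate the differential of $U$ with that of $e^{-rt}(K-S_t)$ on the random time set $\{t:(S_t,Y_t)\in\mathring{\Ex}_t\}$. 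Two continuous semimartingales that coincide on a random set of times need not have coinciding decompositions there: $|B_t|$ and the zero process coincide on $\{B=0\}$, yet the finite-variation part of $|B|$ (its local time) is carried entirely by that set. Here the term $\frac12 L^0(\zeta)$ is, by construction, precisely the possible singular discrepancy between the finite-variation parts of $U$ and $Z$ on $\{U=Z\}$, so arguing ``$U=Z$ there, hence no extra local time'' assumes what is to be proved. Moreover $\{t:(S_t,Y_t)\in\mathring{\Ex}_t\}$ is not a union of intervals (it contains entry times into the exercise region), so there is no interval on which It\^o's formula for $e^{-rt}(K-S_t)$ can legitimately be compared with the decomposition of $U$.

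The step can be repaired, but by a different mechanism, which is the one the paper uses: for $a\neq 0$ the local time $L^a(\zeta)$ does not charge $\{\zeta=0\}$, and the right-continuity of $a\mapsto L^a$ then shows that $dL^0(\zeta)$ does not charge the time-interior $\mathcal O$ of $\{s:\zeta_s=0\}$. One then checks that $S_s<j(s,Y_s)$ (note the strict supremum over $\tau<s$) forces a two-sided neighbourhood of $s$ to lie in $\{\zeta=0\}$, i.e.\ $s\in\mathcal O$. Your condition $S_s<b(s,Y_s^+)$ only yields a right-neighbourhood, since $b$ is merely nondecreasing in $t$; to use it you would additionally have to observe that the left endpoints of the components of $\mathcal O$ form a countable set, which the continuous measure $dL^0$ does not charge. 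As written, the ``matching of decompositions'' is not a proof, and it is the crux of the lemma.
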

	\begin{proof}
		
		In order to simplify the notation, we set $L^0_t=L^0_t(\zeta)$ in this proof. We want to prove that 
		$$
		L^0_t=\int_0^t \ind{\{\zeta_s=0\}}dL^0_s=0.
		$$
		Note that, for $a\neq 0$, we have 
		$$
		\int_0^t\ind{\{\zeta_s=0\}}dL^a_s=0.
		$$
		Therefore, due to the right continuity of the local time with respect to $a$, we have 
		$$
		\int_0^t\ind{\{s\in \mathcal{O} \}}dL^0_s=0,
		$$
		where $\mathcal O$ is the interior of the the set $\{s\mid \zeta_s=0\}$, i.e.
		$$
		\mathcal O=\{  s\in (0,t)\mid \exists\epsilon>0, \forall\tau \in (s-\epsilon,s+\epsilon) \, \zeta_\tau=0   \}.
		$$
		We note that 
		\begin{equation}\label{oprimeino}
		\mathcal O'\subseteq \mathcal O,
		\end{equation}
		where
		$
		\mathcal O'=\{ s\in (0,t)\mid S_s<j(s,Y_s)  \},
		$
		with $
		j(s,y)=\sup_{\tau<s,\zeta>y}b(\tau,s).
		$

		In fact, if $S_s<j(s,Y_s)$, there exists $\tau<s$ and $\zeta>Y_s$ such that $S_s<j(\tau,\zeta)$. By the continuity of the trajectories, there exists $\epsilon >0$ such that 
		$$
		S_\theta <b(\tau,\zeta),\qquad \theta \in (s-\varepsilon,s+\varepsilon).
		$$
		Therefore, for $\theta \in (s-\varepsilon,s+\varepsilon)$ and $\theta$ near enough to $s$, we have $Y_\theta <\zeta$ and $\theta>\tau$, so that $b(\tau,\zeta)\leq b(\theta,Y_\theta)$and so $\zeta_\theta=0$.
		Therefore \eqref{oprimeino} is proved and we have
		$$
		\int_0^t\ind{\{S_s<j(s,Y_s)\}}dL^0_s=0.
		$$
		Now,
		\begin{align*}
		L^0_t&=\int_0^t \ind{\{\zeta_s=0\}}dL^0_s\\&=\int_0^t \ind{\{S_s\leq b(s,Y_s)\}}dL^0_s\\&\leq \int_0^t \ind{\{S_s< j(s,Y_s)\}}dL^0_s+\int_0^t \ind{\{j(s,Y_s)\leq S_s\leq b(s,Y_s)\}}dL^0_s\\&=\int_0^t \ind{\{j(s,Y_s)\leq S_s\leq b(s,Y_s)\}}dL^0_s\\&=\int_0^t \ind{\{j(s,Y_s)\leq S_s\leq b(s,Y_s)\}}\mu(S_s,Y_s)ds\\&=\int_0^tds \int  \ind{\{j(s,y)\leq x\leq b(s,y)\}}\mu(x,y)p(s,x,y)dxdy=0,
		\end{align*}
		if we can prove that $j(s,y)=b(s,y)$ $dsdy$ a.e. 	
		
		In order to prove this, note that $j(s,y)=\sup_{\tau<s}\left(\sup_{\zeta>y}b(\tau,\zeta)\right)$. For any fixed $\tau \geq 0$, we set
		$$
		b_+(\tau,y)=\sup_{\zeta>y}b(\tau,\zeta)=\lim_{n\rightarrow\infty}b\left(\tau,y+\frac 1 n \right),
		$$	
		since the function $y\mapsto b(\tau,y)$ is nonincreasing. On the other hand, $s\mapsto b(s,y)$ is nondecreasing, so 
		$$
		j(s,y)=\sup_{\tau<s}	b_+(\tau,y)=\lim_{n\rightarrow\infty} b_+\left(s-\frac 1 n ,y\right).
		$$
		Therefore, for any $y\geq 0$
		$$j(s,y)=b_+(s,y),\qquad ds\, a.e.
		$$
		and, for any $s>0$
		$$b_+(s,y)=b(s,y),\qquad dy\, a.e.
		$$
		so that
		$$
		j(s,y)=b(s,y),\qquad dsdy\, a.e.
		$$
		which concludes the proof.
		%
		%
	\end{proof}
	We can now prove Proposition \ref{EEP}. 
	\begin{proof}[Proof of Proposition \ref{EEP}]
		Thanks to \eqref{Aa} and Proposition \ref{local_time}  we can rewrite \eqref{doob_dec} as
		\begin{align*}
		U_t&=M_t + \int_0^t \ind{\{U_s=Z_s\}}da_s=M_t+ \int_0^t e^{-rs}(\mathcal{L}-r)\varphi(S_s)\ind{\{S_s\leq b(s,Y_s)\}}ds,
		\end{align*}
		where the last equality derives from the application of the It\^{o} formula to the discounted payoff $Z$. In particular, we have
		\begin{align*}
		U_0= M_0=\E[ M_T]&=\E[U_T]-\E\left[\int_0^T e^{-rs}(\mathcal{L}-r)\varphi(S_s)\ind{\{S_s\leq b(s,Y_s)\}}ds\right]\\&=\E[U_T]-\int_0^T e^{-rs}\E[(\delta S_s  -rK)\ind{\{S_s\leq b(s,Y_s)\}}]ds.
		\end{align*}
		
		The assertion follows  recalling that $U_0=P(0,S_0,Y_0)$ and $\E[U_T]=\E[Z_T]=\E[e^{-rT}(K-S_T)_+]$, which corresponds to the price $P_e(0,S_0,Y_0)$ of an European put with maturity $T$ and strike price $K$. 
	\end{proof}
	\subsection{Smooth fit}\label{sect-sf}
	In this section we analyse the behaviour of the derivatives of the value function with respect to the $s$ and $y$ variables on the boundary of the continuation region.  In other words, we prove a weak formulation of the so called smooth fit principle. 
	
	In order to do this,  we need  two technical lemmas whose proofs can be found in the appendix. The first one is a general result about  the behaviour of the  trajectories of the CIR process.
	\begin{lemma}\label{lemmaY}
		For all $y\geq 0$ we have, with probability one,
		$$
		\limsup_{t\downarrow 0 }\frac{Y^y_t-y}{\sqrt{2t\ln\ln(1/t)}}=-	\liminf_{t\downarrow 0 }\frac{Y^y_t-y}{\sqrt{2t\ln\ln(1/t)}}=\sigma\sqrt{y}.
		$$
	\end{lemma}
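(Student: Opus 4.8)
The plan is to read the claimed identity as a small-time law of the iterated logarithm for the CIR diffusion $Y^y$ and to reduce it to the classical law of the iterated logarithm for Brownian motion at the origin. First I would use the semimartingale decomposition coming from $dY^y_t=\kappa(\theta-Y^y_t)dt+\sigma\sqrt{Y^y_t}dW_t$,
$$
Y^y_t-y=\int_0^t\kappa(\theta-Y^y_s)\,ds+\sigma\int_0^t\sqrt{Y^y_s}\,dW_s=:A_t+M_t.
$$
Since $Y^y$ has continuous paths with $Y^y_0=y$, for almost every $\omega$ there is $\varepsilon(\omega)>0$ with $|\kappa(\theta-Y^y_s)|\le C(\omega)$ for $s\le\varepsilon(\omega)$, whence $|A_t|\le C(\omega)t$; dividing by $\sqrt{2t\ln\ln(1/t)}$ shows that $A_t/\sqrt{2t\ln\ln(1/t)}\to 0$ a.s., so the finite-variation part is negligible and only $M_t$ governs the $\limsup$ and $\liminf$.

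For the martingale part I would invoke the Dambis--Dubins--Schwarz theorem (see \cite{RY}): there is a Brownian motion $\beta$ (on a possibly enlarged space) with $M_t=\beta_{\langle M\rangle_t}$ and $\langle M\rangle_t=\sigma^2\int_0^t Y^y_s\,ds$. By continuity of $Y^y$ and $Y^y_0=y$ one gets $\langle M\rangle_t/t\to\sigma^2 y$ a.s. as $t\downarrow 0$. Treating first $y>0$, there is a random $\tau>0$ on which $Y^y_s>0$, so on $[0,\tau]$ the map $t\mapsto u=\langle M\rangle_t$ is continuous and strictly increasing with $\langle M\rangle_0=0$, hence $u$ sweeps out a full right-neighbourhood of $0$ as $t\downarrow 0$. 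Writing
$$
\frac{M_t}{\sqrt{2t\ln\ln(1/t)}}=\frac{\beta_u}{\sqrt{2u\ln\ln(1/u)}}\cdot\sqrt{\frac{\langle M\rangle_t}{t}}\cdot\sqrt{\frac{\ln\ln(1/u)}{\ln\ln(1/t)}},
$$
the last two factors tend a.s. to $\sigma\sqrt{y}$ (using $u=\langle M\rangle_t\asymp\sigma^2 y\,t$, so $\ln\ln(1/u)/\ln\ln(1/t)\to 1$), while the Brownian law of the iterated logarithm at $0$ gives $\limsup_{u\downarrow 0}\beta_u/\sqrt{2u\ln\ln(1/u)}=1$ and the corresponding $\liminf=-1$. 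Because $u(t)$ runs continuously through all small positive values, the $\limsup$ and $\liminf$ over $t\downarrow 0$ coincide with those over $u\downarrow 0$, producing $\sigma\sqrt{y}$ and $-\sigma\sqrt{y}$ respectively, and combining with the negligibility of $A_t$ yields the statement for $y>0$.

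It remains to treat the degenerate endpoint $y=0$, which I expect to be the main obstacle, since the diffusion coefficient vanishes and the strictly-monotone time-change argument above breaks down. Here I would argue directly: from $Y^0_s\to 0$ one has $\langle M\rangle_t=\sigma^2\int_0^t Y^0_s\,ds=o(t)$ a.s., and the Brownian law of the iterated logarithm provides, for all small $u$, the bound $|\beta_u|\le 2\sqrt{2u\ln\ln(1/u)}$; substituting $u=\langle M\rangle_t$ and exploiting that $\ln\ln$ grows more slowly than any power, one checks that $\sqrt{\langle M\rangle_t\ln\ln(1/\langle M\rangle_t)}=o\bigl(\sqrt{t\ln\ln(1/t)}\bigr)$, so that $M_t=o\bigl(\sqrt{2t\ln\ln(1/t)}\bigr)$. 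Together with $A_t=O(t)$ this forces both the $\limsup$ and the $\liminf$ to equal $0=\sigma\sqrt{0}$, completing the proof. The two points demanding real care are the a.s.\ control of the random normalisation ratio $\langle M\rangle_t/t$ and the legitimacy of inserting a continuous random time change into the Brownian law of the iterated logarithm; these, and the slowly-varying estimate in the $y=0$ case, are where the technical work concentrates.
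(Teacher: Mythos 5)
Your argument is correct, and it reaches the conclusion by a route that differs in one essential respect from the paper's. The paper decomposes $Y^y_t-y=\sigma\sqrt{y}\,W_t+\kappa\int_0^t(\theta-Y^y_s)\,ds+\sigma\int_0^t(\sqrt{Y^y_s}-\sqrt{y})\,dW_s$, applies the classical law of the iterated logarithm directly to the \emph{original} Brownian motion $W$, and reduces everything else to a single lemma: if $H_t\to 0$ a.s. then $\int_0^tH_s\,dW_s=o\bigl(\sqrt{2t\ln\ln(1/t)}\bigr)$, proved via Dubins--Schwarz and the estimate $\limsup_{t\downarrow0}f(\varepsilon t)/f(t)\le\sqrt{\varepsilon}$ for $f(t)=\sqrt{2t\ln\ln(1/t)}$. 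You instead keep the full martingale $M_t=\sigma\int_0^t\sqrt{Y^y_s}\,dW_s$, time-change it into a Brownian motion $\beta$, and transfer the LIL through the random clock $u=\langle M\rangle_t$. Both proofs hinge on the same two ingredients (Dambis--Dubins--Schwarz and the slow variation of $\ln\ln$), but the paper's residual-integrand lemma handles $y>0$ and $y=0$ in a single stroke (the integrand $\sqrt{Y^y_s}-\sqrt{y}$ vanishes at $0$ in both cases), whereas your version pays for the more direct time-change with a case split at $y=0$ and with the extra verification that $t\mapsto\langle M\rangle_t$ is strictly increasing near $0$ when $y>0$ so that the $\limsup$ over $t$ may be replaced by the $\limsup$ over $u$. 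You have correctly identified and discharged the two delicate points (a.s. control of $\langle M\rangle_t/t$ and the legitimacy of the continuous random time substitution in the LIL), so nothing is missing; the paper's formulation is merely slightly more economical.
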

	
	The second one is a result about the behaviour of the trajectories of a standard Brownian motion. 
	\begin{lemma}\label{lemmaBM}
		Let $(B_t)_{t\geq 0}$ be a standard Brownian motion and let  $(t_n)_{n\in\N}$ be a deterministic sequence of positive numbers with $\lim_{n\rightarrow\infty}t_n=0$. We have, with probability one, 
		
		\begin{equation}\label{detILL0}
		\liminf_{n\rightarrow \infty }\frac{B_{t_n}}{\sqrt{t_n}}=-\infty
		\end{equation}
	\end{lemma}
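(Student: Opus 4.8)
The plan is to reduce \eqref{detILL0} to a statement along a conveniently chosen deterministic subsequence and then to exploit the independence of Brownian increments over disjoint intervals. The genuine difficulty is that the variables $B_{t_n}/\sqrt{t_n}$ are each standard Gaussian but highly dependent (they are values of a single path), so the second Borel--Cantelli lemma cannot be applied to them directly. First I would extract a subsequence $s_k=t_{n_k}$, with $n_1<n_2<\cdots$, decreasing so fast that $s_{k+1}\le s_k/(k+1)^2$; this is possible because $t_n\to 0$, so for each $k$ one may pick $n_{k+1}>n_k$ with $t_{n_{k+1}}$ below the prescribed threshold. It then suffices to prove that, for this single subsequence, $\liminf_k B_{s_k}/\sqrt{s_k}=-\infty$ almost surely, since passing to a subsequence can only lower the $\liminf$.

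Write $D_k=B_{s_k}-B_{s_{k+1}}$. These are independent, $D_k\sim N(0,v_k)$ with $v_k=s_k-s_{k+1}\in[\tfrac34 s_k,\,s_k]$ (using $s_{k+1}\le s_k/4$), and since $s_k\to 0$ the a.s. continuity of $B$ at $0$ gives $B_{s_k}=\sum_{j\ge k}D_j$. Fixing a positive integer $M$, I would introduce the two families of events
\[
A_k=\{D_k<-(M+1)\sqrt{s_k}\},\qquad G_k=\{B_{s_{k+1}}\le \sqrt{s_k}\}.
\]
On $A_k\cap G_k$ one has $B_{s_k}=D_k+B_{s_{k+1}}<-(M+1)\sqrt{s_k}+\sqrt{s_k}=-M\sqrt{s_k}$, hence $A_k\cap G_k\subseteq\{B_{s_k}/\sqrt{s_k}<-M\}$. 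The events $A_k$ are independent, and since $\sqrt{s_k/v_k}\le 2/\sqrt3$ one gets $\P(A_k)\ge \Phi(-2(M+1)/\sqrt3)>0$ uniformly in $k$, so $\sum_k\P(A_k)=\infty$ and, by the second Borel--Cantelli lemma, $A_k$ occurs infinitely often a.s. For $G_k$, since $B_{s_{k+1}}\sim N(0,s_{k+1})$ and $\sqrt{s_k/s_{k+1}}\ge k+1$, we have $\P(G_k^c)=\P\big(N(0,1)>\sqrt{s_k/s_{k+1}}\big)\le \Phi(-(k+1))$, which is summable; by the first Borel--Cantelli lemma $G_k$ holds for all large $k$ a.s.

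Combining the two facts, $A_k\cap G_k$ occurs infinitely often a.s., so $\P(B_{s_k}/\sqrt{s_k}<-M \text{ infinitely often})=1$. Intersecting these almost sure events over $M\in\N$ (a countable family) yields $\liminf_k B_{s_k}/\sqrt{s_k}=-\infty$ a.s., and therefore $\liminf_n B_{t_n}/\sqrt{t_n}=-\infty$ a.s., which is \eqref{detILL0}. The only delicate point is the control of the \emph{inner} value $B_{s_{k+1}}$: it is handled by forcing the subsequence to decay fast enough that $B_{s_{k+1}}$ is negligible at the scale $\sqrt{s_k}$, which is exactly what the bound $s_{k+1}\le s_k/(k+1)^2$ guarantees. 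The decomposition $B_{s_k}=D_k+B_{s_{k+1}}$ is what cleanly separates a fresh, independent, order-$\sqrt{s_k}$ Gaussian increment from this small remainder, and this separation is the heart of the argument.
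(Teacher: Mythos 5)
Your proof is correct. Every step checks out: the subsequence $s_{k+1}\le s_k/(k+1)^2$ can indeed be extracted since $t_n\to0$; the increments $D_k=B_{s_k}-B_{s_{k+1}}$ live on disjoint intervals and are therefore independent with variance $v_k\in[\tfrac34 s_k,s_k]$, so $\P(A_k)$ is bounded below uniformly and the second Borel--Cantelli lemma applies; the bound $\P(G_k^c)\le\P(N(0,1)>k+1)$ is summable, so the first Borel--Cantelli lemma forces $G_k$ eventually; and the combination gives $B_{s_k}/\sqrt{s_k}<-M$ infinitely often for each fixed $M$, whence the conclusion after intersecting over $M\in\N$ and passing from the subsequence back to the full sequence.

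Your route is, however, genuinely different from the paper's. The paper first applies time inversion to reduce \eqref{detILL0} to showing $\limsup_n B_{t_n}/\sqrt{t_n}=+\infty$ for $t_n\to\infty$, and then argues via asymptotic decorrelation: the covariance of $B_{t_m}/\sqrt{t_m}$ and $B_{t_n}/\sqrt{t_n}$ is at most $\sqrt{t_m/t_n}$, so along a sparse subsequence the joint law of finitely many such normalized values converges to a product of independent standard Gaussians, which forces $\P\big(\bigcap_k\{B_{t_{n_k}}/\sqrt{t_{n_k}}\le c\}\big)=0$ for every $c$. Both arguments hinge on the same underlying mechanism --- extracting a geometrically sparse subsequence so that the Gaussian variables decouple --- but you implement the decoupling pathwise, by splitting $B_{s_k}=D_k+B_{s_{k+1}}$ into an exactly independent increment plus a negligible remainder, and then invoke the two Borel--Cantelli lemmas. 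This buys you a fully self-contained, quantitative argument that avoids both the time-inversion step and the iterated weak-convergence limit in the paper (whose ``iterating this procedure'' passage is left somewhat informal); the paper's version, in exchange, stays entirely at the level of covariance computations for the normalized variables and never needs to manipulate increments.
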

	
	
	We are now in a position to prove the following smooth fit result.
	\begin{proposition}\label{sfx}
		For any $(t,y)\in [0,T)\times [0,\infty)$ we have $\frac{\partial}{\partial s} P(t,b(t,y),y)=\varphi'(b(t,y))$.
	\end{proposition}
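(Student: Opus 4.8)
Throughout I would fix $(t,y)$ and, exactly as in the proof of strict convexity, reduce to $t=0$ without loss of generality, writing $b=b(0,y)$. By Proposition \ref{positivity} we have $b\in(0,K)$, so $\varphi$ is differentiable at $b$ with $\varphi'(b)=-1$, and the claim becomes $\frac{\partial}{\partial s}P(0,b,y)=-1$. Since $s\mapsto P(0,s,y)$ is convex (Proposition \ref{properties_P}), the one-sided derivatives exist; and because $P(0,s,y)=\varphi(s)=K-s$ for $s\le b$, the left derivative already equals $-1$, so by convexity it suffices to show that the right derivative $\partial_s^+P(0,b,y)$ equals $-1$. The lower bound $\partial_s^+P(0,b,y)\ge -1$ is immediate, since for small $h>0$ one has $P(0,b+h,y)\ge\varphi(b+h)=K-b-h$ while $P(0,b,y)=K-b$.

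For the reverse inequality the plan is to exploit the multiplicative structure $S^{0,s,y}_u=s\,M^y_u$, with $M^y_u=S^{0,1,y}_u$ and $M^y_0=1$, as in the convexity theorem. Letting $\tau_h$ be the optimal stopping time for $P(0,b+h,y)$ and using that $\tau_h$ is admissible (though suboptimal) for the starting point $b$, I would obtain
\begin{equation*}
P(0,b+h,y)-P(0,b,y)\le \E\Big[e^{-r\tau_h}\big((K-(b+h)M^y_{\tau_h})_+-(K-bM^y_{\tau_h})_+\big)\Big].
\end{equation*}
Since $s\mapsto(K-sm)_+$ is convex and nonincreasing, its difference quotient over $[b,b+h]$ is at most $-m\,\ind{\{(b+h)m<K\}}$ and at least $-m$; dividing by $h$ then gives
\begin{equation*}
\frac{P(0,b+h,y)-P(0,b,y)}{h}\le -\,\E\Big[e^{-r\tau_h}M^y_{\tau_h}\ind{\{(b+h)M^y_{\tau_h}<K\}}\Big].
\end{equation*}

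The crux is to pass to the limit $h\downarrow 0$ in this bound. Note $\tau_h$ is nondecreasing in $h$ (a higher starting point reaches $\mathcal E$ later), so $\tau_h\downarrow\tau_0:=\lim_{h\downarrow0}\tau_h\ge 0$, and the hard part will be to show $\tau_0=0$ almost surely, i.e. that starting exactly on the boundary the process enters the exercise region along a sequence of times decreasing to $0$. If instead $\tau_0>0$ on a set of positive probability, then $S^{0,b,y}_u\ge b(u,Y^y_u)$ for all $u<\tau_0$, and I would contradict this using the fine path behaviour recorded in Lemmas \ref{lemmaY} and \ref{lemmaBM}: along a deterministic sequence $u_n\downarrow0$ the downward Brownian fluctuations force $\log S^{0,b,y}_{u_n}-\log b$ to be negative of order $\sqrt{u_n}$ with a constant tending to infinity (Lemma \ref{lemmaBM}), while the oscillation of $Y^y$ around $y$ stays controlled by the iterated-logarithm rate $\sqrt{2u_n\ln\ln(1/u_n)}$ (Lemma \ref{lemmaY}); combined with the monotonicity of $b(\cdot,y)$ in time, this should yield $S^{0,b,y}_{u_n}<b(u_n,Y^y_{u_n})$ for $u_n$ arbitrarily close to $0$, whence $\tau_0=0$.

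Granting $\tau_0=0$, path-continuity gives $M^y_{\tau_h}\to M^y_0=1$ and $e^{-r\tau_h}\to1$, while $\ind{\{(b+h)M^y_{\tau_h}<K\}}\to \ind{\{b<K\}}=1$ since $b<K$. Dominated convergence, justified by the integrability of $\sup_{u\le\varepsilon}M^y_u$, then yields $\limsup_{h\downarrow0}\frac{P(0,b+h,y)-P(0,b,y)}{h}\le -1$. Together with the lower bound this forces $\partial_s^+P(0,b,y)=-1=\varphi'(b)$, which completes the argument. I expect the only genuinely delicate point to be the almost-sure identity $\tau_0=0$; the remaining steps are the suboptimality estimate, an elementary convexity bound on the payoff, and a routine passage to the limit.
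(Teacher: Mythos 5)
Your decomposition is exactly the paper's: the left derivative is trivially $\varphi'(b)$, the right derivative is bounded below by $\varphi'(b)$ because $P\ge\varphi$ with equality at $b$, and it is bounded above by evaluating the suboptimal stopping time $\tau_h$ at the starting point $b$; everything up to the claim $\lim_{h\downarrow 0}\tau_h=0$ is sound. The gap is in that claim, and the sketch you give for it would not go through as written, for two concrete reasons. First, to conclude $S^{0,b,y}_{u_n}<b(u_n,Y^y_{u_n})$ you need a \emph{lower} bound on $b(u_n,Y^y_{u_n})$. Since $y'\mapsto b(t,y')$ is only nonincreasing and left continuous, ``controlling the oscillation of $Y$'' gives nothing on the event $Y^y_{u_n}>y$: the boundary may jump down immediately to the right of $y$. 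You are forced to select times with $Y^y_{u_n}\le y$ — the paper takes $Y^y_{t_n}=y$ exactly — so that $b(u_n,Y^y_{u_n})\ge b(u_n,y)\ge b(0,y)$ by the two monotonicities. Second, the log-price is driven by a Brownian motion correlated with the one driving $Y$, so Lemma \ref{lemmaBM} (stated for a single standard Brownian motion along a \emph{deterministic} sequence) does not directly produce a downward fluctuation of $\log M^y_{u_n}$ of order $\sqrt{u_n}$ with diverging constant: the correlated component $\rho\int_0^{u_n}\sqrt{Y_s}\,dW_s$ is itself of iterated-logarithm size $\sqrt{u_n\ln\ln(1/u_n)}$ and can offset the downward excursion of the independent component.

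The paper's device of taking $t_n\downarrow 0$ with $Y^y_{t_n}=y$ (which Lemma \ref{lemmaY} supplies) resolves both points at once: besides giving $b(t_n,Y^y_{t_n})\ge b(0,y)$, the SDE for $Y$ then yields $\int_0^{t_n}\sqrt{Y_s}\,dW_s=-\frac{\kappa}{\sigma}\int_0^{t_n}(\theta-Y_s)\,ds=O(t_n)$, so the only surviving noise in $\Lambda^y_{t_n}=\log M^y_{t_n}$ is the independent integral $\int_0^{t_n}\sqrt{Y_s}\,d\bar W_s$. Conditioning on $W$, Dubins–Schwarz turns this into $\tilde W_{\int_0^{t_n}Y_s\,ds}$ with a conditionally deterministic time change, and Lemma \ref{lemmaBM} gives $\Lambda^y_{t_n}<0$ along a subsequence, hence $M^y_{t_n}<1$ and $\tau_h\le t_n$ for $h$ small. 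Your two lemma citations are the right ingredients, but the essential idea — using the return times of $Y$ to $y$ to kill the correlated stochastic integral and to neutralize the lack of continuity of $b$ in $y$ — is missing, and without it the passage to the limit in your upper bound is not justified.
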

	\begin{proof}
		The general idea of the proof goes back to \cite{B} for the Brownian motion (see also \cite[Chapter 4]{PS}).  Without loss of generality we can fix $t=0$. 
		Note that, for $h>0$, since $b(0,y)-h\leq b(0,y)$, we have
		$$
		\frac{	P(0,b(0,y)-h,y)-P(0,b(0,y),y)}{h}= \frac{	\varphi(b(0,y)-h)-\varphi (b(0,y))}{h},		
		$$
		so that, since $\varphi$ is continuously differentiable near $b(0,y)$, $		\frac{\partial ^-}{\partial s}P(0,b(0,y),y)=\varphi'(b(0,y))$. 
		
		On the other hand, for $h>0$	small enough, 	since $P\geq \varphi$ and $P(0,b(0,y),y)=\varphi(b(0,y))$, we get
		\begin{align*}
		\frac{	P(0,b(0,y)+h,y)-P(0,b(0,y),y)}{h}\geq \frac{	\varphi(b(0,y)+h)-\varphi (b(0,y))}{h},
		\end{align*}
		so that 
		\begin{align*}
		\liminf_{h\downarrow0}  \frac{	P(0,b(0,y)+h,y)-P(0,b(0,y),y)}{h} \geq \varphi'(b(0,y)).
		\end{align*}
		Now, for the other inequality, we consider the optimal stopping time related to $P(0,b(0,y)+h,y)$, i.e.
		\begin{align*}
		\tau_h=\inf\{ t\in [0,T) \mid S_t^{0,b(0,y)+h,y}< b(t,Y^y_t)\}\wedge T=\inf\left \{ t\in [0,T) \mid M_t^{y}\leq\frac{b(t,Y^y_t)}{b(0,y)+h}\right\}\wedge T,
		\end{align*}
		where $M^y_t=S^{1,y}_t$.
		Recall that $P(0,b(0,y),y)\geq \E\big(e^{-r\tau_h}\varphi(b(0,y)M^y_{\tau_h})\big)$, so we can write
		\begin{align*}
		\frac{ 	P(0,b(0,y)+h,y)-P(0,b(0,y),y)}{h}&= \frac{\E \left(     e^{-r\tau_h}  \varphi((b(0,y)+h)M^y_{\tau_h}\right)  -   P(0,b(0,y),y)}{h}\\&\leq \E\left(   e^{-r\tau_h}   \frac{        \varphi\left((b(0,y)+h)M^y_{\tau_h}\right)  -   \varphi\left(b(0,y)M^y_{\tau_h}\right)}{h}     \right).
		\end{align*}
		Assume for the moment  that 
		\begin{equation}\label{ot}
		\lim_{h\rightarrow 0 }\tau_h=0,\quad a.s.
		\end{equation}
		so we have
		$$
		\lim_{h\downarrow 0 }   \frac{        \varphi((b(0,y)+h)M^y_{\tau_h})\big)  -   \varphi(b(0,y)M^y_{\tau_h})}{h}=\varphi'(b(0,y)).
		$$
		Moreover, recall that $ M^y_{\tau_h}\leq \frac{b(t,Y^y_t)}{b(0,y)+h}\leq \frac K {b(0,y)}$ if $\tau_h<T$ and $ M^y_{\tau_h}=M^y_T$ if $\tau_h=T$. Therefore, by using the fact that $\varphi$ is Lipschitz continuous and the dominated convergence, we obtain
		$$
		\limsup_{h\downarrow 0 } \frac{ 	P(0,b(0,y)+h,y)-P(0,b(0,y),y)}{h} \leq \varphi'( b(0,y))
		$$
		and the assertion is proved. 
		
		It remains to prove \eqref{ot}. Since $t\mapsto b(t,y) $ is nondecreasing, if $	M^y_t< \frac{b(0,y)}{b(0,y)+h}$ and $Y^y_t= y$,
		we have
		$$
		M^y_t< \frac{b(0,y)}{b(0,y)+h} \leq \frac{b(t,Y^y_t)}{b(0,y)+h},
		$$
		so that
		\begin{equation}
		\tau_h \leq 	\inf \left\{  t\geq 0 \mid  M^y_t< \frac{b(0,y)}{b(0,y)+h} \mbox{ \& } Y^y_t= y \right\}.
		\end{equation}
		We now show that we can find a sequence $t_n \downarrow 0 $ such that  $Y^y_{t_n}=0$ and $ M^y_{t_n}<1$.
		First, recall that with a standard transformation we can write
		\begin{equation}\label{incorrelation}
		\begin{cases}
		\frac{dS_t}{S_t}=(r-\delta)dt +  \sqrt{Y_t}(\sqrt{1-\rho^2}d\bar W_t+\rho dW_t),\qquad&S_0=s>0,\\ dY_t=\kappa(\theta-Y_t)dt+\sigma\sqrt{Y_t}dW_t, &Y_0=y\geq 0,
		\end{cases}
		\end{equation}
		where $\bar W$ is a standard  Brownian motion independent of $W$.	Set $\Lambda_t^y=\ln M^y_t$. We deduce from Lemma \ref{lemmaY} that there exists a sequence  $t_n\downarrow 0$ such that  $Y^y_{t_n} =y$ $\P_y$-a.s. . Therefore,  from \eqref{incorrelation} we can write  $\int_0^{t_n}\sqrt{Y^{y}_s}dW_s=-\frac \kappa \sigma \int_0^{t_{n}}(\theta-Y^{y}_s)ds$ for all $n\in \N$. So, we have
		$$
		\Lambda^y_{t_{n}}=(r-\delta)t_{n}-\int_0^{t_{n}} \frac{Y^y_s}{2}   ds +\sqrt{1-\rho^2}\int_0^{t_{n}}\sqrt{Y^y_s} d\bar W_s -\frac {\rho\kappa} \sigma \int_0^{t_{n}}(\theta-Y^{y}_s)ds .
		$$
		Conditioning with respect to $W$  we have 
		\begin{align*}
		\liminf_{n\rightarrow \infty} \Lambda_{t_n}^y=&\liminf_{n\rightarrow\infty}	\frac{(r-\delta)t_{n}}{\sqrt{\int_0^{t_{n}}Y^{y}_sds}}-\frac{\int_0^{t_{n}} \frac{Y^y_s}{2}   ds}{   \sqrt{\int_0^{t_{n}}Y^{y}_sds}        } +\frac{\sqrt{1-\rho^2}\int_0^{t_{n}}\sqrt{Y^y_s} d\bar W_s  }{\sqrt{\int_0^{t_{n}}Y^{y}_sds} }- \frac{\frac {\rho\kappa} \sigma \int_0^{t_{n}}(\theta-Y^{y}_s)ds }{\sqrt{\int_0^{t_{n}}Y^{y}_sds} }\\
		&=\liminf_{n\rightarrow\infty}	\frac{(r-\delta)t_{n}}{\sqrt{\int_0^{t_{n}}Y^{y}_sds}}-\frac{\int_0^{t_{n}} \frac{Y^y_s}{2}   ds}{   \sqrt{\int_0^{t_{n}}Y^{y}_sds}        } +\frac{\sqrt{1-\rho^2}\tilde  W_{ \int_0^{t_{n}}Y^y_sds }}{\sqrt{\int_0^{t_{n}}Y^{y}_sds} }- \frac{\frac {\rho\kappa} \sigma \int_0^{t_{n}}(\theta-Y^{y}_s)ds }{\sqrt{\int_0^{t_{n}}Y^{y}_sds} }=-\infty,
		\end{align*}
		where  we have used the Dubins-Schwartz Theorem and we have applied Lemma \ref{lemmaBM}  to the standard Brownian motion $\tilde W$ and the sequence  $\sqrt{\int_0^{t_{n}}Y^{y}_sds}$ which can be considered deterministic.
		
		We deduce that, up to extract a subsequence of $t_n$, we have $\Lambda^y_{t_n}<0$ and,  as a consequence, $M^y_{t_n}<1$. Therefore, for any any fixed $n$, there exists  $h$ small enough such that 
		$M^y_{t_{n}} <  \frac{b(0,y)}{b(0,y)+h}$  so that, by definition,  $\tau_h\leq t^n$. We conclude the proof passing to the limit as $n$ goes to infinity.

	\end{proof}
	As regards the derivative with respect to the $y$ variable, we have the following result.
	\begin{proposition}\label{prop _sfy}
		If $2\kappa\theta\geq \sigma^2$, for any $(t,y)\in [0,T)\times (0,\infty)$ we have  $\frac{\partial}{\partial y} P(t,b(t,y),y)=0$.
	\end{proposition}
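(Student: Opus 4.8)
The plan is to fix $t=0$ (legitimate, since the put price depends only on $T-t$) and to treat separately the left and right derivatives of $y'\mapsto P(0,s_0,y')$ at $y'=y$, the space variable being frozen at $s_0:=b(0,y)$. The left derivative is immediate from the structure of the exercise region: since $y\mapsto b(0,y)$ is nonincreasing, every $y'\le y$ satisfies $s_0=b(0,y)\le b(0,y')$, so $(0,s_0,y')$ lies in the exercise region and $P(0,s_0,y')=\varphi(s_0)$ does not depend on $y'$; hence the left derivative vanishes. Because $y\mapsto P(0,s_0,y)$ is nondecreasing (Proposition \ref{properties_P}), the right derivative is $\ge 0$, and the statement reduces to proving that the right derivative is $\le 0$.

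To bound the right derivative from above I would compare two stopping rules. Let $\tau_h\in\mathcal{T}_{0,T}$ be the optimal stopping time for the option started at $(0,s_0,y+h)$ and use it as a suboptimal rule for the option started at $(0,s_0,y)$; writing $S_t^{0,s_0,y}=s_0M_t^y$ with $M_t^y=S_t^{1,y}$ as in the proof of strict convexity, this gives
\begin{equation*}
P(0,s_0,y+h)-P(0,s_0,y)\le \E\big[e^{-r\tau_h}\big(\varphi(s_0M_{\tau_h}^{y+h})-\varphi(s_0M_{\tau_h}^{y})\big)\big].
\end{equation*}
Since $s_0=b(0,y)<K$ and the trajectories are continuous with $M_0^{\cdot}=1$, once $\tau_h$ is small both stopped prices lie below $K$, so the bracket equals $s_0(M_{\tau_h}^{y}-M_{\tau_h}^{y+h})=-s_0\int_y^{y+h}\dot M_{\tau_h}^w\,dw$, where $\dot M^w=\partial_w M^w$. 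Differentiating the exponential representation of $M^w$ and invoking the Feller condition $2\kappa\theta\ge\sigma^2$ through the bound \eqref{stimaperholder} on $\dot Y^w/(2\sqrt{Y^w})$, one controls $\dot M_{\tau_h}^w$ uniformly for $w$ near $y$ by an integrable random variable and sees that $\dot M_t^w\to 0$ as $t\downarrow 0$.

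The crux is to prove that $\tau_h\to 0$ almost surely as $h\downarrow 0$. I would mirror the argument used for Proposition \ref{sfx}: by Lemma \ref{lemmaY} the CIR process returns to its starting level along a sequence of (random) times $t_n\downarrow 0$, and by conditioning on $W$, the Dubins--Schwarz theorem and Lemma \ref{lemmaBM} force the log-price to satisfy $\liminf_n \log M_{t_n}=-\infty$, so that $s_0M_{t_n}$ can be driven below $b(t_n,\cdot)$; together with the right-continuity of $t\mapsto b(t,\cdot)$, the monotonicity of $b$ and its strict positivity (Proposition \ref{positivity}) this places $(t_n,S_{t_n},Y_{t_n})$ in the exercise region, whence $\tau_h\le t_n$. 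The genuine additional difficulty compared with \ref{sfx} is that the perturbation now acts on the volatility, so the whole pair $(S^{0,s_0,y+h},Y^{y+h})$ moves with $h$; I would therefore build the times $t_n$ from the limiting process $(S^{0,s_0,y},Y^{y})$ and transfer the conclusion to the perturbed process through the flow continuity $\sup_{t\le T}|Y_t^{y+h}-Y_t^{y}|\to0$ and $\sup_{t\le T}|S_t^{0,s_0,y+h}-S_t^{0,s_0,y}|\to0$ (cf. \eqref{flow-y} and Lemma \ref{lemmasup_diff}), using the left-continuity of $y\mapsto b(t,y)$ to absorb the small displacement of the volatility. This transfer step is where I expect the main care to be needed.

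Granting $\tau_h\to0$, the conclusion follows by dominated convergence. By Fubini the right-hand side above, divided by $h$, equals $-\tfrac{s_0}{h}\int_y^{y+h}\E[e^{-r\tau_h}\dot M_{\tau_h}^w]\,dw$; since $\dot M_{\tau_h}^w\to0$ almost surely and is dominated uniformly in $w$ by the Feller bound, each integrand tends to $0$, hence so does the average over $[y,y+h]$. Therefore $\limsup_{h\downarrow0}\tfrac1h\big(P(0,s_0,y+h)-P(0,s_0,y)\big)\le0$, which combined with the lower bound gives a vanishing right derivative. Together with the vanishing left derivative this yields $\frac{\partial}{\partial y}P(0,b(0,y),y)=0$, and by time-homogeneity the same holds for arbitrary $t\in[0,T)$.
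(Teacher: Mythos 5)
Your strategy coincides with the paper's own proof point for point: the left derivative vanishes trivially, the right difference quotient is nonnegative by monotonicity, the upper bound comes from using the optimal time $\tau_h$ for the perturbed problem as a suboptimal rule for the unperturbed one together with the Feller-condition estimate \eqref{stimaperholder}, and $\tau_h\to0$ is obtained from Lemma \ref{lemmaY}, Dubins--Schwarz and Lemma \ref{lemmaBM}. There is, however, a genuine gap in the dominated-convergence step. Writing $\dot M^w_t=\big(\int_0^t\tfrac{\dot Y^w_s}{2\sqrt{Y^w_s}}\,dB_s-\tfrac12\int_0^t\dot Y^w_s\,ds\big)M^w_t$, the "integrable random variable dominating $\dot M^w_{\tau_h}$ uniformly in $w$" that you invoke is not available for free: any direct bound leads, via H\"older, to a moment of $\sup_{t\le T}M^w_t$ of order strictly greater than one, and in the Heston model such moments may explode. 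The bound \eqref{stimaperholder} controls only the integrand $\dot Y^w/(2\sqrt{Y^w})$, not the multiplicative factor $M^w_{\tau_h}$. The paper closes this exactly where your sketch is vague, by a Girsanov change of measure: the exponential martingale constituting $M^w$ is absorbed into a new probability $\hat\P$, under which the drift term cancels and one is left with $\hat\E\big[\big|\int_0^{\tau_h}\tfrac{\dot Y^w_s}{2\sqrt{Y^w_s}}\,d\hat W_s\big|\big]\le\tfrac{1}{2\sqrt{w}}\,\hat\E[\tau_h]^{1/2}\to0$. Without this (or an equivalent device) the passage to the limit does not go through.

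A second, more localized problem is your use of the left-continuity of $y'\mapsto b(t,y')$ to "absorb the small displacement of the volatility." Left-continuity acts in the wrong direction here: if $Y^{y+h}_t$ lands slightly \emph{above} $y$, then $b(0,Y^{y+h}_t)$ can be strictly smaller than $b(0,y)$ (the boundary is nonincreasing and only left-continuous), and the inclusion of $(t,S_t,Y_t)$ in the exercise region is lost. The paper's fix is to extract from Lemma \ref{lemmaY} not just times $t_n\downarrow0$ with $Y^y_{t_n}=y$ but whole intervals $(t_n,\hat t_n)$ on which $Y^y$ is strictly below $y$; the uniform flow continuity then forces $Y^{y+h}<y$ there for $h$ small, and plain monotonicity gives $b(\tilde t_n,Y^{y+h}_{\tilde t_n})\ge b(0,Y^{y+h}_{\tilde t_n})\ge b(0,y)$, hence $\tau_h\le\tilde t_n$. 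With these two repairs your argument becomes the paper's proof.
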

	\begin{proof}
		Again we fix $t=0$ with no loss of generality. Since $y\rightarrow P(t,s,y)$ in nondecreasing, for any $h>0$ we have $P(0,b(0,y),y-h)\leq P(0,b(0,y),y)=\varphi(b(0,y))$ so that  $P(0,b(0,y),y-h)=\varphi(b(0,y))$. Therefore,
		\begin{align*}
		\frac{	P(0,b(0,y),y-h)-P(0,b(0,y),y)}{h}= 0,
		\end{align*}
		hence $\frac{\partial^-}{\partial y}P(0,b(0,y),y)=0$.
		On the other hand, since $y\mapsto P(t,x,y)$ is nondecreasing, for any $h>0$ we have
		\begin{align*}
		\liminf_{h\downarrow 0}	\frac{	P(0,b(0,y),y+h)-P(0,b(0,y),y)}{h}\geq 0, 
		\end{align*}
		To prove the other inequality, we consider the stopping time related to $P(0,b(0,y),y+h)$, that is
		$$
		\tau_h=\inf\left\{ t\in [0,T) \mid S_t^{0,b(0,y),y+h}<b(t,Y^{y+h}_t)\right \}\wedge T=\inf\left \{ t\in [0,T)  \mid M_t^{y+h}<\frac{b(t,Y^{y+h}_t)}{b(0,y)}\right\}\wedge T
		$$
		and we assume for the moment that
		\begin{equation}\label{limtauh}
		\lim_{h\rightarrow0}\tau_h=0.
		\end{equation}
		We have
		\begin{equation}\label{stimadery}
		\begin{split}
		\frac{ 	P(0,b(0,y),y+h)-P(0,b(0,y),y)}{h}&= \frac{\E \left(     e^{-r\tau_h}  \varphi\left(b(0,y)M^{y+h}_{\tau_h}\right)\right)  -   P(0,b(0,y),y)}{h}\\
		&\leq \E\left[  e^{-r\tau_h}   \frac{        \varphi\left(b(0,y)M^{y+h}_{\tau_h}\right)  -   \varphi(b(0,y)M^y_{\tau_h})}{h}     \right]\\
		&\leq K    \frac{    \E  \left[ \left| M^{y+h}_{\tau_h}  -   M^y_{\tau_h}\right|\right]}{h},
		\end{split}
		\end{equation}
		where the last inequality follows from the fact that $\varphi$ is Lipschitz continuous  and $b(0,y)\leq K$.

		Now,  if the Feller condition $2\kappa\theta\geq\sigma^2$ is satisfied,   we can write
		$$
		M^{y+h}_{t}  -   M^y_{t}=\int_y^{y+h} \left(\int_0^t \frac{\dot{Y}^\zeta_s}{2\sqrt{Y^\zeta_s}}dB_s-\frac 1 2 \int_0^t \dot{Y}^\zeta_sds   \right) e^{(r-\delta)t-\int_0^t \frac {Y^\zeta_s}2 ds+\int_0^t \sqrt{Y^\zeta_s}dB_s   } d\zeta.
		$$
		Fix $\zeta$ and observe that the exponential process $e^{-\int_0^t \frac {Y^\zeta_s}2 ds+\int_0^t \sqrt{Y^\zeta_s}dB_s   } $ satisfies the assumptions of the Girsanov Theorem, namely it is a martingale. Therefore,  we can introduce a new probability measure $\hat \P$ under which the process $\hat W_t=W_t-\int_0^t\sqrt{Y_s}ds$ is a standard Brownian motion. If we denote by $\hat \E$ the expectation under the probability $\hat \P$, substituting in \eqref{stimadery} and using \eqref{stimaperholder} we get
		\begin{align*}
		&\frac{ 	P(0,b(0,y),y+h)-P(0,b(0,y),y)}{h}
		\leq\frac{e^{rT}K}{h}\int_y^{y+h} d\zeta\hat{\E}\left[\left|\int_0^{\tau_h} \frac{\dot{Y}^\zeta_s}{2\sqrt{Y^\zeta_s}}d\hat{W_s}\right|\right]\\ 
		&\quad\leq \frac{e^{rT}K}{h}\int_y^{y+h} d\zeta\left(\hat{\E}\left[\int_0^{\tau_h} \left(\frac{\dot{Y^\zeta_s}}{2\sqrt{Y^\zeta_s}}\right)^2ds\right]\right)^{1/2}
		\leq \frac {e^{rT}K} h \int_y^{y+h}\frac{1}{2\sqrt \zeta}\hat{\E}[\sqrt{\tau_h}]d\zeta
		\end{align*}
		which tends to $0$ as $h$ tends to $0$.
		
		Therefore, as in the proof of Proposition \ref{sfx},  it remains to prove that $\lim_{h\downarrow 0}\tau_h= 0$. In order to do this, we can proceed as follows.
		Again, set
		$$
		\Lambda^y_t=\ln (M^y_t)=(r-\delta)t-\frac 1 2 \int_0^t
		Y^y_sds+\int_0^t\sqrt{Y^y_s}dW_s,
		$$
		so that 
		$$
		\tau_h=\inf\left\{t\in [0,T)\mid \Lambda_t^{y+h}  \leq \ln\left(\frac{ b(t,Y^{y+h}_t)}{b(0,y)}\right) \right \}\wedge T.
		$$
		We deduce from Lemma \eqref{lemmaY} that, almost surely, there exist two sequences $(t_n)_n$ and $(\hat t_n)_n$ which converge to 0 with $0<t_n<\hat t_n$ and such that
		$$
		Y^y_{t_n}=y, \qquad \mbox{ and, for }t\in (t_n,\hat t_n),\quad Y_t<y. 
		$$
		In fact, it is enough to consider a sequence $(\hat t_n)_n$  such that $\lim_{n\rightarrow\infty }\hat t_n=0$ and $Y_{\hat t_n}<y$ and define $t_n=\sup\{t\in [0,\hat t_n) \mid Y^y_t=y   \}$.
		
		Proceeding as in the proof of Proposition \ref{sfx}, up to extract a subsequence we can assume 
		$$
		\Lambda^y_{t_n}<0.
		$$
		
		On the other hand, up to extract a subsequence of $h$ converging to $0$, we can assume that, almost surely,
		\[
		\lim_{h\downarrow 0}\sup_{t\in[0,T]}\left|Y^{y+h}_t-Y^y_t\right|=\lim_{h\downarrow 0}\sup_{t\in[0,T]}\left|\Lambda^{y+h}_t-\Lambda^y_t\right|=0.
		\]

		Now, let us fix $n\in\N$. For $h$ small enough,  there exists $\delta>0$ such that
		\[
		\Lambda^{y+h}_{t}<0, \qquad t\in (t_n-\delta, t_n+\delta).
		\]
		Then, for any $\tilde{t}_n\in (t_n-\delta, t_n+\delta)\cap (t_n,\hat t_n)$, we have at the same time
		$\Lambda^{y+h}_{\tilde{t}_n}<0$ and, since $Y^{y}_{\tilde{t}_n}<y$, $Y^{y+h}_{\tilde{t}_n}<y$ for $h$ small enough.
		Recalling that $t\mapsto b(t,y)$ is nondecreasing and $y\mapsto b(t,y)$ is nonincreasing, we deduce that
		\[
		b(\tilde{t}_n,Y^{y+h}_{\tilde{t}_n})\geq b(0,Y^{y+h}_{\tilde{t}_n})\geq b(0,y).
		\]
		
		Therefore
		\[
		\Lambda^{y+h}_{\tilde{t}_n}\leq \ln\left(\frac{b(\tilde{t}_n,Y^{y+h}_{\tilde{t}_n})}{b(0,y)}\right)
		\]
		and, as a consequence, $\tau_h\leq \tilde{t}_n\leq \hat t_n$ so \eqref{limtauh} follows.

	\end{proof}

	\section{Appendix: some proofs}

	We devote the appendix to the proof of some technical results used in this chapter.
	\subsection{Proofs of Section \ref{sect-monotony}}
	\begin{proof}[Proof of Lemma \ref{lemmasup_diff}]
		Consider $1 > a_1 > a_2 >\dots >a_m >\dots >0$  defined by 
		$$ 	
		\int^1_{a_1} \frac 1 u du =1, \dots, \int_{a_m}^{a_{m-1}} \frac 1 u du = m, \ \dots .
		$$
		We have that $a_m$ tends to $0 $ as $m$ tends to infinity. Let $(\eta_m)_{m\geq 1}$, be a family of continuous functions such that $$\supp \eta_m\subseteq (a_m, a_{m-1}), 
		\quad
		0 \leq \eta_m(u) \leq \frac{2}{um}, \quad  \int_{a_m}^{a_{m-1}}\eta_m(u)du=1.
		$$
		Moreover, we set
		$$
		\phi_m(x) := \int_0^{|x|}dy \int_0^y \eta_m(u)du, \qquad x \in \R .
		$$
		It is easy to see  that $\phi_m \in C^2(\R)$, $|\phi_m^{'} | \leq 1$ and $\phi_m(x)\uparrow |x| $ as $m \rightarrow \infty$.
		Fix $t\in[0,T]$.	Applying It\^{o}'s formula and passing to the expectation we have, for any $m \in \N$,
		\begin{equation}\label{II}
		\begin{split}
		\E[\phi_m(Y^n_t-Y_t)]&= \kappa\int_{0}^{t}  \E\left[\phi_m^{'}(Y^n_s-Y_s)(Y_s- f^2_n(Y^n_s))\right]ds \\&\quad+ \frac {\sigma^2} 2 \int_{0}^{t}   \E \left[       \phi_m^{''}(Y^n_s-Y_s)(f_n(Y^n_s)- \sqrt{Y_s})^2  \right]ds 
		\end{split}
		\end{equation}
		Let us analyse the  right hand term in \eqref{II}. Since $|\phi_m^{'}|\leq 1$, we have
		\begin{align*}
		&  \left| \kappa \int_{0}^{t} \E\left[\phi_m^{'}(Y^n_s-Y_s)(Y_s- f_n^2(Y^n_s))\right]ds \right|
		\leq \kappa \int_{0}^{t} \E\left[ |f_n^2(Y^n_s)-Y^n_s| \right]ds + \kappa \int_{0}^{t} \E\left[ | Y^n_s- Y_s|\right]ds
		\end{align*}
		On the other hand,
		\begin{align*}
		&\left|\frac {\sigma^2} 2 \int_{0}^{t}    \E \left[       \phi_m^{''}(Y^n_s-Y_s)(f_n(Y^n_s)- \sqrt{Y_s})^2\right]ds\right|   \\
		& \quad \leq  \sigma^2\int_{0}^{t}    \E \left[     |  \phi_m^{''}(Y^n_s-Y_s)|(f_n(Y^n_s)- \sqrt{Y^n_s})^2]ds  \right] + \sigma^2 \int_{0}^{t}    \E \left[  |     \phi_m^{''}(Y^n_s-Y_s)|(\sqrt{Y^n_s}- \sqrt{Y_s})^2\right]ds \\
		&\quad \leq \sigma^2\int_{0}^{t}    \E \left[    \frac{2}{m|Y^n_s-Y_s|}(f_n(Y^n_s)- \sqrt{Y^n_s})^2    \ind{\{a_m\leq Y^n_s-Y_s\leq a_{m-1}\}} ]ds  \right] \\&\qquad+ \sigma^2\int_0^t \E \left[  \frac{2}{m|Y^n_s-Y_s|}  |Y^n_s-Y_s|\right]ds\\
		&\quad\leq\frac{2\sigma^2}{ma_m}  \int_{0}^{t}    \E \left[    (f_n(Y^n_s)- \sqrt{Y^n_s})^2  ]ds  \right]  + \frac{2\sigma^2t}{m}.
		\end{align*}
		Observe that, if $|x| \geq a_{m-1}$,
		$$
		\phi_m(x)\geq \int_{a_{m-1}}^{|x|}dy= |x|-a_{m-1}.
		$$
		Therefore,  for any $m$ large enough,  
		\begin{align*}
		\E[\left| Y^n_t-Y_t\right|] &\leq  \kappa   \int_0^t \E[| Y^n_s-Y_s |]ds+ \kappa \int_{0}^{t} \E\left[ |f_n^2(Y^n_s)-Y^n_s| \right]ds\\&\qquad+\frac{2\sigma^2}{ma_m}  \int_{0}^{t}    \E \left[    (f_n(Y^n_s)- \sqrt{Y^n_s})^2  ]ds  \right]  +  \frac{2\sigma^2t}{m}+ a_{m-1}.
		\end{align*}
		Recall that $f_n(y)\rightarrow f(y)\equiv y$ locally uniformly and that $Y^n$ has continuous paths. Moreover, since $f^2_n(x)\leq A(|x|+1)$ with $A$ independent of $n$, it is easily to see that for any $p>1$ there exists $C>0$ independent of $n$ such that
		\begin{equation}\label{moments}
		\E\left[\sup_{t\in [0,T]} |Y^n_t|^p  \right]\leq C.
		\end{equation}
		Therefore, by using Lebesgue's Theorem and recalling that $\lim_{m\rightarrow\infty}a_m=0$, 
		we deduce that for any $\delta >0 $ it is possible to choose 
		$\bar n$ such that for every $ n\geq \bar n$
		$$
		\E[\left| Y^n_t-Y_t\right|] <C \int_{0}^{t} \E[\left| Y^n_s-Y_s\right|]  + \delta. $$ 
		We can now apply  Gronwall's inequality and we deduce  that 
		$
		\E[\left| Y^n_t-Y_t\right|] < \delta e^{Ct} ,$
		so  that \begin{equation}\label{convfixedt}
		\lim_{n\rightarrow \infty} \E[\left| Y^n_t-Y_t\right|] = 0 
		\end{equation} from the arbitrariness of $\delta$.  
		
		Now, note that 
		\begin{equation}\label{relsup}
		\sup_{t \in [0,T]}|Y^n_t-Y_t|\leq \kappa \int_0^T|Y_s-Y^n_s|ds  + \sup_{t \in [0,T]}\left|    \int_0^t  (\sqrt{Y_s}-f_n(Y^n_s))dW_s\right|
		\end{equation}
		The first term in the right hand side of \eqref{relsup} converges to 0 in probability  thanks to \eqref{convfixedt}, so it is enough to prove that the second term converges to 0. We have
		\begin{equation}\label{stimasup}
		\E\left[    \sup_{t \in [0,T]}\left|    \int_0^t  (\sqrt{Y_s}-f_n(Y^n_s))dW_s\right|  \right]\leq\left(\int_0^T \E[ |\sqrt{Y_s}-f_n(Y^n_s)|^2]ds\right)^{\frac 1 2 }
		\end{equation}
		and 
		\begin{align*}
		\E\left[ |\sqrt{Y_s}-f_n(Y^n_s)|^2	\right]&\leq 2\E\left[ |\sqrt{Y_s}-\sqrt{Y^n_s}|^2\right]+2\E\left[ |\sqrt{Y^n_s}-f_n(Y^n_s)|^2\right]\\&\leq 2\E\left[ |Y_s-Y^n_s|\right]+2\E\left[ |\sqrt{Y^n_s}-f_n(Y^n_s)|^2\right].
		\end{align*}
		Therefore, we can conclude that \eqref{stimasup} tends to 0 as $n$ goes to infinity by using \eqref{convfixedt} and the Lebesgue Theorem so that \eqref{supY} is proved.

		As regards \eqref{1}, for every $n\in\N$ we have
		$$
		X^n_t= x +\int_0^t \left( r-\delta-\frac {f_n^2(Y^n_s)} 2 \right)ds + \int_0^t  f_n(Y^n_s)dB_s,
		$$
		so that
		\begin{equation}\label{calcoloperX}
		\sup_{t \in [0,T]}|X^n_t-X_t|  \leq \frac 12 \int_0^T |f^2_n(Y^n_s)- Y_s| ds + 	\sup_{t \in [0,T]}  \left|\int_0^t (f_n(Y^n_s)- \sqrt{Y_s} )dB_s \right|    .
		\end{equation}
		It is enough to show that the two terms in the right hand side  of \eqref{calcoloperX} converge to 0 in probability.
		
		Concerning the first term, note that,
		since $Y$ has continuous paths, for every $ \omega \in \Omega, \, Y_{[0,T]}(\omega)$ is a compact set and 
		$
		K:=\{    x | d(x,Y_{[0,T]}) \leq 1     \}
		$
		is compact as well.
		For $n$ large enough,  $Y^n$ lies in $K$, so 
		\begin{align*}
		& \int_0^T |f^2_n(Y^n_s)- f^2(Y_s)| ds  \leq  \int_0^T |f^2_n(Y^n_s)- f^2(Y^n_s)| ds + \int_0^T |f^2(Y^n_s)- f^2(Y_s)|ds,
		\end{align*}
		which goes to $0$	as $n$ tends to infinity, since $f^2_n\rightarrow f^2$ locally uniformly and $f^2$ is a continuous function. 
		
		On the other hand, for the second term in the right hand side of \eqref{calcoloperX}, we have
		$$
		\E\left[	\sup_{t \in [0,T]}  \left|\int_0^t f(Y^n_s)- \sqrt{Y_s} dW_s \right|\right]\leq \left(  \int_0^T \E[(f(Y^n_s)- \sqrt{Y_s}  )^2] ds \right)^{\frac 1 2}
		$$
		and we can prove with the usual arguments that the last term goes to 0.

	\end{proof}
	\subsection{Proofs of Section \ref{sect-put}}
	\begin{proof}[Proofs of Lemma \ref{support}]
		To simplify the notation we pass to the logarithm and we prove the assertion for the pair $(X,Y)$. We can get rid of the correlation between the  Brownian motions  with a standard transformation, getting
		\begin{equation*}
		\begin{cases}
		dX_t=(r-\delta-\frac 1 2 Y_t)dt +  \sqrt{Y_t}(\sqrt{1-\rho^2}d\bar W_t+\rho dW_t),\qquad&X_0\in\R,\\ dY_t=\kappa(\theta-Y_t)dt+\sigma\sqrt{Y_t}dW_t, &Y_0\geq 0,
		\end{cases}
		\end{equation*}
		where $\bar W$ is a standard Brownian motion independent of $W$. Moreover, from the SDE satisfied by $Y$ we deduce $\int_0^t \sqrt{Y_s}dW_s=\frac 1 \sigma\left(Y_t-Y_0-\int_0^t\kappa(\theta-Y_s)ds\right)$. Conditioning with respect to $Y$, we reduce to prove that, for every continuous function $m: [0,T]\rightarrow \R$ such that $m(0)=X_0$ and for every $\epsilon>0$ we have
		\begin{equation}
		\label{supX}
		\P\left(\sup_{t\in [0,T]}|X_t-m(t)|<\epsilon\mid Y \right)>0,
		\end{equation}
		and
		\begin{equation}
		\label{supY}
		\P\left(\sup_{t\in [0,T]}|Y_t-Y_0|<\epsilon \right)>0.
		\end{equation}
		As regards  \eqref{supX}, by using the Dubins-Schwartz Theorem, there exists a Brownian motion $\tilde W$ such that 
		\begin{align*}
		&\P\left(\! \sup_{t\in [0,T]}\left |x+\!\! \int_0^t \!\!\left(r-\delta-\frac{Y_s} 2-\frac {\rho\kappa} \sigma (\theta-Y_s) \right) \!ds    +\frac \rho \sigma(Y_t-y) +\sqrt{1-\rho^2} \! \int_0^t \!\!  \sqrt{Y_s}d\bar W_s  -m(t)\right|<\epsilon\mid Y \right)\\
		&=\P\left(\sup_{t\in [0,T]}\left|\sqrt{1-\rho^2}  \!\!\! \int_0^t\sqrt{Y_s}d\bar W_s  -\tilde m(t)\right|<\epsilon\mid Y \right)\\
		&=\P\left(\sup_{t\in [0,T]}\left|\sqrt{1-\rho^2}  \tilde W_{\int_0^tY_sds}  -\tilde m(t)\right|<\epsilon\mid Y \right),
		\end{align*}
		where  $\tilde m(t)=m(t)-x-\int_0^t \left(r-\delta-\frac{Y_s} 2-\frac {\rho\kappa} \sigma (\theta-Y_s) \right) ds    -\frac \rho \sigma(Y_t-y) $ is a continuous function which, conditioning w.r.t. $Y$, can be considered deterministic. Then, \eqref{supX} follows by the support theorem for Brownian motions.
		
		In order to prove \eqref{supY}, we distinguish two cases.  Assume  first that $Y_0=y_0>0$ and, for $a\geq 0$, define the stopping time
		$$
		T_{a}=\inf \left\{ t>0\mid Y_t=a 	\right \}.
		$$ 
		Moreover, let us consider the function 
		$$
		\eta(y)=\begin{cases}
		\sqrt{y},\qquad&\mbox{ if } y> \frac{y_0}{2},\\
		\frac{\sqrt{y_0}} 2 \qquad&\mbox{ if } y\leq \frac{y_0}{2},
		\end{cases}
		$$
		and the process $(\tilde Y_t)_{t\in[0,T]}$, solution to the uniformly elliptic  SDE
		$$
		d\tilde Y_t=\kappa(\theta-\tilde Y_t)dt+\sigma\eta(\tilde Y_t)dW_t,\qquad \tilde Y_0=Y_0.
		$$
		It is clear that $Y_t=\tilde Y_t$ on the set $\left\{t\leq T_{\frac {y_0} 2}\right\}$  so  we have, if $\epsilon<\frac{y_0}2$,
		\begin{align*}
		\P\left(\sup_{t\in [0,T]}|Y_t-Y_0|<\epsilon \right)= \P\left(\sup_{t\in [0,T]}|\tilde Y_t-Y_0|<\epsilon \right),
		\end{align*}
		where the last inequality follows from the classical Support Theorem for  uniformly elliptic diffusions (see, for example,  \cite{SV1}).
		
		On the other hand, if we assume $Y_0=0$, then we can write
		$$
		\P\left(\sup_{t\in [0,T]} Y_t<\epsilon \right)=\P\left(  T_{\frac \epsilon 2}  \geq T \right)+\P\left(  T_{\frac \epsilon 2}  < T , \forall t\in \left[T_{\frac \epsilon 2}  ,T\right] Y_t<\epsilon \right).
		$$
		Now, if $\P\left( T_{\frac \epsilon 2} < T\right)>0$, we can deduce that the second term in the right hand side is positive using the strong Markov property and the same argument we have used before  in the case with $Y_0\neq 0$. Otherwise, $\P\left(  T_{\frac \epsilon 2}  \geq T \right)=1$ which concludes the proof.
	\end{proof}
	
	\begin{proof}[Proof of Lemma \ref{lemmat}]
		Let us define $\tilde{\Ex}_t=\{(s,y)\in (0,\infty)\times[0,\infty): s<b(t,y^+)\}$. Note that $\etildet\neq \emptyset$ since $b>0$.
		We first show that $\overline{ \tilde{\Ex}_t}=\Ex_t$. If $(s,y)\in \etildet$, then $s<b(t,y^+)\leq b(t,y)$, since $y\mapsto b(t,y)$ is nonincreasing. Therefore, $ \tilde{\Ex}_t\subseteq \Ex_t$ so that, since $\mathcal E_t$ is closed, $\overline{ \tilde{\Ex}_t}\subseteq \Ex_t$ .
		
		On the other hand, let $(s,y)\in \Ex_t$ and consider the sequence $((s_n,y_n))_n=((s- 1/ n,y-1 /n))_n$. Then, $(s_n,y_n)\rightarrow (s,y)$ and we  prove that $(s_n,y_n)\in \etildet$, so that $(s,y)\in 	\overline{\etildet}$. In fact, for each $n\in \N$,  we can consider the sequence $((s_{n,k},y_{n,k}))_{k>n}, = \left(\left(s-\frac 1 n+\frac 1 k,y-\frac 1 n+\frac 1 k\right)\right)_{k>n}$. We have
		$$
		s_{n,k}=	s-\frac 1 n+\frac 1 k<s\leq b(t,y)\leq b\left(t,y-\frac 1 n+\frac 1 k\right) =b\left(t,y_{n,k}\right).
		$$
		Letting $k$ tends to infinity, we get
		$$
		s_n<s\leq b(t,y_n^+),
		$$
		hence $(s_n,y_n)\in \etildet$, and the assertion is proved.
		
		Then, we show that $\etildet=\mathring{\Ex}_t$. Note that $\etildet$ is an open set, since the function $(s,y)\mapsto b(t,y^+)-s$ is lower semicontinuous. Therefore $\etildet\subseteq \mathring{\Ex}_t$. Let us now consider an open set $A\subseteq \Ex_t$. Fix $(s,y)\in A$, then $\left(s+\frac 1 n,y+\frac 1 n\right)\in A$ for $n$ large enough. Therefore,
		$$
		s<s+\frac 1 n \leq b\left( t, y+\frac 1 n\right)\leq b(t,y^+),
		$$ 
		hence $(s,y)\in \etildet$.
	\end{proof}
	\begin{proof}[Proof of Lemma \ref{lemmaY}]	
		We have
		\begin{align*}
		Y^y_t-y&=\kappa\int_0^t(\theta-Y^y_s)ds+\sigma\int_0^t\sqrt{Y^y_s}dW_s\\
		&=\sigma\sqrt{y}W_t+\kappa\int_0^t(\theta-Y^y_s)ds+\sigma\int_0^t\left(\sqrt{Y^y_s}-\sqrt{y}\right)dW_s,
		\end{align*}
		so it is enough to prove that, if $(H_t)_{t\geq 0}$ is a predictable process such that 
		$\lim_{t\downarrow 0} H_t=0$ a.s., we have
		\[
		\lim_{t\downarrow 0}\frac{\int_0^tH_s dWs}{\sqrt{2t\ln\ln(1/t)}}=0 \mbox{ p.s.}
		\]
		This follows by using standard arguments, we include a proof for the sake of completeness. By using   Dubins-Schwartz inequality we deduce that, if  $f(t)=\sqrt{2t\ln\ln(1/t)}$, for $t$ near to $0$ we have
		\[
		\left|\int_0^tH_s dWs\right|\leq Cf\left(\int_0^tH_s^2ds\right).
		\]
		Let us consider $\varepsilon>0$. For $t$ small enough, we have $\int_0^tH_s^2ds\leq \varepsilon t$ and, since $f$ increases near $0$,
		\[
		\left|\int_0^tH_s dWs\right|\leq Cf\left(\varepsilon t\right).
		\]
		We have
		\begin{align*}
		\frac{f^2(\varepsilon t)}{f^2(t)}&=\frac{\varepsilon t\ln\ln(1/\varepsilon t)}{t\ln\ln(1/t)}=\varepsilon\frac{\ln\left(\ln(1/t)+\ln(1/\varepsilon)\right)}{\ln\ln(1/t)}\\&\leq \varepsilon\frac{\ln\left(\ln(1/t)\right)+\frac{\ln(1/\varepsilon)}{\ln(1/t)}}{\ln\ln(1/t)}=\varepsilon\left(1+\frac{\ln(1/\varepsilon)}{\ln(1/t)\ln\ln(1/t)}\right),
		\end{align*}
		where we have used the inequality $\ln(x+h)\leq \ln(x)+\frac{h}{x}$ (for $x,h>0$).
		Therefore $\limsup_{t\downarrow 0}\frac{f(\varepsilon t)}{f(t)}\leq \sqrt{\varepsilon}$ and the assertion follows.
	\end{proof}
	\begin{proof}[Proof of Lemma \ref{lemmaBM}]
		With  standard inversion arguments, it suffices to prove  that,  for a sequence $t_n$ such that $\lim_{n\rightarrow\infty }t_n=\infty$, we have, with probability one,
		\begin{equation}\label{detILLinf}
		\limsup_{n\rightarrow \infty }\frac{B_{t_n}}{\sqrt{t_n}}=+\infty.
		\end{equation}
		The assertion is equivalent to
		$$
		\P\left(\limsup_{n\rightarrow\infty}\frac{B_{t_n}}{\sqrt{t_n}}\leq c\right)=0, \qquad c>0,
		$$
		that is
		$$
		\P\left(\bigcup_{m\geq1}\bigcap_{n\geq m} \left\{\frac{B_{t_n}}{\sqrt{t_n} }\leq c \right\}\right)=0, \qquad c>0.
		$$
		Therefore, it is sufficient to prove that $	\P\left(\bigcap_{n\geq m} \left\{\frac{B_{t_n}}{\sqrt{t_n} }\leq c \right\}\right)=0$ for every $m\in \N$ and $c>0$. Take, for example, $m=1$ and consider the random variables $\frac{B_{t_1}}{\sqrt{t_1}}$ and $\frac{B_{t_n}}{\sqrt{t_n}}$, for some $n>1$. Then,
		$$
		\frac{B_{t_1}}{\sqrt{t_1}}, \,\frac{B_{t_n}}{\sqrt{t_n}}\sim \mathcal{N}(0,1),
		$$
		where $\mathcal{N}(0,1)$ is the standard Gaussian law and 
		$$
		\mbox{Cov}\left( \frac{B_{t_1}}{\sqrt{t_1}}, \frac{B_{t_n}}{\sqrt{t_n}}\right)=\frac{t_1\wedge t_n}{\sqrt{t_1t_n}}<
		\sqrt{	\frac{t_1}{t_n}},$$	which tends to $0$ as $n$ tends to infinity.
		We deduce that 
		$$
		\P\left(	\frac{B_{t_1}}{\sqrt{t_1}}\leq c, \frac{B_{t_n}}{\sqrt{t_n}}\leq c\right)\rightarrow \P(Z_1\leq c,Z_2\leq c)=\P(Z_1\leq c)^2,
		$$
		where $Z_1$ and $Z_2$ are independent with $Z_1,\,Z_2\sim \mathcal{N}(0,1)$.
		
		Take  now  $m_n\in \N$ such that $t_{m_n}>nt_n$. Then, we have 
		$$
		\frac{B_{t_1}}{\sqrt{t_1}}, \,\frac{B_{t_n}}{\sqrt{t_n}},\frac{B_{t_{m_n}}}{\sqrt{t_{m_n}}}\sim \mathcal{N}(0,1)
		$$
		and
		$$ 
		\mbox{Cov}\left( \frac{B_{t_1}}{\sqrt{t_1}}, \frac{B_{t_{m_n}}}{\sqrt{t_{m_n}}}\right),\,
		\mbox{Cov}\left( \frac{B_{t_n}}{\sqrt{t_n}}, \frac{B_{t_{m_n}}}{\sqrt{t_{m_n}}}\right)\leq
		\sqrt{	\frac{t_n}{t_{m_n}}}.
		$$
		which again tends to $0$ ad $n$ tends to infinity. Therefore, we have 
		$$
		\P\left(	\frac{B_{t_1}}{\sqrt{t_1}}\leq c, \frac{B_{t_n}}{\sqrt{t_n}}\leq c,	\frac{B_{t_{m_n}}}{\sqrt{t_{m_n}}}\leq c\right)\rightarrow \P(Z_1\leq c)^3
		$$
		with $Z_1\sim \mathcal{N}(0,1)$. 
		Iterating this procedure, we can find a subsequence $(t_{n_k})_{k\in \N}$ such that $t_{n_k}\rightarrow \infty $ and 
		$$
		\P\left(\bigcap_{k\geq 1} \left\{\frac{B_{t_{n_k}}}{\sqrt{t_{n_k}} }\leq c \right\}\right)=0
		$$
		which proves that $\limsup_{n\rightarrow \infty }\frac{B_{t_n}}{\sqrt{t_n}}=+\infty$. 
		
	\end{proof}
	
	\part{Hybrid schemes for  pricing options in jump-diffusion stochastic volatility models}

	\chapter{Hybrid  Monte Carlo and tree-finite differences algorithm for pricing options in the Bates-Hull-White model}\label{chapter-art3}

	\section{Introduction}

	
	In this chapter, which is extracted from \cite{bctz}, we focus on the so called Bates-Hull-White model.
	Following the previous work in \cite{bcz,bcz-hhw}, we further develop and study the hybrid tree/finite-difference approach and the hybrid Monte Carlo technique in order to numerically evaluate option prices.

	The Bates model \cite{bates} is a stochastic volatility model
	with price jumps: the dynamics of the underlying asset price is driven
	by both a Heston stochastic volatility \cite{H} and a compound Poisson jump process of the type originally introduced by Merton \cite{mer}. Such a model was introduced by Bates in the  foreign exchange option market in order to tackle the well-known phenomenon of the volatility smile behavior. Here, we assume a possibly stochastic interest rate following the Vasicek model, and we call the full model as Bates-Hull-White. In the case of plain vanilla European options, Fourier inversion methods \cite{CM} lead to closed-form formulas to compute the price under the Bates model. Nevertheless, in the American case the numerical literature is limited. Typically, numerical  methods are based on the use of the dynamic programming principle to which one applies either deterministic schemes  from numerical analysis and/or from tree methods or Monte Carlo techniques.
	
	The option pricing hybrid tree/finite-difference approach 
	we deal with,  derives from applying an efficient recombining binomial tree method in the direction of the volatility and the interest rate components, whereas the asset price component is locally treated by means of a one-dimensional partial integro-differential equation (PIDE), to which a finite-difference scheme is applied. Here, the numerical treatment of the nonlocal term coming from the jumps involves implicit-explicit techniques, as well as numerical quadratures. 
	
	The existing literature on numerical schemes for the option pricing problem in this framework is quite poor. Tree methods are available only for the Heston model, see \cite{VN},  but they are not really efficient when the Feller condition does not hold. Another approach is given by the dicretization of partial differential problems.
	When the jumps are not considered, namely for the Heston and the Heston-Hull-White models, available references are widely recalled in \cite{bcz, bcz-hhw}. In the standard Bates model, that is, presence of jumps but no randomness in the interest rate, the finite-difference methods for solving the $2$-dimensional PIDE associated with the option pricing problems can be based on implicit, explicit or alternating direction implicit schemes. The implicit scheme requires to solve a dense sparse system at each time step. Toivanen \cite{t} proposes a componentwise splitting method for pricing American options. The linear
	complementarity problem (LCP) linked to the American option problem is
	decomposed into a sequence of five  one-dimensional LCP's problems at
	each time step. The advantage is that LCP's need the use of
	tridiagonal matrices. Chiarella \textit{et al.} \cite{ckmz}
	developed a method of lines algorithm for pricing and hedging American
	options again under the standard Bates dynamics.
	More recently Itkin \cite{it}  proposes a unified approach to handle PIDE's associated with L\'evy's models of interest in Finance, by solving the diffusion equation with standard finite-difference methods and by transforming the jump integral into a pseudo-differential operator. But to our knowledge, no deterministic numerical methods are available in the literature for the Bates-Hull-White model, that is, when the the interest rate is assumed to be stochastic.
	
	From the simulation point of view, the main problem consists in the treatment of the CIR dynamics for the volatility process. It is well known that the standard Euler-Maruyama discretization does not work in this framework. As far as we know, the most accurate simulation schemes for the CIR process have been introduced by Alfonsi \cite{A}. Other methods are available in the literature, see e.g. \cite{andersen}, but in this chapter the Alfonsi technique is the one we compare with. In fact, in our numerical experiments we also apply a hybrid Monte Carlo technique: we couple the simulation of the approximating tree for the volatility and the interest rate components with a standard simulation of the underlying asset price, which uses Brownian increments and a straightforward treatment of the jumps. In the case of American option, this is associated with the Longstaff and Schwartz algorithm \cite{ls}, allowing to treat the dynamic programming principle.
	
	As already observed in \cite{bcz,bcz-hhw}, roughly speaking  our methods consist in the application of the most efficient method whenever this is possible: a recombining binomial tree for the volatility and the interest rate, a standard PIDE approach or a standard simulation technique in the direction of the asset price. The results of the numerical tests again support the accuracy of our hybrid methods and besides, we also justify the good behavior of the methods from the theoretical point of view (see also Chapter \ref{chapter-art4}).

	This chapter is devoted to present in detail  the hybrid  procedures introduced in \cite{bctz} to compute functionals of the Bates jump model with stochastic interest rate. In particular, we consider a  hybrid tree-finite differences procedure which uses a tree method in the direction of the volatility and the interest rate and a finite-difference approach in order to handle the underlying asset price process. We also propose hybrid simulations for the model, following a binomial tree in the direction of both the volatility and the interest rate, and a space-continuous approximation for the underlying asset price process coming from a Euler-Maruyama type scheme. As regards the theoretical analysis of the algorithm, we  study here the stability properties of the procedure and we refer to Chapter \ref{chapter-art4} for an analysis of the rate of convergence of a generalization of this algorithm under quite general assumptions.
	We provide numerical experiments which  show the reliability and the efficiency of the algorithms.
	
	The chapter is organized as follows. In Section \ref{sect-model}, we
	introduce the Bates-Hull-White model. In Section \ref{discretization} we describe the tree procedure for the volatility and the interest rate pair (Section \ref{sect-tree}), we illustrate our discretization of the log-price process (Section \ref{sec:approxY}) and  the hybrid Monte Carlo simulations (Section \ref{sect-MC}). Section \ref{sect:htfd} is devoted to the hybrid tree/finite-difference method: we first set the numerical scheme for  the associated local PIDE problem (Section \ref{sec:approxPDE}), then we apply it to the solution of the whole pricing scheme (Section \ref{sect-alg}) and analyze the numerical stability of the resulting tree/finite-difference method (Section \ref{sect:stability}).
	Section \ref{practice} refers to the practical use of our methods and numerical results and comparisons are widely discussed.

	\section{The Bates-Hull-White model}\label{sect-model}
	
	We recall that in the Bates-Hull-White model  the volatility is assumed to follow
	the CIR  process and the underlying asset price process contains a further noise from a jump as introduced by Merton. Moreover,  the interest rate  follows a stochastic model, which we assume to be described by a generalized Ornstein-Uhlenbeck (hereafter OU) process.
	More precisely, the dynamics under the risk neutral measure of the share price $S$, the volatility process $Y$ and the interest rate $r$, are given by the following jump-diffusion model:
	\begin{equation}\label{BHHmodel}
	\begin{array}{l}
	\displaystyle\frac{dS_t}{S_{t^-}}= (r_t-\delta)dt+\sqrt{Y_t}\, dZ^S_t+d H_t,
	\smallskip\\
	dY_t= \kappa_Y(\theta_Y-Y_t)dt+\sigma_Y\sqrt{Y_t}\,dZ^Y_t,
	\smallskip\\
	dr_t= \kappa_r(\theta_r(t)-r_t)dt+\sigma_r dZ^r_t,
	\end{array}
	\end{equation}
	where $\delta$ denotes the continuous dividend rate, $S_0,Y_0,r_0>0$, $Z^S$, $Z^Y$ and $Z^r$ are correlated Brownian motions and $H$ is a compound Poisson process with
	intensity $\lambda$ and i.i.d. jumps $\{J_k\}_k$, that is
	\begin{equation}\label{H}
	H_t=\sum_{k=1}^{K_t} J_k,
	\end{equation}
	$K$ denoting a Poisson process with intensity $\lambda$.
	We assume that the Poisson process $K$, the jump amplitudes $\{J_k\}_k$ and the $3$-dimensional correlated Brownian motion $(Z^S,Z^Y,Z^r)$ are independent. As suggested by Grzelak and Oosterlee in \cite{GO}, the significant correlations are between the noises governing the pairs $(S,Y)$ and $(S,r)$. So, as done in \cite{bcz-hhw}, we assume that the couple $(Z^Y,Z^r)$ is a standard Brownian motion in $\R^2$ and $Z^S$ is a Brownian motion in $\R$ which is correlated both with $Z^Y$ and $Z^r$:
	$$
	d\<Z^S,Z^Y\>_t=\rho_1dt \ \mbox{ and }\ d\<Z^S,Z^r\>_t=\rho_2dt.
	$$
	We recall that the volatility process $Y$ follows a CIR dynamics with mean reversion rate $\kappa_Y$, long run variance $\theta_Y$ and $\sigma_Y$ denotes the vol-vol (volatility of the volatility). We assume that $\theta_Y,\kappa_Y,\sigma_Y>0$ and we stress that we never require in this chapter that the CIR process satisfies the Feller condition $2\kappa_Y\theta_Y\geq \sigma_Y^2$, ensuring that the process $Y$ never hits $0$. So, we allow the volatility $Y$ to reach $0$. The interest rate $r_t$ is described by a generalized OU process, in particular $\theta_r$ is time-dependent but deterministic and fits the zero-coupon bond market values, for details see  \cite{bm}. We write the process $r$ as follows:
	\begin{equation}\label{rR}
	r_t   = \sigma_r R_t + \varphi_t
	\end{equation}
	where
	\begin{equation}\label{Rphi}
	R_t  = - \kappa_r \int_0^tR_s\,ds + \,Z^r_t \quad\mbox{and}\quad
	\varphi_t=r_0e^{-\kappa_r t}+\kappa_r\int_0^t\theta_r(s)e^{-\kappa_r(t-s)}ds.
	\end{equation}

	From now on we set
	$$
	Z^Y=W^1,\quad Z^r=W^2, \quad Z^S=\rho_1 W^1+\rho_2 W^2+ \rho_3 W^3,
	$$
	where $W=(W^1,W^2,W^3)$ is a standard Brownian motion in $\R^3$ and the correlation parameter $\rho_3$ is given by
	$$
	\rho_3=\sqrt{1-\rho_1^2-\rho_2^2},\quad \rho_1^2+\rho_2^2\leq 1.
	$$
	By passing to the logarithm $X=\ln S$ in the first component, by taking into account the above mentioned correlations and by considering the process $R$ as in \eqref{rR}-\eqref{Rphi}, we reduce to the triple $(X,Y,R)$ given by
	\begin{equation}\label{XYR-dyn}
	\begin{array}{ll}
	&dX_t=\mu_X(Y_t,R_t, t)dt+\sqrt{Y_t}\, \big(\rho_1dW^1_t+\rho_2dW^2_t+\rho_3dW^3_t\big) + dN_t, \quad X_0=\ln S_0\in\R, \smallskip\\
	&dY_t= \mu_Y(Y_t)dt+\sigma_Y\sqrt{Y_t}\,dW^1_t,
	\quad Y_0>0,\smallskip\\
	&dR_t=\mu_R(R_t)dt+dW^2_t,
	\quad R_0=0,
	\end{array}
	\end{equation}
	where
	\begin{align}
	\label{muX}
	&\mu_X(y,r,t)=\sigma_rr+\varphi_t-\delta-\frac 12 \,y,\\
	\label{muY}
	&\mu_Y(y)=\kappa_Y(\theta_Y-y),\\
	\label{muR}
	&\mu_R(r)= -\kappa_r r,
	\end{align}
	and $N_t$ is the compound Poisson process with intensity $\lambda$ and the i.i.d. jumps $\{\log(1+J_k)\}_k$, that is
	$$
	N_t=\sum_{k=1}^{K_t} \log (1+J_k),
	$$
	$K$ being a Poisson process with intensity $\lambda$.
	Recall that $K$, the jump amplitudes $\{\log(1+J_k)\}_k$ and the $3$-dimensional standard Brownian motion $(W^1,W^2, W^3)$ are all independent.
	We also recall that the L\'evy measure associated with $N$ is given by
	$$
	\nu(dx)=\lambda \P(\log(1+J_1)\in dx),
	$$
	and whenever $\log (1+J_1)$ is absolutely continuous then $\nu$ has a density as well:
	\begin{equation}\label{nu}
	\nu(dx)=\nu(x)dx=\lambda p_{\log(1+J_1)}(x)dx,
	\end{equation}
	$p_{\log(1+J_1)}$ denoting the probability density function of $\log(1+J_1)$.
	For example, in the Merton model \cite{mer} it is assumed that $\log(1+J_1)$ has a normal distribution, that is
	\[
	\log(1+J_1) \sim N(\mu,\eta^2).
	\]
	This is the choice we will do in our numerical experiments,
	as done in Chiarella \textit{et al.} \cite{ckmz}. But other jump-amplitude measures can be selected. For instance, in the Kou model \cite{kou} the law of $\log(1+J_1)$ is a mixture of exponential  laws:
	$$
	p_{\log(1+J_1)}(x) = p\lambda_+ e^{-\lambda_+ x}\,\ind{\{x>0\}}+(1-p)\lambda_- e^{\lambda_- x}\,\ind{\{x<0\}},
	$$
	$\ind A$ denoting the indicator function of $A$.  Here, the parameters $\lambda_\pm>0$ control the decrease of the distribution tails of negative and positive jumps respectively, and $p$ is the  probability of a positive jump.
	
	Given this framework, our aim is to numerically compute the price of options with maturity $T$ and payoff given by a function of the underlying asset price process $S$. By passing to the transformation $X=\ln S$, we assume that the payoff is a function of the log-price process:
	$$
	\begin{array}{ll}
	\mbox{European payoff: } &
	\displaystyle
	\Psi(X_T),\smallskip\\
	\mbox{American payoff: } &
	\displaystyle
	(\Psi(X_t))_{t\in[0,T]},
	\end{array}
	$$
	where $\Psi\geq 0$. The option price function $P(t,x,y,r)$ is then given by
	\begin{equation}\label{price}
	\begin{array}{ll}
	\mbox{European price: }&
	\displaystyle
	P(t,x,y,r)
	=\E\Big(e^{-\int_t^T (\sigma_rR^{t,r}_s+\varphi_s)ds}\Psi(X^{t,x,y,r}_T)\Big),\smallskip\\
	\mbox{American  price: } &
	\displaystyle
	P(t,x,y,r)=\sup_{\tau\in \mathcal{T}_{t,T}}\E\Big(e^{-\int_t^\tau (\sigma_rR^{t,r}_s+\varphi_s)ds}\Psi(X^{t,x,y,r}_\tau)\Big),
	\end{array}
	\end{equation}
	where $\mathcal{T}_{t,T}$ denotes the set of all stopping times taking values on $[t,T]$. Note that we have used the
	relation between the interest rate $(r_t)_t$ and the process $(R_t)_t$, 
	see \eqref{rR} and \eqref{Rphi}.
	Hereafter,  $(X^{t,x,y,r},Y^{t,y},R^{t,r})$ denotes the solution of the jump-diffusion dynamic \eqref{XYR-dyn} starting at time $t$ in the point $(x,y,r)$.

	
	\section{The dicretized process}\label{discretization}
	
	We first set up the discretization of the triple $(X,Y,R)$ we will take into account.
	
	\subsection{The 2-dimensional tree for $(Y,R)$}\label{sect-tree}
	
	We consider an approximation for the pair $(Y,R)$ on the time-interval $[0,T]$ by means of a $2$-dimensional computationally simple tree. This means that we construct a Markov chain running over a $2$-dimensional recombining bivariate lattice and, at each time-step, both components of the Markov chain can jump only upwards or downwards. We consider the ``multiple-jumps'' approach by Nelson and  Ramaswamy \cite{nr}.
	A detailed description of this procedure and of the benefits of its use, can be found in \cite{acz,bcz,bcz-hhw}. Here, we limit the reasoning to the essential ideas and to the main steps in order to set-up the whole algorithm.
	We start by considering a discretization of the time-interval $[0,T]$ in $N$ subintervals $[nh,(n+1)h]$, $n=0,1,\ldots,N$, with $h=T/N$.

	For the CIR volatility process $Y$, we consider the binomial tree procedure firstly introduced in \cite{acz}.
	For $n=0,1,\ldots,N$, consider the lattice
	\begin{equation}\label{state-space-V}
	\mathcal{Y}_n =\{y^n_k\}_{k=0,1,\ldots,n}\quad\mbox{with}\quad
	y^n_k=\Big(\sqrt {Y_0}+\frac{\sigma_Y} 2(2k-n)\sqrt{h}\Big)^2\ind{\{\sqrt {Y_0}+\frac{\sigma_Y} 2(2k-n)\sqrt{h}>0\}}.
	\end{equation}
	Note that $y^0_{0}=Y_0$, so that $\mathcal{Y}_0^h=\{Y_0\}$. Moreover, the lattice is binomial recombining and, for $n$ large, the ``small'' points degenerate at $0$. 
	Let us briefly recall how this lattice arises (see \cite{acz} for all the details). 
	The idea  is to reduce to a process with a constant diffusion coefficient. So, let us consider the process $\hat Y_t=\sqrt{Y_t}$.
	If we (heuristically) apply It\^{o} formula, we get that the dynamics of $\hat Y_t$ is given by
	$$
	d\hat{Y}_t=\mu_{\hat Y}(\hat Y_t)dt +\frac \sigma 2 dZ^Y_t,
	$$ for a suitable drift coefficient $\mu_{\hat Y}=\mu_{\hat Y}(y)$.
	The term $ \frac \sigma 2dB_t$ gives the foremost contribution to the local movement of $\hat Y_t$. The standard binomial recombining tree for the Brownian motion lives on the lattice
	$$
	\frac{\sigma} 2(2k-n)\sqrt{h}, \qquad 0\leq k\leq n\leq N .
	$$
	Coming back to $Y$, we get the lattice  in \eqref{state-space-V}. Note that the term $\ind{\{\sqrt {Y_0}+\frac{\sigma_Y} 2(2k-n)\sqrt{h}>0\}}$ is inserted in order to deal with invertible functions.

	We now define the multiple 	 ``up'' and ``down'' jumps: the discretized process can jump just on two nodes which in turn are not necessarily the closest ones to the starting node. 
	In particular, for each fixed $y^n_k\in\mathcal{Y}_n $, we define the ``up'' and ``down'' jump by $y^{n+1}_{ k_u (n,k)}$ and $y^{n+1}_{ k_d (n,k)}$, $k_u (n,k)$ and $k_d (n,k)$ being respectively defined as
	\begin{align}
	\label{ku}
	&k_u (n,k) =\min\{k^*\,:\, k+1\leq k^*\leq n+1\mbox{ and }y^n_k+\mu_Y(y^n_k)h \le y^{n+1}_{k^*}\},\\
	\label{kd}
	&k_d (n,k) =\max\{k^*\,:\, 0\leq k^*\leq k \mbox{ and }y^n_k+\mu_Y(y^n_k)h \ge y^{n+1}_{k^*}\}
	\end{align}
	where  $\mu_Y$ is the drift of $Y$, defined in \eqref{muX}, and
	with the understanding $k_u (n,k)=n+1$, respectively $k_d (n,k)=0$, if the set in the r.h.s. of \eqref{ku}, respectively  \eqref{kd}, is empty.
	The transition probabilities are defined as follows: starting from the node $(n,k)$ the probability that the process jumps to $k_u (n,k)$ and $k_d (n,k)$ at time-step $n+1$ are set as
	\begin{equation}\label{pik}
	p^{Y}_u(n,k)
	=0\vee \frac{\mu_Y(y^n_k)h+ y^n_k-y^{n+1}_{k_d (n,k)} }{y^{n+1}_{k_u (n,k)}-y^{n+1}_{k_d (n,k)}}\wedge 1
	\quad\mbox{and}\quad p^{Y}_d(n,k)=1-p^{Y}_u(n,k)
	\end{equation}
	respectively.
	We recall that the multiple jumps and the  transition probabilities are set in order to best fit the local first moment of the diffusion $Y$. We will see in Chapter \ref{chapter-art4} that this property will be crucial in order to study the theoretical convergence of the procedure.

	\smallskip
	
	We follow the same approach for the binomial tree for the process $R$.
	For $n=0,1,\ldots,N$ consider the lattice
	\begin{equation}\label{state-space-X}
	\mathcal{R}_n =\{r^n_j\}_{j=0,1,\ldots,n}\quad\mbox{with}\quad
	r^n_j=(2j-n)\sqrt{h}.
	\end{equation}
	Notice that $r_{0,0}=0=R_0$. For each fixed $r^n_j\in\mathcal{R}_n $, we define the ``up'' and ``down''  jump by means of $j_u (n,j)$ and $j_d (n,j)$  defined by
	\begin{align}
	\label{ju}
	&j_u (n,j) =\min\{j^*\,:\, j+1\leq j^*\leq n+1\mbox{ and }r^n_j+\mu_R(r^n_j)h \le r^{n+1}_{j^*}\},\\
	\label{jd}
	&j_d (n,j) =\max\{j^*\,:\, 0\leq j^*\leq j \mbox{ and }r^n_j+\mu_R(r^n_j)h \ge r^{n+1}_{j^*}\},
	\end{align}
	$\mu_R$ being the drift of the process $R$, see \eqref{muR}.
	As before, $j_u (n,j)=n+1$, respectively $j_d (n,j)=0$, if the set in the r.h.s. of \eqref{ju}, respectively \eqref{jd}, is empty and
	the transition probabilities are as follows: starting from the node $(n,j)$, the probability that the process jumps to $j_u (n,j)$ and $j_d (n,j)$ at time-step $n+1$ are set as
	\begin{equation}\label{pij}
	p^{R}_u(n,j)
	=0\vee \frac{\mu_R(r^n_j)h+ r^n_j-r^{n+1}_{j_d (n,j)} }{r^{n+1}_{j_u (n,j)}-r^{n+1}_{j_d (n,j)}}\wedge 1
	\quad\mbox{and}\quad p^{R}_d(n,j)=1-p^{R}_u(n,j)
	\end{equation}
	respectively.

	Figure \ref{fig:treeYR} shows a picture of the lattices $\mathcal{Y}_n $ (left) and $\mathcal{R}_n $ (right), together with possible instances of the up and down jumps.
	
	\begin{figure}[htp]
		\centering
		\begin{tabular}{c}
			
			\includegraphics[scale=0.33]{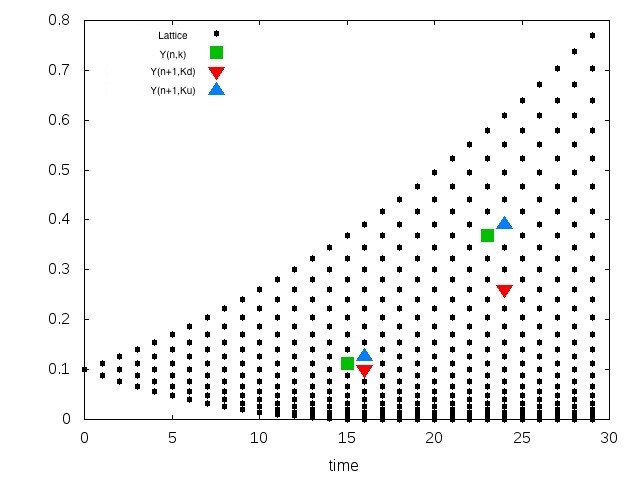}
			\includegraphics[scale=0.33]{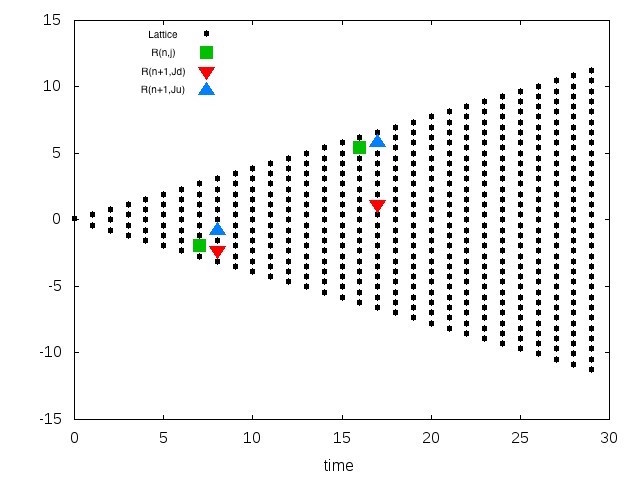}
			
		\end{tabular}
		\caption{The tree for the process $Y$ (left) and for $R$ (right), showing as the trees may be visited.}
		\label{fig:treeYR}
	\end{figure}

	%
	%
	%
	%

	\smallskip
	
	The whole tree procedure for the pair $(Y,R)$  is obtained by joining the trees built for $Y$ and for $R$. Namely, for $n=0,1,\ldots,N$, consider the lattice
	\begin{equation}\label{state-space-Yr}
	\mathcal{Y}_n \times \mathcal{R}_n =\{(y^n_k,r^n_j)\}_{k,j=0,1,\ldots,n}.
	\end{equation}
	Starting from the node $(n,k,j)$, which corresponds to the position $(y^n_k,r^n_j)\in\mathcal{Y}_n \times \mathcal{R}_n $, we define the four possible jumps by means of the following four nodes at time $n+1$:
	\begin{equation}\label{treescheme}
	\begin{array}{lcr}
	(n+1,k_u (n,k),j_u (n,j)) & \hbox{with probability} & p_{uu}(n,k,j)=p^{Y}_{u}(n,k)p^{R}_{u}(n,j), \smallskip\\
	(n+1,k_u (n,k),j_d (n,j)) & \hbox{with probability} & p_{ud}(n,k,j)=p^{Y}_{u}(n,k)p^{R}_{d}(n,j), \smallskip\\
	(n+1,k_d (n,k),j_u (n,j)) & \hbox{with probability} & p_{du}(n,k,j)=p^{Y}_{d}(n,k)p^{R}_{u}(n,j), \smallskip\\
	(n+1,k_d (n,k),j_d (n,j)) & \hbox{with probability} & p_{dd}(n,k,j)=p^{Y}_{d}(n,k)p^{R}_{d}(n,j),
	\end{array}
	\end{equation}
	where the above nodes $k_{u}(n,k)$, $k_{d}(n,k)$, $j_{u}(n,j)$, $j_{d}(n,j)$ and
	the above probabilities $p^{Y}_{u}(n,k)$, $p^{Y}_{d}(n,k)$, $p^{R}_{u}(n,j)$, $p^{R}_{d}(n,j)$ are defined in \eqref{ku}-\eqref{kd}, \eqref{ju}-\eqref{jd}, \eqref{pik} and \eqref{pij}. The factorization of the jump probabilities in \eqref{treescheme} follows from the orthogonality property of the noises driving the two processes.
	This procedure gives rise to a Markov chain $(\hat Y^h_{n},\hat R^h_{n})_{n=0,\ldots,N}$ that weakly converges, as $h\to 0$, to the diffusion process $(Y_{t},R_t)_{t\in[0,T]}$ solution to
	\begin{align*}
	&dY_t= \mu_Y(Y_t)dt+\sigma_Y\sqrt{Y_t}\,dW^1_t,
	\quad Y_0>0,\\
	&dR_t  = \mu_R(R_t)\,dt + dW^2_t,\quad R_0=0.
	\end{align*}
	This can be seen by using standard results (see e.g. the techniques in \cite{nr}) and the convergence of the chain approximating the volatility process proved in \cite{acz}. And  this holds independently of the validity of the Feller condition $2\kappa_Y \theta_Y\geq \sigma^2_Y$.
	
	Details and remarks on the extension of this procedure to more general cases can be found in \cite{bcz-hhw}. In particular, if the correlation between the Brownian motions driving $(Y,R)$ was not null, one could define the jump probabilities by matching the local cross-moment (see Remark 3.1 in \cite{bcz-hhw}).
	
	\subsection{The approximation on the $X$-component}\label{sec:approxY}
	
	We describe here how we manage the $X$-component in \eqref{XYR-dyn} by taking into account the tree procedure given for the pair $(Y,R)$.
	We go back to \eqref{XYR-dyn}: by isolating $\sqrt{Y_t}dW^1_t$ in the second line and $dW^2_t$ in the third one, we obtain
	\begin{equation}\label{Y1}
	dX_t=\mu(Y_t,R_t,t)dt+\rho_3\sqrt{Y_t}\,dW^3_t+\frac{\rho_1}{\sigma_Y}dY_t+\rho_2\sqrt{Y_t}dR_t+dN_t \end{equation}
	with
	\begin{equation}\label{mu-fin}
	\begin{array}{rl}
	\mu(y,r,t)
	&=\mu_X(y,r,t)-\frac{\rho_1}{\sigma_Y}\mu_Y(y)-\rho_2\sqrt{y}\,\mu_R(r)\smallskip\\
	&=\sigma_rr+\varphi_t-\delta-\frac 12 \,y
	-\frac{\rho_1}{\sigma_Y}\,\kappa_Y(\theta_Y-y)+\rho_2\kappa_r r\sqrt y
	\end{array}
	\end{equation}
	($\mu_X$, $\mu_Y$ and $\mu_R$ are defined in \eqref{muX}, \eqref{muY} and \eqref{muR} respectively). To numerically solve \eqref{Y1}, we mainly use the fact that the noises $W^3$ and $N$ are independent of the processes $Y$ and $R$.
	So, we first take the approximating tree $(\hat Y^h_n,\hat R_n)_{n=0,1,\ldots,N-1}$ discussed in Section \ref{sect-tree} and we set $(\bar Y^h_t,\bar R^h_t)_{t\in[0,T]}=(\hat Y^h_{\lfloor t/h\rfloor}+1,\hat R^h_{\lfloor t/h\rfloor}+1)_{t\in[0,T]}$ the associated time-continuous c\`adl\`ag approximating process for $(Y,R)$.
	Then, we insert the discretization $(\bar Y^h,\bar R^h)$ for $(Y,R)$ in the coefficients of \eqref{Y1}. Therefore, the final process $\bar X^h$  approximating $X$ is set as follows:
	$\bar X^h_0=X_0$ and for $t\in(nh,(n+1)h]$ with $n=0,1,\ldots,N-1$
	\begin{equation}\label{barXh}
	\begin{array}{rl}
	\bar X^h_t=& \bar X^h_{nh} + \mu(\bar Y^h_{nh},\bar R^h_{nh},nh)(t-nh)
	+\rho_3\sqrt{\bar Y^h_t}(W^3_t-W^3_{nh})
	\medskip\\
	&+\displaystyle\frac{\rho_1}{\sigma_Y}(\bar Y^h_t-\bar Y^h_{nh})+\rho_2\sqrt{\bar Y^h_t}(\bar R^h_t-\bar R^h_{nh})+(N_t-N_{nh}).
	\end{array}
	\end{equation}
	
	\subsection{The Monte Carlo approach}\label{sect-MC}
	Let us show how one can simulate a single path by using the tree approximation \eqref{state-space-Yr} for the couple $(Y,R)$ and the Euler scheme \eqref{barXh} for the $X$-component.
	
	Let $(\hat X_n)_{n=0,1,\ldots,N}$ be the sequence approximating $X$ at times $nh$, $n=0,1,\ldots,N$,  by means of the scheme in \eqref{barXh}: $\hat X^h_0=X_0$ and for $t\in[nh,(n+1)h]$ with $n=0,1,\ldots,N-1$ then
	$$
	\begin{array}{rl}
	\hat X^h_{n+1}=& \hat X^h_{n} + \mu(\hat Y^h_{n},\hat R^h_{n},nh)h
	+\rho_3\sqrt{h\hat Y^h_n}\Delta_{n+1}
	\medskip\\
	&+\displaystyle\frac{\rho_1}{\sigma_Y}(\hat Y^h_{n+1}-\hat Y^h_{n})+\rho_2\sqrt{\hat Y^h_n}(\hat R^h_{n+1}-\hat R^h_{n})+(N_{(n+1)h}-N_{nh}),
	\end{array}
	$$
	where $\mu$ is defined in \eqref{mu-fin} and $\Delta_1,\ldots,\Delta_N$ denote i.i.d. standard normal r.v.'s, independent of the noise driving the chain $(\hat Y,\hat R)$. The simulation of $N_{(n+1)h}-N_{nh}$ is straightforward: one first generates a Poisson r.v. $K_h^{n+1}$ of parameter $\lambda h$ and if $K_h^{n+1}>0$ then also the log-amplitudes $\log(1+J^{n+1}_k)$ for $k=1,\ldots,K^{n+1}_h$ are simulated. Then, the observed jump of the compound Poisson process is written as the sum of the simulated log-amplitudes, so that
	\begin{equation}\label{hmc}
	\begin{array}{rl}
	\hat X^h_{n+1}=& \hat X^h_{n} + \mu(\hat Y^h_{n},\hat R^h_{n},nh)h
	+\rho_3\sqrt{h\hat Y^h_n}\Delta_{n+1}
	\medskip\\
	&+\displaystyle\frac{\rho_1}{\sigma_Y}(\hat Y^h_{n+1}-\hat Y^h_{n})+\rho_2\sqrt{\hat Y^h_n}(\hat R^h_{n+1}-\hat R^h_{n})+\sum_{k=1}^{K_h^{n+1}}\log(1+J^{n+1}_k),
	\end{array}
	\end{equation}
	in which the last sum is set equal to 0 if $K_h^{n+1}=0$.
	
	The above simulation scheme is plain: at each time step $n\geq 1$, one lets the pair $(Y,R)$ evolve on the tree and simulate the process $X$ by using \eqref{hmc}. We will refer to this procedure as \textit{hybrid Monte Carlo algorithm}, the word ``hybrid'' being related to the fact that two different noise sources are considered: we simulate a continuous process in space (the component $X$) starting from a discrete process in space (the tree for $(Y,R)$).
	
	The simulations just described will be used in  Section \ref{practice} in order to set-up a Monte Carlo procedure for the computation of the option price function  \eqref{price}. In the case of American options, the simulations are coupled with the Monte Carlo algorithm by Longstaff and Schwartz in \cite{ls}.
	
	\section{The hybrid tree/finite difference approach}\label{sect:htfd}

	The price-function $P(t,x,y,r)$ in \eqref{price} is typically computed by  means of the standard backward dynamic programming algorithm. So,  consider a discretization of the time interval $[0,T]$ into $N$ subintervals of length $h=T/N$. Then the price $P(0,X_0,Y_0,R_0)$  is numerically approximated through the quantity $P_h(0,X_0,Y_0,R_0)$ backwardly given by
	\begin{equation}\label{backward}
	\begin{cases}
	P_h(T,x,y,r)= \Psi(x)\quad
	\mbox{and as } n=N-1,\ldots,0,\\
	P_h(nh,x,y,r) =  \max \Big\{\widehat \Psi(x), e^{-(\sigma_rr+\varphi_{nh})h}
	\E\Big(P_h\big((n+1)h, X_{(n+1)h}^{nh,x,y,r}, Y_{(n+1)h}^{nh,y}, R_{(n+1)h}^{nh,r}\big)
	\Big)\Big\},
	\end{cases}
	\end{equation}
	for $(x,y,r)\in\R\times\R_+\times \R$, in which
	$$
	\widehat \Psi(x)=\left\{
	\begin{array}{ll}
	0 & \mbox{ in the European case,}\smallskip\\
	\Psi(x) & \mbox{ in the American case.}
	\end{array}
	\right.
	$$
	So, what is needed is a good approximation of the expectations appearing in the above dynamic programming principle. This is what we first deal with, starting from the dicretized process $(\bar Y^h,\bar Y^h,\bar R^h)$ introduced in Section \ref{discretization}.
	
	\subsection{The local 1-dimensional partial integro-differential equation}\label{sec:approxPDE}
	Let $\bar X^h$ denote the process in \eqref{barXh}.
	If we set
	\begin{equation}\label{Z}
	\bar Z^{h}_t=\bar X^h_t
	-\frac{\rho_1}{\sigma_Y}(\bar Y^h_t-\bar Y^h_{nh})
	-\rho_2\sqrt{\bar Y^h_{nh}}(\bar R^h_t-\bar R_{nh}), \quad t\in [nh,(n+1)h]
	\end{equation}
	then we have
	\begin{equation}\label{barZ}
	\begin{array}{l}
	d\bar Z^{h}_t=\mu(\bar Y^h_{nh},\bar R^h_{nh},{nh})dt+\rho_3\sqrt{\bar Y^h_{nh}}\,dW^3_t,+dN_t\quad t\in (nh,(n+1)h],\smallskip\\
	\bar Z^{h}_{nh}=\bar X^h_{nh},
	\end{array}
	\end{equation}
	that is, $\bar Z^h$ solves a jump-diffusion stochastic equation with constant coefficients and at time $nh$ it starts from $\bar Y^h_{nh}$.
	Take now a function $f$: we are interested in computing
	$$
	\E(f(X_{(n+1)h})\mid X_{nh}=x, Y_{nh}=y, R_{nh}=r).
	$$
	We actually need a function $f$ of all variables $(x,y,r)$ but at the present moment the variable $x$ is the most important one, we will see later on that the introduction of $(y,r)$ is straightforward. So, we numerically compute the above expectation by means of the one done on the approximating processes, that is,
	$$
	\begin{array}{l}
	\displaystyle
	\E\big(f(\bar X^h_{(n+1)h})\mid \bar X^h_{nh}=x, \bar Y^h_{nh}=y, \bar R^h_{nh}=r\big)\smallskip\\
	\displaystyle
	=\E\big(f(\bar Z^{h}_{(n+1)h}+\frac{\rho_1}{\sigma_Y}(\bar Y^h_{(n+1)h}-\bar Y^h_{nh})
	+\rho_2\sqrt{\bar Y^h_{nh}}(\bar R^h_{(n+1)h}-\bar R^h_{nh})
	)\mid \bar Z^{h}_{nh}=x, \bar Y^h_{nh}=y, \bar R^h_{nh}=r\big),
	\end{array}
	$$
	in which we have used the process $\bar Z^h$ in \eqref{Z}. Since $(\bar Y^h, \bar R^h)$ is independent of the Brownian noise $W^3$ and on the compound Poisson process $N$ driving $\bar Z^h$ in \eqref{barZ}, we have the following:
	we set
	\begin{equation}\label{E2}
	\Psi_f(\zeta; x,y,r)=\E(f(\bar Z^h_{(n+1)h}+\zeta
	)\mid \bar Z^h_{nh}=x, \bar Y^h_{nh}=y, \bar R^h_{nh}=r)
	\end{equation}
	and we can write
	\begin{equation}\label{E1}
	\begin{array}{l}
	\displaystyle
	\E(f(\bar X^h_{(n+1)h})\mid \bar X^h_{nh}=x, \bar Y^h_{nh}=y, \bar R^h_{nh}=r)\smallskip\\
	\qquad=\E\Big(\Psi_f\Big(\frac{\rho_1}{\sigma_Y}(\bar Y^h_{(n+1)h}-\bar Y^h_{nh})
	+\rho_2\sqrt{y}(\bar R^h_{(n+1)h}-\bar R^h_{nh}); x,y,r
	\Big)\,\Big|\, \bar Y^h_{nh}=y, \bar R^h_{nh}=r\Big).
	\end{array}
	\end{equation}
	Now, in order to compute the quantity $\Psi_f(\zeta)$ in \eqref{E2}, we consider a generic function $g$ and set
	$$
	u(t,x;y,r)=\E(g(\bar Z^h_{(n+1)h})\mid \bar Z^h_{t}=x, \bar Y^h_{t}=y, \bar R^h_{t}=r),\quad t\in[nh,(n+1)h].
	$$
	By \eqref{barZ} and the Feynman-Kac representation formula we can state that, for every fixed $r\in\R$ and $y\geq 0$, the function $(t,x)\mapsto u(t,x;y,r)$ is the solution to
	\begin{equation}\label{PIDE}
	\left\{\begin{array}{ll}
	\displaystyle
	\partial_t u(t,x;y,r)+\L^{(y,r)} u(t,x;y,r)=0 & y\in\R, t\in [nh,(n+1)h),\smallskip\\
	\displaystyle
	u((n+1)h,x;y,r)=g(y) & x\in \R,
	\end{array}\right.
	\end{equation}
	where $\L^{(y,r)}$ is the integro-differential operator
	\begin{equation}\label{PIDE:Loperator}
	\begin{array}{rl}
	\L^{(y,r)}u(t,x;y,r)=&\mu(y,r)\partial_x u(t,x;y,r) +\frac 12 \rho_3^2 y\partial^2_{xx} u(t,x;y,r)
	\medskip\\
	&+ \displaystyle\int_{-\infty}^{+\infty} \left[ u(t,x+\xi;y,r)-u(t,x;y,r)\right]\nu(\xi) d\xi,
	\end{array}
	\end{equation}
	where $\mu$ is given in \eqref{mu-fin} and $\nu$ is the L\'evy measure associated with the compound Poisson process $N$, see \eqref{nu}. We are assuming here that the L\'evy measure is absolutely continuous (in practice, we use a Gaussian density), but it is clear that the procedure we are going to describe can be straightforwardly extended to other cases.

	\subsubsection{Finite-difference and numerical quadrature}
	
	In order to numerically compute the solution to the PIDE \eqref{PIDE} at time $nh$,
	we generalize the approach already developed in \cite{bcz,bcz-hhw}:
	we apply a one-step finite-difference algorithm to the differential part of the problem coupled now with a quadrature rule to approximate the integral term.
	
	We start by fixing an infinite grid on the $x$-axis $\mathcal{X}=\{x_i=X_0+i\dx\}_{i\in\Z}$, with $\Delta x=x_i-x_{i-1}$, $i\in\Z$. For fixed $n$ and given $r\in\R$ and $y\geq 0$, we set $u^n_i=u(nh, x_i;y,r)$ the discrete solution of \eqref{PIDE} at time $nh$ on the point $x_i$ of the grid $\mathcal{X}$ -- for simplicity of notations, in the sequel we do not stress in $u^n_i$ the dependence on $(y,r)$.
	
	First of all, to numerically compute the integral term in \eqref{PIDE:Loperator} we need to truncate the infinite integral domain to a bounded interval $\mathcal{I}$, to be taken large enough in order that
	\begin{equation}\label{bound_int}
	\int_{\mathcal{I}} \nu(\xi) d\xi \approx \lambda.
	\end{equation}
	In terms of the process, this corresponds to truncate the large jumps.
	We assume that the tails of $\nu$ rapidly decrease -- this is not really restrictive since applied  models typically require that the tails of $\nu$ decrease exponentially.
	Hence, we take $L\in\N$ large enough, set $\mathcal{I}=[-L\Delta y , +L\Delta y ]$ and  apply to \eqref{bound_int} the trapezoidal rule on the grid $\mathcal{X}$ with the same step $\dx$ previously defined.  Then, for  $\xi_l=l\dx$, $l=-L,\ldots,L$, we have
	\begin{equation}\label{num_int}
	\int_{-L\Delta y}^{+L\Delta y} \left[ u(t,x+\xi)-u(t,x)\right]\nu(\xi) d\xi\approx \dx \sum_{l=-L}^{L} \left(u(t,x+\xi_l)-u(t,x)\right)\nu(\xi_l).
	\end{equation}
	We notice that $x_i+\xi_l=X_0+(i+l)\dx \in\mathcal{X}$, so the values $u(t,x_i+\xi_l)$ are well defined on the numerical grid $\mathcal{X}$ for any $ i,l$. These are technical settings and can be modified and calibrated for different L\'evy measures $\nu$.
	
	But in practice one cannot solve the PIDE problem over the whole real line. So, we have to choose artificial bounds and impose numerical boundary conditions. We take a positive integer $M>0$ and we define a finite grid $\mathcal{X}_M=\{x_i=X_0+i\dx\}_{i\in\mathcal{J}_{M}}$, with $\mathcal{J}_{M}=\{-M,\ldots,M\}$, and we assume that $M>L$.
	Notice that 
	for $x=x_i\in \mathcal{X}_M$ then the integral term in \eqref{num_int} splits into two parts: one part concerning nodes falling into the numerical domain $\mathcal{X}_{M}$ and another part concerning nodes falling out of $\mathcal{X}_M$. As an example, at time $t=nh$ we have
	$$
	\sum_{l=-L}^{L}u(nh,x_i+\xi_l)\nu(\xi_l)\approx\sum_{l=-L}^L u^{n}_{i+l} \nu(\xi_l) = \sum_{l\,:\,|l|\leq L,|i+l| \leq M} u^{n}_{i+l}\  \nu(\xi_l) + \sum_{l\,:\,|l|\leq L,|i+l| > M} \tilde u^{n}_{i+l}\  \nu(\xi_l),
	$$
	where $\tilde u^n_{\cdot}$ stands for (unknown) values that fall out of the finite numerical domain $\mathcal{X}_{M}$.
	This implies that we must choose some suitable artificial boundary conditions.
	In a financial context, in \cite{cv} it has been shown that a good choice for the boundary conditions is the payoff function.
	Although this is the choice we will do in our numerical experiments, for the sake of generality we assume here the boundary values outside $\mathcal{X}_{M}$ to be settled as $\tilde u^{n}_{i}=b(nh,x_i)$, where $b=b(t,x)$ is a fixed function defined in
	$[0,T]\times\R$.
	
	Going back to the numerical scheme to solve the differential part of the equation \eqref{PIDE}, as already done in \cite{bcz-hhw}, we apply an implicit in time approximation. However, to avoid to solve at each time step a linear system with a dense matrix, the non-local integral term needs anyway an explicit in time approximation. We then obtain an implicit-explicit (hereafter IMER) scheme as proposed in \cite{cv} and \cite{bln}. Notice that more sophisticated IMER methods may be applied, see for instance \cite{bnr,st}. Let us stress that these techniques could be used in our framework, being more accurate but expensive.
	
	As done in \cite{bcz-hhw}, to achieve greater precision we use the centered approximation for both first and second order derivatives in space.
	The discrete solution $u^n$ at time $nh$ is then computed in terms of the known value $u^{n+1}$ at time $(n+1)h$ by solving the following discrete problem: for all $i\in\mathcal{J}_M$,
	\begin{equation}\label{discr_eq}
	\Frac{u^{n+1}_i-u^n_i}{h}+\tilde\mu_X(y,r)\Frac{u^{n}_{i+1}-u^{n}_{i-1}}{2\dx}+\frac{1}{2}\rho_3^2\ y\ \Frac{u^{n}_{i+1}-2u^n_i+u^{n}_{i-1}}{\dx^2}
	+ \dx \Sum_{l=-R}^{R} \left(u^{n+1}_{i+l}-u^{n+1}_{i}\right)\nu(\xi_l) =0.
	\end{equation}
	We then get the solution $u^n=(u^n_{-M},\ldots,u^n_M)^T$ by solving the following linear system
	\begin{equation}\label{lin_sys}
	A\, u^n = B u^{n+1} + d,
	\end{equation}
	where $A=A(y,r)$ and $B$ are $(2M+1)\times (2M+1)$ matrices and $d$ is a $(2M+1)$-dimensional boundary vector defined as follows.
	
	\smallskip
	
	\noindent
	$\blacktriangleright$ \textbf{The matrix $A$.}
	From \eqref{discr_eq}, we set $A$ as the tridiagonal real matrix given by
	\begin{equation}\label{matrixA}
	A = \left(
	\begin{array}{ccccc}
	1+2\beta & -\alpha-\beta & & & \\
	\alpha-\beta & 1+2\beta & -\alpha-\beta & & \\
	& \ddots & \ddots & \ddots &  \\
	&   & \alpha-\beta & 1+2\beta & -\alpha-\beta \\
	&   &        &   \alpha-\beta & 1+2\beta
	\end{array}
	\right),
	\end{equation}
	with
	\begin{equation}\label{alphabeta_impl}
	\alpha=\frac{h}{2\dx}\,\mu(nh,y,r)\quad\mbox{and}\quad\beta=\frac{h}{2\dx^2}\,\rho_3^2y,
	\end{equation}
	$\mu$ being defined in \eqref{mu-fin}.
	We emphasize that at each time step $n$, the quantities $v$ and $x$ are constant and known values (defined by the tree procedure for $(Y,R)$) and then $\alpha$ and $\beta$ are constant parameters.
	
	\smallskip
	
	\noindent
	$\blacktriangleright$ \textbf{The matrix $B$.}
	Again from \eqref{discr_eq}, $B$ is the $(2M+1)\times(2M+1)$ real matrix given by
	\begin{equation}\label{matrixD_bound}
	B = I + h\dx\left(
	\begin{array}{cccccc}
	\nu(0) - \Lambda &  \nu(\dx)  & \ldots & \nu(L\dx) & 0 & \\
	\nu(-\dx) & \nu(0) - \Lambda & \nu(\dx)  & \ldots &  \nu(L\dx) & \\
	& \ddots & \ddots & \ddots &  &\\
	0 & \nu(-L\dx) & \ldots & \nu(-\dx) &  \nu(0) - \Lambda
	\end{array}
	\right),
	\end{equation}
	where $I$ is the identity matrix and
	$$
	\Lambda=\sum_{l=-L}^L \nu(\xi_l).
	$$
	
	\medskip
	
	\noindent
	$\blacktriangleright$ \textbf{The boundary vector $d$.}
	The vector $d\in\R^{2M+1}$ contains the numerical boundary values:
	\begin{equation}\label{d}
	d = a_b^n+ a_b^{n+1},
	\end{equation}
	with
	$$a_b^n =((\beta-\alpha)b^{n}_{-M-1},0,\ldots,0,(\beta+\alpha)b^n_{M+1})^T\in\R^{2M+1}$$
	and $a_b^{n+1} \in \R^{2M+1}$ is such that
	$$
	(a^{n+1}_b)_i = \left\{\begin{array}{ll}
	\displaystyle
	h \dx \sum_{l=-L}^{-M-i-1}\nu(x_l)\ b^{n+1}_{i+l}, & \mbox{ for } i=-M,\ldots,-M+L-1,
	\smallskip\\
	0 & \mbox{ for } i=-M+L,\ldots,M-L,
	\smallskip\\
	\displaystyle
	h \dx \sum_{l=M-i+1}^L\nu(x_l)\ b^{n+1}_{i+l}, & \mbox{ for } i=M-L+1,\ldots,M-1,
	\end{array}\right.
	$$
	where we have used the standard notation $b^n_i=b(nh,x_i)$, $i\in\mathcal{J}_M$.
	
	\smallskip
	In practice, we numerically solve the linear system \eqref{lin_sys} with an efficient algorithm (see next Remark \ref{costo}). We notice here that a solution to \eqref{lin_sys} really exists
	because for $\beta\ne|\alpha|$, the matrix $A=A(y,r)$ is invertible (see e.g. Theorem 2.1 in \cite{BT}). Then, at time $nh$, for each fixed $y\geq 0$ and $r\in\R$,
	we approximate the solution $x\mapsto u(nh,x ;y,r)$ of \eqref{PIDE} on the points $x_i$'s of the grid in terms of the discrete solution $u^n=\{u^n_i\}_{i\in \mathcal{J}_{M}}$, which in turn is written in terms of the value $u^{n+1}=\{u^{n+1}_i\}_{i\in \mathcal{J}_{M}}$ at time $(n+1)h$. In other words, we set
	\begin{equation}\label{FD}
	\mbox{$u(nh,x_i;y,r)\approx u^n_i$, $i\in\mathcal{J}_M$, where $u^n=(u^n_i)_{i\in\mathcal{J}_M}$ solves \eqref{lin_sys}}
	\end{equation}


	\subsubsection{The final local finite-difference approximation}
	
	We are now ready to tackle our original problem: the computation of the function $\Psi_f(\zeta;x,y,r)$ in \eqref{E2} allowing one to numerically compute the expectation in \eqref{E1}.
	So, at time step $n$, the pair $(y,r)$ is chosen on the lattice $\mathcal{Y}_n\times\mathcal{R}_n$: $y=y^n_k$, $r=r^n_j$ for $0\leq k,j\leq n$. We call $A^n_{k,j}$ the matrix $A$ in \eqref{matrixA} when evaluated in  $(y^n_k, r^n_j)$ and $d^n$ the boundary vector in \eqref{d} at time-step $n$. Then, \eqref{FD} gives
	$$
	\begin{array}{c}
	\mbox{$\Psi_f(\zeta; x_i,y^n_k,r^n_j)\simeq u^n_{i,k,j}$, where $u^n_{\cdot,k,j}=(u^n_{i,k,j})_{i\in\mathcal{J}_M}$ solves the linear system}\medskip\\
	\displaystyle
	A^n_{k,j}u^n_{\cdot,k,j}=Bf(x_\cdot+\zeta)+d^n.
	\end{array}
	$$
	Therefore, by taking the expectation w.r.t. the tree-jumps, the expectation in \eqref{E1} is finally computed on
	$\mathcal{X}_M\times\mathcal{Y}_n\times\mathcal{R}_n$ by means of the above approximation:
	$$
	\E(f(\bar X^h_{(n+1)h})\mid \bar X^h_{nh}=x_i, \bar Y^h_{nh}=y^n_k, \bar R^h_{nh}=r^n_j)\simeq u^n_{i,k,j},
	$$
	where $u^n_{\cdot,k,j}=(u^n_{i,k,j})_{i\in\mathcal{J}_M}$ solves the linear system 
	$$
	A^n_{k,j}u^n_{\cdot,k,j}=\sum_{a,b\in\{u,d\}} p_{ab}(n,k,j) Bf	\Big(x_\cdot+\frac{\rho_1}{\sigma_Y}(y^{n+1}_{k_a (n,k)}-y^n_k)
	+\rho_2\sqrt{y}(r^{n+1}_{j_b (n,j)}-r^n_j)\Big)+
	d^n.
	$$
	Finally, if $f$ is a function on the whole triple $(x,y,r)$, by using standard properties of the conditional expectation one gets
	\begin{equation}\label{E3}
	\begin{array}{l}
	\mbox{$\E(f(\bar X^h_{(n+1)h}, \bar Y^h_{(n+1)h}, \bar R^h_{(n+1)h})\mid \bar X^h_{nh}=x_i, \bar Y^h_{nh}=y^n_k, \bar R^h_{nh}=r^n_j)\simeq u^n_{i,k,j}$,}\smallskip\\
	\mbox{where $u^n_{\cdot,k,j}=(u^n_{i,k,j})_{i\in\mathcal{J}_M}$ solves the linear system}\smallskip\\
	\displaystyle
	A^n_{k,j}u^n_{\cdot,k,j}\smallskip\\
	\displaystyle
	=\sum_{a,b\in\{u,d\}} p_{ab}(n,k,j) Bf
	\Big(x_\cdot+\frac{\rho_1}{\sigma_Y}(y^{n+1}_{k_a (n,k)}-y^n_k)
	+\rho_2\sqrt{y}(r^{n+1}_{j_b (n,j)}-r^n_j), y^{n+1}_{k_a (n,k)}, r^{n+1}_{j_b (n,j)}\Big)+
	d^n.
	\end{array}
	\end{equation}

	\subsection{Pricing European and American options}\label{sect-alg}
	
	We are now ready to approximate the function $P_h$ solution to the dynamic programming principle \eqref{backward}. We consider the discretization scheme $(\bar X^h,\bar Y^h,\bar R^h)$ discussed in Section \ref{sec:approxPDE} and we use the approximation \eqref{E3} for the conditional expectations that have to be computed at each time step $n$. So, for every point $(x_i,y^n_k, r^n_j)\in \mathcal{X}_{M}\times\mathcal{Y}_n\times\mathcal{R}_n$, by \eqref{E3} we have
	$$
	\E\Big(P_h\big((n+1)h, X_{(n+1)h}^{nh,x_{i},y^n_k,r^n_j}, Y_{(n+1)h}^{nh,y^n_k}, R_{(n+1)h}^{nh,r^n_j}\big)\Big)\simeq u^n_{i,k,j}
	$$
	where $u^n_{\cdot,k,j}=(u^n_{i,k,j})_{i\in\mathcal{J}_M}$ solves the linear system
	\begin{equation}\label{E5}
	\begin{array}{l}
	\displaystyle
	A^n_{k,j}u^n_{\cdot,k,j}=
	B\!\!\!\sum_{a,b\in\{u,d\}}\!\!\! p_{ab}(n,k,j)\times \smallskip\\
	\displaystyle
	\times P_h
	\Big((n+1)h,y_\cdot+\frac{\rho_1}{\sigma_Y}(y^{n+1}_{k_a (n,k)}-y^n_k)
	+\rho_2\sqrt{y}(r^{n+1}_{j_b (n,j)}-r^n_j, y^n_k,r^n_j), y^{n+1}_{k_a (n,k)}, r^{n+1}_{j_b (n,j)}\Big)+d^n.
	\end{array}
	\end{equation}
	We then define the approximated price $\tilde P_h(nh,x,y,r)$ for $(x,y,r)\in \mathcal{X}_{M}\times\mathcal{Y}_n\times\mathcal{R}_n$ and $n=0,1,\ldots,N$ as
	\begin{equation}\label{backward-ter0-jumps}
	\begin{cases}
	\tilde P_h(T,x_i,y^N_k,r^N_j)= \Psi(x_i)
	\quad \mbox{and as $n=N-1,\ldots,0$:}\\
	\displaystyle
	\tilde P_h(nh,x_i,y^n_k,r^n_j) =
	\max \Big\{\widehat \Psi(x_i),
	e^{-(\sigma_r r^n_j + \varphi_{nh})h}
	\tilde u^n_{i,k,j}\Big\}
	\end{cases}
	\end{equation}
	in which $\tilde u^n_{\cdot,k,j}=(\tilde u^n_{i,k,j})_{i\in\mathcal{J}_M}$ is the solution to the system in \eqref{E5} with $P_h$ replaced by $\tilde P_h$.
	
	Note that the system in \eqref{E5} requires the knowledge of the function $y\mapsto \tilde P_h((n+1)h,x,y,r)$ in points $x$'s that do not necessarily belong to the grid $\mathcal{X}_M$. Therefore, in practice we compute such a function by means of linear  interpolations, working as follows.
	For fixed $n,k,j,a,b$, we set $I_{n,k,j,a,b}(i)$, $i\in\mathcal{J}_M$, as the index such that
	$$
	x_i+\frac{\rho_1}{\sigma_Y}(y^{n+1}_{k_a (n,k)}-y^n_k)
	+\rho_2\sqrt{y}(r^{n+1}_{j_b (n,j)}-r^n_j)\in[x_{I_{n,k,j,a,b}(i)}, x_{I_{n,k,j,a,b}(i)+1}),
	$$
	with $I_{n,k,j,a,b}(i)=-M$ if $x_i+\frac{\rho_1}{\sigma_Y}(y^{n+1}_{k_a (n,k)}-y^n_k)
	+\rho_2\sqrt{y}(r^{n+1}_{j_b (n,j)}-r^n_j)<-M$ and
	$I_{n,k,j,a,b}(i)+1=M$ if $x_i+\frac{\rho_1}{\sigma_Y}(y^{n+1}_{k_a (n,k)}-y^n_k)
	+\rho_2\sqrt{y}(r^{n+1}_{j_b (n,j)}-r^n_j)>M$. We set
	$$
	q_{n,k,j,a,b}(i)
	=\frac{x_i+\frac{\rho_1}{\sigma_Y}(y^{n+1}_{k_a (n,k)}-y^n_k)
		+\rho_2\sqrt{y}(r^{n+1}_{j_b (n,j)}-r^n_j)-x_{I_{n,k,j,a,b}(i)}}{\Delta x}.
	$$
	Note that $q_{n,k,j,a,b}(i)\in[0,1)$. We  define
	\begin{align*}
	&(\mathfrak{I}_{a,b}\tilde P_h )((n+1)h,x_i, y^{n+1}_{k_a (n,k)}, r^{n+1}_{j_b (n,j)})
	=\tilde P_h ((n+1)h,x_{I_{n,k,j,a,b}(i)}, y^{n+1}_{k_a (n,k)}, r^{n+1}_{j_b (n,j)})\,(1-q_{n,k,j,a,b}(i))\\
	&\quad +\tilde P_h ((n+1)h,x_{I_{n,k,j,a,b}(i)+1}, y^{n+1}_{k_a (n,k)}, r^{n+1}_{j_b (n,j)})\,q_{n,k,j,a,b}(i)
	\end{align*}
	and we set
	$$
	\begin{array}{l}
	\displaystyle
	\tilde P_h\Big((n+1)h,x_i+\frac{\rho_1}{\sigma_Y}(y^{n+1}_{k_a (n,k)}-y^n_k)
	+\rho_2\sqrt{y}(r^{n+1}_{j_b (n,j)}-r^n_j), y^{n+1}_{k_a (n,k)}, r^{n+1}_{j_b (n,j)}\Big)\smallskip\\
	=(\mathfrak{I}_{a,b}\tilde P_h)((n+1)h,x_i, y^{n+1}_{k_a (n,k)}, r^{n+1}_{j_b (n,j)}).
	\end{array}
	$$
	Therefore, starting from \eqref{E5}, in practice the function $\tilde u^n_{\cdot,k,j}=(\tilde u^n_{i,k,j})_{i\in\mathcal{J}_M}$ in \eqref{backward-ter0-jumps} is taken as the solution to the linear system
	\begin{equation}\label{u-interp}
	A^n_{k,j}\tilde u^n_{\cdot,k,j}
	= B\sum_{a,b\in\{u,d\}}\!\!\! p_{ab}(n,k,j)
	(\mathfrak{I}_{a,b}\tilde P_h)((n+1)h,x_\cdot, y^{n+1}_{k_a (n,k)}, r^{n+1}_{j_b (n,j)})+ d^n.
	\end{equation}
	We can then state our final numerical procedure:
	\begin{equation}\label{backward-ter}
	\begin{cases}
	\tilde P_h(T,x_i,y^N_k,r^N_j)= \Psi(x_i)\quad 
	\mbox{and as $n=N-1,\ldots,0$:}\\
	\displaystyle
	\tilde P_h(nh,x_i,y^n_k,r^n_j) =
	\max \Big\{\widehat \Psi(x_i),
	e^{-(\sigma_r r^n_j + \varphi_{nh})h}
	\tilde u^n_{i,k,j}\Big\}
	\end{cases}
	\end{equation}
	$\tilde u^n_{\cdot,k,j}=(\tilde u^n_{i,k,j})_{i\in\mathcal{J}_M}$ being the solution to the system \eqref{u-interp}.

	\begin{remark}\label{interp}
		In the case of an infinite grid, that is $M=+\infty$, $i\mapsto I_{n,k,j,a,b}(i)$ is a translation: $I_{n,k,j,a,b}(i)=I_{n,k,j,a,b}(0)+i$. So, $x_i\mapsto (\mathfrak{I}_{a,b}\tilde P_h )((n+1)h,x_i, y^{n+1}_{k_a (n,k)}, r^{n+1}_{j_b (n,j)})$ is just a linear convex combination of  translations of $x_i\mapsto \tilde P_h ((n+1)h,x_i, y^{n+1}_{k_a (n,k)}, r^{n+1}_{j_b (n,j)})$.
	\end{remark}

	\subsection{Stability analysis of the hybrid tree/finite-difference method}\label{sect:stability}
	
	We analyze here the stability of the resulting tree/finite-difference scheme. To this purpose, we consider a norm, defined on functions of the variables $(x,y,r)$, which is the uniform norm with respect to the volatility and the interest rate components $(y,r)$ and coincides with the standard $l_2$ norm with respect to the direction $x$ (see next \eqref{norm}). The choice of the $l_2$ norm allows one to perform a von Neumann analysis  in the component $x$ on the infinite grid $\mathcal{X}=\{x_i=X_0+i\dx\}_{i\in\Z}$, that is, without truncating the domain and without imposing boundary conditions.  Therefore, our stability analysis does not take into account boundary effects. This approach is extensively used in the literature, see e.g. \cite{duffy}, and yields good criteria on the robustness of the algorithm independently of the boundary conditions.
	
	Let us first write down explicitly the scheme \eqref{backward-ter} on the infinite grid $\mathcal{X}=\{x_i\}_{i\in\Z}$.
	For a fixed  function $f=f(t,x,y,r)$, we set $g=f$ (in the case of American options) or $g=0$ (in the case of European options) and we consider the numerical scheme given by
	\begin{equation} \label{backward-ter-inf-noL}
	\begin{cases}
	F_h(T,x_i,y^N_k,r^N_j)= f(T,x_i,y^N_k,r^N_j)\quad
	\mbox{and as $n=N-1,\ldots,0$:}\\
	\displaystyle
	F_h(nh,x_i,y^n_k,r^n_j) =\max \Big\{g(nh,x_i,y^n_k,r^n_j),
	e^{-(\sigma_r r^n_j + \varphi_{nh})h} u^n_{i,k,j}\Big\}
	\end{cases}
	\end{equation}
	where $u^n_{\cdot,k,j}=(u^n_{i,k,j})_{i\in\Z}$ is the solution to
	\begin{equation}\label{scheme_bates}
	\begin{array}{l} (\alpha_{n,k,j}-\beta_{n,k})u^n_{i-1,k,j}+(1+2\beta_{n,k})u^n_{i,k,j}-(\alpha_{n,k,j}+\beta_{n,k})u^n_{i+1,k,j}
	\smallskip\\
	\displaystyle
	=\sum_{a,b\in\{d,u\}}p_{ab}(n,k,j)\times\Big[ (\mathfrak{I}_{a,b}F_h)((n+1)h,x_i, y^{n+1}_{k_a (n,k)}, r^{n+1}_{j_b (n,j)})+\smallskip\\
	\displaystyle \quad
	+h\dx\sum_l\nu(\xi_l)\big((\mathfrak{I}_{a,b}F_h)((n+1)h, x_{i+l}, y^{n+1}_{k_a (n,k)}, r^{n+1}_{j_b (n,j)})\smallskip\\
	\displaystyle \quad
	-(\mathfrak{I}_{a,b}F_h)((n+1)h, x_{i}, y^{n+1}_{k_a (n,k)}, r^{n+1}_{j_b (n,j)})\big)\Big],
	\end{array}
	\end{equation}
	in which $\alpha_{n,k,j}$ and $\beta_{n,k,j}$ are the coefficients $\alpha$ and $\beta$ defined in \eqref{alphabeta_impl} when evaluated in the pair $(y^n_k, r^n_j)$. Note that \eqref{scheme_bates} is simply the  linear system \eqref{u-interp} on the infinite grid, with $d^n\equiv 0$ (no boundary conditions are needed). Let us stress that in next Remark \ref{antitrasf} we will see that, since $\beta_{n,k}\geq 0$, a solution to \eqref{scheme_bates} does exist, at least for ``nice'' functions $f$. It is clear that the case $g=f$ is linked to the American algorithm whereas the case $g=0$ is connected to the European one: \eqref{backward-ter-inf-noL} gives our numerical approximation of the function
	\begin{equation}\label{F}
	F(t,x,y,r)=\left\{
	\begin{array}{ll}
	\displaystyle
	\E\Big(e^{-(\sigma_r\int_t^TR^{t,r}_sds+\int_t^T\varphi_sds)}f(T,X_T^{t,x,y,r},Y^{t,y}_T, R^{t,r}_T)\Big)
	&\mbox{ if } g=0,\smallskip\\
	\displaystyle
	\sup_{\tau\in\mathcal{T}_{t,T}}\E\Big(e^{-(\sigma_r\int_t^{\tau}R_s^{t,r}ds+\int_t^\tau \varphi_sds)}f(\tau,X_\tau^{t,x,y,r},Y^{t,y}_\tau, R^{t,r}_\tau)\Big)
	&\mbox{ if } g=f,
	\end{array}
	\right.
	\end{equation}
	at times $nh$ and in the points of the grid $\mathcal{X}\times \mathcal{Y}_n\times \mathcal{R}_n$.
	
	\subsubsection{The ``discount truncated scheme'' and its stability}
	
	In our stability analysis, we consider a numerical scheme which is a slight modification of \eqref{backward-ter-inf-noL}: we fix a (possibly large) threshold $\vartheta>0$ and we consider the scheme
	\begin{equation} \label{backward-ter-inf}
	\begin{cases}
	F^\vartheta_h(T,x_i,y^N_k,r^N_j)= f(T,x_i,y^N_{k},r^N_{j})\quad 
	\mbox{and as $n=N-1,\ldots,0$:}\\
	\displaystyle
	F^\vartheta_h(nh,x_i,y^n_k,r^n_j)
	=\max \Big\{g(nh,x_i,y^n_k,r^n_j),
	e^{-(\sigma_r r^n_j\,\ind{\{r^n_j>-\vartheta\}} + \varphi_{nh})h}
	u^n_{i,k,j}\Big\}
	\end{cases}
	\end{equation}
	with $g=f$ or $g=0$, where $u^n_{\cdot,k,j}=(u^n_{i,k,j})_{i\in\Z}$ is the solution to \eqref{scheme_bates}, with $(\mathfrak{I}_{a,b}F_h)$ replaced by
	$(\mathfrak{I}_{a,b}F^\vartheta_h)$.
	Let us stress that the above scheme \eqref{backward-ter-inf-noL} really differs from \eqref{backward-ter-inf} only when $\sigma_r>0$ (stochastic interest rate). And in this case, in the discounting factor of \eqref{backward-ter-inf} we do not allow  $r^n_j$ to run everywhere on its grid: in the original scheme \eqref{backward-ter-inf-noL}, the exponential contains the term $r^n_j$ whereas in the present scheme \eqref{backward-ter-inf} we put $r^n_j\ind{\{r^n_j>-\vartheta\}}$, so we kill the points of the grid $\mathcal{R}_n$ below the threshold $-\vartheta$. And in fact, \eqref{backward-ter-inf} aims to numerically compute the function
	\begin{equation}\label{FL}
	F^\vartheta(t,x,y,r)=\left\{
	\begin{array}{ll}
	\displaystyle
	\E\Big(e^{-(\sigma_r\int_t^TR^{t,r}_s\,\ind{\{R^{t,r}_s>-\vartheta\}}ds
		+\int_t^T\varphi_sds)}f(T,X_T^{t,x,y,r},Y^{t,y}_T, R^{t,r}_T)\Big)
	&\mbox{ if } g=0,\smallskip\\
	\displaystyle
	\sup_{\tau\in\mathcal{T}_{t,T}}
	\E\Big(e^{-(\sigma_r\int_t^{\tau}R_s^{t,r}\,\ind{\{R^{t,r}_s>-\vartheta\}}ds+\int_t^\tau \varphi_sds)}f(\tau,X_\tau^{t,x,y,r},Y^{t,y}_\tau, R^{t,r}_\tau)\Big)
	&\mbox{ if } g=f,
	\end{array}
	\right.
	\end{equation}
	at times $nh$ and in the points of the grid $\mathcal{X}\times \mathcal{Y}_n\times \mathcal{R}_n$.
	Recall that in practice $h$ is small but fixed, so that the implemented scheme incorporates a threshold (see for instance the tree given in Figure \ref{fig:treeYR}). And actually, in our numerical experiments we observe a real stability. However, we will discuss later on how much one can lose with respect to the solution of \eqref{backward-ter-inf-noL}.

	For $n=N,\ldots,0$, the scheme \eqref{backward-ter-inf} returns a function in the variables $(x,y,r)\in \mathcal{X}\times\mathcal{Y}_n\times\mathcal{R}_n$. Note that
	$\mathcal{Y}_n\times\mathcal{R}_n\subset I^Y_n\times I^R_n$, where
	$$
	I^Y_n=[y^n_0,y^n_n]\quad\mbox{and}\quad I^R_n=[r^n_0, r^n_n],
	$$
	that is, the intervals between the smallest and the biggest node at time-step $n$:
	$$
	y^n_0=\Big(\sqrt {Y_0}-\frac{\sigma_Y} 2\,n\sqrt{h}\Big)^2\ind{\{\sqrt {Y_0}-\frac{\sigma_Y} 2\,n\sqrt{h}>0\}},\qquad
	y^n_n=\Big(\sqrt {Y_0}+\frac{\sigma_Y} 2\,n\sqrt{h}\Big)^2,
	$$
	$$
	r^n_0=-n\sqrt{h},\qquad
	r^n_n=n\sqrt{h}.
	$$
	As $n$ decreases to 0, the intervals $I^Y_n$ and $I^R_n$ are becoming smaller and smaller
	and at time 0 they collapse to the single point $y^0_0=Y_0$
	and $r^0_0=R_0=0$ respectively. So, the norm we are going to define  takes into account these facts: at time $nh$ we consider for $\phi=\phi(t,x,y,r)$ the norm
	\begin{equation}\label{norm}
	\|\phi(nh,\cdot)\|_n = \sup_{(y,r)\in I^Y_n\times I^R_n}\|\phi(nh,\cdot,y,r)\|_{l_2(\mathcal{X})} = \sup_{(y,r)\in I^Y_n\times I^R_n}\Big(\sum_{i\in\Z}|\phi(nh,x_i,y,r)|^2\Delta y\Big)^{\frac 12}.
	\end{equation}
	In particular,
	\begin{align*}
	&\|\phi(0,\cdot)\|_0 =\|\phi(0,\cdot,Y_0,R_0)\|_{l_2(\mathcal{X})} = \Big(\sum_{i\in\Z}|\phi(x_i,Y_0,R_0)|^2\Delta y\Big)^{1/2}\quad\mbox{and}\\
	&\|\phi(T,\cdot)\|_N
	\leq \sup_{(y,r)\in\R_+\times \R} \|\phi(x_i,y,r)\|_{l_2(\mathcal{X})}
	= \sup_{(y,r)\in\R_+\times \R} \Big(\sum_{i\in\Z}|\phi(x_i,y,r)|^2\Delta y\Big)^{1/2}.
	\end{align*}
	
	We are now ready to give our stability result.
	
	\begin{theorem}\label{prop-stability}
		Let $f\geq 0$ and, in the case $g=f$, suppose that
		$$
		\sup_{t\in[0,T]}|f(t,x,y,r)|\leq \gamma_T|f(T,x,y,r)|,
		$$
		for some $\gamma_T>0$. Then, for every $\vartheta>0$ the numerical scheme \eqref{backward-ter-inf} is stable with respect to the norm \eqref{norm}:
		$$
		\|F^\vartheta_h(0,\cdot)\|_0\leq C^{N,\vartheta}_T
		\|F^\vartheta_h(T,\cdot)\|_N
		=C^{N,\vartheta}_T
		\|f(T,\cdot)\|_N,\quad \forall h,\Delta y,
		$$
		where
		$$
		C^{N,\vartheta}_T=
		\left\{
		\begin{array}{ll}
		e^{2\lambda cT+\sigma_r\vartheta T-\sum_{n=1}^N\varphi_{nh}h}\stackrel{N\to\infty}{\longrightarrow}
		C^\vartheta_T=e^{2\lambda cT+\sigma_r\vartheta T-\int_0^T\varphi_{t}dt}&\mbox{ if } g=0,\smallskip\\
		\max\Big\{ \gamma_T, e^{2\lambda cT+\sigma_r\vartheta T-\sum_{n=1}^N\varphi_{nh}h}\Big\}\stackrel{N\to\infty}{\longrightarrow}
		C^\vartheta_T=\max\Big\{\gamma_T, e^{2\lambda cT+\sigma_r\vartheta T-\int_0^T\varphi_{t}dt}\Big\}&\mbox{ if } g=f,
		\end{array}
		\right.
		$$
		in which $c>0$ is such that $\sum_l \nu(\xi_l)\dx\leq \lambda c$.
		In the standard Bates model, that is $\sigma_r=0$ and deterministic interest rate $r_t=\varphi_t$, the discount truncated scheme \eqref{backward-ter-inf} coincides with the standard scheme \eqref{backward-ter} and the stability follows for \eqref{backward-ter}.
		
	\end{theorem}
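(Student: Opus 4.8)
The plan is to run a von Neumann (Fourier) analysis in the $x$-direction on the infinite grid $\mathcal{X}$, exploiting the fact that the norm \eqref{norm} is the $l_2$-norm in $x$ combined with a supremum in $(y,r)$. The supremum in $(y,r)$ will be harmless because at each step the tree averaging is a convex combination, so the whole difficulty is concentrated on a one-dimensional $l_2$ estimate along $x$. First I would isolate the one-step operator hidden in \eqref{scheme_bates}: writing $u^n=A^{-1}(\mathrm{RHS})$, with $A$ as in \eqref{matrixA}–\eqref{alphabeta_impl}, the right-hand side is $\sum_{a,b}p_{ab}(n,k,j)\,J\,(\mathfrak{I}_{a,b}F^\vartheta_h)$, where $J v_i=v_i+h\dx\sum_l\nu(\xi_l)(v_{i+l}-v_i)$ is the explicit jump operator read off from \eqref{matrixD_bound} and $\mathfrak{I}_{a,b}$ is the interpolation in \eqref{u-interp}. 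The solvability of \eqref{scheme_bates} is guaranteed by $\beta_{n,k}\ge 0$ (Remark \ref{antitrasf}).

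On $l_2(\mathcal{X})$ each of these building blocks acts as a Fourier multiplier or as a convex combination of shifts. The matrix $A$ has symbol $\hat A(\omega)=1+2\beta(1-\cos\omega)-2i\alpha\sin\omega$, so that $|\hat A(\omega)|^2=(1+2\beta(1-\cos\omega))^2+4\alpha^2\sin^2\omega\ge 1$ since $\beta=\tfrac{h}{2\dx^2}\rho_3^2 y\ge 0$. Hence $\|A^{-1}\|_{l_2}\le 1$: this is exactly the unconditional stability of the implicit step, and it is the place where the sign $\beta\ge 0$ together with the implicit-in-time choice is essential. The jump multiplier obeys $|\hat J(\omega)|\le 1+h\dx\sum_l\nu(\xi_l)|e^{il\omega}-1|\le 1+2h\lambda c$, using $\sum_l\nu(\xi_l)\dx\le\lambda c$, so $\|J\|_{l_2}\le 1+2h\lambda c$. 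Finally $\mathfrak{I}_{a,b}$ is a convex combination of two integer shifts (Remark \ref{interp}), hence $l_2$-nonexpansive, and since $\sum_{a,b}p_{ab}(n,k,j)=1$ the averaging over the four successors is bounded in $l_2$ by the supremum over the successor states, which all lie in $I^Y_{n+1}\times I^R_{n+1}$.

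Collecting these estimates yields the one-step bound $\|A^{-1}(\mathrm{RHS})(\cdot,y^n_k,r^n_j)\|_{l_2}\le (1+2h\lambda c)\,\|F^\vartheta_h((n+1)h)\|_{n+1}$. Multiplying by the discount factor I would then use the truncation, namely $e^{-(\sigma_r r^n_j\ind{\{r^n_j>-\vartheta\}}+\varphi_{nh})h}\le e^{(\sigma_r\vartheta-\varphi_{nh})h}$. This is the only point where the threshold $\vartheta$ intervenes, and it is precisely what secures stability: without it the node $r^n_j$ could be arbitrarily negative and the discount factor unbounded. Setting $\kappa_n=e^{(\sigma_r\vartheta-\varphi_{nh})h}(1+2h\lambda c)$, the European case ($g=0$, where the $\max$ with $0$ is inactive because the scheme preserves nonnegativity) gives $\|F^\vartheta_h(nh)\|_n\le\kappa_n\|F^\vartheta_h((n+1)h)\|_{n+1}$; iterating from $n=N$ and using $(1+2h\lambda c)^N\le e^{2\lambda cT}$ produces $C^{N,\vartheta}_T=e^{2\lambda cT+\sigma_r\vartheta T-\sum_n\varphi_{nh}h}$.

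For the American case ($g=f$) the same continuation estimate holds, but one must also control the obstacle inside the pointwise maximum $F^\vartheta_h(nh)=\max\{f(nh),\,e^{-\cdots}u^n\}$. The hypothesis $\sup_t|f(t,x,y,r)|\le\gamma_T|f(T,x,y,r)|$ gives $\|f(nh)\|_n\le\gamma_T\|f(T)\|_N$, and I would prove by backward induction that $\|F^\vartheta_h(nh)\|_n\le\max\{\gamma_T,\prod_{m=n}^{N-1}\kappa_m\}\|f(T)\|_N$, which at $n=0$ is the announced constant. The genuinely delicate step — the one I expect to be the main obstacle — is that the $l_2$-norm does not commute with the pointwise maximum (one does not have $\|\max\{G,W\}\|_{l_2}\le\max\{\|G\|_{l_2},\|W\|_{l_2}\}$ in general), so a naive recursion does not immediately deliver a clean $\max$-constant and a crude triangle-inequality split would let the constant grow with $N$. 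I would resolve this by unrolling the obstacle recursion into a discrete Snell-envelope / optimal-stopping representation of the scheme over the tree-and-difference kernels: writing $F^\vartheta_h(nh)$ as a supremum over discrete stopping rules of discounted, interpolated terminal and obstacle data, each term is bounded pointwise by $\max\{\gamma_T,\prod_m\kappa_m\}$ times $f(T)$, and only afterwards does one pass to the $l_2$-norm, so that the maximum is inherited at the level of the constant rather than at the level of norms. Finally, when $\sigma_r=0$ and $r_t=\varphi_t$ is deterministic the threshold is inoperative and \eqref{backward-ter-inf} coincides with \eqref{backward-ter}, so the estimate transfers verbatim to the original scheme.
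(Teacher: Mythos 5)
Your Fourier/Parseval treatment of the one-step operator is exactly the paper's argument: the lower bound $|\hat A(\theta)|\ge 1+2\beta_{n,k}(1-\cos(\theta\dx))\ge 1$, the bound $1+2\lambda ch$ for the explicit jump multiplier, the nonexpansiveness of the interpolation (a convex combination of translations, Remark \ref{interp}) and of the averaging over the four tree successors, and the discount bound $e^{\sigma_r\vartheta h-\varphi_{nh}h}$ coming from the truncation $r^n_j\ind{\{r^n_j>-\vartheta\}}\ge-\vartheta$ all appear in the paper's proof, and the European case then closes as you describe. (Your aside that the scheme preserves nonnegativity is neither needed nor true in general --- $(A^n_{k,j})^{-1}$ is not positivity-preserving --- but $\|\max\{0,W\}\|_{l_2}\le\|W\|_{l_2}$ holds regardless.)

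The American case is where you depart from the paper, and the repair you propose does not work. The paper at this point combines \eqref{max} with the continuation estimate and passes the pointwise maximum directly through the norms, obtaining $\sup_{k,j}\|F^n_{\cdot,k,j}\|_{l_2}\le\max\big\{\sup_{k,j}\|g^n_{\cdot,k,j}\|_{l_2},\,(1+2\lambda ch)e^{\sigma_r\vartheta h-\varphi_{nh}h}\sup_{k,j}\|F^{n+1}_{\cdot,k,j}\|_{l_2}\big\}$; you are right to single this out as the delicate step. But your Snell-envelope unrolling fails for two concrete reasons. First, representing (or upper-bounding) $F^\vartheta_h(nh)$ by a supremum over discrete stopping rules of linearly propagated data requires the one-step operator to be order-preserving, so that $\Pi\max\{a,b\}\ge\max\{\Pi a,\Pi b\}$; here $(A^n_{k,j})^{-1}$ is monotone only under the M-matrix condition $|\alpha_{n,k,j}|\le\beta_{n,k}$ (see \eqref{matrixA}--\eqref{alphabeta_impl}), i.e.\ $|\mu(nh,y^n_k,r^n_j)|\dx\le\rho_3^2 y^n_k$, which fails at tree nodes where $y^n_k$ is small or zero --- this is precisely why the centered implicit scheme is analysed in $l_2$ while the monotone (stochastic-operator) variant is the upwind scheme of Section \ref{sect-linf}. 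Second, even granting monotonicity, the individual terms of the expansion are controlled only through $l_2$ operator norms, not ``pointwise by $\max\{\gamma_T,\prod_m\kappa_m\}$ times $f(T)$'' as you assert (the propagators do not send $f(T,\cdot)$ to a pointwise multiple of itself); and the $l_2$ norm of a pointwise supremum of a family of nonnegative functions is not dominated by the supremum of the individual $l_2$ norms --- it can exceed it by a factor of order the square root of the number of terms --- so deferring the norm to the end does not make the maximum ``inherited at the level of the constant''. As written, your American-case argument therefore has a genuine gap at exactly the step you identified as the main obstacle.
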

	
	\textbf{Proof.}
	In order to simplify the notation, we set $g^n_{i,k,j}=g(nh,x_i,y^n_k,r^n_j)$ and, similarly, $F^n_{i,k,j}=F^\vartheta_h(nh,x_i,y^n_k,r^n_j)$, $(\mathfrak{I}_{a,b}F_h^{n+1})_{i,k_a,j_b}
	=(\mathfrak{I}_{a,b}F^\vartheta_h)((n+1)h,x_i,y^{n+1}_{k_a (n,k)},r^{n+1}_{j_b (n,j)})$
	(we have also dropped the dependence on $\vartheta$).
	The scheme \eqref{backward-ter-inf} says that, at each time step $n<N$ and for each fixed $0\leq k,j\leq n$,
	\begin{equation}\label{max}
	F^n_{i,k,j} =
	\max \Big\{g^n_{i,k,j},
	e^{-(\sigma_r r^n_j\ind{\{r^n_j>-\vartheta\}} + \varphi_{nh})h}u^{n}_{i,k,j} \Big\},
	\end{equation}
	where, according to \eqref{scheme_bates}, $u^{n}_{i,k,j}$ solves
	\begin{equation}\label{scheme_bates1}
	\begin{array}{l} (\alpha_{n,k,j}-\beta_{n,k})u^n_{i-1,k,j}+(1+2\beta_{n,k})u^n_{i,k,j}-(\alpha_{n,k,j}+\beta_{n,k})u^n_{i+1,k,j}
	\smallskip\\
	\displaystyle
	=\sum_{a,b\in\{d,u\}}p_{ab}(n,k,j) \Big((\mathfrak{I}_{a,b}F^{n+1})_{i,k_a,j_b}+h\dx\sum_l\nu(\xi_l)
	\big[(\mathfrak{I}_{a,b}F^{n+1})_{i+l,k_a,j_b}-(\mathfrak{I}_{a,b}F^{n+1})_{i,k_a,j_b}\big]\Big).
	\end{array}
	\end{equation}
	Let $\mathfrak{F}\varphi$ denote the Fourier transform of $\varphi\in l_2(\mathcal{X})$, that is,
	\def\i{\mathbf{i}\,}
	\begin{equation*}
	\mathfrak{F}\varphi(\theta) = \frac{\dx}{\sqrt{2\pi}}\sum_{
		s\in\Z} \varphi_s e^{-\i s\Delta y\theta},\quad \theta\in\R,
	\end{equation*}
	$\i$ denoting the imaginary unit. We get from \eqref{scheme_bates1}
	\begin{equation}\label{pp}
	\begin{array}{l}
	\Big((\alpha_{n,k,j}-\beta_{n,k})e^{-\i\theta\dx}+1+2\beta_{n,k}
	-(\alpha_{n,k,j}+\beta_{n,k})e^{\i\theta\dx}\Big) \mathfrak{F} u^n_{k,j}(\theta)
	\medskip\\
	=\Big(1+h\dx\sum_l\nu(\xi_l)(e^{\i l\theta\dx}-1)\Big)\sum_{a,b\in\{d,u\}}p_{ab}(n,k,j)
	\mathfrak{F}(\mathfrak{I}_{a,b}F^{n+1})_{k_a,j_b}(\theta).
	\end{array}
	\end{equation}
	Note that
	\begin{align*}
	|(\alpha_{n,k,j}-\beta_{n,k})&e^{-\i\theta\dx}+1+2\beta_{n,k}
	-(\alpha_{n,k,j}+\beta_{n,k})e^{\i\theta\dx}|\\
	&\geq\big|\mathfrak{Re}\big[(\alpha_{n,k,j}-\beta_{n,k})e^{-\i\theta\dx}
	+1+2\beta_{n,k}-(\alpha_{n,k,j}+\beta_{n,k})e^{\i\theta\dx}\big]\big|\\
	&=1+2\beta_{n,k}(1-\cos(\theta\dx))
	\geq 1,
	\end{align*}
	for every $\theta \in [0,2\pi)$ (recall that $\beta_{n,k}\geq 0$).
	And since $\sum_l \nu(\xi_l)\dx\leq \lambda c$, we obtain
	\begin{align*}
	| \mathfrak{F} u^n_{k,j}(\theta)|
	&\leq \Big(1+h\dx\sum_{l\in\Z} | e^{\i l\theta\dx}-1|\nu(\xi_l)\Big)
	\sum_{a,b\in\{d,u\}}p_{ab}(n,k,j)|  \mathfrak{F}(\mathfrak{I}_{a,b}F^{n+1})_{k_a,j_b}(\theta)|\\
	& \leq (1+2\lambda ch)\sum_{a,b\in\{d,u\}}p_{ab}(n,k,j)|  \mathfrak{F}(\mathfrak{I}_{a,b}F^{n+1})_{k_a,j_b}(\theta)|.
	\end{align*}
	Therefore,
	\begin{align*}
	\| \mathfrak{F} u^n_{k,j}\|_{L^2([0,2\pi),\mathrm{Leb})}
	&\leq (1+2\lambda ch)\sum_{a,b\in\{d,u\}}p_{ab}(n,k,j)
	\|\mathfrak{F}(\mathfrak{I}_{a,b}F^{n+1})_{k_a,j_b}\|_{L^2([0,2\pi),\mathrm{Leb})}.
	\end{align*}
	We use now the Parseval identity $\|\mathfrak{F} \varphi\|_{L^2([0,2\pi),\mathrm{Leb})}
	=\|\varphi\|_{l_2(\mathcal{X})}$ and we get
	\begin{align*}
	\| u^n_{\cdot, k,j}\|_{l^2(\mathcal{X})}
	&\leq (1+2\lambda ch)\sum_{a,b\in\{d,u\}}p_{ab}(n,k,j)
	\|(\mathfrak{I}_{a,b}F^{n+1})_{\cdot, k_a,j_b}\|_{l^2(\mathcal{X})}\\
	&= (1+2\lambda ch)\sum_{a,b\in\{d,u\}}p_{ab}(n,k,j)
	\|F^{n+1}_{\cdot, k_a,j_b}\|_{l^2(\mathcal{X})},
	\end{align*}
	the first equality following from the fact that $i\mapsto (\mathfrak{I}_{a,b}F^{n+1})_{i, k_a,j_b}$ is a linear convex combination of translations of $i\mapsto F^{n+1}_{i, k_a,j_b}$ (see Remark \ref{interp}).
	This gives
	\begin{align*}
	\sup_{0\leq k,j\leq n}\|e^{-(\sigma_r r^n_j\,\ind{\{r^n_j>-\vartheta\}} + \varphi_{nh})h} u^n_{\cdot,k,j}\|_{l_2(\mathcal{X})}
	\leq (1+2\lambda c h)e^{\sigma_r\vartheta h-\varphi_{nh}h}
	\sup_{0\leq k,j\leq n+1}\|F^{n+1}_{\cdot,k,j}\|_{l_2(\mathcal{X})}
	\end{align*}
	and from \eqref{max}, we obtain
	\begin{align*}
	\sup_{0\leq k,j\leq n}\| F^n_{\cdot,k,j}\|_{l_2(\mathcal{X})}
	&\leq \max\Big(\sup_{0\leq k,j\leq n}\|g^n_{\cdot,k,j}\|_{l_2(\mathcal{X})}, (1+2\lambda c h)e^{\sigma_r\vartheta h-\varphi_{nh}h}
	\sup_{0\leq k,j\leq n+1}\|F^{n+1}_{\cdot,k,j}\|_{l_2(\mathcal{X})}\Big).
	\end{align*}
	We now continue assuming that $g=f$, the case $g=0$ following in a similar way. So,
	\begin{align*}
	\sup_{0\leq k,j\leq n}\| F^n_{\cdot,k,j}\|_{l_2(\mathcal{X})}
	&\leq \max\Big(\gamma_T\|f(T,\cdot)\|_N, (1+2\lambda c h)e^{\sigma_r\vartheta h-\varphi_{nh}h}
	\sup_{0\leq k,j\leq n+1}\|F^{n+1}_{\cdot,k,j}\|_{l_2(\mathcal{X})}\Big).
	\end{align*}
	For $n=N-1$ we then obtain
	\begin{align*}
	\sup_{0\leq k,j\leq n}\| F^{N-1}_{\cdot,k,j}\|_{l_2(\mathcal{X})}
	&\leq \max\Big(\gamma_T\|f(T,\cdot)\|_N, (1+2\lambda c h)e^{\sigma_r\vartheta h-\varphi_{(N-1)h}h}
	\|f(T,\cdot)\|_{N}\Big)
	\end{align*}
	and by iterating the above inequalities, we finally get
	\begin{align*}
	\| F^0\|_0
	=\|F^0_{\cdot,0,0}\|_{l_2(\mathcal{X})}
	&\leq \max\Big(\gamma_T\|f(T,\cdot)\|_N, (1+2\lambda c h)^Ne^{N\sigma_rL h-\sum_{n=1}^N\varphi_{nh}h}
	\|f(T,\cdot)\|_N\Big).
	\end{align*}
	\cvd
	
	\begin{remark}\label{antitrasf}
		We have incidentally proved that, as $n$ varies, the solution $u^n_{\cdot,k,j}$ to the infinite linear system \eqref{scheme_bates} actually exists and is unique if $\|f(T,\cdot)\|_N<\infty$. In fact, starting from equality \eqref{pp}, we define the function $\psi_{k,j}(\theta)$, $\theta\in[0,2\pi)$, by
		$$
		\begin{array}{l}
		\Big((\alpha_{n,k,j}-\beta_{n,k})e^{-\i\theta\dx}+1+2\beta_{n,k}
		-(\alpha_{n,k,j}+\beta_{n,k})e^{\i\theta\dx}\Big) \psi_{k,j}(\theta)
		\smallskip\\
		=\Big(1+h\dx\sum_l\nu(\xi_l)(e^{\i l\theta\dx}-1)\Big)\sum_{a,b\in\{d,u\}}p_{ab}(n,k,j)
		\mathfrak{F}(\mathfrak{I}_{a,b}F^{n+1})_{k_a,j_b}(\theta).
		\end{array}
		$$
		As noticed in the proof of Proposition \ref{prop-stability}, the factor multiplying $\psi_{k,j}(\theta)$ is different from zero because $\beta_{n,k}\geq 0$. So, the definition of $\psi_{k,j}$ is well posed and moreover, $\psi_{k,j}\in L^2([0,2\pi,),\mathrm{Leb})$. We now set $u^n_{\cdot,k,j}$ as the inverse Fourier transform of $\psi_{k,j}$, that is,
		$$
		u^n_{l,k,j}=\frac 1{\Delta y\sqrt{2\pi}}\int_0^{2\pi}\psi_{k,j}(\theta)e^{\mathbf{i}\,l\theta\Delta y}d\theta,\quad l\in\Z.
		$$
		Straightforward computations give that $u^n_{\cdot,k,j}$ fulfils the equation system \eqref{scheme_bates}.
	\end{remark}
	
	Of course, Theorem \ref{prop-stability} gives a stability property for the scheme introduced in \cite{bcz-hhw} for the Heston-Hull-White model: just take $\lambda=0$ (no jumps are considered).
	
	\subsubsection{Back to the original scheme \eqref{backward-ter-inf-noL}}\label{ritornoalloschema}
	
	Let us now discuss what may happen when one introduces the threshold $\vartheta $. We recall that the original scheme \eqref{backward-ter-inf-noL} gives the numerical approximation of the function $F$ in \eqref{F} whereas
	the discount truncated scheme \eqref{backward-ter-inf} aims to numerically compute the function $F^\vartheta$ in \eqref{FL}. Proposition \ref{prop-FL} below shows that, under standard hypotheses, $F^\vartheta$ tends to $F$ as $\vartheta\to\infty$ very fast. This means that, in practice, we lose very few in using \eqref{backward-ter-inf} in place of \eqref{backward-ter-inf-noL}.
	\begin{proposition}\label{prop-FL}
		Suppose that $f=f(t,x,y,r)$ has a polynomial growth in the variables $(x,y,r)$, uniformly in $t\in[0,T]$. Let $F$ and $F^\vartheta$, with $\vartheta>0$, be defined in \eqref{F} and \eqref{FL} respectively. Then there exist positive constants $c_T$ and $C_T(x,y,r)$ (depending on $(x,y)$ in a polynomial way and on $r$ in an exponential way) such that for every $\vartheta>0$
		$$
		|F(t,x,y,r)-F^\vartheta(t,x,y,r)|
		\leq \sigma_r C_T(x,y,r)
		e^{-c_T|\vartheta+xe^{-\kappa_r(T-t)}|^2},
		$$
		for every $t\in[0,T]$ and $(x,y,r)\in\R\times\R_+\times \R$.
	\end{proposition}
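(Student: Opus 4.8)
The plan is to exploit the fact that the two discounting weights appearing in \eqref{F} and \eqref{FL} differ only along those trajectories of the interest-rate factor $R^{t,r}$ that dip below the threshold $-\vartheta$, and that such an event is a Gaussian large deviation for the Ornstein--Uhlenbeck process $R^{t,r}$ governed by \eqref{XYR-dyn}. First I would record the pointwise inequality $r'\,\ind{\{r'>-\vartheta\}}\ge r'$ for every $r'\in\R$, which together with $\sigma_r>0$ gives $\sigma_r\int_t^\tau R^{t,r}_s\ind{\{R^{t,r}_s>-\vartheta\}}ds\ge \sigma_r\int_t^\tau R^{t,r}_sds$, hence $F^\vartheta\le F$ when $f\ge 0$. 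More importantly, off the event
$$
A_{t,T}=\Big\{\min_{s\in[t,T]}R^{t,r}_s\le -\vartheta\Big\}
$$
the two integrands coincide, so the difference of the weights vanishes identically. Thus, in the European case ($g=0$),
$$
0\le F(t,x,y,r)-F^\vartheta(t,x,y,r)\le \E\Big[\ind{A_{t,T}}\,e^{-\int_t^T\varphi_sds}\,e^{-\sigma_r\int_t^TR^{t,r}_sds}\,f(T,X^{t,x,y,r}_T,Y^{t,y}_T,R^{t,r}_T)\Big].
$$

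Next I would apply H\"older's inequality with exponents $(2,4,4)$ to separate the probability of $A_{t,T}$ from moment factors, using the trivial bound $e^{-\int_t^T\varphi_sds}\le e^{\int_t^T|\varphi_s|ds}$:
$$
F-F^\vartheta\le e^{\int_t^T|\varphi_s|ds}\,\P(A_{t,T})^{1/2}\,\E\big[e^{-4\sigma_r\int_t^TR^{t,r}_sds}\big]^{1/4}\,\E\big[f(T,\cdots)^4\big]^{1/4}.
$$
The last two expectations are finite and make up the prefactor $C_T(x,y,r)$: the log-normal factor $\E[e^{-4\sigma_r\int R}]$ is the exponential moment of the Gaussian variable $\int_t^T R^{t,r}_sds$, whose mean is affine in $r$, which yields the exponential dependence on $r$; and $\E[f(T,X_T,Y_T,R_T)^4]<\infty$ with polynomial growth in $(x,y)$ and exponential growth in $r$, thanks to the polynomial growth of $f$ combined with the standard moment estimates for the log-price $X$ (its Gaussian part together with the compound Poisson jumps, which have all moments finite), for the CIR factor $Y$, and for the Gaussian factor $R$.

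The heart of the estimate is the tail bound for $\P(A_{t,T})$. Writing $R^{t,r}_s=m_s+G_s$ with deterministic mean $m_s=re^{-\kappa_r(s-t)}$ and centered Gaussian $G_s=\int_t^se^{-\kappa_r(s-u)}dW^2_u$, and using that $m_s\ge m_T$ for $s\le T$ when $r\ge0$, the event $A_{t,T}$ is contained in $\{\exists s\in[t,T]:G_s\le -(\vartheta+re^{-\kappa_r(T-t)})\}$. The Dubins--Schwarz representation $G_s=e^{-\kappa_r(s-t)}\tilde B_{\sigma^2(s)}$, with $\sigma^2(s)=\frac{e^{2\kappa_r(s-t)}-1}{2\kappa_r}$ and $\tilde B$ a standard Brownian motion, together with $e^{\kappa_r(s-t)}\ge1$, gives the inclusion $A_{t,T}\subseteq\{\min_{u\le\sigma^2(T)}\tilde B_u\le -(\vartheta+re^{-\kappa_r(T-t)})\}$, so the reflection principle yields, whenever $\vartheta+re^{-\kappa_r(T-t)}>0$,
$$
\P(A_{t,T})\le 2\exp\Big(-\tfrac{1}{2\sigma^2(T)}\big|\vartheta+re^{-\kappa_r(T-t)}\big|^2\Big).
$$
Taking the square root produces the Gaussian factor $e^{-c_T|\vartheta+re^{-\kappa_r(T-t)}|^2}$ with $c_T=\frac{1}{4\sigma^2(T)}$, matching the claimed decay; the complementary regime in which $\vartheta+re^{-\kappa_r(T-t)}\le0$ (e.g.\ strongly negative $r$) is harmless, since there $\P(A_{t,T})\le1$ and the estimate is absorbed into the exponential-in-$r$ prefactor $C_T$. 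Finally, the American case ($g=f$) follows from $|\sup_\tau a_\tau-\sup_\tau b_\tau|\le\sup_\tau|a_\tau-b_\tau|$: the whole chain of estimates is run for each $\tau\in\mathcal{T}_{t,T}$, the governing event $A_{t,\tau}$ being contained in $A_{t,T}$, while $f(\tau,\cdots)$ is dominated uniformly in $\tau$ by $\sup_{s\in[0,T]}|f(s,\cdots)|$, which has the same polynomial growth; all bounds are therefore uniform in $\tau$.

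The main obstacle I expect is the tail estimate for the running minimum of $R^{t,r}$ (rather than of its terminal value), which forces the time-change and reflection argument and the careful monotone replacement of $m_s$ by $m_T$ that produces exactly the factor $\vartheta+re^{-\kappa_r(T-t)}$; a secondary technical point is verifying that the two exponential-of-Gaussian moment factors combine into a single finite constant with precisely the stated polynomial-in-$(x,y)$, exponential-in-$r$ growth, uniformly for $t\in[0,T]$.
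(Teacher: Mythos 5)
Your argument is correct and follows essentially the same route as the paper: reduce the difference to the event that $R^{t,r}$ crosses the level $-\vartheta$ before $T$, separate the probability of that event from the moment factors via H\"older with exponents $(2,4,4)$, and control the running minimum of the OU process by a Gaussian maximal inequality, producing the factor $e^{-c_T|\vartheta+re^{-\kappa_r(T-t)}|^2}$ (the $x$ in the statement's exponent is a typo for $r$, as the paper's own proof confirms). Your explicit discussion of the sign of $\vartheta+re^{-\kappa_r(T-t)}$ is, if anything, slightly more careful than the paper's, which leaves that case implicit.
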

	
	\begin{proof}
		In the following, $C$ denotes a positive constant, possibly changing from line to line, which depends on $(x,y,r)$  polynomially in $(x,y)$ and exponentially in $r$.  We have
		\begin{equation}\label{itsthefinalerror}
		\begin{split}
		&|F(t,x,y,r)-F^\vartheta(t,x,y,r)|\\
		&\leq C\E\left(\sup_{t\leq u\leq T}|f(u,X_u^{t,x,y,r}, Y_u^{t,y}, R_u^{t,r})|
		\times e^{-\sigma_r\int_t^uR_s^{t,r}\ind{\{R_s^{t,r}>-\vartheta\}}ds}\times
		\left(e^{-\sigma_r\int_t^uR_s^{t,r}\ind{\{R_s^{t,r}<-\vartheta\}}ds}-1\right)
		\right).
		\end{split}
		\end{equation}
		Set now
		$$
		\tau_{-\vartheta}^{t,r}=\inf\{s\geq t\,:\, R_s^{t,r}\leq -\vartheta\}.
		$$
		Notice that $\{  R_s<-\theta\}\subseteq \{\tau_{-\theta} < s\}\subseteq \{ \tau_{-\theta} < T   \}$. Therefore, one has  $\ind{\{R_s^{t,r}<-\vartheta\}}\leq \ind{\{\tau_{-\vartheta}^{t,r}< T\}}$ and
		$$
		-\sigma_r\int_t^uR_s^{t,r}\ind{\{R_s^{t,r}<-\vartheta\}}ds= \int_t^u| \sigma_r R_s^{t,r}| \ind{\{R_s^{t,r}<-\vartheta\}}ds\leq   \sigma_r \ind{\{\tau_{-\vartheta}^{t,r}< T\}}\int_t^u|  R_s^{t,r}| ds.
		$$
		So we can write
		$$
		0\leq e^{-\sigma_r\int_t^uR_s^{t,r}\ind{\{R_s^{t,r}<-\vartheta\}}ds}-1\leq e^{\sigma_r \ind{\{\tau_{-\vartheta}^{t,r}< T\}}\int_t^u|  R_s^{t,r}| ds}-1= \left(e^{\sigma_r \int_t^u|  R_s^{t,r}| ds}-1\right) \ind{\{\tau_{-\vartheta}^{t,r}< T\}}
		$$
		Substituting in \eqref{itsthefinalerror} and applying H\"{o}lder inequality, we get
		\begin{align}
		\nonumber&|F(t,x,y,r)-F^\vartheta(t,x,y,r)|\\
		&\nonumber\leq C\E\left(\sup_{t\leq u\leq T}|f(u,X_u^{t,x,y,r}, Y_u^{t,y}, R_u^{t,r})|
		e^{-\sigma_r\int_t^uR_s^{t,r}\ind{\{R_s^{t,r}>-\vartheta\}}ds}
		\left(e^{\sigma_r \int_t^u|  R_s^{t,r}| ds}-1\right)  \ind{\{\tau_{-\vartheta}^{t,r}< T\}}
		\right)\\
		\nonumber&\leq C\E\left(\sup_{t\leq u\leq T}|f(u,X_u^{t,x,y,r}, Y_u^{t,y}, R_u^{t,r})|^2
		e^{2\sigma_r\int_t^u|R_s^{t,r}|ds}
		\left(e^{\sigma_r \int_t^u|  R_s^{t,r}| ds}-1\right)^2\right)^{1/2} \!\!\!\times \\\nonumber&\qquad \P\left(\ind{\{\tau_{-\vartheta}^{t,r}< T\}}
		\right)^{1/2}\\
		\nonumber&\leq C\E\left(\sup_{t\leq u\leq T}|f(u,X_u^{t,x,y,r}, Y_u^{t,y}, R_u^{t,r})|^2
		\times e^{4\sigma_r \int_t^T|  R_s^{t,r}| ds}\right)^{1/2} \!\!\!\!\times \P\left(\ind{\{\tau_{-\vartheta}^{t,r}< T\}}
		\right)^{1/2}\\
		&\leq C\E\left(\sup_{t\leq u\leq T}|f(u,X_u^{t,x,y,r}, Y_u^{t,y}, R_u^{t,r})|^4
		\right)^{1/4}\!\!\!\! \times \E\left( e^{8\sigma_r \int_t^T|  R_s^{t,r}| ds} \right)^{1/4}\!\!\!\!\times\P\left(\ind{\{\tau_{-\vartheta}^{t,r}< T\}}
		\right)^{1/2}.\label{boh} 
		\end{align}	
		The first term in the left hand side of \eqref{boh} is finite since $f$ has polynomial growth in the space variables, uniformly in the time variable, and by using standard estimates.
		Also the second term in \eqref{boh} is finite. This is because, for every $c>0$, 
		\begin{equation}\label{boh2}
		\E\left( e^{c\sup_{t\leq s\leq T}|  R_s^{t,r}|}\right)<\infty.
		\end{equation}
		In fact, recalling that that $R_s^{t,r}=re^{-\kappa_r(s-t)}+\int_t^se^{-\kappa_r(s-u)}dW^2_u$, \eqref{boh2}  follows from the fact that,
		for a  Brownian motion $W$,  $\sup_{0\leq s\leq T}|W_s|$ has finite exponential moments of any order, for every $T>0$. This is true since $\sup_{0\leq s\leq T}|W_s|\leq \sup_{0\leq s\leq T}W_s+ \sup_{0\leq s\leq T}(-W_s)$ and $\E(e^{p\sup_{0\leq s\leq T}W_s})$ $<\infty$ for every $p>0$. As regards the third term  in \eqref{boh}, note that
		\begin{align*}
		&\P(\tau_{-\vartheta}^{t,r}\leq T)
		=\P(\inf_{s\in[t,T]}R_s^{t,r}<-\vartheta)
		=\P\Big(\inf_{s\in[t,T]}\Big(re^{-\kappa_r(s-t)}+\int_t^se^{-\kappa_r(s-u)}dW^2_u\Big)<-\vartheta\Big)\\
		&\leq \P\Big(\sup_{s\in[t,T]}\Big|\int_t^se^{\kappa_ru}dW^2_u\Big|>\vartheta+re^{-\kappa_r(T-t)}\Big)
		\leq 2\exp\Big(-\frac{|\vartheta+re^{-\kappa_r(T-t)}|^2}{2\int_t^Te^{2\kappa_ru}du} \Big).
		\end{align*}
		By inserting the above estimates in \eqref{boh}, we get the result.
	\end{proof}
	
	\subsubsection{Further remarks}
	
	As already stressed, the introduction of the threshold $-\vartheta$ allows one to handle the discount term. In order to get rid of the discount, a possible approach consists in the use of a transformed function, as  developed by several authors (see e.g. Haentjens and in't Hout \cite{Hint} and references therein). This is a nice fact for European options (PIDE problem), being on the contrary a non definitive tool when dealing with American options (obstacle PIDE problem). Let us see why.
	
	First of all, let us come back to the model for the triple $(X,Y,R)$, see \eqref{XYR-dyn}. The infinitesimal generator is
	\begin{equation}\label{Lt}
	\begin{array}{ll}
	\L_tu
	=&
	\displaystyle
	\Big(\sigma_rr+\varphi_t-\delta-\frac 12y\Big)\partial_xu
	+\kappa_Y(\theta_Y-y)\partial_yu
	-\kappa_rr\partial_ru\smallskip\\
	&\displaystyle
	+\frac 12\Big(y\partial^2_{xx}u+\sigma_Y^2y\partial^2_{yy}u+\partial^2_{rr}u
	+2\rho_1\sigma_Yy\partial^2_{xy}u+2\rho_2\sqrt  y\,\partial^2_{xr}u\Big)\smallskip\\
	&\displaystyle
	+ \displaystyle\int_{-\infty}^{+\infty} \left[ u(t,x+\xi;y,r)-u(t,x;y,r)\right]\nu(\xi) d\xi.
	\end{array}
	\end{equation}
	We set
	$$
	G(t,r)=\E\Big(e^{-\sigma_r\int_t^T R^{t,r}_s ds}\Big)
	$$
	and we recall several known facts: one has (see e.g. \cite{LL})
	\begin{equation}\label{G}
	G(t,r)=e^{-r\sigma_r\Lambda(t,T)-\frac {\sigma_r^2}{2\kappa_r^2}(\Lambda(t,T)-T+t)-\frac {\sigma_r^2}{4\kappa_r}\Lambda^2(t,T)},\quad \Lambda(t,T)=\frac{1-e^{-\kappa_r (T-t)}}{\kappa_r}
	\end{equation}
	and moreover, $G$ solves the PDE
	\begin{equation}\label{PDE-G}
	\begin{array}{l}
	\displaystyle
	\partial_tG
	-\kappa_r x\partial_x G+\frac 12\partial_{rr}^2 G-\sigma_rrG=0,\quad t\in[0,T), r\in\R,\smallskip\\
	G(T,r)=1.
	\end{array}
	\end{equation}

	\begin{lemma}\label{G-lemma}
		Let $\L_t$ denote the infinitesimal generator in \eqref{Lt}. Set $\overline{u}=u\cdot G^{-1}$. Then
		$$
		\partial_tu +\L_tu -ru= G\big(\partial_t\bu +\overline \L_t \bu \big),
		$$
		where
		$$
		\overline \L_t=\L_t-\sigma_r\frac{1-e^{-\kappa_r (T-t)}}{\kappa_r}\big[\rho_{2}\sqrt y \partial_{x}\bu+\partial_r\bu\big].
		$$
	\end{lemma}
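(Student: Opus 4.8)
The plan is to insert $u=\bu\,G$ into $\partial_t u+\L_t u-\sigma_r r\,u$ and to expand every term by the Leibniz rule, the decisive structural fact being that the bond factor $G=G(t,r)$ depends on neither $x$ nor $y$. First I would record the immediate consequences $\partial_x G=\partial_y G=0$: the common factor $G$ then pulls out untouched from all pure $x$- and $y$-derivatives, from the mixed derivative $\partial^2_{xy}u=G\,\partial^2_{xy}\bu$, and---crucially---from the whole integral (jump) term in \eqref{Lt}, since the L\'evy kernel only translates the $x$-variable while $G$ is $x$-independent. Hence the nonlocal part contributes no correction, and all the pieces carrying $G$ as a plain factor recombine exactly into $G\,(\partial_t\bu+\L_t\bu)$.

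Next I would isolate the terms in which derivatives of $G$ actually appear; these can only come from $\partial_t u$, from $-\kappa_r r\,\partial_r u$, from $\tfrac12\partial^2_{rr}u$, and from $\rho_2\sqrt y\,\partial^2_{xr}u$. I would split them into two blocks. The zeroth-order-in-$\bu$ block gathers $\bu\,\partial_t G$, $-\kappa_r r\,\bu\,\partial_r G$, $\tfrac12\bu\,\partial^2_{rr}G$ and the discount contribution $-\sigma_r r\,\bu\,G$; factoring out $\bu$, this is precisely $\bu$ multiplied by the left-hand side of the scalar PDE \eqref{PDE-G} satisfied by $G$, and therefore vanishes identically. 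The first-order-in-$\bu$ block consists of the two cross terms generated by the product rule on the second-order $r$-operators, namely $\partial_r G\,\partial_r\bu$ coming from $\tfrac12\partial^2_{rr}u=\tfrac12 G\,\partial^2_{rr}\bu+\partial_r G\,\partial_r\bu+\tfrac12\bu\,\partial^2_{rr}G$, and $\rho_2\sqrt y\,\partial_r G\,\partial_x\bu$ coming from $\partial^2_{xr}u=G\,\partial^2_{xr}\bu+\partial_r G\,\partial_x\bu$ (again using $\partial_x G=0$).

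To finish I would compute $\partial_r G$ from the closed form \eqref{G}: the $r$-dependence of $G$ sits entirely in the factor $e^{-r\sigma_r\Lambda(t,T)}$, so $\partial_r G=-\sigma_r\Lambda(t,T)\,G$ with $\Lambda(t,T)=\tfrac{1-e^{-\kappa_r(T-t)}}{\kappa_r}$. Substituting this into the surviving first-order block turns it into $-\sigma_r\Lambda(t,T)\,G\,[\rho_2\sqrt y\,\partial_x\bu+\partial_r\bu]$, which is exactly $G$ times the first-order correction defining $\overline{\L}_t$. Collecting the three blocks then gives $\partial_t u+\L_t u-\sigma_r r\,u=G\big(\partial_t\bu+\overline{\L}_t\bu\big)$, the deterministic $\varphi_t$-part of the discount being handled separately by a trivial multiplicative factor.

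I do not foresee a real obstacle: the statement is a product-rule identity and the argument is essentially bookkeeping. The only points demanding care are the two second-order cross terms ($\partial^2_{rr}$ and $\partial^2_{xr}$), which are the sole origin of the correction operator, and the timely use of \eqref{PDE-G} to annihilate the zeroth-order block; one should also note once and for all that the nonlocal term produces no correction precisely because it acts in $x$ alone, where $G$ is constant.
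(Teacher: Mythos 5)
Your proposal is correct and follows essentially the same route as the paper's proof: expand $u=\bu\,G$ by the Leibniz rule using that $G$ depends only on $(t,r)$, kill the zeroth-order block with the PDE \eqref{PDE-G}, and identify the surviving cross terms from $\tfrac12\partial^2_{rr}$ and $\rho_2\sqrt y\,\partial^2_{xr}$ via $\partial_r\ln G=-\sigma_r\frac{1-e^{-\kappa_r(T-t)}}{\kappa_r}$. Your explicit remark that the nonlocal jump term produces no correction because it acts only in $x$ is a point the paper leaves implicit, but the argument is the same.
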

	
	\textbf{Proof.}
	Since $G$ depends on $t$ and $r$ only, straightforward computations give
	\begin{align*}
	\partial_tu +\L_tu -xu=&
	G\big[\partial_t \bu +\L_t\bu\big]
	+\partial_rG(t,r)
	\big[\rho_{2}\sqrt y \partial_{x}\bu+\partial_r\bu\big]
	+\bu\big[\partial_tG-\kappa_rr\partial_r G+\frac 12\partial_{rr}^2 G-\sigma_rrG\big].
	\end{align*}
	By \eqref{PDE-G}, the last term is null.  The statement now follows by observing that $\partial_r\ln G(t,r)=-\sigma_r\frac{1-e^{-\kappa_r (T-t)}}{\kappa_r}$. \cvd
	
	\medskip
	
	We notice that the operator $\overline{L}_t$ in Lemma \ref{G-lemma} is the infinitesimal generator of the jump-diffusion process $(\overline{X},\overline{Y},\overline{R})$ which solves the stochastic differential equation as in \eqref{XYR-dyn},  with the same diffusion coefficients and jump-terms but with the new drift coefficients
	$$
	\mu_{\overline{X}}(t,y,r)=\mu_X(y,r)-\sigma_r\frac{1-e^{-\kappa_r (T-t)}}{\kappa_r}
	\rho_{2}\sqrt y,\qquad
	\mu_{\overline{Y}}(y)\equiv \mu_Y(y),
	$$
	$$
	\mu_{\overline{R}}(r)=\mu_R(t,r)-\sigma_r\frac{1-e^{-\kappa_r (T-t)}}{\kappa_r}.
	$$
	Let us first discuss the scheme \eqref{backward-ter-inf-noL} with $g=0$ (European options), which gives the numerical approximation for the function $F$ in \eqref{F}. By passing to the associated PIDE,  Lemma \ref{G-lemma} says that
	$$
	F(t,x,y,r)
	=G(t,r)\overline{F}(t,x,y,r),
	$$
	where
	$$
	\overline{F}(t,x,y,r)
	=\E(e^{-\int_t^T\varphi_sds}f(T,\overline{X}_T^{t,x,y,r},\overline{Y}_T^{t,y},\overline{R}_T^{t,r})).
	$$
	Therefore, in practice one has to numerically evaluate the function $\overline{F}$. By using our hybrid tree/finite-difference approach, this means to consider the scheme in \eqref{backward-ter-inf}, with the new coefficient $\overline{\alpha}_{n,k,j}$ (written starting from the new drift coefficients)  but with a discount depending on the (deterministic) function $\varphi$ only, that is, with $e^{-(\sigma_rr^n_j\ind{\{r^n_j>-L\}}+\varphi_{nh})h}$ replaced by $e^{-\varphi_{nh}h}$. And the proof of the Proposition \ref{prop-stability} shows that one gets
	$$
	\|\overline{F}_h(0,\cdot)\|_0\leq \max\big(\gamma_T,e^{2\lambda cT-\sum_{n=0}^N\varphi_{nh}h}\big)\|f(T,\cdot)\|_N.
	$$
	In other words, by using a suitable transformation, the European scheme is always stable and  no thresholds are needed.
	
	Let us discuss now the American case, that is, the scheme  \eqref{backward-ter-inf-noL} with $g=f$, giving an approximation of the function $F$ in \eqref{F}. One could  think to use the above transformation in order to get rid of the exponential depending on the process $R$. Set again
	$$
	\overline{F}(t,x,y,r)
	=G(t,r)^{-1}F(t,x,y,r).
	$$
	By using the associated obstacle PIDE problem, Lemma \ref{G-lemma} suggests that
	$$
	\overline{F}(t,x,y,r)
	=\sup_{\tau\in\mathcal{T}_{t,T}}\E(e^{-\int_t^\tau \varphi_sds}\overline{f}(\tau,\overline{X}_\tau^{t,x,y,r},\overline{Y}_\tau^{t,y},\overline{R}_\tau^{t,r})),
	$$
	with
	$
	\overline{f}(t,x,y,r)=G^{-1}(t,r)f(t,x,y,r).
	$
	So, in order to numerically compute $\overline{F}$, one needs to set up the scheme \eqref{backward-ter-inf} with the new coefficient $\overline{\alpha}_{n,k,j}$, with $f$ replaced by $\overline f$, $g=\overline{f}$ and with the discounting factor  $e^{-(\sigma_rr^n_j\ind{\{r^n_j>-L\}}+\varphi_{nh})h}$ replaced by $e^{-\varphi_{nh}h}$. So, again one is able to cancel the unbounded part of the discount. Nevertheless, the unpleasant point is that even if $\|f(T,\cdot)\|_N$ has a bound which is uniform in $N$ then $\|\overline{f}(T,\cdot)\|_N$ may not have because $G^{-1}(t,r)$ has an exponential containing $r$, see \eqref{G}. In other words, the unboundedness problem appears now in the obstacle.

	\section{The hybrid Monte Carlo and tree/finite-difference approach algorithms in practice}\label{practice}
	
	The present section is devoted to our numerical experiments. We first summarise the main steps of our algorithms and then we present several numerical tests.

	\subsection{A schematic sketch of the main computational steps in our algorithms}\label{sec:pseudoalg}
	In short, we outline here the main computational steps of the two proposed algorithms.
	
	First, the procedures need the following preprocessing steps, concerning the construction of the bivariate tree:
	\begin{itemize}
		\item[(\textsc{T1})] define a discretization of the time-interval $[0,T]$ in $N$ subintervals $[nh,(n+1)h]$, $n=0,\ldots,N-1$, with $h=T/N$;
		\item[(\textsc{T2})] for the process $Y$, set the binomial tree $y^n_k$, $0\leq k \leq n\leq N$, by using
		\eqref{state-space-X}, then compute the jump nodes $k_a (n,k)$
		and the jump probabilities $p^{Y}_a(n,k)$, $a\in\{u,d\}$,  by using  \eqref{ku}-\eqref{kd} and \eqref{pik};
		\item[(\textsc{T3})] for the process $R$, set the binomial tree $r^n_j$, $0\leq j\leq N$, by using \eqref{state-space-X}, then compute the jump nodes $j_b (n,j)$ and the jump probabilities $p^{R}_b(n,j)$, $b\in\{u,d\}$,   by using \eqref{ju}-\eqref{jd} and \eqref{pij};
		\item[(\textsc{T4})] for the $2$-dimensional process $(Y,R)$, merge the binomial trees in the bivariate tree $(y^n_k,r^n_j)$, $0\leq k,j\leq n\leq N$, by using \eqref{state-space-Yr}, then compute the jump-nodes $(k_a (n,k),j_b (n,j))$ and the transitions probabilities $p_{ab}(n,k,j)$, $(a,b)\in\{d,u\}$, by using \eqref{treescheme}.
	\end{itemize}
	The bivariate tree for $(Y,R)$ is now settled. Our hybrid tree/finite-difference algorithm can be resumed as follows:
	
	\begin{enumerate}
		\item[(\textsc{FD1})] set a mesh grid $x_i$ for the solution of all the PIDE's;
		\item[(\textsc{FD2})] for each node $(y^N_k,
		r^N_j)$, $0\leq k,j\leq N$, compute the option prices at maturity for each $x_i$, $i\in\mathcal{X}_M$, by using the payoff function;
		\item[(\textsc{FD3})] for $n=N-1,\ldots 0$: for each $(y^n_k,r^n_j)$, $0\leq k,j\leq n$, compute the option prices for each $x_i\in \mathcal{X}_M$, by solving the linear system \eqref{u-interp}.
	\end{enumerate}
	
	Notice that, at each time step $n$,  we need only the one-step PIDE solution in the time interval $[nh, (n+1)h]$. Moreover, both the (constant) PIDE coefficients and the Cauchy final condition change according to the position of the volatility and the interest rate components on the bivariate tree at time step $n$.
	
	\begin{remark} \label{costo}
		We observe that in order to compute the option price by the hybrid tree/finite-difference procedure, in step \textsc{(FD3)} we need to solve many times the tridiagonal system \eqref{u-interp}. This is typically solved by the LU-decomposition method in $O(M)$ operations (recall that the total number of the grid values $x_i\in\mathcal{X}_M$ is $2M+1$).
		However, due to the approximation of the integral term \eqref{num_int}, at each time step $n<N$ we have to compute the sum
		\begin{equation}\label{int_sum}
		\sum \tilde u^{n+1}_{i+l} \nu(\xi_l),
		\end{equation}
		which is the most computationally expensive step of this part of the algorithm: when applied directly, it requires $O(M^2)$ operations. Following the Premia software implementation \cite{pr}, in our numerical tests we use the Fast Fourier Transform  to compute the term \eqref{int_sum} and the computational costs of this step reduce to $O(M\log M)$.
	\end{remark}
	
	We conclude by briefly recalling the main steps of the hybrid Monte Carlo method:
	\begin{enumerate}
		\item[(\textsc{MC1})]
		let the chain $(\hat Y^h_n,\hat R^h_n)$ evolve for $n=1,\ldots,N$, following the probability structure in (\textsc{T4});
		\item[(\textsc{MC2})]
		generate $\Delta_1,\ldots,\Delta_N$ i.i.d. standard normal r.v.'s independent of the noise driving the chain $(\hat Y^h,\hat R^h)$;
		\item[(\textsc{MC3})] generate $K_h^1,\ldots,K_h^N$ i.i.d. positive Poisson r.v.'s of parameter $\lambda h$,  independent of both the chain $(\hat Y^h,\hat R^h)$ and the Gaussian r.v.'s $\Delta_1,\ldots,\Delta_N$, and for every $n=1,\ldots,N$, if $K_h^n>0$ simulate the corresponding amplitudes $\log(1+J_1^n),\ldots ,\log(1+J^n_{K_h^n})$;
		\item[(\textsc{MC4})] starting from $\hat X_0^h=X_0$, compute the approximate values $\hat X^h_n$, $1\leq n \leq N$, by using \eqref{hmc};
		\item[(\textsc{MC5})] following the desired Monte Carlo
		method (European or Longstaff-Schwartz algorithm \cite{ls} in the case of American options), repeat the above simulation scheme and compute the option price.
	\end{enumerate}

	\begin{remark}\label{standard-bates}
		In Section \ref{sect-numerics} we develop numerical experiments in order to study the behavior of our hybrid methods. Our tests involve also the standard Bates model, that is without any randomness in the interest rate. Recall that in the standard Bates model the dynamic reduces to
		\begin{equation}\label{stand-bates}
		\begin{array}{l}
		\displaystyle\frac{dS_t}{S_{t^-}}= (r-\delta)dt+\sqrt{Y_t}\, dZ^S_t+d H_t,
		\smallskip\\
		dY_t= \kappa_Y(\theta_Y-Y_t)dt+\sigma_Y\sqrt{Y_t}\,dZ^Y_t,
		\end{array}
		\end{equation}
		with $S_0>0$, $Y_0>0$ and $r\geq 0$ constant parameters. We assume a correlation between the two Brownian noises:
		$$
		d\<Z^S,Z^Y\>_t=\rho dt,\quad |\rho|<1.
		$$
		Finally, $H_t$ is the compound Poisson process already introduced in Section \ref{sect-model}, see \eqref{H}. We can apply our hybrid approach to this case as well: it just suffices  to follow the computational steps listed above except for the construction of the binomial tree for the process $R$. Consequently, we do not need the bivariate tree for $(Y,R)$, specifically we omit steps \textsc{(T3)}-\textsc{(T4)} and we replace step \textsc{(MC1)} with
		\begin{enumerate}
			\item[\textsc{(MC1')}]
			let the chain $\hat Y^h_n$ evolve for $n=1,\ldots,N$, following the probability structure in \textsc{(T2)}.
		\end{enumerate}
		And of course, in all computations we set equal to 0 the parameters involved in the dynamics for $r$, except for the starting value $r_0$. In particular, we have $\sigma_r=0$ and $\varphi_t=r_0$ for every $t$.
	\end{remark}

	\subsection{Numerical results}\label{sect-numerics}
	
	We develop several numerical results in order to assess the
	efficiency and the robustness of the hybrid tree/finite-difference
	method and the hybrid Monte Carlo method in the case of
	plain vanilla options. The Monte Carlo results derive from our hybrid simulations and, for American options, the use of the Monte Carlo algorithm by Longstaff and Schwartz in \cite{ls}.
	
	We first provide results for the standard Bates model (see Remark \ref{standard-bates}) and secondly, for the case in which the interest rate process is assumed to be stochastic, see \eqref{BHHmodel}.
	
	Following Chiarella \emph{et al.} \cite{ckmz}, in our numerical tests we assume that the jumps for the log-returns are normal, that is,
	\begin{equation}\label{nuC}
	\log(1+J_1)\sim N\Big(\gamma-\frac 12\eta^2,\eta^2\Big),
	\end{equation}
	$N$ denoting the Gaussian law (we also notice that the results in \cite{ckmz} correspond to the choice $\gamma=0$).
	In Section \ref{sect-num-bates}, we first compare our results with the ones provided in Chiarella \emph{et al.}
	\cite{ckmz}. Then in Section \ref{andersen} we study options with large maturities and when the Feller condition is not fulfilled. Finally, Section \ref{sect-num-bateshw} is devoted to  test experiments for European and American options in the Bates model with stochastic interest rate. The codes have been written by using the C++ language and the computations have all been performed in double precision on a PC 2,9 GHz Intel Core I5 with 8 Gb of RAM.
	
	\subsubsection{The standard Bates model}\label{sect-num-bates}
	
	We refer here to the standard Bates model as in \eqref{stand-bates}.
	In the European and American option contracts we are dealing with, we
	consider the following set of parameters, already used in the numerical results
	provided in Chiarella \emph{et al.} \cite{ckmz}:
	\begin{itemize}
		\item  initial price $S_0=80,
		90, 100, 110, 120$, strike price $K=100$,
		maturity $T=0.5$;
		\item (constant) interest rate $r=0.03$, dividend rate $\delta=0.05$;
		\item initial
		volatility $Y_0=0.04$, long-mean $\theta_Y=0.04$, speed of
		mean-reversion   $\kappa_Y=2$, vol-vol $\sigma_Y=0.4$,
		correlation $\rho=-0.5,0.5$;
		\item intensity $\lambda=5$, jump parameters $\gamma=0$ and $\eta=0.1$ (recall \eqref{nuC}).
	\end{itemize}
	It is known that the case $\rho>0$ may lead to moment explosion, see. e.g. \cite{ap}. Nevetheless, we report here results for this case as well, for the sake of  comparisons with the study in Chiarella \textit{et al.} \cite{ckmz}.
	
	In order to numerically solve the PIDE  using the finite difference
	scheme, we first localize the variables and the integral term to
	bounded domains. We use for this purpose the estimates for the
	localization domain and the truncation of large jumps given by Yoltchkova and Tankov
	\cite{vota08}. For example, for the previous model parameters the PIDE
	problem is solved in the finite interval
	$[\ln S_0-1.59, \ln S_0+1.93]$.
	
	The numerical study of the hybrid tree/finite-difference method
	\textbf{HTFD} is split into two cases:
	\begin{itemize}
		\item[-]
		\textbf{HTFDa}: time steps $N_t=50$ and varying mesh grid $\Delta
		x=0.01$, $0.005$, $0.0025$, $0.00125$;
		\item[-]
		\textbf{HTFDb}: time steps $N_t=100$ and varying mesh grid $\Delta
		x=0.01$, $0.005$, $0.0025$, $0.00125$.
	\end{itemize}
	
	Concerning the Monte Carlo method, we compare the results by using the hybrid simulation scheme in Section \ref{sect-MC}, that we call \textbf{HMC}. We compare our hybrid simulation scheme with the accurate third-order Alfonsi \cite{A} discretization scheme for the CIR stochastic volatility process and by using an exact scheme for the
	interest rate. In addition, we simulate the jump component in the standard way. The resulting Monte Carlo scheme is here called \textbf{AMC}.
	In both Monte Carlo methods, we consider  varying number of Monte Carlo
	iterations $N_{\mathrm{MC}}$ and two cases for the number of time discretization steps
	iterations:
	\begin{itemize}
		\item[-]
		\textbf{HMCa} and \textbf{AMCa}: $N_t=50$ and $N_{\mathrm{MC}}=10000, 50000, 100000, 200000$;
		\item[-]
		\textbf{HMCb} and \textbf{AMCb}: $N_t=100$ and $N_{\mathrm{MC}}=10000, 50000, 100000, 200000$.
	\end{itemize}
	All Monte Carlo results include the associated $95\% $ confidence
	interval.
	
	Table \ref{tab1} reports European call option
	prices. Comparisons are
	given with a  benchmark value obtained using the Carr-Madan pricing
	formula \textbf{CF} in \cite{CM} that applies Fast Fourier Transform
	methods (see the Premia software implementation \cite{pr}).
	
	In Table \ref{tab2} we provide results for
	American call option prices. In this case we compare with the values obtained by using
	the method of lines in \cite{ckm}, called \textbf{MOL}, with mesh
	parameters $200$ time-steps,
	$250$ volatility lines, $2995$ asset grid points, and the \textbf{PSOR} method with mesh parameters $1000, 3000, 6000$
	that Chiarella \emph{et al.} \cite{ckmz} used as the true solution.
	Moreover, we consider the Longstaff-Schwartz \cite{ls} Monte Carlo
	algorithm both for \textbf{AMC} and \textbf{HMC}. In particular
	\begin{itemize}
		\item[-]
		\textbf{HMCLSa} and \textbf{AMCLSa}:
		$10$ exercise dates, $N_t=50$ and $N_{\mathrm{MC}}=10000, 50000, 100000, $ $200000$;
		\item[-]
		\textbf{HMCLSb} and \textbf{AMCLSb}: $20$ exercise dates, $N_t=100$ and $N_{\mathrm{MC}}=10000, 50000, 100000,$ $ 200000$.
	\end{itemize}
	
	Tables \ref{tab31} and \ref{tab32} refer to the computational time cost (in seconds) of
	the various algorithms for $\rho=-0.5$ in the European and
	American case respectively.

	In order to make some heuristic considerations about the speed of convergence of  our approach \textbf{HTFD},  we consider the convergence ratio proposed in \cite{dfl}, defined as
	\begin{equation}\label{ratio}
	\mathrm{ratio}=\frac{P_{\frac{N}{2}}-P_{\frac{N}{4}}}{P_{N}-P_{\frac{N}{2}}},
	\end{equation}
	where $P_{N}$ denotes here the approximated price obtained with
	$N=N_t$ number of  time steps. Recall that $P_{N}=O(N^{-\alpha})$ means that
	$\mathrm{ratio}=2^{\alpha}$.
	Table \ref{tab4ratio} suggests that the convergence ratio for
	\textbf{HTDFb} is approximatively linear. The analysis of the convergence in Chapter 4 will confirm this heuristic deduction.
	
	We notice that the above argument does not formally allow to state the speed of convergence of a method knowing its ratio. We will come back on this topic in the next chapter of this thesis. However,  we anticipate here that our theoretical analysis of the convergence confirms the first order in time rate of convergence of the procedure.

	The numerical results in Table \ref{tab1}-\ref{tab32}
	show that \textbf{HTFD} is accurate, reliable and efficient for pricing European and American
	options in the Bates model.
	Moreover, our hybrid Monte Carlo algorithm \textbf{HMC} appears to be competitive with \textbf{AMC}, that is the one from the accurate simulations by Alfonsi \cite{A}:
	the numerical results are similar in term of precision
	and variance but \textbf{HMC} is definitely  better from the computational times point of view. Additionally, because of its simplicity,
	\textbf{HMC} represents a real and interesting alternative to
	\textbf{AMC}.
	
	As a further evidence of the accuracy of our hybrid methods,
	in Figure \ref{Fig1} and \ref{Fig2} we study the shapes of implied volatility smiles across
	moneyness $\frac{K}{S_0}$ and maturities $T$ using \textbf{HTFDa} with $N_t=50$ and
	$\Delta y=0.005$, \textbf{HMCa} with $N_t=50$ and
	$N_{\mathrm{MC}}=50000$ and we compare the
	graphs with the results from the benchmark values \textbf{CF}.
	
	\begin{table}[ht]
		\centering
		\subtable[]{
			\tiny
			{\begin{tabular} {@{}c|lrr|c|rrrrrc@{}} 
					\toprule $\rho=-0.5$ & $\Delta x$
					&HTFDa  &HTFDb & CF &$N_{\mathrm{MC}} $&HMCa  &HMCb &AMCa  &AMCb\\
					\hline
					& 0.01& 1.1302& 1.1302 & &10000&1.08$\pm$0.09 &1.11$\pm$0.09
					&1.00$\pm$0.09&1.08$\pm$0.09 &\\
					&0.005& 1.1293& 1.1294 & &50000 &1.12$\pm$0.04 &1.17$\pm$0.04 &1.07$\pm$0.04&1.10$\pm$0.04 &\\
					$S_0 =80$&0.0025& 1.1291& 1.1292& 1.1293 &100000 &1.14$\pm$0.03 &1.14$\pm$0.03
					&1.13$\pm$0.03&1.13$\pm$0.03 &\\
					&0.00125& 1.1291& 1.1292 & &200000 &1.13$\pm$0.02 &1.14$\pm$0.02  &1.11$\pm$0.02&1.12$\pm$0.02&\\
					\hline
					& 0.01& 3.3331& 3.3312 & & 10000&3.27$\pm$0.17 &3.27$\pm$0.17  &3.19$\pm$0.16&3.22$\pm$0.16&\\\
					&0.005& 3.3315& 3.3301 & &50000 &3.32$\pm$0.08 &3.40$\pm$0.08  &3.24$\pm$0.07&3.26$\pm$0.0&\\
					$S_0 =90$&0.0025& 3.3311& 3.3298& 3.3284  &100000 &3.34$\pm$0.05 &3.34$\pm$0.05  &3.32$\pm$0.05&3.33$\pm$0.05&\\
					&0.00125& 3.3310& 3.3297 & &200000 &3.32$\pm$0.04 &3.35$\pm$0.04  &3.28$\pm$0.04&3.31$\pm$0.04&\\
					\hline
					& 0.01& 7.5245& 7.5239 & & 10000&7.46$\pm$0.25 &7.46$\pm$0.25  &7.37$\pm$0.24&7.36$\pm$0.25&\\
					&0.005& 7.5236& 7.5224 & &50000 &7.53$\pm$0.11 &7.62$\pm$0.11  &7.40$\pm$0.11&7.43$\pm$0.11&\\
					$S_0 =100$&0.0025& 7.5231& 7.5221& 7.5210 &100000 &7.54$\pm$0.08 &7.52$\pm$0.08  &7.53$\pm$0.08&7.52$\pm$0.08&\\
					&0.00125& 7.5230& 7.5220 & &200000 &7.50$\pm$0.06 &7.54$\pm$0.06  &7.46$\pm$0.06&7.50$\pm$0.06&\\
					\hline
					& 0.01& 13.6943& 13.6940 & & 10000&13.69$\pm$0.34 &13.69$\pm$0.34  &13.52$\pm$0.33&13.48$\pm$0.33&\\
					&0.005& 13.6923& 13.6924 & &50000 &13.71$\pm$0.15 &13.81$\pm$0.15
					&13.55$\pm$0.15&13.58$\pm$0.15 &\\
					$S_0 =110$&0.0025& 13.6918& 13.6921& 13.6923 &100000 &13.72$\pm$0.11
					&13.69$\pm$0.11  &13.67$\pm$0.11&13.70$\pm$0.11 &\\
					&0.00125& 13.6917& 13.6920 & &200000 &13.64$\pm$0.08 &13.71$\pm$0.08
					&13.63$\pm$0.07&13.69$\pm$0.08 &\\
					\hline
					& 0.01& 21.3173& 21.3185 & & 10000&21.40$\pm$0.41
					&21.40$\pm$0.41  &21.08$\pm$0.40&21.03$\pm$0.41 &\\
					&0.005& 21.3156& 21.3168 & &50000 &21.35$\pm$0.18 &21.46$\pm$0.19
					&21.17$\pm$0.18&21.21$\pm$0.18 &\\
					$S_0 =120$&0.0025& 21.3152& 21.3164& 21.3174 &100000 &21.36$\pm$0.13
					&21.32$\pm$0.13  &21.29$\pm$0.13&21.33$\pm$0.13 &\\
					&0.00125& 21.3152& 21.3163 & &200000 &21.25$\pm$0.09 &21.33$\pm$0.09
					&21.26$\pm$0.09&21.33$\pm$0.09 &\\
					\hline
				\end{tabular}}
			}
			\quad\quad
			\subtable[]{
				\tiny
				{\begin{tabular} {@{}c|lrr|c|rrrrrc@{}} \toprule $\rho=0.5$ & $\Delta x$ &HTFDa
						&HTFDb & CF  &$N_{\mathrm{MC}} $&HMCa  &HMCb &AMCa  &AMCb\\
						\hline
						& 0.01 & 1.4732 & 1.4751 & & 10000&1.42$\pm$0.12
						&1.40$\pm$0.12  &1.37$\pm$0.12&1.35$\pm$0.12 &\\
						&0.005 &1.4724 &1.4744 &  &50000 &1.49$\pm$0.06 &1.47$\pm$0.05
						&1.40$\pm$0.05&1.42$\pm$0.05 &\\
						$S_0 =80$ &0.0025 &1.4723 &1.4742 &1.4760 &100000 &1.48$\pm$0.04 &1.46$\pm$0.04  &1.46$\pm$0.04&1.49$\pm$0.04&\\
						&0.00125 &1.4722 &1.4741 & &200000 &1.47$\pm$0.03 &1.48$\pm$0.03  &1.48$\pm$0.03&1.48$\pm$0.03&\\
						\hline
						& 0.01& 3.6849& 3.6859 & & 10000&3.63$\pm$0.19 &3.63$\pm$0.19
						&3.48$\pm$0.19&3.49$\pm$0.19 &\\
						&0.005 &3.6836 &3.6849 & &50000 &3.70$\pm$0.09 &3.70$\pm$0.09  &3.57$\pm$0.09&3.60$\pm$0.09&\\
						$S_0 =90$&0.0025 &3.6832 &3.6847 &3.6862 &100000 &3.67$\pm$0.06 &3.67$\pm$0.06 &3.66$\pm$0.06&3.71$\pm$0.06 &\\
						&0.00125 &3.6832 &3.6847 & &200000 &3.66$\pm$0.04 &3.70$\pm$0.04  &3.69$\pm$0.04&3.68$\pm$0.04&\\
						\hline
						& 0.01& 7.6247 &7.6245 & & 10000&7.58$\pm$0.28
						&7.58$\pm$0.28  &7.35$\pm$0.28&7.36$\pm$0.27 &\\
						&0.005 &7.6238 &7.6232 & &50000 &7.66$\pm$0.13 &7.65$\pm$0.13
						&7.47$\pm$0.12&7.52$\pm$0.12 &\\
						$S_0 =100$&0.0025& 7.6234& 7.6229 &7.6223 &100000 &7.61$\pm$0.09 &7.59$\pm$0.09  &7.58$\pm$0.09&7.66$\pm$0.09&\\
						&0.00125& 7.6233 &7.6228 & &200000 &7.58$\pm$0.06 &7.64$\pm$0.06  &7.62$\pm$0.06&7.61$\pm$0.06 &\\
						\hline
						& 0.01& 13.4863& 13.4835 & & 10000&13.48$\pm$0.36
						&13.48$\pm$0.36  &13.21$\pm$0.36&13.19$\pm$0.36 &\\
						&0.005& 13.4842& 13.4818 & &50000 &13.55$\pm$0.17 &13.49$\pm$0.16
						&13.27$\pm$0.16&13.35$\pm$0.16 &\\
						$S_0 =110$&0.0025& 13.4837 &13.4814 &13.4791 &100000 &13.47$\pm$0.12
						&13.41$\pm$0.12  &13.44$\pm$0.12&13.54$\pm$0.12 &\\
						&0.00125 &13.4836 &13.4813 & &200000 &13.42$\pm$0.08 &13.49$\pm$0.08
						&13.47$\pm$0.08&13.48$\pm$0.08 &\\
						\hline
						& 0.01 &20.9678 &20.9661 & & 10000&21.04$\pm$0.44
						&21.04$\pm$0.44  &20.67$\pm$0.44&20.64$\pm$0.43 &\\
						&0.005 &20.9659 &20.9642 & &50000 &21.05$\pm$0.20 &20.98$\pm$0.20
						&20.71$\pm$0.20&20.81$\pm$0.20 &\\
						$S_0 =120$&0.0025 &20.9655 &20.9636 &20.9616 &100000 &20.96$\pm$0.14
						&20.87$\pm$0.14  &20.92$\pm$0.14&21.04$\pm$0.14 &\\
						&0.00125& 20.9654 &20.9635 & &200000 &20.88$\pm$0.10 &20.96$\pm$0.10
						&20.97$\pm$0.10&20.98$\pm$0.10 &\\
						\hline
					\end{tabular}}
				}
				\caption{\em \small{Standard Bates model. Prices of European call options. Test parameters: $K=100$, $T=0.5$,
						$r=0.03$, $\delta=0.05$, $Y_0=0.04$, $\theta_Y=0.04$, $\kappa_Y=2$, $\sigma_Y=0.4$,
						$\lambda=5$, $\gamma=0$, $\eta=0.1$, $\rho=-0.5, 0.5$.}}
				\label{tab1}
			\end{table}
			
			\begin{table}[ht] \centering
				\subtable[]{
					\tiny
					{\begin{tabular} {@{}c|lrr|cc|rrrrr@{}} \toprule $\rho=-0.5$ & $\Delta x$ &HTFDa
							&HTFDb & PSOR &MOL &$N_{\mathrm{MC}} $&HMCLSa  &HMCLSb &AMCLSa  &AMCLSb\\
							\hline
							& 0.01& 1.1365& 1.1365 & & & 10000&1.03$\pm$0.08&1.14$\pm$0.09&1.06$\pm$0.09&1.03$\pm$0.09\\
							& 0.005& 1.1356& 1.1358 & & &50000&1.19$\pm$0.04&1.14$\pm$0.04&1.18$\pm$0.04&1.12$\pm$0.04\\
							$S_0=80$& 0.0025& 1.1354& 1.1356& 1.1359& 1.1363 &100000&1.15$\pm$0.03&1.13$\pm$0.03&1.13$\pm$0.03&1.13$\pm$0.03 \\
							& 0.00125& 1.1353& 1.1355 & & &200000&1.14$\pm$0.02&1.14$\pm$0.02&1.14$\pm$0.02&1.14$\pm$0.02\\
							\hline
							& 0.01& 3.3579& 3.3563 & & &10000&3.39$\pm$0.15&3.44$\pm$0.16&3.38$\pm$0.15&3.48$\pm$0.16 \\
							& 0.005& 3.3564& 3.3551 & & &50000&3.46$\pm$0.07&3.33$\pm$0.07&3.46$\pm$0.07&3.32$\pm$0.07 \\
							$S_0=90$& 0.0025& 3.3560& 3.3548& 3.3532& 3.3530 &100000&3.35$\pm$0.05&3.35$\pm$0.05&3.33$\pm$0.05&3.36$\pm$0.05 \\
							& 0.00125& 3.3559& 3.3547 & & &200000&3.35$\pm$0.03&3.33$\pm$0.03&3.35$\pm$0.03&3.34$\pm$0.03 \\
							\hline
							& 0.01& 7.6010& 7.6006 & & &10000&7.68$\pm$0.23&7.88$\pm$0.24&7.63$\pm$0.23&7.80$\pm$0.24 \\
							& 0.005& 7.6001& 7.5992 &  & &50000&7.75$\pm$0.11&7.59$\pm$0.10&7.76$\pm$0.10&7.53$\pm$0.10\\
							$S_0=100$& 0.0025& 7.5997& 7.5989& 7.5970& 7.5959 &100000&7.56$\pm$0.07&7.61$\pm$0.07&7.56$\pm$0.07&7.61$\pm$0.07 \\
							& 0.00125& 7.5996& 7.5989 &  & &200000&7.58$\pm$0.05&7.55$\pm$0.05&7.58$\pm$0.05&7.57$\pm$0.05\\
							\hline
							& 0.01& 13.8853& 13.8854 & & &10000&13.90$\pm$0.29&14.28$\pm$0.30&13.84$\pm$0.29&14.10$\pm$0.29 \\
							& 0.005& 13.8836& 13.8842 & & &50000&14.05$\pm$0.13&13.89$\pm$0.12&14.07$\pm$0.13&13.86$\pm$0.12 \\
							$S_0=110$& 0.0025& 13.8832& 13.8839& 13.8830& 13.8827 &100000&13.80$\pm$0.09&13.91$\pm$0.09&13.84$\pm$0.09&13.89$\pm$0.09\\
							& 0.00125& 13.8831& 13.8838 & & &200000&13.86$\pm$0.06&13.84$\pm$0.06&13.87$\pm$0.06&13.83$\pm$0.06 \\
							\hline
							& 0.01& 21.7180& 21.7199 & & &10000&21.83$\pm$0.34&22.07$\pm$0.33&21.71$\pm$0.30&22.04$\pm$0.34\\
							& 0.005& 21.7168& 21.7187 & & &50000&21.91$\pm$0.15&21.76$\pm$0.13&21.90$\pm$0.15&21.72$\pm$0.13 \\
							$S_0=120$& 0.0025& 21.7166& 21.7184 & 21.7186& 21.7191 &100000&21.59$\pm$0.10&21.78$\pm$0.10&21.64$\pm$0.10&21.72$\pm$0.10\\
							& 0.00125& 21.7165& 21.7183 & & &200000&21.68$\pm$0.07&21.65$\pm$0.07&21.68$\pm$0.07&21.67$\pm$0.07\\
							\hline
						\end{tabular}}
					}\quad\quad
					\subtable[]{
						\tiny
						{\begin{tabular} {@{}c|lrr|cc|rrrrr@{}} \toprule $\rho=0.5$ & $\Delta x$ &HTFDa
								&HTFDb & PSOR &MOL &$N_{\mathrm{MC}} $&HMCLSa  &HMCLSb &AMCLSa  &AMCLSb\\
								\hline
								& 0.01& 1.4817& 1.4837 & & &10000&1.32$\pm$0.11&1.03$\pm$0.09&1.51$\pm$0.13&0.66$\pm$0.08\\
								& 0.005& 1.4809& 1.4830 & & &50000&1.51$\pm$0.05&1.31$\pm$0.05&1.54$\pm$0.05&1.47$\pm$0.05\\
								$S_0=80$& 0.0025& 1.4807& 1.4828& 1.4843& 1.4848 &100000&1.50$\pm$0.04&1.50$\pm$0.04&1.51$\pm$0.04&1.48$\pm$0.04\\
								& 0.00125& 1.4807& 1.4828 & & &200000&1.50$\pm$0.03&1.49$\pm$0.02&1.49$\pm$0.03&1.47$\pm$0.02\\
								\hline
								& 0.01& 3.7134& 3.7148 & & &10000&3.83$\pm$0.19&3.79$\pm$0.17&3.89$\pm$0.19&3.95$\pm$0.19\\
								& 0.005& 3.7121& 3.7139 & & &50000&3.81$\pm$0.08&3.70$\pm$0.08&3.84$\pm$0.08&3.69$\pm$0.08\\
								$S_0=90$& 0.0025& 3.7118& 3.7137& 3.7145& 3.7146 &100000&3.69$\pm$0.06&3.75$\pm$0.06&3.72$\pm$0.06&3.70$\pm$0.06 \\
								& 0.00125& 3.7118& 3.7137 & & &200000&3.70$\pm$0.04&3.71$\pm$0.04&3.72$\pm$0.04&3.70$\pm$0.04\\
								\hline
								& 0.01& 7.7044& 7.7051 & & &10000&7.74$\pm$0.26&7.85$\pm$0.25&7.96$\pm$0.26&7.99$\pm$0.26\\
								& 0.005& 7.7036& 7.7039 & & &50000&7.85$\pm$0.12&7.68$\pm$0.11&7.87$\pm$0.12&7.68$\pm$0.11 \\
								$S_0=100$& 0.0025& 7.7033& 7.7036& 7.7027& 7.7018 &100000&7.66$\pm$0.08&7.75$\pm$0.08&7.65$\pm$0.08&7.73$\pm$0.08\\
								& 0.00125& 7.7032& 7.7036 & & &200000&7.69$\pm$0.06&7.67$\pm$0.05&7.68$\pm$0.06&7.69$\pm$0.05\\
								\hline
								& 0.01& 13.6770& 13.6756 & & &10000&13.57$\pm$0.32&13.98$\pm$0.31&13.88$\pm$0.32&14.12$\pm$0.33\\
								& 0.005& 13.6752& 13.6742 &  & &50000&13.83$\pm$0.14&13.67$\pm$0.13&13.89$\pm$0.14&13.64$\pm$0.13\\
								$S_0=110$& 0.0025& 13.6747& 13.6739& 13.6722& 13.6715 &100000&13.56$\pm$0.09&13.74$\pm$0.10&13.58$\pm$0.10&13.71$\pm$0.10\\
								& 0.00125& 13.6747& 13.6738 & & &200000&13.65$\pm$0.07&13.65$\pm$0.07&13.64$\pm$0.07&13.64$\pm$0.07\\
								\hline
								& 0.01& 21.3668& 21.3671& & &10000&21.45$\pm$0.32&21.60$\pm$0.35&21.39$\pm$0.33&21.84$\pm$0.34 \\
								& 0.005& 21.3655& 21.3658 & & &50000&21.54$\pm$0.15&21.40$\pm$0.14&21.61$\pm$0.16&21.40$\pm$0.13\\
								$S_0=120$& 0.0025& 21.3653& 21.3655& 21.3653& 21.3657 &100000&21.26$\pm$0.10&21.43$\pm$0.10&21.27$\pm$0.10&21.38$\pm$0.10\\
								& 0.00125& 21.3652& 21.3653 & & &200000&21.31$\pm$0.07&21.33$\pm$0.07&21.31$\pm$0.07&21.31$\pm$0.07 \\
								\hline
							\end{tabular}}
						}
						\caption{\em \small{Standard Bates model. Prices of American call options. Test parameters: $K=100$, $T=0.5$,
								$r=0.03$, $\delta=0.05$, $Y_0=0.04$, $\theta_Y=0.04$, $\kappa_Y=2$, $\sigma_Y=0.4$,
								$\lambda=5$, $\gamma=0$, $\eta=0.1$, $\rho=-0.5,0.5$.}}
						\label{tab2}
					\end{table}

					\begin{table} [ht] \centering
						\tiny
						{\begin{tabular} {@{}clcc|rcccc|cc@{}} \toprule & $\Delta x$ &HTFDa &HTDFb
								&$N_{\mathrm{MC}}$&HMCa &HMCb &AMCa &AMCb &CF\\
								\hline
								& 0.01  &0.09 &0.34 &10000 &0.007 &0.16 &0.16 &0.30 & &\\
								& 0.005 &0.18 &0.72  &50000  &0.36 &0.72  &0.79 &1.51 & &\\
								& 0.0025  &0.46 &1.62  &100000 &0.71 &1.44  &1.57 &3.12  &0.001 &\\
								& 0.00125  &0.84 &3.53 &200000 &1.45 &2.95 &3.14 &6.17 & &\\
							\end{tabular}}
							\caption{\em \small{Standard Bates model. Computational times (in seconds) for European call
									options in Table \ref{tab1} for $S_0=100$, $\rho=-0.5.$}}
							\label{tab31}
						\end{table}
						
						\begin{table} [ht] \centering
							\tiny
							{\begin{tabular} {@{}clcc|rccccc@{}} \toprule & $\Delta x$ &HTFDa &HTDFb
									&$N_{\mathrm{MC}}$&HMCLSa &HMCLSb &AMCLSa &AMCLSb\\
									\hline
									& 0.01  &0.10 &0.37 &10000 &0.09 &0.23 &0.20 &0.45 &\\
									& 0.005 &0.19 &0.77  &50000  &0.47 &1.11  &1.01 &2.25 &\\
									& 0.0025  &0.48 &1.77  &100000 &1.07 &2.25  &2.01 &4.57  &\\
									& 0.00125  &0.95 &3.61 &200000 &1.94 &4.55 &4.05 &8.98 &\\
								\end{tabular}}
								\caption{\em \small{Standard Bates model. Computational times (in seconds) for American call
										options in Table \ref{tab2} for $S_0=100$, $\rho=-0.5.$}}
								\label{tab32}
							\end{table}
							
							\begin{table} [ht] \centering
								\tiny
								{\begin{tabular} {@{}cccccc@{}}\toprule $N$ &$S_0 =80$
										& $S_0 =90$ & $S_0 =100$ & $S_0 =110$ & $S_0 =120$ \\
										\hline
										200 &1.919250  &1.961063 &1.894156  &2.299666 &2.109026\\
										400 &2.172836 &2.209762 &2.556021  &1.673541 &1.996332\\
										800 &1.544849 & 1.851932 &1.463712  &2.935697 &2.106880\\
										\hline
									\end{tabular}}
									\caption{\em \small{Standard Bates model. HTFDb-ratio \eqref{ratio} for the price of
											American call options as the starting point $S_0$ varies with
											fixed space step $\Delta x=0.0025$. Test parameters: $T=0.5$,
											$r=0.03$, $\delta=0.05$, $Y_0=0.04$, $\theta=0.04$, $\kappa=2$, $\sigma=0.4$,
											$\lambda=5$, $\gamma=0$, $\eta=0.1$, $\rho=-0.5$.}}
									\label{tab4ratio}
								\end{table}

								\clearpage
								\begin{figure}[ht]
									\begin{center}
										\includegraphics[scale=0.35]{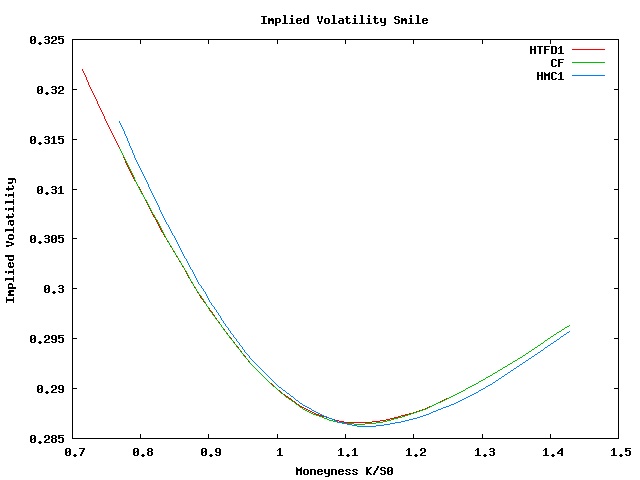}
										\caption{\em \small{ Standard Bates model. Moneyness vs implied volatility for European call options. Test parameters: $T=0.5$,
												$r=0.03$, $\delta=0.05$, $Y_0=0.04$, $\theta_Y=0.04$, $\kappa_Y=2$, $\sigma_Y=0.4$,
												$\lambda=5$, $\gamma=0$, $\eta=0.1$, $\rho=-0.5$.}}
										\label{Fig1}
										
										\bigskip
										
										\includegraphics[scale=0.35]{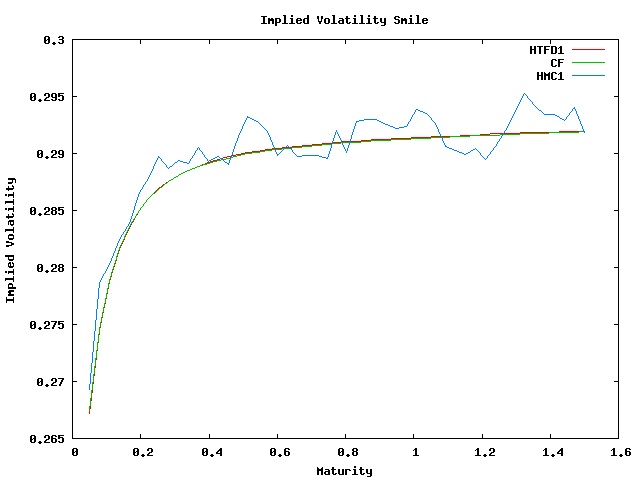}
										\caption{\em \small{Standard Bates model. Maturity vs implied volatility for European call options. Test parameters:
												$S_0=100$, $K=100$,
												$r=0.03$, $\delta=0.05$, $Y_0=0.04$, $\theta_Y=0.04$, $\kappa_Y=2$, $\sigma_Y=0.4$,
												$\lambda=5$, $\gamma=0$, $\eta=0.1$, $\rho=-0.5$.}}
										\label{Fig2}
									\end{center}
								\end{figure}
								\clearpage
								\subsubsection{Options with large maturity in the
									standard Bates model}\label{andersen}
								In order to verify the robustness of the proposed algorithms we consider
								experiments when the Feller condition $2\kappa_Y \theta_Y\geq\sigma_Y^2$  is not fulfilled for the CIR volatility process. We additionally stress our tests by considering  large maturities. For this purpose we consider the parameters from Chiarella \emph{et al.} \cite{ckmz} already used in Section \ref{sect-num-bates} with
								$\rho=-0.5$, except for the maturity and the vol-vol, which are modified as follows: $T=5$ and $\sigma_Y=0.7$ respectively.
								
								Table \ref{tab4} reports European call option
								prices, which are compared with the true values (\textbf{CF}). In Table \ref{tab5} we provide results for
								American call option prices.
								The settings for the experiments \textbf{HTFDa-b}, \textbf{HMCa-b} and
								\textbf{AMCa-b} are the same as described at the beginning of Section \ref{sect-num-bates}.
								The settings for the experiments in the American case
								\textbf{HMCLSa-b} and \textbf{AMCLSa-b} are changed
								\begin{itemize}
									\item[-]
									\textbf{HMCLSa} and \textbf{AMCLSa}: $20$ exercise dates, $N_t=100$ and $N_{\mathrm{MC}}=10000, 50000, 100000, $ $200000$;
									\item[-]
									\textbf{HMCLSb} and \textbf{AMCLSb}: $40$ exercise dates, $N_t=200$ and $N_{\mathrm{MC}}=10000, 50000, 100000$, $200000$.
								\end{itemize}
								In the American case the benchmark values \textbf{B-AMC} are obtained by the Longstaff-Schwartz \cite{ls} Monte Carlo algorithm with $300$
								exercise dates, combined with the accurate
								third-order Alfonsi method with $3000$ discretization time steps and 1
								million iterations.

								The numerical results suggest that large maturities bring to a slight loss of accuracy for \textbf{HTFD} and \textbf{HMC}, even if both methods
								provide a satisfactory approximation of the true option prices, being in turn mostly compatible with the results from the Alfonsi Monte Carlo method. It is
								worth noticing that for long maturity $T=5$ we have developed
								experiments with the same number of steps both in time ($N_t$) and
								space step ($\Delta x$) as for $T=0.5$. So, the numerical experiments are not slower, and it is clear that one could achieve a better accuracy for larger values of $N_t$.
								\begin{table}[ht]
									\centering
									\tiny
									{\begin{tabular} {@{}c|lrr|c|rrrrrc@{}} \toprule $\rho=-0.5$ & $\Delta x$
											&HTFDa  &HTFDb & CF &$N_{\mathrm{MC}} $&HMCa  &HMCb  &AMCa  &AMCb\\
											\hline
											& 0.01&9.0085 &8.9457  & & 10000 &9.21$\pm$0.55 &9.09$\pm$0.55
											&8.69$\pm$0.53&8.56$\pm$0.51 &\\
											&0.0050 &9.0032 &8.9405  &  &50000 &9.13$\pm$0.25 &8.92$\pm$0.24
											&8.81$\pm$0.24&9.04$\pm$0.24 &\\
											$S_0=80$&0.0025 &9.0020 &8.9392  &8.9262  &100000 &9.01$\pm$0.17
											&8.81$\pm$0.17  &8.92$\pm$0.17&8.88$\pm$0.17  &\\\
											&0.00125 &9.0016 &8.9389  &  &200000 &8.99$\pm$0.12 &8.92$\pm$0.12  &8.95$\pm$0.12&8.90$\pm$0.12 &\\
											\hline
											& 0.01&12.7405 &12.6520   & & 10000 &12.95$\pm$0.67
											&12.95$\pm$0.67  &12.29$\pm$0.65&12.15$\pm$0.6 &\\
											&0.0050 &12.7342 &12.6458  & &50000 &12.87$\pm$0.30 &12.64$\pm$0.29
											&12.49$\pm$0.29&12.76$\pm$0.3 &\\
											$S_0=90 $&0.0025 &12.7327 &12.6442  &12.6257  &100000 &12.72$\pm$0.21
											&12.50$\pm$0.21  &12.63$\pm$0.21&12.58$\pm$0.21 &\\
											&0.00125 &12.7323 &12.6438  &  & 200000 &12.71$\pm$0.15 &12.61$\pm$0.15
											&12.66$\pm$0.15&12.61$\pm$0.15 &\\
											\hline
											& 0.01&17.0324 &16.9176  & &10000  &17.24$\pm$0.80
											&17.24$\pm$0.80  &16.43$\pm$0.77&16.29$\pm$0.75 &\\
											&0.0050 &17.0254 &16.9106  &  &50000 &17.18$\pm$0.36 &16.91$\pm$0.35
											&16.73$\pm$0.35&17.03$\pm$0.35 &\\
											$S_0=100 $&0.0025 &17.0237 &16.9089 &16.8855  &100000  &17.00$\pm$0.25
											&16.74$\pm$0.25  &16.91$\pm$0.25&16.84$\pm$0.25 &\\
											&0.00125 &17.0232 &16.9084  & &200000 &16.99$\pm$0.18 &16.86$\pm$0.18
											&16.94$\pm$0.18&16.88$\pm$0.18 &\\
											\hline
											& 0.01&21.8149 &21.6741   & &10000 &22.04$\pm$0.93
											&22.04$\pm$0.93  &21.06$\pm$0.93&20.91$\pm$0.88 &\\
											&0.0050 &21.8067 &21.6659  & &50000 &21.96$\pm$0.42 &21.67$\pm$0.41 &21.43$\pm$0.41&21.82$\pm$0.41 &\\
											$S_0=110 $&0.0025 &21.8047 &21.6639  &21.6364 &100000 &21.76$\pm$0.29
											&21.47$\pm$0.29  &21.69$\pm$0.29&21.59$\pm$0.29 &\\
											&0.00125 &21.8042 &21.6634  & &200000 &21.76$\pm$0.21 &21.59$\pm$0.20
											&21.70$\pm$0.20&21.63$\pm$0.20 &\\
											\hline
											& 0.01&27.0196 &26.8539  & &10000 &27.26$\pm$1.05
											&27.26$\pm$1.05  &26.12$\pm$1.03&25.94$\pm$1.01 &\\
											&0.0050 &27.0108 &26.8452  & &50000 &27.17$\pm$0.47 &26.86$\pm$0.46
											&26.56$\pm$0.46&27.02$\pm$0.47 &\\
											$S_0=120 $&0.0025 &27.0086 &26.8430  &26.8121 &100000 &26.94$\pm$0.33
											&26.63$\pm$0.33  &26.89$\pm$0.33&26.78$\pm$0.33 &\\
											&0.00125 &27.0081 &26.8425  &  &200000 &26.95$\pm$0.23 &26.75$\pm$0.23
											&26.89$\pm$0.23&26.81$\pm$0.23 &\\
											\hline
										\end{tabular}
									}
									\caption{\em \small{Standard Bates model. Prices of European call options. Test parameters: $K=100$, $T=5$,
											$r=0.03$, $\delta=0.05$, $Y_0=0.04$, $\theta_Y=0.04$, $\kappa_Y=2$, $\sigma_Y=0.7$,
											$\lambda=5$, $\gamma=0$, $\eta=0.1$, $\rho=-0.5$. Case $2\kappa_Y\theta_Y<\sigma^2_Y$.}}
									\label{tab4}
								\end{table}
								
								\begin{table}[ht]\centering
									\tiny
									{\begin{tabular} {@{}c|lrr|c|rrrrr@{}} \toprule $\rho=-0.5$ & $\Delta y$ &HTFDa
											&HTFDb &B-AMC &$N_{\mathrm{MC}} $&HMCLSa  &HMCLSb &AMCLSa  &AMCLSb\\
											\hline
											& 0.01&9.8335 &9.7978  & &10000&10.15$\pm$0.46&10.20$\pm$0.46&10.47$\pm$0.47&9.80$\pm$0.42\\
											&0.0050 &9.8283 &9.7927  & &50000&9.93$\pm$0.20&9.86$\pm$0.20&9.89$\pm$0.19&9.78$\pm$0.19 \\
											$S_0=80$&0.0025 &9.8271 &9.7914  &9.7907$\pm$ 0.04 &100000&9.76$\pm$0.14&9.69$\pm$0.13&9.74$\pm$0.14&9.76$\pm$0.13\\
											&0.00125 &9.8267 &9.7911  & &200000&9.79$\pm$0.10&9.70$\pm$0.09&9.73$\pm$0.10&9.72$\pm$0.09 \\
											\hline
											& 0.01&14.0801 &14.0318  & &10000&14.58$\pm$0.56&14.46$\pm$0.55&14.94$\pm$0.58&14.08$\pm$0.51 \\
											&0.0050 &14.0741 &14.0258  & &50000&14.13$\pm$0.24&14.14$\pm$0.24&14.19$\pm$0.23&14.12$\pm$0.23\\
											$S_0=90 $&0.0025 &14.0726 &14.0244  &14.0030$\pm$ 0.05 &100000&13.98$\pm$0.16&13.87$\pm$0.16&13.94$\pm$0.16&13.89$\pm$0.16 \\
											&0.00125 &14.0722 &14.0240  & &200000&13.93$\pm$0.12&13.91$\pm$0.11&13.94$\pm$0.12&13.96$\pm$0.11 \\
											\hline
											& 0.01&19.0658 &19.0075  & &10000&19.59$\pm$0.66&19.44$\pm$0.63&19.88$\pm$0.66&19.13$\pm$0.59 \\
											&0.0050 &19.0594 &19.0011  & &50000&19.10$\pm$0.27&19.06$\pm$0.27&19.26$\pm$0.26&19.01$\pm$0.26\\
											$S_0=100 $&0.0025 &19.0578 &18.9995  &18.9632$\pm$ 0.05 &100000&18.92$\pm$0.19&18.88$\pm$0.18&18.85$\pm$0.19&18.90$\pm$0.18 \\
											&0.00125 &19.0574 &18.9991  & &200000&18.80$\pm$0.13&18.84$\pm$0.13&18.85$\pm$0.13&18.92$\pm$0.13 \\
											\hline
											& 0.01&24.7434 &24.6788  & &10000&25.02$\pm$0.74&24.84$\pm$0.72&25.32$\pm$0.72&24.78$\pm$0.67\\
											&0.0050 &24.7364 &24.6719  & &50000&24.79$\pm$0.30&24.57$\pm$0.29&24.94$\pm$0.29&24.72$\pm$0.29\\
											$S_0=110 $&0.0025 &24.7347 &24.6701  &24.6289$\pm$ 0.06 &100000&24.53$\pm$0.21&24.47$\pm$0.20&24.50$\pm$0.21&24.51$\pm$0.20\\
											&0.00125 &24.7343 &24.6697  & &200000&24.42$\pm$0.14&24.45$\pm$0.14&24.50$\pm$0.15&24.53$\pm$0.14\\
											\hline
											& 0.01&31.0646 &30.9983  & &10000&30.88$\pm$0.74&31.15$\pm$0.75&31.18$\pm$0.74&31.04$\pm$0.71\\
											&0.0050 &31.0577 &30.9914  & &50000&31.10$\pm$0.32&30.94$\pm$0.31&31.32$\pm$0.32&30.98$\pm$0.32\\
											$S_0=120 $&0.0025 &31.0559 &30.9896  &30.9052$\pm$0.07 &100000&30.89$\pm$0.23&30.72$\pm$0.22&30.70$\pm$0.22&30.72$\pm$0.22\\
											&0.00125 &31.0555 &30.9892  & &200000&30.72$\pm$0.16&30.73$\pm$0.16&30.77$\pm$0.16&30.89$\pm$0.15\\
											\hline
										\end{tabular}
									}
									\caption{\em \small{Standard Bates model. Prices of American call options. Test parameters: $K=100$, $T=5$,
											$r=0.03$, $\delta=0.05$, $Y_0=0.04$, $\theta_Y=0.04$, $\kappa_Y=2$, $\sigma_Y=0.7$,
											$\lambda=5$, $\gamma=0$, $\delta=0.1$, $\rho=-0.5$. Case $2\kappa_Y\theta_Y<\sigma^2_Y$.}}
									\label{tab5}
								\end{table}
								
								\clearpage

								\subsubsection{Bates model with stochastic interest rate}\label{sect-num-bateshw}
								We consider now the case of Bates model associated with the Vasiceck model for the stochastic interest rate.
								For the Bates model we consider the parameters from Chiarella \emph{et al.}
								\cite{ckmz} already used in Section \ref{sect-num-bateshw}. Moreover, for the interest rate parameter we fix the following parameters:
								\begin{itemize}
									\item
									initial interest rate
									$r_0=0.03$, speed of mean-reversion   $\kappa_r=1$, interest rate
									volatility  $\sigma_r=0.2$;
									\item
									time-varying long-term mean $\theta_r(t)$ fitting the theoretical bond prices to the yield curve observed on the market, here set as
									$P_r(0,T)=e^{-0.03T}$.
								\end{itemize}
								We study the cases
								$$
								\rho_1=\rho_{SY}=-0.5\quad\mbox{and}\quad\rho_2=\rho_{Sr}=-0.5, 0.5.
								$$
								No correlation is assumed to exist between $r$ and $Y$.
								We consider the mesh grid $\Delta
								y=0.02$, $0.01$, $0.005$, $0.0025$,
								the case $\Delta y=0.00125$ being removed because it requires huge computational times.
								The numerical results are labeled \textbf{HTFDa-b},
								\textbf{HMCa-b}, \textbf{AMCa-b}, \textbf{HMCLSa-b}, \textbf{AMCLSa-b}, their settings being given at the beginning of Section \ref{sect-num-bates}.
								
								When the interest rate is assumed to be stochastic, no references are available in the literature. Therefore, we propose benchmark values obtained by using a Monte Carlo method in which the CIR paths are simulated  through the accurate
								third-order Alfonsi \cite{A} discretization scheme and the interest rate paths are generated by an exact scheme. For these benchmark values, called \textbf{B-AMC}, the number of Monte Carlo iterations and of the discretization time steps are set as $N_{\mathrm{MC}}=10^6$  and
								$N_t=300$ respectively. In the American case, \textbf{B-AMC} is evaluated through the Longstaff-Schwartz \cite{ls} algorithm  with $20$
								exercise dates. All Monte Carlo results report the $95\% $ confidence
								intervals.
								
								European and American call option prices are given in tables \ref{tab6} and \ref{tab7} respectively. Tables \ref{tab81} and \ref{tab82} refer to the computational time cost (in seconds) of the different algorithms in the European Call case and American Call case respectively.
								The numerical results confirm the good numerical behavior of \textbf{HTFD}  and
								\textbf{HMC} in the Bates-Hull-White model as well.
								
								\begin{table}[ht]
									\centering
									\subtable[]{
										\tiny
										{\begin{tabular} {@{}c|lrr|c|rrrrrr@{}} 
												\toprule $\rho_{Sr}=-0.5$ & $\Delta x$
												&HTFDa  &HTFDb & B-AMC &$N_{\mathrm{MC}} $&HMCa  &HMCb  &AMCa  &AMCb\\
												\hline
												& 0.02&1.0169 &1.0079  & &  10000 &1.00$\pm$0.09
												&0.96$\pm$0.09  &1.00$\pm$0.09&1.06$\pm$0.10 & \\
												&0.01 &1.0201 &1.0188  & &50000 &1.02$\pm$0.04 &0.97$\pm$0.04
												&0.98$\pm$0.04&1.01$\pm$0.04 &\\
												$S_0=80$&0.0050 &1.0199 &1.0194  &1.0153$\pm0.01$ &100000 &1.00$\pm$0.03
												&1.00$\pm$0.03  &1.01$\pm$0.03&1.03$\pm$0.03 &\\
												&0.0025 &1.0197 &1.0193  & &200000 &1.01$\pm$0.02 &1.01$\pm$0.02
												&1.02$\pm$0.02&1.00$\pm$0.02 &\\
												\hline
												& 0.01&3.1172 &3.1032  & & 10000&3.05$\pm$0.16 &3.05$\pm$0.16
												&3.07$\pm$0.16&3.14$\pm$0.17 &\\
												&0.01 &3.1186 &3.1137  & &50000 &3.10$\pm$0.07 &3.03$\pm$0.07
												&3.02$\pm$0.07&3.09$\pm$0.07 &\\
												$S_0=90 $&0.0050 &3.1174 &3.1135  &3.1008$\pm0.02$
												&100000 &3.07$\pm$0.05 &3.08$\pm$0.05  &3.09$\pm$0.05&3.14$\pm$0.05 &\\
												&0.0025 &3.1174 &3.1136  & &200000 &3.09$\pm$0.04 &3.10$\pm$0.04  &3.11$\pm$0.04&3.08$\pm$0.04 &\\
												\hline
												& 0.02&7.2528 &7.2472  & & 10000&7.17$\pm$0.24
												&7.17$\pm$0.24  &7.20$\pm$0.24&7.24$\pm$0.25 &\\
												&0.01 &7.2528 &7.2479  & &50000 &7.21$\pm$0.11 &7.18$\pm$0.11
												&7.12$\pm$0.11&7.21$\pm$0.11 &\\
												$S_0=100 $&0.0050 &7.2528 &7.2480  &7.2315$\pm 0.02$ &100000 &7.18$\pm$0.08
												&7.24$\pm$0.08  &7.20$\pm$0.08&7.27$\pm$0.08 &\\
												&0.0025 &7.2528 &7.2480  & &200000 &7.22$\pm$0.05 &7.25$\pm$0.05 &7.24$\pm$0.05&7.20$\pm$0.05 &\\
												\hline
												& 0.02&13.4553 &13.4565  & & 10000&13.30$\pm$0.32
												&13.30$\pm$0.32  &13.41$\pm$0.33&13.39$\pm$0.33 &\\
												&0.01 &13.4465 &13.4440  & &50000 &13.37$\pm$0.15 &13.40$\pm$0.15
												&13.27$\pm$0.15&13.38$\pm$0.15 &\\
												$S_0=110 $&0.0050 &13.4435 &13.4407  &13.4256$\pm 0.03$ &100000 &13.35$\pm$0.10
												&13.46$\pm$0.10  &13.38$\pm$0.10&13.48$\pm$0.10 &\\
												&0.0025 &13.4432 &13.4404  & &200000 &13.40$\pm$0.07 &13.47$\pm$0.07
												&13.43$\pm$0.07&13.39$\pm$0.07 &\\
												\hline
												& 0.02&21.1320 &21.1356  & & 10000&20.89$\pm$0.40
												&20.89$\pm$0.40  &21.08$\pm$0.40&20.99$\pm$0.41  &\\
												&0.01 &21.1243 &21.1239  & &50000 &21.03$\pm$0.18 &21.09$\pm$0.18
												&20.92$\pm$0.18&21.03$\pm$0.18 &\\
												$S_0=120 $&0.0050 &21.1222 &21.1214  &21.1070$\pm 0.04$ &100000 &21.01$\pm$0.13
												&21.17$\pm$0.13  &21.04$\pm$0.13&21.17$\pm$0.13 &\\
												&0.0025 &21.1215 &21.1207  & &200000 &21.06$\pm$0.09 &21.16$\pm$0.09 &21.12$\pm$0.09&21.06$\pm$0.09 &\\
												\hline
											\end{tabular}}
										}
										\quad\quad
										\subtable[]{
											\tiny
											{\begin{tabular} {@{}c|lrr|c|rrrrrr@{}} \toprule $\rho_{Sr}=0.5$ & $\Delta x$ &HTFDa
													&HTFDb & B-AMC  &$N_{\mathrm{MC}} $&HMCa  &HMCb  &AMCa  &AMCb\\
													\hline
													& 0.02&1.3459 &1.3379  & & 10000&1.29$\pm$0.11 &1.28$\pm$0.11
													&1.32$\pm$0.10&1.41$\pm$0.11 &\\
													&0.01 &1.3482 &1.3471  & &50000 &1.34$\pm$0.05 &1.30$\pm$0.05
													&1.32$\pm$0.05&1.35$\pm$0.05  &\\
													$S_0=80$&0.0050 &1.3479 &1.3475  &1.3446$\pm$0.01 &100000 &1.32$\pm$0.03
													&1.31$\pm$0.03  &1.34$\pm$0.03&1.34$\pm$0.03 &\\
													&0.0025 &1.3477 &1.3473  & &200000 &1.33$\pm$0.02 &1.34$\pm$0.02 &1.35$\pm$0.02&1.32$\pm$0.02 &\\
													\hline
													& 0.01&3.7320 &3.7233  & & 10000&3.62$\pm$0.18 &3.62$\pm$0.18
													&3.64$\pm$0.18&3.76$\pm$0.19  &\\
													&0.01 &3.7323 &3.7304  & &50000 &3.69$\pm$0.08 &3.65$\pm$0.08 &3.64$\pm$0.18&3.76$\pm$0.19 &\\
													$S_0=90 $&0.0050 &3.7311 &3.7298  &3.7263$\pm 0.02$ &100000 &3.66$\pm$0.06
													&3.68$\pm$0.06  &3.71$\pm$0.06&3.73$\pm$0.06 &\\
													&0.0025 &3.7311 &3.7299  & &200000 &3.69$\pm$0.04 &3.72$\pm$0.04 &3.73$\pm$0.04&3.68$\pm$0.04  &\\
													\hline
													& 0.02&8.0100 &8.0073  & & 10000&7.83$\pm$0.26
													&7.83$\pm$0.26  &7.82$\pm$0.26&8.00$\pm$0.27 &\\
													&0.01 &8.0112 &8.0102  & &50000 &7.92$\pm$0.12 &7.93$\pm$0.12
													&7.93$\pm$0.12&7.97$\pm$0.12 &\\
													$S_0=100 $&0.0050 &8.0114 &8.0107  &8.0069$\pm 0.03$ &100000 &7.91$\pm$0.08 &7.97$\pm$0.08 &7.99$\pm$0.08&8.02$\pm$0.08 &\\
													&0.0025 &8.0114 &8.0107  & &200000 &7.95$\pm$0.06 &8.02$\pm$0.06
													&8.00$\pm$0.06&7.95$\pm$0.06 &\\
													\hline
													& 0.02&14.1482 &14.1505  & & 10000&13.89$\pm$0.35
													&13.89$\pm$0.35  &13.88$\pm$0.35&14.07$\pm$0.36 &\\
													&0.01 &14.1413 &14.1414  & &50000 &14.01$\pm$0.16 &14.05$\pm$0.16 &14.03$\pm$0.16&14.09$\pm$0.16 &\\
													$S_0=110 $&0.0050 &14.1388 &14.1388  &14.1323$\pm 0.03$ &100000 &14.01$\pm$0.11
													&14.10$\pm$0.11  &14.12$\pm$0.11&14.14$\pm$0.11 &\\
													&0.0025 &14.1386 &14.1386  & &200000 &14.06$\pm$0.08 &14.17$\pm$0.08
													&14.13$\pm$0.08&14.07$\pm$0.08 &\\
													\hline
													& 0.02&21.6737 &21.6772  & & 10000&21.37$\pm$0.42
													&21.37$\pm$0.42  &21.35$\pm$0.42&21.51$\pm$0.43 &\\
													&0.01 &21.6670 &21.6674  & &50000 &21.50$\pm$0.19 &21.55$\pm$0.19 &21.52$\pm$0.19&21.60$\pm$0.19 &\\
													$S_0=120 $&0.0050 &21.6651 &21.6653  &21.6501$\pm 0.04$ &100000 &21.52$\pm$0.13
													&21.63$\pm$0.13  &21.64$\pm$0.13&21.68$\pm$0.14 &\\
													&0.0025 &21.6645 &21.6646  & &200000 &21.57$\pm$0.10 &21.71$\pm$0.10  &21.65$\pm$0.10&21.58$\pm$0.09&\\
													\hline
												\end{tabular}}
											}
											\caption{\em \small{Bates-Hull-White model. Prices of European call options. Test parameters: $K=100$, $T=0.5$,
													$\delta=0.05$, , $r_0=0.03$, $\kappa_r=1$, $\sigma_r=0.2$, $Y_0=0.04$, $\theta_Y=0.04$, $\kappa_Y=2$, $\sigma_Y=0.4$,
													$\lambda=5$, $\gamma=0$, $\eta=0.1$,
													$\rho_{SY}=-0.5$,$\rho_{Sr}=-0.5, 0.5$.}}
											\label{tab6}
										\end{table}
										
										\begin{table}[ht] \centering
											\subtable[]{
												\tiny
												{\begin{tabular} {@{}c|lrr|c|rrrrr@{}} \toprule $\rho_{Sr}=-0.5$ & $\Delta x$ &HTFDa
														&HTFDb & B-AMC &$N_{\mathrm{MC}} $&HMCLSa  &HMCLSb &AMCLSa  &AMCLSb\\
														\hline
														& 0.02&1.0561 &1.0470  & &10000&0.76$\pm$0.07&0.56$\pm$0.06&0.95$\pm$0.08&0.82$\pm$0.08 \\
														&0.01 &1.0598 &1.0588  & &50000&1.08$\pm$0.04&0.91$\pm$0.04&1.01$\pm$0.04&0.96$\pm$0.04 \\
														$S_0=80$&0.0050 &1.0597 &1.0596  &1.0544$\pm0.01$ &100000&1.07$\pm$0.03&1.03$\pm$0.03&1.07$\pm$0.03&1.04$\pm$0.03
														\\
														&0.0025 &1.0596 &1.0595  & &200000&1.05$\pm$0.02&1.04$\pm$0.02&1.07$\pm$0.02&1.05$\pm$0.02\\
														\hline
														& 0.01&3.2511 &3.2364  & &10000&3.28$\pm$0.15&3.39$\pm$0.16&3.35$\pm$0.16&3.07$\pm$0.15 \\
														&0.01 &3.2537 &3.2493  & &50000&3.33$\pm$0.07&3.21$\pm$0.07&3.25$\pm$0.07&3.30$\pm$0.07\\
														$S_0=90 $&0.0050 &3.2528 &3.2494  &3.2273$\pm 0.01$ &100000&3.23$\pm$0.05&3.24$\pm$0.05&3.27$\pm$0.05&3.25$\pm$0.05 \\
														&0.0025 &3.2528 &3.2495  & &200000&3.22$\pm$0.03&3.23$\pm$0.03&3.25$\pm$0.03&3.24$\pm$0.03 \\
														\hline
														& 0.02&7.6012 &7.5952  & &10000&7.64$\pm$0.22&7.99$\pm$0.23&7.80$\pm$0.23&7.68$\pm$0.22\\
														&0.01 &7.6020 &7.5976  & &50000&7.72$\pm$0.10&7.58$\pm$0.09&7.61$\pm$0.10&7.65$\pm$0.10\\
														$S_0=100 $&0.0050 &7.6022 &7.5980  &7.5589$\pm 0.02$ &100000&7.54$\pm$0.07&7.62$\pm$0.07&7.61$\pm$0.07&7.54$\pm$0.07 \\
														&0.0025 &7.6022 &7.5980  & &200000&7.54$\pm$0.05&7.54$\pm$0.05&7.56$\pm$0.05&7.60$\pm$0.05\\
														\hline
														& 0.02&14.1510 &14.1524  & &10000&14.22$\pm$0.28&14.61$\pm$0.29&14.35$\pm$0.29&14.07$\pm$0.28\\
														&0.01 &14.1443 &14.1425  & &50000&14.25$\pm$0.13&14.11$\pm$0.12&14.16$\pm$0.12&14.17$\pm$0.13\\
														$S_0=110 $&0.0050 &14.1420 &14.1401  &14.0909$\pm 0.03$ &100000&14.03$\pm$0.09&14.18$\pm$0.09&14.10$\pm$0.09&14.06$\pm$0.09
														\\
														&0.0025 &14.1419 &14.1399  & &200000&14.05$\pm$0.06&14.04$\pm$0.06&14.07$\pm$0.06&14.13$\pm$0.06\\
														\hline
														& 0.02&22.2466 &22.2505  & &10000&22.38$\pm$0.32&22.84$\pm$0.33&22.46$\pm$0.32&22.15$\pm$0.32 \\
														&0.01 &22.2412 &22.2419  & &50000&22.35$\pm$0.15&22.27$\pm$0.14&22.24$\pm$0.14&22.28$\pm$0.14\\
														$S_0=120 $&0.0050 &22.2398 &22.2402  &22.1736$\pm  0.03$ &100000&22.12$\pm$0.10&22.27$\pm$0.10&22.19$\pm$0.10&22.17$\pm$0.10
														\\
														&0.0025 &22.2394 &22.2397  & &100000&22.12$\pm$0.10&22.27$\pm$0.10&22.19$\pm$0.10&22.17$\pm$0.10\\
														\hline
													\end{tabular}}
												}\quad\quad
												\subtable[]{
													\tiny
													{\begin{tabular} {@{}c|lrr|c|rrrrr@{}} \toprule $\rho_{Sr}=0.5$ & $\Delta x$ &HTFDa
															&HTFDb & B-AMC &$N_{\mathrm{MC}} $&HMCLSa  &HMCLSb &AMCLSa  &AMCLSb\\
															\hline
															& 0.02&1.3551 &1.3470  & &10000&1.18$\pm$0.09&1.29$\pm$0.10&1.12$\pm$0.09&0.80$\pm$0.08\\
															&0.01 &1.3576 &1.3566  & &50000&1.35$\pm$0.05&1.17$\pm$0.04&1.33$\pm$0.05&1.25$\pm$0.05\\
															$S_0=80$&0.0050 &1.3573 &1.3570  &1.3559$\pm0.01$ &100000&1.33$\pm$0.03&1.30$\pm$0.03&1.33$\pm$0.03&1.27$\pm$0.03 \\
															&0.0025 &1.3571 &1.3569  & &200000&1.35$\pm$0.02&1.31$\pm$0.02&1.38$\pm$0.02&1.34$\pm$0.02\\
															\hline
															& 0.01&3.7696 &3.7606  & &10000&3.72$\pm$0.17&3.78$\pm$0.17&3.82$\pm$0.18&3.72$\pm$0.17 \\
															&0.01 &3.7705 &3.7688  & &50000&3.86$\pm$0.08&3.71$\pm$0.08&3.80$\pm$0.08&3.81$\pm$0.08\\
															$S_0=90 $&0.0050 &3.7694 &3.7685  &3.7633$\pm0.02$ &100000&3.75$\pm$0.06&3.74$\pm$0.05&3.76$\pm$0.05&3.74$\pm$0.05\\
															&0.0025 &3.7694 &3.7686  & &200000&3.75$\pm$0.04&3.74$\pm$0.04&3.80$\pm$0.04&3.79$\pm$0.04\\
															\hline
															& 0.02&8.1285 &8.1249  & &10000&8.12$\pm$0.24&8.52$\pm$0.26&8.25$\pm$0.26&8.15$\pm$0.25\\
															&0.01 &8.1308 &8.1301  &  &50000&8.25$\pm$0.11&8.08$\pm$0.11&8.15$\pm$0.11&8.18$\pm$0.11\\
															$S_0=100 $&0.0050 &8.1311 &8.1308  &8.1122$\pm0.03$ &100000&8.07$\pm$0.08&8.16$\pm$0.08&8.11$\pm$0.08&8.10$\pm$0.08\\
															&0.0025 &8.1312 &8.1309  &  &200000&8.08$\pm$0.06&8.07$\pm$0.06&8.14$\pm$0.06&8.16$\pm$0.06\\
															\hline
															& 0.02&14.4455 &14.4468  & &10000&14.48$\pm$0.32&14.84$\pm$0.33&14.43$\pm$0.32&14.51$\pm$0.32\\
															&0.01 &14.4409 &14.4414  & &50000&14.60$\pm$0.15&14.40$\pm$0.14&14.45$\pm$0.14&14.47$\pm$0.14\\
															$S_0=110 $&0.0050 &14.4389 &14.4395  &14.3884$\pm0.03$ &100000&14.34$\pm$0.10&14.47$\pm$0.10&14.39$\pm$0.10&14.38$\pm$0.10 \\
															&0.0025 &14.4388 &14.4394  & &200000&14.35$\pm$0.07&14.37$\pm$0.07&14.38$\pm$0.07&14.48$\pm$0.07 \\
															\hline
															& 0.02&22.2859 &22.2893  & &10000&22.23$\pm$0.36&22.87$\pm$0.39&22.45$\pm$0.36&22.29$\pm$0.35\\
															&0.01 &22.2815 &22.2827  & &50000&22.50$\pm$0.17&22.29$\pm$0.16&22.27$\pm$0.16&22.28$\pm$0.16\\
															$S_0=120 $&0.0050 &22.2802 &22.2813  &22.2039$\pm0.04$ &100000&22.17$\pm$0.12&22.31$\pm$0.12&22.24$\pm$0.12&22.22$\pm$0.12\\
															&0.0025 &22.2798 &22.2808 &  &200000&22.17$\pm$0.08&22.17$\pm$0.08&22.17$\pm$0.08&22.32$\pm$0.08\\
															\hline
														\end{tabular}}
													}
													\caption{\em \small{Bates-Hull-White model. Prices of American call options. Test parameters: $K=100$, $T=0.5$,
															$\delta=0.05$, $r_0=0.03$, $\kappa_r=1$, $\sigma_r=0.2$, $Y_0=0.04$, $\theta_Y=0.04$, $\kappa_Y=2$, $\sigma_Y=0.4$,
															$\lambda=5$, $\gamma=0$, $\eta=0.1$, $\rho_{SY}=-0.5$,$\rho_{Sr}=-0.5, 0.5$.}}
													\label{tab7}
												\end{table}
												
												\clearpage
												
												\begin{table} [ht] \centering
													\tiny
													{\begin{tabular} {@{}llrr|rccccc@{}} \toprule & $\Delta x$ &HTFDa &HTDFb &$N_{\mathrm{MC}}$&HMCa &HMCb &AMCa &AMCb\\
															\hline
															& 0.02  &2.77 &22.95 &10000 &0.13  &0.25  &0.36  &0.48  & \\
															& 0.01  &6.15  &48.17 &50000  &0.66  &1.35  &1.11  &2.48  &\\
															& 0.005  &12.12 &99.19  &100000 &1.37  &2.56  &1.82  &4.99  &\\
															& 0.0025  &27.61 &204.88 &200000  &2.56  &5.08  &3.70  &9.96  &\\
														\end{tabular}}
														\caption{\em \small{Bates-Hull-White model. Computational times (in seconds) for European call
																options in Table \ref{tab6} for $S_0=100$, $\rho_{Sr}=-0.5.$}}
														\label{tab81}
													\end{table}
													
													\begin{table} [ht] \centering
														\tiny
														{\begin{tabular} {@{}llrr|rccccc@{}} \toprule & $\Delta x$ &HTFDa &HTDFb &$N_{\mathrm{MC}}$&HMCLSa &HMCLSb &AMCLSa &AMCLSb\\
																\hline
																& 0.02  &2.77 &23.10 &10000 &0.28  &0.43  &0.40  &0.62  & \\
																& 0.01  &6.39  &48.65 &50000  &0.80  &1.79  &1.30  &2.72  &\\
																& 0.005  &12.50 &99.85  &100000 &1.91  &3.89  &3.02  &6.15  &\\
																& 0.0025  &27.92 &205.60 &200000  &4.03  &8.11  &5.20  &10.75  &\\
															\end{tabular}}
															\caption{\em \small{Bates-Hull-White model. Computational times (in seconds) for American call
																	options in Table \ref{tab7} for $S_0=100$, $\rho_{Sr}=-0.5.$}}
															\label{tab82}
														\end{table}
														
														\clearpage

													\chapter{Weak convergence rate of Markov chains and hybrid numerical schemes for jump-diffusion processes}\label{chapter-art4}
													
													\section{Introduction}\label{sect-introC4}
													This chapter is devoted to the study of the weak convergence rate of numerical schemes allowing one to handle specific jump-diffusion processes which include the  Heston  and  Bates models in the full parameters regime.
													We generalize the hybrid tree- finite difference method  
													described in Chapter \ref{chapter-art3} for the computation of European and American options in the stochastic volatility context
													and we study the rate of convergence.
													Let us mention that, under these models, the literature is rich in numerical methods but, as far as we know, poor in results on the rate of convergence, with the exception of  the papers \cite{A,AN,bossy,z}, all of them either dealing with schemes written on Brownian increments or requiring restrictions on the Heston diffusion parameters. So, we first study the convergence rate of tree methods and then we tackle the hybrid procedure.
													
													Tree methods rely heavily on Markov chains. So, in the first part (Section \ref{sect-markovapprox}) we study the rate at which a sequence of Markov chains weakly converges to a diffusion process $(Y_t)_{t\in[0,T]}$ solution to
													$$
													dY_t=\mu_Y(Y_t)dt+\sigma_Y(Y_t)dW_t.
													$$
													In this framework, the weak convergence  is well known to be governed by the behaviour of the local moments up to order 3 or 4 (see e.g. 
													\cite{SV}). In order to get the speed of convergence, we need to stress such requests, making further but quite general assumptions on the behaviour of the moments, and in Theorem \ref{conv_T} we  prove a  first order weak convergence result. As an application, we give an  example from the financial framework: we theoretically study the convergence rate of the  tree approximation proposed in \cite{acz} for the  CIR process (and described in Section \ref{sect-tree}). 
													Several trees are considered in the literature, see e.g. \cite{cgmz, HST,Tian}, but all of them  work poorly from the numerical point of view when the Feller condition fails. Our result  for the tree in \cite{acz} (Theorem \ref{conv_T-CIR}) works in any parameter regime.
													Recall that in equity markets, one often requires large values for the
													vol-vol $\sigma$  whereas in interest rates context, $\sigma$ is markedly lower (see e.g. the calibration results in \cite{dps} and in \cite{bm} p. 115,  respectively).  So, a result in the full parameter regime is actually essential.
													We stress that our  convergence Theorem \ref{conv_T}  is completely general and may in principle be applied to more general trees constructed through the multiple jumps approach by Nelson and  Ramaswamy \cite{nr} or also to  other cases, e.g. the recent tree method developed in \cite{ads}.
													
													In the second part (Section \ref{sect-hybrid}), we link to $(Y_t)_{t\in[0,T]}$ a jump-diffusion process $(X_t)_{t\in[0,T]}$ which evolves according to a stochastic differential equation whose coefficients only depend on the process $(Y_t)_{t\in[0,T]}$:
													$$
													dX_t=\mu_X(Y_t)dt+\sigma_X(Y_t)dB_t+\gamma_X(Y_t)dH_t,
													$$
													where $H$ is a compound Poisson process independent of the 2-dimensional Brownian motion $(B,W)$.
													So, the pair $(X_t,Y_t)_{t\in[0,T]}$ evolves following a Stochastic Differential Equation (hereafter SDE) with jumps. Given a function $f$, we consider the numerical computation of  $\E[f(X_T,Y_T)]$ or $\sup_{\tau\in\mathcal{T}_{0,T}}\E[f(X_\tau,Y_\tau)] $ through a method 
													(Section  \ref{sect-hybrid2}), 	which works backwardly by
													approximating the process $Y$ with a Markov chain and by using a different numerical scheme for solving a (local) PIDE allowing us to work  in the direction of the process $X$.
													Then (Section \ref{sect-convergence}), in Theorem \ref{convergencebates} we give a general result on the rate of convergence of the hybrid approach.   We stress that the approximating algorithm is not directly written on a Markov approximation, so one cannot extend the convergence result provided in the first part of the chapter. 
													We then study  the stability and the consistency of the hybrid method,  but  in a sense that allows us to exploit the probabilistic properties of the Markov chain approximating the  process $Y$. 
													
													It is worth mentioning that the test functions on which we study the rate of convergence are smooth. 
													In fact, there is a strict connection between such hybrid schemes and the use of a discrete noise in the approximation procedure. This means that we cannot use regularizing arguments \textit{\`a la Malliavin} in order to relax the smoothness requests, as it can be done when the approximation algorithm is based on the Brownian noise (see the seminal paper \cite{bt} or the recent \cite{AN} for the Heston model) or on a noise having at least a ``good piece of absolutely continuous part''  (Doeblin's condition, see \cite{br}).
													
													We then consider two possible finite-difference schemes (Section \ref{sect-finitedifference}) to handle the (local)  PIDE related to the component $X$:
													an implicit in time/centered in space 
													scheme (Section \ref{sect-l2}) and an implicit in time/upwind in space 
													scheme (Section \ref{sect-linf}). In both cases, the numerical treatment of the nonlocal term coming from the jumps involves implicit-explicit techniques, as well as numerical quadratures.  We apply the convergence Theorem \ref{convergencebates} and we obtain that  the hybrid algorithm has a rate of convergence of the first order in time and of a order in space according to the chosen numerical scheme.
													As an application, we give the weak convergence rate of the hybrid procedure   written on the Heston and on the  Bates model for pricing European options (Section \ref{sect-conv-eu}). Finally, in Section \ref{sect-conv-am} we give a theoretical result on the convergence rate  in the case of American options.
													%
													\section{Notation}\label{sect-notation}
													In this section we establish the notation which will be used in this chapter. Let $d\in \N^*=\N\setminus \{0\}$. 
													
													\smallskip
													
													$\bullet$ For a multi-index $l=(l_1,\dots,l_d)\in \N^d$ we define $|l|=\sum_{j=1}^{d}l_j$ and for $y\in \R^d$,
													we define $\partial^l_y=\partial_{y_1}^{l_1}\cdots \partial_{y_d}^{l_d}$ and $y^l=y_1^{l_1}\cdots y_d^{l_d}$.
													Moreover,   we denote by $|y|$ the standard Euclidean norm in  $ \R^d$ and for any linear operator $A: \R^d\rightarrow \R^d$, we denote by $|A|=\sup_{|y|=1}|Ay|$ the induced norm. 
													
													\smallskip
													
													$\bullet$ 
													$L^p(\R^d,d\mathfrak m)$ denotes  the standard $L^p$-space w.r.t. the measure $\mathfrak m$ on $(\R^d,\mathcal{B}_d)$,  $\mathcal{B}_d$ denoting the Borel $\sigma$-algebra on $\R^d$, and we set  $|\cdot|_{L^p(\R^d, d\mathfrak m)}$  the associated norm. The Lebesgue measure is denoted through $dx$.
													
													\smallskip
													
													$\bullet$ Let  $\mathcal{D}\subseteq \R^d$ be a domain (possibly closed)
													and $q\in\N$.  $C^q(\mathcal{D})$ is the set of all functions on $\mathcal{D}$ which are $q$-times continuously differentiable.  We set 
													$C^q_\pol(\mathcal{D} )$ 
													the set of functions $g\in C^q(\mathcal{D})$ such that there exist $C,a>0$ for which 
													\begin{equation*}
													|\partial^{l}_yg(y)|\leq C(1+|y|^a),\qquad  y\in \mathcal{D}, \, |l|\leq q.
													\end{equation*}
													When the above property holds for $q = 0$, in general terms we say that $g$ grows
													polynomially and if $a = 1$, we speak about sublinear growth.																For  $[a,b]\subseteq \R^+$, we set $C^{ q}_{\pol, [a,b]}(\mathcal{D})$ the set of functions $v=v(t,y)$ such that $v\in C^{\lfloor q/2 \rfloor, q}([a,b)\times \mathcal{D} )$ and there exist $C,c>0$ for which 
																													$$
													\sup_{t\in [a,b)}|\partial^{k}_t\partial^{l}_yv(t,y)|\leq C(1+|y|^c),\qquad y\in \mathcal{D} , \, 2k+|l|\leq q.
													$$
													For brevity, we set  $C(\mathcal{D})=C^0(\mathcal{D})$, 
													$C_\pol(\mathcal{D} )=C^0_\pol(\mathcal{D} )$ and $C_{\pol, [a,b]}(\mathcal{D})=C^{0 }_{\pol, [a,b]}(\mathcal{D} )$.
													We also need another functional space, that we call $C^{p,q}_{\pol}(\R^m,\mathcal{D})$, $p\in[1,\infty]$, $q\in\N$, $m\in\N^*$: $g=g(x,y)\in C^{p,q}_{\pol}(\R^m,\mathcal{D})$ if $g\in C_{\pol}^q(\R^m\times \mathcal{D})$ and  there exist $C,c>0$ such that
													$$
													|\partial^{l'}_x\partial^{l}_yg(\cdot, y)|_{L^p(\R^m,dx)}\leq C(1+|y|^c), \quad  |l'|+|l|\leq q.
													$$
													Similarly as above, we set $C^{p,q}_{\pol,[a,b]}(\R^m,\mathcal{D})$ the set of the function $v\in C^q_{\pol,[a,b]}(\R^m\times \mathcal{D} )$ such that 
													$$
													\sup_{t\in [a,b)}|\partial^{k}_t\partial^{l'}_x\partial^{l}_yv(t,\cdot, y)|_{L^p(\R^m,dx)}\leq C(1+|y|^c), \quad  2k+|l'|+|l|\leq q.
													$$
													If $[a,b]=[0,T]$, to simplify the notation, we set $C^{ q}_{\pol, [0,T]}(\mathcal{D})=C^{ q}_{\pol, T}(\mathcal{D})$ and $C^{p, q}_{\pol, [0,T]}(\mathcal{D})=C^{ p,q}_{\pol, T}(\mathcal{D})$.
													
													
													\smallskip
													
													$\bullet$ 
													For fixed $X_0=(X_{01},\ldots,X_{0d})\in\R^d$ and  $\dx=(\dx_1,\dots,\dx_d)\in(0,+\infty)^d$ (spatial step),  $\mathcal{X}=\{x=(X_{01}+i_1\Delta x_1,\dots, X_{0d}+i_d\Delta x_d)\}_{i\in \Z^d}$ denotes a discrete grid in $\R^d$. For $p\in[1,\infty]$, we  set $l_p(\mathcal{X})$ the discrete $l_p$-space of the functions $\varphi\,:\,\mathcal{X}\to\R$ with the norm $|\varphi|_{p}=(\sum_{x\in\mathcal{X}} |\varphi(x)|^{p}\dx_1\cdots\dx_d)^{ 1 /p}$ if $p\in [1,\infty)$ and $|\varphi|_{\infty}=\sup_{x\in\mathcal{X}}|\varphi(x)|$ if $p=\infty$.  Moreover, for a linear operator  $\Gamma \,:\, l_p(\mathcal{X})\to l_p(\mathcal{X})$, the induced norm is denoted by $|\Gamma|_p=\sup_{|\varphi|_p\leq 1}|\Gamma \varphi|_{p}$.  
													And for a function $g\,:\,\R^d\to\R$, we set $|g|_p$ the $l_p(\mathcal{X})$ norm of the restriction of $g$ on $\mathcal{X}$. When $d=1$, we identify $(\varphi(x))_{x\in\mathcal{X}}$ with $(\varphi_i)_{i\in\Z}$ through $\varphi_i=\varphi(X_0+i\Delta x)$, $i\in\Z$.
													
													\smallskip
													
													$\bullet$ 
													$L^p(\Omega)$ is the short notation for the standard $L^p$-space on the probability space $(\Omega,\mathcal{F},\P)$, on which the expectation is denoted by $\E$. We set $\|\cdot\|_p$  the norm in $L^p(\Omega)$.

													\section{First order weak convergence of Markov chains to diffusions}\label{sect-markovapprox}
													Let $d\in \N^*$ and $\mathcal{D}\subseteq\R^d$ be a convex  domain or a closure of it. 
													On a probability space $(\Omega, \mathcal F,\P)$, we consider a $d$-dimensional diffusion process driven by
													\begin{equation}\label{Y}
													dY_t=\mu_Y(Y_t)dt+\sigma_Y(Y_t)dW_t, \qquad Y_0 \in \mathcal{D},
													\end{equation}
													where $W$ is a $\ell$-dimensional standard Brownian motion. From now on, we set $a_Y=\sigma_Y\sigma_Y^\star$, the notation $\star$ denoting transpose. We recall that the associated infinitesimal generator is given by
													\begin{equation}
													\label{A-op}
													\mathcal{A}=\frac 12 \tr (a_Y D_ y^2)+\mu_Y\cdot \nabla_y,
													\end{equation}
													where $\tr$ denotes the matrix trace, $D^2_y$ and $\nabla_y$ are, respectively, the Hessian and the gradient operator w.r.t. the space variable $y$ and the notation  ``$\cdot$'' stands for the scalar product.
													
													Hereafter, we fix $T>0$, $f:\mathcal{D}\rightarrow \R$ and we define
													\begin{equation}\label{u}
													u(t,y)=\E[f(Y^{t,y}_T)],\quad (t,y)\in[0,T]\times\mathcal{D},
													\end{equation}
													where $Y^{t,y}$  denotes the solution to the SDE in \eqref{Y} that starts at $t$ in the position $y$. We do not enter in specific requests for the diffusion coefficients or for $f$, we just ask that the following properties are met:
													\begin{itemize}
														\item [(a)]
														$\mu_Y$ has polynomial growth; 
														\item [(b)]	
														for every $(t,y)\in[0,T]\times \mathcal{D}$ there exists a unique weak solution $(Y^{t,y}_s)_{s\in[t,T]}$ of \eqref{Y} such that
														$\P(\forall s\in[t,T] , \,Y^{t,y}_s\in\mathcal{D})=1;$
														\item[(c)]
														the function  $u$ in \eqref{u} solves the PDE
														\begin{equation}\label{PDE-u-Y}
														\begin{cases}
														\frac{\partial u }{\partial t}+\mathcal{A} u=0,\qquad& \mbox{in } [0,T)\times \D,\\
														u(T,y)=f(y),  &\mbox{in }  \D.
														\end{cases}	
														\end{equation}	
													\end{itemize}
													The above proverties (a), (b) and (c) will be assumed to hold throughout this section.
													
													We are interested in the numerical evaluation of $u(0,Y_0)=\E(f(Y_T))$.
													A widely used and computationally convenient method  is by computing the above expectation on an approximation of the process $Y$. Here, we consider an approximation through a  Markov chain that weakly converges to the diffusion process $Y$, see e.g. the classical references 
													\cite{SV}.  We will see in Section \ref{sect-CIR} an application to tree methods, that is, when the process $Y$ is approximated by means of a computationally simple Markov chain. 
													Here, our aim is to study, under suitable but quite general assumptions, the order of weak convergence.

													%
													
													So,  let $N\in\N^*$ and set $h=T/ N$. The parameters $N$ and $h$ are fixed once for all. Let $( Y^h_{n})_{n=0,\ldots,N}$ denote a Markov chain, 
													whose state space, at time-step $n$, is given by $\mathcal{Y}_n^h\subset \mathcal{D}$. In our mind, $( Y^h_{n})_{n=0,\ldots,N}$ is a Markov process which is a discrete weak  approximation in time (and possibly in space) of the $d$-dimensional diffusion $Y$, namely, $Y^h_n$ approximates $Y$ at times $nh$, for every $n=0,\ldots,N$. Of course, we assume that $Y^h_0=Y_0$, that is, $\mathcal{Y}^h_0=\{Y_0\}$.
													Without loss of generality, we may assume that $( Y^h_{n})_{n=0,\ldots,N}$ is defined in $(\Omega,\mathcal{F},\P)$.

													In order to study the rate of the weak convergence of $(Y^h_n)_{n=0,\dots,N}$ to $Y$, we need to stress the requests that   are usually done in order to merely prove the convergence (see e.g. 
													\cite{SV}). In particular, we need the following assumption.
													\medskip
													
													\noindent
													\textbf{Assumption $\mathcal{A}_1$.}
													\textit{There exists $	\bar h >0$ such that, for every $h< \bar h$,  the first three local moments satisfy
														\begin{align}
														\E[Y^h_{n+1}-Y^h_n\mid Y^h_n]
														&=\mu_Y(Y^h_n)h+f_h(Y^h_n), \label{momento1-gen}\\
														\E[( Y^h_{n+1}-Y^h_n)( Y^h_{n+1}-Y^h_n)^\star\mid Y^{h}_n]
														&=a_Y(Y^{h}_n) h 
														+g_h(Y^{h}_n),\label{momento2-gen}\\
														\E[( Y^h_{n+1}-Y^h_n)^l\mid Y^{h}_n]
														&=j_{h,l}(Y^{h}_n), \qquad l\in\N^d, \,|l|=3\label{momento3-gen},
														\end{align}
														where  $
														f_h:\mathcal{D}\rightarrow \R^d$, $g_h:\mathcal{D}\rightarrow \R^{d\times d} $ and $j_{h,l}:\mathcal{D}\rightarrow \R$ satisfy the following properties:  there exist $p>1$ and $C>0$ such that
														\begin{align}
														&\quad \sup_{h\leq \bar h }	\sup_{n=0,\dots, N}	\|f_h(Y^h_n)\|_p\leq Ch^2
														,\label{stima-f}\\
														&\quad \sup_{h\leq \bar h }	\sup_{n=0,\dots, N}\|g_h(Y^h_n)\|_p\leq Ch^2
														,\label{stima-g}\\
														&\quad \sup_{h\leq \bar h }	\sup_{n=0,\dots, N}\|j_{h,l}(Y^h_n)\|_p\leq Ch^2, \quad |l|=3.\label{stima-j}
														\end{align}
													}

													We also need  the following behavior of the moments.
													
													\medskip
													
													\noindent
													\textbf{Assumption $\mathcal{A}_2$.}
													\textit{There exists $	\bar h >0$ such that for every $p>1$ there exists $C_p>0$ for which
														\begin{align}
														&\label{stima_momento-gen}
														\sup_{h<\bar h}\sup_{0\leq n \leq N} \|Y^h_n\|_p\leq C_{p},\\
														&\label{stima_incremento-gen}
														\sup_{h<\bar h}\sup_{0\leq n\leq N} \frac{1}{\sqrt h} \|Y^h_{n+1}-Y^h_n\|_p\leq C_p.
														\end{align}
													}
													
													\medskip

													We can now state the following first order weak convergence result.
													
													\begin{theorem} \label{conv_T}
														Let assumptions $\mathcal{A}_1$ and $\mathcal{A}_2$ hold and assume that  $u\in C^{4}_{\pol,T}(\D)$, $u$ being defined in \eqref{u}.
														Then  there exist $\bar h>0$ and  $C>0$ such that for every $h<\bar h$ one has
														$$
														|	\E[f(Y^h_N)]-\E[f(Y_T)]|\leq CTh.
														$$
													\end{theorem}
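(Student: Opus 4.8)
The plan is to use the classical one-step (telescoping) decomposition of the global weak error in terms of the solution $u$ of the backward PDE \eqref{PDE-u-Y}, exploiting the fact that this PDE annihilates the leading-order contribution of each one-step increment. Since $u(T,\cdot)=f$ and $u(0,Y_0)=\E[f(Y_T)]$ (recall $Y^h_0=Y_0$ is deterministic), I would first write
\begin{equation*}
\E[f(Y^h_N)]-\E[f(Y_T)]=\E[u(T,Y^h_N)]-u(0,Y_0)=\sum_{n=0}^{N-1}\E\big[u((n+1)h,Y^h_{n+1})-u(nh,Y^h_n)\big].
\end{equation*}
It then suffices to show that each summand $\delta_n$ satisfies $|\delta_n|\le Ch^2$ uniformly in $n$, since then the sum of $N=T/h$ such terms is bounded by $CNh^2=CTh$.

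For the one-step error, set $\Delta_n=Y^h_{n+1}-Y^h_n$ and perform a Taylor expansion of $u$ around $(nh,Y^h_n)$, separating the time increment $h$ and the space increment $\Delta_n$. Expanding in time to second order and then each resulting term in space (to fourth order for $u$ itself, to first order for $\partial_t u$), I would take the conditional expectation given $Y^h_n$ and insert the local-moment expressions \eqref{momento1-gen}--\eqref{momento3-gen} of Assumption $\mathcal{A}_1$. The crucial point is that the collection of the genuine $O(h)$ terms equals
\begin{equation*}
h\Big(\partial_t u+\mu_Y\cdot\nabla_y u+\tfrac12\tr(a_Y D_y^2u)\Big)(nh,Y^h_n)=h\big(\partial_t u+\mathcal{A}u\big)(nh,Y^h_n)=0,
\end{equation*}
so the entire first-order contribution cancels thanks to the PDE \eqref{PDE-u-Y}. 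What survives is, on one hand, the correction terms $\nabla_y u\cdot f_h$, $\tfrac12\tr(D_y^2u\,g_h)$ and $\tfrac16\sum_{|l|=3}\partial^l_y u\,j_{h,l}$ coming from the non-exact local moments, and on the other hand the Taylor remainders: the fourth-order spatial remainder (fourth derivatives of $u$ against $|\Delta_n|^4$), the second-order time remainder ($\partial^2_t u$ against $h^2$) and the mixed term ($\partial_t\nabla_y u$ against $h\,\Delta_n$).

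To bound these surviving terms I would use Hölder's inequality in $L^p(\Omega)$ together with the regularity $u\in C^4_{\pol,T}(\D)$, which (since $2k+|l|\le4$ allows $\partial_t^2u$, $\partial_t\nabla_y u$ and $\partial^l_y u$ with $|l|\le4$) guarantees that all the derivatives appearing above grow polynomially in the space variable, combined with the moment bounds of Assumption $\mathcal{A}_2$. Concretely, the correction term $\|f_h(Y^h_n)\|_p\le Ch^2$ from \eqref{stima-f} is paired, via the conjugate exponent $p'$, with $\sup_n\|\nabla_y u(nh,Y^h_n)\|_{p'}<\infty$ uniformly in $h$, the latter being finite because $\nabla_y u$ grows polynomially and $Y^h_n$ has uniformly bounded moments of all orders by \eqref{stima_momento-gen}; the terms in $g_h$ and $j_{h,l}$ are treated identically through \eqref{stima-g}--\eqref{stima-j}. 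For the spatial remainder one controls $\E[(1+|Y^h_n+\theta\Delta_n|^c)\,|\Delta_n|^4]$, which by Hölder is $O(h^2)$ since $\||\Delta_n|^4\|_{p'}=\|\Delta_n\|_{4p'}^4\le Ch^2$ by \eqref{stima_incremento-gen}; the time and mixed remainders are $O(h^2)$ in the same way. The main obstacle is precisely this bookkeeping: the remainders are evaluated at random intermediate points $Y^h_n+\theta\Delta_n$ where $u$'s derivatives are only polynomially bounded, so the estimates cannot be made pathwise but must blend the uniform-in-$n$ moment control of both $Y^h_n$ and the rescaled increment $\Delta_n/\sqrt h$ with the polynomial-growth regularity of $u$ through a careful choice of Hölder exponents; one also checks that the intermediate points stay in $\D$, which is ensured by its convexity so that the polynomial bounds on $\D$ apply.
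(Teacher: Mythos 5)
Your proposal is correct and follows essentially the same route as the paper's proof: the telescoping decomposition via $u$, Taylor expansion around $(nh,Y^h_n)$ with cancellation of the $O(h)$ terms through the PDE \eqref{PDE-u-Y}, and control of the surviving correction terms and Taylor remainders by H\"older's inequality combined with Assumptions $\mathcal{A}_1$ and $\mathcal{A}_2$ and the polynomial growth of the derivatives of $u$. The bookkeeping you describe (pairing the $L^p$ bounds on $f_h,g_h,j_{h,l}$ with conjugate-exponent moment bounds on the derivatives of $u$ evaluated along the chain, and using $\|\Delta_n\|_{4p'}^4\le Ch^2$ for the spatial remainder) is exactly what the paper does.
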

													\begin{proof}
														The proof is quite standard.	Since  $\E[f(Y^h_N)]=\E[u(T,Y^h_T)]$ and $  \E[f(Y_T)] =u(0,Y_0)$, we have
														$$
														\E[f(Y^h_T)]- \E[f(Y_T)] =\E[  u(T,Y^h_T)-u(0,Y_0) ]=\sum_{n=0}^{N-1}\E[  u((n+1)h,Y^h_{n+1})-u(nh,Y^h_n) ].
														$$
														Since $u\in C^{4}_{\pol,T}(\D)$, we can  apply Taylor's formula to $t\mapsto u(t,y)$ around $nh$    up to order 1 and to the functions $y\mapsto u(t,y)$ and $y\mapsto \partial_t u(t,y)$ around $Y^h_n$ up to order 3 and 1 respectively. We  obtain
														\begin{equation}\label{app1}
														u((n+1)h,Y^h_{n+1})
														= \sum_{ 
															0\leq |l|+2l'\leq 3} \partial_y^{l}\partial_t^{l'} u(nh,Y^h_n)\frac{h^{l'}(Y^h_{n+1}-Y^h_n)^{l}}{|l|!l'!} +R_1(n,h,Y^h_n,Y^h_{n+1}),
														\end{equation}
														where the remaining term $R_1$ is given by
														\begin{align*}
														R_1(n,h,Y^h_n,Y^h_{n+1})	
														&=h^2\int_0^1 (1-\tau)\partial^2_tu(t+\tau h,Y_{n+1}^h)d\tau\\
														&+h\sum_{|k|=2}(Y^h_{n+1}-Y^h_n)^k\int_0^1 (1-\xi)\partial^k_y\partial_tu(nh,Y^h_n+\xi(Y^h_{n+1}-Y^h_n))d\xi\\
														&+\sum_{|k|=4}\frac{(Y^h_{n+1}-Y^h_n)^k}{3!}\int_0^1 (1-\xi)^3\partial^k_yu(nh,Y^h_n+\xi(Y^h_{n+1}-Y^h_n))d\xi.
														\end{align*}
														We now pass to the conditional expectation w.r.t. $Y^h_n$ in \eqref{app1} and use \eqref{momento1-gen} and \eqref{momento2-gen}. By rearranging the terms we obtain
														\begin{equation}\label{resti}
														\begin{split}
														&\E[u((n+1)h,Y^h_{n+1})-u(nh,Y^h_n)] \\& \qquad=h\E\left[ \partial_tu(nh,Y^h_n)+\mu_Y(Y_n^h)\cdot \nabla_y u(nh,Y^h_n)
														+\frac 12 \mbox{Tr}(a_YD^2_y  u(nh,Y^h_n))\right] +\sum_{i=1}^5 R^i_n(h), 
														\end{split}	
														\end{equation}
														in which
														$$
														\begin{array}{ll}
														\displaystyle
														R^1_{n}(h)=\E[ R_1(n,h,V^h_n,V^h_{n+1})],
														&\displaystyle
														R^{2}_n(h)=h\E[(\mu_Y(Y^h_n)h+ f_h(Y^h_n))\cdot \nabla_y\partial_tu(nh,Y^h_{n})],\\
														\displaystyle
														R^{3}_n(h)=\E[f_h(Y^h_n)\cdot \nabla_yu(nh,Y^h_{n})],
														&\displaystyle
														R^{4}_n(h)=\frac 12\E[\tr(g_h(Y^h_n)D^{2}_yu(nh,Y^{h}_n))],\\
														\displaystyle
														R^{5}_n(h)=\frac 16\sum_{|k|=3}\E[\partial_y^ku(nh,Y^h_{n})j_{h,k}(Y^h_n)].&
														\end{array}
														$$

														Thanks to \eqref{PDE-u-Y},  the first term in \eqref{resti} is null, so
														$$
														|\E[u((n+1)h,Y^h_{n+1})-u(nh,Y^h_n)]|
														\leq \sum_{i=1}^{5}|R^{i}_n(h)|.
														$$
														We now prove that $|R^{i}_n(h)|\leq C h^2$, for every $i=1,\ldots 5$. Let $\bar h>0$ such that both assumptions $\mathcal{A}_1$ and $\mathcal{A}_2$ hold and let $h<\bar h$.
														Since the derivatives of $u$ have polynomial growth,  one has
														\begin{align*}
														&|R_1(n,h,Y^h_n,Y^h_{n+1})|\leq C\Big(1+|Y^h_n|+|Y^h_{n+1}|\Big)^{a}\Big[h^2+h|Y^h_{n+1}-Y^h_n|^2+|Y^h_{n+1}-Y^h_n|^4\Big],
														\end{align*}
														where $C,a>0$ denote constants that are independent of $h$ and, from now on, may change from a line to another.
														Then, by using the Cauchy-Schwarz inequality, \eqref{stima_momento-gen} and \eqref{stima_incremento-gen}, we get
														\begin{align*}
														|R^{1}_n(h)|&\leq C \big\|(1+|Y^h_{n+1}|+|Y^h_n|)^{a}\big\|_2\, \big\|h^2+h (Y^h_{n+1}-Y^h_n)^2+(Y^h_{n+1}-Y^h_n)^4 \big\|_2\leq C h^2.
														\end{align*}
														As regards $R^2_n(h)$, we use the polynomial growth of $\nabla_y\partial_tu$, the Cauchy-Schwarz inequality and the H\"older inequality, so that
														\begin{align*}
														|R^{2}_n(h)|&
														\leq C \E[\big(1+|Y^h_{n}|^{a}\big)|\mu_Y(Y^h_n)|]\,h^{2}+ C\E[\big(1+|Y^h_{n}|^a\big)|f_h(Y^h_n)|]\,\\&
														\leq C\big\|1+|Y^h_{n}|^{a}\big\|_2
														\,\big\|\mu_Y(Y^h_n)\big\|_2\,h^{2}+ C\big\|1+|Y^h_{n}|^{a}\big\|_q
														\,\big\|f_h(Y_n^h)\big\|_p,
														\end{align*}
														where $p$ is given in \eqref{stima-f} and $q$ is its conjugate exponent.  Since $\mu_Y$ has polynomial growth,  by \eqref{stima-f} and \eqref{stima_momento-gen} we get
														$$
														|R^{2}_n(h)|\leq C h^2.
														$$
														The remaining terms $R^3_n(h)$, $R^4_n(h)$ and $R^5_n(h)$ can be handled similarly, so 
														the statement follows.
													\end{proof}
													
													\subsection{An example: a first order weak convergent binomial tree for the CIR process}\label{sect-CIR}
													We now fix $d=1$ and $\mathcal{D}=\R_+=[0,\infty)$. We consider the CIR process $(Y_t)_{t\in [0,T]}$
													solution to the SDE
													\begin{align*}
													&dY_t= \kappa(\theta- Y_t)dt+\sigma\sqrt{Y_t}\,dW_t,
													\quad Y_0\geq0.
													\end{align*}
													We assume that $\theta,\kappa,\sigma>0$ and we do not require  the Feller condition. Therefore,  the process  $Y$ can reach $0$.
													
													
													We consider  here the  \avirg multiple jumps"  tree approximation for the  CIR process described in Section \ref{sect-tree}. 
													%
													We first briefly recall how the tree works and then, as an application of Theorem \ref{conv_T}, we study the rate of convergence.
													
													Recall that, for $n=0,1,\ldots,N$ we have  the lattice
													\begin{equation}\label{vnk}
													\mathcal{Y}_n^h=\{y^n_k\}_{k=0,1,\ldots,n}\quad\mbox{with}\quad
													y^n_k=\Big(\sqrt {Y_0}+\frac{\sigma} 2(2k-n)\sqrt{h}\Big)^2\ind{\{\sqrt {Y_0}+\frac{\sigma} 2(2k-n)\sqrt{h}>0\}}.
													\end{equation}
													Note that $\mathcal{Y}_0^h=\{Y_0\}$. 
													For each fixed node $(n,k)\in\{0,1,\ldots,N-1\}\times\{0,1,\ldots,n\}$, the ``up'' jump $k_u(n,k)$ and the ``down'' jump $k_d(n,k)$ from $y^n_k\in\mathcal{Y}_n^h$ are defined as 
													\begin{align}
													\label{ku2}
													&k_u(n,k) =
													\min \{k^*\,:\, k+1\leq k^*\leq n+1\mbox{ and }y^n_k+\mu_Y(y^n_k)h \le y^{n+1}_{k^*}\},\\
													\label{kd2}
													&k_d(n,k)=
													\max \{k^*\,:\, 0\leq k^*\leq k \mbox{ and }y^n_k+\mu_Y(y^n_k)h \ge y^{n+1}_{ k^*}\},
													\end{align}
													where $\mu_Y(y)=\kappa(\theta-y)$ and
													with the understanding $k_u(n,k)=n+1$, resp. $k_d(n,k)=0$,  if the set in \eqref{ku2}, resp. \eqref{kd2}, is empty. 
													In fact, starting from the node $(n,k)$ the probability that the process jumps to $k_u(n,k)$ and $k_d(n,k)$ at time-step $n+1$ are set as
													\begin{equation*}key
													p_u(n,k)
													=0\vee \frac{\mu_Y(y^n_k)h+ y^n_k-y^{n+1}_{k_d(n,k)} }{y^{n+1}_{k_u(n,k)}-y^{n+1}_{k_d(n,k)}}\wedge 1
													\quad\mbox{and}\quad p_d(n,k)=1-p_u(n,k)
													\end{equation*}
													respectively. We will see in next Proposition \ref{propjumps} that for $h$ small enough the parts ``$0\vee$'' and ``$\wedge 1$'' can be omitted.
													
													We call $(Y^h_n)_{n=0,1,\ldots,N}$ the Markov chain governed by the above jump probabilities. As an application of Theorem \ref{conv_T}, we shall prove the following result.
													
													\begin{theorem}\label{conv_T-CIR}
														Let $f\in C^{4}_\pol(\R_+)$. Then, 
														there exist $\bar h>0$ and $C>0$ such that for every $h<\bar h$,
														$$
														|	\E[f(Y^h_N)]-\E[f(Y_T)]|\leq CTh,
														$$
														that is, the tree approximation $(Y^h_n)_{n=0,\dots,N}$ is first order weak convergent.
													\end{theorem}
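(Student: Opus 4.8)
The plan is to deduce the statement directly from the general first order weak convergence result, Theorem \ref{conv_T}, applied with $d=1$, $\mathcal{D}=\R_+$, drift $\mu_Y(y)=\kappa(\theta-y)$ and diffusion coefficient $a_Y(y)=\sigma^2 y$, the approximating Markov chain being the tree $(Y^h_n)_{n=0,\dots,N}$ built on the lattice \eqref{vnk} with the jump rules \eqref{ku2}--\eqref{kd2} and the associated probabilities. Thus the whole work reduces to checking the three hypotheses of that theorem: the regularity $u\in C^{4}_{\pol,T}(\R_+)$ of the value function $u(t,y)=\E[f(Y^{t,y}_T)]$, Assumption $\mathcal{A}_1$ on the first three local moments, and Assumption $\mathcal{A}_2$ on the uniform moment and increment bounds. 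Properties (a)--(c) preceding Theorem \ref{conv_T} hold here because the CIR SDE has a unique nonnegative weak solution and $u$ solves the backward Kolmogorov equation.

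For Assumption $\mathcal{A}_1$ I would compute the local moments explicitly. The increment $Y^h_{n+1}-Y^h_n$ takes only the two values $\Delta_u=y^{n+1}_{k_u(n,k)}-y^n_k$ and $\Delta_d=y^{n+1}_{k_d(n,k)}-y^n_k$ with probabilities $p_u(n,k),p_d(n,k)$, and for $h$ smaller than some $\bar h$ the truncations ``$0\vee$'' and ``$\wedge 1$'' are inactive away from the origin (this is the content of Proposition \ref{propjumps}), so the probabilities are exactly chosen to yield $p_u\Delta_u+p_d\Delta_d=\mu_Y(y^n_k)h$; hence the first local moment is matched exactly and $f_h$ is supported near $y=0$. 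Writing each one-step move in the $\sqrt{Y}$ variable, where the mesh is $\sigma\sqrt h$, the increments are of size $\approx\pm2\sigma\sqrt{y}\sqrt h+\sigma^2h$ in the $Y$ scale; this gives a second local moment equal to $\sigma^2 y\,h$ up to a remainder of order $h^2$, and a third local moment which is $O(h^2)$ because the leading $h^{3/2}$ contribution is odd and cancels, the surviving terms arising from the cross terms and from the probability imbalance $p_u-p_d=O(\sqrt h)$ multiplied by $(\sqrt h)^3$. The delicate point is the behaviour near $y=0$, where the lattice point is truncated and the multiple-jump structure is genuinely used: there the matching is only approximate pointwise, so I would control the errors in the $L^p(\Omega)$ norm of Assumption $\mathcal{A}_1$ by exploiting that $Y^h_n$ is itself of order $h$ on that region, with a probability small enough to make the bound summable, rather than aiming at a uniform estimate over the nodes.

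Assumption $\mathcal{A}_2$, namely $\sup_{n}\|Y^h_n\|_p\le C_p$ and $\|Y^h_{n+1}-Y^h_n\|_p\le C_p\sqrt h$, I would obtain from the mean-reverting structure of the lattice together with the estimates already established in \cite{acz}: the uniform moment bound follows from the contraction induced by $\mu_Y$, and the $\sqrt h$ increment bound from the fact that $|\Delta_u|,|\Delta_d|$ are of order $\sqrt{Y^h_n\,h}$ combined with the moment bound. Once $\mathcal{A}_1$, $\mathcal{A}_2$ and the regularity of $u$ are in force, Theorem \ref{conv_T} yields exactly $|\E[f(Y^h_N)]-\E[f(Y_T)]|\le CTh$.

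I expect the hard part to be the regularity statement $u\in C^{4}_{\pol,T}(\R_+)$, since the CIR generator degenerates at $y=0$ and the square-root coefficient is not smooth there, so the classical uniformly elliptic Schauder theory does not apply directly. The plan here is to exploit the affine character of the CIR process: the Laplace transform of $(Y_t,\int_0^tY_s\,ds)$ has the explicit exponential-affine form already used in Chapter \ref{chapter-art1} (Propositions \ref{Laplace}--\ref{Laplace2}), from which one can differentiate in the initial datum and obtain polynomially growing space derivatives of the semigroup up to order four together with the matching time regularity, thereby transferring the $C^4_\pol$ regularity of $f$ to $u$; alternatively one may invoke the known smoothing properties of the Feller semigroup (see \cite{Abook,acz}). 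The boundary analysis near $y=0$ in the moment computations is the second, more technical, obstacle, and it is precisely for this reason that Assumption $\mathcal{A}_1$ is formulated with $L^p(\Omega)$ norms over the chain rather than pointwise on the lattice.
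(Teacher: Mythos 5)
Your proposal is correct and follows essentially the same route as the paper: reduce to Theorem \ref{conv_T} by verifying Assumptions $\mathcal{A}_1$ and $\mathcal{A}_2$ through the explicit local-moment computations based on Proposition \ref{propjumps} (with the bad regions $y\leq\theta_*h$ and $y\geq\theta^*/h$ handled in $L^p(\Omega)$ via the $O(h)$ increment bound and Markov's inequality, and the uniform moment bounds obtained from the mean-reverting one-step inequality as in \cite{A-MC}), and by transferring the $C^4_{\pol}$ regularity of $f$ to $u$. The only cosmetic differences are that the paper gets $f_h\equiv 0$ exactly (Proposition \ref{propjumps} shows the probability truncations are inactive at \emph{all} nodes for small $h$, not just away from the origin) and obtains the regularity $u\in C^4_{\pol,T}(\R_+)$ by citing Theorem 4.1 of \cite{A-MC} (equivalently its Corollary \ref{corollary-cir}, proved via a derivative-transfer representation with shifted CIR parameters) rather than via the Laplace transform.
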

													
													In order to discuss the  assumptions $\mathcal{A}_1$ and $\mathcal{A}_2$  of Theorem \ref{conv_T}, we need some preliminary results which pave the way to the analysis of the convergence. 
													
													\begin{proposition}\label{propjumps}
														There exist  $\theta_*,\theta^*, C_*, \bar h>0$ such that for any $h<\bar h$ the following properties hold.
														\begin{enumerate}
															\item If $\theta_*h\leq y^n_k \leq  \theta^*/h$, then $	k_u(n,k)=k+1$, $k_d(n,k)=k$. Moreover,
															\begin{equation*}
															y^{n+1}_{k_u(n,k)}=y^n_k+\frac{\sigma^2}{4}h +\sigma \sqrt{y^n_k h }
															\quad\mbox{and}\quad
															y^{n+1}_{k_d(n,k)}=y^n_k+\frac{\sigma^2}{4}h -\sigma \sqrt{y^n_k h }.
															\end{equation*}
															
															\item If $y^n_k < \theta_*h$, then $k_d(n,k)=k$. Moreover,
															\begin{equation}\label{stimasottosoglia}
															0\leq y^{n+1}_{k_u(n,k)}-y^n_k\leq C_*h.
															\end{equation}
															
															\item  If $y^n_k > \theta^*/h$, then $k_u(n,k)=k+1$.
															
															\item The jump probabilities are
															\begin{equation}\label{proba}
															p_u(n,k)
															= \frac{\mu_Y(y^n_k)h+ y^n_k-y^{n+1}_{k_d(n,k)} }{y^{n+1}_{k_u(n,k)}-y^{n+1}_{k_d(n,k)}},
															\quad \quad p_d(n,k)=    \frac{ y^n_{k_u(n,k)}-y^{n}_{k}-\mu_Y(y^n_k)h }{y^{n+1}_{k_u(n,k)}-y^{n+1}_{k_d(n,k)}}.
															\end{equation}
															
														\end{enumerate}
													\end{proposition}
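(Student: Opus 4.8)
The plan is to reduce every assertion to an analysis of the single value $y:=y^n_k$ together with the two neighbouring values $y^{n+1}_{k+1}$ and $y^{n+1}_k$, since the jump mechanism from a node depends only on its height and on $h$, not on $(n,k)$ separately. Writing $z^n_k=\sqrt{Y_0}+\frac{\sigma}{2}(2k-n)\sqrt h$, so that $y^n_k=(z^n_k)^2\ind{\{z^n_k>0\}}$, I would first record the elementary identities $z^{n+1}_{k+1}=z^n_k+\frac{\sigma}{2}\sqrt h$ and $z^{n+1}_k=z^n_k-\frac{\sigma}{2}\sqrt h$. When $z^n_k>0$ and $z^{n+1}_k>0$ (equivalently $y>\tfrac{\sigma^2}4 h$), squaring and using $z^n_k=\sqrt y$ gives
$$y^{n+1}_{k+1}=y+\sigma\sqrt{yh}+\tfrac{\sigma^2}4 h,\qquad y^{n+1}_k=y-\sigma\sqrt{yh}+\tfrac{\sigma^2}4 h,$$
which are exactly the formulas claimed in item 1; the whole proposition then becomes a study of when the defining $\min$/$\max$ in \eqref{ku2}--\eqref{kd2} select the neighbours $k+1$ and $k$.

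For item 1, since $k_u(n,k)\ge k+1$ and $k_d(n,k)\le k$ by construction, it suffices to show $y+\mu_Y(y)h\le y^{n+1}_{k+1}$ and $y+\mu_Y(y)h\ge y^{n+1}_k$. With $\mu_Y(y)=\kappa(\theta-y)$ these reduce, after cancelling $y$ and dividing by $h$, to
$$\kappa(\theta-y)\le \sigma\sqrt{y/h}+\tfrac{\sigma^2}4\quad\text{and}\quad \kappa(\theta-y)\ge -\sigma\sqrt{y/h}+\tfrac{\sigma^2}4 .$$
Viewing each side as a function of $u=\sqrt y$ turns these into quadratic inequalities. The first fails only when $y$ is too small, and it holds as soon as $\sigma\sqrt{\theta_*}+\frac{\sigma^2}4\ge\kappa\theta$, i.e. for $y\ge\theta_* h$ with $\theta_*$ chosen large enough (and $\theta_*>\sigma^2/4$ to guarantee $z^{n+1}_k>0$, so that the formula above is legitimate). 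The second is a downward parabola in $u$ and fails only when $y$ is too large; it holds for $y\le\theta^*/h$ provided $\sigma\sqrt{\theta^*}-\kappa\theta^*>0$, that is $0<\theta^*<\sigma^2/\kappa^2$, once $\bar h$ is small. Item 3 is then the easy large-$y$ endpoint: for $h<\theta^*/\theta$ one has $y>\theta^*/h>\theta$, hence $\mu_Y(y)<0$, so $y+\mu_Y(y)h<y<y^{n+1}_{k+1}$ and $k_u(n,k)=k+1$.

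For item 2 I would split on the sign of $z^{n+1}_k$. If $z^{n+1}_k\le0$ then $y^{n+1}_k=0\le y+\mu_Y(y)h$ (the drift is positive since $y<\theta_* h<\theta$ for small $h$), so $k_d(n,k)=k$; if $z^{n+1}_k>0$ then $\sqrt{yh}>\frac{\sigma}2 h$, whence $\sigma\sqrt{yh}-\frac{\sigma^2}4 h>0$ and again $y+\mu_Y(y)h\ge y^{n+1}_k$. The overshoot bound comes from the gap identity $y^{n+1}_{j+1}-y^{n+1}_j=\sigma\sqrt h\,(z^{n+1}_j+z^{n+1}_{j+1})$, valid where the nodes are positive: since the target satisfies $y+\mu_Y(y)h\le(\theta_*+\kappa\theta)h=:C'h$, the value $y^{n+1}_{k_u(n,k)}$, being the smallest node above the target, has $z^{n+1}_{k_u(n,k)}\le\sqrt{C'h}+\sigma\sqrt h$, so $y^{n+1}_{k_u(n,k)}\le(\sqrt{C'}+\sigma)^2 h$ and $0\le y^{n+1}_{k_u(n,k)}-y\le C_* h$ (the boundary case $z^{n+1}_k\le0$ being treated directly, as there $y^{n+1}_{k+1}\le\sigma^2 h$).

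Finally, item 4 follows from $p_d(n,k)=1-p_u(n,k)$ together with the observation that, in each of the three regimes just analysed, the sets defining \eqref{ku2}--\eqref{kd2} are non-empty, so that by construction $y^{n+1}_{k_d(n,k)}\le y+\mu_Y(y)h\le y^{n+1}_{k_u(n,k)}$ with $y^{n+1}_{k_d(n,k)}<y^{n+1}_{k_u(n,k)}$; this places the ratio $\frac{\mu_Y(y)h+y-y^{n+1}_{k_d(n,k)}}{y^{n+1}_{k_u(n,k)}-y^{n+1}_{k_d(n,k)}}$ in $[0,1]$ and makes the truncation ``$0\vee\cdots\wedge1$'' inactive. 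I expect the genuine technical care to be concentrated precisely here and at the lattice boundary $z^n_k\le0$: one must verify non-emptiness of the max-set for $k_d$ in the large-$y$ regime (using that the smallest node equals $0$ for $n$ large and that the target is non-negative when $h<1/\kappa$) and dispose of the small-$n$ corner cases, all while keeping the constants $\theta_*$, $\theta^*$, $C_*$ and $\bar h$ fixed consistently across the three regimes and uniformly in $(n,k)$.
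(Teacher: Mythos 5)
Your proposal is correct and follows essentially the same route as the paper's proof: compare the drifted value $y^n_k+\mu_Y(y^n_k)h$ with the neighbouring nodes via the explicit square-root lattice formulas, bound the overshoot in the small-$y$ regime through the lattice spacing, and deduce item 4 from the non-emptiness of the sets in \eqref{ku2}--\eqref{kd2}, which deactivates the truncation $0\vee\cdots\wedge 1$. The only organisational difference is that the paper proves item 1 through the contrapositives (a multiple up-jump forces $y^n_k<(\kappa\theta/\sigma)^2h$, a multiple down-jump forces $y^n_k>\sigma^2/(4\kappa^2h)$), which then deliver items 2 and 3 essentially for free, whereas you verify the two inequalities directly on the band by a quadratic analysis in $\sqrt{y}$; both arguments are sound and produce the same conclusion with slightly different but equally admissible explicit constants.
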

													
													The proof of Proposition \ref{propjumps} relies on a boring study of the properties of the lattice, so we postpone it in Appendix \ref{app-jumps}.
													This is all we need to prove that $\mathcal{A}_2$ holds:
													
													\begin{proposition}\label{moments-2} 
														The CIR approximating tree $\{Y^h_n\}_{n=0,\ldots,N}$ satisfies Assumption $\mathcal{A}_2$. 
													\end{proposition}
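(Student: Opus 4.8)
The plan is to verify the two bounds \eqref{stima_momento-gen} and \eqref{stima_incremento-gen} defining Assumption $\mathcal{A}_2$, reducing both of them to a single pointwise one–step control of the increments together with a Lyapunov–type moment estimate. The starting point is Proposition \ref{propjumps}, from which I would first extract, for $h<\bar h$, the a.s. bound
\begin{equation*}
|Y^h_{n+1}-Y^h_n|\leq C\big(\sqrt{h\,Y^h_n}+h(1+Y^h_n)\big),
\end{equation*}
valid simultaneously in the three regimes. In the regular regime this is immediate from the explicit up/down values in part (1); in the subthreshold regime it follows from $k_d(n,k)=k$ and \eqref{stimasottosoglia}, since both $Y^h_n$ and $Y^h_{n+1}$ are then of order $h$; in the supthreshold regime the up increment is again $\sigma\sqrt{Y^h_nh}+\tfrac{\sigma^2}{4}h$, while the down increment has size at most $|\mu_Y(Y^h_n)|h$ plus one grid gap $\sigma\sqrt{Y^h_nh}$, i.e. $\leq \kappa\theta h+\kappa Y^h_nh+\sigma\sqrt{Y^h_nh}$. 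Moreover, part (4) of Proposition \ref{propjumps} shows that the truncation in the definition of $p_u,p_d$ is inactive for $h<\bar h$, so formula \eqref{proba} yields the exact first–moment matching $\E[Y^h_{n+1}-Y^h_n\mid Y^h_n]=\mu_Y(Y^h_n)h=\kappa(\theta-Y^h_n)h$.

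For the moment bound \eqref{stima_momento-gen} I would test the chain against $V_p(y)=(1+y)^p$. By monotonicity of the $L^p(\Omega)$ norms on a probability space it suffices to treat $p\geq 3$, for which $V_p\in C^3(\R_+)$ with $0\leq V_p^{(j)}(y)\leq C_{p}(1+y)^{p}$ for $j=0,1,2,3$. A second–order Taylor expansion of $V_p$ around $Y^h_n$ with integral remainder, followed by conditioning on $Y^h_n$, produces three contributions: the first–order term equals $V_p'(Y^h_n)\mu_Y(Y^h_n)h$, bounded in absolute value by $C_ph\,V_p(Y^h_n)$; the second–order term is controlled by $\tfrac12 V_p''(Y^h_n)\E[(Y^h_{n+1}-Y^h_n)^2\mid Y^h_n]\leq C_p h\,V_p(Y^h_n)$, using the pointwise increment bound and $h^2\leq\bar h\,h$; and the remainder, estimated through $V_p'''(\xi)$ with $\xi$ between $Y^h_n$ and $Y^h_{n+1}$, is $\leq C_ph\,V_p(Y^h_n)$ once one notes that $1+\xi\leq C(1+Y^h_n)$ for $h<\bar h$ and that $|Y^h_{n+1}-Y^h_n|^3$ carries a factor $h^{3/2}\leq h$. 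This gives the one–step estimate $\E[V_p(Y^h_{n+1})\mid Y^h_n]\leq(1+C_ph)V_p(Y^h_n)$; taking expectations and iterating (discrete Gronwall) yields $\E[V_p(Y^h_n)]\leq(1+C_ph)^nV_p(Y_0)\leq e^{C_pT}(1+Y_0)^p$, uniformly in $n\leq N$ and $h<\bar h$, whence \eqref{stima_momento-gen}.

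The increment bound \eqref{stima_incremento-gen} would then follow directly: dividing the pointwise bound by $\sqrt h$ gives $\tfrac{1}{\sqrt h}|Y^h_{n+1}-Y^h_n|\leq C\sqrt{Y^h_n}+C\sqrt h(1+Y^h_n)$, so taking $L^p(\Omega)$ norms and invoking \eqref{stima_momento-gen} produces $\tfrac{1}{\sqrt h}\|Y^h_{n+1}-Y^h_n\|_p\leq C\|Y^h_n\|_{p/2}^{1/2}+C\sqrt{\bar h}\,(1+\|Y^h_n\|_p)\leq C_p$. I expect the only genuinely delicate point to be the uniform handling of the supthreshold regime $Y^h_n>\theta^*/h$: there the deterministic down–drift makes the increment as large as order $hY^h_n$, and one must check both that this is still dominated by $\sqrt{hY^h_n}+h(1+Y^h_n)$ and that the associated Taylor remainder stays of order $h\,V_p(Y^h_n)$ rather than blowing up; the reduction to $p\geq3$ is precisely what keeps $V_p'''(\xi)$ polynomially bounded near the origin and makes the remainder estimate uniform.
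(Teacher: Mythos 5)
Your argument is correct, and it reaches both bounds of Assumption $\mathcal{A}_2$ by a route that differs from the paper's in its key device. The paper (following Alfonsi's technique for CIR schemes) proves the \emph{one-sided stochastic domination} $0\leq Y^h_{n+1}\leq (1+bh)Y^h_n+Ch+\sigma\sqrt{Y^h_nh}\,W^h_{n+1}$, where $W^h_{n+1}$ is bounded by $2$ and conditionally centered, and then runs an induction on integer powers $p$ via a multinomial expansion, the centering of $W^h_{n+1}$ killing the dangerous $\sqrt h$-order term; for the increment bound it then splits into the three regimes and handles the supthreshold one by Cauchy--Schwarz plus Markov's inequality, precisely because the domination gives no pointwise control of the (possibly multiple) down jump there. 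You instead establish a \emph{two-sided pointwise} bound $|Y^h_{n+1}-Y^h_n|\leq C(\sqrt{hY^h_n}+h(1+Y^h_n))$ in all three regimes --- your estimate of the down jump above $\theta^*/h$ as drift plus one grid gap is the only genuinely new verification, and it is sound since the gaps at level $n+1$ below index $k+1$ are at most $2\sigma\sqrt{y^n_kh}$ --- and then use the Lyapunov function $(1+y)^p$ with a second-order Taylor expansion, where the exact drift matching $\E[Y^h_{n+1}-Y^h_n\mid Y^h_n]=\mu_Y(Y^h_n)h$ plays the role that the centering of $W^h_{n+1}$ plays in the paper. What your approach buys is a cleaner and essentially immediate proof of \eqref{stima_incremento-gen} (divide the pointwise bound by $\sqrt h$ and invoke the moments, no Markov inequality needed) and a treatment of all real $p\geq 3$ at once rather than an integer induction; what the paper's approach buys is that it never needs the two-sided pointwise control of the multiple down jump, only the nonnegativity $Y^h_{n+1}\geq 0$ and an upper domination, which is why its argument ports more directly to other schemes of this type.
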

													
													\begin{proof}
														\textbf{Step 1: proof of \eqref{stima_momento-gen}.}
														We use a technique  firstly developed in \cite{A-MC} for a CIR discretization scheme based on Brownian increments. The key point is the proof of a monotonicity property allowing one to control the moments of the tree:  there exist $b,C, \bar h>0$ such that for every $h < \bar h$ and $n=0,\dots,N-1$ one has
														\begin{equation}\label{key}
														0\leq Y^h_{n+1}\leq (1+bh)Y^h_n+Ch+\sigma\sqrt{Y^h_nh} \, W^h_{n+1},
														\end{equation}
														where $W^h_{n+1}$ is a r.v. such that 
														\begin{equation}\label{law-W}
														\P(W^h_{n+1}=2p_d(n,k)|Y^h_n=y^n_k )=p_u(n,k)=1-\P(W^h_{n+1}=-2p_u(n,k)|Y^h_n=y^n_k ).
														\end{equation}
														To this purpose,  fix a node $(n,k)$. For the sake of simplicity, we write $k_u$, resp. $k_d$, in place of $k_u(n,k)$, resp. $k_d(n,k)$. We have (see \eqref{tree1}) that
														$$
														y^{n+1}_{k+1}\leq y^n_k+\frac{\sigma^2}{4}h +\sigma \sqrt{y^n_k h },\qquad
														y^{n+1}_{k}\leq y^n_k+\frac{\sigma^2}{4}h -\sigma \sqrt{y^n_k h }.
														$$
														By Proposition \ref{propjumps}, for $h<\bar h$,
														if $\theta_*h< y^n_k <\theta^*/h$  the up and down jumps are  both single, hence $y^{n+1}_{k_u}=y^{n+1}_{k+1}$ and $y^{n+1}_{k_d}=y^{n+1}_{k}$
														On the other hand, if $y^n_k \geq \theta^*/h$ the up jump is single, that is $y^{n+1}_{k_u}=y^{n+1}_{k+1}$ , 
														while the down jump can be multiple but, in every case, is still true that
														\begin{equation*}
														y^{n+1}_{k_d}\leq	y^{n+1}_{k}=  y^n_k+\frac{\sigma^2}{4}h -\sigma \sqrt{y^n_k h }.
														\end{equation*}
														Finally, if $y^n_k \leq \theta_*h$, we have $y^{n+1}_{k_d}=y^{n+1}_{k}$, 
														while the up jump can be multiple but we can always write
														$$
														y^{n+1}_{k_u} \leq y^n_k+C_*h \leq y^n_k+C_*h+ \sigma \sqrt{y^n_k h}.
														$$
														Summing up, if we set $\bar C=\max\Big(C_*, \frac{\sigma^2}{4}\Big)$,
														for every $h$ small 
														we can write
														$$
														0\leq Y^h_{n+1}\leq Y^h_n+ \bar Ch+\sigma \sqrt{Y^h_n h }\, Z^h_{n+1},
														$$
														where $Z^h_{n+1}$ is a random variable such that $\P( Z^h_{n+1}=+1|Y^h_n=y^n_k )=p_u(n,k)$ and  $\P( Z^h_{n+1}=-1|Y^h_n=y^n_k )=p_d(n,k)$. 
														Note that $\E(Z^h_{n+1}|Y^h_n=y^n_k )=p_u(n,k)-p_d(n,k)=2p_u(n,k)-1$. Then,  the random variable
														$$
														W^h_{n+1}= Z^h_{n+1}-\E[Z^h_{n+1}|Y^h_n]
														$$
														has  exactly the law given in \eqref{law-W}.
														We also define  the function $	P_u(y^n_k)= p_u(n,k)$.
														Therefore,
														\begin{align*}
														0\leq &	Y^h_{n+1}\leq 
														Y^h_n+ \bar Ch+\sigma \sqrt{Y^h_n h} \,(2P_u(Y^h_n)-1)  +\sigma \sqrt{Y^h_n h }\,W^h_{n+1}\\
														\leq &
														Y^h_n+ \bar Ch+\sigma \sqrt{\theta^*}\sqrt{\frac{Y^h_n h}{\theta^*} } \,\big|2P_u(Y^h_n)-1\big|\ind{ \{Y^h_n  \geq \frac{\theta^*}{h}\}}
														+\sigma \sqrt{Y^h_n h} \,\big(2P_u(Y^h_n)-1\big)\ind{ \{Y^h_n < \frac{\theta^*}{h}\}}\\
														&+\sigma \sqrt{Y^h_n h }\,W^h_{n+1}.
														\end{align*}
														Now, if $Y^h_n  \geq \frac{\theta^*}{h}$ then $\sqrt{\frac{Y^h_n h}{\theta^*} }
														\leq \frac{Y^h_n h}{\theta^*}$ and, since $P_u\in[0,1]$, we have $|2P_u(Y^h_n)-1|\leq 1$. Then, 
														$$
														0\leq Y^h_{n+1}
														\leq
														(1+bh)Y^h_n+ \bar Ch
														+\sigma \sqrt{Y^h_n h} \,\big(2P_u(Y^h_n)-1\big)\ind{ \{Y^h_n < \frac{\theta^*}{h}\}}
														+\sigma \sqrt{Y^h_n h }\,W^h_{n+1},
														$$
														where $b=\frac{\sigma}{\sqrt{\theta^*}}$.
														Let us study the quantity $\sigma \sqrt{Y^h_n h }\, (2P_u(Y^h_n)-1) \ind{ \{Y^h_n  < \frac{\theta^*}{h}\}}  $. If   $\theta_*h< y^n_k < \theta^*/h$, by using \eqref{proba} and point 1. of Proposition \ref{propjumps}, we can explicitly write
														\begin{align*}
														\sigma \sqrt{y^n_k h }\,&(	2P_u(y^n_k)-1)
														= 	\sigma \sqrt{y^n_k h }\,\Big(2\Big(  \frac 1 2 + \frac{4\mu_Y(v_k^n)-\sigma^2}{8\sigma \sqrt{y^n_k h }}   \Big)h-1
														\Big)=	\mu_Y(v_k^n)h-\frac{\sigma^2}4h \leq  \kappa\theta h.	
														\end{align*}
														If instead $y^n_k \leq \theta_*h$, then by using 2. in Proposition \ref{propjumps} we have
														\begin{align*}
														\sigma \sqrt{y^n_k h }\,&(	2P_u(y^n_k)-1)=	\sigma \sqrt{y^n_k h }\,  \frac{2\mu_Y(y^n_k)h+ 2y^n_k-y^{n+1}_{k_d(n,k)}-y^{n+1}_{k_u(n,k)} }{y^{n+1}_{k_u(n,k)}-y^{n+1}_{k_d(n,k)}} \\
														&\leq	\sigma \sqrt{y^n_k h }\, \frac{2\mu_Y(y^n_k)h+ 2y^n_k }{y^{n+1}_{k+1}-y^{n+1}_{k}} \leq	\sigma \sqrt{y^n_k h }\,  \frac{2\kappa\theta h+ 2\theta_*h }{2\sigma \sqrt{y^n_k h }} =(\kappa\theta + \theta_*)h.	
														\end{align*}
														So, by inserting, for every $n\leq N-1$ we get
														\begin{align*}
														\nonumber	0\leq Y^h_{n+1}&\leq (1+bh)Y^h_n+\bar Ch+\sigma (\kappa\theta + \theta_*) h
														+\sigma \sqrt{Y^h_n h }\,W^h_{n+1}
														\end{align*}
														and \eqref{key} is proved.
														
														Now,  we repeat step by step the proof of Lemma 2.6 in \cite{A-MC} in order to get \eqref{stima_momento-gen}.
														We use induction on $p$. For $p=1$, by definition one has  $\E[ Y^h_{n+1}| Y^h_n]=  Y^h_n+\mu_ Y( Y^h_n)h$  and, by  passing to the expectation, $\E[ Y^h_{n+1}]=\E[ Y^h_n]+\E[\mu_ Y( Y^h_n)h]\leq \E[ Y^h_n]+\kappa\theta h$, from which we obtain $ \E[ Y^h_{n+1}]\leq  Y_0+\kappa\theta (n+1)h\leq  Y_0+\kappa\theta T$ and the case $p=1$ is proved. So, assume that \eqref{stima_momento-gen} holds for $p-1$ and let us prove its validity for $p$. 
														Using \eqref{key}, we have
														\begin{align*}
														\E[	( Y^h_{n+1})^p]\leq \sum_{l_1+l_2+l_3=p} \frac{p!}{l_1!l_2!l_3!}(1+bh)^{l_1}\sigma^{l_2}C^{l_3}\E\left[( Y^h_n)^{l_1+\frac{l_2}2}h^{l_3+\frac{l_2}{2}}(W^h_{n+1})^{l_2}\right].	
														\end{align*}
														So, it is sufficient to control $\mathcal{E}(l_1,l_2,l_3)= \E\left[ ( Y^h_n)^{l_1+\frac{l_2}2} h^{l_3+\frac{l_2}{2}}(W^h_{n+1})^{l_2} \right]$ for $l_1+l_2+l_3=p$.

														Assume first that $l_1+\frac{l_2}2\leq p-\frac 3 2 $, a case giving  $l_3+\frac{l_2}{2}\geq \frac 3 2$. Without loss of generality we can assume  $C_{p-1}\geq 1$. Moreover, recall that $|W^h_{n+1}| \leq 2$. By using the H\"{o}lder's inequality with $\alpha=\frac{p-1}{l_1+\frac{l_2}2}$, we get
														\begin{align*}
														\mathcal{E}(l_1,l_2,l_3)\leq |\mathcal{E}(l_1,l_2,l_3)| \leq \E\left[ ( Y^h_n)^{l_1+\frac{l_2}2}\right] 2^{l_2} h^{l_3+\frac{l_2}{2}} \leq C_{p-1} 2^{l_2} h^{\frac 32}.
														\end{align*}
														Therefore
														\begin{align*}
														\sum_{l_1+l_2+l_3=p \atop l_1+l_2/2\leq p-3/2}   \frac{p!}{l_1!l_2!l_3!}(1+bh)^{l_1}\sigma^{l_2}C^{l_3}\mathcal{E}(l_1,l_2,l_3) 
														&	\leq  C_{p-1}h^{\frac{3}{2}} \sum_{l_1+l_2+l_3=p}\frac{p!}{l_1!l_2!l_3!}(1+bh)^{l_1}(2\sigma)^{l_2}C^{l_3}\\ 
														&
														\leq C_{p-1}h^{\frac{3}{2}} (1+b+2\sigma+C)^p.
														\end{align*}
														The case $l_1+\frac{l_2}2> p-\frac 3 2$ gives 4 further contributions, namely $(l_1,l_2,l_3)=(p,0,0)$, $(p-1,0,1)$, $(p-1,1,0)$ and $(p-2,2,0)$. So, we get
														\begin{align*}
														\E[	( Y^h_{n+1})^p]&\leq 
														C_{p-1} (1+b+2\sigma+C)^ph^{\frac{3}{2}}
														+ (1+bh)^p\E[	( Y^h_{n})^p]
														+  p(1+bh)^{p-1}Ch\E[	( Y^h_{n})^{p-1}]\\
														&+  p(1+bh)^{p-1}\sigma Ch^{1/2}\E[	( Y^h_{n})^{p-1/2}W^h_{n+1}]
														+  \frac{p(p-1)}{2}(1+bh)^{p-2}\sigma^2h\E[	( Y^h_{n})^{p-1}(W^h_{n+1})^2].
														\end{align*}
														Consider the last two terms above. For the first, we note that
														$$
														\E[	( Y^h_{n})^{p-1/2}W^h_{n+1}]=\E[( Y^h_{n})^{p-1/2}\E[	W^h_{n+1}|^h Y_n]]=0
														$$
														and for the second, we recall that $|W^h_{n+1}|\leq 2$. So, we easily obtain
														\begin{align*}
														\E[	( Y^h_{n+1})^p]
														&\leq 
														C_{p-1} h(1+b+2\sigma+C)^p\Big[1+p+\frac{p(p-1)}{2}\Big]
														+ (1+bh)^p\E[	( Y^h_{n})^p].
														\end{align*}
														By recursion on $n$, we get
														\begin{align*}
														\E[	( Y^h_{n+1})^p]
														&\leq 
														C_{p-1}h(1+b+2\sigma+C)^p\,\frac{p^{2}+p+2}{2}
														\sum_{{j=0}}^{n}(1+bh)^{jp}+ Y_0^{p}(1+bh)^{(n+1)p}
														\end{align*}
														and \ref{stima_momento-gen} now follows.	
														
														\medskip 	 
														
														\textbf{Step 2: proof of \eqref{stima_incremento-gen}}. 	 We can write
														\begin{align*}
														| Y^h_{n+1}- Y^h_n|^p\leq &3^{p-1}\Big| \frac{\sigma^2}{4}h +\sigma\sqrt{ Y^h_nh}Z^h_{n+1}\Big|^p\ind{ \{ \theta_*h<  Y^h_n< \theta^*/h\}}
														+3^{p-1}|  Y^h_{n+1}- Y^h_n|^p\ind{ \{  Y^h_n\leq \theta_*h\}}\\&+3^{p-1}| Y^h_{n+1}- Y^h_n|^p\ind{\{ Y^h_n\geq \theta^*/h\}}=:3^{p-1}(I_1+I_2+I_3),
														\end{align*}
														where we have used that, on the set $\{\theta_*h<  Y^h_n< \theta^*/h\}$, we have
														$
														Y^h_{n+1}=  Y^h_n+ \frac{\sigma^2}{4}h +\sigma\sqrt{ Y^h_nh}Z^h_{n+1},
														$
														with $\P(Z^h_{n+1}=1\mid  Y^h_{n+1})=P_u( Y^h_n)$ and $\P(Z^h_{n+1}=-1\mid  Y^h_{n+1})=P_d( Y^h_n)$.
														Now, by using \eqref{stima_momento-gen}, Proposition \ref{propjumps}, the Cauchy-Swartz and the Markov inequality,
														\begin{align*}
														I_1&\leq \E\Big[\Big(\frac{\sigma^2}{4}h +\sigma\sqrt{ Y^h_nh}\Big)^p\Big]
														\leq
														2^{p-1}\Big(\Big(\frac{\sigma^2}{4}\Big)^p+\sigma^p\E[( Y^h_n)^p]^{1/2} \Big)h^{p/2}
														\leq
														2^{p-1}\Big(\Big(\frac{\sigma^2}{4}\Big)^p+\sigma^p\sqrt{C_p} \Big)h^{p/2},\\
														I_2 &\leq C_*^ph^p,\\
														I_3	&\leq \E[( Y^h_{n+1}- Y^h_n)^{2p}]^{1/2} \P\Big( Y^h_n>\frac{\theta^*}{h}\Big)^{1/2}\leq 2^{p}\sqrt{\frac{C_{2p}C_p}{(\theta^*)^p} }\,h^{p/2},
														\end{align*}
														and \eqref{stima_incremento-gen} follows.
													\end{proof}
													
													\begin{proposition}\label{moments-1} 
														The CIR approximating tree $\{Y^h_n\}_{n=0,\ldots,N}$ satisfies Assumption $\mathcal{A}_1$.
														
													\end{proposition}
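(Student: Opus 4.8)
The plan is to verify directly that the three local moments of the CIR tree satisfy the structure required in Assumption $\mathcal{A}_1$, namely equations \eqref{momento1-gen}--\eqref{momento3-gen} together with the $O(h^2)$ estimates \eqref{stima-f}--\eqref{stima-j}. The natural starting point is Proposition \ref{propjumps}, which gives the explicit geometry of the jumps and the jump probabilities on the ``central'' region $\theta_*h\leq y^n_k\leq \theta^*/h$, together with control of the degenerate regions $y^n_k<\theta_*h$ and $y^n_k>\theta^*/h$. First I would work entirely on the central region, where the jumps are single: by the point 1. of Proposition \ref{propjumps} one has the exact expressions $y^{n+1}_{k_u(n,k)}=y^n_k+\tfrac{\sigma^2}{4}h+\sigma\sqrt{y^n_k h}$ and $y^{n+1}_{k_d(n,k)}=y^n_k+\tfrac{\sigma^2}{4}h-\sigma\sqrt{y^n_k h}$, and the explicit probabilities \eqref{proba}. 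Plugging these into the definitions of the first, second and third local moments and simplifying should reproduce $\mu_Y(y^n_k)h$ for the first moment, $a_Y(y^n_k)h=\sigma^2 y^n_k h$ for the second moment, and an $O(h^2)$ term for the third moment, with remainders $f_h,g_h,j_{h,l}$ that are $O(h^2)$ pointwise in the central region. This is essentially a matching-of-moments computation and is where the specific construction of the tree in \eqref{ku2}--\eqref{kd2} pays off.

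The second step is to handle the two degenerate regions. On the set $\{y^n_k<\theta_*h\}$ the up jump may be multiple, but \eqref{stimasottosoglia} gives $0\leq y^{n+1}_{k_u(n,k)}-y^n_k\leq C_*h$, so both the one-step increment and the node value are themselves $O(h)$; hence all the discrepancies between the tree moments and the target expressions $\mu_Y h$, $a_Y h$, and the third moment are controlled by powers of $h$ that are at least $h^2$ once one uses that $y^n_k$ itself is small. On the set $\{y^n_k>\theta^*/h\}$ the up jump is single but the down jump may be multiple; here the strategy is to bound the relevant contributions by the size of the increment, using the moment and increment bounds \eqref{stima_momento-gen}--\eqref{stima_incremento-gen} established in Proposition \ref{moments-2}. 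The key observation is that $\{y^n_k>\theta^*/h\}$ is a ``rare'' region in the sense of the Markov inequality: $\P(Y^h_n>\theta^*/h)$ is small of any polynomial order in $h$ because all moments of $Y^h_n$ are uniformly bounded. Combining a Cauchy--Schwarz splitting of the defective terms with this tail estimate should yield the $L^p$ bounds \eqref{stima-f}--\eqref{stima-j}.

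Concretely, the $L^p$ estimates are obtained by writing, for each of $f_h,g_h,j_{h,l}$, a decomposition according to the three regions and then taking $\|\cdot\|_p$ norms. In the central region the remainder is pointwise $O(h^2)$ (uniformly, since the correction involves only $\mu_Y$ and $\sigma^2$), so its $L^p$ norm is $O(h^2)$ directly. In the low region one uses $0\leq Y^h_n\leq\theta_*h$ on the event, turning each remainder into something of order $h^2$ after accounting for the factor $\ind{\{Y^h_n<\theta_*h\}}$. In the high region one writes the remainder as (bounded increment term) $\times\,\ind{\{Y^h_n>\theta^*/h\}}$ and applies H\"older together with $\P(Y^h_n>\theta^*/h)\leq C_q (\theta^*/h)^{-q}=C_q(h/\theta^*)^{q}$, valid for every $q$ by \eqref{stima_momento-gen}; choosing $q$ large enough makes this contribution $o(h^2)$, in fact negligible. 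Assembling the three pieces gives the uniform-in-$n$, uniform-in-$h$ bounds by $Ch^2$ required in \eqref{stima-f}--\eqref{stima-j}.

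The main obstacle I anticipate is the precise bookkeeping in the central-region moment matching, in particular verifying that the third local moment is genuinely $O(h^2)$ rather than merely $O(h^{3/2})$. Since the increment scales like $\sqrt{h}$ and the probabilities carry a correction of relative size $\sqrt{h}$ around $1/2$, a naive estimate gives only $h^{3/2}$ for the raw third moment; one must exploit the cancellation coming from the symmetric leading part of the binomial step (the $+\sigma\sqrt{y^n_k h}$ versus $-\sigma\sqrt{y^n_k h}$ structure) so that the symmetric $h^{3/2}$ contributions cancel and only the $O(h^2)$ drift-correction survives. This cancellation is exactly what the choice of jump nodes and probabilities in \eqref{ku2}--\eqref{proba} is designed to produce, so the computation should go through, but it is the delicate point and must be carried out carefully; once it is established, Theorem \ref{conv_T-CIR} follows immediately by invoking Theorem \ref{conv_T}, provided $u\in C^4_{\pol,T}(\R_+)$, which holds for $f\in C^4_\pol(\R_+)$ by standard regularity results for the CIR semigroup.
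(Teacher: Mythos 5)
Your proposal is correct and follows essentially the same route as the paper: exact first-moment matching from the construction of the probabilities (in fact $f_h\equiv 0$, since Proposition \ref{propjumps} guarantees the probabilities in \eqref{proba} need no truncation for small $h$), explicit computation of the second and third local moments on the central region $\theta_*h\le y^n_k\le\theta^*/h$ where the cancellation of the symmetric $\pm\sigma\sqrt{y^n_k h}$ terms yields the $O(h^2)$ remainders, the bound $|Y^h_{n+1}-Y^h_n|\le C_*h$ on the low region, and the Markov-inequality tail estimate from Proposition \ref{moments-2} on the high region. The delicate point you flag (the third moment being genuinely $O(h^2)$ rather than $O(h^{3/2})$) is resolved exactly as you describe.
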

													
													\begin{proof}
														Straightforward computations give $	\E[Y^h_{n+1}-Y^h_n\mid Y^h_n]=\mu_Y(Y^h_n)h$, so \eqref{momento1-gen} and \eqref{stima-f} immediately follow. As for	\eqref{momento2-gen}, 
														\begin{align*}
														&\E[( Y^h_{n+1}-Y^h_n)^2\mid Y^h_n=y^n_k]  =\E[( Y^h_{n+1}-Y^h_n)^2\mid Y^h_n=y^n_k]\ind{\{y^n_k \leq \theta_*h\}}\\
														&\quad +\E[( Y^h_{n+1}-Y^h_n)^2\mid Y^h_n=y^n_k]\ind{\{\theta_*h\leq y^n_k \leq \theta^*/h\}} 
														+\E[( Y^h_{n+1}-Y^h_n)^2\mid Y^h_n=y^n_k] \ind{\{y^n_k > \theta^*/h\}} .
														\end{align*}
														We study separately the first two terms of the above r.h.s. If $y^n_k <\theta_*h$, Proposition \ref{propjumps} gives $|y^{n+1}_{k_u}-y^n_k| \leq C_*h $ and $|y^{n+1}_{k_d}-y^n_k| \leq C_*h $
														so that
														$$
														\E[( Y^h_{n+1}-Y^h_n)^2\mid Y^{h}_n=y^n_k]\ind{\{ y^n_k \leq \theta_*h\}}
														=\varphi_1(y^n_k)h^2\ind{\{y^n_k \leq \theta_*h\}},
														$$
														with $\varphi_1$ such that
														$
														|\varphi_1(y)|\leq C_*^{2}.
														$
														If instead  $\theta_*h\leq y^n_k\leq \theta^*/h$, by using \eqref{proba} we get
														\begin{align*}
														(y^{n+1}_{k_u}-y^n_k)^2p_u(n,k)+ (y^{n+1}_{k_d}-y^n_k)^2p_d(n,k)
														=\sigma^2y^n_kh+\frac{\sigma^2}2\Big( \kappa(\theta-y^n_k)-\frac{\sigma^2}8 \Big)h^2.
														\end{align*}
														So,
														$$
														\E[( Y^h_{n+1}-Y^h_n)^2\mid Y^{h}_n=y^n_k]\ind{\{\theta_*h\leq y^n_k \leq \theta^*/h\}}
														=\big(\sigma^2y^n_kh+\varphi_2(y^n_k)h^2\big)\ind{\{\theta_*h\leq y^n_k \leq \theta^*/h\}},
														$$
														with $\varphi_2$ such that
														$
														|\varphi_2(y)|\leq \frac{\sigma^2}2\Big( \kappa(\theta+y)+\frac{\sigma^2}8 \Big).
														$
														By inserting, \eqref{momento2-gen} follows with $g_h$ satisfying
														$$
														|g_h(Y^h_n)|\leq c_1(1+Y^h_n)h^2+
														\E((Y_{n+1}^h-Y_n^h)^2+\sigma h Y_n^h\mid Y^h_n)\ind{\{Y^h_n\geq \theta^*/h\}},
														$$
														$c_1$ denoting a suitable constant. By Proposition \ref{moments-2} and the Markov inequality, \eqref{stima-g}  follows.	
														
														Finally, for \eqref{momento3-gen}, we write
														\begin{align*}
														&\E[( Y^h_{n+1}-Y^h_n)^3\mid Y^h_n=y^n_k]  
														=\E[( Y^h_{n+1}-Y^h_n)^3\mid Y^h_n=y^n_k]\ind{\{y^n_k \leq \theta_*h\}}\\
														&\quad +\E[( Y^h_{n+1}-Y^h_n)^3\mid Y^h_n=y^n_k]\ind{\{\theta_*h<y^n_k < \theta^*/h\}} 
														+\E[( Y^h_{n+1}-Y^h_n)^3\mid Y^h_n=y^n_k] \ind{\{y^n_k \geq \theta^*/h\}} .
														\end{align*}
														Now, if $y^n_k \leq \theta_*h$ then $|Y^h_{n+1}-y^n_k|^3\leq C_*^3 h^3$. If instead $\theta_*h< y^n_k < \theta^*/h$, by \eqref{proba} one obtains
														\begin{align*}
														(y^{n+1}_{k_u}-y^n_k)^3p_u(n,k)+ (y^{n+1}_{k_d}-y^n_k)^3p_d(n,k)
														=\mu_Y(y^n_k)h^{2}\Big(\sigma^{2}y^n_k+\frac{3\sigma^{4}}{16} \,h\Big)+\Big(\frac{\sigma^{4}}{2}\,y^n_k+\frac{\sigma^{4}}{16}\,h\Big)h^{2}.
														\end{align*} 
														Therefore,
														$$
														|j_h(Y^h_n)|\leq c_2 h^2(1+(Y^h_n)^2)+\E(|Y_{n+1}^h-Y_n^h|^3+\sigma h Y_n^h\mid Y^h_n)\ind{\{Y^h_n\geq \theta^*/h\}},
														$$
														$c_2$ denoting a suitable constant, and again by Proposition \ref{moments-2} and the Markov inequality, \eqref{stima-j} follows.	
													\end{proof}

													We are finally ready for the
													\begin{proof}[Proof of Theorem \ref{conv_T-CIR}]
														By Theorem 4.1 in \cite{A-MC} (or Corollary \ref{corollary-cir}), one has that if $f\in C^{4}_\pol(\R_+)$  then  $u\in C^{4}_{\pol,T}(\R_+)$ . Since Assumption $\mathcal{A}_1$ and $\mathcal{A}_2$ both hold,
														the statement  follows as an application of Theorem \ref{conv_T}.
													\end{proof}

								\section{Hybrid schemes for jump-diffusions and convergence rate   } \label{sect-hybrid}
								We now introduce a $m$-dimensional jump-diffusion $(X_t)_{t\in[0,T]}$ whose dynamics is given by coefficients depending on  the process $(Y_t)_{t\in[0,T]}$ discussed in Section \ref{sect-markovapprox}. More precisely, we consider  the stochastic system
								\begin{equation}\label{generalsystem}
								\begin{cases}
								dX_t= \mu_X(Y_t)dt+\sigma_X(Y_t)\, dB_t+\gamma_X(Y_t)dH_t,  \qquad & X_0 \in \R^m,\\
								dY_t=\mu_Y(Y_t)dt+\sigma_Y(Y_t)\,dW_t, \qquad &Y_0\in\D,
								\end{cases}
								\end{equation}
								where  $B$ is a $\ell_1$-dimensional Brownian motion and  $H$ is a $\ell_2$- dimensional compound Poisson process with intensity  $\lambda$ and i.i.d. jumps  $\{J_k\}_k$, that is  
								\begin{equation}\label{H2}
								H_t=\sum_{k=1}^{K_t} J_k,
								\end{equation}
								$K$ denoting a Poisson process with intensity $\lambda$.  We assume that the Poisson process $K$, the jump amplitudes $\{J_k\}_k$ and the Brownian motions $B$ and $W$ are independent. Moreover, we ask that $J_1$ has a density $p_{J_1}$, so that the L\'evy measure associated with $H$  has a density as well:
								\begin{equation*}
								\nu(dx)=\nu(x)dx=\lambda p_{J_1}(x)dx.
								\end{equation*}
								Hereafter, we denote by  $\mathcal{L}$  the infinitesimal generator associated with the diffusion pair $(X,Y)$, i.e.
							\begin{equation}\label{general-L}
							\begin{array}{rl}
							\L g(x,y)=&
							\frac 12 \mbox{Tr}(a(y)D^2_{x,y}g(x,y))+\mu(y) \cdot \nabla_{x,y}g(x,y) \\
							&\displaystyle
							+ \int (g(x+\gamma_X(y)\zeta,y)-g(x,y))\nu(d\zeta),
							\end{array}
							\end{equation}
								where $\mu(y)=(\mu_X(y),\mu_Y(y))^\star$ and $a(y)=\sigma\sigma^\star(y)$, where 
								$$
								\sigma(y)=\begin{pmatrix}
								\sigma_X(y)&0_{m\times d}\\
								0_{d\times m}&\sigma_Y(y)
								\end{pmatrix}.
								$$ Here, $D^2_{x,y}$ and $\nabla_{x,y}$ are respectively the Hessian and the gradient operator w.r.t. the space variables $x$ and $y$.
								We    assume that the coefficients of $X$ do not depend on the time variable just to simplify the notation, but  all the proofs in this chapter are still valid in the time-depending case under  non restrictive classical assumptions.  
								
								Let $(X^{t,x,y}_s,Y^{t,x}_s)_{s\in [t,T]}$ be the solution of \eqref{generalsystem}  with starting condition $(X_t,Y_t)=(x,y)$.	Hereafter, we fix $T>0$   and $f:\R^m\times \mathcal{D}\rightarrow \R$. We are interested in computing the quantity  $u(0,X_0,Y_0)$, where, as specified from time to time, $u$ is given by
								\begin{equation}\label{european}
								u(t,x,y)=\E\Big[f(X^{t,x,y}_T,Y^{t,y}_T)\Big], \qquad(t,x,y)\in[0,T]\times \R^m\times\D,
								\end{equation}
								or
								\begin{equation}\label{american}
								u(t,x,y)=\sup_{\tau\in\mathcal{T}_{t,T}}\E\Big[f(X^{t,x,y}_\tau,Y^{t,y}_\tau)\Big], \qquad(t,x,y)\in[0,T]\times \R^m\times\D,
								\end{equation}
								where $\mathcal{T}_{t,T}$ denotes the set of all stopping times taking values on $[t,T]$.
								
								This can be, in general,  a problem of interest in a large number of applications. Of course, the immediate application in this thesis is in the financial world, where   $X$ can represent the log-price (or a transformation of it) and $Y$ can be interpreted as a random source such as  a stochastic volatility and/or a stochastic  interest rate.  In this framework,  the function defined in \eqref{european}  is the  price value at time $t$ of a European option with maturity $T$ and  (discounted) payoff $f$, while the function $u$ as defined in \eqref{american} is the   value function of the corresponding American option. Therefore, from now on we will refer to the European case when $u$ is defined as in \eqref{european} and to the American case where $u$ is given by \eqref{american}.
													
We do not enter in specific assumptions but from now on, the following requests (1), (2) and (3) will be assumed to hold:
\begin{itemize}
	\item [(1)]
	there exists a unique weak solution of \eqref{generalsystem}  and $\P((X_t,Y_t)\in\R^m\times \mathcal{D}\ \forall t)=1$;
	\item[(2)]
	$\mu=(\mu_X,\mu_Y)^\star$ and $\sigma_X$ have polynomial growth; moreover, either $\gamma_X\equiv 0$ (no jumps) or there exists $\varepsilon>0$ such that  $\inf_{y\in\mathcal{D}}|\gamma_X(y)|\geq \varepsilon$;
	\item[(3)]
	the function $u$ in \eqref{european} solves the PIDE
	\begin{equation}\label{PDEB}
	\left\{
	\begin{array}{ll}
	\partial_tu(t,x,y)+\mathcal{L}u(t,x,y)=0, \quad &(t,x,y)\in [0,T)\times\R^m\times \D, \\
	u(T,x,y)=f(x,y),&\mbox{ in } \R^m\times\D ,
	\end{array}
	\right.
	\end{equation}
	$\mathcal{L}$ being given in \eqref{general-L}.
\end{itemize} 
													
													\subsection{The hybrid procedure}\label{sect-hybrid2}
													
													\subsubsection{The European case}	\label{sect-hybrid-e}
													
													Let $u$ be given in \eqref{european}.
													We study here the  computation of $u(0,X_0,Y_0)$ by a backward hybrid algorithm which  generalizes the procedure developed in \cite{bcz,bcz-hhw,bctz} and described in Chapter 3. Roughly speaking, one uses a Markov  chain in order to approximate the  process $Y$ and a different numerical procedure to handle the jump-diffusion component $X$. Let us briefly recall the main ideas and set up the approximation of $u$.

													We start from the representation of $u(t,x,y)$ at times $nh$, $h=T/N$ and $n=0,\ldots,N$,  by the usual (backward) dynamic programming principle: for $(x,y)\in\R^m\times\D$,
													\begin{equation} \label{backward2volta}
													\begin{cases}
													u(T,x,y)= f(x,y)\quad
													\mbox{and as } n=N-1,\ldots,0,\\
													u(nh,x,y) =  
													\E\Big[u\big((n+1)h, X_{(n+1)h}^{nh,x,y}, Y_{(n+1)h}^{nh,y}\big)
													\Big].
													\end{cases}
													\end{equation}
													So, the central issue is to have a good approximation of the expectations in \eqref{backward2volta}.
													
													As a first step, let $( Y^h_{n})_{n=0,\ldots,N}$  be the Markov chain discussed in Section \ref{sect-convergence} which approximates  $Y$. Of course, we assume that $( Y^h_{n})_{n=0,\ldots,N}$ is  independent of the Brownian motion $B$ and the compound Poisson process $H$ driving $X$ in \eqref{generalsystem}.  Then, at each step  $n=0,1,\ldots,N-1$, for every $y\in \mathcal Y^h_n$ we write
													\begin{align*}
													\E\Big[u\big((n+1)h,  X_{(n+1)h}^{nh,x,y},  Y_{(n+1)h}^{nh,y}\big)\Big] \approx 	\E\Big[u\big((n+1)h,  X_{(n+1)h}^{nh,x,y},  Y^h_{n+1}\big)\big|Y^h_n=y\Big].
													\end{align*}
													Recall that $\mathcal{Y}_n^h\subseteq \D$ is the state space of $Y^h_{n}$  and  that $\mathcal{Y}^h_0=\{Y_0\} $. 
													
													As a second step, we approximate  the component  $X$ on $[nh,(n+1)h]$ by freezing the coefficients in \eqref{generalsystem} at the observed position $Y^h_n=y$, that is, for $t\in[nh,(n+1)h]$,
													$$
													X_{t}^{nh,x,y}\stackrel{\mbox{\tiny law}}{\approx} \widehat{X}^{nh,x}_t(y)=x+\mu_X(y)(t-nh)+\sigma_X(y)\, (B_t-B_{nh})+\gamma_X(y)(H_t-H_{nh}).
													$$
													Therefore,	by using that the Markov chain, $B$ and $H$ are all independent, we write
													\begin{align*}
													\E\Big[u\big((n+1)h,  X_{(n+1)h}^{nh,x,y},  Y_{(n+1)h}^{nh,y}\big)\Big] 
													&\approx 
													\E\Big[u\big((n+1)h,  \widehat{X}^{nh,x}_{(n+1)h}(y),  Y^h_{n+1}\big)\big|Y^h_n=y\Big]\\
													&=\E\big[ \phi(  Y^{h}_{n+1};x,y )\big|Y^h_n=y\big],
													\end{align*}
													where
													\begin{equation}
													\label{phi}
													\phi(\zeta;x,y)= \E\big[u((n+1)h, \widehat{X}^{nh,x}_{(n+1)h}(y),\zeta)\big].
													\end{equation}
													From the Feynman-Kac formula, one gets $\phi(\zeta;x,y)=v(nh,x;y,\zeta)$, where $(t,x)\mapsto v(t,x;y,\zeta)$ is the solution at time $nh$ of the parabolic PIDE Cauchy problem
													\begin{equation}\label{PDE-barug-h}
													\begin{array}{ll}
													\displaystyle
													\partial_t v+\L^{(y)}v=0, \qquad & \mbox{in } [nh,(n+1)h)\times \R^m,\smallskip\\
													\displaystyle
													v((n+1)h,x;y,\zeta)
													=u((n+1)h,x,\zeta), & x\in \R^m,
													\end{array}
													\end{equation}
													where  $\L^{(y)}$ is the integro-differential operator acting on the functions $g=g(x)$ given by
												\begin{equation}
												\label{genh}
												\L^{(y)}g(x)=\mu_X(y)\cdot\nabla_xg(x) +\frac 12\mbox{ Tr}(a_X(y)D^2_{x}g(x))+ \int \big(g(x+\gamma_X(y)\zeta)-g(x)\big)	\nu(\zeta)d\zeta
												\end{equation}
													Here $a_X(y)=\sigma_X(y)\sigma_X^\star(y)$, while
													$\nabla_x$ and $D^2_x$ are  the $m$ dimensional gradient vector and  the Hessian matrix with respect to  the $x$ variable  respectively. Recall that here $y$ is just a parameter and that for each fixed $y\in \D$, $\L^{(y)}$ has constant coefficients.
													
													We consider now a numerical solution of the PIDE \eqref{PDE-barug-h}. Let $\dx=(\dx_1,\dots,\dx_m)$ denote a fixed spatial step and set $\mathcal{X}$ denote a grid on $\R^m$ given by $\mathcal{X}=\{x\,:\,x=((X_0)_1+i_1\Delta x_1,\dots, (X_0)_m+i_m\Delta x_m), (i_1,\ldots,i_m)\in \Z^m\}$. For $y\in\mathcal{D}$,  let $\Pi^h_{\dx}(y)$ be a linear operator (acting on suitable functions on $\mathcal{X}$) which gives 
													the approximating solution to the PIDE \eqref{PDE-barug-h} at time $nh$.  Then we get the numerical approximation
													\begin{align*}
													\E\Big[u\big((n+1)h,  X_{(n+1)h}^{nh,x,y},  Y_{(n+1)h}^{nh,y}\big)\Big] 
													&\approx 
													\E\Big[\Pi^h_{\dx}(y)u\big((n+1)h, \cdot,  Y^h_{n+1}\big)(x)\big|Y^h_n=y\Big],\quad x\in\mathcal{X}.
													\end{align*}
													Therefore, by inserting  in \eqref{backward2volta}, the hybrid numerical procedure works as follows:
													the function $x\mapsto u(0,x,Y_0)$, $x\in\mathcal{X}$,  is approximated by $u^h_0(x,Y_0)$ backwardly defined as
													\begin{equation}\label{backward-ter0}
													\begin{cases}
													u^h_N(x,y)= f(x,y),\quad \mbox{$(x,y)\in \mathcal{X}\times\mathcal{Y}^h_N$},
													\quad \mbox{and as $n=N-1,\ldots,0$:}\\
													\displaystyle    u^h_n(x,y) =  \E[
													\Pi^h_{\dx}(y) u^h_{n+1}( \cdot,  Y^h_{n+1})(x)\mid Y^h_n=y],\quad \mbox{$(x,y)\in \mathcal{X}\times\mathcal{Y}^h_n$.}
													\end{cases}
													\end{equation}
													\subsubsection{The American case}\label{sect-hybrid-a}
													Let us now consider the function $u$ defined in \eqref{american}. Again, we want an approximation of the quantity $u(0,X_0,Y_0)$.
													In practice, at times $nh$, the function $u$ is approximated by the function $\tilde u^h_n$  defined through the backward programming dynamic principle, that is,
													\begin{equation} \label{backward2}
													\begin{cases}
													\tilde u^h_N(x,y)= f(x,y)\quad
													\mbox{and as } n=N-1,\ldots,0\\
													\tilde u^h_n(x,y) =  
													\max\Big\{f(x,y), \E\Big[\tilde u^h_{n+1}\big(X_{(n+1)h}^{nh,x,y}, Y_{(n+1)h}^{nh,y}\big)
													\Big]\Big\}.
													\end{cases}
													\end{equation}
													In financial terms,  $\tilde u^h_0$  corresponds to approximate the original continuous time American option price at $t=0$ by the price of an option  which can be exercised only at the discrete times $nh$, $n=0,\dots,N$ (Bermudean option).
													
													Now, at each step of  \eqref{backward2},  we can use the procedure described in Section \ref{sect-hybrid-e} in order to compute the conditional expectations therein. Therefore, the  hybrid numerical procedure becomes: for $n=0,1,\ldots,N$ and $(x,y)\in \mathcal{X}\times\mathcal{Y}^h_n$,  $\tilde u^h_n(x,y)$ is approximated by $ u^h_n(x,y)$  defined  as
													\begin{equation}\label{backward-ter0-bis}
													\begin{cases}
													u^h_N(x,y)= f(x,y),
													\quad \mbox{and as $n=N-1,\ldots,0$:}\\
													\displaystyle    u^h_n(x,y) =\max\Big\{ f(x,y), \E[
													\Pi^h_{\dx}(y) u^h_{n+1}( \cdot, \bar Y^{nh,y}_{(n+1)h})(x)] \Big\} .
													\end{cases}
													\end{equation}
													
													\subsubsection{The general hybrid procedure}
													As we have done in Chapter 3, it is useful to  put together  in a unique formulation the numerical procedures  described respectively  in Section \ref{sect-hybrid-e} for the European case and in Section \ref{sect-hybrid-a} for the American case.
													In both cases we  have to consider  at time $nh$ the function $\tilde u^h_n$  defined as
													\begin{equation} \label{backward3}
													\begin{cases}
													\tilde u^h_N(x,y)= f(x,y)\quad
													\mbox{and as } n=N-1,\ldots,0\\
													\tilde u^h_n(x,y) =  
													\max\Big\{g(x,y), \E\Big[\tilde u^h_{n+1}\big(X_{(n+1)h}^{nh,x,y}, Y_{(n+1)h}^{nh,y}\big)
													\Big]\Big\},
													\end{cases}
													\end{equation}
													where 
													$$
													g(x,y)=\begin{cases}
													0, \qquad\qquad\qquad&\mbox{ in the European case};\\
													f(x,y),&\mbox{ in the American case}.
													\end{cases}
													$$
													We stress that, in the European case,  the function $\tilde u^h_n$   coincides with  the function $u$ defined in \eqref{european} at time $nh$, while,  in the American case,  it is the Bermudean approximation of the (continuous monitored) American option value given in \eqref{backward2}.

													Then, for $n=0,1,\ldots,N$ and $(x,y)\in \mathcal{X}\times\mathcal{Y}^h_n$,  we approximate the function $\tilde u^h_n$ by the function  $ u^h_n$ defined as
													\begin{equation}\label{backward-ter0-tris}
													\begin{cases}
													u^h_N(x,y)= f(x,y),
													\quad \mbox{and as $n=N-1,\ldots,0$:}\\
													\displaystyle    u^h_n(x,y) =\max\left\{ g(x,y), \E\left[
													\Pi^h_{\dx}(y) u^h_{n+1}( \cdot, \bar Y^{nh,y}_{(n+1)h})(x)\right] \right\} .
													\end{cases}
													\end{equation}


													Our aim is to study the speed of convergence of the scheme \eqref{backward-ter0-tris}  that is, we give a quantitative estimate for 
													$$
													|\tilde u^h_0(x,y)-u^h_0(x,y)|,\qquad (x,y)\in \mathcal X\times \mathcal{Y}^h_0.
													$$ 
													
													As regards the American case, we recognize two types of error. The first one is the error induced by the approximation of the function $u(0,\cdot)$ in \eqref{american} with the function $\tilde u^h_0(\cdot)$ in the backward programming principle \eqref{backward2}. In the standard hypotheses on the model, that is, for sublinear and Lipschitz continuous diffusion coefficients and standard semiconvex payoff function, this error is known to be of the first order  in $h$ (we refer, for example,  to Theorem 2 in  \cite{BP}). The degenerate models such as the Heston model do not satisfy such requests, so we might just argue a first order error in time. 
													The second type of error is the one related to the approximation of $\tilde u^h_0$ with the function $u^h_0$ defined in \eqref{backward-ter0-bis}. Here,  we focus on studying the latter one.
													
													\subsection{Convergence speed of the hybrid scheme} 
													\label{sect-convergence}
													The idea is to follow the hybrid nature of the procedure by using numerical techniques, that is, an analysis of the stability and of the consistency of the method. This will be done in a sense that allows us to exploit the probabilistic properties of the Markov chain approximating the  process $Y$.
													
													We introduce the following assumption on the linear operator $\Pi^h_{\dx}(y)$ in \eqref{backward-ter0} (recall the notation $l_p(\mathcal{X})$ in Section \ref{sect-notation}).

													\medskip
													
													\noindent
													\textbf{Assumption $\mathcal{B}(p,c,\mathcal{E})$.} 
													\textit{Let $p\in [1,\infty]$, $c=c(y)\geq 0$, $y\in\D$ and $\mathcal{E}= \mathcal{E}(h,\dx)\geq 0$ such that 
														$\lim_{(h,\dx)\rightarrow 0}\mathcal{E}(h,\dx)=0.$
														We say that the 
														linear operator $\Pi^h_{\dx}(y):l_p(\mathcal{X})\to l_p(\mathcal{X})$, $y\in\D$, satisfies Assumption $\mathcal{B}(p,c,\mathcal E)$ if 
														\begin{equation}\label{stab}
														| \Pi^h_{\dx}(y)|_p\leq1+c(y)h
														\end{equation}
														and, $\tilde u^h_n$ being defined in \eqref{backward3}, for every $n=0,\dots, N-1$, one has
														\begin{equation}\label{cons}
														\E\Big[	\Pi^h_{\dx}(Y^h_n) \tilde u^{n+1}_h(\cdot,Y^h_{n+1})(x)\,\big|\, Y^h_n=y\Big]= \E[\tilde u^h_n(X^{nh,x,y},Y^{nh,y}_n)]+ \mathcal{R}_n^h(x,y),
														\end{equation}
														where the remainder $\mathcal{R}_n^h(x ,y)$, $(x,y)\in \mathcal X\times \mathcal Y^h_n$  satisfies the following property: there exist $\bar h<1$ and  $C>0$ such that for every $n\in \N$, $h<\bar h$,  $|\dx|<1$ and $n\leq N=\lfloor T/h\rfloor$ one has		
														\begin{equation}\label{ass_errore}
														\begin{split}
														\Big\|e^{\sum_{l=1}^{n}\, c(Y^h_l)h} |\mathcal{R}_n^h(\cdot,Y^h_n)|_p\Big\|_p&\leq C h\mathcal{E}(h,\dx), \qquad \mbox{if } p\in [1,\infty),\\
														\Big\|e^{\sum_{l=1}^{n}\, c(Y^h_l)h} |\mathcal{R}_n^h(\cdot,Y^h_n)|_\infty\Big\|_1& \leq C h\mathcal{E}(h,\dx), \qquad \mbox{if } p=\infty.
														\end{split}
														\end{equation}
													}
													
													Assumption $\mathcal{B}(p,c,\mathcal{E})$ is inspired by the Lax-Richtmeyer's convergence theorem \cite{Lax}. In fact, recall that
													at each time step $n$, the hybrid scheme isolates the component $y$ and applies the discrete operator $\Pi^h_{\dx}(y)$ for solving (one step in time) the PIDE 
													$$	
													\partial_t v(t,x)+\L^{(y)}  v(t,x)=0, \qquad (t,x)\in [nh,(n+1)h)\times \R^m.
													$$ 
													Here, $y$ is just a parameter (the current position of the Markov chain), so the coefficients of $\L^{(y)}$	(see \eqref{genh}) are indeed constant. That's  why the Lax-Richtmeyer technique can be adapted, as it follows in the next result.

													\begin{theorem}\label{convergencebates}
														Assume that $\Pi^h_{\dx}(y)$, $y\in\D$, satisfies Assumption $\mathcal{B}(p,c,\mathcal{E})$. Let $\tilde u^h_n$ be the function  defined  in\eqref{backward3}   and $u^h_n$ be the approximation through the  scheme \eqref{backward-ter0-tris}.
														Then, there exist $\bar h\in(0,1)$ and  $C>0$ such that for every $h<\bar h$ and $\dx<1$  one has
														\begin{equation}
														| \tilde u^h_0(\cdot,Y_0)-u^h_0(\cdot,Y_0)|_{p} \leq CT\mathcal{E}(h,\dx). 
														\end{equation}
													\end{theorem}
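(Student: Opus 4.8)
The plan is to prove the convergence estimate by a telescoping/induction argument that mimics the classical Lax--Richtmeyer scheme, but carried out in the probabilistic norm dictated by Assumption $\mathcal{B}(p,c,\mathcal{E})$. First I would set, for each $n=0,\ldots,N$, the global error on the grid,
\begin{equation*}
e^h_n(x,y)=\tilde u^h_n(x,y)-u^h_n(x,y),\qquad (x,y)\in\mathcal{X}\times\mathcal{Y}^h_n,
\end{equation*}
and the goal is to control $|e^h_0(\cdot,Y_0)|_p$. Since both $\tilde u^h_n$ and $u^h_n$ are defined backwardly through the same $\max\{g(x,y),\cdot\}$ operation, I would first exploit the elementary $1$-Lipschitz property of $a\mapsto\max\{g,a\}$, namely $|\max\{g,a\}-\max\{g,b\}|\le|a-b|$, to remove the obstacle and reduce the recursion to a comparison of the two inner expressions: the exact conditional expectation $\E[\tilde u^h_{n+1}(X^{nh,x,y}_{(n+1)h},Y^{nh,y}_{(n+1)h})]$ and the numerical one $\E[\Pi^h_{\dx}(y)u^h_{n+1}(\cdot,Y^h_{n+1})(x)\mid Y^h_n=y]$.

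The central step is then to split the one-step discrepancy into a \emph{consistency} part and a \emph{stability} part by inserting the intermediate quantity $\E[\Pi^h_{\dx}(y)\tilde u^h_{n+1}(\cdot,Y^h_{n+1})(x)\mid Y^h_n=y]$. The difference between this intermediate term and the exact expectation is, by the consistency identity \eqref{cons}, precisely the remainder $\mathcal{R}^h_n(x,y)$, which is controlled by \eqref{ass_errore}. The difference between the intermediate term and the numerical term is $\E[\Pi^h_{\dx}(y)\,e^h_{n+1}(\cdot,Y^h_{n+1})(x)\mid Y^h_n=y]$, to which I would apply the stability bound \eqref{stab}: $|\Pi^h_{\dx}(y)\varphi|_p\le(1+c(y)h)|\varphi|_p$. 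Thus I obtain, schematically,
\begin{equation*}
|e^h_n(\cdot,y)|_p\le(1+c(y)h)\,\E\big[|e^h_{n+1}(\cdot,Y^h_{n+1})|_p\,\big|\,Y^h_n=y\big]+|\mathcal{R}^h_n(\cdot,y)|_p.
\end{equation*}
The handling of the conditional expectation inside the $l_p$ norm requires care: for $p\in[1,\infty)$ I would use Jensen's inequality (or Minkowski's integral inequality) to pass the conditional expectation through the $p$-th power, while for $p=\infty$ the uniform norm passes through directly; this is exactly why Assumption $\mathcal{B}$ states \eqref{ass_errore} with the $L^p(\Omega)$ norm for $p<\infty$ and the $L^1(\Omega)$ norm for $p=\infty$.

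The weighting by the exponential factor $e^{\sum_{l=1}^n c(Y^h_l)h}$ is the device that turns the multiplicative accumulation of the stability constants into a uniformly bounded telescoping sum. Concretely, I would define the weighted error process $E^h_n=e^{\sum_{l=1}^n c(Y^h_l)h}|e^h_n(\cdot,Y^h_n)|_p$ (viewed as a random variable on $\Omega$), take the $L^p(\Omega)$ norm for $p<\infty$ (respectively $L^1(\Omega)$ for $p=\infty$), and use the tower property of conditional expectation together with the stability step to show that $\|E^h_n\|$ increases by at most $\|e^{\sum_{l=1}^n c(Y^h_l)h}|\mathcal{R}^h_n|_p\|$ at each backward step. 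Summing over $n=0,\ldots,N-1$, using $E^h_N=0$ (since $u^h_N=\tilde u^h_N=f$) and the remainder bound \eqref{ass_errore}, gives
\begin{equation*}
|e^h_0(\cdot,Y_0)|_p=\|E^h_0\|\le\sum_{n=0}^{N-1}Ch\mathcal{E}(h,\dx)\le CN h\,\mathcal{E}(h,\dx)=CT\mathcal{E}(h,\dx),
\end{equation*}
which is the claimed estimate. I expect the main obstacle to be the rigorous commutation of the conditional expectation with the $l_p(\mathcal{X})$ spatial norm and with the random exponential weight simultaneously: one must verify that the linearity of $\Pi^h_{\dx}(y)$, the independence of the chain from $(B,H)$, and the Markov property combine correctly so that the weighted stability estimate closes the induction uniformly in $N$. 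This is precisely the point where the two different formulations of \eqref{ass_errore} (for $p<\infty$ versus $p=\infty$) are needed, and where the exponential weight must exactly absorb the factor $(1+c(y)h)$ via the inequality $1+c(y)h\le e^{c(y)h}$.
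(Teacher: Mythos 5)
Your proposal is correct and follows essentially the same route as the paper's proof: the $1$-Lipschitz property of $a\mapsto\max\{g,a\}$ to strip the obstacle, the insertion of the intermediate term $\E[\Pi^h_{\dx}(y)\tilde u^h_{n+1}(\cdot,Y^h_{n+1})(x)\mid Y^h_n=y]$ to split consistency (via \eqref{cons}--\eqref{ass_errore}) from stability (via \eqref{stab}), and the exponential weight absorbing the product of factors $(1+c(Y^h_l)h)$ before summing the $N$ remainders. The only cosmetic difference is bookkeeping: the paper unrolls the recursion completely into $\sum_{n}\E[|(\prod_{l=0}^{n-1}\Pi^h_{\dx}(Y^h_l))\mathcal{R}^h_n|]$ and then estimates each term, whereas you propagate a weighted error process step by step; both require the same Minkowski/Jensen commutation of the conditional expectation with the $l_p(\mathcal{X})$ norm that you correctly identify as the delicate point.
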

													\begin{proof}
														Set $\mathrm{err}^{h}_n (\cdot,Y^h_n)=\tilde u^h_n(\cdot,Y^h_n)-u^h_n(\cdot,Y^h_n)$. By using the relation  $|\max\{ (a,b)  \}-\max\{ (a',b')  \}|\leq \max\{|a-a'|,|b-b'|\}$ we get
														\begin{align*}
														|	\mathrm{err}^{h}_n (x,Y^h_n)|
														&\leq \left|   \E		 \left[\tilde u^h_{n+1}(X^{nh,x,y}_{n+1},Y^{nh,y}_{(n+1)h})\right]\Big |_{y=Y^h_n}-  \E		 \left[\Pi^h_{\dx}(Y^h_n)u^h_{n+1}(\cdot,Y^h_{n+1})(x)\big|Y^h_n\right]   \right|
														\\& \leq \left| \E[\Pi^h_{\dx}(Y^h_n)\mathrm{err}^{h}_{n+1}(\cdot,Y^h_{n+1})(x) |Y^h_n ] \right|+|\mathcal{R}_n^h(x ,Y^h_n)|,
														\end{align*}
														in which we have used \eqref{cons}. Since $\mathrm{err}_{n}^h(x_i,Y_N^h)=0$, by iterating one gets
														$$
														|	\mathrm{err}^h_0(\cdot,Y_0)|\leq  \sum_{n=0}^{N-1}\E\left[ \left|\left(\prod_{l=0}^{n-1} \Pi^h_{\dx}(Y^h_l)\right) \mathcal{R}_n^h(\cdot,Y^h_n)\right|\right],
														$$
														%
														%
														%
														in which we use the convention $\displaystyle \prod_{l=0}^{-1}(\cdot)=\Id$. We use now \eqref{ass_errore}. For $p\neq \infty$,
														\begin{align*}
														|\mathrm{err}_h^0(\cdot,Y_0)|_{p}
														&\leq \sum_{n=0}^{N-1}\Big|\E\Big[ \Big(\prod_{l=0}^{n-1} \Pi^h_{\dx}(Y^h_l)\Big) \mathcal{R}_n^h(\cdot,Y^h_n)\Big]\Big|_{p}
														\leq \sum_{n=0}^{N-1}\E\Big[ \Big|\Big(\prod_{l=0}^{n-1} \Pi^h_{\dx}(Y^h_l)\Big) \mathcal{R}_n^h(\cdot,Y^h_n)\Big|^p_{p}\Big]^{1/p}\\
														&\leq  \sum_{n=0}^{N-1}	\left(	\E\big[e^{\sum_{l=1}^{n}pc(Y^h_l)h} |\mathcal{R}_n^h(\cdot,Y^h_n)|^p_{p}\big]\right)^{\frac 1 p} 
														\leq \sum_{n=0}^{N-1}hC\mathcal{E}(h,\dx)
														\leq TC\mathcal{E}(h,\dx).
														\end{align*}
														The case  $p=\infty$ follows the same lines.
													\end{proof}

													\begin{remark}
														In  Assumption $\mathcal{B}(p,c,\mathcal E)$ we have required that the constant $C$ and the function $\mathcal E$ in \eqref{ass_errore}  do  not depend on $h$ and $n$. A closer look at the proof of Theorem \ref{convergencebates} shows that this assumption can be relaxed. In fact,  we can replace $C$ and $\mathcal{E}$ in \eqref{ass_errore} by $C_{h,n}$ and $\mathcal E_{h,n}$  which depend on $h$ and $n$ but such that $\lim_{(h,\dx)\rightarrow (0,0)} 
														\sum_{n=0}^{N-1}hC_{h,n}\mathcal{E}_{h,n}(h,\dx)
														=0$. However, in this case we  do not get  information about the rate of convergence of the method.
													\end{remark}
													
													\subsection{An example: finite difference schemes}\label{sect-finitedifference}	
													
													We specify here some settings ensuring  that the assumptions of Theorem \ref{convergencebates} are satisfied. In particular, we choose the operator $\Pi^h_{\dx}(y)$ in \eqref{backward-ter0} by means of two different finite difference schemes: the first one is a generalization of the procedure described in Chapter 3 and allows us to study the convergence in the $l_2$-norm, while the second one works $l_\infty$.  For the sake of readability, we consider the case $m=d=\ell=\ell_1=\ell_2=1$.


The request on $\gamma$ made at the beginning of Section \ref{sect-hybrid}, that is either $\gamma_X\equiv 0$ or $\inf_{y\in\mathcal{D}}|\gamma_X(y)|\geq \varepsilon>0$ now comes on.  Set
\begin{equation}\label{nuy}
\nu_y(x)=\left\{
\begin{array}{ll}
0 &\mbox{if } \gamma_X\equiv 0,\\
\frac 1{|\gamma_X(y)|}\nu(\frac{x}{\gamma_X(y)}) & \mbox{otherwise},
\end{array}
\right.
\quad y\in\mathcal{D},
\end{equation}
$\nu$ denoting the  density of the L\'evy measure.
	\begin{proposition}\label{prop-nu}
	If $\frac{\nu'}{\nu},\frac{\nu''}{\nu}\in L^1(\R,d\nu)$, there exists $c_\nu\geq 0$ such that
	\begin{equation}\label{cnu}
	\sum_{l\in\Z}\nu_y(l\dx)\dx\leq \lambda c_\nu,\quad \forall y\in\mathcal{D}.
	\end{equation}
\end{proposition}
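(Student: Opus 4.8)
The plan is to read $\sum_{l\in\Z}\nu_y(l\dx)\dx$ as a Riemann sum of the integral $\int_\R \nu_y(x)\,dx$ and to control the discrepancy by the total variation of $\nu_y$, which the hypotheses bound uniformly in $y$. First I would dispose of the degenerate case $\gamma_X\equiv 0$: there $\nu_y\equiv 0$ by \eqref{nuy}, so the sum vanishes and \eqref{cnu} holds with any $c_\nu\geq 0$. Hence from now on I assume $\inf_{y\in\D}|\gamma_X(y)|\geq \varepsilon>0$.

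Next I would translate the standing hypotheses into integrability of the derivatives in the ordinary Lebesgue sense. Since $d\nu=\nu(x)\,dx$, the assumption $\nu'/\nu\in L^1(\R,d\nu)$ reads $M_1:=\int_\R|\nu'(x)|\,dx<\infty$ (and likewise $\nu''/\nu\in L^1(\R,d\nu)$ gives $\int_\R|\nu''|\,dx<\infty$, a fact I will not actually need for the upper bound). In particular $\nu$, and hence $\nu_y$, is absolutely continuous, so for each $l$ and each $x\in[l\dx,(l+1)\dx]$ one has $|\nu_y(l\dx)-\nu_y(x)|\leq\int_{l\dx}^{(l+1)\dx}|\nu_y'(t)|\,dt$.

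The core step is the comparison itself. Writing $\nu_y(l\dx)\dx=\int_{l\dx}^{(l+1)\dx}\nu_y(l\dx)\,dx$ and summing over $l$ gives
\begin{align*}
\sum_{l\in\Z}\nu_y(l\dx)\dx
&=\int_\R\nu_y(x)\,dx+\sum_{l\in\Z}\int_{l\dx}^{(l+1)\dx}\big(\nu_y(l\dx)-\nu_y(x)\big)\,dx\\
&\leq \int_\R\nu_y(x)\,dx+\dx\int_\R|\nu_y'(t)|\,dt.
\end{align*}
It then remains to compute the two integrals via the scaling $\nu_y(x)=|\gamma_X(y)|^{-1}\nu(x/\gamma_X(y))$. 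The change of variables $s=x/\gamma_X(y)$ gives $\int_\R\nu_y(x)\,dx=\int_\R\nu(s)\,ds=\lambda$, while from $\nu_y'(t)=|\gamma_X(y)|^{-2}\gamma_X(y)\,\nu'(t/\gamma_X(y))/|\gamma_X(y)|$ one obtains $\int_\R|\nu_y'(t)|\,dt=|\gamma_X(y)|^{-1}M_1\leq M_1/\varepsilon$, where the last inequality is precisely where the uniform lower bound $|\gamma_X(y)|\geq\varepsilon$ enters. Therefore, for $\dx\leq 1$,
$$
\sum_{l\in\Z}\nu_y(l\dx)\dx\leq \lambda+\frac{\dx\,M_1}{\varepsilon}\leq\lambda+\frac{M_1}{\varepsilon}=\lambda c_\nu,\qquad y\in\D,
$$
with $c_\nu:=1+M_1/(\lambda\varepsilon)$, which is the claim.

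The estimate is elementary, so there is no deep obstacle; the two points demanding care are the uniformity in $y$, handled by the hypothesis $\inf_y|\gamma_X(y)|\geq\varepsilon$, and the requirement that $\dx$ be bounded (the bound genuinely fails as $\dx\to\infty$, since the single term $l=0$ already behaves like $\nu_y(0)\dx$). Since the finite-difference applications only ever invoke \eqref{cnu} for $\dx<1$, restricting to $\dx\leq 1$ is harmless.
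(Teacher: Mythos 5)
Your proof is correct, but it takes a genuinely different route from the paper. The paper deduces \eqref{cnu} from part $(i)$ of Lemma \ref{lemma-poisson}, a Poisson-summation estimate giving $\big|\sum_{l}\nu_y(l\dx)\dx-\int_\R\nu_y(x)dx\big|\leq \frac{\dx^2}{12}|\nu_y''|_{L^1(\R,dx)}$, and then uses the scaling $|\nu_y''|_{L^1(\R,dx)}=|\gamma_X(y)|^{-2}|\nu''|_{L^1(\R,dx)}\leq \varepsilon^{-2}|\nu''|_{L^1(\R,dx)}$; this is why the hypothesis involves $\nu''/\nu$. You instead use a first-order Riemann-sum comparison controlled by the total variation of $\nu_y$, which yields the weaker (but here entirely sufficient) error bound $\dx\,|\nu_y'|_{L^1(\R,dx)}\leq \dx\, M_1/\varepsilon$ and only consumes the hypothesis $\nu'/\nu\in L^1(\R,d\nu)$. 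What the paper's route buys is second-order accuracy in $\dx$ and, more to the point, reuse: part $(ii)$ of the same lemma is needed later for the $l_2$ estimates, so the proposition comes for free once that machinery is in place. What your route buys is elementarity and a slightly weaker hypothesis. Two further points in your favour: you dispose explicitly of the case $\gamma_X\equiv 0$ (where $\nu_y\equiv 0$ by \eqref{nuy}), which the paper passes over, and you make explicit that a uniform constant $c_\nu$ requires restricting $\dx$ (say $\dx\leq 1$) — a restriction the paper's own bound $\lambda+\frac{\dx^2}{12\varepsilon^2}|\nu''|_{L^1(\R,dx)}$ also needs but leaves implicit, and which is harmless since all applications take $\dx<1$.
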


\begin{proof}
	The proof follows from the technical \ref{lemma-poisson} below: if $\gamma_X$ is non null, $(i)$ applied to $g(x)=\nu_y(x)$ gives
	$
	\sum_{l\in \Z}\nu_y(l\dx)\dx\leq \int_\R \nu(x)dx+\frac{|\dx|^2}{12|\gamma_X(y)|^2}\int_\R |\nu''(x)|dx.
	$
	Now we use the ``uniformity'' condition $\inf_{y\in\mathcal{D}}|\gamma_X(y)|\geq \varepsilon$. 
\end{proof}

\begin{lemma}\label{lemma-poisson} Let $g\in C^2(\R)$.
	
	\noindent
	$(i)$ 
	If $g,g',g''\in L^1(\R,dx)$ then 
	\begin{equation}\label{poisson1}
	\Big|\sum_{l\in \Z}g(l\Delta x )\Delta x -\int_\R g(x)dx\Big|\leq \frac {\Delta x ^2}{12} \,|g''|_{L^1(\R,dx)}.
	\end{equation}
	
	\noindent
	$(ii)$ 
	If $g,g', g''\in L^2(\R,dx)$ then
	\begin{equation}\label{poisson2}
	|g|_2^2
	\leq |g|_{L^2(\R,dx)}^2+
	\frac {\Delta x ^2}{6} \,\big(|g'|_{L^2(\R,dx)}^2+|g|_{L^2(\R,dx)}\times|g''|_{L^2(\R,dx)}\big).
	\end{equation}
\end{lemma}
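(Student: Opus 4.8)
The final statement is the technical Poisson-summation lemma (Lemma \ref{lemma-poisson}), relating a Riemann-type sum over the grid $\Delta x\Z$ to the continuous integral, with an explicit error controlled by the step $\Delta x$ and the $L^1$ (resp. $L^2$) norms of the derivatives of $g$. The natural tool here is the \emph{Poisson summation formula}, which states that $\sum_{l\in\Z}g(l\Delta x)\Delta x=\sum_{k\in\Z}\widehat g\big(\tfrac{2\pi k}{\Delta x}\big)$, where $\widehat g$ is a suitably normalized Fourier transform; isolating the $k=0$ term gives exactly $\int_\R g(x)dx$, and everything reduces to bounding the tail $\sum_{k\neq 0}\widehat g(2\pi k/\Delta x)$. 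The plan is to control this tail by exploiting decay of $\widehat g$ obtained from the integrability of the derivatives $g',g''$, since each derivative contributes a factor of the frequency variable under Fourier transform.

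\textbf{Part $(i)$.} First I would write the tail using the decay estimate for $\widehat g$ coming from $g''\in L^1$. Integrating by parts twice, $\widehat g(\xi)=\frac{1}{(\mathbf{i}\xi)^2}\widehat{g''}(\xi)$ in the relevant normalization, so that $|\widehat g(2\pi k/\Delta x)|\leq \frac{\Delta x^2}{4\pi^2k^2}\,|g''|_{L^1(\R,dx)}$ by the trivial bound $|\widehat{g''}|_\infty\leq |g''|_{L^1(\R,dx)}$. Summing over $k\neq 0$ and using $\sum_{k\geq 1}k^{-2}=\pi^2/6$, I would obtain
\begin{equation*}
\Big|\sum_{l\in\Z}g(l\Delta x)\Delta x-\int_\R g(x)dx\Big|
\leq 2\sum_{k\geq 1}\frac{\Delta x^2}{4\pi^2k^2}|g''|_{L^1(\R,dx)}
=\frac{\Delta x^2}{12}|g''|_{L^1(\R,dx)},
\end{equation*}
which is precisely \eqref{poisson1}. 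The constant $1/12$ falls out of $2\cdot\frac{1}{4\pi^2}\cdot\frac{\pi^2}{6}$, so the bookkeeping matches the stated sharp constant.

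\textbf{Part $(ii)$.} Here the quantity is $|g|_2^2=\sum_{l\in\Z}g(l\Delta x)^2\Delta x$, i.e.\ the discrete $l_2$-sum of $g^2$. The idea is to apply a Poisson-type argument to the function $G=g^2$, or more efficiently to work directly with $g$ on the Fourier side via Parseval. I would apply part $(i)$-style reasoning to $g^2$, whose derivatives are $(g^2)'=2gg'$ and $(g^2)''=2(g')^2+2gg''$; then $|(g^2)''|_{L^1(\R,dx)}\leq 2|g'|_{L^2(\R,dx)}^2+2|g|_{L^2(\R,dx)}|g''|_{L^2(\R,dx)}$ by Cauchy--Schwarz on the cross term. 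Feeding this into \eqref{poisson1} applied to $G=g^2$ yields
\begin{equation*}
\Big|\,|g|_2^2-|g|_{L^2(\R,dx)}^2\Big|
\leq \frac{\Delta x^2}{12}\cdot 2\big(|g'|_{L^2(\R,dx)}^2+|g|_{L^2(\R,dx)}|g''|_{L^2(\R,dx)}\big),
\end{equation*}
which is exactly \eqref{poisson2} with the factor $\frac{\Delta x^2}{6}$. Dropping the absolute value gives the one-sided inequality as stated.

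\textbf{Main obstacle.} The delicate point is the rigorous justification of the Poisson summation formula under the stated hypotheses: the classical statement requires decay/regularity ensuring both sides converge absolutely and agree pointwise. With only $g,g',g''\in L^1$ (part $(i)$) the decay $|\widehat g(\xi)|\lesssim \xi^{-2}$ guarantees absolute convergence of the frequency side, but one must check that $g$ is continuous and that the periodization $\sum_l g(x+l\Delta x)$ is well-defined and has an absolutely convergent Fourier series — this is where I would need to argue carefully (e.g.\ via the standard result that $g,g''\in L^1$ with $g$ continuous suffices, since then $g$ is automatically bounded and the periodization lies in a Wiener-type algebra). For part $(ii)$ the subtlety shifts to ensuring $g^2$ inherits enough regularity: $g^2,(g^2)',(g^2)''\in L^1$ must be verified from $g,g',g''\in L^2$, which again follows by Cauchy--Schwarz but should be stated explicitly. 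Apart from this analytic care, the computation is routine and the constants are forced by the value $\sum_{k\geq1}k^{-2}=\pi^2/6$.
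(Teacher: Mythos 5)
Your proof is correct and follows essentially the same route as the paper: part $(i)$ is the Poisson summation formula applied to $\varphi(x)=g(x\Delta x)$ with two integrations by parts and $\sum_{k\geq 1}k^{-2}=\pi^2/6$, and part $(ii)$ is obtained by applying $(i)$ to $g^2$ (shifted by $X_0$) together with Cauchy--Schwarz on $(g^2)''=2(g')^2+2gg''$. The only difference is that the point you flag as the main obstacle --- justifying Poisson summation under the hypotheses $g,g',g''\in L^1$ --- is handled in the paper by a self-contained Euler--Maclaurin argument (Section \ref{app-poisson}) using the Fourier series of $x-\lfloor x\rfloor-\tfrac12$, rather than by appeal to a Wiener-algebra version of the classical theorem.
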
	

\begin{proof}
	We first recall the Poisson summation formula. It is worldwide famous but is usually written on the Schwartz space, we use here the following version (Section \ref{app-poisson} in the appendix contains the detailed proof):  if $\varphi\in C^2(\R)$ with $\varphi,\varphi',\varphi''\in L^1(\R,dx)$ then
	\begin{equation}\label{poisson-formula}
	\sum_{n\in \Z}\varphi(n)=\int_{\R} \varphi(x)dx+\sum_{n\in\Z,n\neq 0}\int_{\R}\varphi(x)e^{-2\pi \ii n x}dx.
	\end{equation}
	$(i)$ 
	We apply \eqref{poisson-formula} to $\varphi(x)=g(x\Delta x )$. So,
	$$
	\begin{array}{l}
	\sum_{n\in \Z}g(n\Delta x )\Delta x -\int_{\R} g(x)dx=
	\sum_{n\in\Z,n\neq 0} 
	\int_{\R}g(x)e^{-2\pi \ii n x/\Delta x  }dx\\
	\ \ =\sum_{n\in\Z,n\neq 0} 
	\frac{\Delta x ^2}{(2\pi \ii n)^2}\int_{\R}g''(x)e^{-2\pi \ii n x/\Delta x  } dx,
	\end{array}
	$$
	the latter inequality coming from the integration by parts formula. The statement holds by recalling that $\sum_{n\geq 1}\frac{1}{n^2}=\frac{\pi^2}{6}$.
	
	\smallskip
	
	\noindent
	$(ii)$
	By applying \eqref{poisson1} to the function $x\mapsto g^2(X_0+x)$, \eqref{poisson2} immediately follows.
\end{proof}

Statement $(ii)$ in \ref{lemma-poisson}  will be used to handle the error in $l_2$-norm coming from suitable Taylor's expansions and from the quadrature approximation.  

	\subsubsection{Convergence in $l_2$-norm}\label{sect-l2}
													We study here a  hybrid procedure which generalizes the one introduced in \cite{bctz} and described in Chapter \ref{chapter-art3} for the Bates model.
													For $y\in\D$, $\Pi^h_{\dx}(y)$ gives the numerical solution on $\mathcal{X}=\{x_i=X_0+i\Delta x\}_{i\in \Z}$ a time $nh$ to the PIDE \eqref{PDE-barug-h}, the operator $\L^{(y)}$ therein being given in \eqref{genh}. It is clear that the solution $v$ of \eqref{PDE-barug-h} depends on $y$ and $\zeta$ as well, but these are just parameters (and not variables of the PIDE), so for simplicity we drop here such dependence. 
													We split the operator $	\L^{(y)}v(t,x)=\L_{\mbox{{\tiny diff}}}^{(y)}v(t,x)+\L_{\mbox{{\tiny int}}}^{(y)}v(t,x)$ in its differential and integral part:
													\begin{equation}
													\label{L_d}
													\L_{\mbox{{\tiny diff}}}^{(y)}v(t,x)=\mu_X(y)\partial_x v(t,x) +\frac 12\sigma_X^2(y)\partial^2_x v(t,x)
													\end{equation}
													and by using the change of variable ($\nu_y$ being defined in \eqref{nuy}),
													\begin{equation}\label{L_i}
													\L_{\mbox{{\tiny int}}}^{(y)}v(t,x)
													=\int \big(v(t,x+\gamma_X(y)z)-v(t,x)\big)	\nu(z)dz\\
													=\int \big(v(t,x+\zeta)-v(t,x)\big)	\nu_y(\zeta)d\zeta.
													\end{equation} 
													We use the central finite difference scheme to solve $\L^{(y)}_{\mbox{{\tiny diff}}}v$ and the trapezoidal rule in order to approximate the integral term $\L^{(y)}_{\mbox{{\tiny int}}}v$.
													Applying an implicit-explicit method in time, we obtain an approximating solution $v^{n}=(v^n_j)_{j\in\Z}$ to the PIDE \eqref{PDE-barug-h} given by the solution of the linear equation
													\begin{equation}\label{equaz}
													A^h_{\dx}(y)v^n=B^h_{\dx}(y)v^{n+1}
													\end{equation}
													(recall that $v^{n+1}$ is known).	Here $A^h_{\dx}(y)$ is the linear operator given by 
													\begin{equation}\label{A}
													(A^h_{\dx})_{ij}(y)=\begin{cases}
													\alpha^h_{\dx}(y)-\beta^h_{\dx}(y),\qquad &\mbox{ if }i=j+1,\\
													1+2\beta^h_{\dx}(y),\qquad &\mbox{ if }i=j,\\
													-\alpha^h_{\dx}(y)-\beta^h_{\dx}(y),\qquad &\mbox{ if }i=j-1,\\
													0, &\mbox{ if }|i-j|>1,
													\end{cases}
													\end{equation} 
													with
													\begin{equation}\label{alpha-beta}
													\alpha^h_{\dx}(y)=\frac{h}{2\dx} \mu_X(y), \qquad \beta^h_{\dx}(y)=\frac{h}{2\dx^2}\sigma_X^2(y),
													\end{equation}
													and $B^h_{\dx}(y)$ is the linear operator defined as
													\begin{equation}\label{B}
													(B^h_{\dx})_{ij}(y)=\begin{cases}
													h\nu_y((j-i)\dx)\dx & \mbox{ if }j\neq i,\\
													1+h\Big(\nu_y(0)\dx-\sum_{l\in\Z} \nu_y(l\dx)\dx\Big)	&\mbox{ if }i=j .
													\end{cases}						\end{equation}
Then we have
\begin{lemma}\label{norma}
	For every  $y\in \D$, the operator $A^h_{\dx}(y):l_2(\mathcal{X})\rightarrow l_2(\mathcal{X})$ is  invertible and $\sup_{y\in\mathcal{D}}| (A^h_{\dx})^{-1}(y)|_2\leq1$. And if $\frac{\nu'}{\nu},\frac{\nu''}{\nu}\in L^1(\R,d\nu)$ then $\sup_{y\in\mathcal{D}}|B^h_{\dx}(y)|_2$ $\leq 1+2\lambda c_\nu h$, $c_\nu$ being defined in \ref{cnu}.
\end{lemma}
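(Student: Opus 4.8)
The plan is to exploit that both $A^h_{\dx}(y)$ and $B^h_{\dx}(y)$ are constant--coefficient (Toeplitz, i.e.\ convolution) operators on $l_2(\mathcal{X})$, and to diagonalize them by the discrete Fourier transform, exactly as in the von Neumann analysis of the proof of Proposition \ref{prop-stability} and Remark \ref{antitrasf}. I would start by recalling the transform $\mathfrak{F}\varphi(\theta)=\frac{\dx}{\sqrt{2\pi}}\sum_{s\in\Z}\varphi_s e^{-\ii s\dx\theta}$, which is a unitary map from $l_2(\mathcal{X})$ onto $L^2([0,2\pi/\dx),\mathrm{Leb})$ (Parseval), and which turns any bounded convolution operator into multiplication by its symbol; consequently, for such an operator the $l_2$--operator norm equals the $L^\infty$--norm of its symbol, and invertibility is equivalent to the symbol being bounded away from $0$.

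For $A^h_{\dx}(y)$ I would read off from \eqref{A} that $(A^h_{\dx}(y)v)_i=(\alpha^h_{\dx}(y)-\beta^h_{\dx}(y))v_{i-1}+(1+2\beta^h_{\dx}(y))v_i-(\alpha^h_{\dx}(y)+\beta^h_{\dx}(y))v_{i+1}$, so that $\mathfrak{F}(A^h_{\dx}(y)v)=a_y\,\mathfrak{F}v$ with symbol $a_y(\theta)=(\alpha^h_{\dx}(y)-\beta^h_{\dx}(y))e^{-\ii\dx\theta}+1+2\beta^h_{\dx}(y)-(\alpha^h_{\dx}(y)+\beta^h_{\dx}(y))e^{\ii\dx\theta}$. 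Since $\alpha^h_{\dx}(y),\beta^h_{\dx}(y)$ are real and $\beta^h_{\dx}(y)=\frac{h}{2\dx^2}\sigma_X^2(y)\geq 0$, taking real parts yields $|a_y(\theta)|\geq\mathfrak{Re}\,a_y(\theta)=1+2\beta^h_{\dx}(y)(1-\cos(\dx\theta))\geq 1$ for every $\theta$, and this lower bound is uniform in $y$. From the nonvanishing of $a_y$ I would conclude that $A^h_{\dx}(y)$ is invertible, with inverse the convolution operator of symbol $1/a_y$, and that $|(A^h_{\dx})^{-1}(y)|_2=\sup_\theta|1/a_y(\theta)|\leq 1$ uniformly in $y$ (this is the counterpart, for the present scheme, of the existence statement in Remark \ref{antitrasf}).

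For $B^h_{\dx}(y)$ I would rewrite \eqref{B} as $(B^h_{\dx}(y)v)_i=v_i+h\dx\sum_{l\in\Z}\nu_y(l\dx)(v_{i+l}-v_i)$, whose symbol is $b_y(\theta)=1+h\dx\sum_{l\in\Z}\nu_y(l\dx)(e^{\ii l\dx\theta}-1)$. This is the step where the hypothesis $\frac{\nu'}{\nu},\frac{\nu''}{\nu}\in L^1(\R,d\nu)$ enters through Proposition \ref{prop-nu}: it gives $\sum_l\nu_y(l\dx)\dx\leq\lambda c_\nu$, which guarantees that the series defining $b_y$ converges and that $B^h_{\dx}(y)$ is a bounded convolution operator on $l_2(\mathcal{X})$. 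Then, bounding $|e^{\ii l\dx\theta}-1|\leq 2$, I would get $|b_y(\theta)|\leq 1+2h\dx\sum_l\nu_y(l\dx)\leq 1+2\lambda c_\nu h$ for all $\theta$ and all $y$, and Parseval would give $\sup_y|B^h_{\dx}(y)|_2=\sup_y\sup_\theta|b_y(\theta)|\leq 1+2\lambda c_\nu h$.

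The delicate part is not the arithmetic but the Fourier--analytic set-up that underpins everything: one must justify that $\mathfrak{F}$ is a unitary equivalence between $l_2(\mathcal{X})$ and $L^2$ over one period of the dual torus, that the band operator $A^h_{\dx}(y)$ and the full convolution operator $B^h_{\dx}(y)$ are genuinely bounded on $l_2(\mathcal{X})$ (for $B^h_{\dx}(y)$ this rests precisely on the summability $\sum_l\nu_y(l\dx)<\infty$ from Proposition \ref{prop-nu}), and that under this identification the operator norm equals the supremum of the modulus of the symbol. Once this correspondence is secured, both estimates collapse to the elementary pointwise bounds on $a_y$ and $b_y$ above, and the uniformity in $y$ is automatic, coming from $\beta^h_{\dx}(y)\geq 0$ in the first case and from the $y$--independent constant $c_\nu$ of Proposition \ref{prop-nu} (whose proof used $\inf_{y\in\mathcal{D}}|\gamma_X(y)|\geq\varepsilon$) in the second.
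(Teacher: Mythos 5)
Your proposal is correct and follows essentially the same route as the paper's own proof: the paper also diagonalizes $A^h_{\dx}(y)$ and $B^h_{\dx}(y)$ via the discrete Fourier transform, bounds the symbol of $A$ below by $1+2\beta^h_{\dx}(y)(1-\cos(\theta\dx))\geq 1$ to get invertibility and $|(A^h_{\dx})^{-1}(y)|_2\leq 1$, and bounds the symbol of $B$ above using $|e^{\ii l\theta}-1|\leq 2$ together with \eqref{cnu}, concluding by Parseval. The only cosmetic difference is that the paper argues directly with $\mathfrak{F}(v)=\mathfrak{F}(w)/\psi$ rather than invoking the general ``operator norm equals sup of the symbol'' dictionary, but the content is identical.
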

\begin{proof}
	Let $ \mathcal{F}\,:\,  l_2(\mathcal{X})\to L^2([0,2\pi), dx)$ denote the Fourier transform: 	$$ \mathcal{F}(\varphi) (\theta) = \frac{\dx}{\sqrt{2\pi}}\sum_{
		j\in \Z} \varphi_j e^{-\ii j\Delta x\theta}, \qquad \qquad \theta\in [0,2\pi), \quad\varphi\in l_2(\mathcal{X}).$$ 
	
	Fix $y\in \D $ and $w \in l_2(\mathcal{X})$. $v\in  l_2(\mathcal{X})$ satisfies  $A^h_{\dx}(y)v=w$ iff 
	$ \mathcal{F}(A^h_{\dx}(y)v)= \mathcal{F}(w)$. Straightforward computations give (see e.g. the proof of Theorem 5.1 in \cite{bctz})
	$ \mathcal{F}(A^h_{\dx}(y)v)=\psi \times   \mathcal{F}(v)$, with $\psi(\theta)=(\alpha^h_{\dx}(y)-\beta^h_{\dx}(y))e^{-\ii\theta\dx}+1+2\beta^h_{\dx}(y)-(\alpha^h_{\dx}(y)+\beta^h_{\dx}(y))e^{\ii\theta\dx}$. 
	It can be easily seen that $|\psi(\theta)|\geq 1+2\beta^h_{\dx}(y)(1-\cos(\theta\dx))$ $
	\geq 1$. Hence $ \mathcal{F}(v) = \mathcal{F} (w)/\psi\in L^2([0,2\pi),dx)$ and its inverse Fourier transform uniquely defines the solution $v\in l_2(\mathcal{X})$ to $A^h_{\Delta x}(y)v=w$.
	Thus $A^h_{\dx}$ is invertible. Moreover, we obtain 
	$	|  \mathcal{F}(v)|_{L^2([0,2\pi),dx)}$ $\leq | \mathcal{F} (w)|_{L^2([0,2\pi),dx)}$.
	By the Parseval identity 
	we get $| (A^h_{\dx})^{-1}(y)w|_2 \leq 	| w|_2$, so $| (A^h_{\dx})^{-1}(y)|_2\leq 1$.
	Finally, for $w\in l_2(\mathcal{X})$ straightforward computations give
	$$
	\mathcal{F}(B^h_{\dx}(y)w)(\theta)=\Big( 1+ h\dx\sum_l  \nu_y(l\dx) (e^{\ii l\theta} -  1)       \Big) \mathcal{F} (w) (\theta).
	$$
	Then, $| \mathcal{F}(B^h_{\dx}(y)w) |_{L^2([0,2\pi),dx)}\leq ( 1+ 2\lambda c_\nu h)| \mathcal{F}(w)|_{L^2([0,2\pi),dx)}$ because  \eqref{cnu} holds. By the Parseval relation, $|B^h_{\dx}(y)w|_{2}\leq ( 1+ 2\lambda c_\nu h)|w|_{2}$, which concludes the proof.
\end{proof}

													In the following we will use functions $v\in C^{p,q}_{\pol,[nh,(n+1)h]}(\R,\D)$ a.e. uniformly in $n$ and $h$. This means that $v\in C^{\lfloor q/2 \rfloor, q}([a,b),\R\times \mathcal{D} )$ a.e. and there exist $C,c>0$ independent of $n$ and $h$  such that
													$$
													\sup_{t\in [nh,(n+1)h)}|\partial^{k}_t\partial^{l'}_x\partial^{l}_yv(t,\cdot, y)|_{L^p(\R^m,dx)}\leq C(1+|y|^c), \quad  2k+|l'|+|l|\leq q.
													$$
													We can now state the convergence result.

													\begin{theorem}
														\label{conv-H}
														Let $\tilde u^h_n$ be defined in \eqref{backward3} and $u^h_n$ be given by \eqref{backward-ter0-tris} with the choice
														$$
														\Pi^h_{\dx}(y)=(A^h_{\dx})^{-1}B^h_{\dx}(y),
														$$
														$A^h_{\dx}(y)$ and $B^h_{\dx}(y)$ being given in \eqref{A} and \eqref{B} respectively.  Moreover, for  $n=0,\dots, N,$ consider the function 
														\begin{equation}\label{v^h_n}
														v^h_n(t,x,y)=\E\left[  \tilde u^h_{n+1}(X^{t,x,y}_{(n+1)h},Y^{t,y}_{(n+1)h})  \right], \qquad \qquad t\in[nh,(n+1)h].
														\end{equation}
														Assume that
														\begin{itemize}
															\item $\frac{\nu'}\nu,\frac{\nu''}\nu\in L^2(\R,d\nu)$;
															\item the Markov chain $(Y^h_n)_{n=0,\dots, N}$ satisfies assumptions $\mathcal{A}_1$ and $ \mathcal{A}_2$;
															\item $v^h_n\in  C^{2,6}_{\pol, [nh,(n+1)h]}( \R, \D)$ a.e. and uniformly in $n$ and $h$.
														\end{itemize}
														Then, there exist $\bar h,C>0$ such that for every $h<\bar h$ and $\dx<1$ one has
														\begin{equation}\label{conv-1}
														| \tilde u^h_0(\cdot,Y_0)-u^h_{0}(\cdot,Y_0)|_{2} \leq CT(h+\dx^2).
														\end{equation}
														
													\end{theorem}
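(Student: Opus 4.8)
The plan is to apply the abstract convergence result of Theorem \ref{convergencebates} to the specific finite-difference operator $\Pi^h_{\dx}(y)=(A^h_{\dx})^{-1}B^h_{\dx}(y)$, so the whole task reduces to verifying that this operator satisfies Assumption $\mathcal{B}(2,c,\mathcal{E})$ with $c=c(y)$ of polynomial growth and $\mathcal{E}(h,\dx)=h+\dx^2$. Once this is done, Theorem \ref{convergencebates} immediately yields $|\tilde u^h_0(\cdot,Y_0)-u^h_0(\cdot,Y_0)|_2\leq CT(h+\dx^2)$, which is exactly \eqref{conv-1}. So the proof splits into a \emph{stability} part (the bound \eqref{stab}) and a \emph{consistency} part (the local error estimate \eqref{cons}--\eqref{ass_errore}).

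For stability, I would invoke Lemma \ref{norma}: since $\sup_y|(A^h_{\dx})^{-1}(y)|_2\leq 1$ and $\sup_y|B^h_{\dx}(y)|_2\leq 1+2\lambda c_\nu h$ (using the hypothesis $\frac{\nu'}\nu,\frac{\nu''}\nu\in L^2(\R,d\nu)$, which implies the $L^1$ condition needed for \eqref{cnu}), submultiplicativity gives $|\Pi^h_{\dx}(y)|_2\leq 1+2\lambda c_\nu h$. Thus \eqref{stab} holds with the \emph{constant} $c(y)\equiv 2\lambda c_\nu$, which trivially has polynomial growth in $y$.

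The consistency part is the main obstacle. Here I would fix $n$ and $y=Y^h_n$, and compare the one-step output $\Pi^h_{\dx}(y)\tilde u^h_{n+1}(\cdot,Y^h_{n+1})$ with the exact conditional expectation. The strategy follows the freezing argument behind the scheme: the function $v^h_n(t,x,y)$ in \eqref{v^h_n} solves, for each frozen $y$, the local PIDE \eqref{PDE-barug-h} with terminal datum $\tilde u^h_{n+1}(\cdot,Y^h_{n+1})$, so I must show that applying $(A^h_{\dx})^{-1}B^h_{\dx}(y)$ to the terminal datum reproduces $v^h_n(nh,\cdot,y)$ up to an error of order $h(h+\dx^2)$ in $l_2$. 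Writing $A^h_{\dx}(y)v^h_n(nh,\cdot)-B^h_{\dx}(y)v^h_n((n+1)h,\cdot)$ and expanding by Taylor's formula in time (order $1$) and in space (centered differences, order $2$ after cancellation of odd terms) plus the trapezoidal quadrature error for the jump integral, one identifies a local truncation term that is $O(h^2+h\dx^2)$ pointwise. The delicate point is that these errors must be controlled in $l_2(\mathcal{X})$ \emph{uniformly in} $n$ and $h$, and here the regularity hypothesis $v^h_n\in C^{2,6}_{\pol,[nh,(n+1)h]}(\R,\D)$ together with statement $(ii)$ of Lemma \ref{lemma-poisson} is essential: it converts the pointwise Taylor and quadrature remainders, which are of the form $|\partial^k_t\partial^{l'}_x v^h_n|_{L^p(\R,dx)}\leq C(1+|y|^c)$, into genuine $l_2$ bounds on the grid, with the extra $\dx$-dependent terms absorbed into the $\dx^2$ order. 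Since $c$ is constant and the polynomial-growth constants in the derivative estimates are uniform in $n,h$, the remainder $\mathcal{R}^h_n(\cdot,Y^h_n)$ inherits $\||\mathcal{R}^h_n(\cdot,Y^h_n)|_2\|_2\leq Ch(h+\dx^2)$.

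Finally, to meet the precise form of \eqref{ass_errore} I would multiply by the weight $e^{\sum_{l=1}^n c(Y^h_l)h}=e^{2\lambda c_\nu nh}\leq e^{2\lambda c_\nu T}$, which is bounded uniformly because $c$ is constant; this leaves the order unchanged. The polynomial factor $(1+|Y^h_n|^c)$ appearing in the derivative bounds is handled by Assumption $\mathcal{A}_2$ (in particular \eqref{stima_momento-gen}), which guarantees $\sup_{h,n}\|(1+|Y^h_n|^c)\|_p<\infty$ for every $p$, so that $\||\mathcal{R}^h_n(\cdot,Y^h_n)|_2\|_2\leq Ch(h+\dx^2)=Ch\,\mathcal{E}(h,\dx)$ with $\mathcal{E}(h,\dx)=h+\dx^2$. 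With both hypotheses of Assumption $\mathcal{B}(2,c,\mathcal{E})$ verified, Theorem \ref{convergencebates} delivers \eqref{conv-1}, completing the proof.
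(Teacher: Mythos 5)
Your overall architecture matches the paper's proof: reduce everything to verifying Assumption $\mathcal{B}(2,2\lambda c_\nu,h+\dx^2)$ and then invoke Theorem \ref{convergencebates}; the stability half, via Lemma \ref{norma} and the constant choice $c(y)\equiv 2\lambda c_\nu$, is exactly right. The problem is in the consistency half. You assert that $v^h_n$ ``solves, for each frozen $y$, the local PIDE \eqref{PDE-barug-h}''. It does not: by Feynman--Kac applied to \eqref{v^h_n}, $v^h_n$ solves the \emph{full} PIDE $\partial_t v^h_n+\L v^h_n=0$ on $[nh,(n+1)h)$, where $\L$ is the two-dimensional generator \eqref{general-L} containing the $\partial_y$ and $\partial^2_y$ terms. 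The frozen PIDE \eqref{PDE-barug-h} is solved by a different function (the expectation along the frozen dynamics $\widehat X$), and the gap between the frozen and the true one-step evolution is itself $O(h)$ per step unless it is cancelled. Consequently, the truncation analysis you describe --- Taylor in $t$, centered differences in $x$, trapezoidal error for the jump integral --- only compares $\Pi^h_{\dx}(y)$ with the frozen evolution; it says nothing about the discrepancy between that and the true conditional expectation $\E\big[\tilde u^h_{n+1}(X^{nh,x,y}_{(n+1)h},Y^{nh,y}_{(n+1)h})\big]$, which is the quantity appearing in \eqref{cons}.

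The missing step, and the heart of the paper's Steps 1--3, is the Taylor expansion of $v^h_n((n+1)h,x_i,Y^h_{n+1})$ in the $y$-variable around $Y^h_n$ (up to order $3$) \emph{inside the conditional expectation} $\E[\,\cdot\mid Y^h_n]$. Taking conditional expectations converts $(Y^h_{n+1}-Y^h_n)$, $(Y^h_{n+1}-Y^h_n)^2$, $(Y^h_{n+1}-Y^h_n)^3$ into $\mu_Y h+f_h$, $a_Y h+g_h$, $j_h$ via \eqref{momento1-gen}--\eqref{momento3-gen}; these reconstruct the $y$-part of $\L$ up to $O(h^2)$ remainders controlled by \eqref{stima-f}--\eqref{stima-j}, and only after this does the identity $\partial_t v^h_n+\L v^h_n=0$ kill the leading term, leaving a genuine $O(h(h+\dx^2))$ remainder. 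This is precisely where Assumption $\mathcal{A}_1$ enters (your sketch lists it as a hypothesis but never uses it) and where the $O(h)$ part of $\mathcal{E}$ originates. Your use of Lemma \ref{lemma-poisson}$(ii)$ to pass from $L^2(\R,dx)$ bounds to $l_2(\mathcal{X})$ bounds, and of Assumption $\mathcal{A}_2$ to integrate the polynomial weights $(1+|Y^h_n|^a)$ against the increments, is correct and is exactly how the paper closes Step 4 --- but without the $y$-expansion and the moment matching, the estimate \eqref{ass_errore} is not established and the proof does not go through as written.
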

													We stress that, from \eqref{conv-1}, the rate of convergence is of  the second order in space, because of the choice of a second order finite difference scheme, and of first order in time, as it is natural also for the presence of the approximating Markov chain $Y^h$ (see Theorem \ref{conv_T}).

	\begin{proof}
		The result follows from   Theorem \ref{convergencebates} once we prove that Assumption $\mathcal{K}(2,2\lambda c_\nu,h+\dx^2)$ holds.
			First, Lemma \ref{norma} gives $|\Pi^h_{\dx}(y)|_2\leq |(A^h_{\dx})^{-1}(y)|_2|B^h_{\dx}(y)|_2\leq 1+2\lambda c_\nu h$, so \eqref{stab} holds with $c(y)=2\lambda c_\nu$. We prove now \eqref{ass_errore} with $p=2$ and $\mathcal{E}(h,\dx)=h+\dx^2$. We first note that \eqref{cons} equals to 
		\begin{equation}
		\label{mmm}
		\begin{array}{l}
		\E\big[B^h_{\dx}(\hat{Y}^h_n)v^n_h((n+1)h, \cdot, \hat{Y}^h_{n+1})(x)\mid \hat{Y}^h_n\big] \\ =A^h_{\dx}(\hat{Y}^h_n)v^n_h(nh,\cdot,\hat{Y}^h_n)(x)+A^h_{\dx}(\hat{Y}^h_n)\mathcal{R}^h_n(\cdot,\hat{Y}^h_n)(x).
		\end{array}
		\end{equation}
		
		\noindent\textbf{Step 1. Taylor expansion of the l.h.s. of \eqref{mmm}.} We set
		\begin{equation}\label{Bapplied}
		\begin{array}{l}
		I_1= B^h_{\dx}(\hat{Y}^h_n)v^n_h((n+1)h, \cdot, \hat{Y}^h_{n+1})(x_i) \\
		=v^n_h((n+1)h,x_i,\hat{Y}^h_{n+1}) \\
		+h\sum_l \nu_{\hat{Y}^h_n}(l\dx )\Big(v^n_h((n+1)h,x_i+l\dx,\hat{Y}^h_{n+1})-v^n_h((n+1)h,x_{i},\hat{Y}^h_{n+1})\Big)\dx.
		\end{array}
		\end{equation}
		In the first term of the above r.h.s. we apply several Taylor's expansion: of $t\mapsto v^n_h(t,x_i,\hat{Y}^h_{n+1}) $  around $nh$ up to  order 1,
		of $y\mapsto v^n_h(nh,x_i,y)$ around $\hat{Y}^h_n$ up to order 3 and of $y\mapsto \partial_tv^n_h(nh,x_i,y)$ around $\hat{Y}^h_n$ up to order 1. Rearranging the terms we obtain
		$$
		\begin{array}{l}
		v^n_h((n+1)h,x_i,\hat{Y}^h_{n+1})
		=v^n_h(nh,x_i,\hat{Y}^h_{n}) \\
		+ \partial_tv^n_h(nh,x_i,\hat{Y}^h_{n})h+\partial_yv^n_h(nh,x_i\hat{Y}^h_{n})(\hat{Y}^h_{n+1}-\hat{Y}^h_{n})
		+\frac 12 \partial_y^{2}v^n_h(nh,x_i,\hat{Y}^h_{n})(\hat{Y}^h_{n+1}-\hat{Y}^h_{n})^{2}\\
		+\partial_y\partial_tv^n_h(nh,x_i,\hat{Y}^h_{n})\,h(\hat{Y}^h_{n+1}-\hat{Y}^h_{n})
		+\frac 16\partial_y^3v^n_h(nh,x_i,\hat{Y}^h_{n})(\hat{Y}^h_{n+1}-\hat{Y}^h_{n})^3\\
		+R_{1}(n,h,x_i,\hat{Y}^h_{n},\hat{Y}^h_{n+1}),
		\end{array}
		$$
		where $R_1$  is given by
		\begin{equation}\label{R1}
		\begin{array}{ll}
		&	R_{1}(n,h,x_i,\hat{Y}^h_{n},\hat{Y}^h_{n+1})
		=h^2\int_0^1(1-\tau)\partial^2_tv^n_h(nh+\tau h,x_i,\hat{Y}^h_{n+1})d\tau\\
		&	\quad + \frac{(\hat{Y}^h_{n+1}-\hat{Y}^h_n)^4} 6 \int_0^1(1-\zeta)^3\partial^4_yv^n_h(nh,x_i,\hat{Y}^h_n+\zeta(\hat{Y}^h_{n+1}-\hat{Y}^h_n))d\zeta\\
		&	\quad +h(\hat{Y}^h_{n+1}-\hat{Y}^h_n)^2\int_0^1(1-\zeta)\partial_t\partial^2_yv^n_h(nh,x_i,\hat{Y}^h_n+\zeta(\hat{Y}^h_{n+1}-\hat{Y}^h_n))d\zeta.
		\end{array}
		\end{equation}

		For the second term in the r.h.s. of \eqref{Bapplied},  we stop the Taylor expansion of   $t\mapsto v^n_h((n+1)h,x_i+l\dx,\hat{Y}^h_{n+1})$ around $nh$ at order 0  and of $y\mapsto v^n_h(nh,x_i+l\dx,y)$ around $\hat{Y}^n_h$ at order 1, obtaining
		$$
		\begin{array}{l}
		h\sum_l \nu_{\hat{Y}^h_n}(l\dx)\big[v^n_h((n+1)h,x_i+l\dx,\hat{Y}^h_{n+1})-v^n_h((n+1)h,x_{i},\hat{Y}^h_{n+1})\big]\dx  \\
		=h\sum_l \nu_{\hat{Y}^h_n}(l\Delta x)\big[v^n_h(nh,x_i+l\dx,\hat{Y}^h_{n})-v^n_h(nh,x_{i},\hat{Y}^h_{n})\big]\dx \\
		+h(\hat{Y}^h_{n+1}-\hat{Y}^h_{n})\sum_l \nu_{\hat{Y}^h_n}(\l\dx)\big[\partial_y v^n_h(nh,x_i+l\dx,\hat{Y}^h_{n})-\partial_y v^n_h(nh,x_{i},\hat{Y}^h_{n})\big]\dx \\
		+R_2(n,h,x_i,\hat{Y}^h_{n},\hat{Y}^h_{n+1}),
		\end{array}
		$$	
		where $R_2$ contains the integral terms:
		\begin{equation}\label{R2}
		\begin{array}{l}
		R_2(n,h,x_i,\hat{Y}^h_{n},\hat{Y}^h_{n+1})= 
		h^2\sum_l \nu_{\hat{Y}^h_n}(l\Delta x)\dx\times\\
		\times \int_0^1(1-\tau)\big[\partial_tv^n_h(nh+\tau h,x_i+l\dx,\hat{Y}^h_{n+1})-\partial_tv^n_h(nh+\tau h,x_{i},\hat{Y}^h_{n+1})\big]d\tau \\
		+h (\hat{Y}^h_{n+1}-\hat{Y}^h_{n})^{2} \sum_l \nu_{\hat{Y}^h_n}(\l\dx)\dx\times \\
		\times \!\!\int_0^1\!(1-\zeta)\!\big[\!\partial_yv^n_h(nh ,x_i\!+\!l\dx,\hat{Y}^h_n\!+\!\zeta(\hat{Y}^h_{n+1}\!-\!\hat{Y}^h_n))\!-\!\partial_yv^n_h(nh,x_{i},\hat{Y}^h_n\!+\!\zeta(\hat{Y}^h_{n+1}\!-\!\hat{Y}^h_n))\!\big]\!d\zeta .
		\end{array}
		\end{equation}
		By resuming, we obtain
		\begin{equation}\label{I1}
		\begin{array}{rl}
		I_1=
		&	v^n_h(nh,x_i,\hat{Y}^h_{n})
		+ \partial_tv^n_h(nh,x_i,\hat{Y}^h_{n})h+\partial_yv^n_h(nh,x_i,\hat{Y}^h_{n})(\hat{Y}^h_{n+1}-\hat{Y}^h_{n})\\
		&+\frac 12 \partial_y^{2}v^n_h(nh,x_i,\hat{Y}^h_{n})(\hat{Y}^h_{n+1}-\hat{Y}^h_{n})^{2} \\
		&+h\dx\sum_l \nu_{\hat{Y}^h_n}(l\dx )\big[v^n_h(nh,x_i+l\dx,\hat{Y}^h_{n})-v^n_h(nh,x_{i},\hat{Y}^h_{n})\big]\\
		&+\sum_{i=1}^2 R_{i}(n,h,x_i,\hat{Y}^h_{n},\hat{Y}^h_{n+1}) + S(n,h,x_i,\hat{Y}^h_{n},\hat{Y}^h_{n+1}),
		\end{array}
		\end{equation}
		where 
		\begin{equation}
		\label{S}
		\begin{array}{l}
		S(n,h,x_i,\hat{Y}^h_{n},\hat{Y}^h_{n+1}) \\
		=	\partial_y\partial_tv^n_h(nh,x_i,\hat{Y}^h_{n})\,h(\hat{Y}^h_{n+1}-\hat{Y}^h_{n})
		+\frac 16\partial_y^3v^n_h(nh,x_i,\hat{Y}^h_{n})(\hat{Y}^h_{n+1}-\hat{Y}^h_{n})^3 \\
		+h(\hat{Y}^h_{n+1}-\hat{Y}^h_{n})\sum_l \nu_{\hat{Y}^h_n}(l\dx)\big[\partial_yv^n_h(nh,x_i+l\dx,\hat{Y}^h_{n})-\partial_yv^n_h(nh,x_{i},\hat{Y}^h_{n})\Big]\dx.
		\end{array}
		\end{equation}
		\noindent
		\textbf{Step 2. Taylor expansion  of the first addendum in the r.h.s. of \eqref{mmm}}. We set
		$$
		\begin{array}{rl}
		I_2=&A^h_{\dx}v^n_h(nh,\cdot,\hat{Y}^h_n)(x_i) \\
		=&(\alpha^h_{\dx}(\hat{Y}^h_n)-\beta^h_{\dx}(\hat{Y}^h_n))v^n_h(nh,x_{i-1},\hat{Y}^h_n) \\
		&+(1+2\beta^h_{\dx}(\hat{Y}^h_n))v^n_h(nh,x_{i},\hat{Y}^h_n)
		-(\alpha^h_{\dx}(\hat{Y}^h_n)+\beta^h_{\dx}(\hat{Y}^h_n))v^n_h(nh,x_{i+1},\hat{Y}^h_n).
		\end{array}
		$$
		We expand with Taylor $x\mapsto v^n_h(nh,x,\hat{Y}^h_n)$  around $x_i$ up to order 3
		and we insert  the values of $\alpha^h_{\dx}$ and $\beta^h_{\dx}$ in \eqref{alpha-beta}. Rearranging the terms we get 
		\begin{equation}\label{I2}
		\begin{array}{l}
		I_2=
		v^n_h(nh,x_{i},\hat{Y}^h_n)
		-h\mu_X(\hat{Y}^h_n)\partial_xv^n_h(nh,x_{i},\hat{Y}^h_n)
		-\frac 12 \,h \sigma^2_X(\hat{Y}^h_n)\partial^2_xv^n_h(nh,x_{i},\hat{Y}^h_n)\\
		\ \ \ +R_3(n,h,x_i,\hat{Y}^h_{n},\hat{Y}^h_{n+1})
		\end{array}
		\end{equation}
		where
		\begin{equation}\label{R3}
		\begin{array}{l}
		R_3(n,h,x_i,\hat{Y}^h_{n},\hat{Y}^h_{n+1})\\
		=
		\frac{\dx\mu_X(\hat{Y}^h_n)- \sigma_X^2(\hat{Y}^h_n)}{12}\,h\dx^2\int_0^1(1-\eta)^3\partial^4_xv^n_h(nh,x_i-\eta\dx,\hat{Y}^h_n)d\eta  \\
		-\frac{\dx\mu_X(\hat{Y}^h_n)+ \sigma_X^2(\hat{Y}^h_n)}{12}\,h\dx^2\int_0^1(1-\eta)^3\partial^4_xv^n_h(nh,x_i+\eta\dx,\hat{Y}^h_n)d\eta \\
		-\frac 1 6 \,h  \dx^2\mu_X(\hat{Y}^h_n)\partial^3_xv^n_h(nh,x_{i},\hat{Y}^h_n).
		\end{array}
		\end{equation}

		\noindent
		\textbf{Step 3. Rearranging the terms.} By resuming, from \eqref{I1} and \eqref{I2} we have
		$$
		\begin{array}{l}
		I_1-I_2\\
		=h\partial_tv^n_h(nh,x_i,\hat{Y}^h_{n})+(\hat{Y}^h_{n+1}-\hat{Y}^h_n)\partial_yv^n_h(nh,x_i,\hat{Y}^h_{n})+h\mu_X(\hat{Y}^h_n)\partial_xv^n_h(nh,x_{i},\hat{Y}^h_n)\\
		\quad+\frac 1 2 \big[(\hat{Y}^h_{n+1}-\hat{Y}^h_n)^2\partial^2_yv^n_h(nh,x_i,\hat{Y}^h_{n}) +  h\,\sigma_X^2(\hat{Y}^h_n) \partial^2_xv^n_h(nh,x_{i},\hat{Y}^h_n)\big]\\ 
		\quad +h\int(v^n_h(t,x+\gamma_X(\hat{Y}^n_h)\zeta,\hat{Y}^h_n)-v^n_h(t,x,\hat{Y}^h_n))\nu(\zeta)d\zeta\\
		\quad+ \sum_{i=1}^4R_{i}(n,h,x_i,\hat{Y}^h_{n},\hat{Y}^h_{n+1}) + S(n,h,\hat{Y}^h_n, \hat{Y}^h_{n+1}),
		\end{array}
		$$
		in which we have used the change of variable giving
		$$
		\int(v^n_h(t,x+z,\hat{Y}^h_n)-v^n_h(t,x,\hat{Y}^h_n))\nu_{\hat{Y}^h_n}(z)dz
		=\!\!\int(v^n_h(t,x+\gamma_X(\hat{Y}^n_h)\zeta,\hat{Y}^h_n)-v^n_h(t,x,\hat{Y}^h_n))\nu(\zeta)d\zeta
		$$
		and where
		\begin{equation}\label{R4}
		\begin{array}{l}
		R_4(n,h,x_i,\hat{Y}^h_{n})
		=
		h	\sum_l \big[v^n_h(t,x_i+l\dx,\hat{Y}^h_{n})-v^n_h(t,x_{i},\hat{Y}^h_{n})\big]\nu_{\hat{Y}^h_n}(l\dx)\dx \\
		-h\int\big[v^n_h(t,x_i+z,\hat{Y}^h_{n})-v^n_h(t,x_i,\hat{Y}^h_{n})\big]\nu_{\hat{Y}^h_n}(z)dz.
		\end{array}
		\end{equation} 
		By passing to the conditional expectation and by using formulas \eqref{momento1-gen}, \eqref{momento2-gen} and \eqref{momento3-gen} for the local moments of order 1, 2 and 3, we obtain
		$$
		\begin{array}{l}
		\widetilde{\mathcal{R}}_n^h(x_i,\hat{Y}^h_n):=\E[I_1-I_2\mid \hat{Y}^h_n]
		=h(\partial_tv^n_h(nh,x_i,\hat{Y}^h_{n})+\L v^n_h(nh,x_i,\hat{Y}^h_{n}))\\
		\quad +\sum_{i=1}^4\E[R_{i}(n,h,x_i,\hat{Y}^h_{n},\hat{Y}^h_{n+1})\mid \hat{Y}^h_n]+\E(S(n,h,x_i,\hat{Y}^h_n, \hat{Y}^h_{n+1})\mid \hat{Y}^h_n)\\
		\quad 	= \sum_{i=1}^4\E[R_{i}(n,h,x_i,\hat{Y}^h_{n},\hat{Y}^h_{n+1})\mid \hat{Y}^h_n]+\sum_{i=1}^2 S_i(n,h,x_i,\hat{Y}^h_n).
		\end{array}
		$$
		Here we have used the following facts: $u$ solves \eqref{PDEB}; $\E(S(n,h,x_i,\hat{Y}^h_n, \hat{Y}^h_{n+1})\mid \hat{Y}^h_n)=\sum_{i=1}^2 S_i(n,h,x_i,\hat{Y}^h_n)$, with (recall the definition of $S$ in  \ref{S} and of the local moments $f_h$, $g_h$ and $j_h$  in \eqref{momento1-gen}, \eqref{momento2-gen} and  \eqref{momento3-gen})
		\begin{equation}\label{S1}
		\begin{array}{l}
		S_1(n,h,x_i,\hat{Y}^h_{n}) \\
		=f_h(\hat{Y}^h_n)\partial_yv^n_h(nh,x_i,\hat{Y}^h_n)+\frac 12g_h(\hat{Y}^h_n)\partial^2_yv^n_h(nh,x_i,\hat{Y}^h_n)+\frac 16j_h(\hat{Y}^h_n)\partial^3_yv^n_h(nh,x_i,\hat{Y}^h_n) \\ 
		\quad +\partial_y\partial_tv^n_h(nh,x_i,\hat{Y}^h_{n})\,h(\mu_Y(\hat{Y}^h_{n})h+f_h(\hat{Y}^h_n)),
		\end{array}
		\end{equation} 
		\begin{equation}\label{S2}
		\begin{array}{l}
		S_2(n,h,x_i,\hat{Y}^h_{n}) 
		=h(h\mu_Y(\hat{Y}^h_n)+f_h(\hat{Y}^h_n))\times\\
		\times\sum_l \nu_{\hat{Y}^h_n}(l\dx)\big[\partial_yv^n_h(nh,x_i+l\dx,\hat{Y}^h_{n})-\partial_yv^n_h(nh,x_{i},\hat{Y}^h_{n})\Big]\dx.
		\end{array}
		\end{equation} 
		\noindent
		\textbf{Step 4. Estimate of the remainder.} 
		Hereafter, $C$ denotes a positive constant which may vary from a line to another and is independent of $n,h,\dx$.
		
		By \eqref{mmm}, we have to study 
		$\mathcal{R}_n^h(\cdot ,\hat{Y}^h_n)=(A^h_{\dx})^{-1}(\hat{Y}^h_n)\widetilde{\mathcal{R}}_n^h(\cdot ,\hat{Y}^h_n)$. By  Lemma \ref{norma}, \\ $\sup_{y\in\mathcal{D}}|(A^h_{\dx})^{-1}(y)|_2\leq 1$, so
		$$
		\begin{array}{l}
		\E\big[e^{\sum_{l=1}^{n}2\lambda c_\nu h}|\mathcal{R}_n^h(\cdot,\hat{Y}^h_n)|^2_{2}\big]
		\leq e^{2\lambda c_\nu T}\E\big[|\widetilde{\mathcal{R}}_n^h(\cdot,\hat{Y}^h_n)|^2_{2}\big]\\
		\quad \leq C	\sum_{i=1}^4\E\big[|R_{i}(n,h,\cdot,\hat{Y}^h_{n},\hat{Y}^h_{n+1})|^2_{2}\big] +\sum_{i=1}^2\E\big[|S_{i}(n,h,\cdot,\hat{Y}^h_{n})|^2_{2}\big] .
		\end{array}
		$$
		Hence it suffices to prove that the above 6 terms are all upper bounded by $Ch^2(h+\dx^2)^2$. The inequalities studied in $(ii)$ of Lemma \ref{lemma-poisson} now come on.
		
		Consider first $R_1$ in \eqref{R1} and in particular, the first addendum therein. Set
		$$
		g_n(x)= h^2\int_0^1(1-\tau)\partial^2_tv^n_h(nh+\tau h,x,\hat{Y}^h_{n+1})d\tau.
		$$
		Since $u\in C^{2,6}_{\pol,T}(\R,\mathcal{D})$,  $\partial^k_xg_n\in L^2(\R,dx)$ for every $k=0,1,2$ and $|\partial^k_xg_n|_{L^2}$ $\leq Ch^2(1+|\hat{Y}^h_{n+1}|^a)$. So, by using \eqref{poisson2},
		$$
		|g_n|_2^2\leq C h^4(1+|\hat{Y}^h_{n+1}|^a)^2.
		$$
		Similar estimates hold for the other terms in $R_1$, so we can write
		$$
		\begin{array}{rl}
		|R_{1}(n,h,\cdot,\hat{Y}^h_{n},\hat{Y}^h_{n+1})|^2_{2}\leq
		C\big[&\!\!\!\!h^4(1+|\hat{Y}_n^h|^{a})^2+|\hat{Y}_{n+1}-\hat{Y}_n|^{8}(1+|\hat{Y}_n^h|^{a}+|\hat{Y}_{n+1}^h|^{a})^2\\
		&+h^2|\hat{Y}_{n+1}-\hat{Y}_n|^4(1+|\hat{Y}_n^h|^{a})^2\big].
		\end{array}
		$$
		By using the increment estimates \eqref{stima_momento-gen}, the moment estimates \eqref{stima_incremento-gen} and the Cauchy-Schwartz inequality, we obtain
		$$
		\E\big[|R_{1}(n,h,\cdot,\hat{Y}^h_{n},\hat{Y}^h_{n+1})|^2_{2}\big]\leq Ch^4.
		$$
		The same arguments can be developed for $R_3$ in \eqref{R3} and $S_1$ in \eqref{S1}. These give
		$$
		\E\big[|R_3(n,h,\cdot,\hat{Y}^h_{n},\hat{Y}^h_{n+1})|^2_{2}\big]\leq C h^2\dx^4
		\mbox{ and }\E\big[|S_1(n,h,\cdot,\hat{Y}^h_{n})|^2_{2}\big]	\leq Ch^4.
		$$
		In order to study $R_2$ in \eqref{R2}, consider the first term and set
		$$
		\begin{array}{l}
		g_n(x)
		=h^2	\sum_l \nu_{\hat{Y}^h_n}(l\dx)\dx
		\times\\
		\times	\int_0^1(1-\tau)\big[\partial_tv^n_h(nh+\tau h,x+l\Delta x,\hat{Y}^h_{n+1})-\partial_tv^n_h(nh+\tau h,x,\hat{Y}^h_{n+1})\big]d\tau.
		\end{array}
		$$
		We notice that $g_n\in C^2$. By the Cauchy-Schwarz inequality for the (discrete) finite measure $\nu_{\hat{Y}^h_n}(l\dx)\dx$, $l\in\Z$, we have
		$$
		\begin{array}{l}
		|\partial_x^kg_n(x)|^2
		\leq Ch^4\sum_l \nu_{\hat{Y}^h_n}(l\dx)\dx
		\times\\
		\times	\int_0^1(1-\tau)^2\Big(\big|\partial_x^k\partial_tv^n_h(nh+\tau h,x+l\Delta x,\hat{Y}^h_{n+1})\big|^2+\big|\partial_x^k\partial_tv^n_h(nh+\tau h,x,\hat{Y}^h_{n+1})\big|^2\Big)d\tau.
		\end{array}
		$$
		This gives $|\partial^k_x g_n|_{L^2}\leq C h^2(1+|\hat{Y}^h_{n+1}|^a)$ and, by \eqref{poisson2}, $|g_n|_2^2\leq C h^4 (1+|\hat{Y}^h_{n+1}|^a)^2$. By developing the same arguments to the other terms in $R_2$, we obtain 
		$$
		\begin{array}{l}
		|R_{2}(n,h,\cdot,\hat{Y}^h_{n},\hat{Y}^h_{n+1})|^2_{2}\leq
		C\big[h^4(1+(\hat{Y}_n^h)^{a})^2+
		h^2|\hat{Y}_{n+1}-\hat{Y}_n|^{4}(1+|\hat{Y}_n^h|^{a})\big].
		\end{array}
		$$
		And by passing to the expectation, we get $\E(|R_{2}(n,h,\cdot,\hat{Y}^h_{n},\hat{Y}^h_{n+1})|^2_{2})\leq Ch^4$. A similar approach can be used to handle $R_4$ in \eqref{R4} and in $S_2$ in \eqref{S2}, giving 
		$$
		\E\big[|R_4(n,h,\cdot ,\hat{Y}^h_{n})|_{2}^2\big]
		\leq
		Ch^2\dx^4 \mbox{ and }
		\E\big[|S_2(n,h,\cdot,\hat{Y}^h_{n})|^2_{2}\big]\leq Ch^4.
		$$
	\end{proof}

													\subsubsection{Convergence in $l_\infty$-norm}\label{sect-linf}
													We consider here a different finite difference scheme for equation \eqref{PDE-barug-h}:
													%
													%
													we  still  approximate (explicit in time) the integral term $\L^{(y)}_{\mbox{\tiny{int}}}v$ in  \eqref{L_i} with a  trapezoidal rule, but  we use an  upwind first order scheme
													to approximate (implicit in time) the differential part $\L^{(y)}_{\mbox{\tiny{diff}}}v$ in  \eqref{L_d}.  As usually done in convection-diffusion problems, we distinguish the cases in which $\mu_X(y)$ is positive or negative in order to take into account the asymmetry given by the convection term and we use one sided difference in the appropriate direction.  Specifically, if $\mu_X(y)\geq 0$, we approximate  $\L^{(y)}_{\mbox{\tiny{diff}}}u$ by using the scheme 
													$$
													\frac{v^{n+1}_i-v^n_i}{h}+ \mu_X(y)\frac{v^{n}_{i+1}-v^n_i}{\dx} + \frac 1 2 \sigma_X^2(y)\frac{v^{n}_{i+1}-2v^n_i+v^n_{i-1}}{\dx^2} ,
													$$
													while, if $\mu_X(y)\leq 0$, we use the approximation
													$$
													\frac{v^{n+1}_i-v^n_i}{h}+ \mu_X(y)\frac{v^{n}_{i}-v^n_{i-1}}{\dx} + \frac 1 2 \sigma_X^2(y)\frac{v^{n}_{i+1}-2v^n_i+v^n_{i-1}}{\dx^2}.
													$$
													The resulting scheme is 
													\begin{equation}\label{equaz2}
													A^h_{\dx}(y)v^n=B^h_{\dx}(y)v^{n+1},
													\end{equation}
													where $A^h_{\dx}(y)$ is the linear operator given by  
													\begin{equation}\label{A2}
													(A^h_{\dx})_{ij}(y)=\begin{cases}
													-\beta^h_{\dx}(y)-|\alpha^h_{\dx}(y)|\ind{\alpha^h_{\dx}(y)<0},\qquad &\mbox{ if }i=j+1,\\
													1+2\beta^h_{\dx}(y)+|\alpha^h_{\dx}(y)|,\qquad &\mbox{ if }i=j,\\
													-\beta^h_{\dx}(y)-|\alpha^h_{\dx}(y)|\ind{\alpha^h_{\dx}(y)>0},\qquad &\mbox{ if }i=j-1,\\
													0, &\mbox{ if }|i-j|>1,
													\end{cases}
													\end{equation}  
													with
													$$
													\alpha^h_{\dx}(y)=\frac{h}{\dx}\mu_X(y), \qquad \beta^h_{\dx}(y)=\frac{h}{2\dx^2}\sigma^2_X(y),
													$$
													and $B^h_{\dx}(y)$ is the linear operator defined in \eqref{B}. Then we have:
%
%
\begin{lemma}\label{lemmamatrici}
	For every $y\in \D$, the operator $A^h_{\dx}(y):l_\infty(\mathcal{X})\rightarrow l_\infty(\mathcal{X})$ is  invertible and $\sup_{y\in\mathcal{D}}|( A^h_{\dx})^{-1}(y)|_\infty\leq1$. And if $\frac{\nu'}{\nu},\frac{\nu''}{\nu}\in L^1(\R,d\nu)$ then  $\sup_{y\in\mathcal{D}}|B^h_{\dx}(y)|_\infty$ $\leq 1+2\lambda c_\nu$, $c_\nu$ being defined in \eqref{cnu}.  Finally, if $\gamma_X\equiv 1$, $\Pi^h_{\dx}(y)=(A^h_{\dx})^{-1}B^h_{\dx}(y)$ is a stochastic operator, that is,  $$(\Pi^h_{\dx})_{ij}(y)\geq 0, \quad i,j\in\Z,\qquad  \qquad   \sum_{j\in\Z}(\Pi^h_{\dx})_{ij}(y)=1, \quad j\in\Z.$$
\end{lemma}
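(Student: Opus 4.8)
The plan is to exploit the special sign structure of the tridiagonal operator $A^h_{\dx}(y)$ in \eqref{A2}. Writing $d=1+2\beta^h_{\dx}(y)+|\alpha^h_{\dx}(y)|\geq 1$ for the (row‑independent) diagonal value, I would factor $A^h_{\dx}(y)=d\,(\Id-Q)$, where $Q$ collects the off‑diagonal contributions. A direct inspection of \eqref{A2} shows that $Q$ has non‑negative entries, is tridiagonal (hence bounded on $l_\infty(\mathcal{X})$), and has constant row sum equal to $(2\beta^h_{\dx}(y)+|\alpha^h_{\dx}(y)|)/d=(d-1)/d<1$; consequently $|Q|_\infty=(d-1)/d<1$. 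The Neumann series $\sum_{k\geq 0}Q^k$ then converges in operator norm, so $\Id-Q$ is invertible and $A^h_{\dx}(y)$ is invertible on $l_\infty(\mathcal{X})$ with
\[
|(A^h_{\dx})^{-1}(y)|_\infty=\frac1d\,|(\Id-Q)^{-1}|_\infty\leq\frac1d\sum_{k\geq 0}|Q|_\infty^k=\frac1d\cdot\frac{1}{1-(d-1)/d}=1 .
\]
Since this bound is independent of $y$, the supremum over $y\in\D$ is controlled as well.

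For the bound on $B^h_{\dx}(y)$, I would compute $|B^h_{\dx}(y)|_\infty=\sup_i\sum_j|(B^h_{\dx})_{ij}(y)|$ directly from \eqref{B}. The off‑diagonal entries $h\nu_y((j-i)\dx)\dx$ are non‑negative, while the diagonal entry equals $1-h\sum_{l\neq 0}\nu_y(l\dx)\dx$; bounding its modulus by the triangle inequality gives, for every row,
\[
\sum_j|(B^h_{\dx})_{ij}(y)|\leq 1+2h\sum_{l\neq 0}\nu_y(l\dx)\dx\leq 1+2\lambda c_\nu h,
\]
the last step using \eqref{cnu}, which holds under the assumption $\nu'/\nu,\nu''/\nu\in L^1(\R,d\nu)$ by Proposition \ref{prop-nu}. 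This is uniform in $y$ and yields the claimed estimate (in particular $\leq 1+2\lambda c_\nu$ whenever $h<1$).

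Finally, for the stochasticity of $\Pi^h_{\dx}(y)=(A^h_{\dx})^{-1}B^h_{\dx}(y)$ when $\gamma_X\equiv 1$ (so that $\nu_y=\nu$ by \eqref{nuy}), I would check separately non‑negativity and unit row sums. Non‑negativity: from the factorization above, $(A^h_{\dx})^{-1}(y)=d^{-1}\sum_{k\geq 0}Q^k$ has non‑negative entries since $Q\geq 0$ entrywise; and $B^h_{\dx}(y)$ has non‑negative entries as soon as its diagonal $1-h\sum_{l\neq 0}\nu(l\dx)\dx$ is non‑negative, which holds for $h$ small by \eqref{cnu}. A product of entrywise non‑negative operators is entrywise non‑negative, giving $(\Pi^h_{\dx})_{ij}(y)\geq 0$. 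Unit row sums: denoting by $\mathbf 1$ the constant sequence, the row sums of $A^h_{\dx}(y)$ and of $B^h_{\dx}(y)$ are both $1$ (direct computation from \eqref{A2} and \eqref{B}, the $|\alpha^h_{\dx}(y)|$‑terms cancelling), i.e.\ $A^h_{\dx}(y)\mathbf 1=\mathbf 1$ and $B^h_{\dx}(y)\mathbf 1=\mathbf 1$; hence $\Pi^h_{\dx}(y)\mathbf 1=(A^h_{\dx})^{-1}(y)B^h_{\dx}(y)\mathbf 1=(A^h_{\dx})^{-1}(y)\mathbf 1=\mathbf 1$, which is exactly $\sum_j(\Pi^h_{\dx})_{ij}(y)=1$.

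The routine bookkeeping of signs and row sums is elementary; the only genuinely delicate point is that $\mathcal X$ is infinite, so invertibility of $A^h_{\dx}(y)$ cannot be taken for granted and must be obtained through the operator‑norm contraction $|Q|_\infty<1$ and the resulting convergent Neumann series. This same series is also what makes the inverse entrywise non‑negative and what legitimises the identity $(A^h_{\dx})^{-1}(y)\mathbf 1=\mathbf 1$ on $l_\infty(\mathcal X)$.
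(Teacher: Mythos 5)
Your proposal is correct and follows essentially the same route as the paper: the factorization $A^h_{\dx}(y)=d(\Id-Q)$ with $d=1+2\beta^h_{\dx}(y)+|\alpha^h_{\dx}(y)|$ is just a rescaling of the paper's splitting $A^h_{\dx}(y)=(1+\eta(y))\Id-P(y)$, and the Neumann series, the triangle-inequality bound on the rows of $B^h_{\dx}(y)$, and the verification $A^h_{\dx}(y)\mathbf 1=B^h_{\dx}(y)\mathbf 1=\mathbf 1$ all coincide with the paper's argument. Your explicit remark that non-negativity of the diagonal of $B^h_{\dx}(y)$ requires $h$ small (so that $h\sum_{l\neq 0}\nu(l\dx)\dx\leq \lambda c_\nu h\leq 1$) is a point the paper leaves implicit, and is a welcome precision rather than a divergence.
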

\begin{proof}
	We write $ A^h_{\dx}(y)=(1+\eta(y)) \Id-P(y)$, 
	where $\eta(y)=2\beta^h_{\dx}(y)+|\alpha^h_{\dx}(y)|\geq 0$ and $P_{ij}(y)=0$ if $|i-j|\neq 1$ and $P_{ij}=-(A^h_{\dx})_{ij}$ if $|i-j|=1$. It easily follows that $|P(y)|_\infty\leq \eta(y)$. Moreover, it is easy to see that the operator $A^h_{\dx}(y):l_\infty(\mathcal{X})\rightarrow l_\infty(\mathcal{X})	$ is invertible with inverse
	$$
	(A^h_{\dx})^{-1}(y)=(	(1+\eta(y)) \Id -P(y))^{-1}=\frac 1 {1+\eta(y)} \sum_{k=0}^\infty \frac {P(y)^k} {(1+\eta(y))^k }.
	$$
	It then follows that $|( A^h_{\dx})^{-1}(y)|_\infty\leq1$. The assertion for $B^h_{\dx}(y)$ follows from \eqref{B} and \eqref{cnu}. Finally, $(A^h_{\dx})^{-1}_{ij}(y)\geq 0$ for all $i,j$ because all entries of $P(y)$ are non negative   and $(B^h_{\dx})_{ij}(y)\geq 0$ if $\gamma_X\equiv 1$. Moreover, $\Pi^h_{\dx}(y)1=1$ because, by construction, $A^h_{\dx}(y)1=1$ and $B^h_{\dx}(y)1=1$ when $\gamma_X\equiv 1$.
\end{proof}
	We can now state the convergence result.

													\begin{theorem}
														\label{conv-H2}
														Let $\tilde u^h_n$ be defined in \eqref{backward3} and $u^h_n$ be given by \eqref{backward-ter0-tris} with the choice
														$$
														\Pi^h_{\dx}(y)=(A^h_{\dx})^{-1}B^h_{\dx}(y),
														$$
														$A^h_{\dx}(y)$ and $B^h_{\dx}(y)$ being given in \eqref{A2} and \eqref{B} respectively.  Moreover, for  $n=0,\dots, N,$ consider the function 
														\begin{equation*}
														v^h_n(t,x,y)=\E\left[  \tilde u^h_{n+1}(X^{t,x,y}_{(n+1)h},Y^{t,y}_{(n+1)h})  \right], \qquad \qquad t\in[nh,(n+1)h].
														\end{equation*}
														Assume that
														\begin{itemize}
															\item $\frac{\nu'}\nu,\frac{\nu''}\nu\in L^1(\R,d\nu)$;
															\item the Markov chain $(Y^h_n)_{n=0,\dots, N}$ satisfies assumptions $\mathcal{A}_1,\, \mathcal{A}_2$ and $\mathcal{A}_3(4\lambda c_\nu|\gamma_X|)$;
															\item $v^h_n\in  C^{\infty,4}_{\pol, [nh,(n+1)h]}( \R, \D)$ a.e. and uniformly in $n$ and $h$.
														\end{itemize}
														Then, there exist $\bar h,C>0$ such that for every $h<\bar h$ and $\dx<1$ one has
														\begin{equation*}
														| \tilde u^h_0(\cdot,Y_0)-u^h_{0}(\cdot,Y_0)|_{\infty} \leq CT(h+\dx^2).
														\end{equation*}
														
													\end{theorem}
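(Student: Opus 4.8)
The plan is to deduce the statement from the general convergence result, Theorem \ref{convergencebates}, exactly as Theorem \ref{conv-H} was obtained in the $l_2$ setting. Concretely, I would check that the discrete solution operator $\Pi^h_{\dx}(y)=(A^h_{\dx})^{-1}(y)B^h_{\dx}(y)$, with $A^h_{\dx}(y)$ the upwind matrix \eqref{A2} and $B^h_{\dx}(y)$ the quadrature matrix \eqref{B}, satisfies Assumption $\mathcal{B}(\infty,c,\mathcal{E})$ with $c(y)=4\lambda c_\nu|\gamma_X(y)|$ and $\mathcal{E}(h,\dx)=h+\dx^2$; the conclusion then follows immediately. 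The stability bound \eqref{stab} is the first and easiest step: Lemma \ref{lemmamatrici} gives $\sup_{y}|(A^h_{\dx})^{-1}(y)|_\infty\le 1$ and a control of $\sup_y|B^h_{\dx}(y)|_\infty$, so that, after the row-sum computation (for which the hypothesis $\nu'/\nu,\nu''/\nu\in L^1(\R,d\nu)$ and $\inf_y|\gamma_X(y)|\ge\varepsilon$ guarantee $\sum_l\nu_y(l\dx)\dx\le\lambda c_\nu$ via \eqref{cnu}), one gets $|\Pi^h_{\dx}(y)|_\infty\le 1+c(y)h$ for $h$ small.

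The core of the work is the consistency identity \eqref{cons}. Here I would follow verbatim the four-step Taylor argument of the proof of Theorem \ref{conv-H}: multiply \eqref{cons} by $A^h_{\dx}(Y^h_n)$, expand $B^h_{\dx}(Y^h_n)v^h_n((n+1)h,\cdot,Y^h_{n+1})$ and $A^h_{\dx}(Y^h_n)v^h_n(nh,\cdot,Y^h_n)$ in Taylor series in time, in the volatility variable around $Y^h_n$ (to order $3$) and in the space variable, and then take conditional expectation w.r.t. $Y^h_n$. Using that $v^h_n$ solves the PIDE \eqref{PDE-barug-h} (so that $\partial_tv^h_n+\L v^h_n=0$, consistently with \eqref{PDEB}) and the local-moment formulas of Assumption $\mathcal{A}_1$, the leading $O(h)$ term cancels and one is left with remainders of the same nature as in Theorem \ref{conv-H}. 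Two changes occur relative to the $l_2$ proof: the differential part is now discretized with the one-sided upwind stencil, so its Taylor expansion must be performed separately on $\{\mu_X(Y^h_n)\ge0\}$ and $\{\mu_X(Y^h_n)<0\}$; and every remainder is estimated pointwise in $x$ (sup-norm) rather than in $L^2(\R,dx)$. The quadrature error of the trapezoidal rule is controlled in sup-norm through part $(i)$ of Lemma \ref{lemma-poisson} (the $L^1$ version of the Poisson summation formula), which applies since $\nu'/\nu,\nu''/\nu\in L^1(\R,d\nu)$; the assumption $v^h_n\in C^{\infty,4}_{\pol,[nh,(n+1)h]}(\R,\D)$, uniformly in $n,h$, then bounds in $L^\infty(\R,dx)$ all the space-derivatives appearing in the remainders by $C(1+|y|^c)$.

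The main obstacle is the final estimate \eqref{ass_errore} with $p=\infty$, namely the $L^1(\Omega)$ bound $\big\|e^{\sum_{l=1}^n c(Y^h_l)h}|\mathcal{R}^h_n(\cdot,Y^h_n)|_\infty\big\|_1\le Ch\mathcal{E}(h,\dx)$. In the $l_2$ proof the analogous quantity was handled comfortably: H\"older's inequality let us split $\E[e^{\sum pc(Y^h_l)h}|\mathcal{R}^h_n|^p_p]^{1/p}$ and absorb the exponential weight, after which only the polynomial moment bound \eqref{stima_momento-gen} and the increment bound \eqref{stima_incremento-gen} of Assumption $\mathcal{A}_2$ were needed. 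In the $l_\infty$ case this splitting is no longer available, since the sup-norm remainder $|\mathcal{R}^h_n(\cdot,Y^h_n)|_\infty$ carries a polynomial-in-$Y^h_n$ factor that must be integrated jointly against the exponential stability weight $e^{\sum_l c(Y^h_l)h}$. This is precisely the role of the extra Assumption $\mathcal{A}_3(4\lambda c_\nu|\gamma_X|)$: it provides the joint exponential–polynomial moment control $\sup_h\sup_n\E[e^{\sum_{l=1}^n 4\lambda c_\nu|\gamma_X(Y^h_l)|\,h}(1+|Y^h_n|^a)]<\infty$, which, combined with the increment estimates of $\mathcal{A}_2$ and a Cauchy–Schwarz inequality applied only to the increment powers, yields the required $O(h(h+\dx^2))$ bound. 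Once \eqref{stab}, \eqref{cons} and \eqref{ass_errore} are established, Theorem \ref{convergencebates} gives $|\tilde u^h_0(\cdot,Y_0)-u^h_0(\cdot,Y_0)|_\infty\le CT\mathcal{E}(h,\dx)=CT(h+\dx^2)$, as claimed.
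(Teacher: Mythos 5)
Your overall strategy is exactly the paper's: verify Assumption $\mathcal{B}(\infty,c,\mathcal{E})$ for $\Pi^h_{\dx}(y)=(A^h_{\dx})^{-1}B^h_{\dx}(y)$ via Lemma \ref{lemmamatrici} for stability and a rewriting of the four-step Taylor argument of Theorem \ref{conv-H} for consistency, then invoke Theorem \ref{convergencebates}; your observation that only part $(i)$ of Lemma \ref{lemma-poisson} (the $L^1$ bound \eqref{poisson1}) is needed, rather than \eqref{poisson2}, is precisely the point the paper makes to justify the weaker regularity class $C^{\infty,4}_{\pol}$.

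There is, however, a genuine quantitative gap: you claim the remainders are ``of the same nature as in Theorem \ref{conv-H}'' and conclude $\mathcal{E}(h,\dx)=h+\dx^2$, but the upwind stencil in \eqref{A2} destroys the second-order spatial accuracy. Expanding the one-sided difference, say for $\mu_X(y)\geq 0$, gives
\begin{equation*}
\mu_X(y)\,\frac{v(nh,x_{i+1},y)-v(nh,x_i,y)}{\dx}
=\mu_X(y)\,\partial_xv(nh,x_i,y)+\frac{\dx}{2}\,\mu_X(y)\,\partial_x^2v(nh,x_i,y)+O(\dx^2),
\end{equation*}
and the term $\frac{\dx}{2}\,\mu_X(y)\,\partial_x^2v$ does not cancel against anything (the diffusion part is still discretized by the centered second difference, which is consistent with $\frac12\sigma_X^2(y)\partial_x^2v$ to order $\dx^2$). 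The analogue of the remainder $R_3$ therefore carries an uncancelled contribution of size $h\dx\,|\mu_X(Y^h_n)|\,|\partial_x^2v^h_n|$, i.e.\ $O(h\dx)$ per step rather than $O(h\dx^2)$, and the scheme is only first order in space. This is consistent with the paper's own proof, which establishes Assumption $\mathcal{B}(\infty,2\lambda c_\nu,h+\dx)$ and with the downstream application, Theorem \ref{Bates_conv}$(ii)$, which states the rate $CT(h+\dx)$; the $\dx^2$ in the statement of Theorem \ref{conv-H2} is an internal inconsistency of the paper, and your proof cannot repair it. A secondary, more minor point: since the stability constant here is the deterministic $c=2\lambda c_\nu$ (the bound \eqref{cnu} is uniform in $y$), the exponential weight in \eqref{ass_errore} is a bounded constant $e^{2\lambda c_\nu T}$, so the $p=\infty$ estimate reduces to the same moment/increment bounds of $\mathcal{A}_2$ used in the $l_2$ case; the joint exponential--polynomial moment machinery you attribute to $\mathcal{A}_3$ is not actually needed to close that step in this setting.
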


													\begin{proof}
													The statement follows by applying Theorem  \ref{convergencebates} once it is proved that $\mathcal{K}(\infty, 2\lambda c_\nu, h+\dx)$ holds. This is just a rewriting of the proof of  Theorem  \ref{conv-H} in terms of the norm in $l_\infty(\mathcal{X})$. We only notice that, for handling the remaining terms, we do not need to apply  \eqref{poisson2} for the $l_\infty$-norm, so we do not need more regularity for $u$. That's why the class $C^{\infty,4}_{\pol,T}(\R,\D)$ is enough. 
												\end{proof}

													It is natural to look for conditions on the function  $f$ which ensure that the regularity assumptions on the function $v^h_n$ for $n=0,\dots, N$, which are required  	In Theorem \ref{conv-H2}, are actually satisfied.  Of course, these conditions  depend on the regularity of the model. In Sections \ref{sect-conv-eu} and \ref{sect-conv-am} we will study the case of the degenerate Heston or Bates model.

													\section{The European case in the Heston/Bates model}\label{sect-conv-eu}
													As an application in finance, 	in this section we apply  our convergence results to to a tree-finite difference procedure for  pricing European options  in the Heston (\cite{H}) or Bates (\cite{bates}) model: the asset price process $S$ and the volatility process $Y$  evolve following the stochastic differential system
													\begin{equation}\label{bates}
													\begin{array}{ll}
													&\displaystyle
													\frac{dS_t}{S_{t^-}}= (r-\delta)dt+\mu \sqrt{Y_t}\, dZ^1_t+\gamma d\tilde H_t,  \smallskip\\
													&\displaystyle
													dY_t= \kappa(\theta-Y_t)dt+\sigma\sqrt{Y_t}\,dZ^2_t,
													\end{array}
													\end{equation}
													where  $S_0>0$, $Y_0\geq 0$, $Z=(Z^1,Z^2)$ is a correlated Brownian motions with $d\langle Z^1,Z^2\rangle_t=\rho dt$, $|\rho|<1$, $\tilde H$ is a compound Poisson process with
													intensity $\lambda$ and i.i.d. jumps $\{\tilde  J_k\}_k$ as in \eqref{H2}. Here, $\gamma=1$ (Bates model) or $\gamma= 0$ (Heston model).
													The above quantities  $r$ and $\delta$ are  the interest rate and the dividend interest rate respectively.	We assume, as usual, that the Poisson process $K$, the jump amplitudes $\{\tilde J_k\}_k$ and the correlated Brownian motion $(Z^1,Z^2)$ are independent. 
													
													With a simple transformation, we can reduce the model \eqref{bates} to our reference model \eqref{generalsystem}. 
													To get rid of the correlated Brownian motion,  we set 
													$$
													\bar\rho=\sqrt{1-\rho^2}\quad\mbox{and}\quad Z^2=W,\quad Z^1=\rho Z^2+\bar \rho B,
													$$
													in which $(B,W)$ denotes a standard $2$-dimensional Brownian motion. 
													Moreover,	considering the process $X_t=\log S_t-\frac{\rho}{\sigma}Y_t$, we reduce to the jump-diffusion pair  $(X,Y)$, which evolves according to
													\begin{equation}\label{SDE_bates}
													\begin{split}
													&dX_t= \mu_X(Y_t)dt+\bar\rho\,\sqrt{Y_t}\, dB_t+\gamma dH_t,\\
													&dY_t= \kappa(\theta-Y_t)dt+\sigma\sqrt{Y_t}\,dW_t,
													\end{split}
													\end{equation}
													where 
													$$
													\mu_X(y)=  r-\delta-\frac y2 -\frac \rho{\sigma}\kappa(\theta-y),
													$$
													$H_t$ is the compound Poisson process written through the Poisson process $K$, with intensity $\lambda$, and the i.i.d. jumps $J_k=\log(1+\tilde J_k)$. 
													The standard Bates model requires that $J_1$ has a normal law. But it is clear that the  convergence result holds for other laws such that the L\'evy measure $\nu$ satisfies the requests in Theorem \ref{conv-H} or Theorem \ref{conv-H2}. For example, these properties hold for the mixture of exponential laws used by Kou \cite{kou}.
													
													In this section we focus on European options. 
													Recall that, in this case,  the function $\tilde u^h_n(\cdot)$ defined in \eqref{backward3}  is nothing but the European price value at time $nh$, that is  $u(nh,\cdot)$ where $u$ is defined in \eqref{european}.  Moreover, we can easily see that, for any $n=N-1,\dots$, the function $v^h_n$ defined in \eqref{v^h_n} satisfies 
													$$
													v^h_n(t,x,y)=u(t,x,y), \qquad t\in [nh,(n+1)h].
													$$
													We consider  the approximating Markov chain for the CIR process discussed in Section \ref{sect-CIR} and the two possible finite difference operator discussed in Section \ref{sect-l2} and \ref{sect-linf}. As an application, we get the following convergence rate result of the hybrid method.

													\begin{theorem}\label{Bates_conv}
														Let $(X,Y)$ be the solution to \eqref{SDE_bates} and let $(Y^h_n)_{n=0,\dots, N}$ be the Markov chain introduced in Section \ref{sect-CIR} for approximating  the CIR process $Y$. Let $u(t,x,y)=\E(f(X_T^{t,x,y},Y_T^{t,y}))$ be as in \eqref{european} and $(u^h_n)_{n=0,\ldots,N}$ be given by \eqref{backward-ter0} with the choice
														$$
														\Pi^h_{\dx}(y)=(A^h_{\dx})^{-1}B^h_{\dx}(y).
														$$
														
														\begin{itemize}
															\item[$(i)$] 
															$\mathrm{[Convergence\  in\ }l_2(\mathcal{X})]$ Suppose that 
															\begin{itemize}
																\item [$\bullet$]
																$A^h_{\dx}(y)$ and $B^h_{\dx}(y)$ are defined in \eqref{A} and \eqref{B} respectively;
																\item [$\bullet$]
																$\frac{\nu'}{\nu},\frac{\nu''}{\nu}\in L^2(\R,d\nu)$ and $\nu$ has finite moments of any order;
																\item [$\bullet$]
																$\partial^{2j}_xf\in C^{2,6-j}_{\pol}(\R, \R_+)$ for every $j=0,\ldots,6$.
															\end{itemize} 
															Then, there exist $\bar h,C>0$ such that for every $h<\bar h$ and $\dx<1$ one has
															$$
															| u(0,\cdot,Y_0)-u^h_{0}(\cdot,Y_0)|_{2} \leq CT(h+ \dx^2).
															$$
															
															\item[$(ii)$] 
															$\mathrm{[Convergence\  in\ } l_\infty(\mathcal{X})]$ Suppose that 
															\begin{itemize}
																\item [$\bullet$]
																$A^h_{\dx}(y)$ and $B^h_{\dx}(y)$ are defined in \eqref{A2} and \eqref{B} respectively;
																\item [$\bullet$]
																$\frac{\nu'}{\nu},\frac{\nu''}{\nu}\in L^1(\R,d\nu)$  and $\nu$ has finite moments of any order;
																\item [$\bullet$]
																$\partial^{2j}_xf\in C^{\infty,4-j}_{\pol}(\R, \R_+)$ for every $j=0,\ldots,4$.
															\end{itemize} 
															Then, there exist $\bar h,C>0$ such that for every $h<\bar h$ and $\dx<1$ one has
															$$
															| u(0,\cdot,Y_0)-u^h_{0}(\cdot,Y_0)|_{\infty} \leq CT(h+ \dx).
															$$
														\end{itemize} 
														
													\end{theorem}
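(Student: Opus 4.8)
The plan is to obtain both estimates as direct applications of the general convergence results established for the two finite-difference schemes: part $(i)$ from Theorem~\ref{conv-H} (the centered/$l_2$ scheme) and part $(ii)$ from Theorem~\ref{conv-H2} (the upwind/$l_\infty$ scheme). Each of these theorems rests on three hypotheses, so the proof amounts to checking them for the model \eqref{SDE_bates} together with the CIR tree of Section~\ref{sect-CIR}. The first two are immediate: the Markov chain $(Y^h_n)_{n=0,\dots,N}$ satisfies Assumptions $\mathcal{A}_1$ and $\mathcal{A}_2$ by Propositions~\ref{moments-1} and~\ref{moments-2} (and, for part $(ii)$, Assumption $\mathcal{A}_3$, which one verifies from the same moment bounds since here $\gamma_X\equiv\gamma$ is constant), while the requests $\frac{\nu'}{\nu},\frac{\nu''}{\nu}\in L^2(\R,d\nu)$, resp.\ $L^1(\R,d\nu)$, are assumed outright. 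The entire weight of the proof therefore falls on the third hypothesis, the regularity of the functions $v^h_n$ defined in \eqref{v^h_n}.

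Here the European structure gives a decisive simplification. Since \eqref{backward3} has $g\equiv 0$, the function $\tilde u^h_{n+1}$ coincides with the true value function $u((n+1)h,\cdot)$ from \eqref{european}, and the tower property yields $v^h_n(t,x,y)=u(t,x,y)$ for all $t\in[nh,(n+1)h]$. Thus the delicate ``uniformly in $n$ and $h$'' requirement collapses: it suffices to prove the single statement $u\in C^{2,6}_{\pol,T}(\R,\R_+)$ for part $(i)$ and $u\in C^{\infty,4}_{\pol,T}(\R,\R_+)$ for part $(ii)$. Recalling that time derivatives are exchanged for space derivatives through the PIDE $\partial_t u=-\mathcal{L}u$, it is enough to control $\partial_x^{l'}\partial_y^{l}u$ for $l'+l$ up to $6$, resp.\ $4$. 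The derivatives in $x$ are cheap: because the coefficients of $X$ in \eqref{SDE_bates} depend only on $Y$, one has $X^{t,x,y}_s=x+X^{t,0,y}_s$, hence $\partial_x X^{t,x,y}_s\equiv 1$ and $\partial_x^{l'}u(t,x,y)=\E[\partial_x^{l'}f(X^{t,x,y}_T,Y^{t,y}_T)]$. This transfers all $x$-smoothness directly onto $f$, which is exactly the role of the hypotheses $\partial_x^{2j}f\in C^{2,6-j}_{\pol}$, resp.\ $C^{\infty,4-j}_{\pol}$: after peeling off the $x$-derivatives one is left, for each fixed differentiation order, with a problem whose remaining $y$- and $t$-budget matches the smoothness still available on $f$.

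The main obstacle is the regularity of $u$ in the volatility variable $y$, which cannot be obtained from classical parabolic theory because the CIR diffusion $\sigma\sqrt{y}$ degenerates at $y=0$ and we do not assume the Feller condition. The route I would follow mirrors the estimates on the joint law used in Chapter~1: conditioning on the path of $Y^{t,y}$ (driven by $W$) and on the jumps $H$, the variable $X^{t,x,y}_T$ is Gaussian, so that
\[
u(t,x,y)=\E\Big[\int f\Big(x+\int_t^T\mu_X(Y_s)\,ds+\gamma(H_T-H_t)+\bar\rho\,\Sigma_T z,\;Y_T\Big)\,n(z)\,dz\Big],
\]
where $\Sigma_T^2=\int_t^T Y^{t,y}_s\,ds$ and $n$ is the standard normal density. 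Differentiating in $y$ then propagates onto the flow derivatives $\partial_y^{l}Y^{t,y}_s$ and onto $\Sigma_T$, and the Gaussian smoothing produces negative powers of $\Sigma_T$; these are controlled by the CIR flow moment estimates and by the negative-moment bound of $\int_t^T Y_s\,ds$ of Lemma~\ref{RemarqueMomentsNeg}, exactly as in the proof of Theorem~\ref{theotransition}. The upshot --- which I would package by appealing to the CIR regularity results of \cite{A-MC} already invoked in Theorem~\ref{conv_T-CIR}, extended to the coupled pair $(X,Y)$ --- is that $u$ lies in the required polynomial-growth class, with the jump contribution harmless since jumps enter $X$ additively and $\nu$ has finite moments of every order. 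Once $u\in C^{2,6}_{\pol,T}(\R,\R_+)$, resp.\ $C^{\infty,4}_{\pol,T}(\R,\R_+)$, is established, the hypotheses of Theorem~\ref{conv-H}, resp.\ Theorem~\ref{conv-H2}, are fully met, and the bounds $CT(h+\dx^2)$ and $CT(h+\dx)$ follow verbatim.
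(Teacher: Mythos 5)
Your reduction is exactly the paper's: apply Theorem~\ref{conv-H} for $(i)$ and Theorem~\ref{conv-H2} for $(ii)$, check $\mathcal{A}_1$ and $\mathcal{A}_2$ via Propositions~\ref{moments-1} and~\ref{moments-2}, observe that in the European case $v^h_n\equiv u$ so the uniformity in $n$ and $h$ is automatic, and reduce everything to proving $u\in C^{2,6}_{\pol,T}(\R,\R_+)$, resp.\ $u\in C^{\infty,4}_{\pol,T}(\R,\R_+)$. Up to that point the proposal is correct.

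The gap is in how you propose to get the $y$-regularity. Conditioning on $Y$ and the jumps and exploiting the Gaussian smoothing in $x$ (negative moments of $\Sigma_T$ via Lemma~\ref{RemarqueMomentsNeg}) is the mechanism of Theorem~\ref{theotransition}, and it buys regularity in $x$ — which is not what is needed here, since $f$ is already assumed smooth in $x$. For the $y$-derivatives your plan is to differentiate the flow, but without the Feller condition the first variation process of the CIR diffusion involves $\exp\big(\tfrac\sigma2\int_t^s Y_r^{-1/2}dW_r-\tfrac{\sigma^2}8\int_t^s Y_r^{-1}dr\big)$, whose moments are not controlled when $Y$ hits zero; the paper itself records (Proposition~\ref{prop-lipschitz}) that $y\mapsto u$ is in general only H\"older-$\tfrac12$ and becomes locally Lipschitz only under $2\kappa\theta\geq\sigma^2$, so a flow-differentiation argument cannot deliver six (or four) $y$-derivatives in the full parameter regime. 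The paper's actual device, to which all of Section~\ref{appendix-reg} is devoted (Lemma~\ref{lemma-reg} and Proposition~\ref{prop-reg-new}), is to differentiate the PIDE rather than the flow: $\partial_yu$ is identified — through a non-trivial regularization of $\sqrt{y}$ and a stability argument, since the Feynman--Kac identification is not free in the degenerate setting — with the solution of a new PIDE driven by a Heston/Bates process whose long-run mean is shifted to $\theta+\tfrac{\sigma^2}{2\kappa}$, so that the Feller condition is restored after one differentiation, and whose source term contains $\tfrac12\partial_x^{2}\partial_y^{n-1}u+\mathfrak{b}\,\partial_x\partial_y^{n-1}u$. Iterating this representation \eqref{stoc_repr-new} trades each $y$-derivative for two $x$-derivatives of $f$, which is precisely why the hypotheses read $\partial_x^{2j}f\in C^{p,q-j}_{\pol}$ rather than $f\in C^{p,q}_{\pol}$ — a bookkeeping your proposal notices ("the remaining budget matches") but does not actually explain or justify. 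Citing \cite{A-MC} covers only the pure CIR case; the extension to the coupled pair, with the coupling term $\partial_x^2u$ appearing in the source, is the substantive content that is missing from your argument.
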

													
													\begin{proof}
														We apply Theorem \ref{conv-H} for $(i)$ and Theorem \ref{conv-H2} for $(ii)$. The validity of assumptions $\mathcal{A}_1$ and $\mathcal{A}_2$ is proved in Proposition \ref{moments-2}. So, we need only to prove that if $\partial_x^{2j}f\in C^{2,6-j}_{\pol}(\R, \R_+)$ as $j=0,1,\ldots,6$, resp. $\partial_x^{2j}f\in C^{\infty,4-j}_{\pol}(\R, \R_+)$ as $j=0,1,\ldots,4$, then $u\in C^{2,6}_{\pol,T}(\R, \R_+)$, resp. $u\in C^{\infty,4}_{\pol,T}(\R, \R_+)$. This is proved in next Proposition \ref{prop-reg-new} (set $\rho=0$, $\mathfrak{a}=r-\delta-\frac \rho{\sigma}\kappa\theta$ and $\mathfrak{b}=\frac \rho{\sigma}\kappa-\frac 12$ therein),  the whole next Section \ref{appendix-reg} being devoted to.
													\end{proof}
													
													\begin{remark}
														In Chapter \ref{chapter-art3} we have considered the  Bates-Hull-White model \cite{bctz}, which is a Bates model coupled with a stochastic interest rate. Recall that the dynamics follows  \eqref{bates} in which $r$ is not constant but given by the Vasicek model 
														$$
														dr_t=\kappa_r(\theta_r-r_t)dt+\sigma_rdZ^3_t,
														$$
														$Z^3$ being a Brownian motion correlated with $Z^1$ (and possibly $Z^2$).  Here, there is no global transformation allowing one to reduce to our reference model. Nevertheless, a similar convergence result can be proved by means of the local transformation introduced in Section \ref{sec:approxPDE}, acting on each time interval $[nh,(n+1)h]$.	
													\end{remark}
													%
													
													\subsection{A regularity result for the Heston PDE/Bates PIDE}\label{appendix-reg}
													We deal here with a slightly more general model: we consider the SDE 
													\begin{equation}\label{SDE}
													\begin{split}
													&dX_t= \left(\mathfrak{a}+\mathfrak{b}Y_t\right)dt+\sqrt{Y_t}\, dW^1_t+\gamma_XdH_t, \\
													&dY_t= \kappa(\theta-Y_t)dt+\sigma\sqrt{Y_t}\,dW^2_t,
													\end{split}
													\end{equation}
													where $W^1,W^2$ are correlated Brownian motions  with $d\langle W^1,W^2\rangle_t=\rho dt$ and $H$ is a compound Poisson process with intensity  $\lambda$ and  L\'evy measure $\nu$, which is assumed hereafter to have finite moments of any order. Here, $\mathfrak{a},\mathfrak{b}\in\R$  and $\gamma_X\in\{0,1\}$ denote constant parameters. Note that 
													when $\mathfrak{a}=r-\delta$ (interest rate minus dividend rate), $\mathfrak{b}=-\frac 12$ and $\gamma_X=0$ (resp. $\gamma_X=1$), then $(X,Y)$ is the standard Heston (resp. Bates) model for the log-price and volatility. When instead $\rho=0$, $\mathfrak{a}=r-\delta-\frac \rho{\sigma}\kappa\theta$ and $\mathfrak{b}=\frac \rho{\sigma}\kappa-\frac 12$, we recover the equation \eqref{SDE_bates} discussed in Theorem \ref{Bates_conv}.
													
													Let  $\L$ denote the infinitesimal generator associated to \eqref{SDE}, that is,
													\begin{equation}	\label{app-L}
													\L u=\frac y 2 \left( \partial^2_x u +2\rho\sigma \partial_x\partial_yu+\sigma^2\partial^2_yu  \right)
													+  \left(\mathfrak{a}+\mathfrak{b}y \right)\partial_xu
													+\kappa(\theta-y)\partial_yu+\mathcal{L}_{\mbox{{\tiny int}}} u,
													\end{equation}
													where, hereafter, we set
													$$
													\mathcal{L}_{\mbox{{\tiny int}}} u(t,x,y)=\gamma_X\int \big[u(t,x+\zeta,y)-u(t,x,y)\big]\nu(\zeta)d\zeta.
													$$
													
													So, the present section is devoted to the proof of the following result.
													
													\begin{proposition} \label{prop-reg-new}
														Let $p\in [1,	\infty]$, $q\in\N$ and  suppose that 
														$\partial_x ^{2j}f\in C^{p,q-j}_{\pol}(\R,\R_+)$ for every $j=0,1,\ldots,q$. 
														Set
														$$
														u(t,x,y)=\E\big[f(X^{t,x,y}_T,Y^{t,y}_T)\big].
														$$
														Then $u\in C^{p,q}_{\pol, T}(\R,\R_+)$. 
														Moreover, the following stochastic representation holds: for $m+2n\leq 2q$,
														\begin{equation}\label{stoc_repr-new}
														\begin{split}
														&\partial^{m}_x\partial^{n}_yu(t,x,y)=\E\left[e^{-n\kappa (T-t)} \partial^m_x\partial^n_yf(X^{n,t,x,y}_T,Y^{n,t,x,y}_T)\right]\\
														&\quad+
														n\,\E\left[\int_t^T\left[\frac 12 \partial^{m+2}_x\partial^{n-1}_yu+\mathfrak{b}\partial^{m+1}_x\partial^{n-1}_yu\right](s,X^{n,t,x,y}_s,Y^{n,t,x,y}_s)ds  \right],
														\end{split}
														\end{equation}
														where $\partial^{m}_x\partial^{n-1}_y u:=0$ when $n=0$ and $(X^{n,t,x,y},Y^{n,t,x,y})$, $n\geq 0$, denotes the solution starting from $(x,y)$ at time $t$ to the SDE \eqref{SDE}  with parameters
														\begin{equation}\label{parameters-new}
														\rho_n=\rho,\quad \mathfrak{a}_n=\mathfrak{a}+n\rho\sigma,\quad \mathfrak{b}_n=\mathfrak{b},\quad \kappa_n=\kappa,\quad \theta_n=\theta+\frac{n\sigma^2}{2\kappa},\quad \sigma_n=\sigma.
														\end{equation}
														In particular, if $q\geq 2$ then $u\in C^{1,2}([0,T]\times \bar{\mathcal{O}})$, $\bar{\mathcal{O}}=\R\times\R_+$, solves the PIDE
														\begin{equation}\label{PIDE-new}
														\begin{cases}
														\partial_t u(t,x,y)+ \L u(t,x,y)= 0,\qquad& t\in [0,T), \,(x,y)\in \bar{\mathcal{O}},\\
														u(T,x,y)= f(x,y),\qquad& (x,y)\in \bar{\mathcal{O}}.
														\end{cases}	
														\end{equation}
													\end{proposition}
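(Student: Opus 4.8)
The plan is to transfer regularity from $f$ to $u$ by exploiting the affine structure of \eqref{SDE}: $x$-derivatives are handled by translation invariance, and $y$-derivatives by a commutation identity between $\partial_y$ and $\L$ that produces exactly the parameter shift \eqref{parameters-new}. First I would treat the $x$-direction. Since the coefficients of $X$ do not depend on $x$ and the jumps enter additively, one has $X_T^{t,x,y}=x+X_T^{t,0,y}$, hence $u(t,x,y)=\E[f(x+X_T^{t,0,y},Y_T^{t,y})]$. Differentiating under the expectation (justified by dominated convergence, using the hypothesis on $\partial_x^{2j}f$ and the finite moments of the CIR process and of $\nu$) gives $\partial_x^m u(t,x,y)=\E[\partial_x^m f(X_T^{t,x,y},Y_T^{t,y})]$, which yields the $l^p$-in-$x$, polynomial-in-$y$ bounds for all pure $x$-derivatives.

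The heart of the argument is the single $y$-derivative identity. Writing $u_1=\partial_y u$ and using $\partial_t u+\L u=0$, I would compute the commutator
\[
[\partial_y,\L]=\tfrac12\partial_x^2+\rho\sigma\,\partial_x\partial_y+\tfrac{\sigma^2}2\partial_y^2+\mathfrak b\,\partial_x-\kappa\,\partial_y,
\]
so that $u_1$ solves $\partial_t u_1+(\L+\rho\sigma\partial_x+\tfrac{\sigma^2}2\partial_y-\kappa)u_1=-(\tfrac12\partial_x^2+\mathfrak b\partial_x)u$. The key algebraic observation is that $\L+\rho\sigma\partial_x+\tfrac{\sigma^2}2\partial_y=\L^{(1)}$, the generator of \eqref{SDE} with $\theta$ raised to $\theta+\tfrac{\sigma^2}{2\kappa}$ and $\mathfrak a$ raised to $\mathfrak a+\rho\sigma$, i.e. exactly \eqref{parameters-new} with $n=1$, while the leftover $-\kappa$ is a killing term. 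By Feynman--Kac applied to the shifted diffusion $(X^1,Y^1)$ this represents $\partial_y u$ through the $n=1$ shifted process, a factor $e^{-\kappa(T-t)}$, and the source $(\tfrac12\partial_x^2+\mathfrak b\partial_x)u$. Iterating this step together with the $x$-translation and an induction on $m+2n$ produces the general representation \eqref{stoc_repr-new}; the accumulated killing gives $e^{-n\kappa(T-t)}$ and the source involves $\partial_y^{n-1}u$ along the $n$-shifted process. Since $\L_{\mbox{\tiny int}}$ acts only in $x$ and commutes with $\partial_y$, it does not contribute to the commutator, which explains why $\gamma_X$ is absent from \eqref{parameters-new}; the finite-moment assumption on $\nu$ is what keeps $\L_{\mbox{\tiny int}}u$ and its derivatives polynomially bounded. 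It is also this commutator that trades one $\partial_y$ for two $\partial_x$'s (via the $\tfrac12\partial_x^2$ source), which is precisely why the hypothesis is phrased in terms of $\partial_x^{2j}f\in C^{p,q-j}_{\pol}$.

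Next I would close the induction by propagating the polynomial bounds. At each level the representation expresses a derivative of order $m+2n$ in terms of derivatives of $f$ of the same order (controlled by hypothesis) and of strictly lower-order derivatives of $u$ (controlled by the inductive hypothesis), all evaluated along the shifted processes; non-explosion and uniform-in-time moment estimates for these shifted CIR-type processes follow from Proposition \ref{Laplace2} and standard CIR moment bounds, giving $u\in C^{p,q}_{\pol,T}(\R,\R_+)$. Finally, for $q\ge2$ one has $u\in C^{1,2}([0,T]\times\bar{\mathcal{O}})$, so applying It\^o's formula to $u(t,X_t,Y_t)$ and using the polynomial bounds to upgrade the resulting local martingale to a true martingale yields $\partial_t u+\L u=0$ with terminal datum $f$, i.e. \eqref{PIDE-new}.

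I expect the main obstacle to be the degeneracy of the CIR diffusion at $y=0$: the coefficient $\sigma\sqrt y$ is not Lipschitz there, the boundary is hit with positive probability when the Feller condition fails, and so a pathwise first-variation (tangent-flow) computation of $\partial_y$ is unavailable. This is exactly why the argument must run through the affine/commutation structure rather than by differentiating the flow, and why I would first verify the single-derivative identity on exponential data $e^{iux+ivy}$ using Proposition \ref{prop-TF}, where differentiation in $y$ is explicit and the $\theta$-shift is transparent (already for the pure CIR one has $\psi_z(t)=z\,e^{-\kappa t}e^{\frac{\sigma^2}2\phi_z(t)}$, so $\partial_y\E_y[e^{zY_t}]=e^{-\kappa t}\E_y^{\theta+\sigma^2/2\kappa}[\partial\,e^{zY_t}]$), and then extend to general $f$ by density. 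The subordinate technical difficulties are the uniform-in-time control of the growing family of shifted-process moments and the careful justification of the It\^o/martingale step under unbounded, non-Lipschitz coefficients.
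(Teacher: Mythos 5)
Your architecture coincides with the paper's: pure $x$-derivatives via translation invariance of the $X$-dynamics, the single $y$-derivative obtained by differentiating the PIDE and recognizing that the commutator $[\partial_y,\L]=\tfrac12\partial_x^2+\rho\sigma\partial_x\partial_y+\tfrac{\sigma^2}2\partial_y^2+\mathfrak b\,\partial_x-\kappa\partial_y$ turns $\L$ into the generator with parameters \eqref{parameters-new} (for $n=1$) plus a killing rate $\kappa$ and the source $\tfrac12\partial_x^2u+\mathfrak b\,\partial_xu$, then Feynman--Kac for the shifted process and an induction that trades each $\partial_y$ for two $\partial_x$'s — which is indeed why the hypothesis reads $\partial_x^{2j}f\in C^{p,q-j}_{\pol}$. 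You also correctly note that the shift forces the Feller condition for the new process ($2\kappa\theta_*=2\kappa\theta+\sigma^2\geq\sigma^2$), which is what makes the Feynman--Kac formula the \emph{unique} polynomially growing solution of the shifted PIDE, and your sanity check on the pure CIR Laplace transform is accurate.

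The one place where you diverge, and where your proposal has a real gap, is the identification of $\partial_y u$ with that Feynman--Kac candidate $\bar v$. Uniqueness for the shifted PIDE only helps once you already know that $\partial_y u$ exists, is continuous and grows polynomially — precisely the regularity being proved. Your device for this is to verify the identity on exponentials $e^{\mathbf{i}ux+\mathbf{i}vy}$ and "extend by density" in $f$. But the admissible data here are unbounded, with only polynomial growth in $y$ and $L^p$ control in $x$; to pass the identity to the limit along trigonometric approximations $f_n\to f$ you would need polynomial bounds on $\partial_yu_n$ that are \emph{uniform in $n$}, and nothing in a Fourier approximation of an unbounded $f$ on $\R\times[0,\infty)$ delivers that. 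The paper instead regularizes the \emph{diffusion coefficient} rather than the data: it replaces $\sqrt{y}$ by smooth $\varphi_k\geq 1/k$ with $\varphi_k^2$ uniformly Lipschitz, so the regularized system is uniformly elliptic, $\partial_yu^k$ is computed through the first-variation (tangent-flow) process with polynomial bounds uniform in $k$ (because $\varphi_k$ is uniformly sublinear and $\varphi_k\varphi_k'$ uniformly bounded), and then stability results for SDEs with non-Lipschitz coefficients give $u^k\to u$ and $\partial_yu^k\to\bar v$ pointwise; the uniform bounds identify $\bar v$ as the weak, hence classical, $y$-derivative of $u$. Keep your commutator computation but replace the exponential-density step by this coefficient-regularization argument and the proof closes.
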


													\begin{remark}
														For our purposes, we need both the polynomial growth condition for $(x,y)\mapsto u(t,x,y)$ and the $L^p$	property for $x\mapsto u(t,x,y)$, and similarly for the derivatives. A closer look to the proof of Proposition \ref{prop-reg-new} shows that the result holds also when one is not interested in the latter $L^p$ condition. In this case, Proposition \ref{prop-reg-new} reads:  for $q\in\N$, if $\partial_x ^{2j}f\in C^{q-j}_{\pol}(\R\times\R_+)$ for every $j=0,1,\ldots,q$ then
														$u\in C^q_{\pol,T}(\R\times \R_+)$. Moreover, the stochastic representation \eqref{stoc_repr-new} holds and, if $q\geq 2$, $u$ solves PIDE \eqref{PIDE-new}.
													\end{remark}	
													
													As an immediate consequence of Proposition \ref{prop-reg-new}, we obtain the already known regularity result for the CIR process which has been already proved in Proposition 4.1 of  \cite{A-MC}.
													
													\begin{corollary}\label{corollary-cir}
														Assume that $f=f(y)$ and set
														$
														u(t,y)=\E\big[f(Y^{t,y}_T)\big].
														$
														If $f\in C^q_\pol(\R_+)$, then $u\in C^q_{\pol, T}(\R_+)$. 
														Moreover, for $n\leq q$,
														$$
														\partial^n_y u(t,y)=\E\left[e^{-n\kappa (T-t)} 	\partial^n_yf(Y^{n,t,y}_T)\right],
														$$
														where $Y^{n,t,y}$ denotes a CIR process starting from $y$ at time $t$ which solves the CIR dynamics with parameters $\kappa_{n}=\kappa$, $\theta_{n}=\theta+\frac {n\sigma^2} {2\kappa}$, $\sigma_n=\sigma$. 
														In particular, if  $q\geq 2$ then $u\in C^2_\pol(\R_+)$ solves the PDE
														$$
														\begin{cases}
														\partial_tu+ \mathcal{A} u= 0,\qquad& (t,y)\in [0,T)\times \R_+,\\
														u_n(T,y)= \partial^n_yf(y),\qquad& y\in \R_+,
														\end{cases}	
														$$	
														where $\mathcal{A}$ is the CIR infinitesimal generator  (see \eqref{A-op}).
													\end{corollary}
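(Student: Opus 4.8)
The plan is to read off Corollary \ref{corollary-cir} directly from Proposition \ref{prop-reg-new}, treating the $y$-only datum $f$ as a degenerate instance of the two-variable setting. First I would regard $f=f(y)$ as the function $\tilde f(x,y):=f(y)$ on $\R\times\R_+$, constant in $x$. Then $\partial_x^{2j}\tilde f\equiv 0$ for every $j\geq 1$, which lies trivially in any polynomial-growth class, while $\partial_x^{0}\tilde f=\tilde f$ inherits $C^q_\pol(\R\times\R_+)$ regularity from $f\in C^q_\pol(\R_+)$. This is exactly the hypothesis ``$\partial_x^{2j}f\in C^{q-j}_\pol(\R\times\R_+)$ for $j=0,\dots,q$'' needed to feed into the (polynomial-growth) version of Proposition \ref{prop-reg-new}.

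The one genuinely delicate point — and the only real obstacle — is that the statement of Proposition \ref{prop-reg-new} asks for the mixed $L^p$ condition $\partial_x^{2j}f\in C^{p,q-j}_\pol(\R,\R_+)$, whereas a nonzero function that is constant in $x$ is never in $L^p(\R,dx)$. Hence I cannot apply the proposition verbatim. Instead I would explicitly invoke the variant recorded in the remark immediately following Proposition \ref{prop-reg-new}, which drops the $L^p$ requirement and asserts: if $\partial_x^{2j}f\in C^{q-j}_\pol(\R\times\R_+)$ for all $j$, then $u\in C^q_{\pol,T}(\R\times\R_+)$, the representation \eqref{stoc_repr-new} holds, and (for $q\geq 2$) $u$ solves \eqref{PIDE-new}. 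Applied to $\tilde f$ this yields $u\in C^q_{\pol,T}(\R\times\R_+)$.

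Next I would strip the spurious $x$-dependence. Since the $Y$-component of \eqref{SDE} does not involve $X$, the marginal law of $Y^{t,y}_T$ is independent of the starting point $x$, so $u(t,x,y)=\E[f(Y^{t,y}_T)]$ is constant in $x$; writing $u=u(t,y)$ gives $u\in C^q_{\pol,T}(\R_+)$. Specializing \eqref{stoc_repr-new} to $m=0$ then produces the claimed formula: the correction integral there contains only $\partial_x^{2}\partial_y^{n-1}u$ and $\partial_x\partial_y^{n-1}u$, both identically zero because $u$ is $x$-independent, so the only surviving term is $\partial_y^n u(t,y)=\E[e^{-n\kappa(T-t)}\partial_y^n f(Y^{n,t,y}_T)]$. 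It then remains only to identify $Y^{n,t,y}$: by \eqref{parameters-new} the $Y$-marginal of the auxiliary system is precisely a CIR process with $\kappa_n=\kappa$, $\theta_n=\theta+n\sigma^2/(2\kappa)$, $\sigma_n=\sigma$, exactly as stated.

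Finally, for $q\geq 2$ I would substitute $u=u(t,y)$ into the PIDE \eqref{PIDE-new}. All $x$-derivatives in $\L$ (see \eqref{app-L}) vanish, and the nonlocal term $\mathcal{L}_{\mbox{{\tiny int}}}u$ is zero since $u(t,x+\zeta,y)-u(t,x,y)=0$; thus $\L$ collapses to $\frac{\sigma^2 y}{2}\partial_y^2+\kappa(\theta-y)\partial_y=\mathcal{A}$, the CIR generator of \eqref{A-op}, and \eqref{PIDE-new} reduces to the asserted Cauchy problem $\partial_t u+\mathcal{A}u=0$ with terminal datum $f$. I expect no computation beyond these specializations: the entire content is the bookkeeping of which version of Proposition \ref{prop-reg-new} is applicable and the cancellation of every $x$-derivative term once the $x$-dependence is removed.
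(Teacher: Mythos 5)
Your proposal is correct and follows exactly the route the paper intends: the paper simply declares the corollary "an immediate consequence of Proposition \ref{prop-reg-new}", and your write-up supplies precisely the missing bookkeeping (constant-in-$x$ extension, vanishing of the $x$-derivative correction terms in \eqref{stoc_repr-new}, collapse of $\L$ to $\mathcal{A}$). The only remark worth adding is that your detour through the remark is not strictly necessary, since the class $C^{p,q}_\pol$ allows $p=\infty$, for which a function constant in $x$ with polynomial growth in $y$ satisfies the $L^p$ condition verbatim.
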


													We first need some preliminary results. First of all, recall that $X$ and $Y$ have uniformly bounded moments:
													for every $T>0$ and $a\geq 1$ there exist $A>0$  such that for every $t\in[0,T]$,
													\begin{equation}\label{moments_XY}
													\sup_{s\in[t,T]} \E[|X^{t,x,y}_s|^a]\leq A(1+|x|^{a}+y^{a})\mbox{ and }
													\sup_{s\in[t,T]} \E[|Y^{t,y}_s|^a]\leq A(1+y^a).
													\end{equation}
													For the second property in  \eqref{moments_XY}, we refer, for example, to \cite{A-MC}, whereas the first one follows from standard techniques.
													\begin{lemma}\label{lemma_reg}
														Let $p\in[0,\infty]$, $ g\in C^{p,0}_{\pol}(\R,\R_+)$,
														$ h\in C^{p,0}_{\pol,T}(\R,\R_+)$  and consider the function
														\begin{equation}\label {u-g-f}
														u(t,x,y)=\E\left[e^{\varrho (T-t)} g(X^{t,x,y}_T,Y^{t,y}_T)-\int_t^Te^{\varrho (s-t)} h(s,X^{t,x,y}_s,Y^{t,y}_s)ds  \right],
														\end{equation}
														where $\varrho\in\R$. Then $u\in C^{p,0}_{\pol,T}(\R,\R_+)$. 
													\end{lemma}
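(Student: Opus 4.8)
The plan is to verify separately the three requirements hidden in the definition of $C^{p,0}_{\pol,T}(\R,\R_+)$: that $u$ is continuous on $[0,T]\times\R\times\R_+$, that $u$ grows polynomially in $(x,y)$ uniformly in $t$, and that $\sup_{t\in[0,T)}|u(t,\cdot,y)|_{L^p(\R,dx)}\leq C(1+|y|^c)$. The cornerstone of the whole argument is a structural observation on the dynamics \eqref{SDE}: since the coefficients of $X$ depend only on $Y$ and not on $X$ itself, the process $Y^{t,y}$ does not depend on the starting point $x$, and $x$ enters $X$ only additively, i.e.
\begin{equation*}
X^{t,x,y}_s = x + X^{t,0,y}_s, \qquad s\in[t,T],
\end{equation*}
where $X^{t,0,y}$ is the solution started from $X_t=0$, $Y_t=y$. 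This translation invariance in $x$ (preserved by the jump term $\gamma_X dH$, which is also additive and independent of $x$) is precisely what will let me control the $L^p_x$-norm.

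First I would establish the polynomial growth. Writing $|g(x,y)|\leq C(1+|x|^a+|y|^a)$, $|h(t,x,y)|\leq C(1+|x|^a+|y|^a)$ and bounding the exponential factors by $e^{|\varrho|T}$, the definition of $u$ together with the moment bounds \eqref{moments_XY} for $\sup_s\E[|X^{t,x,y}_s|^a]$ and $\sup_s\E[|Y^{t,y}_s|^a]$ immediately yields $\sup_{t\in[0,T)}|u(t,x,y)|\leq C(1+|x|^a+|y|^a)$. For the $L^p$ estimate, which is the genuinely new ingredient, I would insert the translation identity and apply Minkowski's integral inequality to move the $L^p_x$-norm inside the expectation and inside the time integral. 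Since for each fixed $\omega$ the shift $X^{t,0,y}_s(\omega)$ is a constant, translation invariance of the Lebesgue measure gives, pathwise,
\begin{equation*}
\big|g(\cdot + X^{t,0,y}_T, Y^{t,y}_T)\big|_{L^p(\R,dx)} = \big|g(\cdot, Y^{t,y}_T)\big|_{L^p(\R,dx)}\leq C\big(1+|Y^{t,y}_T|^c\big),
\end{equation*}
the last bound coming from $g\in C^{p,0}_{\pol}(\R,\R_+)$, and analogously for $h$. Taking expectations and using \eqref{moments_XY} once more produces $\sup_{t\in[0,T)}|u(t,\cdot,y)|_{L^p(\R,dx)}\leq C(1+|y|^c)$; the case $p=\infty$ is handled identically, replacing the $L^p_x$-norm by the essential supremum, which is likewise translation invariant.

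Finally, for the continuity of $u$ I would rely on the continuous dependence of the flow $(X^{t,x,y},Y^{t,y})$ on the data $(t,x,y)$, in the spirit of Lemma \ref{propflo}, so that along any sequence $(t_n,x_n,y_n)\to(t,x,y)$ the integrands converge in probability; I would then pass to the limit under the expectation and under $\int_t^T\!ds$ by a uniform integrability argument. The latter is secured by the moment bounds \eqref{moments_XY} at an order strictly larger than the growth exponent $a$ of $g$ and $h$, combined with Vitali's theorem. I expect this continuity step, and in particular the uniform integrability needed to interchange limit, expectation and time integral while the initial time $t_n$ itself varies, to be the main technical obstacle; the polynomial growth and the $L^p$ bound are comparatively direct once the translation identity is in hand. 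This lemma serves as the $q=0$ base case, from which the derivative estimates of Proposition \ref{prop-reg-new} will later be bootstrapped through the stochastic representation \eqref{stoc_repr-new}.
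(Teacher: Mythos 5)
Your proposal is correct and follows essentially the same route as the paper's proof: the additive structure $X^{t,x,y}_s=x+X^{t,0,y}_s$ combined with translation invariance of Lebesgue measure for the $L^p_x$ bound, the moment estimates \eqref{moments_XY} for the polynomial growth, and convergence in probability of the flow plus uniform integrability (from higher-order moment bounds) to pass to the limit for continuity. The only cosmetic difference is that you invoke Minkowski's integral inequality to move the $L^p_x$-norm inside the expectation where the paper uses Jensen/Cauchy--Schwarz, which is an equivalent manipulation.
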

													\begin{proof}
														We set
														\begin{align*}
														u_1(t,x,y)=\E\left[e^{\varrho (T-t)} g(X^{t,x,y}_T,Y^{t,y}_T)\right], \qquad
														u_2(t,x,y)=\E\left[\int_t^Te^{\varrho (s-t)} h(s,X^{t,x,y}_s,Y^{t,y}_s)ds  \right]
														\end{align*}
														and we show that, for $i=1,2$, $u_i\in C^{p,0}_{\pol,T}(\R,\R_+)$. We prove it for $i=2$, the case $i=1$ being similar and easier.

														Fix $(t,x,y)\in [0,T]\times \R\times \R_+$ and let $(t_n,x_n,y_n)_n\subset [0,T]\times\R\times\R_+$ be such that $(t_n,x_n,y_n)\to (t,x,y)$ as $n\to\infty$. One can easily prove that, for every fixed $s\geq t_n\vee t$,  $(X_s^{t_n,x_n,y_n},Y_s^{t_n,y_n})\rightarrow(X_s^{t,x,y},Y_s^{t,y})$ in probability. 
														%
														We write $u_2$ as 
														$$
														u_2(t,x,y)=\int_0^T\ind{s>t}e^{\varrho (s-t)}\E\left[ h(s,X^{t,x,y}_s,Y^{t,y}_s) \right]ds 
														$$
														Since $ h$ is continuous,	for $s>t_n\vee t$ the sequence $( h(s,X^{t_n,x_n,y_n}_s,Y^{t_n,y_n}_s))_n$ converges in probability to $ h(s,X^{t,x,y}_s,Y^{t,y}_s)$.  By the polynomial growth of $h$ and \eqref{moments_XY}, for $p>1$ we have
														\begin{align}\label{a}
														\sup_{n} \E[	| h(X_T^{t_n,x_n,y_n},Y_T^{t_n,y_n})|^p]&\leq \sup_nC\E[1+|X_T^{t_n,y_n}|^{ap}+(Y_T^{t_n,y_n})^{ap}]<\infty.
														\end{align}
														Thus, $( h(X_T^{t_n,x_n,y_n},Y_T^{t_n,y_n}))_n$ is uniformly integrable, so $ h(X_T^{t_n,x_n,y_n},Y_T^{t_n,y_n})\rightarrow  h(X_T^{t,x,y},Y_T^{t,y})$ in $L^1$ and
														$$
														\ind{s>t_n}\E\left[e^{\varrho (s-t_n)} h(s,X^{t_n,x_n,y_n}_s,Y^{t_n,y_n}_s) \right]\to \ind{s>t}\E\left[e^{\varrho (s-t)} h(s,X^{t,x,y}_s,Y^{t,y}_s) \right],
														$$
														a.e. $s\in [0,T] $.
														By \eqref{a}, $u_2(t_n,x_n,y_n)\to u_2(t,x,y)$ thanks to the Lebesgue's dominated convergence and moreover, $u_2$ grows polynomially. So, $u_2\in \mathcal{C}_{\pol,T}(\R\times\R_+)$.
														
														Fix now $p\neq\infty$. We have
														\begin{align*}
														&\sup_{t\leq T}\|u_2(t,\cdot,y)\|_{L^p(\R,dx)}
														= \sup_{t\leq T}\left\| \E\left[\int_t^Te^{\varrho (s-t)} h(s,X^{t,\cdot,y}_s,Y^{t,y}_s)ds  \right]\right\|_{L^p(\R,dx)}\\
														&\quad\leq C\sup_{t\leq T}\E\left[ \int_t^T\left\| h(s,X^{t,\cdot,y}_s,Y^{t,y}_s)\right\|^p_{L^p(\R,dx)}\right]^{1/p}
														= C\sup_{t\leq T}\E\left[ \int_t^T\left\| h(s,\cdot +H^{t,y}_s,Y^{t,y}_s)\right\|^p_{L^p(\R,dx)}\right]^{1/p}\\
														&\quad = C\sup_{t\leq T}\E\left[ \int_t^T\left\| h(s,\cdot ,Y^{t,y}_s)\right\|^p_{L^p(\R,dx)}\right]^{1/p}
														\leq C T\sup_{t\leq s\leq T}(1+\E[(Y_s^{t,y})^{pa}])^{1/p}
														\end{align*}
														in which we have used twice the Cauchy-Schwarz inequality. Then, by using \eqref{moments_XY}, we have $u_2\in C_{\pol,T}^{p,0}(\R,\R_+)$. The case $p=\infty$ follows the same lines.
													\end{proof}
													To simplify the notation, from now on we set $\E^{t,x,y}[\cdot]=\E[\cdot|X_t=x,Y_t=y]$ and $\mathcal{O}=\R\times(0,\infty)$.. 
													\begin{lemma}\label{u-pde}
														Let $ g\in \mathcal{C}_{\pol}(\bar{\mathcal{O}})$ and $ h\in C_{\pol,T}(\bar{\mathcal{O}})$ be such that $\mathcal{O}\ni z\mapsto  h(t,z)$ is locally H\"older continuous uniformly on the compact sets of $[0,T)$. 
														Let $u$ be defined in \eqref {u-g-f}. Then, $u\in\mathcal{C}([0,T]\times\bar{\mathcal{O}})\cap\mathcal{C}^{1,2}([0,T)\times \mathcal{O})$ and solves the PIDE
														\begin{equation}\label{PDE-u}
														\begin{cases}
														\partial_t u+ \L u+\varrho u= h,\qquad&\mbox{ in }[0,T)\times\mathcal{O},\\
														u(T,z)= g(z),\qquad&\mbox{ in }\mathcal{O}.
														\end{cases}	
														\end{equation}
														Moreover, if the Feller condition holds, that is, $2\kappa\theta\geq \sigma^2$, then $u$ is the unique solution to \eqref{PDE-u} in the class $C_{\pol,T}(\bar{\mathcal{O}})$.
														
													\end{lemma}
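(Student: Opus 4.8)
The plan is to establish the regularity and PIDE-solution property through a Feynman–Kac type argument combined with classical parabolic theory, exploiting the fact that the integral term $\mathcal{L}_{\mbox{{\tiny int}}}$ is a bounded perturbation (since $\nu$ is a finite measure for a compound Poisson process). First I would show that $u$ given by \eqref{u-g-f} is continuous on $[0,T]\times\bar{\mathcal{O}}$; this follows directly from Lemma \ref{lemma_reg} applied with $p=0$ (or the remark-variant without the $L^p$ condition), since $g\in\mathcal{C}_{\pol}(\bar{\mathcal{O}})$ and $h\in C_{\pol,T}(\bar{\mathcal{O}})$ give $u\in C_{\pol,T}(\bar{\mathcal{O}})$, hence in particular $u$ is continuous up to the boundary and matches the terminal condition $u(T,\cdot)=g$.

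The core of the argument is the interior regularity $u\in\mathcal{C}^{1,2}([0,T)\times\mathcal{O})$ together with the fact that $u$ solves \eqref{PDE-u}. Here I would use the strict positivity of $y$ on $\mathcal{O}=\R\times(0,\infty)$: on any compact subset of $\mathcal{O}$ the diffusion operator $\frac{y}{2}(\partial^2_x+2\rho\sigma\partial_x\partial_y+\sigma^2\partial^2_y)$ is uniformly elliptic (the diffusion matrix has determinant $\frac{y^2\sigma^2}{4}(1-\rho^2)>0$, using $|\rho|<1$), with smooth coefficients. The plan is to treat the nonlocal term as an inhomogeneous source: set $\tilde h(t,z)=h(t,z)-\mathcal{L}_{\mbox{{\tiny int}}}u(t,z)$ and observe that $\mathcal{L}_{\mbox{{\tiny int}}}u$ is well-defined and locally H\"older continuous because $u$ is continuous with polynomial growth and $\nu$ has finite moments of all orders. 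Then $u$ is a probabilistic (mild) solution of the \emph{local} Cauchy problem $\partial_t u+\mathcal{A}_{\mbox{{\tiny diff}}}u+\varrho u=\tilde h$, where $\mathcal{A}_{\mbox{{\tiny diff}}}$ is the purely differential part of $\L$. By interior Schauder estimates for uniformly parabolic equations (as in the classical theory invoked via \cite{LSU}), applied on an increasing sequence of compact subdomains of $\mathcal{O}$, one obtains $u\in\mathcal{C}^{1,2}([0,T)\times\mathcal{O})$, and then an application of It\^o's formula (now justified since $u$ is smooth enough in the interior) together with the martingale/optional-sampling argument along the lines already used in the proof of Proposition \ref{identification} verifies that $u$ indeed solves \eqref{PDE-u} in the interior.

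The main obstacle I anticipate is precisely the boundary behaviour at $y=0$, where the operator degenerates and the Feller condition governs whether the process can reach or leave the boundary. The continuity of $u$ up to $\partial\mathcal{O}$ is handled by Lemma \ref{lemma_reg} and the continuous dependence of the flow on the initial data (an analogue of Lemma \ref{propflo}), but proving that no boundary condition is needed, and that $u$ is the \emph{unique} solution in the class $C_{\pol,T}(\bar{\mathcal{O}})$ when $2\kappa\theta\geq\sigma^2$, requires more care. Under the Feller condition the process $Y$ started at $y>0$ never hits zero, so the representation \eqref{u-g-f} is intrinsically determined by the interior dynamics; for uniqueness I would argue by a standard verification/comparison technique, applying It\^o's formula to a candidate solution $w\in C_{\pol,T}(\bar{\mathcal{O}})$ along the diffusion $(X,Y)$ stopped before exiting a large compact set, and passing to the limit using the polynomial growth together with the uniform moment bounds \eqref{moments_XY} to control the boundary and terminal contributions. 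The delicate point is ensuring that the localizing stopping times do not accumulate mass at $\{y=0\}$, which is exactly where the Feller condition enters to guarantee that the local martingale terms vanish in the limit and the representation is forced.
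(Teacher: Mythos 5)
Your overall architecture --- localize away from the degenerate boundary $y=0$ where the second-order part is uniformly elliptic, obtain interior regularity from classical parabolic theory, and prove uniqueness by a stopped verification argument in which the Feller condition forces the exit times of $Y$ from $(\epsilon,\infty)$ to tend to infinity --- is the same as the paper's, and your treatment of the continuity of $u$ (via Lemma \ref{lemma_reg}) and of the uniqueness step is sound. The genuine gap is in the interior regularity step. You propose to freeze the nonlocal term, regard $u$ as a solution of the local problem with source $\tilde h=h-\mathcal{L}_{\mathrm{int}}u$, and invoke interior Schauder estimates; but Schauder theory requires the source to be locally H\"older continuous, and your justification --- that $\mathcal{L}_{\mathrm{int}}u$ is locally H\"older continuous ``because $u$ is continuous with polynomial growth'' --- does not hold: the map $u\mapsto \mathcal{L}_{\mathrm{int}}u$ only transfers the modulus of continuity of $u$, so mere continuity of $u$ gives mere continuity of $\mathcal{L}_{\mathrm{int}}u$, which does not yield classical $C^{1,2}$ regularity (at best $C^{1,\alpha}$ or $W^{2,p}$ interior regularity). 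Since $g$ is only assumed continuous with polynomial growth, you have no a priori H\"older bound on $u$ with which to start the bootstrap; moreover, you would also need to make precise in what sense the merely continuous function $u$ ``solves'' the local equation with source $\tilde h$ before any interior estimate can be applied, and the phrase ``probabilistic (mild) solution'' does not by itself license Schauder theory.

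The paper sidesteps both difficulties by arguing in the opposite direction: on each strip $Q=[0,S)\times\R\times(\epsilon,\infty)$ it solves the Dirichlet problem for the \emph{full} integro-differential operator with boundary datum $u$ on the parabolic boundary (classical existence for non-degenerate parabolic integro-differential problems, as in \cite{GM,MP}, requires only continuity of the boundary data), obtaining a genuine $C^{1,2}$ solution $v$; it then identifies $v\equiv u$ by applying optional sampling to the martingale $Z_s=e^{\varrho s}v(s,X_s,Y_s)-\int_t^se^{\varrho r}h(r,X_r,Y_r)dr$ up to $S\wedge\tau_{\mathcal{R}}$ and rewriting the stopped term via the strong Markov property. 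The regularity of $u$ is thus inherited from $v$ rather than bootstrapped. To repair your argument you would need either to first establish local H\"older continuity of $u$ (nontrivial under the stated hypotheses, especially in the $y$-direction) or to replace the Schauder step by an existence-plus-identification device of exactly this kind.
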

													
													\begin{proof}
														Let $S\in[0,T)$,  $\mathcal{R}=\R \times (\epsilon,\infty)$, $\epsilon>0$,  $Q=[0,S)\times\mathcal{R}$ and consider the PIDE problem
														\begin{equation*}
														\begin{cases}
														\partial_tv+ \L v+\varrho v= h,\qquad&\mbox{ in }Q,\\
														v=u,\qquad&\mbox{ in }\partial_0Q,
														\end{cases}	
														\end{equation*}
														$\partial_0Q$ denoting the parabolic boundary of $Q$. The coefficients satisfy in $Q$ all the classical assumptions (see e.g.  \cite{GM, MP}), so a unique (bounded) solution $v\in C^{1,2}([0,T)\times\mathcal{R})\cap C([0,T]\times\bar{\mathcal{R}})$ actually exists (and have H\"older continuous derivatives $v_t$, $\nabla_z v$ and $D^2_zv$ in $\bar{Q}$). As a consequence,
														$$
														Z_s:=e^{\varrho s}v(s,X_s,Y_s)-\int_t^s e^{\varrho r} h(r,X_r,Y_r)dr
														$$
														is a martingale over $[t,S\wedge \tau_\mathcal{R}]$, where $\tau_\mathcal{R}$ denotes the exit time of  $(X,Y)$ from $\mathcal{R}$. Then,
														\begin{align*}
														&e^{\varrho t}v(t,x,y)
														=\E^{t,x,y}(Z_t)=\E^{t,x,y}(Z_{S\wedge \tau_\mathcal{R}})\\
														&=\E^{t,x,y}\Big[e^{\varrho {S\wedge \tau_\mathcal{R}}}u(S\wedge \tau_\mathcal{R},X_{S\wedge \tau_\mathcal{R}},Y_{S\wedge \tau_\mathcal{R}})-\int_t^{S\wedge \tau_\mathcal{R}} e^{\varrho r} h(r,X_r,Y_r)dr\Big].
														\end{align*}
														Now, 
														by the strong Markov property,
														\begin{align*}
														& e^{\varrho {S\wedge \tau_\mathcal{R}}}u(S\wedge \tau_\mathcal{R},X_{S\wedge \tau_\mathcal{R}},Y_{S\wedge \tau_\mathcal{R}})
														=\E\Big[e^{\rho T} g(X_T,Y_T)-\int_{S\wedge \tau_{\mathcal{R}}}^{T} e^{\varrho r} h(r,X_r,Y_r)dr\,\Big|\, \mathcal{F}_{S\wedge \tau_{\mathcal{R}}}\Big].
														\end{align*}
														By replacing above, it follows that $v\equiv u$ in $Q$. Whence, the first assertion is proved.   
														Suppose now that $2\kappa\theta\geq \sigma^2$ and that $ g$  has polynomial growth. Let $w\in\mathcal{C}([0,T]\times \bar{\mathcal{O}})$ denote a solution to \eqref{PDE-u} with polynomial growth. We prove that $w=u$.
														Let $S_n<T$ and let $\mathcal{R}_n$ denote a sequence rectangles as before such that $Q_n=[0,S_n)\times \mathcal{R}_n\uparrow[0,T)\times \mathcal{O}$. Let $w_n$ the unique solution to
														\begin{equation*}
														\begin{cases}
														\partial_t w_n+ \L w_n+\varrho w_n= h,\qquad&\mbox{ in }Q_n,\\
														w_n=w,\qquad&\mbox{ in }\partial_0Q_n.
														\end{cases}	
														\end{equation*}    
														Since $w$ trivially solves the above PIDE problem, we get $w_n=w$ and
														$$
														e^{\varrho t} w(t,x,y)=\E^{t,x,y}\Big[e^{\varrho {S_n\wedge \tau_{\mathcal{R}_n}}}w(S_n\wedge \tau_{\mathcal{R}_n},X_{S_n\wedge \tau_{\mathcal{R}_n}},Y_{S_n\wedge \tau_{\mathcal{R}_n}})-\int_t^{S_n\wedge \tau_{\mathcal{R}_n}} e^{\varrho r} h(r,X_r,Y_r)dr\Big].
														$$
														Now, as $n\to\infty$, one has $\tau_{\mathcal{R}_n}\uparrow \infty$ because, by the Feller condition, $\P^{t,y}(Y_s>0\,\forall s)=1$. Then, we pass to the limit  and since $w$ is continuous  and has polynomial growth, we easily obtain $w\equiv u$.
													\end{proof}
													
													\begin{lemma}
														\label{lemma-reg}
														
														Let $u$ be defined in \eqref{u-g-f}, with $g$ and $h$ such that, as $j=0,1$,
														$\partial_x^{2j}g\in C^{1-j}_{\pol}(\bar{\mathcal{O}})$  and  $\partial_x^{2j}h\in \mathcal{C}^{1-j}_{\pol,T}(\bar{\mathcal{O}})$.
														Then 
														$u\in \mathcal{C}^1_{\pol,T}(\bar{\mathcal{O}})$. Moreover, $\partial^2_xu\in \mathcal{C}_{\pol,T}(\bar{\mathcal{O}})$ and one has
														\begin{align}
														&	\partial^m_x u(t,x,y)
														=\E^{t,x,y}\left[e^{\varrho (T-t)} \partial^m_x g(X_T,Y_T)-\int_t^Te^{\varrho (s-t)} \partial^m_x h(s,X_s,Y_s)ds  \right],\quad m=1,2,\label{u-x-xx}\\
														&	\partial_y u(t,x,y)
														=\E^{t,x,y}\left[e^{(\varrho-\kappa) (T-t)} \partial_yg(X^*_T,Y^*_T)\right]\nonumber \\&\qquad\qquad\qquad+\E\left[\int_t^Te^{(\varrho-\kappa) (T-s)} \Big[\partial_yh+\frac 12 \partial^2_xu+\mathfrak{b}\partial_xu\Big](s,X^*_s,Y^*_s)ds  \right],\label{u-y}
														\end{align}
														where
														$(X^*_t,Y^*_t)$ solves  \eqref{SDE} with new parameters 	$\rho_*=\rho$, $\mathfrak{a}_*=\mathfrak{a}+\rho\sigma$, $\mathfrak{b}_*=\mathfrak{b}$,  $\kappa_*=\kappa$, $\theta_*=\theta+\frac{\sigma^2}{2\kappa}$, $\sigma_*=\sigma$.
													\end{lemma}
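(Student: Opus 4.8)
The plan is to establish the regularity statement, the stochastic representations \eqref{u-x-xx}--\eqref{u-y}, and the continuity $\partial_x^2 u\in\mathcal{C}_{\pol,T}(\bar{\mathcal{O}})$ by differentiating the flow of \eqref{SDE} with respect to the initial conditions $x$ and $y$ and then identifying the resulting expressions with solutions of auxiliary SDEs obtained by a change of the parameters $(\mathfrak{a},\theta)$. The starting observation is that the $x$-translation invariance of the dynamics gives the cleanest identities: since the coefficients of \eqref{SDE} do not depend on $x$ at all (the drift is $\mathfrak{a}+\mathfrak{b}y$ and the diffusion is $\sqrt{y}$, and the jump term is an additive $\gamma_X dH$), one has $X^{t,x,y}_s=x+X^{t,0,y}_s$ and $Y^{t,y}_s$ does not depend on $x$. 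Hence $u(t,x,y)=\E[e^{\varrho(T-t)}g(x+X^{t,0,y}_T,Y^{t,y}_T)-\int_t^T e^{\varrho(s-t)}h(s,x+X^{t,0,y}_s,Y^{t,y}_s)ds]$, and differentiating $m$ times in $x$ simply moves $\partial_x^m$ onto $g$ and $h$ inside the expectation. The polynomial-growth and $L^p$ bookkeeping for these $x$-derivatives is then exactly Lemma \ref{lemma_reg} applied to the data $\partial_x^m g$ and $\partial_x^m h$; this proves \eqref{u-x-xx} for $m=1,2$ and that $\partial_x u,\partial_x^2 u\in\mathcal{C}_{\pol,T}(\bar{\mathcal{O}})$.

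The substantive part is the $y$-derivative \eqref{u-y}, and here I would exploit the affine structure that was already used in Proposition \ref{Laplace} and Corollary \ref{corollary-cir}. The key computational fact is that differentiating the CIR flow $Y^{t,y}$ with respect to $y$ produces the tangent process $\dot Y^{t,y}_s=\partial_y Y^{t,y}_s$, which solves a linear SDE whose solution is $\dot Y^{t,y}_s=\exp(-\kappa(s-t)-\tfrac{\sigma^2}{8}\int_t^s\frac{ds'}{\cdots})\cdots$; more usefully, the law of $(X,Y)$ differentiated in $y$ can be re-expressed, via the explicit dependence of the CIR transition density on $\theta$ through $\beta=2\kappa\theta/\sigma^2$, as a shift $\theta\mapsto\theta+\sigma^2/(2\kappa)$ together with the killing/discount factor $e^{-\kappa(T-t)}$. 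This is precisely the mechanism behind the parameter change $\theta_*=\theta+\frac{\sigma^2}{2\kappa}$, $\mathfrak{a}_*=\mathfrak{a}+\rho\sigma$ in the statement: the shift in $\mathfrak{a}$ absorbs the correlation term $\rho\sigma\partial_x$ that appears when one commutes $\partial_y$ past the mixed second-order term in $\L$. I would derive \eqref{u-y} by first assuming the Feller condition $2\kappa\theta\ge\sigma^2$ (so that $u$ is a genuine classical $C^{1,2}$ solution of \eqref{PDE-u} by Lemma \ref{u-pde}), formally differentiating the PIDE \eqref{PDE-u} in $y$ to see that $\partial_y u$ solves the same type of PIDE but with generator associated to the starred parameters and with source $\partial_y h+\tfrac12\partial_x^2 u+\mathfrak{b}\partial_x u$, and then applying the Feynman--Kac representation (Lemma \ref{u-pde} again, now for the starred model) to read off \eqref{u-y}.

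The main obstacle will be removing the Feller condition, since the target model \eqref{SDE} (and the Heston/Bates application) must be handled when $2\kappa\theta<\sigma^2$, where $Y$ hits $0$ and the classical interior regularity/uniqueness of Lemma \ref{u-pde} is no longer directly available up to the degenerate boundary. The plan to circumvent this is an approximation argument in the parameters: one approximates $\theta$ from above by $\theta^{(k)}\uparrow\theta$ together with, if needed, $\sigma^{(k)}\downarrow\sigma$ chosen so that $2\kappa\theta^{(k)}\ge(\sigma^{(k)})^2$ holds, establishes \eqref{u-y} and the uniform polynomial bounds for each approximating model, and then passes to the limit using the continuity of the CIR flow in its parameters and the uniform moment bounds \eqref{moments_XY}. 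The representation \eqref{u-y} and the starred SDE are stable under this limit because the starred parameters depend continuously on $(\mathfrak{a},\theta,\sigma)$; uniform integrability, needed to pass the expectations to the limit, follows from the polynomial-growth assumptions on $\partial_x^{2j}g,\partial_x^{2j}h$ combined with \eqref{moments_XY} exactly as in Lemma \ref{lemma_reg}. Once \eqref{u-x-xx} and \eqref{u-y} are in hand, the claim $u\in\mathcal{C}^1_{\pol,T}(\bar{\mathcal{O}})$ and $\partial_x^2 u\in\mathcal{C}_{\pol,T}(\bar{\mathcal{O}})$ follows by applying Lemma \ref{lemma_reg} to each representation, noting that the source term $\tfrac12\partial_x^2 u+\mathfrak{b}\partial_x u$ appearing in \eqref{u-y} already belongs to the right class thanks to \eqref{u-x-xx}. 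This also sets up the natural induction that Proposition \ref{prop-reg-new} will later run on $q$, since \eqref{u-y} expresses a $y$-derivative of $u$ in terms of lower-order $x$-derivatives of $u$ for the starred model, matching the recursive form of \eqref{stoc_repr-new}.
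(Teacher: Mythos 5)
Your treatment of \eqref{u-x-xx} matches the paper: translation invariance in $x$ plus Lemma \ref{lemma_reg} applied to $\partial_x^mg$, $\partial_x^mh$. For \eqref{u-y} you also identify the right structure (differentiate the PIDE, obtain the starred generator with killing rate $\varrho-\kappa$ and source $\partial_yh+\tfrac12\partial_x^2u+\mathfrak{b}\partial_xu$, then invoke Feynman--Kac). But there is a genuine gap at the identification step, and your proposed workaround aims at the wrong obstruction. To conclude that $\partial_yu$ \emph{equals} the Feynman--Kac candidate $\bar v$ via the uniqueness part of Lemma \ref{u-pde}, you must know beforehand that $\partial_yu$ exists, extends continuously to the degenerate boundary $\{y=0\}$, and has polynomial growth, i.e.\ $\partial_yu\in C_{\pol,T}(\bar{\mathcal{O}})$. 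That is precisely the nontrivial content of the lemma, and it does not follow from ``formally differentiating the PIDE'': the diffusion coefficient $y\mapsto\sqrt y$ is not differentiable at $0$, so the first-variation (stochastic flow) argument in $y$ is unavailable, and interior $C^{1,2}$ regularity from Lemma \ref{u-pde} says nothing about behaviour up to $y=0$.

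Your plan to restore the Feller condition by perturbing $(\theta,\sigma)$ does not address this: the Feller condition is not the difficulty here. The paper observes that the starred parameters \emph{always} satisfy $2\kappa_*\theta_*=2\kappa\theta+\sigma^2\geq\sigma_*^2$, so the uniqueness statement of Lemma \ref{u-pde} is available for the starred PIDE with no approximation of parameters; and conversely, making Feller hold for approximants does not make $\sqrt{y}$ smooth, so the a priori bound on $\partial_yu^{(k)}$ would still be missing for each approximant. The paper's actual resolution is an elliptic regularization of the coefficient itself (a density argument in the spirit of Ekstr\"om--Tysk): replace $\sqrt{|y|}$ by smooth $\varphi_k\geq 1/k$ with $\varphi_k^2$ uniformly Lipschitz, for which the flow is differentiable and $\partial_yu^k$ admits a Feynman--Kac representation over the regularized starred process; the $L^p$ estimates are uniform in $k$, giving polynomial bounds on $u^k$ and $\partial_yu^k$ uniform in $k$; stability results for SDEs with non-Lipschitz coefficients (the reference \cite{BMO}) give pointwise convergence $u^k\to u$ and $\partial_yu^k\to v$; and a weak-derivative/integration-by-parts argument against test functions identifies $v=\partial_yu$. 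Without this (or an equivalent a priori control of $\partial_yu$ up to $y=0$), your argument does not close.
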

													
													\begin{proof}
														First, the stochastic flow w.r.t. $x$ is differentiable (here, $(X^*)^{t,x,y}_s=x+Z^{ t,y}_s$ and $Z^{ t,y}_s$ does not depend on $x$). Hence, by using the polynomial growth hypothesis, by  \eqref{u-g-f} one gets  \eqref{u-x-xx}. Let us prove \eqref{u-y}.
														
														By Lemma \ref{u-pde} $u$  solves \eqref{PDE-u}. So, setting $v=\partial_y u$, by derivating \eqref{PDE-u} one has
														$$
														\begin{cases}
														\partial_t v+ \L_* v+\varrho_* v= h_*,\qquad&\mbox{ in }[0,T)\times\mathcal{O},\\
														v(T,z)= g_*(z),\qquad&\mbox{ in }\mathcal{O}.
														\end{cases}	
														$$
														where $\L_*$ is the infinitesimal generator of $(X^*,Y^*)$ and 
														$\varrho_*=\varrho-\kappa$, $h_*=\partial_y h-\mathfrak{b}\partial_xu-\frac 12\partial^2_xu$, $g_*=\partial_yg$.
														By using \eqref{u-x-xx} and  Lemma \ref{lemma_reg}, $h_*\in C_{\pol, T}(\bar{\mathcal{O}})$. Moreover, the Feller condition $2\kappa_*\theta_*\geq \sigma^2_*$ holds, and by Lemma \ref{u-pde} the unique solution with polynomial growth in $(x,y)$ to the above PIDE is
														$$
														\bar v(t,x,y)
														=\E^{t,x,y}\left[e^{\varrho (T-t)} g_*(X^*_T,Y^*_T)-\int_t^Te^{\varrho (s-t)} h_*(s,X^*_s,Y^*_s)ds  \right].
														$$
														In order to identify $\bar v$ with $v=\partial_yu$ we would need to know that $\partial_yu\in C_{\pol, T}(\mathcal{O})$.  If the diffusion coefficient of $Y^*$ was more regular, one could  use arguments from the stochastic flow. But this is not the case, hence we  use a density argument inspired by \cite{ET}. 
														
														For $k\geq 1$, let $\varphi_k$  be a $C^\infty(\R)$ approximation of $\sqrt{|y|}$ such that $\varphi_k(y)\geq 1/k$, $\varphi_k(y)\to \sqrt{|y|}$ uniformly on the compact sets of $[0,+\infty)$ and $\varphi^2_k$ is Lipschitz continuous uniformly in $k$ (which means that $\varphi_k\varphi'_k$ is bounded uniformly in $k$).  Consider the diffusion process $(X^k,Y^k) $ defined by
														\begin{equation}\label{SDE-n}
														\begin{cases}
														dX^k_t=\left( \mathfrak{a}+\mathfrak{b}Y^k_t\right)dt +\varphi_k(Y^k_t)dB_t+dH_t,\\
														dY^k_t=\kappa(\theta-Y^k_t)dt+\sigma\varphi_k(Y^k_t)dW_t, 
														\end{cases}
														\end{equation}
														whose generator is
														$$
														\L_ku= \frac {\varphi^2_k(y)} 2 \left(\partial^2_x u+ 2\rho\sigma\partial_x\partial_y u  
														+ \sigma^2 \partial^2_y u\right)
														+\left(\mathfrak{a}+\mathfrak{b} y \right)\partial_x u+\kappa(\theta-y)\partial_y u  + \II u.
														$$
														Set
														$$
														u^k(t,x,y)=\E^{t,x,y}
														\left[e^{\varrho (T-t)} g(X^k_T,Y^k_T)-\int_t^Te^{\varrho (s-t)}h(s, X^k_s, Y^k_s)ds\right].
														$$
														Le us first show that $\partial_yu^k\in C_{\pol,T}(\mathcal{O})$. Since the diffusion coefficients associated to $(X^k,Y^k)$ are good enough, we can consider the first variation process: by calling $Z^{k,t,x,y}_s=(\partial_yX^{k,t,x,y}_s, \partial_y Y^{k,t,x,y}_s)$, we get
														\begin{align*}
														\partial_y  u^k(t,x,y)
														=&\E\left[e^{\varrho (T-t)}\left\langle \nabla_{x,y}g(X^{k,t,x,y}_T,Y^{k,t,x,y}_T),Z^{k,t,x,y}_T\right\rangle\right]\\
														&-\int_t^Te^{\varrho (s-t)}
														\E\left[\left\langle\nabla_{x,y}h(s,X^{k,t,x,y}_s,Y^{k,t,x,y}_s), Z^{k,t,x,y}_s\right\rangle\right]ds.
														\end{align*}
														The functions $g,h$ and their derivatives have polynomial growth, so
														\begin{align*}
														\left|\partial_y u^k(t,x,y)\right|
														\leq &\E\left[C(1+|X^{k,t,x,y}_T|^a+|Y^{k,t,x,y}_T|^a)|Z^{k,t,x,y}_T|\right]\\
														&+\int_t^Te^{\varrho (s-t)}
														\E\left[C(1+|X^{k,t,x,y}_s|^a+|Y^{k,t,x,y}_s|^a)|Z^{k,t,x,y}_s|\right]ds
														\end{align*}
														and the usual $L^p$-estimates give
														$$
														\sup_{t<T}\left|\partial_y  u^k(t,x,y)\right|\leq C_k(1+|x|^{a_k}+y^{a_k}),
														$$
														for suitable constants $C_k,a_k>0$.
														Moreover, from the standard theory of parabolic PIDEs, $u^k$ is a solution to
														$$
														\begin{cases}
														\partial_t  u^k+ \L_k u^k+\varrho u^k= h,\qquad&\mbox{ in }[0,T)\times\mathcal{O},\\
														u^k(T,z)= g(z),\qquad&\mbox{ in }\mathcal{O}.
														\end{cases}	
														$$
														By differentiating, $v^k=\partial_y  u^k$ solves the problem
														$$
														\begin{cases}
														\partial_t  v^k+ \L_{k,*} v^k+\varrho_* v^k= h_{k,*},\qquad&\mbox{ in }[0,T)\times\mathcal{O},\\
														v^k(T,z)= g_{*}(z),\qquad&\mbox{ in }\mathcal{O}.
														\end{cases}	
														$$
														where
														\begin{align*}
														\L_{k,*}v=
														& \frac {\varphi^2_k(y)} 2 \left(\partial^2_xv+ 2\rho\sigma\partial_x\partial_y v
														+ \sigma^2 \partial^2_yv \right)\\
														&+ \left(\mathfrak{a}+\mathfrak{b}y+2\rho\sigma\varphi_k\varphi'_k(y)\right)\partial_xv
														+\left(\kappa(\theta-y)+\sigma^2\varphi_k\varphi'_k(y)\right)\partial_yv+\II v 
														\end{align*}
														and 
														$
														h_{k,*}=\partial_yh-\mathfrak{b} \partial_x u^k-\varphi_k\varphi'_k(y)\partial^2_x u^k.
														$
														By developing the same arguments as before, we get $h_{k,*}\in C_ {\pol, T}(\bar{\mathcal{O}})$.
														The PIDE for $v^k$ has a unique solution in $C_{\pol,T}(\mathcal{O})$ (recall that, by construction, the second order operator is uniformly elliptic). Thus, the Feynman-Kac formula gives
														$$
														\partial_yu^k(t,x,Y)
														=\E^{t,x,y}\left[e^{\varrho (T-t)} g_*(X^{k,*}_T,Y^{k,*}_T)-\int_t^Te^{\varrho (s-t)} h_{k,*}(s,X^{k,*}_s,Y^{k,*}_s)ds  \right],
														$$
														where $(X^{k,*},Y^{k,*})$ is the diffusion with infinitesimal generator given by $\L_{k,*}$.
														Now, the standard $L^p$ estimates for $(X^k,Y^k)$ and $(X^{k,*},Y^{k,*})$ hold uniformly in $k$ (recall that $\varphi_k$ is sublinear uniformly in $k$ and $\varphi_k\varphi'_k$ is bounded uniformly in $k$): for every $p\geq 1$ there exist $C,a>0$ such that
														$$
														\sup_k\sup_{t\leq T}\E^{t,x,y}\left( |X^k_t|^p+|Y^k_t|^p\right)
														+\sup_k\sup_{t\leq T}\E^{t,x,y}\left( |X^{k,*}_t|^p+|Y^{k,*}_t|^p\right)\leq C(1+|x|^a+|y|^a).
														$$
														This gives that
														$$
														\sup_k\sup_{t<T}|u^k(t,x,y)|+\sup_k\sup_{t<T}\left|\partial_y u^k(t,x,y)\right|\leq C(1+|x|^a+|y|^a),
														$$
														for suitable $C,a>0$ (possibly different from the ones above). Moreover, using the stability results of \cite{BMO} one obtains
														\begin{align*}
														\lim_{n\to\infty}u^k(t,x,y)=u(t,x,y)\quad \mbox{and}\quad
														\lim_{n\to\infty}\partial_y u^k(t,x,y)=v(t,x,y)
														\end{align*}
														for every $(t,x,y)\in [0,T)\times \mathcal{O}$. And thanks to the above uniform polynomial bounds for $u^k$ and $\partial_yu^k$, for every $\phi\in C^\infty (\mathcal{O})$ with compact support we easily get
														\begin{align*}
														&\int v(t,x,y) \phi(x,y)dxdy
														=\int \lim_k \partial_y  u^k(t,x,y)\phi(x,y)dxdy\\
														&=-\int \lim_k u^k(t,x,y)\partial_y\phi(x,y)dxdy
														=-\int u(t,x,y)\partial\phi(x,y)dxdy.
														\end{align*}
														Therefore, $v(t,x,y)=\partial_y u(t,x,y)$ in $[0,T)\times \mathcal{O}$. The statement now follows.
														
													\end{proof}

													We can now prove the result which this section is devoted to.

													\begin{proof}[Proof of Proposition \ref{prop-reg-new}]
														We follow an induction on $q$. If $q=0$, 	Lemma \ref{lemma_reg} gives the result. Suppose the statement is true up to $q-1\geq 1$ and let us prove it for $q$. 
														
														Take $f$ such that	$\partial_x ^{2j}f\in C^{p,q-j}_{\pol}(\R,\R_+)$ for every $j=0,1,\ldots,q$. Then, by induction, $\partial^{l}_t\partial^m_x\partial^n_y u\in C^{p,0}_{\pol,T}(\R,\R_+)$ when $2l+m+n\leq q-1$. So, we just need to prove that $\partial^l_t\partial^m_x\partial^n_y u\in C^{p,0}_{\pol, T}(\R,\R_+)$ for any $l,m,n$ such that $2l+m+n=q$. 
														
														Assume first $l=0$. For $n=0$, we use that $X_T^{t,x,y}=x+Z_T^{t,y}$ and we get
														$
														\partial^{m}_xu(t,x,y)=\E^{t,x,y}\big[\partial^m_xf(X_T,Y_T)\big].
														$
														Since $\partial^m_xf\in C^{p,0}_{\pol}(\R,\R_+)$ for any $m\leq 2q$,  by Lemma \ref{lemma_reg} we obtain  $\partial^{m}_xu\in C^{p,0}_{\pol,T}(\R,\R_+)$	for every $m\leq 2q$.  
														
														Fix now $n>0$ and $m\geq 0$.   Recursively applying Lemma \ref{lemma-reg}, we get  formula \eqref{stoc_repr-new}. Let us stress that,
														because of the presence of the derivatives $\partial^{m+2}_x\partial^{n-1}_yu$ and $\partial^{m+1}_x\partial^{n-1}_yu$ in \eqref{stoc_repr-new}, the recursively application of Lemma \ref{lemma-reg} gives the constraint $m+2n\leq q$. 
														Then, by Lemma \ref{lemma_reg}, it follows that $\partial^{m}_x\partial^{n}_yu \in C^{p,0}_{\pol, T}(\R,\R_+)$	for every $m, n\in \N$ such that $m+2n\leq 2q$, and in particular when $m+n=q$.
														
														Consider now the case $l>0$. By \eqref{stoc_repr-new}, Lemma \ref{u-pde} ensures that if $m+2n\leq 2q$ then $u_{n,m}=\partial^{m}_x\partial^{n}_y u$ solves
														\begin{equation}\label{pdeit-new}
														\begin{cases}
														\partial_t u_{m,n}+ \L_{n}u_{m,n}-n\kappa u_{m,n}=-n \big[\frac 12 u_{m+2,n-1}+ \mathfrak{b}u_{m+1,n-1}\big]\quad \mbox{in }[0,T)\times\mathcal{O},\\
														u_{m,n}(T,x,y)=\partial^m_x\partial^n_y f(x,y)\quad \mbox{in }\mathcal{O},
														\end{cases}
														\end{equation}
														where $\L_n$ is the generator in \eqref{app-L} with the (new) parameters in \eqref{parameters-new}.
														Therefore, the general case concerning  $\partial^l_t\partial^m_x\partial^n_y u$ with $2l+m+n=q$ follows by an iteration on $l$: by \eqref{pdeit-new},
														\begin{align*}
														\partial^{l}_t\partial^{m}_x\partial^n_y u=-\L_n\partial^{l-1}_t\partial^{m}_x\partial^n_y u
														+n\kappa \partial^{l-1}_t\partial^{m}_x\partial^n_y u
														-n\Big[\frac 12\partial^{l-1}_t\partial^{m+2}_x\partial^{n-1}_y u+\mathfrak{b}\partial^{l-1}_t\partial^{m+1}_x\partial^{n-1}_y u\Big].
														\end{align*}
													\end{proof}

													\section{The American case in the Heston/Bates model}\label{sect-conv-am}
													In this section we focus on the American case. 
													We first  prove a simple lemma which better specifies the behaviour of the moments in the Heston and Bates model.
													\begin{lemma}
														For every $p\geq 2$ there exists $C>0$ (depending on $p$ and on the model parameters) such that 
														\begin{align}
														&	\sup_{t\in [nh,(n+1)h]}\E[ |X_{(n+1)h}^{t,x,y}|^p]\leq(1+Ch)(1+|x|^p+y^p),\label{stima_am_X}\\
														&	\sup_{t\in [nh,(n+1)h]}\E[ (Y_{(n+1)h}^{t,y})^p]\leq(1+Ch)(1+y^p).\label{stima_am_Y}
														\end{align}
													\end{lemma}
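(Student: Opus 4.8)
The two estimates $\eqref{stima_am_X}$ and $\eqref{stima_am_Y}$ are short-time moment bounds on one time-step $[nh,(n+1)h]$, and the natural tool is It\^o's formula applied to $z\mapsto |z|^p$ (or a smoothed version) composed with the processes $X$ and $Y$ solving $\eqref{SDE_bates}$. The key structural fact is that the drift and diffusion coefficients of the Heston/Bates model have (at most) linear growth in $(x,y)$: the drift $\mu_X(y)=r-\delta-\tfrac y2-\tfrac{\rho}{\sigma}\kappa(\theta-y)$ is affine in $y$, the $X$-diffusion coefficient is $\bar\rho\sqrt{Y_t}$, the $Y$-drift $\kappa(\theta-y)$ is affine, and the $Y$-diffusion is $\sigma\sqrt{Y_t}$. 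So both $\sigma_X^2(Y_t)=\bar\rho^2 Y_t$ and $\sigma_Y^2(Y_t)=\sigma^2 Y_t$ grow \emph{linearly} in $y$, which is exactly what produces the multiplicative factor $(1+Ch)$ rather than something worse.

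\textbf{First, the $Y$-estimate.} I would start with $\eqref{stima_am_Y}$, which is purely one-dimensional and in fact already essentially contained in the moment bounds used in Proposition~\ref{moments-2} and in \cite{A-MC}. Applying It\^o's formula to $(Y^{t,y}_s)^p$ for $s\in[t,(n+1)h]$ and taking expectations, the martingale part vanishes and one is left with
$$
\E[(Y^{t,y}_s)^p]=y^p+\E\Big[\int_t^s\Big(p\,\kappa(\theta-Y_u)(Y_u)^{p-1}+\tfrac{p(p-1)}2\sigma^2 Y_u (Y_u)^{p-2}\Big)du\Big].
$$
The integrand is bounded by $C(1+(Y_u)^p)$ because $\kappa\theta(Y_u)^{p-1}\le C(1+(Y_u)^p)$ and the other terms are of order $(Y_u)^p$ or lower; hence $\E[(Y^{t,y}_s)^p]\le y^p+C\int_t^s(1+\E[(Y^{t,y}_u)^p])\,du$. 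Gronwall's lemma on the interval of length $\le h$ then gives $\E[(Y^{t,y}_{(n+1)h})^p]\le (1+Ch)(1+y^p)$, which is $\eqref{stima_am_Y}$. (One should strictly speaking regularize $y\mapsto y^p$ near $0$ or use localization to justify It\^o on $\R_+$, but this is routine since $Y\ge0$.)

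\textbf{Next, the $X$-estimate.} For $\eqref{stima_am_X}$ I would apply It\^o's formula to $|X^{t,x,y}_s|^p$, now keeping track of the jump term. The continuous part of the generator contributes $p\,\mu_X(Y_u)|X_u|^{p-1}\mathrm{sgn}(X_u)+\tfrac{p(p-1)}2\bar\rho^2 Y_u|X_u|^{p-2}$, and since $\mu_X$ is affine in $y$ these are bounded by $C(1+|X_u|^p+Y_u^p)$. The compound-Poisson jump part contributes $\lambda\,\E[\,|X_u+J_1|^p-|X_u|^p\,]$, which by the elementary inequality $|a+b|^p\le C_p(|a|^p+|b|^p)$ and the finiteness of all moments of $\nu$ (assumed throughout Section~\ref{sect-conv-eu}, and in particular $\E|J_1|^p<\infty$) is bounded by $C(1+|X_u|^p)$. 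Collecting terms and taking expectations,
$$
\E[|X^{t,x,y}_s|^p]\le |x|^p+C\int_t^s\big(1+\E[|X^{t,x,y}_u|^p]+\E[(Y^{t,y}_u)^p]\big)du.
$$
Now I would substitute the already-established bound $\E[(Y^{t,y}_u)^p]\le(1+Ch)(1+y^p)\le C(1+y^p)$ for $u$ in an $h$-interval, obtaining $\E[|X^{t,x,y}_s|^p]\le |x|^p+Ch(1+y^p)+C\int_t^s\E[|X^{t,x,y}_u|^p]du$, and close with Gronwall over the length-$h$ interval to reach $(1+Ch)(1+|x|^p+y^p)$.

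\textbf{Main obstacle.} The genuinely delicate point is that the diffusion coefficients are only H\"older ($\sqrt{Y}$ is not Lipschitz at $0$) and degenerate on $\{y=0\}$, so the direct application of It\^o to $|z|^p$ is not entirely clean: $z\mapsto|z|^p$ is not $C^2$ at the origin for $p<2$ (here $p\ge2$ so this is fine for $X$), and for $Y$ one works on $\R_+$ where the square-root coefficient forces a localization or a smooth approximation argument exactly as in \cite{A-MC}. I would handle this by stopping at the exit time from a large ball, deriving the Gronwall inequality for the stopped moments with constants uniform in the localization, and then passing to the limit by monotone convergence—this is standard but is where all the care must go. Once the localization is justified, the two Gronwall arguments over an interval of length $h$ immediately yield the stated $(1+Ch)$ factors, and the uniformity of the supremum over $t\in[nh,(n+1)h]$ follows because all constants depend only on the model parameters and on $p$, not on the starting time.
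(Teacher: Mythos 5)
Your proposal is correct and follows essentially the same route as the paper: It\^o's formula applied to $y^p$ and to a $p$-th power of $X$ (the paper uses the even power $|X|^{2p}$ to avoid the $\mathrm{sgn}$ term), with the jump part handled by compensation and the finiteness of all moments of $\nu$. The only cosmetic difference is that you close the estimate with Gronwall on the length-$h$ interval, whereas the paper first records the global a priori bounds \eqref{stime_momenti} and substitutes them directly into the time integral of length $h$; both yield the $(1+Ch)$ factor.
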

													\begin{proof}
														It can be easily proved that there exists $C>0$ such that 
														\begin{equation}\label{stime_momenti}
														\sup_{t\in [0,T]}\E[ |X_t|^p]\leq C(1+|x|^p+y^p),\qquad	\sup_{t\in [0,T]}\E[ (Y_t^{t,y})^p]\leq C(1+y^p).
														\end{equation}
														We start by proving \eqref{stima_am_Y}. Let us fix $p\geq1$. By using It\^{o}'s Lemma, for any $t\in [nh,(n+1)h]$ we have
														\begin{align*}
														(Y^{t,y}_{(n+1)h})^p
														&=y^p+p\int_t^{(n+1)h}\left(\Big(\kappa\theta -\frac  {p-1} 2  \sigma^2\Big) (Y^{t,y}_{s})^{p-1}-\kappa  (Y^{t,y}_s)^{p}\right)ds \\&\qquad+p\sigma\! \int_t^{(n+1)h}(Y^{t,y}_{s})^{p-\frac 1 2 }dW_s.
														\end{align*}
														Passing to the expectation and using \eqref{stime_momenti}, we can find $C>0$ (depending on $p$ and on the coefficients of the model) such that 
														\begin{align*}
														\sup_{t\in [nh,(n+1)h]}\E[(Y^{t,y}_{(n+1)h})^p]
														&\leq y^p+hC(1+y^{p-1}+y^p)
														\leq (1+2Ch)(1+y^p),
														\end{align*}
														from which 	\eqref{stima_am_Y} follows. As regards \eqref{stima_am_X}, again by It\^{o}'s Lemma, for $t\in [nh,(n+1)h]$ we get
														\begin{align*}
														|X_{(n+1)h}^{t,x,y}&|^{2p}=x^p+\int_t^{(n+1)h}\left[ 2p\mu_X(Y_{s}^{t,y})(X_{s^-}^{t,x,y})^{2p-1}+ p(2p-1) \sigma_X^2(Y_{s}^{t,y}) (X_{s^-}^{t,x,y})^{2p-2}\right]ds\\
														& + \int_t^{(n+1)h} (X_{s^-}^{t,x,y}+J_{N_{s}})^{2p}-(X_{s^-}^{t,x,y})^{2p}dK_s+\int_t^{(n+1)h}2p\sigma_X(Y_s^{t,y})(X_{s^-}^{t,x,y})^{2p-1}dB_s,
														\end{align*}
														$K$ denoting the Poisson process driving the compound Poisson process $H$, whose associated L\'evy measure is $\nu$. Passing to the expectation, and using the martingale properties (which hold thanks to \eqref{stime_momenti}) we get
														\begin{align*}
														\E[|X_{(n+1)h}^{t,x,y}|^{2p}]&
														=x^{2p}\!+\!\int_t^{(n+1)h}\!\!\!\left[ \E[2p\mu_X(Y_s^{t,y})(X_s^{t,x,y})^{2p-1}\!\!+ p(2p-1) \sigma_X^2(Y_s^{t,y}) (X_s^{t,x,y})^{2p-2}]\right]ds\\
														&\quad + \int_t^{(n+1)h}ds\int \E[(X_s^{t,x,y}+z)^{2p}-(X_s^{t,x,y})^{2p}]\nu(dz).
														\end{align*}
														\eqref{stima_am_X} now follows by using H\"{o}lder inequality, the estimate \eqref{stime_momenti} and the existence of all moments under $\nu$.
													\end{proof}
													
													Again, we approximate the CIR process with the Markov chain discussed in Section \ref{sect-CIR} and we consider the two  finite difference operators introduced in Section \ref{sect-l2} and \ref{sect-linf}. Therefore, we get the following convergence rate result.

													
													\begin{theorem}\label{Bates_conv_am}
														Let $(X,Y)$ be the solution to \eqref{SDE_bates} and let $(Y^h_n)_{n=0,\dots, N}$ be the Markov chain introduced in Section \ref{sect-CIR} for the approximation of the CIR process $Y$. Let $\tilde u^h_n$ be defined in \eqref{backward2} and $u^h_h$ be given by \eqref{backward-ter0-bis}  with the choice
														$$
														\Pi^h_{\dx}(y)=(A^h_{\dx})^{-1}B^h_{\dx}(y).
														$$
														
														\begin{itemize}
															\item[$(i)$] 
															$\mathrm{[Convergence\  in\ }l_2(\mathcal{X})]$ Suppose that 
															\begin{itemize}
																\item [$\bullet$]
																$A^h_{\dx}(y)$ and $B^h_{\dx}(y)$ are defined in \eqref{A} and \eqref{B} respectively;
																\item [$\bullet$]
																$\frac{\nu'}{\nu},\frac{\nu''}{\nu}\in L^2(\R,d\nu)$ and $\nu$ has finite moments of any order;
																\item [$\bullet$]
																$f\in C^\infty_\pol(\R\times\D)$ is such that there exist $C,a>0$ with
																$$
																|\partial^{l'}_x\partial^l_yf(\cdot,y)|_{L^2(\R,dx)}		\leq C(1+y^a), \qquad l',l\in\N.
																$$ 
															\end{itemize} 
															Then, there exist $\bar h,C>0$ such that for every $h<\bar h$ and $\dx<1$ one has
															$$
															| u(0,\cdot,Y_0)-u^h_{0}(\cdot,Y_0)|_{2} \leq CT(h+ \dx^2).
															$$
															
															\item[$(ii)$] 
															$\mathrm{[Convergence\  in\ } l_\infty(\mathcal{X})]$ Suppose that 
															\begin{itemize}
																\item [$\bullet$]
																$A^h_{\dx}(y)$ and $B^h_{\dx}(y)$ are defined in \eqref{A2} and \eqref{B} respectively;
																\item [$\bullet$]
																$\frac{\nu'}{\nu},\frac{\nu''}{\nu}\in L^1(\R,d\nu)$  and $\nu$ has finite moments of any order;
																\item [$\bullet$]
																$f\in C^\infty_\pol(\R\times\D)$ is such that there exist $C,a>0$ with
																$$
																|\partial^{l'}_x\partial^l_yf(\cdot,y)|_{L^\infty(\R,dx)}		\leq C(1+y^a), \qquad l',l\in\N.
																$$ 
															\end{itemize} 
															Then, there exist $\bar h,C>0$ such that for every $h<\bar h$ and $\dx<1$ one has
															$$
															| u(0,\cdot,Y_0)-u^h_{0}(\cdot,Y_0)|_{\infty} \leq CT(h+ \dx).
															$$
														\end{itemize} 
														
													\end{theorem}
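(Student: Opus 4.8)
The plan is to derive both statements as instances of the general convergence Theorems \ref{conv-H} and \ref{conv-H2}, exactly as the European result Theorem \ref{Bates_conv} is derived, the only genuinely new ingredient being the regularity of the one-step continuation value in the optimal-stopping setting. First I would decompose the global error as
\begin{equation*}
u(0,\cdot,Y_0)-u^h_0(\cdot,Y_0)=\big(u(0,\cdot,Y_0)-\tilde u^h_0(\cdot,Y_0)\big)+\big(\tilde u^h_0(\cdot,Y_0)-u^h_0(\cdot,Y_0)\big),
\end{equation*}
where $u$ is the continuously monitored American price \eqref{american} and $\tilde u^h_0$ is the Bermudean value \eqref{backward2}. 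The first term is the time-discretization (Bermudean) error: as discussed before Section \ref{sect-convergence}, this is the classical first-order-in-time bound, and although the Heston/Bates coefficients are degenerate and non-Lipschitz so that the standard hypotheses fail, one still argues an error of order $h$. The second term is precisely the quantity estimated by Theorems \ref{conv-H} and \ref{conv-H2}, once their hypotheses are checked for the model \eqref{SDE_bates}, the CIR tree of Section \ref{sect-CIR}, and the finite-difference operators \eqref{A}, \eqref{A2}, \eqref{B}. Combining the two bounds then yields $CT(h+\dx^2)$ in the $l_2$ case and $CT(h+\dx)$ in the $l_\infty$ case.

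Second, I would dispose of the hypotheses that are already available. The conditions $\nu'/\nu,\nu''/\nu\in L^2(\R,d\nu)$ (resp. $L^1$) together with the existence of all moments of $\nu$ are assumed directly and are met by the Gaussian (Bates) and exponential-mixture (Kou) jump laws. The assumptions $\mathcal{A}_1$ and $\mathcal{A}_2$ on the approximating chain $(Y^h_n)_n$ hold by Proposition \ref{moments-2}, exactly as in the European proof, and $\mathcal{A}_3$ (needed only for the $l_\infty$ scheme) is verified on the same chain. The one-step moment controls \eqref{stima_am_X}--\eqref{stima_am_Y} just established provide the uniform-in-$n$ polynomial bounds on the moments of $(X,Y)$ over a single subinterval $[nh,(n+1)h]$, which are what the polynomial-growth weights in the norms $C^{\cdot,\cdot}_{\pol,[nh,(n+1)h]}(\R,\D)$ ultimately rest on.

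The heart of the proof, and the main obstacle, is verifying that the continuation value
\begin{equation*}
v^h_n(t,x,y)=\E\big[\tilde u^h_{n+1}(X^{t,x,y}_{(n+1)h},Y^{t,y}_{(n+1)h})\big],\qquad t\in[nh,(n+1)h],
\end{equation*}
belongs to $C^{2,6}_{\pol,[nh,(n+1)h]}(\R,\D)$ (resp. $C^{\infty,4}_{\pol,[nh,(n+1)h]}(\R,\D)$) uniformly in $n$ and $h$. In the European case this was immediate because $v^h_n\equiv u$ and Proposition \ref{prop-reg-new} applies globally. Here the terminal datum of the one-step PIDE solved by $v^h_n$ is $\tilde u^h_{n+1}=\max\{f,v^h_{n+1}((n+1)h,\cdot)\}$, which carries a kink along the discrete exercise boundary $\{f=v^h_{n+1}\}$ and is therefore at best Lipschitz, so Proposition \ref{prop-reg-new} cannot be invoked directly. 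I would proceed by a backward induction on $n$: at each step one writes $v^h_n$ as the one-step Heston/Bates semigroup applied to $\tilde u^h_{n+1}$, uses the $C^\infty_{\pol}$-smoothness of $f$ (with all derivatives controlled in the relevant weighted norm) to handle the smooth branch of the maximum through Proposition \ref{prop-reg-new} and its stochastic representation \eqref{stoc_repr-new}, and shows that the Bermudean values $\tilde u^h_n$ inherit uniform weighted Lipschitz and higher semiconcavity bounds that propagate through both the obstacle and the expectation operator.

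The delicate point — and the reason the hypothesis on $f$ must be so much stronger than in the European statement — is that the parabolic smoothing gained over a single interval of length $h$ amplifies the high-order space derivatives of the kinked datum by negative powers of $h$, which naively would not survive the summation over the $N=\lfloor T/h\rfloor$ steps performed in Theorem \ref{convergencebates}. The plan is therefore to show that these amplifications are compensated: the kink of $\tilde u^h_{n+1}$ is localized at the exercise boundary, and the $C^\infty$-regularity of $f$ together with the uniform Lipschitz control should let one absorb the boundary contribution into the weighted $l_2$ (resp. $l_\infty$) norms with constants independent of $n$ and $h$, so that the per-step consistency remainder $\mathcal{R}^h_n$ of Theorems \ref{conv-H}--\ref{conv-H2} still satisfies the uniform estimate \eqref{ass_errore}. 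Establishing this uniform-in-$(n,h)$ regularity transfer across the obstacle is where essentially all the work lies; once it is in place, the conclusion follows as in the proof of Theorem \ref{Bates_conv}.
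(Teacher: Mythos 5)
Your overall architecture (verify $\mathcal{A}_1$, $\mathcal{A}_2$ via Proposition \ref{moments-2}, then reduce to Theorems \ref{conv-H} and \ref{conv-H2} by establishing regularity of $v^h_n$ uniformly in $n$ and $h$) matches the paper, and you have correctly located the crux in the loss of smoothness of $\tilde u^h_{n+1}=\max\{f,v^h_{n+1}((n+1)h,\cdot)\}$ across the discrete exercise boundary. But the mechanism you propose to resolve it is not the paper's, and as stated it is a plan rather than an argument, for a problem (controlling the singular part of high-order derivatives generated by the kink, with $h^{-1}$-amplifications from one-step parabolic smoothing to be ``compensated'' by localization at the free boundary) that is substantially harder than what is actually needed. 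The paper never invokes parabolic smoothing and no negative powers of $h$ ever appear. Instead, the regularity classes are required to hold only \emph{a.e.}, and the key elementary observation is that the maximum of two functions whose derivatives of all orders exist a.e.\ and satisfy weighted $L^2$ (resp.\ $L^\infty$) bounds again has a.e.\ derivatives satisfying the maximum of those bounds; one then propagates these bounds backward through the stochastic representation \eqref{stoc_repr-new}. The price of each backward step is not a factor $h^{-\alpha}$ but a \emph{shift of derivative indices}: because $\partial_y^l v^h_n$ is controlled via $\partial_x^{l'+2l}\partial_y^{l}$-type derivatives of $\tilde u^h_{n+1}$, after $k$ steps one needs the constants $C_{l'+2kl,l}$ and exponents $a_{l'+2kl,l}$, with $k$ ranging up to $N=T/h$. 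This — and not a smoothing/localization compensation — is why $f$ must be $C^\infty_{\pol}$ with derivative bounds that are \emph{uniform over all orders} $l,l'$: only then do the iterated constants $C_{l'+2kl,l}(1+C_lh)^{k(l+1)}\leq C e^{cT}$ stay bounded independently of $n$ and $h$. Your diagnosis of why the hypothesis on $f$ is so strong is therefore incorrect, and the step ``show that these amplifications are compensated'' has no counterpart in a workable proof.

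A second, more minor, discrepancy: your opening decomposition adds the Bermudean time-discretization error $u(0,\cdot,Y_0)-\tilde u^h_0(\cdot,Y_0)$ and asserts it is $O(h)$. The paper explicitly declines to prove this for the degenerate Heston/Bates dynamics (the standard Lipschitz/ellipticity hypotheses of the known $O(h)$ results fail) and confines the theorem's proof to the second error $\tilde u^h_0-u^h_0$. Treating the Bermudean error as settled imports an unproven claim into your argument; either restrict the conclusion to the Bermudean value, as the paper's proof effectively does, or supply a separate argument for that term.
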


													\begin{proof}	
														We prove $(i)$, $(ii)$ following in the same way.  The validity of assumptions $\mathcal{A}_1$ and $\mathcal{A}_2$ is proved in Proposition \ref{moments-2} and since $\gamma_X\equiv 1$ or $\gamma_X\equiv 0$, $\mathcal{A}_3(4\lambda c_\nu|\gamma_X|)$ trivially holds.
														So, as in the European case,  in order to apply Theorem \ref{conv-H}  it is enough  to prove that the function $v^h_n$  defined in \eqref{v^h_n} belongs to the space $C^{2,6}_{\pol, [nh,(n+1)h]}( \R, \D)$  a.e. and uniformly in $n$ and $h$. 
														
														Let us consider a function $f\in C_\pol^\infty(\R \times \mathcal{D})$ such that for any $l,l'\in \N$  there exist $C_{l',l},\,a_{l,l'}>0$ such that 
														\begin{equation}\label{fintetest}
														|\partial^{l'}_x\partial^l_yf(\cdot,y)|_{L^2(\R,dx)}
														\leq C_{l',l}(1+y^{a_{l,l'}}),\qquad y\in\D.
														\end{equation}
														We point out that in the statement of the theorem we  actually require  that there exist $C,a>0$ such that $C_{l',l}\leq C$ and $a_{l',l}\leq a$ for any $l,l'\in\N$. 
														We will use this strong assumption only at the end of the proof, when it will be clear why we need it in order to get the assertion.

														We proceed by a backward iteration. For $n=N-1$ we have $v^{h}_{N-1}(t,x,y)=\E\big[f(X^{t,x,y}_{T},Y^{t,y}_{T})\big]$. By the proof of Proposition \ref{prop-reg-new} and by using \eqref{stima_am_X} and \eqref{stima_am_Y}, we deduce that,
														if $l=0$, by using \eqref{stima_am_X}-\eqref{stima_am_Y} we have
														$$
														\sup_{t\in [(N-1)h,T)}|\partial^{l'}_xv^h_{N-1}(t,\cdot,y)|_{L^2(\R,dx)} \leq C_{l',0}(1+C_0h)(1+y^{a_{l',0}}).
														$$
														On the other hand, again from the proof  of Proposition \ref{prop-reg-new}, we have  that, for $t\in [(N-1)h,T)$,
														\begin{equation}\label{stoc_repr}
														\begin{split}
														&\partial^{l'}_x\partial^{l}_yv^h_{N-1}(t,x,y)=\E\left[e^{-l\kappa (T-t)} \partial^{l'}_x\partial^l_yf(X^{l,t,x,y}_T,Y^{l,t,x,y}_T)\right]\\
														&\quad+
														l\,\E\left[\int_t^T\left[\frac 12 \partial^{l'+2}_x\partial^{l-1}_yv^h_{N-1}+b\partial^{l'+1}_x\partial^{l-1}_y v^h_{N-1}\right](s,X^{l,t,x,y}_s,Y^{l,t,x,y}_s)ds  \right],
														\end{split}
														\end{equation}
														where $b=\frac \rho{\sigma}\kappa-\frac 12$ and  $(X^l,Y^l)$ is the solution of the Heston/Bates model with new coefficients
														$r_{l}=r+l\rho\sigma$, $\kappa_{l}=\kappa$, $\theta_{l}=\theta+\frac {l\sigma^2} {2\kappa}$, $\sigma_{l}=\sigma$.
														Denote by $C_l$ the constant such that 
														$$
														\sup_{t\in [(N-1)h,T)}\E^{t,y}[ (Y^l_{(n+1)h})^p]\leq(1+y^p)(1+C_lh).
														$$
														Then, if $l=1$, by \eqref{stoc_repr}  we get
														\begin{align*}
														&\sup_{ t\in [(N-1)h,T)}   |\partial^{l'}_x\partial_y v^h_{N-1}(t,\cdot,y)|\leq C_{l',1}(1+C_1h)(1+y^{a_{l',1}})\\&\qquad+h \left(  \frac 1 2  C_{l'+2,0}(1+C_1h)(1+y^{a_{l'+2,0}}) + |b| C_{l'+1,0}(1+C_1h)(1+ y^{a_{l'+1,0}})\right).
														\end{align*}
														Without loss of generality we can assume that $\frac 1 2 +|b|\leq C_1$, $C_i\leq C_{i+1}$ and that the constants $C_{l,l'}$ and $a_{l,l'}$  are nondecreasing in both $l$ and $l'$. Then, we easily deduce that
														$$
														\sup_{ t\in[(N-1)h,T)}   |\partial^{l'}_x\partial_y v^h_{N-1}(t,\cdot,y)|_{L^2(\R,dx)}\leq C_{l'+2,1}(1+C_1h)^2(1+y^{a_{l'+2,1}}).
														$$
														With the same arguments, if $l=2$, we get
														$$
														\sup_{ t\in [N-1)h,T)}   |\partial^{l'}_x\partial^2_y v^h_{N-1}(t,\cdot,y)|_{L^2(\R,dx)}\leq C_{l'+4,1}(1+C_2h)^3(1+y^{a_{l'+4,1}}).
														$$
														By iterating, it can be easily seen that
														$$
														\sup_{ t\in [N-1)h,T)}   |\partial^{l'}_x\partial^l_y v^h_{N-1}(t,\cdot,y)|_{L^2(\R,dx)}\leq C^{(h,N-1)}_{l',l}\left(1+y^{a^{(N-1)}_{l',l}}\right),
														$$
														where 
														$$
														C^{(h,N-1)}_{l',l}=C_{l'+2l,l}(1+C_lh)^{l+1},\qquad a^{(N-1)}_{l',l}=a_{l'+2l,l}. $$
														As regard the derivatives w.r.t. the time variable, again from the proof of Proposition \ref{prop-reg-new}, we have 
														\begin{align*}
														\partial^{l''}_t\partial^{l'}_x\partial^l_y v^h_{N-1}&=-\L_l\partial^{l-1}_t\partial^{l'}_x\partial^l_y v^h_{N-1}
														+l\kappa \partial^{l''-1}_t\partial^{l'}_x\partial^l_y v^h_{N-1}
														\\&\quad	-l\Big[\frac 12\partial^{l-1}_t\partial^{l'+2}_x\partial^{l-1}_y v^h_{N-1}+b\partial^{l''-1}_t\partial^{l'+1}_x\partial^{l-1}_y v^h_{N-1}\Big],
														\end{align*}
														so that
														\begin{equation}\label{dertime}
														\sup_{t\in [nh,(n+1)h)}|\partial_t^{l''}\partial_x^{l'}\partial_y^{l} v^h_{N-1}(t,\cdot, y)|_{L^2(\R,dx)}\leq cl C^{(h,N-1)}_{l'+2,l+2}\left(1+y^{a^{(N-1)}_{l',l}+l''}\right),
														\end{equation}
														where $c$ is a constant which depends on the coefficient of the model. 
														
														Therefore,
														$$
														\tilde u^h_{N-1}(x,y)=\max\{f(x,y), v^h_{N-1}((N-1)h,x,y)\}
														$$
														is a continuous function, whose derivatives, of any order, a.e. continuously exist  and for every $l', l$,
														\begin{equation}\label{oddiononcelafacciopiu}
														|\partial_x^{l'}\partial_y^{l}  \tilde u^h_{N-1}(\cdot ,y)|_{L^2(\R,dx)}\leq C^{(h,N-1)}_{l',l}\left(1+y^{a^{(N-1)}_{l',l}}\right)\quad \mbox{a.e.}.
														\end{equation}
														Note that the estimates \eqref{dertime} on the time derivatives of $v^h_{N-1}$  are not involved in the estimate  \eqref{oddiononcelafacciopiu} and, as a consequence, in the iterative procedure.
														
														At time step $n=N-2$ the function $v^h_{N-2}$ is defined by
														$$
														v^h_{N-2}(t,x,y)=\E\big[\tilde u^h_{N-1}(X^{t,x,y}_{(N-1)h},Y^{t,y}_{(N-1)h})\big], \qquad t\in [(N-2)h, (N-1)h].
														$$
														By developing  arguments already done for $n=N-1$, we get 
														$$
														\sup_{ t\in [N-1)h,T)}   |\partial^{l'}_x\partial_y v^h_{N-2}(t,\cdot,y)|_{L^2(\R,dx)}\leq C^{(h,N-2)}_{l',l}\left(1+y^{a^{(N-2)}_{l',l}}\right),
														$$
														where 
														$$
														C^{(h,N-2)}_{l',l}=C^{(h,N-1}_{l'+2l,l}(1+C_lh)^{l+1}=C_{l'+4l,l}(1+C_lh)^{2(l+1)},\qquad a^{(N-2)}_{l',l}=a_{l'+4l,l}.$$
														Moreover
														$$
														\sup_{t\in [nh,(n+1)h)}|\partial_t^{l''}\partial_x^{l'}\partial_y^{l} v^h_{N-2}(t,\cdot, y)|_{L^2(\R,dx)}\leq cl C^{h,N-2}_{l'+,l+2}\left(1+y^{a^{N-2}_{l',l}+l''}\right).
														$$
														Therefore, the function 
														$$
														\tilde u^h_{N-2}(x,y)=\max\{f(x,y), v^h_{N-2}((N-2)h,x,y)\}
														$$
														is a continuous function, whose derivatives, of any order, a.e. continuously exist  and for every $l', l$,
														$$
														|\partial_x^{l'}\partial_y^{l}  \tilde u^h_{N-2}(\cdot ,y)|_{L^2(\R,dx)}\leq C^{h,N-2}_{l',l}\left(1+y^{a^{N-2}_{l',l}+l''}\right)\quad \mbox{a.e.},
														$$

														By iterating, we get that, at time step  $n=N-k$,  the function $v^h_{N-k}$ satisfies 
														$$
														|\partial_x^{l'}\partial_y^{l}  v^h_{N-k}(\cdot ,y)|_{L^2(\R,dx)}\leq C^{(h,N-k)}_{l',l}\left(1+y^{a^{(N-k)}_{l',l}+l''}\right)\quad \mbox{a.e.},
														$$
														where $$
														C^{(h,N-k)}_{l',l}=C_{l'+2kl,l}(1+C_lh)^{k(l+1)}, \qquad a^{(N-k)}_{l',l}=a_{l'+2kl,l}.
														$$ Again
														$$
														\sup_{t\in [nh,(n+1)h)}|\partial_t^{l''}\partial_x^{l'}\partial_y^{l} v^h_{N-k}(t,\cdot, y)|_{L^2(\R,dx)}\leq cl C^{(h,N-k)}_{l'+2,l+2}\left(1+y^{a^{(N-k)}_{l',l}+l''}\right).
														$$
														
														In order to have $v^h_n\in C^{2,6}_{\pol, [nh,(n+1)h]}( \R, \D)$ a.e. and uniformly in $n$ and $h$, we need estimates of the derivatives $\partial_x^{l'}\partial_y^{l}  v^h_n$ for $l+l'\leq 6$ which are uniform in $n$ and $h$. It is clear that for each  $k\leq N$, since $h=T/N$ and $l\leq 6$,
														$$
														(1+C_lh)^{k(l+1)}\leq e^{C_l h N(l+1)}\leq e^{7TC_6}.
														$$
														Moreover,  the assumption  that  there exist $C,a>0$ such that  $C_{l',l}\leq C$  and $a_{l',l}\leq a$ for any $l,l'\in\N$ now comes in. Thanks to this, we can   deduce that $v^h_n\in C^{2,6}_{\pol, [nh,(n+1)h]}( \R, \D)$ a.e. and uniformly in $n$ and $h$, so by Theorem \ref{conv-H}  we get the result.
													\end{proof}

													\begin{remark}
														In Theorem \ref{Bates_conv_am} we require  really strong regularity and boundedness assumptions on the test function $f$.  On the other hand, let us stress that our algorithm is strongly based on numerical analysis techniques. When these procedures are used, as far as we know,  literature is missing in results  on the rate of convergence of numerical schemes for obstacle problems.
														
														Let us mention that, in some particular cases, different approaches could in principle be followed. For example,  let us  consider the scheme introduced in Section \ref{sect-linf}, where the linear operator is given by
														$$
														\Pi^h_{\dx}(y)=(A^h_{\dx})^{-1}B^h_{\dx}(y),
														$$
														$A^h_{\dx}(y)$ and $B^h_{\dx}(y)$ being defined in \eqref{A2} and \eqref{B} respectively. Here, we have proved in Lemma \ref{lemmamatrici} that $\Pi^h_{\dx}(y)$ is a stochastic operator.  From a probabilistic point of view,  this means that the  algorithm can be written 
														through  a Markov chain (see \cite{bcz}). Then, one could apply purely probabilistic methods  to prove the convergence of the procedure, for example by developing techniques similar to the ones introduced in \cite{BP}.  On the other hand, in this case, $\Pi^h_{\dx}(y)$  is  a monotone linear operator, so another possible way to proceed is to use the theory introduced by Barles \cite{barles}, which uses viscosity solutions. In order to do this, we need  a comparison principle for viscosity solutions  of Heston-type degenerate parabolic problems (note that in Section \ref{sect-existenceanduniqueness} we have proved such a result in the case of weak solutions).
														However,  both the mentioned approaches  give in principle just the convergence, that is, no information about the rate of convergence is provided.
													\end{remark}

													\section{Appendix}	
													\subsection{Lattice properties of the CIR approximating tree}\label{app-jumps}
													The aim of this section is to prove  Propostition \ref{propjumps}. 
													For later use, let us first give some (trivial) properties of the lattice. First, by construction, $k_d(n,k)\leq k<k_u(n,k)$, so that
													$
													y^{n+1}_{k_d(n,k)}\leq y^{n+1}_k\leq y^n_k\leq y^{n+1}_{k+1}\leq y^{n+1}_{k_u(n,k)}.
													$
													Moreover for every $n$ and $k$, it is easy to see that
													\begin{equation}\label{tree1}
													\begin{array}{c}
													y_k^n\leq y_{k+1}^n,
													\quad y^{n+1}_{k}\leq y^n_k\leq y^{n+1}_{k+1},\smallskip\\
													\displaystyle
													y^n_k\leq y  ^n_{k-1}+\sigma^2h+2\sigma\sqrt{v^n_{k-1}h},\quad
													y^{n+1}_k\leq y^n_k+\frac{\sigma^2} 4\,h-\sigma\sqrt{y^n_k h}.
													\end{array}
													\end{equation}
													
													\medskip
													
													\noindent
													\textit{Proof of Proposition \ref{propjumps}.}
													1. The statement is an immediate consequence of the following facts:
													\begin{align}
													&\mbox{if $k_u(n,k)\geq k+2$, then $y^n_k < \theta_*h$,}\label{ass1}\\
													&\mbox{if $k_d(n,k)\leq k-1$, then $y^n_k > \theta^*/h$,}\label{ass2}
													\end{align}
													which we now prove.
													
													First of all, note that $y^n_k+\mu_Y(y^n_k)h= \kappa\theta h +y^n_k(1-\kappa h)$, so by choosing $\bar h = 1/ \kappa$, 	one has 	$y^n_k+\mu_Y(y^n_k)h >0$. Moreover, as   a direct consequence of \eqref{ku2}--\eqref{kd2} and of \eqref{tree1}, we have that, if $\mu_Y(y^n_k)>0$, then $k_d(n,k)=k$, and if $\mu_Y(y^n_k)<0$, then $k_u(n,k)=k+1$.
													
													Concerning \eqref{ass1}, we obviously assume $y^n_k>0$, so that  $y^{n+1}_{k+1}>0.$ Note that, from \eqref{ku2},
													\begin{align*}
													y^n_k+\mu_Y(y^n_k)h > y^{n+1}_{k_u(n,k)-1}\geq y^{n+1}_{k+1}=y^n_k+\frac{\sigma^2}{4}h +\sigma \sqrt{y^n_k h }.
													\end{align*}
													Since $\mu_Y(y^n_k) \leq \kappa\theta$,  we get
													$$
													\kappa\theta h > \frac{\sigma^2}{4}h +\sigma \sqrt{y^n_k h }>\sigma \sqrt{y^n_k h},
													$$
													from which
													$$
													y^n_k < \Big( \frac{\kappa\theta}{\sigma}\Big)^2	h=\theta_*h.
													$$
													We prove now \eqref{ass2}.  First of all observe that, if $y^n_k \leq \theta$, then $\mu_Y(y^n_k) >0$ and so $k_d(n,k)=k$. Then we have $y^n_k >\theta$ and from \eqref{vnk} we can assume   $y^{n+1}_k>0$ up to take  $h< ( 2 \sqrt{\theta}/\sigma)^2$.
													Now, by \eqref{kd2} we get
													\begin{align*}
													y^n_k+\mu_Y(y^n_k)h < y^{n+1}_{k_d(n,k)+1}\leq  y^{n+1}_{k}=y^n_k+\frac{\sigma^2}{4}h -\sigma \sqrt{y^n_k h },
													\end{align*}
													so that
													$$
													\kappa(\theta-y^n_k)h 
													< \frac{\sigma^2}{4}h -\sigma \sqrt{y^n_k h }.
													$$
													This gives $\kappa y^{n}_k h> \sigma\sqrt{v^{n}_k h} -\frac{\sigma^{2}}{4}\, h +\kappa\theta h$ and, for $h$ small enough, one gets $y^n_k h>\frac{\sigma^2}{4\kappa^2}$.
													
													\smallskip

													2. If $y^n_k \leq \theta_*h$, \eqref{ass2} gives $k_d(n,k)= k$. As regards the up jump, the case $y^{n+1}_{k_u(n,k)}= 0$ is trivial so we consider $y^{n+1}_{k_u(n,k)}>0$.
													In order to prove \eqref{stimasottosoglia}, we consider two possible cases: $k_u(n,k)=k+1$
													and $k_u(n,k)\geq k+2$. In the first case, we have
													\begin{align*}
													y^{n+1}_{k_u(n,k)}-y^n_k= \frac{\sigma^2}{4}h +\sigma \sqrt{y^n_k h }\leq \Big(   \frac{\sigma^2}{4}+\sigma \sqrt{\theta_*}  \Big)h\leq C_\ast h,
													\end{align*}
													and the statement holds. If instead $k_u(n,k)\geq k+2$, then by \eqref{ku2} we have
													$$
													y^{n+1}_{k_u(n,k)-1} - y^n_k <\mu_Y(y^n_k)h.
													$$
													We apply the third inequality in \eqref{tree1} (with $n$ replaced by $n+1$ and $k=k_u(n,k)$) and we get
													\begin{align*}
													0\leq y^{n+1}_{k_u(n,k)}-y^n_k&\leq y^{n+1}_{k_u(n,k)-1}+2\sigma \sqrt{y^{n+1}_{k_u(n,k)-1}h } +\sigma^2h-y^n_k \\
													& \leq \mu_Y(y^n_k)h+2\sigma \sqrt{(y^n_k +\mu_Y(y^n_k)h)h}+\sigma^2h\\
													&\leq (\kappa\theta + 2\sigma\sqrt{ \theta_*+\kappa\theta}+\sigma^2)h\leq C_\ast h.
													\end{align*}
													
													3. The statement follows from \eqref{ass1}.
													
													\smallskip
													
													4. Formula \eqref{proba}  follows from the fact that the sets $K_u(n,k)$ and $K_d(n,k)$ are nonempty. Indeed, 
													if 	$y^n_k> \theta_*h$ then $k_u=k+1$, so $K_u(n,k)\neq \emptyset$.	And if $y^n_k<\theta_*h$,
													\begin{align*}
													y^{n+1}_{n+1}-y^n_k-\mu_Y(y^n_k)h \geq Y_0 -\theta_*h-\kappa\theta h =Y_0 -(\theta_*+\kappa\theta )h>0
													\end{align*}
													for $h< Y_0/(\theta_*+\kappa\theta)$, which gives $k_u(n,k)<n+1$. Therefore   $K_u(n,k)\neq \emptyset$ for every $(n,k)$.
													
													As regards $K_d(n,k)$, if $y^n_k< \theta^*/h$ then $k_d(n,k)=k$ by Proposition \ref{propjumps}, so that $K_d(n,k)\neq \emptyset$. If instead $y^n_k\geq \theta^*/h$, then
													$$
													y^{n+1}_0-y^n_k-\mu_Y(y^n_k)h\leq Y_0-\frac {\theta^*} h -\kappa\theta h +\kappa y^n_kh\leq Y_0-\frac {\theta^*} h  +\kappa y^n_kh .
													$$
													Recalling that $h=T / N$, we note that there exists $C>0$ such that
													\begin{align*}
													y^n_k h\leq y^N_Nh= \Big(\sqrt {Y_0}+\frac{\sigma} 2N\sqrt{h}\Big)^2h= \Big(\sqrt {Y_0}\sqrt{\frac T N }+\frac{\sigma} 2T  \Big)^2\leq C.
													\end{align*}
													Therefore
													$$
													y^{n+1}_0-y^n_k-\mu_Y(y^n_k)h\leq Y_0-\frac {\theta^*} h+\kappa C <0
													$$
													for $h<\frac{\theta^*  }{Y_0+\kappa C}$. So, $K_d(n,k)\neq \emptyset$.

													Now, by \eqref{kd2} and \eqref{ku2}, since $K_d(n,k)\neq \emptyset$ and $K_u(n,k)\neq \emptyset$,
													\begin{align*}
													& \frac{\mu_Y(y^n_k)h+ y^n_k-y^{n+1}_{k_d(n,k)} }{y^{n+1}_{k_u(n,k)}-y^{n+1}_{k_d(n,k)}}\geq 0,
													&\frac{\mu_Y(y^n_k)h+ y^n_k-y^{n+1}_{k_d(n,k)} }{y^{n+1}_{k_u(n,k)}-y^{n+1}_{k_d(n,k)}}= 1+ \frac{\mu_Y(y^n_k)h+ y^n_k-y^{n+1}_{k_u(n,k)} }{y^{n+1}_{k_u(n,k)}-y^{n+1}_{k_d(n,k)}}\leq 1.
													\end{align*}
													\begin{flushright}
														$\square$
													\end{flushright}

	\subsection{Proof of Lemma \ref{lemma-poisson} }\label{app-poisson}

	\begin{proof}
	For $x\in\R$, let $\lfloor x\rfloor=\sup\{k\in \Z\,:\,k\leq x\}$ denote the integer part. For $N\in\N$, straightforward computations give
		$$
		\sum_{|n|\leq N}\varphi(n)=\frac 12 (\varphi(N)+\varphi(-N))+\int_{-N}^{N}\varphi(x)dx+\int_{-N}^{N}\Big(x-\lfloor x\rfloor-\frac 12\Big)\varphi'(x)dx.
$$	
We recall that $\varphi(\pm N)\to 0$ as $N\to\infty$ (because $\varphi,\varphi'\in L^1(\R,dx)$). Moreover, 
the Fourier series representation gives
$$
x-\lfloor x\rfloor-\frac 12=\sum_{n\in\Z,n\neq 0}\frac {e^{-2\pi\ii n x}}{2\pi\ii n},\quad x\in\R.
$$
So,
\begin{align*}
\sum_{n\in\Z}\varphi(n)
&=\int_\R \varphi(x)dx+\int_\R\sum_{n\in\Z,n\neq 0}\frac {e^{-2\pi\ii n x}}{2\pi\ii n}\varphi'(x)dx.
\end{align*}
With $\mathfrak{F}[\cdot]$ denoting the Fourier transform, we have $\int_\R e^{-2\pi\ii n x}\varphi'(x)dx=\mathfrak{F}[\varphi'](2\pi n)=2\pi \ii n\mathfrak{F}[\varphi]$ $(2\pi n)$ and $|\mathfrak{F}[\varphi'](2\pi n)|\leq |\frac{\mathfrak{F}[\varphi''](2\pi n)}{2\pi n}|\leq \frac M n$ because $\varphi''\in L^1(\R,dx)$. Thus, we can put the sum outside the integral and  the statement holds.
													\end{proof}

													\clearpage
													\fancyhf{} \fancyfoot[CE,CO]{\thepage}
													\fancyhead[CO]{\textit{Acknowledgments}}
													\fancyhead[CE]{\textit{Acknowledgments}}
													\renewcommand{\headrulewidth}{0.5pt}
													\renewcommand{\footrulewidth}{0.0pt}
													\addtolength{\headheight}{0.5pt}
													\fancypagestyle{plain}{\fancyhead{}\renewcommand{\headrulewidth}{0pt}}
													
													\chapter*{Acknowledgments}
													\addcontentsline{toc}{chapter}{Acknowledgments}
													This thesis was written at the Dipartimento di Matematica of the Università di Roma Tor Vergata and at the Laboratoire d' Analyse et des Mathématiques Appliquées  of the Université Paris-Est Marne-la-Vallée.
													I had two advisors, two laboratories, two places to call \emph{home}.

													I cannot find the words to say how much last three years have meant to me, both from a professional and a personal point of view.   What is for sure is that  I would never been here without the help and the support of some  people. Therefore, I want to dedicate this last part of my thesis,  maybe the most difficult to write, to say them thanks.
													
													\medskip
													
													My first, heartfelt, thanks go to my advisors, Lucia Caramellino and Damien Lamberton.

													Grazie Lucia, for having supported me in all these years,   for having me taught much more than maths, for your friendship.
													Merci Damien, because if now I know some French I owe it to you, for your  kindness and irony, for always  welcoming me with a smile.
													Thank you both for having me taught more or less everything I can do now,  for your necessary help, for all the time you have dedicated to me with  infinite patience.

													I heartily thank Maya Briani, who is like a third advisor to me: thank you Maya   for everything you taught me, for your kindness and for all the time we have spent together discussing about maths and talking as good friends do. I am also  grateful to Antonino Zanette for having given me the opportunity to work with him.
													
													I would like to express my gratitude  to Prof. A. Neuenkirch and to Prof. A. Pascucci, for accepting to  referee my work,  and to  Prof. A.  Alfonsi and Prof. F. Antonelli,  for accepting to be  part of the committee.
													\medskip
													
													I was lucky enough to find warm and stimulating environments in both my laboratories. 
													Many  thanks to the Probability groups both in Tor Vergata and in UPEM. In particular, thank to Prof. V. Bally and to Prof. P. Baldi, for his  help during my stays in France and for all the discussions we had about Paris and maths. 
													
													I am sincerely grateful to  the coordinator of the Ph.D. School in Rome, Prof. A. Braides, and to
													Simonetta De Nicola, Christiane Lafargue, Audrey Patout and Sylvie Cach: your availability  and patience were precious  during all these years. 
													
													I met great people and I had  fantastic  Ph.D. colleagues in both my universities. In France, I wish to especially thank Huong and Yushun, my first friends in UPEM. As regards Tor Vergata, I have a long list of friends to thank. First of all, thanks to the members of  my beloved 1225, especially to Duccio, Rossana and Gianluca, who have been by my side from the beginning. I also wish  to thank the other Ph.D. students and researchers (mathematicians, of course, but computer scientists too!)  with whom I have shared  fruitful  discussions, lunches, laughs and  beer during  all these years.   Among all, a special thank goes to Michele: I could not have a better colleague, flatmate, friend. Thanks, a lot.
													
													\medskip
													
													It's the turn to thank my friends, who have been standing close to me during all these years. 

													Thanks to all the fantastic  people I met in Paris, with whom I have shared unforgettable moments. In particular, thanks to 
													Danilo and Vanessa, best Roman presents Paris could have  given me. 
													
													Thanks to  Anna, Anna Paola, Martina and Maurizia, my favourite, amazing, women in maths: your support  helped me  in many situations, thanks! 
													Thanks also to my old university colleagues, who are now dear friends of mine.
													
													Thanks to Federica and to my \textit{Sardinia girls}, who were very close to me in the last year, and to my neighbour and friend Giulia.

													Finally, my heartfelt thanks to Beatrice, Isabella, Liliana, Manlio, Michela,  Simone and  Simone: you have been my certainty in all these years, and I feel so  lucky to have friends like you. A special thank to Beatrice, who always believed in me, even when I did not do it.
													\medskip
													
													L'ultimo e più grande ringraziamento va a tutta la mia famiglia, in particolare ai miei genitori e ai miei fratelli Marco e Simone. Siete e sarete sempre il mio punto di riferimento più grande. 
													\clearpage

													\clearpage
													\fancyhf{} \fancyfoot[CE,CO]{\thepage}
													\fancyhead[CO]{\textit{Bibliography}}
													\fancyhead[CE]{\textit{Bibliography}}
													\renewcommand{\headrulewidth}{0.5pt}
													\renewcommand{\footrulewidth}{0.0pt}
													\addtolength{\headheight}{0.5pt}
													\fancypagestyle{plain}{\fancyhead{}\renewcommand{\headrulewidth}{0pt}}

													\small

														%

													\end{document}